\newif\ifllstyle
\newif\ifmbstyle
\newif\ifaap
\newif\ifdraft
\newif\iflongversion
\def\iftextcolor{black} 
\numberwithin{equation}{section}
\theoremstyle{plain}
\newcommand{\cmax}{c_1}
\newcommand{\gw}{\operatorname{GW}}
\renewcommand{\new}{\emph}
\renewcommand{\eqst}{\stackrel{\rm d}{=}}
\newcommand{\dbin}{\operatorname{Bin}}
\newcommand{\edeg}{\pi^{\rm deg}}
\newcommand{\aedeg}{\bar\pi^{\rm deg}}
\newcommand{\edegnu}{\pi^{\rm deg}_\nu}
\newcommand{\aedegnu}{\bar\pi^{\rm deg}_\nu}
\renewcommand{\law}{\mathcal{L}}
\newcommand{\mom}{m}
\newcommand{\ang}[1]{\langle #1 \rangle} 
\newcommand{\xmax}{\supnorm{x}}
\newcommand{\pimin}{\pi_{\rm min}}
\newcommand{\qtwo}{\bar{q}_2} 
\newcommand{\prnuto}{\stackrel{\pr_\nu}{\longrightarrow}}
\newcommand{\wprnuto}{\ \prnuto \ }
\newcommand{\gon}{\operatorname{Gon}}
\newcommand{\CondBin}{\operatorname{CondBin}}
\renewcommand{\var}{\Var}
\renewcommand{\epsilon}{\varepsilon}
\renewcommand{\set}[1]{\operatorname{set}(#1)}
\newcommand{\II}{{\mathbb I}}
\newcommand{\DD}{\mathbb{D}}
\newcommand{\CBin}{\operatorname{CBin}}
\definecolor{keynoteblue}{HTML}{00A2FF}
\def\nodelinecolor{black}
\def\nodetextcolor{white}
\definecolor{pink1}{RGB}{241,238,246}
\definecolor{pink2}{RGB}{215,181,216}
\definecolor{pink3}{RGB}{223,101,176}
\definecolor{pink4}{RGB}{206,18,86}
\newcommand{\PicModifiedExplorationFourFour}{
\begin{tikzpicture}[>=stealth', auto, semithick, scale=1.3]
\tikzstyle{every state} = [fill=blue, draw=\nodelinecolor, text=\nodetextcolor, thick, scale=1, inner sep=0pt, minimum size=5mm]
\tikzstyle{statei} = [fill=blue, draw=\nodelinecolor, circle, text=\nodetextcolor, thick, scale=1, inner sep=0pt, minimum size=5mm]
\tikzstyle{stateo} = [fill=gray, draw=\nodelinecolor, circle, text=\nodetextcolor, thick, scale=1, inner sep=0pt, minimum size=5mm]
\tikzstyle{states} = [fill=red, draw=\nodelinecolor, circle, text=\nodetextcolor, thick, scale=1, inner sep=0pt, minimum size=5mm]
\tikzstyle{myedge1} = [very thick, color=black]
\tikzstyle{myedge2} = [very thick, color=red]
\def\h{0.4}
\def\hh{0.3}
\node[statei] (V1)  at (1,4) {1};
\node[state] (V2) at (2,4)  {2};
\node[state] (V3) at (3,4) {3};
\node[state] (V4) at (4,4) {4};
\node[stateo] (V5) at (1,3) {5};
\node[state] (V6) at (2,3)  {6};
\node[state] (V7) at (3,3) {7};
\node[state] (V8) at (4,3) {8};
\node[stateo] (V9) at (1,2) {9};
\node[states] (V10) at (2,2)  {10};
\node[state] (V11) at (3,2) {11};
\node[state] (V12) at (4,2) {12};
\node[stateo] (V13) at (1,1) {13};
\node[state] (V14) at (2,1)  {14};
\node[state] (V15) at (3,1) {15};
\node[state] (V16) at (4,1) {16};

\draw [rounded corners, draw=blue, thick] (1-\h, 4+\h) rectangle (2+\h, 2-\h); 
\draw [rounded corners, draw=blue, thick] (2-\hh, 4+\hh) rectangle (3+\hh, 4-\hh); 
\draw [rounded corners, draw=blue, thick] (3-\h, 4+\h) rectangle (4+\h, 2-\h); 
\draw [rounded corners, draw=blue, thick] (4-\hh, 2+\hh) rectangle (4+\hh, 1-\hh); 
\draw [rounded corners, draw=blue, thick] (2-\hh, 2+\hh) rectangle (3+\hh, 1-\hh); 
\draw [rounded corners, draw=blue, thick] (1-\hh, 2+\hh) rectangle (1+\hh, 1-\hh); 
%
\node at (1.5,4.5) {\color{blue} $G_1$};
\node at (2.5,4.5) {\color{blue} $G_2$};
\node at (3.5,4.5) {\color{blue} $G_3$};
\node at (1,0.5) {\color{blue} $G_4$};
\node at (2.5,0.5) {\color{blue} $G_5$};
\node at (4,0.5) {\color{blue} $G_6$};
%
\path
(V1) edge [myedge1] (V2)
(V2) edge [myedge1] (V6)
(V5) edge [myedge1] (V9)
(V5) edge [myedge1] (V10)
(V9) edge [myedge1] (V10)
(V2) edge [myedge1] (V3)
(V3) edge [myedge1] (V4)
(V3) edge [myedge1] (V7)
(V7) edge [myedge1] (V8)
(V7) edge [myedge1] (V11)
(V8) edge [myedge1] (V12)
(V11) edge [myedge1] (V10)
(V9) edge [myedge1] (V13)
(V11) edge [myedge1] (V14)
(V11) edge [myedge1] (V15)
(V12) edge [myedge1] (V16)
;
\end{tikzpicture}
}
\newcommand{\PicDegreeExponent}{
\begin{tikzpicture} [xscale=4, yscale=3.0]

\def\th{0.5pt} 
\def\w{4.4} 
\def\h{1.1} 
\coordinate (A0) at (2,0);
\coordinate (A1) at (3,0);
\coordinate (A2) at (4,0);
\coordinate (A3) at (\w,0);
\coordinate (B0) at (2,1);
\coordinate (B3) at (\w,1);
\coordinate (C0) at (2,\h);
\coordinate (C3) at (\w,\h);

\fill[line width=0mm, fill=pink4!70] (B0)--(A0)--(A1)--cycle;
\fill[line width=0mm, fill=pink3!70] (B0)--(A1)--(A2)--cycle;
\fill[line width=0mm, fill=pink2!70] (B0)--(A2)--(A3)--(B3)--cycle;
\draw (B0)--(A1);
\draw (B0)--(A2);
\draw (B0)--(B3);

\draw[->, semithick] (2cm-\th, 0) -- (\w,0);
\node[below] at (\w,-\th) {$\alpha$};
\foreach \x in {2,3,4}{
     \draw (\x,-\th) -- (\x,\th)
       node[below=5*\th] {$\x$};}
\draw[->, semithick] (2,0-\th) -- (2,\h);
\node[left=5*\th] at (2,\h) {$\beta$};
\foreach \y in {0,1}{
     \draw (2cm-\th,\y) -- (2cm+\th,\y)
       node[left=5*\th] {$\y$};}

\node[align=center] at (2.35,0.25) {\parbox{15mm}{\scriptsize \centering Power law\\  ($1 < \delta < 2$)}};
\node[align=center] at (3.0,0.3) {\parbox{15mm}{\scriptsize \centering Power law\\  ($2 < \delta < 3$)}};
\node[align=center] at (3.7,0.55) {\parbox{15mm}{\scriptsize \centering Power law\\  ($\delta > 3$)}};
\end{tikzpicture}
}
\newcommand{\PicTransitivityExponent}{
\begin{tikzpicture} [xscale=4, yscale=3.0]

\def\th{0.5pt} 
\def\w{4.4} 
\def\h{1.1} 
\coordinate (A0) at (2,0);
\coordinate (A1) at (3,0);
\coordinate (A2) at (4,0);
\coordinate (A3) at (\w,0);
\coordinate (AA2) at (2.667,0.667);
\coordinate (AA3) at (\w,0.667);
\coordinate (AAA2) at (3, 0.5);
\coordinate (AAA3) at (\w, 0.5);

\coordinate (B0) at (2,1);
\coordinate (B3) at (\w,1);
\coordinate (C0) at (2,\h);
\coordinate (C3) at (\w,\h);

\fill[line width=0mm, fill=gray!60] (B0)--(A0)--(A1)--cycle;
\fill[line width=0mm, fill=gray!30] (B0)--(A1)--(A2)--cycle;
\fill[line width=0mm, fill=pink3!70] (AA2)--(AA3)--(A3)--(A2)--cycle;
\fill[line width=0mm, fill=pink4!70] (AAA2)--(AAA3)--(A3)--(A2)--cycle;
\fill[line width=0mm, fill=pink2!70] (B0)--(AA2)--(AA3)--(B3)--cycle;
\draw (B0)--(A1);
\draw (B0)--(A2);
\draw (B0)--(B3);
\draw (AA2)--(AA3);
\draw (AAA2)--(AAA3);

\draw[->, semithick] (2cm-\th, 0) -- (\w,0);
\node[below] at (\w,-\th) {$\alpha$};
\foreach \x in {2,3,4}{
     \draw (\x,-\th) -- (\x,\th)
       node[below=5*\th] {$\x$};}
\draw[->, semithick] (2,0-\th) -- (2,\h);
\node[left=5*\th] at (2,\h) {$\beta$};
\foreach \y in {0,1}{
     \draw (2cm-\th,\y) -- (2cm+\th,\y)
       node[left=5*\th] {$\y$};}

\node[align=center] at (2.25,0.3) {\parbox{15mm}{\scriptsize \centering $(P)_{21} = \infty$\\ $(P)_{32}=\infty$}};
\node[align=center] at (3.0,0.3) {\parbox{15mm}{\scriptsize \centering $(P)_{32}=\infty$}};
\node[align=center] at (3.95,0.3) {\parbox{35mm}{\scriptsize \centering Power law:  $\beta/(1-\beta) \in (0,1)$}};
\node[align=center] at (3.6,0.58) {\parbox{55mm}{\scriptsize \centering Power law: $\beta/(1-\beta) \in (1,2)$}};
\node[align=center] at (3.4,0.85) {\parbox{55mm}{\scriptsize \centering Power law with exponent 2}};
\end{tikzpicture}
}
\newcommand{\abstracttext}{
A simple but powerful network model with $n$ nodes and $m$ partly overlapping layers is generated as an overlay of independent random graphs $G_1,\dots,G_m$ with variable sizes and densities. The model is parameterised by a joint distribution $P_n$ of layer sizes and densities. When $m$ grows linearly and $P_n \to P$ as $n \to \infty$, the model generates sparse random graphs with a rich statistical structure, admitting a nonvanishing  clustering coefficient together with a limiting degree distribution and clustering spectrum with tunable power-law exponents. Remarkably, the model admits parameter regimes in which bond percolation exhibits two phase transitions: the first related to the emergence of a giant connected component, and the second to the appearance of gigantic single-layer components. 
}
\begin{document}

\title{Clustering and percolation on superpositions of Bernoulli random graphs}

\ifllstyle
\renewcommand{\mnote}[1]{\mbox{}\marginpar{\raggedright\hspace{30pt}{\color{red} \tiny #1}}}
\author{Mindaugas Bloznelis \and Lasse Leskelä}
\date{\today}
\maketitle
\begin{abstract}
\abstracttext
\end{abstract}
\fi

\ifmbstyle
\author{Mindaugas Bloznelis \and Lasse Leskelä\thanks{\aaltoaddress\lslurl}}
\date{\today}
\maketitle
\fi

\ifaap
\begin{frontmatter}
\runtitle{Superpositions of Bernoulli random graphs}
\begin{aug}
\author{\fnms{Mindaugas} \snm{Bloznelis} \ead[label=e1]{mindaugas.bloznelis@mif.vu.lt}}
\and
\author{\fnms{Lasse} \snm{Leskel\"a} \ead[label=e2]{lasse.leskela@aalto.fi}
}
\runauthor{M.\ Bloznelis and L.\ Leskel\"a}
\affiliation{Vilnius University and Aalto University}
\address{Mindaugas Bloznelis\\
Vilnius University\\
Faculty of Mathematics and Informatics\\
Naugarduko 24, LT-03225 Vilnius, Lithuania\\
\printead{e1}
}
\address{Lasse Leskel\"a\\
Aalto University\\
School of Science\\
Department of Mathematics and Systems Analysis\\
Otakaari 1, 02015 Espoo, Finland\\
\printead{e2}
}
\end{aug}

\begin{abstract}
\abstracttext
\end{abstract}


\begin{keyword}[class=MSC]
\kwd[Primary ]{60K35}
\kwd{60K35}
\kwd[; secondary ]{60K35}
\end{keyword}

\begin{keyword}
\kwd{random graph}
\kwd{complex network}
\kwd{multiplex network}
\kwd{multilayer network}
\kwd{overlay network}
\kwd{overlapping communities}
\kwd{clustering coefficient}
\kwd{transitivity}
\kwd{intersection graph}
\kwd{giant component}
\kwd{percolation}
\kwd{epidemic model}
\end{keyword}

\end{frontmatter}
\fi

\ifdraft 

\clearpage

\tableofcontents

\clearpage

\section*{Key updates during 27 Dec 2019 -- 30 Dec 2019}

\begin{itemize}
\item The paper is now written in the style of Annals of Probability / Annals of Applied Probability.

\item In Theorem~\ref{the:DegreeDistribution}, for the limiting degree distribution, alternatively, $\CPoi(\lambda, g_{10}) = \CPoi(\tilde \lambda, \tilde g_{10})$ where $\tilde \lambda = \mu x_*$ with $x_* = \int x 1(x \ge 2) P(dx,dy)$, and
\[
 \tilde g_{10}(t)
 \weq \frac{\int \Bin(x-1, y, t) \, x 1(x \ge 2) P(dx,dy)}{\int x 1(x \ge 2) P(dx,dy)},
 \quad t=0,1,\dots
\]
This follows by noting that $g_{10} = a \tilde g_{10} + (1-a) \delta_0$ with $a = \frac{x_*}{(P)_{10}}$ implies $\CPoi(\lambda, g_{10}) = \CPoi( a \lambda, \tilde g_{10})$. (Compare with [MB 2019-12-27].) 

\item In Theorem~\ref{the:TransitivityGlobal} [MB 2019-12-27] allows $(P)_{32}=0$ and $(P)_{33}=0$. This is updated here as well. Note that $(P)_{21}=0$ $\implies$ $(P)_{31}=0$ $\implies$ $(P)_{32}=0$ $\implies$ $(P)_{33}=0$
(If $(P)_{21}=0$, then $(X)_2 Y = 0$ a.s., and also $(X)_3 Y = 0$ a.s., so that $(P)_{32} \le (P)_{31}=0$.)

\item In Theorem~\ref{the:TransitivityLocal}
[MB 2019-12-27] allows $(P)_{32}=0$ and $(P)_{33}=0$ but this is not good, because then the $g_{32}, g_{33}$ are ill defined. Note that $(P)_{33} > 0$ implies $(P)_{rs} > 0$ for all integers $0 \le r,s \le 3$.

\item In Theorem~\ref{the:Giant}, alternatively, $\CPoi(\lambda, g_{10}) = \CPoi(\tilde \lambda, \tilde g_{10})$ where $\tilde \lambda = \mu x_*$ with $x_* = \int x 1(x \ge 2) P(dx,dy)$, and
\[
 \tilde g_{10}(t)
 \weq \frac{\int \Bin^+(x-1,y)(t) \, x 1(x \ge 2) P(dx,dy)}{\int x 1(x \ge 2) P(dx,dy)},
 \quad t=0,1,\dots
\]
This follows by noting that $\bar g_{10} = a \tilde g_{10} + (1-a) \delta_0$ with $a = \frac{x_*}{(P)_{10}}$ implies $\CPoi(\lambda, \bar g_{10}) = \CPoi( a \lambda, \tilde g_{10})$. (Compare with [MB 2019-12-27].) 

\item There are now two percolation models discussed: Overlay percolation and layer percolation.

\item There is a new term "gigantic single-layer neighborhoods" referring to the second phase transition threshold.

\item Theorem~\ref{the:Percolation} is new. Its proof is missing some details, but maybe it could be sufficient for 1st submission. For example, it might require a bit of detail to justify the coupling argument.  Also, the proof is now only given for model~A (random layer types).

\item Discussion about power laws is now restricted to cases with $f(t) \sim a t^{-\alpha}$ instead of the more general $f(t) \sim \ell(t) t^{-\alpha}$ where $\ell(t)$ is slowly varying. This is to save space (and to keep notations and technicalities minimal and clean).
\end{itemize}

\clearpage

\begin{table}[h!]
\centering
\begin{tabular}{ll}
\toprule
Symbol & Meaning \\
\midrule
Free symbols & $a,b,c,u,v,w,z$ \\
Free symbols & $A,B,F,G,H,K,L,M,N,S,T,U,V,W$ \\
Free symbols & $\lambda,\mu,\nu, \eta, \zeta, \tau, \sigma, \phi, \psi, \xi$ \\
\\
$m$ & Number of nodes \\
$n$ & Number of layers \\
$x_k, x^{(n)}_k$ & Size of layer $k$ \\
$y_k, y^{(n)}_k$ & Strength of layer $k$ \\
\\
$i,j$ & Generic node indices \\
$k,\ell$ & Generic layer indices \\
$f,g,h$ & Generic probability distributions on $\Z_+$ \\
$g_{sr}, \hat g_{sr}$ & Mixed binomial distributions \\
$r,s,t$ & Generic integers \\
\\
$G^{(n)}$ & Overlay graph \\
$G^{(n)}_k$ & Layer-$k$ graph \\
$D, D_n$ & Degree of a node \\
\\
$\hat P_n, \rnote{P_n}$ & Empirical layer type distribution \\
$P_n, \rnote{\pi_n} $ & Layer type distribution (generative model) \\
$P, \rnote{\pi}$     & Limiting layer type distribution (both models) \\
$(P_n)_{sr}, (\hat P_n)_{sr}$ & Cross-factorial moment \\
$\mu_{sr}, \rnote{p_{sr}}$ & Normalised cross-factorial moment $\mu_{sr} = \frac{n}{(m)_s} (\hat P_n)_{sr}$ \\
$\mu_{sr}$ & Normalised cross-factorial moment $\mu_{sr} = \frac{n}{(m)_s} (P_n)_{sr}$ \\
$p_{sr}(a)$ & Covering probability of layer $a$: $p_{sr}(a) = \frac{(x_a)_s y_a^r}{(m)_s}$ \\
$\law, \law_{XY}, \law_{xy}$ & Law, conditional law \\
$\pr_{XY}, \pr_{xy}$ & Conditional distribution given layer types $(x_k, y_k)$ \\
$\pr$ & Unconditional distribution \\
\\
$\alpha$ & Power law exponent of layer sizes \\
$\beta$ & Power law exponent of layer strength \\
$\delta$ & Power law exponent of degrees \\
$\mu^{-1}$ & Inverse relative number of layers $\lim_{n \to \infty} \frac{m}{n}$ \\
$\mu$ & Relative number of layers $\lim_{n \to \infty} \frac{n}{m}$ \\
$\rho$ & GW survival probability, $\rho = 1 -\eta$ \\
$\tau$ & Transitivity (clustering coefficient) \\
\\
$\chi_M$ & Truncation map, $\chi_M(x,y) = (x \wedge M, y)$ \\
$M$ & Generic truncation parameter \\
$p$ & Layer size distribution \\
$q$ & Layer strength function \\
\bottomrule\\
\end{tabular}
\caption{Table of notations.}
\end{table}

\clearpage

\section*{Related work}
\begin{itemize}
\item \Boguna et al.\ \cite{ColomerDeSimon_Boguna_2014} have observed an interesting double percolation phenomenon over graphs where the local clustering coefficient follows power law with a heavy tail of exponent less than one.
\item \Pralat et al.\ \cite{Iskhakov_Kaminski_Mironov_Pralat_Prokhorenkova_2018} analysed local clustering coefficient in a spatial preferential attachment model.
\item Stegehuis \cite{Stegehuis_2019_thesis} has reported a lot of interesting observations about the local transitivity spectrum, also nice numerical experiments.
\item Kivelä et al.\ review \cite{Kivela_etal_2014} multilayer and multiplex networks, they refer to Gomez et al.~\cite{Gomez_etal_2013} (superposition network) and Battiston et al.~\cite{Battiston_Nicosia_Latora_2014} (overlapping network), and De Domenico et al.~\cite{DeDomenico_etal_2013} (overlay network), and \cite{Szell_Lambiotte_Thurner_2010} as most relevant to us.
\item Battiston et al.~\cite{Battiston_Nicosia_Latora_2014} discuss multiplex networks, where each layer is a graph. The adjacency matrix of the union of the graphs is called the aggregated topological matrix. 
\end{itemize}

\fi 


\section{Introduction}

Applications in natural sciences, social sciences, and technology often deal with large networks of nodes linked by pairwise interactions which involve uncertainty due to noisy observations and missing data.  Such uncertainties have been investigated using statistical models ranging from classical Bernoulli random graphs and uniform random graphs with given degree distributions to stochastic block models and more complex generative models involving various preferential attachment and rewiring mechanisms \cite{Abbe_2018_JMLR,Frieze_Karonski_2016,Janson_Luczak_Rucinski_2000,Newman_2003_Structure,VanDerHofstad_2017}.
While succeeding to obtain a good fit for degree distributions and tractable percolation analysis, most earlier models fail to capture second-order effects related to clustering and transitivity. Random intersection graphs \cite{Ball_Sirl_Trapman_2014,Bloznelis_Godehardt_Jaworski_Kurauskas_Rybarczyk_2015,Britton_Deijfen_Lageras_Lindholm_2008,Karonski_Scheinerman_Singer-Cohen_1999,Spirakis_Nikoletseas_Raptopoulos_2013}, spatial preferential attachment models \cite{Iskhakov_etal_2020,Jacob_Morters_2015,Jacob_Morters_2017}, and hyperbolic random geometric graphs \cite{Bode_Fountoulakis_Muller_2015,Fountoulakis_VanDerHoorn_Muller_Schepers_2020,Kiwi_Mitsche_2019,Krioukov_Papadopoulos_Kitsak_Vahdat_Boguna_2010} have been introduced to conduct percolation analysis on networks with nonvanishing transitivity and clustering properties.

Despite remarkable methodological advances, most sparse network models still appear somewhat rigid in what comes to modeling finer clustering properties, such as the \emph{clustering spectrum} (degree-dependent local clustering coefficient) \cite{AngelesSerrano_Boguna_2006_I,Stegehuis_VanDerHofstad_Janssen_VanLeeuwaarden_2017,Vazquez_Pastor-Satorras_Vespignani_2002}, which may significantly impact the percolation properties of the network \cite{AngelesSerrano_Boguna_2006_II,ColomerDeSimon_Boguna_2014}. 
A decreasing clustering spectrum manifests the fact that \emph{high-degree nodes tend to have sparser local neighbourhoods than low-degree nodes.} Motivated by analysing this phenomenon in a tractable quantitative framework, this article discusses a statistical network model generated as an overlay of mutually independent Bernoulli random graphs $G_1,\dots, G_m$ which can be interpreted as \emph{layers} or \emph{communities}. The layers have a variable size (number of nodes) and strength (link probability), and they may overlap each other.  A key feature of the model is that the layer sizes and layer strengths are assumed to be correlated, which allows to model and analyse a rich class of networks with a tunable frequency of strong small communities and weak large communities.

\subsection{Main contributions}

This article presents a rigorous mathematical analysis of clustering and percolation of the overlay graph model in the natural sparse limiting regime where the number of nodes $n$ tends to infinity, the number of layers $m$ is linear in the number of nodes, and the joint distribution $P_n$ of layer sizes and layer strengths converges to a limiting distribution~$P$.
We derive exact formulas for the limiting degree distribution,  clustering coefficient, clustering spectrum, and the largest component size in terms of cross-factorial moments and functional transforms of $P$.   We also investigate the model under bond and site percolation, and characterise critical parameter values of the associated phase transitions.

The descriptive power of the model is illustrated by a detailed investigation of an instance where the layer size follows a power law, and the layer strength is a deterministic function of the layer size following another power law. This setting leads to a power-law degree distribution and a power-law clustering spectrum with tunable exponents in ranges $(1,\infty)$ and [0,2], respectively. A special case in which layer strengths are inversely proportional to their sizes corresponds to layers of bounded average degree.  In this natural parameter regime we discover a remarkable \emph{double phase transition} phenomenon with two critical values: the first characterising the emergence of a giant component in the overlay graph, and the second characterising the emergence of gigantic components in layers covering a typical node.

Finally, we highlight that the modelling framework in this article covers \emph{both deterministic and random layer types}.  Our approach of characterising the regularity of layer types using averaged empirical distributions allows both cases to be treated in a uniform manner.

\subsection{Related work}

The overlay network model discussed in this article is naturally motivated and implicitly described by classical works in social networks \cite{Breiger_1974,Feld_1981}.  The explanatory power and wide applicability of the model in the context of social, collaboration, and information networks has been demonstrated in \cite{Yang_Leskovec_2012,Yang_Leskovec_2014} by experimental studies of a \emph{community-affiliation graph}, which represents an instance of the present model where the node sets of layers are nonrandom or otherwise known to the observer.  The superposition of Bernoulli random graphs considered here serves as a null model for sparse community-affiliation graphs.

The mathematical analysis in this article builds on earlier works on component evolution and clustering in inhomogeneous random graphs \cite{Bollobas_Janson_Riordan_2007} and random intersection graphs \cite{Bloznelis_2010_Largest,Bloznelis_2013}.
The special model instance with unit layer strengths reduces to the so-called \emph{passive random intersection graph} \cite{Godehardt_Jaworski_2001}, and as a byproduct, the present article also provides the first rigorous analysis of giant components in general passive random intersection graphs, extending \cite{Bradonjic_Hagberg_Hengartner_Percus_2010,Lageras_Lindholm_2008}. When layer strengths are constant but not necessarily one, clustering properties and subgraph densities of the model have been analysed in \cite{Karjalainen_Leskela_2017,Karjalainen_VanLeeuwaarden_Leskela_2018,Petti_Vempala_2018-02}, and the recovery of the layers in \cite{Epasto_Lattanzi_PaesLeme_2017}.
Another related work \cite{Vadon_Komjathy_VanDerHofstad_2019} (also part of \cite{Vadon_2020}) on percolation in overlapping community networks assumes that layers are sampled from an arbitrary distribution on the space of finite connected graphs, and the layers are assigned to nodes via a bipartite configuration model. 
The restriction to connected layers and the use of a configuration model makes the model in \cite{Vadon_Komjathy_VanDerHofstad_2019} and its analysis fundamentally different from the present one, and limits its applicability by ruling out networks composed of weak communities.

Clustering spectra with power-law exponent 1 have been shown for random intersection graph models \cite{Benson_Liu_Yin_2020,Bloznelis_2013} and spatial preferential attachment models \cite{Iskhakov_etal_2020,Krot_OstroumovaProkhorenkova_2015}, and with a tunable power-law exponent in $[0,1]$ for random intersection graphs \cite{Bloznelis_2019,Bloznelis_Petuchovas_2017} and recently also for a hyperbolic random geometric graph model \cite{Fountoulakis_VanDerHoorn_Muller_Schepers_2020}.
%
%
%
Furthermore, \cite{Stegehuis_VanDerHofstad_Janssen_VanLeeuwaarden_2017} discusses an inhomogeneous Bernoulli graph model where the clustering spectrum vanishes, but its normalised version displays evidence of a power-law behaviour with exponent in range (0,2).
%

To the best of our knowledge, the present work is the first of its kind where a nonvanishing clustering spectrum with a tunable power-law exponent in the extended range [0,2] is rigorously derived in terms of a simple statistical network model. This model admits a clear explanation of the values of power-law exponents, and introduces a new analytical framework for studying ordinary and double phase transitions in bond and site percolation on sparse networks of overlapping communities of variable size and strength.



\subsection{Outline}
In the rest of the article, Section~\ref{sec:Model} presents model details and notations, and Section~\ref{sec:MainResults} the main results. Section~\ref{sec:PowerLaws} illustrates the main results in a power-law setting, and confirms the existence of double phase transition. The remaining Sections~\ref{sec:DegreeAnalysis}--\ref{sec:PowerLawAnalysis} are devoted to proofs, with technical details postponed to Appendix~\ref{sec:Supplement}.

\section{Model description}
\label{sec:Model}
\subsection{Multilayer network}
A multilayer network model with $n$ nodes and $m$ layers is defined by a list $((G_1,X_1,Y_1), \dots, (G_m,X_m,Y_m))$ of mutually independent random variables with values in $\cG_n \times \{0,\dots,n\} \times [0,1]$, where $\cG_ n$ is the set of undirected graphs with node set contained in $\{1,\dots,n\}$. We assume that conditionally on $(X_k,Y_k)$, the probability distribution of $V(G_k)$ is uniform on the subsets of $\{1,\dots,n\}$ of size $X_k$, and conditionally on $(V(G_k), X_k, Y_k)$,
each node pair of $V(G_k)$ is linked with probability $Y_k$, independently of other node pairs. Thus, $G_k$ is a Bernoulli random graph on node set $V(G_k)$, with edge set denoted $E(G_k)$. The variables $X_k$, $Y_k$, and $(X_k,Y_k)$ are called the \emph{size}, \emph{strength}, and \emph{type} of layer $k$, respectively. Aggregation of layers produces an overlay random graph $G$ defined by
\begin{equation}
 \label{eq:OverlayGraph}
 V(G) = \{1,\dots,n\}
 \qquad \text{and} \qquad
 E(G) = \cup_{k=1}^m E(G_k).
\end{equation}
This setting includes as special cases: (i) models with deterministic layer types, and (ii) models where the layer types are independent and identically distributed random variables.

\subsection{Large networks}
A large network is analysed by considering a sequence of network models $((G^{(n)}_1,X^{(n)}_1,Y^{(n)}_1), \dots, (G^{(n)}_m, X^{(n)}_m,Y^{(n)}_m))$ indexed by the number of nodes $n=1,2,\dots$ so that the number of layers $m = m_n$ tends to infinity as $n \to \infty$. We shall focus on a sparse parameter regime where there exists a probability measure $P$ on $\{0,1,\dots\} \times [0,1]$ which approximates in sufficiently strong sense the averaged layer type distribution
\begin{equation}
 \label{eq:AveragedLayerTypeDistribution}
 P_n(A)
 \weq \frac{1}{m} \sum_{k=1}^m \pr( (X^{(n)}_k,Y^{(n)}_k) \in A).
\end{equation}
In this fundamental regime, the network features are described by limiting formulas with rich expressive power captured by cross moments and tail characteristics of $P$.

\subsection{Notations}

We denote $\Z_+ = \{0,1,\dots\}$, $(a)_+=\max \{0, a\}$, and $(x)_s = x(x-1) \cdots (x-s+1)$. The indicator function of a condition $A$ is denoted by $1(A)$ or $1_A$, whichever is more convenient. Sets of size $x$ are called \emph{$x$-sets}. Unordered pairs and triples are abbreviated as $ij = \{i,j\}$ and $ijk = \{i,j,k\}$. We write $\sum'_{i,j}$ and $\sum'_{i,j,k}$ to indicate sums over ordered pairs and ordered triples with distinct elements. We write $a_n \ll b_n$ and $a_n = o(b_n)$ when $a_n/b_n \to 0$, $a_n \lesim b_n$ and $a_n = O(b_n)$ when $\limsup \abs{a_n/b_n} < \infty$, and $a_n \sim b_n$ when $a_n/b_n \to 1$.



A graph is a pair $G=(V,E)$ where $E$ is a set of unordered pairs of elements of $V$. The degree and component of node $i$ in graph $G$ are denoted by $\deg_G(i)$ and $C_G(i)$, respectively.  The transitive closure of graph $G$ is defined as the graph $\bar G$ with $V(\bar G) = V(G)$ and $E(\bar G) = \{ij: i \in C_G(j)\}$ consisting of unordered node pairs connected by a path in $G$.

The probability distribution of a random variable $X$ is denoted by $\law(X)$. For probability measures, $\dtv(f,g)$ denotes the  total variation distance, $f \conv g$ the convolution, and $f_n \weakto f$ refers to weak convergence. On countable spaces, the same letter is used for both a probability measure $f(A)$ and its density $f(t)$ with respect to the counting measure.  The Dirac measure at $x$ is denoted by $\delta_x$. The densities of the binomial distribution $\Bin(x,y)$ and the Poisson distribution $\Poi(\lambda)$ are denoted by
\[
 \Bin(x,y)(t) \weq \binom{x}{t} (1-y)^{x-t} y^t,
 \qquad
 \Poi(\lambda)(t) \weq e^{-\lambda} \frac{\lambda^t}{t!},
\]
with the convention that the densities are zero for $t$ outside $\{0,\dots, x\}$ and $\Z_+$, respectively.
The Bernoulli distribution is denoted $\Ber(y)(t) = \Bin(1,y)(t)$. We also denote by
\begin{equation}
 \label{eq:BinPlus}
 \Bin^+(x, y)(t)
 \weq \pr( \deg_{\bar H_{x+1,y}}(1) = t )
\end{equation}
the degree distribution of any particular node in the transitive closure $\bar H_{x+1,y}$ of a Bernoulli random graph $H_{x+1,y}$ on node set $\{1,\dots, x+1\}$, where each node pair is linked with probability $y$, independently of other node pairs.
Alternatively, $\Bin^+(x, y)(t)$ equals the probability that the connected component of any particular node in $H_{x+1,y}$ has size $t+1$.  Both distributions have the same support $\{0,\dots,n\}$, and $\Bin(x,y) \lest \Bin^+(x,y)$ in the strong stochastic order.
%
No simple closed form expression is know for $\Bin^+(x, y)(t)$, but its values can be efficiently computed with the help of Gontcharoff polynomials \cite{Andersson_Britton_2000,Ball_Sirl_Trapman_2014}. The compound Poisson distribution with rate parameter $\lambda$ and increment distribution $g$ is denoted $\CPoi(\lambda,g)$; recall that this is the law of a random variable $\sum_{k=1}^\Lambda X_k$ where $\Lambda, X_1, X_2,\dots$ are mutually independent and such that $\law(\Lambda) = \Poi(\lambda)$ and $\law(X_k) = g$.

For any probability measure $P$ on $\Z_+ \times [0,1]$, any $P$-distributed random variable $(X,Y)$, and integers $r,s\ge 0$, we denote
\begin{align}
 \label{eq:CrossMoment}
 (P)_{rs}
 \weq \E (X)_r Y^s
 \weq \int (x)_r y^s \, P(dx,dy),
\end{align}
%
and when this quantity is finite and nonzero, we define mixed probability distributions $\Bin_{rs}(P)$ and $\Bin^+_{rs}(P)$ on $\Z_+$ with probability mass functions
\begin{align}
 \label{eq:MixedBin}
 \Bin_{rs}(P)(t)
 &\weq \E \left( \, \Bin(X-r, Y)(t) \, \frac{(X)_r Y^s}{(P)_{rs}} \right), \\
 \label{eq:MixedBinPlus}
 \Bin^+_{rs}(P)(t)
 &\weq \E \left( \Bin^+(X-r, Y)(t) \, \frac{(X)_r Y^s}{(P)_{rs}} \right).
\end{align}


\section{Main results}
\label{sec:MainResults}
\subsection{Degree distribution}

The model degree distribution is defined by
\begin{equation}
 \label{eq:ModelDegreeDistribution}
 f^{(n)}(t)
 \weq \frac{1}{n} \sum_{i=1}^n \pr( \deg_{G^{(n)}}(i) = t ),
\end{equation}
and represents the probability distribution of the number of neighbours of a randomly chosen node. Because $G^{(n)}$ is an exchangeable random graph, we see that $f^{(n)} = \law(\deg_{G^{(n)}}(1))$.


%
%

\begin{theorem}
\label{the:DegreeDistribution}
Assume that $\frac{m}{n} \to \mu \in (0,\infty)$ and $P_n \to P$ weakly together with $(P_n)_{10} \to (P)_{10} \in (0,\infty)$ for some probability measure $P$ on $\Z_+ \times [0,1]$. Then the model degree distribution $f^{(n)}$ converges weakly to a compound Poisson distribution
$f = \CPoi(\mu (P)_{10}, \Bin_{10}(P))$. 
\end{theorem}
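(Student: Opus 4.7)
The plan is to decompose $\deg_{G^{(n)}}(1) = D^{*(n)} - R^{(n)}$, where $D^{*(n)} = \sum_{k=1}^{m} \deg_{G_k^{(n)}}(1)$ sums the per-layer degrees of node $1$ and $R^{(n)} \geq 0$ counts duplicate contributions to edges incident to node $1$ coming from different layers, prove that $D^{*(n)}$ converges weakly to the target compound Poisson distribution via probability generating function (PGF) analysis, and show that $R^{(n)} \to 0$ in probability. Because the layers are mutually independent and, conditionally on $(X_k^{(n)}, Y_k^{(n)}) = (x,y)$, the per-layer degree $D_k := \deg_{G_k^{(n)}}(1)$ has PGF
$1 + (x/n)((1-y+yz)^{x-1}-1)$
(the factor $x/n$ accounting for whether node $1$ belongs to $V(G_k^{(n)})$), independence across $k$ yields
\[
 \log \E[z^{D^{*(n)}}]
 \weq \sum_{k=1}^{m} \log\!\left(1 + \frac{1}{n}\E\bigl[X_k^{(n)}((1-Y_k^{(n)}+Y_k^{(n)} z)^{X_k^{(n)}-1}-1)\bigr]\right).
\]
Expanding $\log(1+u) = u + O(u^2)$, summing, and recognising that $\frac{1}{m}\sum_k \E[\,\cdot\,] = \int \cdots \, dP_n$, one passes to the limit using weak convergence $P_n \to P$ together with uniform integrability of $x$ under $P_n$ (supplied by $(P_n)_{10} \to (P)_{10} < \infty$) to obtain
\[
 \log \E[z^{D^{*(n)}}]
 \to \mu \int x\bigl((1-y+yz)^{x-1} - 1\bigr) P(dx, dy),
\]
which, after unwinding the definition \eqref{eq:MixedBin}, is precisely the log-PGF of $\CPoi(\mu(P)_{10}, \Bin_{10}(P))$.

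For the overlap, set $N_j = |\{k : \{1,j\} \in E(G_k^{(n)})\}|$, so that $R^{(n)} = \sum_{j \ne 1}(N_j - 1)_+$ vanishes iff $N_j \le 1$ for every $j$. Independence of layers gives $\Pr(\{1,j\} \in E(G_k^{(n)})) = \E[(X_k^{(n)})_2 Y_k^{(n)}]/(n(n-1))$ and independence of the events across $k$, so a pairwise union bound yields
$\Pr(N_j \ge 2) \wle \tfrac12 \bigl(m (P_n)_{21}/(n(n-1))\bigr)^2$
and by exchangeability $\Pr(R^{(n)} \ge 1) \wle (n-1)\Pr(N_2 \ge 2) = O((P_n)_{21}^2/n)$. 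Thus if $(P)_{21} < \infty$, the overlap is negligible and the proof is complete.

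The main obstacle is that the hypotheses only guarantee $(P)_{10} < \infty$ while $(P)_{21}$ may be infinite (in which case also the expansion $\log(1+u)=u+O(u^2)$ above is not uniformly justified). This is resolved by truncation: let $G^{(n,M)}$ be the overlay obtained by discarding every layer with $X_k^{(n)} > M$, and let $P_n^M$, $P^M$ denote the corresponding type distributions. Since $(P_n^M)_{21} \le M^2$, the preceding two steps apply and give $\deg_{G^{(n,M)}}(1) \to \CPoi(\mu(P^M)_{10}, \Bin_{10}(P^M))$ weakly as $n \to \infty$. Uniform integrability of $x$ under $P_n$ yields
\[
 \Pr\!\bigl(\deg_{G^{(n)}}(1) \ne \deg_{G^{(n,M)}}(1)\bigr)
 \wle \Pr\!\bigl(\exists k:\, X_k^{(n)} > M,\, 1 \in V(G_k^{(n)})\bigr)
 \wle \frac{m}{n} \int x\,\mathbf{1}(x > M) \, P_n(dx,dy),
\]
which tends to $0$ as $M \to \infty$ uniformly in $n$. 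Finally, $(P^M)_{10} \uparrow (P)_{10}$ and $\Bin_{10}(P^M) \to \Bin_{10}(P)$ weakly together imply $\CPoi(\mu(P^M)_{10}, \Bin_{10}(P^M)) \to \CPoi(\mu(P)_{10}, \Bin_{10}(P))$ weakly, and a standard diagonal argument then completes the proof.
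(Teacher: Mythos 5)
Your proposal is correct and establishes the theorem, but it takes a genuinely different route from the paper. Both arguments share the same skeleton: replace $\deg_{G^{(n)}}(1)$ by the sum $D^{*(n)} = \sum_k \deg_{G_k^{(n)}}(1)$ of per-layer degrees, control the overcount $R^{(n)}$ coming from node pairs covered by several layers via a second-moment union bound in $(P_n)_{21}$, and truncate layer sizes to reduce to the bounded-size case. Where you diverge is in the convergence step for $D^{*(n)}$: you compute its probability generating function directly and let $n\to\infty$, whereas the paper proves a \emph{quantitative} total-variation bound for deterministic layer types (Proposition~\ref{the:DegreeApproximationQuantitative}, using Le~Cam's inequality and an explicit Poissonisation of the number of covering layers), then handles random layer types by conditioning on the realised layer type vector $\theta_n$ and showing the empirical type distribution $P_{\theta_n}$ concentrates (Lemmas~\ref{the:EmpiricalDistributionConvergence} and \ref{the:ConditionalConvergenceInProbability}), which in turn requires a preliminary discretisation of layer strengths (Section~\ref{sec:DiscreteLayerTypes}) to land in a finite type space. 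Because the PGF of $D^{*(n)}$ factorises over independent layers and the per-layer PGF is a bounded continuous function of $(x,y)$ on the truncated domain $\{0,\dots,M\}\times[0,1]$, your argument absorbs the random-type case automatically and needs neither the empirical-distribution concentration nor the strength discretisation — a real simplification. The trade-off is that the paper's quantitative TV bound is reused later (it feeds into the component-size analysis), while your asymptotic-only argument serves just this one theorem. Two small presentational cautions: ``discarding'' layers of size $>M$ gives a random number of layers, so it is cleaner to keep all $m$ layers and set large ones to size~$0$, i.e.\ push $P_n$ forward by $\sigma_M(x,y)=(x\,1(x\le M),y)$ as the paper does; and the final ``diagonal argument'' deserves one line (fix $\epsilon$, choose $M$ so both the truncation error $\mu h(M)$ and the $\dtv$ gap between the $M$-truncated and full compound Poisson laws are $<\epsilon$, then take $n$ large).
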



The limiting degree distribution $f$ in Theorem~\ref{the:DegreeDistribution} can be represented as the law of
$D = \sum_{k=1}^\Lambda D_k$ where $\Lambda$ is Poisson distributed with mean $\mu (P)_{10}$, $D_1, D_2,\dots$ follow a mixed binomial distribution $\Bin_{10}(P)$, and the random variables in the sum are mutually independent. Here $\Lambda$ represents the number of layers covering a particular node, and $D_k$ the number of neighbours in a typical layer covering the node.
The mean equals $\E(D) = \mu (P)_{21} \le \infty$, and the variance equals $\Var(D) = \mu \big( (P)_{21} + (P)_{32} \big)$ for 
$(P_{21}) < \infty$.  Moreover, $\E(D^r) < \infty$ if and only if $(P)_{r+1,r} < \infty$.  The generating function is given by $\E(z^D) = e^{\lambda(\hat g_{10}(z)-1)}$, where $\hat g_{10}(z) = \int (1-y + y z)^{x-1} \frac{x P(dx,dy)}{(P)_{10}}$.  
The structure of $P$ determines whether or not the limiting degree distribution is light-tailed or heavy-tailed.  Section \ref{sec:PowerLaws} illustrates both cases and provides  examples of power laws with a tunable exponent.

\subsection{Clustering}

The  clustering (a.k.a.\ transitivity) coefficient of the model is defined by 
\[
 \tau^{(n)}
 \weq \frac{\sum'_{ijk} \pr(G^{(n)}_{ij}, G^{(n)}_{ik}, G^{(n)}_{jk}) }{\sum'_{ijk} \pr(G^{(n)}_{ij}, G^{(n)}_{ik})},
\]
where $G^{(n)}_{ij}$ represents the event that node pair $ij$ is linked, and the sums are taken over ordered triples of distinct nodes. We may interpret $\tau^{(n)}$ as the conditional probability that node pair $JK$ is linked given that $IJ$ and $IK$ are linked, where $(I,J,K)$ is an ordered triple of distinct nodes selected uniformly at random. 

\begin{theorem}
\label{the:TransitivityGlobal}
Assume that $(P_n)_{rs} \to (P)_{rs} < \infty$ for $rs = 21, 32, 33$, and $(P)_{21}>0$. Then the  model clustering coefficient is approximated by $\tau^{(n)} \to \tau$, where
\[
 \tau
 \weq
 \begin{cases}
  \dfrac{ (P)_{33} }{(P)_{32}} &\quad \text{when $m \ll n$ and $(P)_{32}>0$}, \\
  \dfrac{ (P)_{33} }{(P)_{32} + \mu (P)_{21}^2} &\quad \text{when $\frac{m}{n} \to \mu \in (0,\infty)$}, \\[2.5ex]
  0 &\quad \text{when $n \ll m \ll n^2$}. 
 \end{cases}
\]
\end{theorem}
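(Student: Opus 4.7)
By exchangeability of the overlay graph,
\[
 \tau^{(n)}
 \weq \frac{\pr(G^{(n)}_{12} \cap G^{(n)}_{13} \cap G^{(n)}_{23})}{\pr(G^{(n)}_{12} \cap G^{(n)}_{13})},
\]
so the problem reduces to computing one triangle and one wedge probability. The plan is to write each $G^{(n)}_{ij} = \bigcup_{k=1}^m E^{(k)}_{ij}$, where $E^{(k)}_{ij}$ is the event that layer $k$ creates edge $ij$, and then use inclusion--exclusion together with the mutual independence of distinct layers to reduce both probabilities to sums over tuples of layer indices.

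\textbf{Layer-wise expansions.} Set
\[
 \alpha_k \weq \E \frac{(X^{(n)}_k)_2 Y^{(n)}_k}{(n)_2},
 \quad
 \beta_k \weq \E \frac{(X^{(n)}_k)_3 (Y^{(n)}_k)^2}{(n)_3},
 \quad
 \gamma_k \weq \E \frac{(X^{(n)}_k)_3 (Y^{(n)}_k)^3}{(n)_3},
\]
so that $\sum_k \alpha_k = m(P_n)_{21}/(n)_2$, $\sum_k \beta_k = m(P_n)_{32}/(n)_3$, and $\sum_k \gamma_k = m(P_n)_{33}/(n)_3$. The identity
\[
 \pr(G^{(n)}_{12} \cap G^{(n)}_{13}) \weq 1 - 2 \prod_k (1-\alpha_k) + \prod_k (1-2\alpha_k + \beta_k),
\]
combined with Taylor expansion of the products, yields $\pr(G^{(n)}_{12} \cap G^{(n)}_{13}) = \sum_k \beta_k + (\sum_k \alpha_k)^2 + R^w_n$, with $R^w_n$ of strictly lower order. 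An analogous inclusion--exclusion for the triangle produces three leading contributions: (T1) all three edges in a single layer, of order $\sum_k \gamma_k \sim m/n^3$; (T2) a wedge in one layer and the third edge in another, of order $(\sum_k \alpha_k)(\sum_k \beta_k) \sim m^2/n^5$; and (T3) each of the three edges in a different layer, of order $(\sum_k \alpha_k)^3 \sim m^3/n^6$.

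\textbf{Case analysis.} Setting $c_n = m/n$, the leading-order ratio reads
\[
 \tau^{(n)}
 \weq \frac{c_n (P_n)_{33} + O(c_n^2/n) + O(c_n^3/n^2)}{c_n (P_n)_{32} + c_n^2 (P_n)_{21}^2} + o(1).
\]
When $m \ll n$, i.e.\ $c_n \to 0$, the two-layer wedge $c_n^2$ and all cross-layer numerator terms are negligible relative to the single-layer terms, giving $\tau^{(n)} \to (P)_{33}/(P)_{32}$. When $c_n \to \mu \in (0,\infty)$, (T1) still dominates the numerator while the two denominator terms are of comparable order, yielding the limit $(P)_{33}/((P)_{32} + \mu (P)_{21}^2)$. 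When $n \ll m \ll n^2$, i.e.\ $c_n \to \infty$ with $c_n/n \to 0$, the two-layer wedge $c_n^2$ dominates the denominator, and (T1) still dominates the numerator because $c_n^2/n$ and $c_n^3/n^2$ are both $o(c_n)$ whenever $c_n \ll n$; hence $\tau^{(n)} \sim (P)_{33}/(c_n (P)_{21}^2) \to 0$.

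\textbf{Main obstacle.} The principal technical step is bounding the Taylor-expansion remainder $R^w_n$ and its triangle counterpart. Via Bonferroni these reduce to sums such as $\sum_k \alpha_k^2$, $\sum_k \alpha_k \beta_k$, and $(\sum_k \alpha_k)^3$. Since $\sum_k \alpha_k \to 0$ we automatically have $\max_k \alpha_k \to 0$, and standard moment manipulations then show each such remainder is $o$ of the leading terms, provided $(P_n)_{rs}$ stays bounded for $rs \in \{21, 32, 33\}$. In the presence of atypically large individual layer types the cleanest route is to truncate layer sizes at a fixed level $M$, apply the argument above, and then let $M \to \infty$ after $n \to \infty$; uniform integrability of $(X)_r Y^s$ under $P_n$, a consequence of the assumed factorial-moment convergence, absorbs the truncation error.
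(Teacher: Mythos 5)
Your route is essentially the paper's but more direct. Where the paper first proves the degree-dependent densities in Theorems~\ref{the:TriangleDensityDeg} and~\ref{the:TwoStarDensityDeg} via Bonferroni bounds and then specializes, you work from the exact identity $\pr(G_{12}\cap G_{13}) = 1 - 2\prod_k(1-\alpha_k) + \prod_k(1-2\alpha_k+\beta_k)$ and Taylor-expand. This is a clean simplification when only the unconditional wedge and triangle probabilities are needed, and your regime-by-regime bookkeeping matches the paper's Section~\ref{sec:ProofTransitivityOverall}.

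The one genuine gap is in the remainder. Expanding the products to second order gives
\[
 \pr(G_{12}\cap G_{13}) \weq \sum_k\beta_k + \Bigl(\sum_k\alpha_k\Bigr)^2 - \sum_k\alpha_k^2 + \text{(higher order)},
\]
so $R_n^w$ contains $-\sum_k\alpha_k^2$, the analogue of the paper's $\mu_{21,21}$. The observation "$\sum_k\alpha_k\to 0$, hence $\max_k\alpha_k\to 0$" only yields $\sum_k\alpha_k^2 = o(\sum_k\alpha_k)$, which is far weaker than the required $\sum_k\alpha_k^2 = o\bigl(\sum_k\beta_k + (\sum_k\alpha_k)^2\bigr)$. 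To see this is not a routine manipulation, take $m\asymp n$, one layer of type $(n,\,c/n)$ with $c>0$ fixed, and all others of type $(2,q)$. Then $(P_n)_{21},(P_n)_{32},(P_n)_{33}$ all converge to finite limits, yet $\sum_k\alpha_k^2 = (1+o(1))\sum_k\beta_k = (1+o(1))c^2/n^2$, and the exact wedge probability is $(1+o(1))(\sum_k\alpha_k)^2$; your stated leading term $\sum_k\beta_k+(\sum_k\alpha_k)^2$ is therefore off by a constant factor.

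The paper bounds $\mu_{21,21}$ through Lemma~\ref{the:SpecialUI}, and that lemma explicitly assumes $P_n\weakto P$ in addition to moment convergence. Your closing claim that uniform integrability of $(X)_r Y^s$ under $(P_n)$ is "a consequence of the assumed factorial-moment convergence" is therefore not correct: convergence of $(P_n)_{rs}$ to finite constants gives neither tightness nor uniform integrability, precisely as the example above shows (there $P_n\weakto\delta_{(2,q)}$, whose $32$-moment is $0$, not $c^2/\mu$). The truncate-and-limit plan has the right shape, but to close it you must explicitly invoke weak convergence of $P_n$ --- the hypothesis that makes Lemma~\ref{the:SpecialUI} tick --- or impose a uniform-integrability condition on $(X)_2 Y$; without one of these, the remainder control is a real gap rather than a standard moment estimate.
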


\begin{remark*}[Constant layer strengths]
When $Y_k=q$ is constant for all $k$, we see that $(P)_{rs} = (p)_r q^s$ where $(p)_r$ equals the $r$-th factorial moment of the limiting layer size distribution. In this case the limiting model clustering equals
$\frac{q (p)_{3} }{(p)_{3} + \mu (p)_{2}^2 }$ and agrees with \cite{Bloznelis_2013,Karjalainen_VanLeeuwaarden_Leskela_2018}.
\end{remark*}

The clustering spectrum of the model is defined by
\[
 \sigma^{(n)}(t)
 \weq \frac{\sum_{ijk} \pr(\deg_{G^{(n)}}(i) = t, \, G^{(n)}_{ij}, G^{(n)}_{ik}, G^{(n)}_{jk} ) }{\sum_{ijk} \pr(\deg_{G^{(n)}}(i) = t, G^{(n)}_{ij}, G^{(n)}_{ik})}, \quad t \ge 2,
\]
and can be interpreted as the conditional probability that node pair $JK$ is linked given that $J$ and $K$ are neighbours of a node $I$ with degree $t$, where $(I,J,K)$ is an ordered triple of nodes selected uniformly at random.
Section~\ref{sec:PowerLaws} illustrates examples where the limiting clustering spectrum below follows a power law.

\begin{theorem}
\label{the:TransitivityLocal}
Assume that $\frac{m}{n} \to \mu \in (0,\infty)$, and $P_n \to P$ weakly together with $(P_n)_{rs} \to (P)_{rs} \in (0,\infty)$ for $rs=10,21,32,33$. Then $\sigma^{(n)} \to \sigma$ pointwise to the limit
\begin{equation}
 \label{eq:TransitivityLocal}
 \sigma(t)
 \weq \frac{(P)_{33} \, (f \conv g_{33})(t-2)}
 {(P)_{32} (f \conv g_{32})(t-2) +\mu (P)_{21}^2 (f \conv g_{21} \conv g_{21})(t-2)},
\end{equation}
where $f = \CPoi(\mu (P)_{10}, \Bin_{10}(P))$ is the limiting degree distribution in Theorem~\ref{the:DegreeDistribution}, and  the distributions $g_{rs} = \Bin_{rs}(P)$ are defined by \eqref{eq:MixedBin}.
\end{theorem}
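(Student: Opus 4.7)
The plan is first to apply exchangeability of the node set to reduce the clustering spectrum to
\[
\sigma^{(n)}(t) \weq \frac{\pr(\deg_{G^{(n)}}(1)=t,\, G^{(n)}_{12},\, G^{(n)}_{13},\, G^{(n)}_{23})}{\pr(\deg_{G^{(n)}}(1)=t,\, G^{(n)}_{12},\, G^{(n)}_{13})},
\]
and then, using independence of the layers $G^{(n)}_1,\dots,G^{(n)}_m$, to decompose the numerator and the denominator according to which layer(s) supply the prescribed edges. In each configuration the ``contributing'' layers are analysed via size-biased layer-type conditioning, while the ``remaining'' layers contribute a degree to node~$1$ which converges weakly to the limiting degree distribution $f$ of Theorem~\ref{the:DegreeDistribution} by the same compound-Poisson argument.

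For the numerator, the leading-order contribution (at scale $n^{-2}$) comes from a single layer $k$ that covers $\{1,2,3\}$ and realises the full triangle, which happens with per-layer probability $(X_k)_3 Y_k^3/(n)_3$. Conditionally on this event, node~$1$ has $2+\Bin(X_k-3, Y_k)$ neighbours inside layer $k$ together with an asymptotically $f$-distributed contribution from the other layers. Averaging against the weight $(X_k)_3 Y_k^3$ converts the $\Bin(X_k-3,Y_k)$ increment into $g_{33}=\Bin_{33}(P)$ by definition \eqref{eq:MixedBin}, yielding
\[
\pr(\deg_{G^{(n)}}(1)=t,\, G^{(n)}_{12},\, G^{(n)}_{13},\, G^{(n)}_{23})
\weq \frac{\mu (P)_{33}}{n^2}\,(f\conv g_{33})(t-2) + o(n^{-2}).
\]
Triangle configurations spanning two or three distinct layers carry additional factors of $1/n$, so they fall into the error.

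For the denominator, exactly two configurations contribute at the same order $n^{-2}$. The first is a single layer $k$ covering $\{1,2,3\}$ with both edges $12$ and $13$ present (the edge $23$ being unconstrained), weighted by $(X_k)_3 Y_k^2$; this contributes $\mu (P)_{32}/n^2 \cdot (f\conv g_{32})(t-2)$. The second is a pair of distinct layers $k\neq l$ supplying $12$ and $13$ respectively, each size-biased by $(X)_2 Y$, whose extra-neighbour increments $\Bin(X_k-2,Y_k)$ and $\Bin(X_l-2,Y_l)$ are independent and convolve into $g_{21}\conv g_{21}$; this contributes $\mu^2 (P)_{21}^2/n^2 \cdot (f\conv g_{21}\conv g_{21})(t-2)$. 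Dividing the numerator asymptotic by the sum of these two yields \eqref{eq:TransitivityLocal} after cancellation of the common factor $\mu/n^2$.

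The main technical obstacle will be showing that, conditionally on a given layer-configuration event, the degree contribution to node~$1$ from the non-conditioned layers is asymptotically independent of the configuration and converges weakly to $f$. After truncating $(X_k,Y_k)\mapsto(X_k\wedge M, Y_k)$ to keep the relevant cross-factorial moments bounded, this reduces to a total-variation Poisson approximation for the number of non-special layers covering node~$1$, uniform in the conditioning; the truncation is then removed by sending $M\to\infty$ using $(P_n)_{rs}\to(P)_{rs}$. A secondary point is controlling overlap events (e.g.\ the layer supplying edge~$12$ also containing node~$3$, or the extra neighbours in two different layers coinciding), each of which contributes $O(1/n)$ per involved layer and hence only to lower-order terms. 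Pointwise convergence of $(f\conv g_{rs})(t-2)$ at any fixed $t$ is automatic because weak and pointwise convergence agree on $\Z_+$ and convolution by a fixed distribution is pointwise continuous; the denominator in \eqref{eq:TransitivityLocal} is strictly positive thanks to the assumption $(P)_{21},(P)_{32}>0$.
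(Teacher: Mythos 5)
Your proposal is essentially the paper's argument: the paper also proves Theorem~\ref{the:TransitivityLocal} by exchangeability reduction, then decomposes the two joint probabilities by which layer(s) supply the conditioned edges, size-biases those layers to obtain $g_{33}$ (numerator) and $g_{32}$, $g_{21}\conv g_{21}$ (denominator), and treats the remaining layers' contribution to $\deg_{G^{(n)}}(1)$ via the compound Poisson degree convergence of Theorem~\ref{the:DegreeDistribution}, with overlap and higher-order configurations controlled as $o(n^{-2})$. The paper packages the quantitative side into the non-asymptotic estimates of Theorems~\ref{the:TriangleDensityDeg} and~\ref{the:TwoStarDensityDeg} (proved via Lemmas~\ref{the:DegreeSplitLayers} and~\ref{the:DegreeLessLayers}) and then makes the error terms vanish using the moment bound of Lemma~\ref{the:SpecialUI}, which exploits the hypothesis $(P_n)_{rs}\to(P)_{rs}<\infty$ directly. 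This lets it avoid the explicit layer-size truncation you introduce; your truncate-then-remove step would also work, but it is not needed here because the assumed convergence of the cross-factorial moments $(P_n)_{rs}$ for $rs=10,21,32,33$ already plays the role that truncation would otherwise serve.
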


\subsection{Connected components}

We denote by $N_1(G^{(n)}) \ge N_2(G^{(n)})$ the two largest component sizes in $G^{(n)}$. For a probability distribution $f$ on $\Z_+$, we denote by
\[
 \rho(f)
 \weq 1 - \min\Big\{s \ge 0: \sum_{x \ge 0} s^x f(x) = s \Big\}
\]
the probability of eternal survival of a Galton--Watson branching process with offspring distribution $f$.

\begin{theorem}
\label{the:Giant}
Assume that $\frac{m}{n} \to \mu \in (0,\infty)$ and $P_n \to P$ weakly together with $(P_n)_{10} \to (P)_{10} \in (0,\infty)$.
Then the largest two component sizes in $G^{(n)}$ are approximated by
\[
 \frac{N_1(G^{(n)})}{n}
 \wprto \rho(f^+)
 \quad\text{and}\quad
 \frac{N_2(G^{(n)})}{n}
 \wprto 0,
\]
where $f^+ = \CPoi(\mu (P)_{10}, \Bin^+_{10}(P))$ is a compound Poisson distribution with rate parameter $\mu (P)_{10}$ and increment distribution $\Bin^+_{10}(P)$ defined by \eqref{eq:MixedBinPlus}.
\end{theorem}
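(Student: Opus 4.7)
The approach is to approximate the connected component of a typical vertex by a Galton--Watson branching process with offspring distribution $f^+$, combine a first-moment coupling argument for the asymptotic component size with a sprinkling argument for uniqueness of a linear-sized component, and handle the possibly heavy tail of the layer-size distribution via truncation.

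\textbf{Layer-driven exploration.} The first step is to describe a breadth-first exploration of $C_{G^{(n)}}(1)$ organised layer-by-layer. Maintain an active set (initialised to $\{1\}$) and a set of used layers (initially empty); at each step pick an unexplored vertex $v$, enumerate the layers $\mathcal{K}(v)$ covering $v$ that are not yet used, and for each $k \in \mathcal{K}(v)$ expose the entire component $C_k(v)$ of $v$ in $G^{(n)}_k$, pushing its new vertices to the active set and marking $k$ as used. The count $|\mathcal{K}(v)|$ is a sum of independent Bernoullis with total mean $\frac{m}{n}(P_n)_{10} \to \mu (P)_{10}$, hence converges in distribution to $\Poi(\mu (P)_{10})$; a layer covering $v$ has its type sampled from the size-biased distribution $x\,P(dx,dy)/(P)_{10}$; and given a type $(x,y)$, the number of new discoveries from $v$ via that layer is distributed as $\Bin^+(x-1,y)$ by definition~\eqref{eq:BinPlus}. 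Since unused layers remain independent of the exploration's history, the offspring count at every discovered vertex converges in distribution to $f^+ = \CPoi(\mu (P)_{10}, \Bin^+_{10}(P))$.

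\textbf{Coupling and first moment.} Next I would couple the exploration with a Galton--Watson tree $\mathcal{T}$ having offspring distribution $f^+$, with total-variation error vanishing on the first $n^{\alpha}$ discoveries for any $\alpha<1/2$; contributions to the discrepancy from collisions with already-seen vertices and from reusing layers are controllable by crude union bounds. This yields $\pr(|C_{G^{(n)}}(1)| \ge K) \to \pr(|\mathcal{T}| \ge K)$ for every fixed $K$. Writing $N_{\ge K}(G^{(n)})$ for the number of vertices in components of size at least $K$, a second-moment computation that exploits asymptotic independence of the early stages of explorations from two distinct vertices gives $N_{\ge K}(G^{(n)})/n \wprto \pr(|\mathcal{T}|\ge K)$, and letting $K\to\infty$ yields $N_{\ge K}(G^{(n)})/n \wprto \rho(f^+)$.

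\textbf{Sprinkling and main obstacle.} To conclude, I would split the layer list into a primary pool of $(1-\epsilon)m$ layers and a sprinkling pool of $\epsilon m$ layers. Applying the previous step to the primary pool produces components of total mass $\rho(f^+_\epsilon)n$ with $\rho(f^+_\epsilon)\to\rho(f^+)$ as $\epsilon\to 0$. The sprinkling pool then almost surely merges any two disjoint linear-sized vertex sets, since the probability that no sprinkled layer simultaneously covers vertices in both sets decays exponentially in $\epsilon m$ (using the positive $P$-mass on layer types with $x\ge 2$ and $y>0$, which can be assumed after handling the trivial case $\rho(f^+)=0$). This yields uniqueness of the giant and hence $N_1(G^{(n)})/n \wprto \rho(f^+)$ together with $N_2(G^{(n)})/n \wprto 0$. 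The main obstacle is the possibly heavy tail of the layer-size distribution, since only the first moment $(P)_{10}$ is assumed finite and individual layers may therefore cover $\Theta(n)$ vertices, breaking a naive branching process approximation. I would resolve this by a monotone truncation: first establish the theorem for the model with $X\wedge M$ in place of $X$, then pass to the limit $M\to\infty$ using continuity of $\rho$ under the appropriate topology together with the fact that the contribution of very large layers vanishes in probability as $M\to\infty$.
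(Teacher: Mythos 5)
Your proposal follows the same high-level route as the paper: a layer-by-layer exploration whose offspring counts approach $f^+$, first- and second-moment arguments giving $n^{-1}N_{\ge K}\to\rho(f^+)$, a sprinkling step to collapse the large components into a single giant, and a truncation to handle unbounded layer sizes. That is indeed the paper's proof strategy.

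Two nontrivial details are elided, however, and both would require real work to fill in. First, getting a \emph{lower} bound on $\pr(|C_{G^{(n)}}(1)|>\tau)$ is not handled by ``crude union bounds on collisions'': the restricted exploration you describe (each layer used once) only discovers a \emph{subset} of the component, and conditioning on ``no multi-overlap'' skews the distribution of the yet-unexplored layers so that the offspring counts are no longer i.i.d. The paper resolves this by a randomised layer-rejection scheme (Algorithm~\ref{algo:SetSelection}, Lemma~\ref{the:SetSelection}) that trades a small admission probability for exact $\Ber$-structure of the selection indicators, together with an explicit bookkeeping of per-type layer counts (Proposition~\ref{the:QuantitativeLowerBound}); this is a genuine idea, not a union bound. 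Second, this bookkeeping forces the argument onto a \emph{finite} layer-type space, so the paper discretises layer strengths (the maps $\sigma_{L\pm}$ of Section~\ref{sec:DiscreteLayerTypes}) in addition to truncating layer sizes; your proposal truncates only $X$, which leaves $Y$ continuous and hence the type space infinite, so you would need either to discretise $Y$ as the paper does or to develop a direct coupling argument that avoids type-counting. Finally, a small imprecision: the sets you sprinkle to connect are the components of size $\ge\omega$ with $1\ll\omega\ll n$, not linear-sized sets; the correct argument (as in the paper) picks $\omega\asymp n^{2/3}$ so there are at most $n^{1/3}$ such components and the per-pair failure probability $e^{-\Theta(n^{1/3})}$ still beats the union bound.
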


\subsection{Site percolation}

We may analyse how a subset of nodes $S_n \subset \{1,\dots,n\}$ is connected by considering a \emph{site-percolated graph} defined as the subgraph
\begin{equation}
 \label{eq:SitePercolation}
 \check G^{(n)} = G^{(n)}[S_n]
\end{equation}
of $G^{(n)}$ induced by $S_n$. The site-percolated graph is an instance of the overlay graph model \eqref{eq:OverlayGraph} with layers $(\check G_1, \check X_1, \check Y_1), \dots, (\check G_m, \check X_m, \check Y_m)$ such that the conditional distribution of $\check X_k = \abs{V(\check G_k)}$ given $X_k = V(G_k)$ is hypergeometric, and $\check Y_k = Y_k$. An approximation of the hypergeometric distribution by a binomial distribution $\Bin(X_k, \theta)$ with $\frac{\abs{S_n}}{n} \approx \theta$ suggests replacing the limiting layer type distribution $P$ by
\[
 \check P(A) \weq \int ( \Bin(x,\theta) \times \delta_y) (A) \, P(dx,dy).
\]
The following result confirms that this modification is well justified, and summarizes the results of Theorems~\ref{the:DegreeDistribution}--\ref{the:Giant} adjusted to site percolation.

\begin{theorem}
\label{the:SitePercolation}
Assume that $\frac{m}{n} \to \mu \in (0,\infty)$, $P_n \to P$ weakly together with $(P_n)_{10} \to (P)_{10} \in (0,\infty)$, and $S_n \subset \{1,\dots,n\}$ satisfies $\frac{\abs{S_n}}{n} \to \theta \in (0,1]$. Then the following approximations are valid for the site-percolated graph $\check G^{(n)} = \check G^{(n)}[S_n]$:
\begin{enumerate}[(i)]
\item \label{ite:SitePercolation1} The degree distribution converges weakly to 
$\check f = \CPoi(\mu (\check P)_{10}, \Bin_{10}(\check P))$.
\item \label{ite:SitePercolation2} The largest two component sizes are approximated by $n^{-1} N_1 \prto \rho(\check f^+)$ and $n^{-1} N_2 \prto 0$ with $\check f^+ = \CPoi(\mu (\check P)_{10}, \Bin^+_{10}(\check P))$.
\newcounter{enumit}
\setcounter{enumit}{\value{enumi}}
\end{enumerate}
If we also assume that $(P_n)_{rs} \to (P)_{rs} \in (0,\infty)$ for $rs=21,32,33$, then
\begin{enumerate}[(i)]
\setcounter{enumi}{\value{enumit}}
\item \label{ite:SitePercolation3} The  clustering coefficient converges to 
$\hat \tau = \tau$ where $\tau$ is the corresponding limit of the nonpercolated graph $G^{(n)}$.
\item \label{ite:SitePercolation4} The clustering spectrum converges pointwise to $\check\sigma$ defined by replacing $f$ and $g_{rs}$ in \eqref{eq:TransitivityLocal} by $\check f$ and $\check g_{rs} = \Bin_{rs}(\check P)$.
\end{enumerate}
\end{theorem}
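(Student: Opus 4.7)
The strategy is to recognise $\check G^{(n)}$ as an overlay graph model in its own right, with a modified layer-type distribution $\check P_n \to \check P$ and modified layer-to-node ratio $\check\mu = \mu/\theta$, and then to invoke Theorems~\ref{the:DegreeDistribution}--\ref{the:Giant} directly.

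First, I would verify the model reduction. For each original layer $(G_k, X_k, Y_k)$, the restricted layer $(\check G_k, \check X_k, \check Y_k) = (G_k[S_n], |V(G_k) \cap S_n|, Y_k)$ satisfies: (a) $\check X_k$ given $X_k$ is hypergeometrically distributed with parameters $(n, X_k, |S_n|)$; (b) given $\check X_k$, $V(\check G_k)$ is uniform on $\check X_k$-subsets of $S_n$; (c) given $V(\check G_k)$ and $Y_k$, edges of $\check G_k$ are independent Bernoulli$(Y_k)$. The mutual independence of these layers is inherited from the original layers. Hence $\check G^{(n)}$ is an instance of the overlay graph model on the $|S_n|$-node set $S_n$, with $m$ layers.

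Next, I would establish convergence of the averaged layer-type distribution. Using the hypergeometric factorial-moment identity $\E[(\check X_k)_r \mid X_k] = (X_k)_r (|S_n|)_r/(n)_r$, one obtains
\[
 (\check P_n)_{rs} \weq \frac{(|S_n|)_r}{(n)_r}\, (P_n)_{rs} \to \theta^r (P)_{rs} \weq (\check P)_{rs}.
\]
Weak convergence $\check P_n \to \check P$ follows from the convergence of the hypergeometric distribution to the binomial $\Bin(x, \theta)$ at each fixed layer size $x$, combined with the weak convergence $P_n \to P$; a standard truncation argument controls the tails of the layer-size coordinate under the assumed moment convergence.

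With the model reduction and convergence in hand, I would apply Theorems~\ref{the:DegreeDistribution}--\ref{the:Giant} to $\check G^{(n)}$ viewed as an overlay graph on $|S_n|$ nodes with $m$ layers, noting $m/|S_n| \to \check\mu = \mu/\theta$. Items (i), (ii), and (iv) then follow on substituting $\check P$ and $\check\mu$ into the formulas of Theorems~\ref{the:DegreeDistribution}, \ref{the:Giant}, and~\ref{the:TransitivityLocal}, respectively. For (iii), the apparent dependence on $\theta$ cancels because $(\check P)_{rs} = \theta^r (P)_{rs}$ and $\check\mu = \mu/\theta$:
\[
 \hat\tau
 \weq \frac{(\check P)_{33}}{(\check P)_{32} + \check\mu (\check P)_{21}^2}
 \weq \frac{\theta^3 (P)_{33}}{\theta^3 (P)_{32} + (\mu/\theta)\,\theta^4 (P)_{21}^2}
 \weq \frac{(P)_{33}}{(P)_{32} + \mu (P)_{21}^2}
 \weq \tau.
\]

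The main technical obstacle lies in the second step: securing joint weak convergence together with moment convergence of $\check P_n \to \check P$ when the original layer sizes may be unbounded. The factorial-moment identity handles the cross moments exactly, but uniform integrability requires a truncation argument invoking the assumed convergence of $(P_n)_{rs}$ and continuity of the binomial distribution in its first parameter. Once this is in place, the remainder of the argument is an essentially mechanical application of the previous theorems.
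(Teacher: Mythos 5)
Your proof is correct and follows essentially the same route as the paper: recognise $\check G^{(n)}$ as an overlay model with hypergeometric layer sizes, show $\check P_n \to \check P$ weakly together with cross-moment convergence, and invoke Theorems~\ref{the:DegreeDistribution}--\ref{the:Giant} with the cancellation in $\theta$ for~(iii). The only difference is presentational: where you invoke "a standard truncation argument" for the weak convergence $\check P_n \weakto \check P$, the paper makes this precise via a kernel formulation and the Diaconis--Freedman total-variation bound $\dtv(\Hyp(n,\check n,x), \Bin(x,\check n/n)) \le 2x/n$ combined with tightness of $(P_n)$, but this is a refinement of the same idea, not a different argument.
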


\subsection{Bond percolation}
\label{sec:BondPercolation}
Bond percolation studies how well the nodes of a graph are connected along a subset of links obtained by random sampling. In a multilayer networks, we may either sample (i) a subset of links of the overlay graph, or (ii) independent subsets of links for each layer separately. To analyse these cases for the overlay graph model $G = G^{(n)}$ in \eqref{eq:OverlayGraph}, we define an \emph{overlay bond-percolated graph} by
\begin{equation}
 \label{eq:BondPercolation}
 \hat G = G \cap H,
\end{equation}
and a \emph{layerwise bond-percolated graph} $\tilde G$ by
\begin{equation}
 \label{eq:LayerPercolation}
 V(\tilde G) = \{1,\dots, n\}
 \qquad\text{and}\qquad
 E(\tilde G) = \cup_{k=1}^m E(G_k \cap H_k),
\end{equation}
where $H,H_1,\dots,H_m$ are mutually independent random graphs on $\{1,\dots,n\}$ in which each node pair is linked with probability $\theta$, independently of other node pairs, and independently of
the layers $(G_k,X_k,Y_k)$.

In an epidemic modeling context, the standard SIR epidemic model is used to model individuals who infect their neighbours with probability $\theta$, independently of each other \cite{Andersson_Britton_2000}.  The links of a graph $G$ represent social contacts, and the bond-percolated component of node $i$ corresponds to the set of eventually infected individuals in a population where node $i$ is initially infectious and the other nodes susceptible. Bond percolation on the overlay graph can be used to develop finer models to model contacts of individuals generated by social communities (households, workplaces, schools) of variable size and strength. Layerwise percolation $\hat G$ then models the case where infections occur independently inside the communities, and the overlay bond-percolation $\tilde G$ models the case where infections occur between individuals regardless of the underlying community structure.

The layerwise bond-percolated graph is an instance of the overlay model \eqref{eq:OverlayGraph} with layer types $(X_k, \theta Y_k)$. This suggests considering a modified limiting layer type distribution
\[
 \hat P(A) \weq \int ( \delta_x \times \delta_{\theta y}) (A) \, P(dx,dy).
\]
We expect the overlay bond-percolated model to behave similarly to the layerwise bond-percolated model in sparse regimes where the layers do not overlap much. The following result confirms this, and summarises the results of Theorems~\ref{the:DegreeDistribution}--\ref{the:Giant} adjusted to bond percolation.

\begin{theorem}
\label{the:BondPercolation}
Assume that $\frac{m}{n} \to \mu \in (0,\infty)$, and $P_n \to P$ weakly together with $(P_n)_{10} \to (P)_{10} \in (0,\infty)$, and $\theta_n \to \theta \in (0,1]$. Then the following approximations are valid for both the overlay bond-percolated graph $\hat G^{(n)}$ and the layerwise bond-percolated graph $\tilde G^{(n)}$:
\begin{enumerate}[(i)]
\item \label{ite:BondPercolation1} The degree distribution converges weakly to 
$\hat f = \CPoi(\mu (\hat P)_{10}, \Bin_{10}(\hat P))$.
\item \label{ite:BondPercolation2} The largest two component sizes are approximated by $n^{-1} N_1 \prto \rho(\hat f^+)$ and $n^{-1} N_2 \prto 0$ with $\hat f^+ = \CPoi(\mu (\hat P)_{10}, \Bin^+_{10}(\hat P))$.
\setcounter{enumit}{\value{enumi}}
\end{enumerate}
If we also assume that $(P_n)_{rs} \to (P)_{rs} \in (0,\infty)$ for $rs=21,32,33$, then:
\begin{enumerate}[(i)]
\setcounter{enumi}{\value{enumit}}
\item \label{ite:BondPercolation3} The  clustering coefficient converges to $\hat \tau = \theta \tau$ where $\tau$ is the corresponding limit of the nonpercolated graph $G^{(n)}$.
\item \label{ite:BondPercolation4} The clustering spectrum converges pointwise to $\hat\sigma$ defined by replacing $P$, $f$, and $g_{rs}$ in \eqref{eq:TransitivityLocal} by $\hat P$, $\hat f$, and $\hat g_{rs} = \Bin_{rs}(\hat P)$.
\end{enumerate}
\end{theorem}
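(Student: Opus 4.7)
The plan is to handle the two percolation models in turn, reducing the layerwise case directly to the earlier theorems and the overlay case to the layerwise case via an explicit coupling. Claims (i)--(iv) have identical statements for $\tilde G^{(n)}$ and $\hat G^{(n)}$, so it suffices to establish them for each graph separately.

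First, consider the layerwise bond-percolated graph $\tilde G^{(n)}$. By construction it is an instance of the overlay model \eqref{eq:OverlayGraph} with layer types $(X_k^{(n)}, \theta_n Y_k^{(n)})$, and its averaged layer type distribution $\tilde P_n$ is the pushforward of $P_n$ under $(x,y) \mapsto (x, \theta_n y)$. Since $\theta_n \to \theta$ and $P_n \to P$ weakly, we get $\tilde P_n \to \hat P$ weakly, and the cross-factorial moments satisfy $(\tilde P_n)_{rs} = \theta_n^s (P_n)_{rs} \to \theta^s (P)_{rs} = (\hat P)_{rs}$ for every relevant $rs$. Theorems~\ref{the:DegreeDistribution}--\ref{the:Giant} applied with $P$ replaced by $\hat P$ then yield all four conclusions for $\tilde G^{(n)}$. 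In particular, the clustering coefficient limit evaluates to
\[
 \frac{(\hat P)_{33}}{(\hat P)_{32} + \mu (\hat P)_{21}^2}
 \weq \frac{\theta^3 (P)_{33}}{\theta^2 (P)_{32} + \mu \theta^2 (P)_{21}^2}
 \weq \theta \tau,
\]
which gives (iii), and the clustering spectrum limit is obtained by substituting $\hat P, \hat f, \hat g_{rs}$ into \eqref{eq:TransitivityLocal}.

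Next, for $\hat G^{(n)} = G^{(n)} \cap H$ I would build a coupling with $\tilde G^{(n)}$ by identifying, for every node pair $ij$ covered by a unique layer $k^*$ of $G^{(n)}$, the overlay percolation indicator $W_{ij}$ with the single layerwise indicator $H_{k^*}(ij)$, while using fresh independent Bernoulli$(\theta_n)$ variables on the multi-cover event. Under this coupling the symmetric difference $E(\hat G^{(n)}) \triangle E(\tilde G^{(n)})$ is supported on pairs covered by two or more layers. A first-moment estimate bounds this: for a fixed pair $ij$ the expected number of covering layers is of order $m (P_n)_{20}/n^2 = O(n^{-1})$, so by a Bonferroni bound the probability of two or more covering layers is $O(n^{-2})$, and summing over the $\binom{n}{2}$ pairs yields an $O_p(1)$ discrepancy in edges (after a truncation at layer size $M$ to handle a possibly infinite $(P)_{20}$, justified by $P_n \to P$ weakly and an $M \to \infty$ limit).

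With the $O_p(1)$ coupling in hand, each conclusion transfers from $\tilde G^{(n)}$ to $\hat G^{(n)}$ by standard perturbation arguments: the probability that a fixed vertex is incident to a discrepant edge vanishes, preserving the degree distribution limit (i); adding or removing $O_p(1)$ edges changes total component sizes by $O_p(1)$, which is negligible under the $n^{-1}$ normalisation in (ii); and the sums in the numerator and denominator of the clustering coefficient and clustering spectrum scale linearly in $n$, while the contribution of triples incident to discrepant edges is only $O_p(1)$, leaving the ratio limits (iii) and (iv) unchanged. The main obstacle will be a clean proof of the $O_p(1)$ discrepancy bound when $P$ has a heavy-tailed layer-size marginal, since the naive second-moment bound uses $(P_n)_{20}$; a truncation argument together with monotone convergence should resolve this, but it requires care to ensure the truncation error is uniformly controlled across all four statistics.
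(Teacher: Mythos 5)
Your treatment of the layerwise bond-percolated graph $\tilde G^{(n)}$ matches the paper exactly: identify it as an overlay model with type distribution $\tilde P_n = P_n \circ (x,y \mapsto x, \theta_n y)^{-1}$, check weak convergence and moment convergence, and apply Theorems~\ref{the:DegreeDistribution}--\ref{the:Giant}. For the overlay case $\hat G^{(n)}$, parts (i), (iii), (iv) also go through along the lines you describe (the paper in fact uses the cleaner exact identity $\hat\tau^{(n)} = \theta\,\tau^{(n)}$ for (iii), avoiding any coupling entirely, but your perturbation argument would also work once the truncation step is carried out, as you anticipate).

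The genuine gap is in part~(ii). You claim that ``adding or removing $O_p(1)$ edges changes total component sizes by $O_p(1)$, which is negligible under the $n^{-1}$ normalisation.'' This is false: $N_1$ is not Lipschitz in the number of edge modifications. Under the paper's coupling one has $\hat G \subset \tilde G$ a.s., so removing even a single bridge edge from the giant component of $\tilde G$ can split it into two pieces of comparable size, decreasing $N_1$ by a constant fraction of $n$ (and simultaneously pushing $N_2$ up to $\Theta(n)$). Hence an $O_p(1)$ symmetric-difference bound gives you only the trivial upper bound $N_1(\hat G) \le N_1(\tilde G)$ for free; it does not give the matching lower bound $N_1(\hat G) \ge (\rho(\hat f^+) - o(1))n$, nor does it control $N_2(\hat G)$. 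The obstacle you flag at the end (heavy tails requiring a truncation of $(P_n)_{20}$, which incidentally should be $(P_n)_{21}$ in the paper's coupling, where the discrepancy is confined to pairs \emph{linked} by $\ge 2$ layers rather than merely covered) is real but secondary.

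The paper's route around this is structurally different: the coupling $\hat G \subset \tilde G$ yields only the upper bound directly; for the lower bound one returns to the restricted balanced exploration (Algorithm~\ref{algo:LowerExploration}) run on $\hat G^{(n)} = G^{(n)} \cap H$ and observes that this exploration, by construction, never re-visits any node pair through two different layers. Consequently its law is the \emph{same} as if each layer carried its own independent copy $H_k$ of the percolation graph, so Lemma~\ref{the:QuantitativeLowerBound} transfers verbatim to the overlay-percolated model. The sprinkling argument in the proof of the lower bound of \eqref{eq:GiantFiniteLayerTypes} then requires a separate modification (splitting both the layers and the percolation randomness into ``red'' and ``blue'' parts, plus a coupon-collector estimate to ensure the blue percolation keeps enough independent edges between large red components). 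This is the conceptual content your proposal is missing: the equality of exploration laws for the two percolation models, not a perturbation bound, is what makes part~(ii) go through.
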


\subsection{Double phase transition}

Theorem~\ref{the:BondPercolation} shows that the largest relative component size in the bond-percolated graph is approximated by the survival probability $\rho(\hat f^+)$ of a Galton--Watson process with compound Poisson offspring distribution $\hat f^+ = \CPoi(\mu (\hat P)_{10}, \Bin^+_{10}(\hat P))$. The mean of the offspring distribution can be written as\footnote{$R_0(\theta)$ can be interpreted as the basic reproduction number ``R naught'' in the epidemiological context.}
\begin{equation}
 \label{eq:R0}
 R_0(\theta)
 \weq \mu \int R(x-1,\theta y) \, x P(dx,dy).
\end{equation}
where $R(x, y) = \sum_{t \ge 0} t \Bin^+(x,y)(t)$ defined using \eqref{eq:BinPlus} represents the expected transitive degree in a homogeneous Bernoulli graph with $x+1$ nodes and link probability $y$. Classical branching process theory tells that $\rho(\hat f^+) > 0$ if and only if $R_0(\theta) > 1$. Hence the largest component in the bond-percolated graph is sublinear for $\theta < \theta_1$, and linear for $\theta > \theta_1$, where the critical threshold is defined by
\[
 \theta_1 \weq \sup\{ \theta\in [0,1]: R_0(\theta) < 1\}.
\]
%
The overlay graph model in studied in this article involves another nontrivial phase transition associated with a critical threshold value
\[
 \theta_2 \weq \sup\{ \theta\in [0,1]: R_0(\theta) < \infty\}.
\]
Section~\ref{sec:PowerLaws} describes an example where $0 < \theta_1 < \theta_2  <1$.
%
%

The first phase transition at $\theta_1$ characterises the emergence of a giant component in a bond-percolated overlay graph. To understand the second phase transition, note that $R_0(\theta)$ is proportional to the expected number of nodes which can be reached by paths within a typical bond-percolated layer covering a particular node.
The second phase transition at $\theta_2$ hence amounts to the emergence of gigantic components inside
bond-percolated layers covering a typical node.

In the epidemic context discussed in Section~\ref{sec:BondPercolation}, we note that the critical quantity $R_0(\theta)$ does \emph{not} refer to the number of individuals directly infected by a reference individual in an otherwise susceptible population, unlike in classical SIR models. Rather, $R_0(\theta)$ also counts the number of individuals indirectly infected by the reference individual via single-layer infection paths.

\section{Power-law models}
\label{sec:PowerLaws}
This section illustrates the rich statistical features of the overlay model by discussing the results of Section~\ref{sec:MainResults} in a setting where the layer strength is a deterministic function of layer size according to $Y_k = q(X_k)$ for some $q: \Z_+ \to [0,1]$, and the limiting layer type distribution factorises according to
\begin{equation}
 \label{eq:LayerTypeFactorized}
 P(dx,dy) \weq p(dx) \delta_{q(x)}(dy)
\end{equation}
where the layer size distribution $p$ is a probability on $\Z_+$.  For concreteness, we assume that the probability mass function $p(x)$ of the layer size distribution and $q(x)$ follow power laws
\begin{equation}
 \label{eq:PowerLaws}
 p(x) = (a+o(1)) x^{-\alpha}
 \quad\text{and}\quad
 q(x) = (b+O(x^{-1/2})) x^{-\beta}
\end{equation} 
as $x \to \infty$, with exponents $\alpha > 2$, $\beta \ge 0$ and constants $a,b >0$. In this case
\[
 (P)_{rs}
 \weq \sum_{x \ge 0} (x)_r q(x)^s p(x)
 \weq \sum_{x \ge 0} \big(ab^s+o(1) \big) \, x^{r-s\beta - \alpha}
\]
shows that $(P)_{rs}$ is finite if and only if $\alpha + s \beta > r+1$.


\subsection{Degree distribution and clustering spectrum}

Theorems~\ref{the:PowerLawDegree} and~\ref{the:PowerLawTransitivityLocal} below establish 
power laws for the limiting degree distribution and clustering spectrum. Figures~\ref{fig:PowerLawDegree} and~\ref{fig:PowerLawTransitivityLocal} illustrate how the associated power-law exponents relate to the corresponding exponents of layer sizes and layer strengths.
%
Remarkably, the power law of the clustering spectrum admits a tunable exponent in $[0,2]$. A similar power law with exponent 1 has earlier been established for a random intersection graph \cite{Bloznelis_2013} and for a spatial preferential attachment random graph \cite{Iskhakov_etal_2020}, and with exponent restricted to $[0,1]$ for inhomogeneous random intersection graphs \cite{Benson_Liu_Yin_2020,Bloznelis_2019,Bloznelis_Petuchovas_2017} and a hyperbolic random geometric graph model \cite{Fountoulakis_VanDerHoorn_Muller_Schepers_2020}.


\begin{theorem}
\label{the:PowerLawDegree}
Assume~\eqref{eq:PowerLaws} for some $\alpha > 2$, $\beta \ge 0$, and $a,b > 0$.
\begin{enumerate}[(i)]
\item If $\beta \in (0,1)$,
then the limiting degree distribution satisfies 
\begin{equation}
 \label{eq:PowerLawDegree}
 f(t)
 \wsim d t^{-\delta}
\end{equation}
for $\delta = 1 + \frac{\alpha-2}{1-\beta}$ and $d =\mu (1-\beta)^{-1} a b^{\delta-1}$.
\item Relation \eqref{eq:PowerLawDegree} holds also for $\beta = 0$, provided that either $b < 1$, or $b=1$ and $q(x)=1$ for all but finitely many $x$.
\item If $\beta \ge 1$, then the limiting degree distribution is light-tailed
with generating function bounded by $\sum_{t \ge 0} z^t f(t) \le e^{\mu(P)_{10}( e^{M(z-1)}-1 )}$ for all $z \ge 0$, where $M = \sup_{x \ge 1} (x-1)q(x)$.
\end{enumerate}
\end{theorem}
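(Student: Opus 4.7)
The plan is to reduce to a tail analysis of the increment distribution in the compound Poisson representation from Theorem~\ref{the:DegreeDistribution}: $f = \CPoi(\lambda, g)$ with $\lambda = \mu (P)_{10}$ and
\[
 g(t) \weq \frac{1}{(P)_{10}} \sum_{x \ge 1} x p(x) \, \Bin(x-1, q(x))(t).
\]
The strategy is (a) to derive the pointwise tail asymptotic $g(t) \sim c\, t^{-\delta}$, and (b) transfer it to $f(t) \sim \lambda\, g(t)$ via local subexponentiality: writing $f = \sum_{k \ge 0} \Poi(\lambda)(k) g^{*k}$ and exploiting $g^{*k}(t) \sim k g(t)$ for regularly varying $g$ of index $-\delta$ with $\delta > 1$, combined with a Kesten-type bound $g^{*k}(t) \le K(1+\epsilon)^k g(t)$ to justify exchanging sum and limit. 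Since $\delta > 1$ in all cases of (i)--(ii), this transfer is routine.

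For part (i) with $\beta \in (0,1)$, the binomials $\Bin(x-1, q(x))$ have mean $(x-1) q(x) \sim b x^{1-\beta} \to \infty$ and parameter $q(x) \to 0$, so are well approximated by $\Poi(b x^{1-\beta})$ (error bounded by $(x-1) q(x)^2$ in total variation). Replacing the sum by an integral and inserting $p(x) \sim a x^{-\alpha}$ makes the dominant contribution $\frac{1}{(P)_{10}} \int_0^\infty \Poi(b x^{1-\beta})(t) \, a x^{1-\alpha}\, dx$. The substitution $u = b x^{1-\beta}$ reduces this to $\frac{a b^{\delta-1}}{(1-\beta)(P)_{10}} \int_0^\infty \Poi(u)(t)\, u^{-\delta}\, du$, and the classical identity $\int_0^\infty e^{-u} u^{t-\delta}\, du / t! = \Gamma(t+1-\delta)/\Gamma(t+1) \sim t^{-\delta}$ gives $g(t) \sim \frac{a b^{\delta-1}}{(1-\beta)(P)_{10}} t^{-\delta}$, whence $f(t) \sim \frac{\mu a b^{\delta-1}}{1-\beta} t^{-\delta} = d\, t^{-\delta}$.
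Case (ii) splits in two: if $b = 1$ with $q(x) = 1$ for all but finitely many $x$, then $\Bin(x-1, 1) = \delta_{x-1}$, so $g(t) = (t+1) p(t+1)/(P)_{10} \sim a t^{-(\alpha-1)}/(P)_{10}$ directly, yielding $\delta = \alpha - 1$; if $b < 1$, the integral framework still applies with the Poisson approximation replaced by a Gaussian approximation of $\Bin(x-1, b)$ near its mean $\sim bx$, using $\sum_{x \ge t+1} \Bin(x-1, b)(t) = 1/b$ for the normalization and noting that $x^{1-\alpha}$ varies slowly across the $\sqrt{t}$-width of concentration.

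For case (iii) with $\beta \ge 1$, the supremum $M = \sup_{x \ge 1}(x-1) q(x)$ is finite because $(x-1) q(x) \sim b x^{1-\beta}$ is bounded. The elementary inequality $(1+a)^n \le e^{na}$ (valid for $a \ge -1$, $n \ge 0$) gives $(1 - q(x) + q(x) z)^{x-1} \le e^{(x-1) q(x)(z-1)} \le e^{M(z-1)}$ for $z \ge 1$. Averaging with the size-biased weight $x p(x) / (P)_{10}$ (total mass $1$) yields $G_g(z) \le e^{M(z-1)}$ on the probability generating function of $g$, and substituting into $G_f(z) = e^{\lambda (G_g(z) - 1)}$ gives the stated bound.

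The principal obstacle lies in making case (i) rigorous. The Poisson approximation error $(x-1) q(x)^2 \sim x^{1-2\beta}$ is not uniformly small across the summation range when $\beta \le 1/2$, so one must truncate small $x$ (where the power-law approximations need not hold anyway) and split the range so that the approximation error contributes $o(t^{-\delta})$ in aggregate. The main-range contribution then follows from a Laplace-type estimate using that $\Poi(bx^{1-\beta})(t)$, viewed as a function of $x$, is sharply peaked near $x^* = (t/b)^{1/(1-\beta)}$ on a width of order $\sqrt{t}/(1-\beta)$, on which the smooth factor $x^{1-\alpha}$ may be replaced by its value at $x^*$ with negligible error.
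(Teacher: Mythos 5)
Your high-level plan coincides with the paper's: represent $f = \CPoi(\lambda, g)$, establish a pointwise power-law tail for the increment distribution $g$, and transfer it to $f(t)\sim\lambda g(t)$ via (local) subexponentiality. Parts (ii) (the $b=1$, eventual $q\equiv1$ case) and (iii) are essentially identical to what the paper does, including the same elementary bound on the generating function.

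The difference is in the central technical step — deriving $g(t)\sim c\,t^{-\delta}$. The paper does this through a dedicated lemma on mixed binomial distributions (Lemma~\ref{L1}) whose engine is a \emph{local} central limit theorem: each $\Bin(x_k-1,y_k)(t)$ is replaced by a normal density with matched mean and variance, with a pointwise error of order $\sigma_k^{-2}$; a Chernoff bound confines the sum to the window $A_r$ around the peak $k^*\asymp(t/(ab))^{1/(1-\beta)}$; and a Riemann-sum argument turns the concentrated sum into a Gaussian integral. This handles $\beta\in[0,1)$ uniformly (including $\beta=0$ with $b<1$, which is absorbed into part (i) in the paper) and gives genuinely pointwise asymptotics. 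Your route, by contrast, replaces the binomial by a Poisson and exploits the clean identity $\int_0^\infty \Poi(u)(t)\,u^{-\delta}\,du = \Gamma(t+1-\delta)/\Gamma(t+1)\sim t^{-\delta}$; this is an attractive alternative that gives the constant $d$ directly, and for $\beta=0$, $b<1$ you propose a Gaussian approximation, which is exactly what the paper's Lemma~\ref{L1} does in that regime.

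The genuine gap is that you appeal to a \emph{total variation} bound on the Poisson approximation (Le Cam's $\dtv\le(x-1)q(x)^2$, and you could sharpen to the Stein--Chen bound $\dtv\lesssim q(x)$), but TV control does not suffice for what you need, which is a \emph{pointwise density} comparison. At the peak $t\approx b x^{1-\beta}$, both densities are of size $\asymp(b x^{1-\beta})^{-1/2}$, while the Stein--Chen TV bound is of size $q(x)\asymp x^{-\beta}$. For $\beta\le 1/3$ the TV bound exceeds the density scale, so it says nothing about the pointwise error. The fact that your estimate blows up for $\beta\le1/2$ is a symptom of using the wrong (and here suboptimal) error metric, not just a matter of truncating small $x$ as you suggest; truncation doesn't help because it is \emph{large} $x$ — which drive the tail — where the bound fails. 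To repair this you would need a local Poisson approximation theorem (a pointwise ratio estimate $\Bin(x-1,q(x))(t)/\Poi((x-1)q(x))(t)=1+O(q(x))$ uniformly near the peak, proved e.g.\ via Stirling), which is an extra lemma of comparable weight to the local CLT the paper invokes. Without it, part (i) of your argument is incomplete for $\beta\le1/3$ (and strictly speaking throughout, since TV never yields pointwise density asymptotics).
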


\begin{figure}[h]
\centering
\PicDegreeExponent
\caption{\label{fig:PowerLawDegree} (Color online.) Power-law exponent of degree distribution as a function of layer size exponent $\alpha$ and layer strength exponent $\beta$.}
\end{figure}

\begin{theorem}
\label{the:PowerLawTransitivityLocal}
Assume~\eqref{eq:PowerLaws} for some $\alpha \in (2,\infty)$ and $\beta \in (0,1)$ such that $\alpha + 2\beta > 4$, and that
$(P_n)_{rs} \to (P)_{rs} \in (0,\infty)$ for $rs=10,21,32,33$. Then the limiting clustering spectrum defined by \eqref{eq:TransitivityLocal} follows a power law according to
\[
 \sigma(t)
 \wsim
 \begin{cases}
  c_1 t^{-\beta/(1-\beta)}, &\quad \beta < 2/3, \\
  c_2 t^{-2}, &\quad \beta = 2/3, \\
  c_3 t^{-2}, &\quad \beta > 2/3,
 \end{cases}
\]
where $c_1 = b^{1/(1-\beta)}$, $c_3 = \mu (P)_{33}$, and $c_2 = c_1 + c_3$.  Furthermore, if 
\eqref{eq:PowerLaws} holds for $\alpha \in (4,\infty)$ and $\beta =0$, and $q(x) = b \in (0,1]$ for all but finitely many $x$, then $\sigma(t) \sim b$.
\end{theorem}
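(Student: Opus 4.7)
The plan is to substitute the formula for $\sigma(t)$ from Theorem~\ref{the:TransitivityLocal} and analyse the tails of its numerator and denominator separately, using power-law asymptotics of $f$ and of each $g_{rs} = \Bin_{rs}(P)$ together with convolution closure for regularly varying probability mass functions. Theorem~\ref{the:PowerLawDegree} already gives $f(t) \sim d\, t^{-\delta}$ with $\delta = 1 + (\alpha-2)/(1-\beta)$ and $d = \mu a b^{\delta-1}/(1-\beta)$, so the main analytic input still needed is the tail of $g_{rs}$.

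First I would show that, for each $rs \in \{10,21,32,33\}$, $g_{rs}(t) \sim C_{rs}\, t^{-\gamma_{rs}}$ with $\gamma_{rs} = (\alpha - r + (s-1)\beta)/(1-\beta)$ and explicit constant $C_{rs} = a b^{s-1+\gamma_{rs}}/((P)_{rs}(1-\beta))$. The idea is a discrete Laplace / saddle-point analysis: writing $g_{rs}(t) = (P)_{rs}^{-1} \sum_x \Bin(x-r, q(x))(t)\, h(x)$ with weight $h(x) = (x)_r q(x)^s p(x) \sim a b^s x^{r-s\beta-\alpha}$, one observes that $\Bin(x-r, q(x))(t)$, viewed as a function of $x$, is sharply peaked around $x_*(t) := (t/b)^{1/(1-\beta)}$ where the mean $\lambda(x) := (x-r)q(x) \sim b x^{1-\beta}$ equals $t$, with a Gaussian profile of width $\sqrt{t}/\lambda'(x_*) \sim x_*/\sqrt{t}$. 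Since $h$ varies slowly on this window, the sum evaluates to $h(x_*)/((P)_{rs}\lambda'(x_*))$ to leading order, which yields the claimed exponent and constant. The moment condition $\alpha + 2\beta > 4$ ensures $\gamma_{rs}>1$ for each relevant $(r,s)$, so that $g_{rs}$ is a proper regularly varying probability mass function.

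Next, convolution closure for regularly varying probability mass functions gives $(u_1 \conv u_2)(t) \sim c_1 t^{-a_1} + c_2 t^{-a_2}$ when $u_i(t) \sim c_i t^{-a_i}$ with $a_i>1$ (lighter tails disappear, equal exponents add). The exponent ordering $\gamma_{32} = \delta - 2 < \gamma_{21} = \delta - 1 < \delta$ together with $\gamma_{33} - \delta = -(2-3\beta)/(1-\beta)$ yields that the denominator of $\sigma(t)$ is dominated by the $g_{32}$ term and behaves as $(P)_{32} C_{32}\, t^{-(\delta-2)}$, while the numerator behaves as $(P)_{33} C_{33}\, t^{-\gamma_{33}}$ for $\beta<2/3$, as $(P)_{33}(d+C_{33})\, t^{-\delta}$ at $\beta=2/3$, and as $(P)_{33} d\, t^{-\delta}$ for $\beta>2/3$. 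Two algebraic identities then finish the constant bookkeeping: $d = \mu (P)_{32} C_{32}$ (a direct consequence of $\gamma_{32} = \delta - 2$) produces $c_3 = \mu (P)_{33}$, and $(P)_{33} C_{33}/((P)_{32} C_{32}) = b^{1/(1-\beta)}$ produces $c_1 = b^{1/(1-\beta)}$, with both contributions present at the critical value $\beta=2/3$. The $\beta=0$ case with $q(x) = b$ eventually is analogous but simpler: then $\lambda(x) \sim bx$ is linear in $x$, $\gamma_{rs} = \alpha - r$, and $g_{32}$ and $g_{33}$ share the exponent $\alpha-3$, so $\sigma(t) \to (P)_{33} C_{33}/((P)_{32} C_{32}) = b$.

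The main obstacle is a uniform-in-$t$ justification of the $g_{rs}$ tail. The naive Poisson approximation to $\Bin(x-r, q(x))$ carries total variation error of order $(x-r)q(x)^2 \sim x^{1-2\beta}$, which is not uniformly small for $\beta < 1/2$, so one cannot simply replace the Binomial by a Poisson wholesale. Instead one must run a local central limit analysis of the Binomial mass function directly as a function of $x$ on a window of relative width $1/\sqrt{t}$ around $x_*(t)$. The $O(x^{-1/2})$ remainder term in the assumption on $q$ is precisely what is needed so that $\lambda$ and $\lambda'$ are sufficiently smooth across this window for the saddle-point evaluation to retain the leading-order constant $b^{1/(1-\beta)}$. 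Contributions from $x$ outside a multiplicative neighbourhood of $x_*(t)$ are then killed by Chernoff/Bennett bounds on the Binomial and are asymptotically negligible.
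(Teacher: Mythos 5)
Your proposal is correct and follows essentially the same route as the paper: the key input is the power-law asymptotic $g_{rs}(t)\sim d_{rs}t^{-\delta_{rs}}$ (your $C_{rs}$, $\gamma_{rs}$ coincide exactly with the paper's $d_{rs}$, $\delta_{rs}$), which the paper establishes in Lemma~\ref{L1} via the same local-CLT/saddle-point analysis you describe (normal approximation of the binomial mass on an $O(x_*/\sqrt{t})$ window around $x_*(t)$, Chernoff tails outside, Riemann-sum-to-integral conversion), and the remainder is the same convolution-closure and exponent/constant bookkeeping. One small caveat matching the paper's own treatment: the $\beta=0$, $b=1$ sub-case degenerates (the binomial becomes a point mass), so the Gaussian-profile step does not apply there and one must instead argue directly that $g_{rs}(t)=\tilde p_{rs}(t+r)$ for large $t$, as in the proof of Theorem~\ref{the:PowerLawDegree}(ii).
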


Networks with $\sigma(t) \ll t^{-1}$ are sometimes call weakly clustered, and those with $\sigma(t) \gg t^{-1}$ strongly clustered \cite{AngelesSerrano_Boguna_2006_II}.  According to Theorem~\ref{the:PowerLawTransitivityLocal}, the overlay graph model produces weakly clustered networks for $\beta > \frac12$, and strongly clustered networks for $\beta < \frac12$. Using techniques in \cite{Bloznelis_2019}, Theorem~\ref{the:PowerLawTransitivityLocal} can be generalised to the case where $p(x)$ in \eqref{eq:PowerLaws} has a regularly varying tail, and we believe that it can be extended to more general subexponential distributions as well. We do not pursue this line here to avoid unnecessary technicalities.

\begin{figure}[h]
\centering
\PicTransitivityExponent
\caption{\label{fig:PowerLawTransitivityLocal} (Color online.) 
Power-law exponent of clustering spectrum as a function of layer size exponent $\alpha$ and layer strength exponent $\beta$. The assumptions of Theorem~\ref{the:PowerLawTransitivityLocal} do not hold in the grey areas where $(P)_{32} = \infty$.}
\end{figure}

\subsection{Existence of double phase transition}

For the power-law model \eqref{eq:PowerLaws}, the function in \eqref{eq:R0} can be computed as
$R_0(\theta) = \mu \sum_x R(x-1, \theta q(x)) x p(x)$.
By applying a classical giant component result for Bernoulli random graphs \cite[Theorem 5.4]{Janson_Luczak_Rucinski_2000}, one may verify that%
\footnote{The first implication in \eqref{eq:ERPhase} follows by noting that if $xy < 1$, then the proof of \cite[Theorem 5.4]{Janson_Luczak_Rucinski_2000} shows that
$\E \abs{C_{H_{xy}}(1)} = \sum_{t \ge 1} \pr( \abs{C_{H_{xy}}(1)} \ge t )
\le \sum_{t \ge 1} e^{-\frac12 (1-xy)^2 t} \le \int_0^\infty e^{-\frac12 (1-xy)^2 t} \le 2 (1-xy)^{-2}$, 
so that
$R(x-1,y) = \E \abs{C_{H_{xy}}(1)} - 1 \le 2 (1-xy)^{-2}$.}
\begin{equation}
\label{eq:ERPhase}
\begin{aligned}
 \limsup_{x \to \infty} \theta x q(x) \le 1-\epsilon \quad &\implies \quad \limsup_{x \to \infty} R(x-1, \theta q(x)) \le 2 \epsilon^{-2} \\
 \liminf_{x \to \infty} \theta x q(x) \ge 1+\epsilon   \quad &\implies \quad \liminf_{x \to \infty} \, x^{-1} R(x-1, \theta q(x)) > 0, 
\end{aligned}
\end{equation}
If $\alpha > 3$, then the limiting layer size distribution $p$ has a finite second moment 
and $R(x-1,y) \le x-1$ implies that $R_0(1) < \infty$. Hence $\theta_2 = 1$, and the second phase transition cannot occur. On the other hand, when $\alpha \in (2,3]$, the limiting layer size distribution has infinite second moment. In this case \eqref{eq:ERPhase} yields the following conclusions:
\begin{enumerate}
\item $\beta = 1$ with $b > 1$. Then $R_0(\theta) < \infty$ for $\theta < b^{-1}$, and $R_0(\theta) = \infty$ for $\theta > b^{-1}$. Hence $\theta_2 = b^{-1} \in (0,1)$. Assume in addition that the constant $a$ in \eqref{eq:PowerLaws} is large enough so that $\mu \theta (P)_{21} \ge 1$ for $\theta = \frac12 \theta_2$. Then $\hat f^+ \gest \hat f$ implies that $R_0(\theta) = \sum_t t \hat f^+(t) \ge \sum_t t \hat f(t) = \mu \theta (P)_{21} \ge 1$ for $\theta = \frac12 \theta_2$, and the continuity of $R_0(\theta)$ on $[0,\theta_2)$ implies that $\theta_1 \in (0, \frac12 \theta_2)$. There are hence two critical values $0 < \theta_1 < \theta_2 < 1$ in which the model displays two distinct phase transitions.

\item $\beta \in (1,\infty)$, or $\beta =1$ with $b<1$. Then $R_0(\theta) < \infty$ for all $\theta \in [0,1]$, so that $\theta_2 = 1$, and the second-type phase transition cannot occur.

\item $\beta \in [0,1)$. Then one can show that $R_0(\theta) = \infty$ for all $\theta \in (0,1]$, and hence $\theta_1 = \theta_2 = 0$, and there are no phase transitions of either type.
\end{enumerate} 

The above observations confirm the existence of a double phase transition in bond percolation, as postulated in \cite{ColomerDeSimon_Boguna_2014}, for a natural network model admitting tunable power-law exponents for both the degree distribution and the clustering spectrum. Together with Theorems~\ref{the:PowerLawDegree} and~\ref{the:PowerLawTransitivityLocal}, this opens up a flexible framework for studying the significance and interrelations of these power laws to bond and site percolation properties in clustered complex networks. The investigation of how these phase transitions are reflected in the core-periphery organisation of the network \cite{AngelesSerrano_Boguna_2006_II,ColomerDeSimon_Boguna_2014} remains an important topic for future research.

\section{Analysis of degree distributions}
\label{sec:DegreeAnalysis}

%

\subsection{Quantitative approximation for deterministic layer types}

The following quantitative estimate is valid for every scale.

\begin{proposition}
\label{the:DegreeApproximationQuantitative}
If the layer types are nonrandom and $(P_n)_{10} > 0$, then the model degree distribution $f^{(n)}$ defined by \eqref{eq:ModelDegreeDistribution} is approximated by a compound Poisson distribution $\CPoi(\lambda^{(n)}, g^{(n)}_{10})$ with rate parameter $\lambda^{(n)} = \frac{m}{n} (P_n)_{10}$ and increment distribution $g_{10}^{(n)} = \Bin_{10}(P_n)$ defined by \eqref{eq:MixedBin}
according to
\begin{equation}
 \label{eq:DegreeApproximationSimple}
 \dtv \Big( \, f^{(n)}, \, \CPoi(\lambda^{(n)}, g^{(n)}_{10}) \Big)
 \wle \left( 1 + \frac{m}{n} \right)^2 \supnorm{X}^4 n^{-1},
\end{equation}
where $\supnorm{X} = \max_{1\le k \le m} X_k$.
\end{proposition}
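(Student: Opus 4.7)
By the exchangeability of $G^{(n)}$ it suffices to approximate the law of $D := \deg_{G^{(n)}}(1)$. I will decompose $D$ into layerwise contributions, compare it with an independent sum, apply a standard Poisson approximation, and bound the error from edges that lie in more than one layer. Let $D_k := |\{j\neq 1: (1,j)\in E(G_k)\}|$ be the number of neighbours of node $1$ coming from layer $k$. Since the layers are independent, so are the $D_k$, and each factorises as $D_k = I_k W_k$ with $I_k := 1(1\in V(G_k)) \sim \Ber(p_k)$, $p_k = X_k/n$, and $W_k \mid I_k = 1 \sim \nu_k := \Bin(X_k-1,Y_k)$. Writing $S := \sum_{k=1}^m D_k$, we have $D\le S$ pointwise, with equality except when some $j\ne 1$ is joined to $1$ in at least two distinct layers.

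The first step is to bound $\pr(D\ne S)$. Because the layers are independent, for a fixed $j\ne 1$ the probability that $(1,j)\in E(G_k)$ equals $X_k(X_k-1)Y_k/(n(n-1))$, so the probability that this edge occurs in at least two layers is at most $\tfrac12 \big(\sum_k X_k(X_k-1)Y_k/(n(n-1))\big)^2 = \tfrac12 \, m^2(P_n)_{21}^2/(n(n-1))^2$. Summing over the $n-1$ choices of $j$ and using the trivial estimate $(P_n)_{21}\le \supnorm{X}^2$ gives a bound of order $m^2\supnorm{X}^4/n^3$, which contributes at most $(m/n)^2 \supnorm{X}^4/n$ to the total variation distance via the coupling inequality $\dtv(\law(D),\law(S))\le \pr(D\ne S)$.

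The second step is the compound Poisson approximation of $S$. For each $k$, the elementary bound $\dtv(\Ber(p_k),\Poi(p_k))\le p_k^2$ (note $p_k\le 1$) together with the contraction property of total variation under the increment map $j\mapsto \sum_{i=1}^j W_{k,i}$ yields $\dtv\big(\law(I_kW_k), \CPoi(p_k,\nu_k)\big)\le p_k^2$. Independence of the layers and the triangle inequality then give
\[
 \dtv\big(\law(S), \CPoi(\lambda^{(n)}, g^{(n)}_{10})\big)
 \wle \sum_{k=1}^m p_k^2
 \wle \frac{\supnorm{X}}{n} \cdot \lambda^{(n)}
 \wle \frac{m\supnorm{X}^2}{n^2},
\]
provided one checks that the mixture increment $\lambda^{(n),-1}\sum_k p_k\nu_k$ coincides with $\Bin_{10}(P_n)$. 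This last identification is immediate from $(P_n)_{10} = m^{-1}\sum_k X_k$ and the definition \eqref{eq:MixedBin}, and the rate matches as $\sum_k p_k = m(P_n)_{10}/n = \lambda^{(n)}$.

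Combining the two estimates by the triangle inequality and absorbing $\supnorm{X}^2\le \supnorm{X}^4$ (when $\supnorm{X}\ge 1$; the case $\supnorm{X}=0$ is trivial) bounds $\dtv(f^{(n)},\CPoi(\lambda^{(n)},g^{(n)}_{10}))$ by $m\supnorm{X}^4/n^2 + m^2\supnorm{X}^4/n^3$, which is at most $(1+m/n)^2 \supnorm{X}^4/n$. The individual ingredients (elementary Bernoulli--Poisson bound, layerwise independence, union bound over multiply-covered edges) are all standard; \textbf{the main delicate point} is to track constants so that the two error sources combine cleanly into the single factor $(1+m/n)^2$ in \eqref{eq:DegreeApproximationSimple}, and to verify the bookkeeping that identifies the mixture increment with $\Bin_{10}(P_n)$ under the deterministic-type hypothesis.
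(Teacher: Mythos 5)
Your proof is correct and follows essentially the same approach as the paper's: decompose the degree into layerwise contributions, bound the overcount error from doubly-covered node pairs by a union bound, and then replace the Bernoulli layer indicators by Poisson counts to obtain the compound Poisson law. The only cosmetic difference is that the paper first groups layers by type and invokes Le Cam's binomial-to-Poisson inequality, whereas you apply the Bernoulli-to-Poisson estimate $\dtv(\Ber(p_k),\Poi(p_k))\le p_k^2$ one layer at a time; the two routes yield exactly the same bound $\sum_k p_k^2$.
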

\begin{proof}
We approximate the degree $D_i = \deg_G(i)$ of node $i$ by a random integer
$
 L_i = \sum_{k=1}^m \deg_{G_k}(i).
$ 
Observe that $L_i \ne D_i$ if and only if there exists a node $j \ne i$ and some distinct layers $k < \ell$ such that $ij \in E(G_k)$ and $ij \in E(G_\ell)$. Hence by the union bound and the independence of $G_k$ and $G_\ell$,
\begin{align*}
 \pr( L_i \ne D_i )
 &\wle \sum_{j \ne i} \sum_{1 \le k < \ell \le m} \pr(ij \in E(G_k)) \, \pr(ij \in E(G_\ell)).
\end{align*}
Hence, noting that $\pr((ij \in E(G_k)) = \frac{(X_k)_2}{(n)_2} Y_k \le \frac{X_k^2}{n^2} Y_k \le n^{-2} \supnorm{X}^2$, it follows that
\begin{equation}
 \label{eq:DegreeApproximation1}
 \dtv(D_i, L_i)
 \wle (n-1) \binom{m}{2} ( n^{-2} \supnorm{X}^2 )^2
 \wle \frac{m^2}{n^3} \supnorm{X}^4.
\end{equation}

Now denote by $W_{xy} = \{k: (X_k, Y_k) = (x,y)\}$ the set of layers with size $x$ and strength $y$, and let $m_{xy} = \abs{W_{xy}}$. Also denote $S = \{ (x,y): m_{xy} > 0\}$. Then we see that
$
 L_i
 = \sum_{(x,y) \in S} \sum_{k \in W_{xy}} \deg_{G_k}(i).
$
Let us define a random variable
\begin{equation}
 \label{eq:RepresentationL}
 \hat L_i \weq \sum_{(x,y) \in S} \sum_{\ell=1}^{M_{xy}} A_{xy}(\ell),
\end{equation}
where $\law(M_{xy}) = \Bin(m_{xy}, \frac{x}{n})$, $\law(A_{xy}(\ell)) = \Bin(x-1,y)$, and all random variables on the right side are mutually independent. Then for any $(x,y) \in S$,
$
 \sum_{k \in W_{xy}} \deg_{G_k}(i)
 \eqst \sum_{\ell=1}^{M_{xy}} A_{xy}(\ell),
$
because the summands on the left are mutually independent, the number of layers $k \in W_{xy}$ containing node $i$ is $\Bin(m_{xy}, \frac{x}{n})$-distributed, and because $\law( \deg_{G_k}(i) \cond V(G_k) \ni i ) = \Bin(x-1,y)$
for each $k \in W_{xy}$. As a consequence, it follows that $L_i \eqst \hat L_i.$

Now denote $\lambda_{xy} = m_{xy} \frac{x}{n}$ and define a new random variable
\begin{equation}
 \label{eq:RepresentationLPrime}
 L_i'
 \weq \sum_{(x,y) \in S} \underbrace{\sum_{\ell=1}^{M_{xy}'} A_{xy}(\ell)}_{L_{xy}'},
\end{equation}
where $M_{xy}'$ are $\Poi( \lambda_{xy} )$-distributed, mutually independent, and independent of the random variables $A_{xy}(\ell)$. Because $\law(L_{xy}') = \CPoi(\lambda_{xy}, \Bin(x-1,y))$, Lemma~\ref{the:CPoiSum} implies that $\law(L_i') = \CPoi(\lambda^{(n)}, g^{(n)}_{10})$ with rate parameter
$
 \lambda^{(n)}
 = \sum_{(x,y) \in S} \lambda_{xy}
 = \frac{m}{n} (P_n)_{10}
$
and mixed binomial increment distribution
\[
 g^{(n)}_{10}
 \weq \sum_{(x,y) \in S} \dbin(x-1, y) \frac{\lambda_{xy}}{\lambda^{(n)}} 
 \weq \sum_{(x,y) \in S} \dbin(x-1, y) \frac{x P_n(\{(x,y)\})}{(P_n)_{10}}.
\]
As a consequence of Le Cam's inequality \cite{Steele_1994} it follows that $\dtv( M_{xy}, M_{xy}') \le m_{xy} \left(\frac{x}{n} \right)^2 \le n^{-2} \supnorm{X}^2 m_{xy}$, and hence
\begin{align*}
 \dtv(\hat L_i, L_i')
 \wle \sum_{(x,y) \in S} \dtv \Big( \sum_{\ell=1}^{M_{xy}} A_{xy}(\ell), \, \sum_{\ell=1}^{M_{xy}'} A_{xy}(\ell)\Big)
 \wle \sum_{(x,y) \in S} \dtv(M_{xy}, M'_{xy})
\end{align*}
implies that $\dtv( L_i, L_i' ) = \dtv( \hat L_i, L_i' ) \le n^{-2} m \supnorm{X}^2$. By combining this with \eqref{eq:DegreeApproximation1}, the claim follows.
\end{proof}

\subsection{Proof of Theorem~\ref{the:DegreeDistribution}}
We prove the claim in three stages: (i) under an extra assumption that the space of layer types is finite, (ii) under an extra assumption that the layer sizes are bounded, (iii) under no extra assumptions.  In what follows, $D_n = \deg_{G^{(n)}}(1)$ and we consider all models $n=1,2,\dots$ to be defined on a common probability space (see Section~\ref{sec:FormalModel} for formal details).

(i) Assume that the supports of $P_n$, $n \ge 1$, and $P$ are contained in a finite set $A \subset \Z_+ \times [0,1]$.
Denote by $P_{\theta_n} = \frac{1}{m} \sum_{k=1}^m \delta_{(X_{n,k},Y_{n,k})}$ the empirical layer type distribution of the $n$-th model, and denote by $\law(D_n \cond \theta_n)$ the conditional distribution of $D_n$ given layer types $\theta_n = ((X_{n,1},Y_{n,1}), \dots, (X_{n,m},Y_{n,m}))$.  Let us define $\lambda_{\theta_n} = \frac{m}{n} (P_{\theta_n})_{10}$, and
\[
 g_{\theta_n}(t)
 \weq
 \begin{cases}
 \Bin_{10}(P_{\theta_n}), &\quad (P_{\theta_n})_{10} > 0, \\
 \delta_0(t), &\quad \text{else},
 \end{cases}
\]
where $\Bin_{10}(P_{\theta_n})$ is defined by \eqref{eq:MixedBin} and $\delta_0$ is the Dirac measure at zero. Then by applying Proposition~\ref{the:DegreeApproximationQuantitative} and Lemma~\ref{the:CPoiPerturbation},
\begin{align*}
 &\dtv \Big( \law(D_n \cond \theta_n), \CPoi( \lambda, g) \Big) \\
 &\wle \dtv \Big( \law(D_n \cond \theta_n), \CPoi( \lambda_{\theta_n}, g_{\theta_n}) \Big) + 
    \dtv \Big( \CPoi( \lambda_{\theta_n}, g_{\theta_n}) , \CPoi( \lambda, g)  \Big) \\
 &\wle \left( 1 + \frac{m}{n} \right)^2 M^4 n^{-1} + \abs{\lambda_{\theta_n} - \lambda} + \lambda \dtv(g_{\theta_n}, g),\end{align*}
where the inequalities remain valid also on the event that $(P_{\theta_n})_{10} = 0$ because in this case all layers are empty and $\law(D_n \cond \theta_n) = \delta_0$.  On the event that $P_{\theta_n} \weakto P$, 
we see that $\lambda_{\theta_n} = \frac{m}{n} (P_{\theta_n})_{10} \to \mu (P)_{10} = \lambda$ and $g_{\theta_n} \weakto g$ (see Lemma~\ref{the:MixedBiasedERConvergence}), so that $\dtv \big( \law(D_n \cond \theta_n), \CPoi( \lambda, g) \big) \to 0$. Observe next that $\dtv(P_{\theta_n}, P) \prto 0$ by Lemma~\ref{the:EmpiricalDistributionConvergence}.   By applying Lemma~\ref{the:ConditionalConvergenceInProbability} with $\Phi_n(\theta_n, \xi_n) = \law(D_n \cond \theta_n)$, we conclude that $\dtv \big( \law(D_n \cond \theta_n), \CPoi( \lambda, g) \big) \prto 0$. Because $\dtv$ is a bounded metric, it follows that $\dtv \big( \law(D_n), \CPoi( \lambda, g) \big) \le \E \dtv \big( \law(D_n \cond \theta_n), \CPoi( \lambda, g) \big) \to 0$.

(ii) Assume now that the supports of $P_n$ and $P$ are all contained in $\{0,1,\dots,M\} \times [0,1]$. We will discretise the unit interval $[0,1]$ as in Section~\ref{sec:DiscreteLayerTypes}. Fix an integer $L \ge 1$, and denote by $G_n^{L-}$ (resp.\ $G_n^{L+}$) an overlay graph generated by a modified model where the layer strengths $Y_{n,k}$ are replaced by $\floor{Y_{n,k}}_L$ (resp.\ $\ceil{Y_{n,k}}_L$), defined by \eqref{eq:DiscreteLayerStrengths}.
Denote by $D_n^{L-}, D_n^{L+}$ the degrees of node 1 in $G_n^{L-}, G_n^{L+}$, respectively. Under a natural coupling of the Bernoulli variables describing the link indicators of the layers we have $G_n^{L-} \subset G_n \subset G_n^{L+}$ almost surely, and hence
\begin{equation}
 \label{the:DegreeLayerStrengthCoupling}
 \pr( D_n^{L-} \ge t) \wle \pr( D_n \ge t ) \wle \pr( D_n^{L+} \ge t).
\end{equation}
for all integers $t \ge 0$ and $L \ge 1$.

The averaged layer type distribution of $G_n^{L\pm}$ is given by $P_n \circ \sigma_{L\pm}^{-1}$, where $\sigma_{L-}(x,y) = (x, \floor{y}_L)$ and $\sigma_{L+}(x,y) = (x, \ceil{y}_L)$. By Lemma~\ref{the:DiscreteLayerTypes}, $P_n \circ \sigma_{L\pm}^{-1} \weakto P \circ \sigma_{L\pm}^{-1}$ and $(P_n \circ \sigma_{L\pm}^{-1})_{10} \to (P \circ \sigma_{L\pm}^{-1})_{10}$. Hence by part (i), it follows that $\pr( D_n^{L\pm} \ge t) \to \pr( D^{L\pm} \ge t)$, where $\law(D^{L\pm}) = \CPoi( \lambda, g_{L\pm})$ with $g_{L\pm}(t) = \Bin_{10}(P \circ \sigma_{L\pm}^{-1})$. Hence by \eqref{the:DegreeLayerStrengthCoupling},
\[
 \pr( D^{L-} \ge t)
 \wle \liminf_{n \to \infty} \pr( D_n \ge t )
 \wle \limsup_{n \to \infty} \pr( D_n \ge t )
 \wle \pr( D^{L+} \ge t).
\]
Lemma~\ref{the:DiscreteLayerTypes} also shows that $P \circ \sigma_{L\pm}^{-1} \weakto P$  and $(P \circ \sigma_{L\pm}^{-1})_{10} \to (P)_{10}$, so that (Lemma~\ref{the:MixedBiasedERConvergence}) $g_{L\pm} \weakto g$ and hence also (Lemma~\ref{the:CPoiPerturbation}) $\law(D^{L\pm}) \weakto \law(D)$ as $L \to \infty$, where $\law(D) =   \CPoi( \lambda, g)$.  The above inequalities then imply that $\pr( D_n \ge t ) \to \pr( D \ge t )$ for all $t$.  Hence $\law(D_n) \weakto \law(D)$.

(iii) Let us now prove Theorem~\ref{the:DegreeDistribution} without making any extra assumptions. Let $G^{M}_n$ be an overlay graph generated by truncated layers
\begin{equation}
 \label{eq:LayerTruncationNew}
 G^{M}_{n,k}
 \weq
 \begin{cases}
  G_{n,k} &\quad \text{if $\abs{V(G_{n,k})} \le M$}, \\
  \text{empty graph} &\quad \text{otherwise}.
 \end{cases}
\end{equation}
Denote by $D_n^{M}$ the degree of node~1 in $G_n^{M}$. Observe that $D_n \ne D_n^{M}$ implies that there exists a layer $G_{n,k}$ of size larger than $M$ which contains node 1, and this occurs with probability
\[
 \pr( V(G_{n,k}) \ni 1, \abs{V(G_{n,k})} > M)
 \weq \E \frac{X_{n,k}}{n} 1( X_{n,k} > M).
\]
Hence by the union bound,
\begin{equation}
 \label{eq:DegreeTruncation1}
 \dtv ( \law(D_n), \law(D_n^{M}) )
 \wle \sum_{k=1}^m \E \frac{X_{n,k}}{n} 1( X_{n,k} > M)
 \wle \frac{m}{n} h(M),
\end{equation}
where $h(M) = \sup_{n \ge 1} \int x 1(x > M) P_n(dx,dy)$.

Observe next that $G^{M}_n$ is an instance of the overlay model with layer types $(X_{n,k} 1(X_{n,k} \le M), Y_{n,k})$ and averaged layer type distribution $P_n \circ \sigma_M^{-1}$ where $\sigma_M(x,y) = (x 1(x \le M), y)$. By Lemma~\ref{the:DiscreteLayerTypes}, $P_n \circ \sigma_M^{-1} \weakto P \circ \sigma_M^{-1}$ together with $(P_n \circ \sigma_M^{-1})_{10} \to (P \circ \sigma_M^{-1})_{10}$. Hence by part (ii), it follows that
\[
 \dtv( \law(D_n^{M}), \CPoi( \lambda^{M}, g^{M} ) )
 \to 0,
\]
where $\lambda^{M} = \mu (P \circ \sigma_M^{-1})_{10}$ and $g^{M} = \Bin_{10}(P \circ \sigma_M^{-1})$. Now by \eqref{eq:DegreeTruncation1} and Lemma~\ref{the:CPoiPerturbation}, we find that
\begin{align*}
 \dtv (\law(D_n), \CPoi(\lambda, g) )
 &\wle \dtv( \law(D_n^{M}), \CPoi( \lambda^{M}, g^{M} ) ) \\
 &\qquad + \abs{\lambda^{M} - \lambda} + \lambda \dtv( g^{M} , g ) + \frac{m}{n} h(M),
\end{align*}
so that
\begin{equation}
 \label{eq:DegreeTruncation2}
 \limsup_{n \to \infty} \dtv (\law(D_n), \CPoi(\lambda, g) )
 \wle \abs{\lambda^M - \lambda} + \lambda \dtv( g^M , g ) + \mu h(M).
\end{equation}
Lemma~\ref{the:DiscreteLayerTypes} also implies that $h(M) \to 0$, and that
$P \circ \sigma_M^{-1} \weakto P$ together with $(P \circ \sigma_M^{-1})_{10} \to (P)_{10}$ as $M \to \infty$.  Hence $g^{M}  \weakto g$ by Lemma~\ref{the:MixedBiasedERConvergence}.
The claim of Theorem~\ref{the:DegreeDistribution} now follows because the right side of \eqref{eq:DegreeTruncation2} can be made arbitrarily small by choosing a large enough $M$.
\qed

\section{Analysis of clustering}
\label{sec:Clustering}

\subsection{General subgraph densities}
Subgraph frequencies in the overlay graph will be characterised using cross moments
\begin{equation}
 \label{eq:CrossMoments}
 (P_n)_{rs} = \int (x)_r y^s d P_n,
 \qquad
 (P_n)_{rs,tu} = \int (x)_r y^s \, (x)_t y^u d P_n
\end{equation}
of the averaged layer type distribution $P_n$ defined by \eqref{eq:AveragedLayerTypeDistribution}, and 
normalised cross moments defined by
\begin{equation}
 \label{eq:Mu}
 \mu^{(n)}_{rs} \weq \sum_{k=1}^m p^{(n)}_{rs}(k),
 \qquad
 \mu^{(n)}_{rs, tu} \weq \sum_{k=1}^m p^{(n)}_{rs}(k) \, p^{(n)}_{tu}(k),
\end{equation}
where $p^{(n)}_{rs}(k) \weq (n)_r^{-1} \E (X^{(n)}_k)_r (Y^{(n)}_k)^s$. These definitions are motivated by the following result, where $G_{k^*} = G^{(n)}_{k^*}$ represents a randomly chosen layer, and we recall the  the mixed binomial distribution $\Bin_{rs}(P_n)$ defined in \eqref{eq:MixedBin}.

\begin{lemma}
\label{the:SubgraphDensities}
Let $F_{rs}$ be a graph with node set in $\{1,\dots,n\}$ such that $\abs{V(F_{rs})} = r$ and $\abs{E(F_{rs})} = s$, and let $i$ be a node in $V(F_{rs})$ with $\deg_{F_{rs}}(i) = r-1$. Select $k^* \in \{1,\dots,m\}$ uniformly at random and independently of the layers. Then:
\begin{enumerate}[(i)]
\item $\pr( G_{k^*} \supset F_{rs} ) = m^{-1} \mu^{(n)}_{rs}$,
\item $\pr( \deg_{G_{k^*}}(i) = t \cond G_{k^*} \supset F_{rs} ) = \Bin_{rs}(P_n)(t-r+1)$ for all $t$.
\end{enumerate}
\end{lemma}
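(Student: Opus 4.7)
My plan is to derive both identities by conditioning first on the index $k^* = k$ and then on the layer type $(X_k, Y_k) = (x,y)$, using the independence built into the layer-generation mechanism.

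For (i), the first step is to compute $\pr(G_k \supset F_{rs} \mid X_k = x, Y_k = y)$. Since $V(G_k)$ is uniform on $x$-subsets of $\{1,\dots,n\}$, it contains the fixed $r$-set $V(F_{rs})$ with probability $\binom{n-r}{x-r}/\binom{n}{x} = (x)_r/(n)_r$; conditionally on this, each of the $s$ edges of $F_{rs}$ lies in $E(G_k)$ independently with probability $y$, giving a factor $y^s$. Unconditioning over $(X_k, Y_k)$ yields $\pr(G_k \supset F_{rs}) = (n)_r^{-1} \E (X_k)_r Y_k^s = p^{(n)}_{rs}(k)$, and averaging over $k^*$ (which is independent of the layers) gives $m^{-1} \sum_{k=1}^m p^{(n)}_{rs}(k) = m^{-1} \mu^{(n)}_{rs}$, as claimed.

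For (ii), the key step is to describe the conditional law of $\deg_{G_k}(i)$ given $\{G_k \supset F_{rs}\}$ and $(X_k, Y_k) = (x, y)$. The hypothesis $\deg_{F_{rs}}(i) = r-1$ forces $i$ to be adjacent in $G_k$ to all $r-1$ other vertices of $V(F_{rs})$, so its remaining neighbours lie in $V(G_k) \setminus V(F_{rs})$, a set of cardinality $x - r$. Crucially, the edges from $i$ to these $x-r$ vertices are independent Bernoulli$(y)$ random variables, independent of the $s$ prescribed edges inside $F_{rs}$, so the additional degree is $\Bin(x-r, y)$-distributed. This gives
\[
  \pr(\deg_{G_k}(i) = t, \, G_k \supset F_{rs} \mid X_k=x, Y_k=y)
  \weq \frac{(x)_r}{(n)_r} \, y^s \, \Bin(x-r, y)(t-r+1).
\]
Averaging over $(X_k, Y_k)$, summing over $k$, and dividing by $\pr(G_{k^*} \supset F_{rs}) = (n)_r^{-1} (P_n)_{rs}$ obtained from (i) yields $\E (X)_r Y^s \Bin(X-r, Y)(t-r+1) / (P_n)_{rs}$ with $(X,Y) \sim P_n$, which is exactly $\Bin_{rs}(P_n)(t-r+1)$ by \eqref{eq:MixedBin}. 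There is no substantive obstacle; the only delicate point is cleanly separating the two independent sources of randomness — the $s$ prescribed edges inside $F_{rs}$ (giving the $y^s$ factor) and the $x-r$ fresh potential edges from $i$ into $V(G_k) \setminus V(F_{rs})$ (giving the $\Bin(x-r,y)$ factor) — and this is automatic from the product structure of the Bernoulli random graph.
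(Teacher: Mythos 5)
Your proof is correct and follows essentially the same route as the paper's: condition on the layer type $(X_k,Y_k)=(x,y)$, use the uniform-subset factor $(x)_r/(n)_r$ together with the $y^s$ edge factor for containment, decompose $\deg_{G_k}(i)=(r-1)+D_k'$ with $D_k'\sim\Bin(x-r,y)$ on the containment event, then average over $k$ and divide by the containment probability. The only point worth making explicit (which the paper leaves equally implicit) is that the degree-$(r-1)$ hypothesis on $i$ in $F_{rs}$ is what guarantees that every edge from $i$ into $V(F_{rs})\setminus\{i\}$ is already prescribed by the containment event, so that the residual degree is exactly the count of $G_k$-neighbours of $i$ among the $x-r$ nodes of $V(G_k)\setminus V(F_{rs})$; you did handle this correctly.
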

\begin{proof}
(i) Because $\pr( V(G_k) \supset V(F_{rs}) \cond X_k, Y_k ) = \frac{(X_k)_r}{(n)_r}$ for any $k$, we see that $\pr( G_k \supset F_{rs} ) = \E \frac{(X_k)_r}{(n)_r} Y_k^s = p^{(n)}_{rs}(k)$. The corresponding probability for a randomly selected $k^*$ equals $\pr( G_{k^*} \supset F_{rs} ) = \frac{1}{m} \sum_{k=1}^m p^{(n)}_{rs}(k) = (P_n)_{rs}$.

(ii) Denote $D_k = \deg_{G_k}(i)$. On the event that $G_k \supset F_{rs}$, we see that
$D_k = d + D_k'$ where $D_k' = \abs{N_{G_k}(i) \setminus V(F_{rs})}$ and $d = r-1$. Conditionally on $(X_k,Y_k) = (x,y)$ and $G_k \supset F_{rs}$, the random integer $D_k'$ is $\Bin(x-r, y)$-distributed. Hence
\[
 \pr( D_k = t, \, G_k \supset F_{rs} )
 \weq \E \left( \Bin(X_k-r, Y_k)(t-d) \frac{(X_k)_r}{(n)_r} Y_k^s \right).
\]
The corresponding probability for a randomly chosen $k^*$ is
\[
 \pr( D_{k^*} = t, \, G_{k^*} \supset F_{rs} )
 \weq \int \left( \Bin(x-r, y)(t-d) \frac{(x)_r}{(n)_r} y^s \right) P_n(dx,dy),
\]
so the claim follows by dividing both sides by $\pr(G_{k^*} \supset F_{rs}) = (n)_r^{-1} (P_n)_{rs}$.
\end{proof}

\subsection{Triangle densities}

The following quantitative bound is valid for every fixed $n$.

\begin{theorem}
\label{the:TriangleDensityDeg}
Let $K_3$ be a triangle with $i \in V(K_3) \subset [n]$.
Then:
\begin{enumerate}[(i)]
\item $\abs{ \pr( G \supset K_3 ) - \mu_{33} } \le 4 \mu_{21} \mu_{32} + \mu_{21}^3$.
\item $\pr( \deg_G(i) = t, \, G \supset K_3 ) = \mu_{33} \, f^{(n)} \conv g^{(n)}_{33}(t-2) + \epsilon(t)$,
where $f^{(n)}$ is the model degree distribution defined by \eqref{eq:ModelDegreeDistribution}, $g^{(n)}_{33} = \Bin_{33}(P_n)$ is defined by \eqref{eq:MixedBin}, and the approximation error is bounded by
\[
 \abs{\epsilon(t)}
 \wle (4+t) \mu_{21} \mu_{32} + \mu_{21}^3 + 2\mu_{10,33}.
\]
\end{enumerate}
\end{theorem}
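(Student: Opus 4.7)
The plan is to decompose the triangle event by tracking how the three edges of $K_3$, call them $a_1, a_2, a_3$, are distributed among the layers. Writing $E_{j,k} = \{a_j \in E(G_k)\}$, we have
\[
 T = \{G \supset K_3\} \weq \bigcup_{k_1, k_2, k_3 = 1}^{m} \bigcap_{j=1}^3 E_{a_j, k_j}.
\]
Grouping the triples $(k_1, k_2, k_3)$ by multiplicity and using independence of distinct layers together with Lemma~\ref{the:SubgraphDensities}(i) yields three contributions: (a) all three indices equal, giving $\sum_k p_{33}(k) = \mu_{33}$; (b) exactly two equal, in which two adjacent edges form a wedge inside one layer while the third edge lies in another layer, contributing at most $3 \sum_{k \ne \ell} p_{32}(k) p_{21}(\ell) \le 3\mu_{21}\mu_{32}$ (the factor $3$ counts which edge is isolated); and (c) all distinct, contributing at most $\mu_{21}^3$. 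A matching Bonferroni lower bound for (a) produces a correction $\le \tfrac12 \mu_{33}^2$, and the elementary inequality $p_{33}(k) \le \min\{p_{32}(k), p_{21}(k)\}$ (from $Y_k \le 1$ and $(X_k)_3/(n)_3 \le (X_k)_2/(n)_2$) gives $\mu_{33}^2 \le \mu_{21}\mu_{32}$. Collecting yields $|\pr(T) - \mu_{33}| \le 4\mu_{21}\mu_{32} + \mu_{21}^3$, proving part~(i).

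For (ii) the same layer-allocation decomposition is applied jointly with the event $\{\deg_G(i) = t\}$. The dominant term comes from exactly one layer $k$ containing $K_3$: by Lemma~\ref{the:SubgraphDensities}(ii) with $r=s=3$ and $i$ the designated degree-$2$ vertex of $K_3$,
\[
 \sum_k \pr(\deg_{G_k}(i) = u, \, G_k \supset K_3) \weq \mu_{33}\, g_{33}^{(n)}(u-2) \qquad (u \ge 2).
\]
Writing $\deg_G(i) = \deg_{G_k}(i) + \deg_{G_{\bar k}}(i) - \Delta_k$ where $G_{\bar k} = \bigcup_{\ell \ne k} G_\ell$ and $\Delta_k = |N_{G_k}(i) \cap N_{G_{\bar k}}(i)|$, the independence of $G_k$ and $G_{\bar k}$ together with the fact that $\law(\deg_{G_{\bar k}}(i))$ agrees with $f^{(n)}$ up to removing a single layer yields $\mu_{33}(f^{(n)} \conv g_{33}^{(n)})(t-2)$ as the leading term.

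The residual error $\epsilon(t)$ then aggregates four pieces: the cross-layer triangle terms (b) and (c), bounded by $3\mu_{21}\mu_{32} + \mu_{21}^3$; the Bonferroni correction $\le \tfrac12 \mu_{33}^2 \le \tfrac12 \mu_{21}\mu_{32}$; the overlap $\Delta_k$, which after a union bound over the $t$ candidate common neighbours of $i$ contributes $\le t\,\mu_{21}\mu_{32}$; and the discrepancy between $\law(\deg_{G_{\bar k}}(i))$ and $f^{(n)}$, which is controlled by the expected degree contribution of layer $k$ at $i$ on the event $\{G_k \supset K_3\}$, giving $\sum_k p_{10}(k) p_{33}(k) = \mu_{10,33}$ per direction, hence $2\mu_{10,33}$ for the two-sided estimate. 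Summing these yields $|\epsilon(t)| \le (4+t)\mu_{21}\mu_{32} + \mu_{21}^3 + 2\mu_{10,33}$.

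The main technical obstacle will be the third and fourth error pieces: one must carefully separate the independent layer-$k$ and $G_{\bar k}$ contributions to $\deg_G(i)$ while conditioning on $\{G_k \supset K_3\}$, and simultaneously track both the shared-neighbour overlap $\Delta_k$ and the shift in the degree distribution induced by removing one layer. The remaining steps reduce to routine union bounds and to factorisations enabled by Lemma~\ref{the:SubgraphDensities}.
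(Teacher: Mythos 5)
Your proposal is correct and follows essentially the same route as the paper. The decomposition into (a) single-layer triangles, (b) two-layer wedges plus an edge, (c) three distinct layers, together with a Bonferroni correction, is exactly the paper's $\epsilon_1$ and $\epsilon_2$; your decomposition $\deg_G(i)=\deg_{G_k}(i)+\deg_{G_{-k}}(i)-\Delta_k$ with the overlap $\Delta_k$ handled by a union bound over at most $t$ candidate shared neighbours is precisely the paper's Lemma~\ref{the:DegreeSplitLayers} step ($\epsilon_3$), and your comparison of $\law(\deg_{G_{-k}}(i))$ to $f^{(n)}$ via $\pr(\deg_{G_k}(i)>0)\le p_{10}(k)$ is the paper's Lemma~\ref{the:DegreeLessLayers} step ($\epsilon_4$). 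Your bounds even come out slightly sharper (your Bonferroni term is $\tfrac12\mu_{33}^2$ rather than $\mu_{33}^2$), and all pieces aggregate to the stated $(4+t)\mu_{21}\mu_{32}+\mu_{21}^3+2\mu_{10,33}$. One small caution in writing this up: the phrase ``expected degree contribution of layer $k$'' is slightly misleading — the quantity that controls the shift from $\law(\deg_{G_{-k}}(i))$ to $f^{(n)}$ is $\pr(\deg_{G_k}(i)>0)\le p_{10}(k)$, not $\E\deg_{G_k}(i)$; since you do arrive at $\mu_{10,33}=\sum_k p_{10}(k)p_{33}(k)$, the bookkeeping is correct, but the verbal description should be tightened.
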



\begin{proof}
Denote $\cK_3 = \{G \supset K_3\}$. 
Denote by $\cA_k = \{G_k \supset K_3\}$ the event that all node pairs of the triangle are linked by layer $k$. We also denote $D = \deg_G(i)$, $D_k = \deg_{G_k}(i)$, and $D_{-k} = \deg_{G_{-k}}(i)$ with $G_{-k} = \cup_{k' \ne k} G_k$.

(i) Denote
\[
 \epsilon_1(t)
 \weq \pr( D = t, \cK_3 ) - \pr( D = t, \cup_k \cA_k ),
\]
and observe that $0 \le \epsilon_1(t) \le \pr( D=t, \cE_{12} ) + \pr( D = t, \cE_{111} )$, where $\cE_{12}$ is the event that there exists one layer covering one link and a different layer covering two links of $K_3$, and $\cE_{111}$ is the event that three distinct layers cover the links of $K_3$.  We write $p(abc) = \pr(\cG^a_{12}, \cG^b_{13}, \cG^c_{23})$, where $\cG^a_{ij}$ the event that node pair $ij$ is linked in layer $a$.  We note that $p(abc) = p_{21}(a)p_{21}(b)p_{21}(c)$, $p(aab) = p_{32}(a)p_{21}(b)$, and $p(aaa) = p_{33}(a)$ for distinct layers $a,b,c$. Hence
\begin{align*}
 \pr(\cE_{12})
 \wle \sumd_{a,b} \Big(p(aab) + p(aba) + p(baa)\Big) 
 &\wle 3 \mu_{21} \mu_{32},
\end{align*}
and
$
 \pr(\cE_{111})
 \le \sum'_{a,b,c} p(abc)
 \le \mu_{21}^3.
$
Thus, $\sum_{t \ge 0} \abs{\epsilon_1(t)} \le 3 \mu_{21} \mu_{32} + \mu_{21}^3$.

Then denote
\[
 \epsilon_2(t)
 \weq \pr( D = t, \cup_k \cA_k ) - \sum_k \pr( D = t, \cA_k).
\]
Bonferroni's inequalities imply that $0 \le -\epsilon_2(t) \le \sum'_{k,k'} \pr( D = t, \cA_k, \cA_{k'})$, and hence, noting that $\mu_{33} \le \mu_{32} \le \mu_{21}$,
\[
 \sum_{t \ge 0} \abs{\epsilon_2(t)}
 \wle \sumd_{k,k'} \pr( \cA_k, \cA_{k'})
 \weq \sumd_{k,k'} p_{33}(k) p_{33}(k')
 \wle \mu_{33}^2
 \wle \mu_{21} \mu_{32}.
\]
By combining this with the bound for $\epsilon_1(t)$, we conclude that
\[
 \pr( D = t, \cK_3 )
 \weq \sum_k \pr( D = t, \cA_k) + \epsilon_1(t) + \epsilon_2(t),
\]
where $\sum_{t \ge 0} ( \abs{\epsilon_1(t)} + \abs{\epsilon_2(t)}) \le 4\mu_{21} \mu_{32} + \mu_{21}^3$. Hence claim (i) follows by summing the above equality over $t$, and noting that $\sum_k \pr(\cA_k) = \mu_{33}$.

(ii) We will next approximate
\begin{align}
 \sum_k \pr( D = t, \cA_k)
 \label{eq:TriangleDegreeNew3} &\wapprox \sum_k \pr( D_{-k} + D_k = t, \cA_k) \\
 \nonumber &\weq \sum_k \sum_{r+s=t} \pr( D_{-k} = r) \, \pr( D_k = s, \cA_k) \\
 \label{eq:TriangleDegreeNew4} &\wapprox \sum_k \sum_{r+s=t} \pr(D=r) \, \pr( D_k = s, \cA_k ).
\end{align}
Lemma~\ref{the:SubgraphDensities} shows that
$\sum_k \pr( D_k = s, \cA_k ) = \mu_{33} \Bin_{33}(P_n)(s-2)$. Hence the last term above
equals $\mu_{33} \, f^{(n)} \conv g^{(n)}_{33}(t-2)$, and to prove the claim it suffices to analyse the approximation errors in \eqref{eq:TriangleDegreeNew3}--\eqref{eq:TriangleDegreeNew4}.

The approximation error in \eqref{eq:TriangleDegreeNew3} equals $\epsilon_3(t) = \sum_k \epsilon_{3k}(t)$, where
\begin{align*}
 \epsilon_{3k}(t)
 \weq \pr( D = t, \cA_k) - \pr( D_{-k} + D_k = t, \cA_k).
\end{align*}
By applying Lemma~\ref{the:DegreeSplitLayers} with $A = \{k\}$, $B = [n] \setminus \{k\}$, $\cE_A = \{G_k \ni e_1, e_2, e_3\}$, and $\cE_B = \{\}$ being the sure event, we see that $\abs{\epsilon_{3k}(t)} \le c_B t \pr( D_k \le t, \cA_k ) \le c_B t \pr( \cA_k )$, where $c_B = \pr(G_{-k} \ni 12) \le \sum_{\ell \ne k} p_{21}(\ell) \le \mu_{21}$. Hence
\[
 \abs{\epsilon_3(t)}
 \wle t \mu_{21} \sum_k p_{33}(k)
 \weq t \mu_{21} \mu_{33}
 \wle t \mu_{21} \mu_{32}.
\]

The approximation error in \eqref{eq:TriangleDegreeNew4} equals $\epsilon_4(t) = \sum_k \epsilon_{4k}(t)$ where
\[
 \epsilon_{4k}(t)
 \weq \sum_{r+s=t} \Big( \pr(D=r) - \pr( D_{-k} = r) \Big) \pr(D_k = s, \cA_k).
\]
By Lemma~\ref{the:DegreeLessLayers}, $\sum_{t \ge 0} \abs{\epsilon_{4k}(t)} \le 2 \pr(D_k > 0) \pr(\cA_k)$.
Because $\pr(D_k > 0) \le p_{10}(k)$ and $\pr(\cA_k) = p_{33}(k)$, it follows that $\sum_{t \ge 0} \abs{\epsilon_{4}(t)} \le 2\mu_{10,33}$.  Claim (ii) follows by combining the above estimates for the total approximation error $\epsilon(t) = \epsilon_{1}(t) + \epsilon_{2}(t) + \epsilon_{3}(t) + \epsilon_{4}(t)$.
\end{proof}

\subsection{Two-star densities}

The following quantitative bound is valid for every fixed $n$.

\begin{theorem}
\label{the:TwoStarDensityDeg}
Consider a two-star $K_{12}$ with node set $V(K_{12}) \subset [n]$ and hub node $i$.
Then:
\begin{enumerate}[(i)]
\item $\abs{ \pr(G \supset K_{12} ) - (\mu_{32} + \mu_{21}^2)} \le 6 \mu_{21} \mu_{32} + 6 \mu_{21}^3 + \mu_{21}^4 + \mu_{21,21}$.
\item
$ \pr( \deg_G(i) = t, G \supset K_{12})
 =
 \mu_{32} \, f^{(n)} \conv g^{(n)}_{32}(t-2)
 + \mu_{21}^2 f^{(n)} \conv g^{(n)}_{21} \conv g^{(n)}_{21}(t-2)
 + \epsilon(t),
$
where $f^{(n)}$ is the degree distribution of $G$, and the approximation error is bounded by
\begin{align*}
 \abs{\epsilon(t)}
 \wle (6+2t) (\mu_{21} \mu_{32} + \mu_{21}^3) + \mu_{21}^4
 + 4 \mu_{10,32} + 4 \mu_{21} \mu_{10,21}
 + \mu_{21,21}.
\end{align*}
\end{enumerate}
\end{theorem}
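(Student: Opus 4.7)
The proof follows the same blueprint as Theorem~\ref{the:TriangleDensityDeg}, but must handle the fact that a two-star $K_{12}$ with hub $i$ and edges $e_1=ij$, $e_2=ik$ can arise in the overlay graph in two distinct ways: either (a) a single layer $a$ contains both edges, or (b) two different layers $a \ne b$ cover $e_1$ and $e_2$ separately. For each layer $a$, let $\cA_a = \{G_a \supset K_{12}\}$, and for each ordered pair $a \ne b$ let $\cB_{ab} = \{G_a \supset e_1, \, G_b \supset e_2\}$. Writing $\cK_{12} = \{G \supset K_{12}\}$, one has $\cK_{12} = \bigcup_a \cA_a \,\cup\, \bigcup_{a \ne b} \cB_{ab}$.

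For part (i), apply Bonferroni's inequalities to this union. Lemma~\ref{the:SubgraphDensities} applied with $F_{32}=K_{12}$ and $F_{21}=$ single edge gives the main terms $\sum_a \pr(\cA_a) = \mu_{32}$ and $\sum_{a \ne b} \pr(\cB_{ab}) = \mu_{21}^2 - \mu_{21,21}$. The Bonferroni correction involves pairwise intersections of the events in the union, which I would group into: two distinct same-layer covers (bounded by $\mu_{33}^2 \le \mu_{21}\mu_{32}$); a same-layer cover intersected with a split cover sharing or not sharing a layer (bounded by $O(\mu_{21}\mu_{32} + \mu_{21}^3)$); and two split covers with overlapping layers (bounded by $O(\mu_{21}^3 + \mu_{21}^4)$). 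Collecting these estimates and moving $\mu_{21,21}$ from the main term to the error recovers the claimed bound.

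For part (ii), the same decomposition yields two contributions. The single-layer sum $\sum_a \pr(\deg_G(i)=t, \cA_a)$ is handled exactly as in the triangle case: Lemma~\ref{the:DegreeSplitLayers} approximates $\deg_G(i)$ by $\deg_{G_{-a}}(i) + \deg_{G_a}(i)$ at cost $\lesim t\mu_{21}\mu_{32}$ summed over $a$, then Lemma~\ref{the:DegreeLessLayers} replaces $\deg_{G_{-a}}(i)$ with $\deg_G(i)$ at cost $\lesim \mu_{10,32}$, and Lemma~\ref{the:SubgraphDensities} with $(r,s)=(3,2)$ identifies the main term as $\mu_{32}\, f^{(n)} \conv g^{(n)}_{32}(t-2)$. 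The two-layer sum $\sum_{a \ne b} \pr(\deg_G(i)=t, \cB_{ab})$ now requires a three-fold splitting $\deg_G(i) \approx \deg_{G_a}(i) + \deg_{G_b}(i) + \deg_{G_{-\{a,b\}}}(i)$: two sequential applications of Lemma~\ref{the:DegreeSplitLayers} (peeling off $G_a$ then $G_b$) produce error $\lesim t(\mu_{21}\mu_{32} + \mu_{21}^3)$, and two applications of Lemma~\ref{the:DegreeLessLayers} contribute $\lesim \mu_{21}\mu_{10,21}$. The resulting factorized main term, by Lemma~\ref{the:SubgraphDensities} with $(r,s)=(2,1)$ on each factor, equals $\mu_{21}^2\, f^{(n)} \conv g^{(n)}_{21} \conv g^{(n)}_{21}(t-2)$, up to a diagonal $\mu_{21,21}$ correction arising when the restricted sum $\sum_{a\ne b}$ is completed to a full product.

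The main obstacle is the careful bookkeeping in the two-layer case: the three-way degree decomposition requires two sequential applications of both splitting and replacement lemmas, and one must verify that the errors from each peeling step remain controlled by $(\mu_{21}\mu_{32} + \mu_{21}^3)$-type quantities (rather than compounding to worse orders), using $\mu_{33} \le \mu_{32} \le \mu_{21}$ and the fact that $\pr(\cB_{ab}) = p_{21}(a) p_{21}(b)$ factorizes across distinct layers. The extra factor of $2$ in the coefficient $(6+2t)$ relative to the triangle bound $(4+t)$ reflects precisely these two peelings, and the novel $\mu_{21}^4$ and $\mu_{21,21}$ terms reflect, respectively, quadruple overlap patterns in the Bonferroni expansion and the passage from $\sum_{a \ne b}'$ to $\sum_{a,b}$.
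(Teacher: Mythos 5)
Your event decomposition into single-layer covers $\cA_a$ and split covers $\cB_{ab}$ is the same as the paper's unified family $\cA_{k\ell}$ with $k=\ell$ versus $k\ne\ell$ (your ``diagonal $\mu_{21,21}$ correction'' is the completion from $\sum'$ to $\sum$, matching the paper's $\epsilon_{42}$), and your treatment of part (i) and of the single-layer contribution in part (ii) follows the paper closely. The gap is in the cross-layer case, specifically in the claim that two sequential applications of Lemma~\ref{the:DegreeSplitLayers}, peeling off $G_a$ then $G_b$, produce error $\lesim t(\mu_{21}\mu_{32}+\mu_{21}^3)$.

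Run the first peel through the lemma: $A=\{a\}$, $B=[n]\setminus\{a\}$, $\cE_A=\{G_a\supset e_1\}$, $\cE_B=\{G_b\supset e_2\}$. The constant is $c_B=\max_{j\ne i}\pr(ij\in E(G_B),\cE_B)$, and taking $j$ to be the leaf of $e_2$ gives $\pr(ij\in E(G_B),\cE_B)=\pr(G_b\supset e_2)=p_{21}(b)$, because the event $\cE_B$ itself puts that $j$ in $N_B$. The lemma's bound is therefore $t\,p_{21}(a)p_{21}(b)$, and summing over $a\ne b$ yields $t\mu_{21}^2$ --- which is of the same order as the leading term $\mu_{21}^2\, f^{(n)}\conv g^{(n)}_{21}\conv g^{(n)}_{21}(t-2)$ you are isolating, so the approximation is vacuous. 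The paper avoids this by peeling the entire signal pair at once ($A=\{k,\ell\}$, $\cE_B$ the sure event), so that $c_B=\max_j\pr(1j\in E(G_{-k\ell}))\le\mu_{21}$ is a genuine noise probability; only afterwards does it split $D_{k\ell}=D_k+D_\ell$ in a separate, inner step. Your sequential route is salvageable --- going inside the proof of Lemma~\ref{the:DegreeSplitLayers}, the leaf of $e_2$ lies in $N_A$ only if $G_a$ also covers $e_2$, an event of probability $p_{32}(a)$ rather than $p_{21}(a)$, which restores the claimed order --- but this refinement is not what the lemma statement gives, and your text attributes the bound directly to the lemma. You explicitly flag that ``one must verify that the errors from each peeling step remain controlled,'' but that verification is precisely what fails at the first peel as cited, and it is where the paper's joint-peeling choice is doing essential work.
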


\begin{proof}
We assume that $K_{12}$ is the two-star with node set $\{1,2,3\}$ and link set $\{12,13\}$, and denote the event under study by $\cK_{12} = \{G \supset K_{12}\}$. We denote by $\cG_{ij}^k$ the event that $ij \in E(G^k)$ and we set $\cA_{k\ell} = \cG^k_{12} \cap \cG^\ell_{13}$. We denote $G^{k\ell} = G^k \cup G^\ell$ and $G^{-k\ell} = \cup_{q \notin \{k,\ell\}} G^q$, and we set $D = \deg_{G}(1)$, $D_{k\ell} = \deg_{G^{k\ell}}(1)$ and $D_{-k\ell} = \deg_{G^{-k\ell}}(1)$.  We also denote $h_{k\ell}(s) = \pr( D_{k\ell} = s, \cA_{k\ell})$.

First we approximate
\begin{align}
 \pr( D = t, \cK_{12}) 
 \label{eq:TwoStarErr1} &\wapprox \sum_{k,\ell} \pr( D = t, \cA_{k\ell}) \\
 \label{eq:TwoStarErr2} &\wapprox \sum_{k,\ell} \pr( D_{k\ell} + D_{-k\ell} = t, \cA_{k\ell}) \\
 \nonumber &\weq \sum_{k,\ell} \sum_{r+s = t} \pr( D_{-k\ell} = r ) \, h_{k\ell}(s) \\
 \label{eq:TwoStarErr3} &\wapprox \sum_{k,\ell} \sum_{r+s = t} \pr( D = r ) \, h_{k\ell}(s),
\end{align}
so that
\begin{equation}
 \label{eq:TwoStarErr13}
 \pr( D = t, \cK_{12})
 \wapprox \sum_{r+s = t} f^{(n)}(r) \sum_{k} h_{kk}(s)
 \ + \sum_{r+s = t} f^{(n)}(r) \sumd_{k,\ell} h_{k\ell}(s).
\end{equation}

Then we note with the help of Lemma~\ref{the:SubgraphDensities} that $\sum_k h_{kk}(s) = \mu_{32} \, g^{(n)}_{32}(s-2)$.
Hence the first term on the right side of \eqref{eq:TwoStarErr13} equals
\begin{equation}
 \label{eq:TwoStarDegCommon}
 \sum_{r+s = t} f^{(n)}(r) \sum_{k} h_{kk}(s)
 \weq \mu_{32} f^{(n)} \conv g^{(n)}_{32}(t-2).
\end{equation}
Next we approximate, denoting $h_k(s) = \pr( D_{k} = s, \cG_{12}^k)$,
\begin{align}
 \sumd_{k,\ell} h_{k\ell}(s)
 \nonumber &\weq \sumd_{k,\ell} \pr( D_{k\ell} = s, \cG_{12}^k, \cG_{13}^\ell) \\
 \label{eq:TwoStarErr4} &\wapprox \sumd_{k,\ell} \pr( D_k + D_\ell = s, \cG_{12}^k, \cG_{13}^\ell) \\
 \nonumber &\weq \sumd_{k,\ell} \!\! \sum_{s_1+s_2 = s} h_k(s_1) h_\ell(s_2) \\
 \label{eq:TwoStarErr5} &\wapprox \, \sum_{k,\ell} \sum_{s_1+s_2 = s} h_k(s_1) h_\ell(s_2).
\end{align}
After noting (see Lemma~\ref{the:SubgraphDensities}) that $\sum_k h_k(s) = \mu_{21} g^{(n)}_{21}(s-1)$,
we conclude that
\[
 \sum_{k,\ell} \sum_{s_1+s_2 = s} h_k(s_1) h_\ell(s_2)
 \weq \mu_{21}^2  g^{(n)}_{21} \conv g^{(n)}_{21}(s-2),
\]
and hence the second term on the right side of \eqref{eq:TwoStarErr13} is approximately
\begin{equation}
 \label{eq:TwoStarDegDistinct}
 \sum_{r+s = t} f(r) \sumd_{k,\ell} h_{k\ell}(s)
 \wapprox \mu_{21}^2 f \conv g^{(n)}_{21} \conv g^{(n)}_{21}(t-2).
\end{equation}
By combining \eqref{eq:TwoStarErr13}, \eqref{eq:TwoStarDegCommon} and \eqref{eq:TwoStarDegDistinct}, we conclude that
\begin{equation}
 \label{eq:TwoStarDegCombined}
 \pr( D = t, \cK_{12})
 \wapprox \mu_{32} f^{(n)} \conv g^{(n)}_{32}(t-2) + \mu_{21}^2 f^{(n)} \conv g^{(n)}_{21} \conv g^{(n)}_{21}(t-2).
\end{equation}

The total approximation error in \eqref{eq:TwoStarDegCombined} can be written as
$\epsilon(t) = \epsilon_1(t) + \epsilon_2(t) + \epsilon_3(t) +\epsilon_4(t),$
where $\epsilon_1(t), \epsilon_2(t), \epsilon_3(t)$ are the approximation errors in \eqref{eq:TwoStarErr1}, \eqref{eq:TwoStarErr2}, \eqref{eq:TwoStarErr3}, respectively, and the 
%
approximation error in \eqref{eq:TwoStarDegDistinct} equals
\[
 \epsilon_4(t)
 \weq \sum_{r+s = t} f^{(n)}(r) \big(\epsilon_{41}(s) + \epsilon_{42}(s)\big),
\]
where $\epsilon_{41}(s), \epsilon_{42}(s)$ denote the errors made in \eqref{eq:TwoStarErr4}, \eqref{eq:TwoStarErr5}, respectively. We will next analyse the individual approximation errors one by one.

(i) The union bound shows that the approximation error $\epsilon_1(t)$ in \eqref{eq:TwoStarErr1}
is nonpositive for all $t$, and hence $\sum_{t \ge 0} \abs{\epsilon_1(t)} = \sum_{k,\ell} \pr( \cA_{k\ell}) - \pr( \bigcup_{k,\ell} \cA_{k\ell} )$. Bonferroni's inequalities imply that
\[
 \sum_{t \ge 0} \abs{\epsilon_1(t)}
 \wle \sumd_{(k_1,k_2), (\ell_1,\ell_2)} \pr(\cA_{k_1k_2}, \cA_{\ell_1\ell_2})
 \ =: \Delta.
\]
We split the right side above by
$
 \Delta
 = \Delta_2 + \Delta_3 + \Delta_4,
$
where $\Delta_i$, $i=2,3,4$, is the sum on the right side above over layer pairs $(k_1,k_2) \ne (\ell_1, \ell_2)$ such that the list $(k_1,k_2,\ell_1,\ell_2)$ contains precisely $i$ distinct elements. Denote
\[
 p(k_1k_2\ell_1\ell_2) \weq \pr( G_{k_1} \ni e_1, G_{k_2} \ni e_2, G_{\ell_1} \ni e_1, G_{\ell_2} \ni e_2).
\]
Then
\begin{align*}
 \Delta_2 &\weq \sumd_{a,b} \Big( p(aabb) + p(abba) + p(aaab) + p(aaba) + p(abaa) + p(baaa) \Big), \\
 \Delta_3 &\weq \sumd_{a,b,c} \Big( p(aabc) + p(abac) + p(abca) + p(baac) + p(baca) + p(bcaa) \Big).
\end{align*}
In the sum of $\Delta_2$, the terms $p(aabb)$ and $p(abba)$ equal $p_{32}(a) p_{32}(b)$ and the other terms equal $p_{32}(a) p_{21}(b)$. Because $p_{32}(b) \le p_{21}(b)$, it follows that
$
 \Delta_2
 \le 6 \sumd_{a,b} p_{21}(a) p_{32}(b)
 \le 6 \mu_{21} \mu_{32}.
$
In the sum of $\Delta_3$, the terms $p(abac)$ and $p(baca)$ equal  $p_{21}(a) p_{21}(b) p_{21}(c)$ and the other terms equal $p_{32}(a) p_{21}(b) p_{21}(c)$. Because $p_{32}(a) \le p_{21}(a)$, it follows that
$
 \Delta_3
 \le 6 \mu_{21}^3.
$
Furthermore, $\Delta_4 = \sum'_{a,b,c,d} p(abcd) \le \mu_{21}^4$.
As a conclusion, it follows that
\[
 \sum_{t \ge 0} \abs{\epsilon_1(t)}
 \wle 6 \mu_{21} \mu_{32} + 6 \mu_{21}^3 + \mu_{21}^4.
\]
Claim (i) now follows by combining the above bound with the equality
\[
 \sum_{k,\ell} \pr( \cA_{k\ell})
 \weq \sum_k p_{32}(k) + \sumd_{k,\ell} p_{21}(k) p_{21}(\ell)
 \weq \mu_{32} + \mu_{21}^2 - \mu_{21,21}.
\]

(ii) The approximation error in \eqref{eq:TwoStarErr2} equals $\epsilon_2(t) = \sum_{k,\ell} \epsilon_{2k\ell}(t)$ where
\[
 \epsilon_{2k\ell}(t)
 \weq \pr( D = t, \cA_{k\ell} ) - \pr( D_{k\ell} + D_{-k\ell} = t, \cA_{k\ell} ).
\]
By applying Lemma~\ref{the:DegreeSplitLayers} with $A = \{k,\ell\}$, $B = [m] \setminus \{k,\ell\}$, $\cE_A = \{G_k \ni e_1, G_\ell \ni e_2\}$, and $\cE_B = \{\}$ being the sure event, we see that
\[
 \abs{\epsilon_{2k\ell}(t)}
 \wle t c_B \pr( D_{k\ell} \le t, \cA_{k\ell} )
 \wle t c_B \pr( \cA_{k\ell} ),
\]
where $c_B \le \pr(G_{-k\ell} \ni 12) \le \pr(G \ni 12) \le \mu_{21}$. Hence
\[
 \abs{\epsilon_2(t)}
 \wle t \mu_{21} \sum_{k,\ell} \pr( \cA_{k\ell} )
 \wle t  ( \mu_{21} \mu_{32} + \mu_{21}^3 ).
\]

(iii) The approximation error in \eqref{eq:TwoStarErr3} equals $\epsilon_3(t) = \sum_{k,\ell} \epsilon_{3k\ell}(t)$ where
\[
 \epsilon_{3k\ell}(t)
 \weq \sum_{r+s = t} \Big( \pr(D = r) - \pr(D_{-k\ell} = r) \Big) h_{k\ell}(s).
\]
By applying Lemma~\ref{the:DegreeLessLayers} with $g(s) = \frac{h_{k\ell}(s)}{\pr(\cA_{k\ell})}$, it follows that $\sum_{t \ge 0} \abs{\epsilon_{3k\ell}(t)} \le 2 \pr(\cA_{k\ell}) \pr(D_{k\ell}>0)$. Observe now that
$\pr(D_{k\ell} > 0)
\le p_{10}(k) + p_{10}(\ell)$, 
%
Hence,
\begin{align*}
 \sum_{t \ge 0} \abs{\epsilon_3(t)}
 &\wle 2 \sum_{k,\ell} ( p_{10}(k) + p_{10}(\ell) ) \, \pr( \cA_{k\ell} ) \\
 &\weq 4 \sum_{k} p_{10}(k) p_{32}(k) + 4 \sumd_{k,\ell} p_{10}(k) p_{21}(k) p_{21}(\ell) \\
 &\wle 4 \mu_{10,32} + 4 \mu_{21} \mu_{10,21}.
\end{align*}


(iv) The approximation error in \eqref{eq:TwoStarErr4} equals $\epsilon_{41}(s) = \sum'_{k,\ell} \epsilon_{4k\ell}(s)$ where
\begin{align*}
 \epsilon_{4k\ell}(s)
 &\weq \pr( D_{k\ell} = s, \cA_{k\ell} ) - \pr( D_{k} + D_\ell = s, \cA_{k\ell} ).
\end{align*}
By applying Lemma~\ref{the:DegreeSplitLayers} with $A = \{k\}$ and $B = \{\ell\}$, together with $\cE_A = \{12 \in G_k\}$ and $\cE_B = \{13 \in G_\ell\}$, it follows that $\abs{\epsilon_{4k\ell}(s)} \le s p_{21}(k) p_{32}(\ell).$ By summing the above inequality with respect to $k,\ell$, it follows that $\abs{\epsilon_{41}(s)} \le s \mu_{21} \mu_{32}$.
The approximation error in \eqref{eq:TwoStarErr5} equals
\begin{align*}
 \abs{\epsilon_{42}(s)}
 \weq \sum_{k} \sum_{s_1+s_2 = s} \pr( D_{k} = s_1, \cG_{12}^k) \pr( D_{k} = s_2, \cG_{12}^k).
\end{align*}
Hence $\sum_{s \ge 0} \abs{\epsilon_{42}(s)} = \sum_k p_{21}(k)^2 = \mu_{21,21}$.
Hence,
\[
 \abs{\epsilon_4(t)}
 \wle \sum_{r+s = t} f^{(n)}(r) \big( \abs{\epsilon_{41}(s)} + \abs{\epsilon_{42}(s)} \big)
 \wle \max_{s \le t} \abs{\epsilon_{41}(s)} + \max_{s \le t} \abs{\epsilon_{42}(s)}
\]
shows that $\abs{\epsilon_4(t)} \le t \mu_{21} \mu_{32} + \mu_{21,21}$.

Claim (ii) follows by collecting all the bounds in (i)--(iv) together.
\end{proof}

\subsection{Lemma about cross moments}

\begin{lemma}
\label{the:SpecialUI}
Let $(X_1, Y_1), \dots, (X_m,Y_m)$ be random variables with values in $\{0,\dots,n\} \times [0,1]$ and averaged empirical distribution $P_n$ defined by \eqref{eq:AveragedLayerTypeDistribution}. If
$P_n \weakto P$ and $(P_n)_{rs} \to (P)_{rs} < \infty$, then the cross moments defined in \eqref{eq:CrossMoments}--\eqref{eq:Mu} satisfy $\mu^{(n)}_{10,rs} \ll m (n)_r^{-1}$ and $(P_n)_{10,rs} \ll n$.
\end{lemma}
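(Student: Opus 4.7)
The plan is to establish both estimates via a common truncation at layer-size threshold $M$, underpinned by uniform integrability of $(x)_r y^s$ under the family $\{P_n\}$. Two preparatory facts drive the argument. With $h_n(M) := \int (x)_r y^s 1(x > M) \, dP_n$, the hypotheses $P_n \weakto P$ and $(P_n)_{rs} \to (P)_{rs} < \infty$ yield $h(M) := \sup_n h_n(M) \to 0$ as $M \to \infty$: the cutoff $(x)_r y^s 1(x \le M)$ is bounded continuous on $\Z_+ \times [0,1]$, so $\int (x)_r y^s 1(x \le M) \, dP_n \to \int (x)_r y^s 1(x \le M) \, dP$, and subtracting from $(P_n)_{rs} \to (P)_{rs}$ gives pointwise convergence of $h_n(M)$ to $\int (x)_r y^s 1(x > M) \, dP$, which vanishes by monotone convergence as $M \to \infty$. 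Weak convergence alone also gives $(P_n)_{10} = o(n)$: splitting $(P_n)_{10} \le M + n P_n(\{x > M\})$ and sending first $n \to \infty$ then $M \to \infty$ yields $\limsup_n (P_n)_{10}/n \le P(\{x > M\}) \to 0$.

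For the bound $(P_n)_{10, rs} = o(n)$, I would use the algebraic identity $x(x)_r = (x)_{r+1} + r(x)_r$ to split $(P_n)_{10, rs} = (P_n)_{r+1, s} + r (P_n)_{rs}$, reducing the task to $(P_n)_{r+1, s} = o(n)$. Bounding $(x)_{r+1} \le (M)_{r+1}$ on $\{x \le M\}$ and $(x)_{r+1} \le n (x)_r$ on $\{x > M\}$ gives $(P_n)_{r+1, s} \le (M)_{r+1} + n h_n(M)$, and dividing by $n$ and sending first $M \to \infty$ then $n \to \infty$ finishes.

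For $\mu^{(n)}_{10, rs} \ll m/(n)_r$, I would write $\mu^{(n)}_{10, rs} = (n(n)_r)^{-1} \sum_k \xi_k \zeta_k$ with $\xi_k := \E X_k$ and $\zeta_k := \E(X_k)_r Y_k^s$, and denote $\xi_k^M := \E X_k 1(X_k > M)$, $\zeta_k^M := \E(X_k)_r Y_k^s 1(X_k > M)$, so that $\xi_k \le M + \xi_k^M$ and $\zeta_k \le (M)_r + \zeta_k^M$. Expanding $(M + \xi_k^M)((M)_r + \zeta_k^M)$ into four pieces and summing over $k$, using $\sum_k \xi_k^M \le \sum_k \xi_k = m(P_n)_{10}$ and the averaged identity $\sum_k \zeta_k^M = m h_n(M)$, together with the layer-wise $\xi_k^M \le n$ for the fully truncated piece $\xi_k^M \zeta_k^M$, gives
\[
 \frac{\mu^{(n)}_{10, rs}}{m/(n)_r} \ \le\ \frac{M(M)_r}{n} + \frac{M h_n(M)}{n} + \frac{(M)_r (P_n)_{10}}{n} + h_n(M).
\]
Sending $M \to \infty$ (killing the last term by uniform integrability) and then $n \to \infty$ (killing the rest; the third uses $(P_n)_{10}/n \to 0$) completes the proof.

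The only genuinely delicate step is the uniform integrability preparation, which needs careful combination of weak convergence on the noncompact space $\Z_+ \times [0,1]$ with the assumed moment convergence. A mild subtlety in the third step is that $\mu^{(n)}_{10, rs}$ consists of products of per-layer expectations rather than expectations of products, so the random-layer-type case does not reduce to the deterministic one by conditioning; the layer-wise bound $\E X_k \le n$ together with the averaged identity for $h_n(M)$ is what handles both cases uniformly.
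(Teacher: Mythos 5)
Your proof is correct and rests on the same core idea as the paper's (truncate and exploit uniform integrability), but the implementation differs enough to be worth comparing. The paper truncates the two factors $A_k = X_k$ and $B_k = (X_k)_r Y_k^s$ at two \emph{separate} thresholds $a$ and $b$ (the latter in terms of the value $(x)_r y^s$, not of $x$), and then chooses scale-dependent thresholds $a_n = n^{1/2}$ and $b_n = \phi(a_n)^{-1/2}$ so that all three error terms $\frac{a_n}{n}c$, $b_n\phi(a_n)$, $\psi(b_n)$ vanish simultaneously as $n\to\infty$; this lets the same displayed inequality, with and without the expectation symbol, dispatch both claims at once. You instead truncate only in $x$ at a fixed $M$, establish the uniform-integrability fact $\sup_n h_n(M)\to 0$ and the auxiliary estimate $(P_n)_{10}=o(n)$ explicitly (the paper treats the analogous facts as known), and handle the second claim separately via the factorial identity $x(x)_r = (x)_{r+1}+r(x)_r$. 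Your route is a bit longer but more elementary and self-contained; the paper's is tighter because one inequality serves both conclusions and the random-threshold device avoids the double-limit bookkeeping.

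One small point that should be fixed: in both places you write ``sending first $M\to\infty$ then $n\to\infty$.'' Read literally, that order fails — for fixed $n$ the terms $M(M)_r/n$ and $(M)_{r+1}/n$ diverge as $M\to\infty$. The argument you actually need (and clearly intend) is the opposite: take $\limsup_{n\to\infty}$ first, which leaves only the $h(M)$ term, and then send $M\to\infty$. Equivalently, fix $M$ large so $h(M)<\epsilon$, then take $n$ large. Everything else in the proposal — the bounded-continuity argument for $h(M)\to 0$, the use of $X_k\le n$ in the cross term $\xi_k^M\zeta_k^M\le n\,\zeta_k^M$, the identity $\sum_k\zeta_k^M = m\,h_n(M)$, and the observation that product-of-expectations does not reduce to expectation-of-products in $\mu^{(n)}_{10,rs}$ — is sound.
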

\begin{proof}
Denote $A_k = X_k$ and $B_k = (X_k)_r Y_k^s$. Observe that $A_k \le a + A_k 1(A_k > a)$ and $B_k \le b + B_k 1(B_k > b)$ for any $a,b > 0$. Because $A_k \le n$, we find that
\begin{equation}
 \label{eq:UISpecial1}
 \begin{aligned}
 A_k \, \E B_k
 &\wle (a + A_k 1(A_k > a)) \E B_k \\
 &\wle a \E B_k + b n 1(A_k > a) + n \E B_k 1(B_k > b).
 \end{aligned}
\end{equation}
By taking expectations and averaging with respect to $k$, we find that
\begin{equation}
 \label{eq:UISpecial2}
 \frac{1}{m} \sum_{k=1}^m \E A_k \, \E B_k
 \wle a \E B_* + b n \pr(A_* > a) + n \E B_* 1(B_* > b),
\end{equation}
where $A_* = X_*$, $B_* = (X_*)_r Y_*^s$, and $(X_*,Y_*)$ is a generic $P_n$-distributed random variable.
Because the left side above equals $m^{-1} n(n)_r \mu^{(n)}_{10,rs}$, we conclude
\[
 m^{-1} (n)_r \mu^{(n)}_{10,rs}
 \wle \frac{a}{n} c + b \phi(a) + \psi(b),
\]
where $c = \sup_n (P_n)_{rs}$, $\phi(t) = \sup_n \int 1(x > t) dP_n$, and $\psi(t) = \sup_n \int (x)_r y^s 1( (x)_r y^s > t ) dP_n$. 
Then the tightness of $P_n$ implies that $\phi(a_n) \to 0$ for $a_n = n^{1/2}$.  Hence also $b_n \phi(a_n) \to 0$ where $b_n = \phi(a_n)^{-1/2} \to \infty$. The uniform $(x)_r y^s$-integrability of $P_n$ further implies that $\psi(b_n) \to 0$. Hence the right side above vanishes and first claim follows.

For the second claim, we may repeat the above reasoning to verify that \eqref{eq:UISpecial1} holds also with the $\E$-symbol removed. Therefore, \eqref{eq:UISpecial2} also holds when the left side is replaced by $(P_n)_{10,rs} = \frac{1}{m} \sum_{k=1}^m \E A_k B_k$. Hence the second claim follows by the same argument.
\end{proof}

\subsection{Proof of Theorem~\ref{the:TransitivityGlobal}}
\label{sec:ProofTransitivityOverall}

By Theorem~\ref{the:TriangleDensityDeg} and Theorem~\ref{the:TwoStarDensityDeg},
\begin{align*}
 \pr(\cK_3) &\weq \mu_{33} + O\big(\mu_{21}\mu_{32} + \mu_{21}^3 \big), \\
 \pr(\cK_{12}) &\weq \mu_{32} + \mu_{21}^2 + O\big( \mu_{21}\mu_{32} + \mu_{21}^3 + \mu_{21}^4 + \mu_{21,21} \big),
\end{align*}
where $\mu_{rs} = m (n)_r^{-1} (P_n)_{rs}$, and the associated cross moments are defined by \eqref{eq:CrossMoments}--\eqref{eq:Mu}. Because $(P_n)_{21} \lesim 1$ and $(P_n)_{32} \lesim 1$, it follows that $\mu_{21} \mu_{32} \lesim m^2 n^{-5} $, $\mu_{21}^3 \lesim m^3 n^{-6}$, and $\mu_{21}^4 \lesim m^4 n^{-8}$. Next, we note that $\mu_{21,21} \le m (n)_2^{-2} (P_n)_{21,21}$ by Jensen's inequality. Note also that $((x)_2y)^2 \le 2 x (x)_3 y^2$ for $x \ge 3$. Hence $((x)_2y)^2 \le 4 + 2 x (x)_3 y^2$, and $(P_n)_{21,21} \le 4 + 2 (P_n)_{10,32}$. Furthermore, Lemma~\ref{the:SpecialUI} implies that $(P_n)_{10,32} \ll n$. Hence $\mu_{21,21} \ll m n^{-3}$.

(i) Consider the case $\frac{m}{n} \to \mu \in [0,\infty)$. Then $\mu_{32} = \big( (P)_{32} + o(1) \big) m n^{-3}$
and
$
 \mu_{21}^2
 = \big( \mu (P)_{21}^2 + o(1) \big) m n^{-3}
$
imply that
\begin{align*}
 \pr(\cK_{12})
 &\weq (P)_{32} m n^{-3} + \mu (P)_{21}^2 m n^{-3} + o\big(m n^{-3} \big).
\end{align*}
Similarly, $\mu_{33} = \big( (P)_{33} + o(1) \big) n^{-3} m$ implies
\[
 \pr(\cK_3)
 \weq (P)_{33} m n^{-3} + o(m n^{-3}),
\]
and hence the first two claims of Theorem~\ref{the:TransitivityGlobal} follow.

(ii) Assume now that $n \ll m \ll n^2$. Then $m n^{-3}, m^2 n^{-5}, m^3 n^{-6} \ll m^3 n^{-4} $.  Hence $\pr(\cK_3) \ll m^2 n^{-4}$.
Furthermore, $m^4 n^{-8} \ll m^2 n^{-4}$, and we conclude that $\pr(\cK_{12}) = (P)_{21}^2  m^2 n^{-4} + o(m^2 n^{-4})$. Hence $\frac{\pr(\cK_3)}{\pr(\cK_{12})} \to 0$ implies the third claim of Theorem~\ref{the:TransitivityGlobal}.
\qed

\subsection{Proof of Theorem~\ref{the:TransitivityLocal}}
Let $K_{12}$ be the two-star on $\{1,2,3\}$ with links $\{12,13\}$. Let $K_{3}$ be the triangle on $\{1,2,3\}$.
Denote $\cK_3^{(n)} = \{G^{(n)} \supset K_3\}$ and $\cK_{12}^{(n)} = \{G^{(n)} \supset K_{12}\}$.
Let $D^{(n)} = \deg_{G^{(n)}}(1)$. By Theorem~\ref{the:TriangleDensityDeg} and Theorem~\ref{the:TwoStarDensityDeg},
\begin{align*}
 \pr( D^{(n)} = t, \, \cK_3^{(n)} )
 &\weq \mu_{33} \, f^{(n)} \conv g^{(n)}_{33}(t-2) + \epsilon_1^{(n)}(t), \\
 \pr( D^{(n)} = t, \, \cK_{12}^{(n)})
 &\weq \mu_{32} \, f^{(n)} \conv g^{(n)}_{32}(t-2) 
  + (\mu_{21})^2 f^{(n)} \conv g^{(n)}_{21} \conv g_{21}^{(n)}(t-2) + \epsilon_2^{(n)}(t).
\end{align*}
where the associated cross moments are defined by \eqref{eq:CrossMoments}--\eqref{eq:Mu}, the distributions 
$g^{(n)}_{rs} = \Bin_{rs}(P_n)$ are defined by \eqref{eq:MixedBin}, and 
\begin{align*}
 \abs{\epsilon_1^{(n)}(t)}
 &\wle (4+t) \mu_{21} \mu_{32} + (\mu_{21})^3 + 2\mu_{10,33}, \\
 \abs{\epsilon^{(n)}_2(t)}
 &\wle (6+2t) (\mu_{21} \mu_{32} + (\mu_{21})^3) + (\mu_{21})^4
 + 4 \mu_{10,32} + 4 \mu_{21} \mu_{10,21}
 + \mu_{21,21}.
\end{align*}
Now Lemma~\ref{the:SpecialUI} implies that $\mu_{10,21} \ll n^{-1}$ and $\mu_{10,33} \le \mu_{10,32} \ll n^{-2}$. Also, the argument in the proof of Theorem~\ref{the:TransitivityGlobal} (Section~\ref{sec:ProofTransitivityOverall}) implies that $\mu_{21,21} \ll n^{-2}$. Because $\mu_{rs} = m (n)_r^{-1} (P_n)_{rs}$ and $(P_n)_{21}, (P_n)_{32} \lesim 1$, it follows that $\mu_{21} \ll n^{-1}$ and $\mu_{21} \mu_{32} + \mu_{21}^3 + \mu_{21}^4 \ll n^{-2}$. Hence,
$
 \abs{\epsilon_1^{(n)}(t)} + \abs{\epsilon_2^{(n)}(t)}
 \ll (1+t) n^{-2}.
$
Note also that $\mu_{32} = (\mu+o(1))(P)_{32} n^{-2}$, $\mu_{33} = (\mu+o(1))(P)_{33} n^{-2}$, together with $\mu_{21}^2 = (1+o(1)) \mu (P)_{21}^2 n^{-2}$.  Moreover, by Theorem~\ref{the:DegreeDistribution}, $f^{(n)} \weakto f = \CPoi(\mu (P)_{10},g_{10})$. By Lemma~\ref{the:MixedBiasedERConvergence}, $g^{(n)}_{rs} \weakto g_{rs}$ for $rs = 21,32,33$. As a consequence,
\begin{align*}
 \mu_{33} \, f^{(n)} \conv g^{(n)}_{33}(t-2)
 &\weq (P)_{33} \mu n^{-2} f \conv g_{33}(t-2) + o(n^{-2}), \\
 \mu_{32} \, f^{(n)} \conv g^{(n)}_{32}(t-2)
 &\weq (P)_{32} \mu n^{-2} f \conv g_{32}(t-2) + o(n^{-2}), \\
 \mu_{21}^2 f^{(n)} \conv g^{(n)}_{21} \conv g_{21}^{(n)}(t-2) 
 &\weq (P)_{21}^2 \mu^2 n^{-2} f \conv g_{21} \conv g_{21}(t-2) + o(n^{-2}),
\end{align*}
and hence the claim follows.
\qed

%
%
%

\section{Analysis of connectivity}

The proof of Theorem~\ref{the:Giant} builds upon the approach developed in \cite{Bollobas_Janson_Riordan_2007} and extended to random intersection graphs in \cite{Bloznelis_2010_Largest}. We denote by $C_G(i)$ the component of node $i$, by $N_1(G) \ge N_2(G)$ the largest two component sizes, and by $B_t(G) = \{i: \abs{C_G(i)} > t\}$ be the set of nodes with component larger than $t$ in $G$. Here $\rho_t(f)$ denotes the probability that the total progeny of a Galton--Watson process with offspring distribution $f$ is larger than $t$, and $\rho(f) = \lim_{t \to \infty} \rho_t(f)$ is the long-term survival probability (see Appendix~\ref{sec:BranchingTrees}). We start by the case with deterministic layer types.


\subsection{Quantitative upper bound for deterministic layer types}

In this section we prove the following quantitative upper bound which is valid for any model instance with deterministic layer types, without taking limits. The upper bound is characterised by a distribution
\begin{equation}
 \label{eq:UpperOffspring}
 f_{\tau,n}
 \weq \law \Big( \sum_{k=1}^m B_k T_k \Big),
\end{equation}
where the random variables on the right are mutually independent and such that $\law(B_k) = \Ber( \frac{X_k}{n-\tau})$ and $\law(T_k) =  \Bin^+(X_k-1,Y_k)$. 

\begin{proposition}
\label{the:SingleUpperBound}
If the layer types are nonrandom with sizes bounded by $M$, then
for any $n \ge 3$ and $1 \le \tau \le n/2$, the probability of a node $i$ having a component larger than $\tau$ is bounded by 
$
 \pr(\abs{C_G(i)} > \tau)
 \le \rho_\tau(f_{\tau,n}) + c \tau^2 n^{-1} \log n,
$
where $c = e^{5M(1+m/n)}$.

\end{proposition}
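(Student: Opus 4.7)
The plan is to follow the branching-process domination approach of \cite{Bollobas_Janson_Riordan_2007,Bloznelis_2010_Largest}: explore the component $C_G(i)$ in a layer-oriented BFS, couple this exploration with a Galton--Watson tree with offspring distribution $f_{\tau,n}$, and absorb the coupling slack into the additive error $c \tau^2 n^{-1} \log n$.

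First I would set up the BFS exploration: maintain a queue of active nodes and a set of used layers, initialised with $\{i\}$ and $\emptyset$. While fewer than $\tau$ nodes have been discovered, take the next active node $v$ and, for each unused layer $k$, reveal whether $v \in V(G_k)$; if yes, mark $k$ used, reveal $C_{G_k}(v)$, and append any previously unseen members to the queue. Conditionally on the history when $s \le \tau$ nodes have been discovered, the probability that a fresh layer $k$ contains $v$ is at most $X_k/(n - \tau)$, and conditionally on $v \in V(G_k)$ the size of $C_{G_k}(v) \setminus \{v\}$ has distribution $\Bin^+(X_k - 1, Y_k)$ because the remaining members of $V(G_k)$ form a uniform $(X_k - 1)$-subset of $[n] \setminus \{v\}$ and the internal edges are independent $\Ber(Y_k)$. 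These two facts yield sequential stochastic domination by independent Bernoulli$(X_k/(n-\tau))$ variables $\tilde B_{v,k}$ and independent $\Bin^+(X_k-1, Y_k)$ variables $\tilde T_{v,k}$ indexed by (exploration node, layer) pairs. On a good event $E$, the number of children of $v$ is then dominated by $\sum_k \tilde B_{v,k} \tilde T_{v,k}$, which has law $f_{\tau,n}$, so that $\pr(\abs{C_G(i)} > \tau) \le \rho_\tau(f_{\tau,n}) + \pr(E^c)$ by the definition of $\rho_\tau$.

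The bad event $E^c$ has two sources: (a) the exploration would revisit a layer already marked used, which the GW tree resamples independently; and (b) the transitive closure within a layer hits a node that has already been discovered. Both are birthday-style events, with per-inspection failure probability $O(M\tau/n)$; the total number of (node, layer) inspections performed during the exploration is, by a Chernoff tail bound using $\abs{V(G_k)} \le M$ and $m/n$ bounded, concentrated at $O(M(1+m/n)\tau)$ with tail deviations of order $O(M\tau \log n)$, which explains both the $\log n$ factor and the prefactor $c = e^{5M(1+m/n)}$ in the stated error term. The main obstacle is the honest sequential coupling: fixing an order of layer inspections per exploration node so that the conditional probability $\pr(v \in V(G_k) \mid \text{history}) \le X_k/(n-\tau)$ holds stepwise, and simultaneously establishing the $\Bin^+$ domination of the layer-component size while isolating the layer-reuse and node-collision discrepancies into $E^c$ without introducing measurability issues. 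Once the coupling is in place, the $\rho_\tau(f_{\tau,n})$ bound is immediate from the total-progeny definition and the error term reduces to standard union and Chernoff estimates.
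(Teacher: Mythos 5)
Your proposal follows the same structure as the paper's proof: a layer-restricted BFS exploration from $i$ (the paper's restricted exploration algorithm), stochastic domination of its queue-length process by a Galton--Watson process with offspring distribution $f_{\tau,n}$ via the per-step conditional bounds you identify ($\Ber(X_k/(n-\tau))$ for layer coverage and $\Bin^+(X_k-1,Y_k)$ for the within-layer cluster size), and a birthday-paradox-plus-Chernoff estimate for the multi-overlap events under which the restricted BFS undercounts $C_G(i)$. The one point to tighten in the write-up is that the stochastic domination of the queue process holds unconditionally, not only on the good event; the good event is used separately to guarantee that up to $\tau$ steps the restricted exploration's output coincides with $C_G(i)$, and keeping these two roles distinct is what makes the split $\pr(\abs{C_G(i)}>\tau)\le\pr(Q_\tau>0)+\pr(E^c)\le\rho_\tau(f_{\tau,n})+\pr(E^c)$ transparent.
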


\subsubsection{Restricted exploration process}
The proof of Proposition~\ref{the:SingleUpperBound} is based on a restricted component exploration process described in Algorithm~\ref{algo:UpperExploration}.  The algorithm explores each layer at most once, and always discovers a subset of $C_G(i)$. This subset may be strict (see Figure~\ref{fig:ModifiedExploration}).

\begin{algorithm}[H]
\small
\DontPrintSemicolon
\KwInput{Graph layers $G_1,\dots,G_m$, root node $i$
}
\KwOutput{A subset of the $G$-connected component of $i$}
~\\
Initialise: $\cQ \leftarrow \{i\}$, $\cM \leftarrow \emptyset$, $t \leftarrow 0$ \\
\While{$\cQ \ne \emptyset$}
{
$t \leftarrow t + 1$\\
Node selection: $v_t \leftarrow \min \cQ$, $\cQ \leftarrow \cQ \setminus \{v_t\}$ \\
\For{$k=1,\dots,m$}
{
\If{$V(G_k) \ni v_t$ and $k \notin \cM$}
{
Layer exploration: $\cZ \leftarrow N_{v_t}(\bar G_k)$\\
Queue update: $\cQ \leftarrow \cQ \cup \cZ$ \\
Update the set of explored layers: $\cM \leftarrow \cM \cup \{k\}$\\
}
}
}
Output node set $\{v_1,\dots, v_t\}$
\caption{Restricted exploration.}
\label{algo:UpperExploration}
\end{algorithm}

\begin{figure}[h]
\centering
\scriptsize
\PicModifiedExplorationFourFour
\caption{\label{fig:ModifiedExploration} The component of node 1 equals $C_1 = \{1,\dots,16\}$,
but Algorithm~\ref{algo:UpperExploration} outputs $C_1 \setminus \{5,9,13\}$.  
Algorithm~\ref{algo:UpperExploration} discovers nothing while exploring node 10, because layer $G_1$ is already explored. A multi-overlap occurs while exploring node 11 when layer $G_5$ intersects the already explored layer $G_1$.
}
\end{figure}

\subsubsection{Properties of Algorithm~\ref{algo:UpperExploration}}
\label{sec:PropertiesUpperExploration}

We denote by $T_i$ the number of steps completed by Algorithm~\ref{algo:UpperExploration} started at root node $i$. For $t=1,\dots,T_i$, we denote by $\cW_t$ the set of layers which are explored during step $t$. We denote by $\cM^e_t = \cup_{s=1}^{t \wedge T_i} \cW_s$ the set of layers and by $\cN^e_{t} = \{v_1,\dots, v_{t \wedge T_i}\}$ the set of nodes explored up to time $t$.  We denote by $\cN^{d}_{t} = \{i\} \cup ( \cup_{k \in \cM^e_{t}} V(G_k))$ the set of nodes discovered up to time~$t$.

\begin{lemma}
\label{the:NumberLayersExplored}
The number of layers explored up to time $t$ is bounded by $\pr( \abs{\cM^e_t} > a t ) \le t e^{2M (n-t)^{-1} m - a}$ for all $a \ge 0$.
\end{lemma}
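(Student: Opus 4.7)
The plan is to decompose $\abs{\cM^e_t} = \sum_{s=1}^{t \wedge T_i} \abs{\cW_s}$ into the per-step contributions and combine a pigeonhole reduction with a Chernoff bound for each $\abs{\cW_s}$. By the pigeonhole principle, since the sum of at most $t$ nonnegative integers exceeds $at$ only if one of the summands exceeds $a$, the event $\{\abs{\cM^e_t} > at\}$ forces $\abs{\cW_s} > a$ for some $s \le t$. A union bound therefore gives
\[
 \pr\bigl( \abs{\cM^e_t} > at \bigr)
 \wle \sum_{s=1}^t \pr\bigl( \abs{\cW_s} > a \bigr)
 \wle t \max_{1 \le s \le t} \pr\bigl( \abs{\cW_s} > a \bigr).
\]

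Next I would analyse $\abs{\cW_s}$ conditionally on the natural filtration $\cF_{s-1}$ generated by the first $s-1$ steps of Algorithm~\ref{algo:UpperExploration}. The crucial observation is that at the start of step $s$ a layer $k$ is unexplored precisely when $V(G_k) \cap \{v_1,\ldots,v_{s-1}\} = \emptyset$. Since the layers $(G_k,X_k,Y_k)$ are mutually independent and each $V(G_k)$ is uniform on $X_k$-subsets of $[n]$, conditionally on $\cF_{s-1}$ the node sets of the unexplored layers remain mutually independent, with $V(G_k)$ uniform on $X_k$-subsets of $[n] \setminus \{v_1,\ldots,v_{s-1}\}$. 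Because $v_s$ is $\cF_{s-1}$-measurable and lies in this complement, the conditional probability that $v_s \in V(G_k)$ equals $X_k/(n-s+1) \le M/(n-t+1) \le M/(n-t)$ for every $s \le t$. Consequently, $\abs{\cW_s}$ is conditionally stochastically dominated by $\dbin(m, M/(n-t))$.

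Finally I would apply a Chernoff bound with parameter $\lambda = 1$: for $Y \sim \dbin(m,p)$ with $p = M/(n-t)$,
\[
 \pr(Y > a)
 \wle \E e^{Y} e^{-a}
 \weq (1 + p(e-1))^m e^{-a}
 \wle e^{mp(e-1) - a}
 \wle e^{2mp - a},
\]
so that $\pr(\abs{\cW_s} > a) \le e^{2Mm/(n-t) - a}$ uniformly in $s \le t$. Substituting into the union bound yields the claimed inequality.

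The main point requiring care is the conditional distribution of the unexplored layers given $\cF_{s-1}$. One must verify that the information gathered by steps $1,\ldots,s-1$ consists entirely of the full description of the explored layers together with the identities $v_1,\ldots,v_{s-1}$, and that the only residual information this conveys about an unexplored layer $k$ is the constraint $V(G_k) \cap \{v_1,\ldots,v_{s-1}\} = \emptyset$. This is a routine consequence of the mutual independence of the layers in the original model, but it is the only conceptually nontrivial ingredient and needs to be stated explicitly so that the conditional stochastic domination by $\dbin(m, M/(n-t))$ is legitimate.
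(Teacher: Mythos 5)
Your proposal is correct and matches the paper's argument essentially step for step: decompose $\abs{\cM^e_t}$ into per-step contributions, bound the conditional law of each $\abs{\cW_s}$ by $\dbin(m, M/(n-t))$ after verifying that conditioning on the exploration history leaves each unexplored layer uniform over $X_k$-subsets of the unexplored nodes, and apply a Chernoff bound of the form $\pr(Y>a)\le e^{-a}\E e^{Y}\le e^{2mp-a}$ (this is exactly Lemma~\ref{the:BinomialConcentration}(i) in the paper). The only cosmetic difference is that the paper conditions on explicit events $\cE_{t-1}^+(A,B,C,v)$ rather than on a filtration, and packages the pigeonhole step as $\abs{\cM^e_t}\le t\max_{s\le t\wedge T_i}\abs{\cW_s}$ rather than as a sum decomposition, but the content is the same.
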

\begin{proof}
Consider an event $\cE_{t-1}^+ = \cE_{t-1}^+( A, B, C, v)$ that the exploration proceeds to step $t$, in the beginning of which the set of explored nodes equals $\cN^e_{t-1} = A$, the set of explored layers equals $\cM^e_{t-1} = B$, the set of discovered nodes equals $\cN^{d}_{t-1} = C$, and the currently explored node equals $v_t = v$, for some node sets $A \subset C$ with $v \in C \setminus A$ and some layer set $B$ such that the event $\cE_{t-1}^+$ has nonzero probability. The event $\cE_{t-1}^+$ is determined by the random graphs $\{G_k: k \in B\}$ and the indicator variables $\{1( V(G_k) \ni v ): v \in A, k \in [m]\}$. About the unexplored layers $G_k$, $k \in B^c$, the event $\cE_{t-1}^+$ reveals that $V(G_k) \subset A^c$, but nothing else.  Therefore, given $\cE_{t-1}^+$, the random graphs $\{G_k: k \in B^c\}$ are mutually independent and
\begin{equation}
 \label{eq:UniformPerStep}
 \text{$\law( V(G_k) \cond \cE_{t-1}^+)$ is uniform among the $X_k$-sets of $A^c$}.
\end{equation}
Given $\cE_{t-1}^+$, each unexplored layer $V(G_k)$ hence covers $v$ with probability $\frac{X_k}{n-(t-1)} \le \frac{M}{n-t}$, independently. Therefore, the number of layers explored during step $t$ satisfies
$\law( \abs{\cW_t} \cond \cE_{t-1}^+) \lest \Bin(m,\frac{M}{n-t})$, and a Chernoff inequality (Lemma~\ref{the:BinomialConcentration}) implies that $\pr(\abs{\cW_t} > a \cond \cE_{t-1}^+) \le e^{2 M (n-t)^{-1} m - a}$. Because the right side of the latter inequality does not depend on the choice of $A, B, C, v$, we conclude that
$
 \pr(\abs{\cW_t} > a \cond T_i \ge t)
 \le e^{2 M (n-t)^{-1}m - a}.
$
This implies the claim, because the inequality $\abs{\cM^e_t} \le t \max_{1 \le s \le t \wedge T_i} \abs{\cW_s} $ implies that
\[
 \pr( \abs{\cM^e_t} > a t )
 \wle \pr( \max_{1 \le s \le t \wedge T_i} \abs{\cW_s} > a )
 \wle \sum_{s=1}^t \pr( \abs{\cW_s} > a, \, T_i \ge s).
\]
\end{proof}


During an exploration step $t \le T_i$, a \emph{multi-overlap of type~1} occurs if one of the layers covering $v_t$ overlaps with previously explored layers in some other node besides $v_t$, and a \emph{multi-overlap of type~2} occurs if some of the layers covering $v_t$ overlap each other in more than one node. These events can be written as
\begin{align*}
 \cO_{1t} &\weq \{T_i \ge t\} \cap \{V'_k \cap \cN^{d}_{t-1} \ne \emptyset \ \text{for some $k \in \cW^+_t$}\}, \\
 \cO_{2t} &\weq \{T_i \ge t\} \cap \{V'_k \cap V'_\ell \ne \emptyset \ \text{for some distinct $k, \ell \in \cW^+_t$}\},
\end{align*}
where $V'_k = V(G_k) \setminus \{v_t\}$ and $\cW^+_t = \{k: V(G_k) \ni v_t\}$. We denote the occurrence of a multi-overlap by $\cO_t = \cO_{1t} \cup \cO_{2t}$, and we define $\cO_{\le t} = \cO_1 \cup \cdots \cup \cO_t$.

\begin{lemma}
\label{the:NoMultioverlap}
For any $n \ge 3$ and $1 \le \tau \le n/2$, the probability that a multi-overlap occurs during the first $\tau$ exploration steps is bounded by 
$
 \pr( \cO_{\le \tau} )
 \le c \tau^2 n^{-1} \log n,
$
where $c = e^{5M(1+m/n)}$.
\end{lemma}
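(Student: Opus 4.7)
\medskip

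\noindent\textbf{Proof plan.}
The strategy is to intersect $\cO_{\le \tau}$ with a good event on which the set $\cN^d_{t-1}$ of already discovered nodes stays small throughout the first $\tau$ steps, and then use the uniformity property \eqref{eq:UniformPerStep} to bound the chance of a new overlap at each step conditional on the history.

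Fix a parameter $a$ to be chosen later and define the good event
\[
 \cG \weq \bigcap_{t=1}^{\tau} \bigl\{ T_i < t \ \text{or}\ |\cM^e_t| \le a\, t \bigr\}.
\]
By Lemma~\ref{the:NumberLayersExplored}, $\pr(|\cM^e_t| > a t) \le t\, e^{2M(n-t)^{-1} m - a}$. A union bound over $t \le \tau \le n/2$ (so that $(n-t)^{-1} \le 2/n$) then gives
\[
 \pr(\cG^c) \wle \tau^2\, e^{4Mm/n - a}.
\]
On $\cG \cap \{T_i \ge t\}$ one has the deterministic bound $D_{t-1} := |\cN^d_{t-1}| \le 1 + M\, a\, (t-1) \le 1 + M a \tau$.

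The heart of the argument is to bound $\pr(\cO_{1t} \cup \cO_{2t} \mid \cE_{t-1}^+)$ for a history event $\cE_{t-1}^+$ as in the proof of Lemma~\ref{the:NumberLayersExplored}. By \eqref{eq:UniformPerStep}, each unexplored layer $k \in B^c$ has $V(G_k)$ uniform on the $X_k$-subsets of $A^c$, independently across $k$. For type 1, a layer $k$ contributes when $v_t \in V(G_k)$ and $V(G_k) \setminus \{v_t\}$ meets $\cN^d_{t-1} \setminus \{v_t\}$; by the basic identity $\pr(u \in V(G_k)) = X_k/|A^c|$ and a further union bound over a target set of size $\le D_{t-1}$, this contributes at most $X_k(X_k-1)\, D_{t-1} / \bigl((n-t+1)(n-t)\bigr) \le M^2 D_{t-1}/(n-t)^2$. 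Summing over $m$ layers yields
\[
 \pr(\cO_{1t} \mid \cE_{t-1}^+) \wle \frac{m M^2 D_{t-1}}{(n-t)^2}.
\]
For type 2, the analogous calculation with two independent uniform layers gives $\pr(\cO_{2t} \mid \cE_{t-1}^+) \le m^2 M^4 / (n-t)^3$.

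Combining these bounds with $D_{t-1} \le 1 + Ma\tau$ on $\cG$ and $(n-t)^{-1} \le 2/n$, and summing $t = 1,\dots,\tau$, gives
\[
 \sum_{t=1}^\tau \pr(\cO_t \cap \cG) \wle \frac{4 m M^2 \tau}{n^2} + \frac{4 m M^3 a\, \tau^2}{n^2} + \frac{8 m^2 M^4 \tau}{n^3}.
\]
Choosing $a = \log n + 4Mm/n$ makes $\pr(\cG^c) \le \tau^2/n$, while the dominant term above is of order $(m/n) M^3 \tau^2 \log n /n$. Putting everything together yields a bound of the form $(\text{polynomial in } M, m/n)\, \tau^2 n^{-1} \log n$, and the prefactor $e^{5M(1+m/n)}$ comfortably absorbs the polynomial factors for all $n \ge 3$.

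The main technical obstacle is the conditional calculation in the penultimate paragraph: one has to isolate carefully which part of $\cN^d_{t-1}$ lies inside the ``still unexplored universe'' $A^c \setminus \{v_t\}$ so that the uniformity property \eqref{eq:UniformPerStep} applies cleanly. Once this is done, turning the per-step bounds into the final $\tau^2 n^{-1}\log n$ estimate is a matter of bookkeeping with the exponent $5M(1+m/n)$.
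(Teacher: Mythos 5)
Your proof is correct and follows essentially the same strategy as the paper's: conditionally bound the per-step overlap probabilities using the uniformity property \eqref{eq:UniformPerStep}, control the number of explored layers via Lemma~\ref{the:NumberLayersExplored}, and choose $a \asymp \log n$ to balance the error terms. The only cosmetic differences are that you intersect $\cO_{\le\tau}$ with one global good event $\cG$ (and bound $\pr(\cG^c)$ by a union bound) where the paper splits $\pr(\cO_t)$ per step, and you shift $4Mm/n$ into $a$ rather than into the constant; your closing claim that $e^{5M(1+m/n)}$ absorbs the polynomial factors is stated without verification but does in fact hold for all $M \ge 1$, $m/n \ge 0$, $n \ge 3$.
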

\begin{proof}
Consider an event $\cE_{t-1}^+ =\cE_{t-1}^+(A,B,C,v)$ as in the proof of Lemma~\ref{the:NumberLayersExplored}. By \eqref{eq:UniformPerStep}, we know that given $\cE_{t-1}^+$, 
each layer $k \in B^c$ covers $v$ with probability $\frac{X_k}{n-(t-1)}$, and given $\cE_{t-1}^+ \cap \{V(G_k) \ni v\}$, the law of $V'_k = V(G_k) \setminus \{v\}$ with $k \in B^c$ is uniform among the $(X_k-1)$-sets of $(A \cup \{v_t\})^c$, and hence $V'_k$ overlaps $C$ with probability at most $\frac{(X_k-1)(\abs{C}-1)}{n-t}$. Because $\abs{B^c} \le n$ and $\abs{C} \le M \abs{B}$, the probability of a multi-overlap of type~1 is bounded by 
\[
 \pr(\cO_{1t} \cond \cE_{t-1}^+)
 \wle \sum_{k \in B^c} \frac{X_k}{n-(t-1)} \frac{(X_k-1)(\abs{C}-1)}{n-t}
 \wle n \frac{M^3 \abs{B}}{(n-t)^{2}}.
\]

Similarly, the $\cE_{t-1}^+$-conditional probability that two distinct layers $k,\ell \in B^c$ cover $v$ and overlap each other in some other node is bounded by $\frac{X_k}{n-(t-1)} \frac{X_\ell}{n-(t-1)} \frac{(X_k-1)(X_\ell-1)}{n-t} \le \frac{M^4}{(n-t)^3}$. Hence a multi-overlap of type~2 occurs with probability at most $\pr(\cO_{2t} \cond \cE_{t-1}^+) \le \binom{n}{2} \frac{M^4}{(n-t)^3}$. Hence for $t \le n/2$ and $\abs{B} \le at$ with $a,t \ge 1$,
\[
 \pr( \cO_t \cond \cE_{t-1}^+ )
 \wle 4 M^3 ( n^{-1} m + M n^{-2} m^2) a t n^{-1}.
\]
Because the right side above is valid whenever $\abs{B} \le at$, the above inequality also holds for $\cE_{t-1}^+$ replaced by the event that $\abs{\cM^e_{t-1}} \le at$ and $T_i \ge t$.  Lemma~\ref{the:NumberLayersExplored} now implies that
\begin{align*}
 \pr( \cO_t )
 &\weq \pr( \cO_t, \abs{\cM^e_{ t-1}} \le at, T_i \ge t ) + \pr( \cO_t, \abs{\cM^e_{ t-1}}> at, T_i \ge t ) \\
 &\wle \pr( \cO_t,  \abs{\cM^e_{ t-1}} \le at, T_i \ge t ) + \pr( \abs{\cM^e_{ t-1}} > at ) \\
 &\wle 4 (M^3 m/n + M^4 m^2/n^2) a t n^{-1} + e^{4 M m/n} t e^{- a}.
\end{align*}
Using $x \le 1+x \le e^x$ we find that $4 (M^3 m/n + M^4 m^2/n^2) = (2M)^2 (M m/n)(1+M m/n) \le e^{4M + 2M m/n}$. By plugging in $a = \log n$, it follows that
\begin{align*}
 \pr( \cO_t )
 \wle \left( e^{4M(1+m/n)} + e^{4 M (1+m/n)} \right) t n^{-1} \log n
 \wle e^{5M(1+m/n)} t n^{-1} \log n.
\end{align*}
Hence the claim follows by the union bound.
\end{proof}

\begin{lemma}
\label{the:RestrictedQueueUpper}
The probability that the restricted exploration discovers more than $\tau$ nodes is bounded by
$\pr( Q_\tau > 0 ) \le \rho_\tau(f_{\tau,n})$,
where the distribution $f_{\tau,n}$ is defined by \eqref{eq:UpperOffspring}.
\end{lemma}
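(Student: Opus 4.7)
The plan is to couple the restricted exploration with a Galton--Watson branching process whose offspring distribution is exactly $f_{\tau,n}$, so that the event $\{Q_\tau > 0\}$ (the queue is still nonempty after step $\tau$) forces the total progeny of the coupled tree to exceed~$\tau$.

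First, I fix a step $t \le \tau$ and condition on a history event $\cE_{t-1}^+ = \cE_{t-1}^+(A,B,C,v)$ of the form used in the proof of Lemma~\ref{the:NumberLayersExplored}, so that $A = \cN^e_{t-1}$ is the set of explored nodes, $B = \cM^e_{t-1}$ the set of explored layers, and $v = v_t$. By the uniformity property \eqref{eq:UniformPerStep}, given $\cE_{t-1}^+$ the unexplored layers $\{G_k : k \notin B\}$ are mutually independent, $V(G_k)$ is uniform among the $X_k$-subsets of $A^c$, and the edges inside $V(G_k)$ are independent $\Ber(Y_k)$. Consequently, writing $\widetilde B_{k,t} = 1(k\notin B,\, V(G_k) \ni v_t)$ and $\widetilde T_{k,t} = \abs{N_{v_t}(\bar G_k)}$, the indicator $\widetilde B_{k,t}$ is Bernoulli with parameter at most $\frac{X_k}{n-(t-1)} \le \frac{X_k}{n-\tau}$ (using $t \le \tau \le n/2$), and on $\{\widetilde B_{k,t}=1\}$ the neighbourhood size $\widetilde T_{k,t}$ is $\Bin^+(X_k-1, Y_k)$-distributed, independently across~$k$.

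Second, the number of new nodes added to the queue at step $t$ is bounded by $\Delta_t = \sum_{k=1}^m \widetilde B_{k,t}\,\widetilde T_{k,t}$, which is stochastically dominated, conditionally on the history, by a fresh independent copy $\Xi_t = \sum_{k=1}^m B_k^{(t)} T_k^{(t)} \sim f_{\tau,n}$ of the sum defining $f_{\tau,n}$ (capping $\frac{X_k}{n-\tau}$ at $1$ in the rare event that it exceeds $1$ only strengthens the domination). Comparing the queue recursion $Q_t = Q_{t-1} - 1 + \Delta_t$ with $Q_0 = 1$ to the Galton--Watson random walk $\widehat Q_t = \widehat Q_{t-1} - 1 + \Xi_t$ with $\widehat Q_0 = 1$, a standard step-by-step BFS coupling (extending the exploration by $\Delta_t = 0$ for $t > T_i$) produces $Q_t \le \widehat Q_t$ for all $t \le \tau$. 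Since $\{\widehat Q_\tau > 0\} = \{\widehat Q_s > 0 \text{ for all } s \le \tau\}$ is precisely the event that the total progeny of a Galton--Watson tree with offspring law $f_{\tau,n}$ exceeds $\tau$, we conclude
\[
 \pr(Q_\tau > 0) \wle \pr(\widehat Q_\tau > 0) \weq \rho_\tau(f_{\tau,n}).
\]

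The main obstacle is making the coupling rigorous across steps: the same layer index $k$ appears in the sums $\Delta_1,\dots,\Delta_\tau$, so the natural indicators $\widetilde B_{k,t}$ are not independent across~$t$, which at first seems incompatible with using i.i.d.\ copies of $f_{\tau,n}$. This is resolved by exploiting the fact that once $k \in \cM^e_{t-1}$ the contribution of layer $k$ to $\Delta_t$ is zero, so on every step one can supply an independent fresh copy $(B_k^{(t)}, T_k^{(t)})$ and only lose by inclusion. A secondary technical point is ensuring that the step-$t$ conditional probability $\frac{X_k}{n-(t-1)}$ really is bounded by the step-independent parameter $\frac{X_k}{n-\tau}$ uniformly for $t \le \tau$, which is immediate from $\tau \le n/2$ and the monotonicity $n-(t-1) \ge n-\tau$.
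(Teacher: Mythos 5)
Your argument is essentially the paper's own: condition on a history event $\cE_{t-1}^+$, use the uniformity property~\eqref{eq:UniformPerStep} to show the conditional law of the step-$t$ increment is stochastically dominated by $f_{\tau,n}$, then close with a step-by-step monotone coupling to a Galton--Watson queue; the rest is bookkeeping. One small correction is needed in the last step: with the recursion $\widehat Q_t = \widehat Q_{t-1}-1+\Xi_t$ as written, the walk is not killed at zero, so $\{\widehat Q_\tau>0\}$ is strictly larger than $\{\widehat Q_s>0\text{ for all }s\le\tau\}$ (the walk can pass through $0$ and rebound on a step with $\Xi_t\ge 1$), and consequently $\pr(\widehat Q_\tau>0) \ge \rho_\tau(f_{\tau,n})$, which makes your final inequality point the wrong way. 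Replace $\widehat Q$ by the killed recursion $\widehat Q_t = 1(\widehat Q_{t-1}>0)(\widehat Q_{t-1}-1+\Xi_t)$ from Appendix~\ref{sec:BranchingTrees}, for which $\pr(\widehat Q_\tau>0)=\rho_\tau(f_{\tau,n})$ holds by definition; the same monotone coupling still gives $Q_t\le\widehat Q_t$ by induction, and the lemma follows.
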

\begin{proof}
The queue length process satisfies $Q_0 = 1$ and
\begin{equation}
 \label{eq:ExplorationQueueNew}
 Q_t \weq (Q_{t-1} - 1 + Z_t) 1(Q_{t-1}>0), \quad t =1,2,\dots,
\end{equation}
where $Z_t = \abs{\cZ_t}$ is the number of nodes added to the queue in step $t$.  Fix $1 \le t \le \tau$ and consider an event $\cE_{t-1}^+ = \cE_{t-1}^+(A,B,C,v)$ as in the proof of Lemma~\ref{the:NumberLayersExplored}. On this event,
\begin{equation}
 \label{eq:ExplorationQueueUpperNew}
 Z_t
 \wle \sum_{k \in B^c} 1( V(G_k) \ni v ) \deg_{\bar G_k}(v).
\end{equation}
By recalling \eqref{eq:UniformPerStep}, we know that conditionally on $\cE_{t-1}^+$, the random variables on the right side of \eqref{eq:ExplorationQueueUpperNew} are mutually independent, and such that $\law(1( V(G_k) \ni v ) \cond \cE_{t-1}^+) = \Ber(\frac{X_k}{n-(t-1)})$ and $\law(\deg_{\bar G_k}(v) \cond \cE_{t-1}^+) = \Bin^+(X_k-1,Y_k)$ for all $k \in B^c$. We conclude that
\[
 \law(Z_t \cond \cE_{t-1}^+)
 \wlest f_{t,n}
 \wlest f_{\tau,n}
 \quad \text{for all $t=1,\dots,\tau$}.
\] 
Because the above inequalities hold for all events $\cE_{t-1}^+ = \cE_{t-1}^+(A,B,C,v)$ of the above form, it also holds for the event $\{Q_{t-1} > 0\}$ that there is a node to explore at step $t$. Hence it follows that  $(Q_0, \dots, Q_\tau) \lest (Q'_0, \dots, Q'_\tau)$ where the right side is defined as in \eqref{eq:ExplorationQueueNew} but with $Z_1,Z_2,\dots$ replaced by independent $f_{\tau,n}$-distributed random integers $Z'_1, Z'_2, \dots$ The claim follows by noting that $\pr( Q'_\tau > 0 ) = \rho_\tau(f_{\tau,n})$ (see Appendix~\ref{sec:BranchingTrees}).
\end{proof}

\subsubsection{Proof of Proposition~\ref{the:SingleUpperBound}}

Let $Q_t = \abs{\cQ_t}$ be the exploration queue length in Algorithm~\ref{algo:UpperExploration} started at node $i$.  
We note that $Q_\tau > 0$ means that the restricted exploration discovers more than $t$ nodes of the component of $i$. Therefore $Q_\tau > 0$ implies $\abs{C_G(i)} > \tau$. The converse may not be true (see Figure~\ref{fig:ModifiedExploration}) because the restricted exploration may stop before discovering all nodes in the component of $i$. On the event $\cO_{\le \tau}^c$ that multi-overlaps do not occur up to time $\tau$, this cannot happen, and hence
$
 \{ Q_\tau > 0 \} \cap \cO_{\le \tau}^c
 = \{ \abs{C_G(i)} > \tau \} \cap \cO_{\le \tau}^c.
$
%
Therefore,
\begin{align*}
 \pr( \abs{C_G(i)} > \tau )
 &\weq \pr(Q_\tau > 0, \cO_{\le \tau}^c) + \pr(\abs{C_G(i)} > \tau, \cO_{\le \tau}) \\
 &\wle \pr(Q_\tau > 0)  + \pr( \cO_{\le \tau}).
\end{align*}
The claim follows by combining Lemma~\ref{the:NoMultioverlap} and Lemma~\ref{the:RestrictedQueueUpper}.
\qed

\subsection{Double upper bound for deterministic layer types}
To obtain an upper bound on the variance of the number of nodes contained in large components, 
we extend the analysis in Proposition~\ref{the:SingleUpperBound} to two restricted exploration processes run on the same graph instance. 

\begin{proposition}
\label{the:DoubleUpperBound}
For any $1 \le \tau \le n/4$, the components sizes of nodes $i \ne j$ are bounded by 
$ \pr( \abs{C_G(i)} > \tau, \abs{C_G(j)} > \tau)
 \le \rho_\tau(f_{2\tau,n})^2 + c \tau^2 n^{-1} \log n,
$
where $c = e^{9M(1+m/n)}$.
\end{proposition}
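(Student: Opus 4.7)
The approach is to emulate the proof of Proposition~\ref{the:SingleUpperBound} but run two restricted explorations on the same graph instance. First, Algorithm~\ref{algo:UpperExploration} is started at root $i$ and run for up to $\tau$ steps, producing explored node and layer sets $\cN^e_\tau(i)$, $\cM^e_\tau(i)$ and discovered node set $\cN^d_\tau(i)$. Then, on the same graph instance, a second copy of the algorithm is started at root $j$, but initialised with $\cM \leftarrow \cM^e_\tau(i)$ so that previously explored layers are declared unavailable from the start, and run for up to $\tau$ further steps. Across both explorations together, at most $2\tau$ nodes are ever declared as the ``current node''.

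Mirroring Section~\ref{sec:PropertiesUpperExploration}, I would define three kinds of bad events: the multi-overlap events $\cO^{(i)}_{\le\tau}$ and $\cO^{(j)}_{\le\tau}$ for each individual exploration, controlled by Lemma~\ref{the:NoMultioverlap}, and a \emph{collision event} $\cC$ in which an unexplored layer activated during the $j$-exploration intersects $\cN^d_\tau(i) \setminus \{j\}$. A counting argument essentially identical to that in Lemma~\ref{the:NoMultioverlap} --- with the role of ``previously discovered nodes'' now played by $\cN^d_\tau(i)$, of size at most $M\abs{\cM^e_\tau(i)}$ --- bounds $\pr(\cC)$ by $O(\tau^2 n^{-1} \log n)$. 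Combining the three bad events via the union bound yields a single error term with constant $c = e^{9M(1+m/n)}$, where the extra factor over Lemma~\ref{the:NoMultioverlap} absorbs the two additional unions.

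On the complement of all bad events, the uniform-distribution identity~\eqref{eq:UniformPerStep} extends to the joint conditioning: given the entire history of the first exploration together with the absence of collisions, each fresh layer activated by the second exploration is uniform on subsets of $[n] \setminus (\cN^e_\tau(i) \cup \cN^e_{t-1}(j) \cup \{v_t\})$ of the prescribed size. Because the number of nodes ever touched across both explorations is at most $2\tau$, the covering-probability denominators are uniformly at least $n - 2\tau + 1$, which is exactly what the offspring distribution $f_{2\tau,n}$ in \eqref{eq:UpperOffspring} is tailored to. Consequently, the queue-length processes $(Q^i_t)_{t \le \tau}$ and $(Q^j_t)_{t \le \tau}$ are jointly stochastically dominated by \emph{independent} Galton--Watson processes with common offspring distribution $f_{2\tau,n}$, by exactly the argument of Lemma~\ref{the:RestrictedQueueUpper}. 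This yields $\pr(\abs{C_G(i)} > \tau,\, \abs{C_G(j)} > \tau,\, \mathrm{good\ event}) \le \rho_\tau(f_{2\tau,n})^2$, and adding the bad-event bound gives the claim.

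The main obstacle is to justify rigorously that, on the good event, fresh-layer uniformity survives the joint conditioning on the entire first exploration --- equivalently, that the collision event $\cC$ is the \emph{only} probabilistic link between the two explorations, so that its exclusion decouples them. Once this is pinned down, the independence of the two dominating Galton--Watson processes is automatic, and the bound on $\pr(\cC)$ is a routine adaptation of the computation in Lemma~\ref{the:NoMultioverlap} with the extended notion of ``previously discovered set.''
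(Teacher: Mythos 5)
Your plan follows essentially the same route as the paper's proof: run two restricted explorations, isolate bad events (multi-overlaps and collisions between the two discovered sets), and on the good event dominate both queue processes by Galton--Watson processes with offspring $f_{2\tau,n}$. You also correctly identify that the critical missing step is the decoupling claim --- that excluding the collision event is enough to make the two explorations probabilistically independent. That claim is not automatic, and is exactly where the paper does the real work.

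The paper's resolution is a \emph{conditioning-by-observability} argument, not a thinning-by-initialisation one. Rather than initialising the $j$-exploration with $\cM \leftarrow \cM^e_\tau(i)$ (which builds the dependence into the process definition and makes independence of the two dominating processes harder to extract), the paper runs both explorations with the full layer set, then conditions on an event of the form $\cE^+_{i\tau} = \{\cN^e_{i\tau}=A,\ \cM^e_{i\tau}=B,\ Q_{i\tau}>0\}$. The crucial observation is that $\cE^+_{i\tau}$ is \emph{determined} by the random graphs $\{G_k: k\in B\}$ together with the indicators $\{1(V(G_k)\ni a): k\in[m],\ a\in A\}$, and by nothing else. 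Consequently, conditionally on $\cE^+_{i\tau}$, the unexplored layers $\{G_k: k\in B^c\}$ are still mutually independent with $V(G_k)$ uniform among $X_k$-sets of $A^c$ --- which is precisely the distribution of a fresh model on $n-\tau$ nodes. Comparing the actual $j$-exploration to a modified exploration on the layer set $B^c$ (they coincide on the event $\cM^e_{j\tau}\cap B=\emptyset$), the paper gets $\pr(Q_{i\tau}>0,\,Q_{j\tau}>0,\,\cM^e_{i\tau}\cap\cM^e_{j\tau}=\emptyset) \le \pr(Q_{i\tau}>0)\,\pr(Q'''_{j\tau}>0)$ where $Q'''$ is the exploration on the shrunken node set; $\rho_\tau(f_{\tau,n})\le\rho_\tau(f_{2\tau,n})$ then gives the $\rho_\tau(f_{2\tau,n})^2$ term. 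This is the step you flag as the main obstacle, and it requires the explicit determinism observation, not just exclusion of the collision event.

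One other place where your plan is more optimistic than the paper: the bound on the collision probability is not a ``routine adaptation'' of Lemma~\ref{the:NoMultioverlap}. The paper proves it as a separate result (Lemma~\ref{the:DisjointExplorations}), which needs both Lemma~\ref{the:AvoidOne} (to rule out $j\in\cN^d_{i\tau}$) and Lemma~\ref{the:NumberLayersExplored} (to control $\abs{\cN^d_{i\tau}}\le Ma\tau$ with high probability so that the per-step overlap probability is $O(a\tau n^{-2}m)$), followed by a union over $t\le\tau$ and a careful choice of $a=\log(n/\tau)$. This is where the constant $c=e^{7M(1+m/n)}$ for the collision term comes from; combining with $2\pr(\cO_{i,\le\tau})$ then produces the stated $c=e^{9M(1+m/n)}$.

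In short: right approach and the gap you identified is real; to close it you need the paper's observability argument, and the collision bound needs Lemmas~\ref{the:AvoidOne} and~\ref{the:NumberLayersExplored} rather than a direct reuse of the multi-overlap computation.
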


The proof of Proposition~\ref{the:DoubleUpperBound} requires Lemma~\ref{the:AvoidOne} and~\ref{the:DisjointExplorations}, outlined next.

%


\begin{lemma}
\label{the:AvoidOne}
The probability that Algorithm~\ref{algo:UpperExploration} started at $i$ discovers node $j \ne i$ during $\tau$ steps is bounded by
$
 \pr( \cN^{d}_{i,\tau} \ni j)
 \le 4 M^2 n^{-2} m \tau.
$
\end{lemma}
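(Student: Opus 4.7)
The plan is to decompose the event $\{j \in \cN^{d}_{i,\tau}\}$ according to the step at which $j$ is first discovered, and then bound each per-step probability by conditioning on the exploration history, reusing the conditional uniform-coverage property already established in the proof of Lemma~\ref{the:NumberLayersExplored}.

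First, since $\cN^{d}_t$ is nondecreasing in $t$ with $\cN^{d}_0 = \{i\}$ and $j \ne i$, I would write the disjoint decomposition
\[
 \{j \in \cN^{d}_{i,\tau}\}
 \weq \bigcup_{t=1}^{\tau} \{j \in \cN^{d}_t \setminus \cN^{d}_{t-1}\}.
\]
Because $\cN^{d}_t \setminus \cN^{d}_{t-1} \subset \cup_{k \in \cW_t} V(G_k)$, the event $\{j \in \cN^{d}_t \setminus \cN^{d}_{t-1}\}$ implies $T_i \ge t$ and that some newly explored layer $k \in \cW_t$ satisfies $j \in V(G_k)$, which in turn means $\{v_t, j\} \subset V(G_k)$ for some previously unexplored $k$.

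Next, I would fix $t \le \tau$ and condition on an event $\cE_{t-1}^+ = \cE_{t-1}^+(A,B,C,v)$ as in the proof of Lemma~\ref{the:NumberLayersExplored}. By \eqref{eq:UniformPerStep}, for each $k \in B^c$ the set $V(G_k)$ is uniform among the $X_k$-subsets of $A^c$, which has cardinality $n-(t-1)$. If $j \in A$ then $j$ is already explored and $\pr(j \in V(G_k) \mid \cE_{t-1}^+) = 0$; otherwise
\[
 \pr\bigl( \{v,j\} \subset V(G_k) \,\big|\, \cE_{t-1}^+ \bigr)
 \weq \frac{(X_k)_2}{(n-t+1)(n-t)}
 \wle \frac{4 M^2}{n^2},
\]
using $X_k \le M$ together with $t \le \tau \le n/2$, so that $(n-t+1)(n-t) \ge (n/2)^2$. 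A union bound over the at most $m$ unexplored layers gives the per-step estimate
\[
 \pr(j \in \cN^{d}_t \setminus \cN^{d}_{t-1} \mid \cE_{t-1}^+) \wle 4 M^2 m n^{-2}.
\]

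Finally, since this bound is uniform over the choice of $(A,B,C,v)$, it holds unconditionally on $\{T_i \ge t\}$ (and trivially when $T_i < t$), and a union bound over $t=1,\dots,\tau$ yields the claimed inequality $\pr(\cN^{d}_{i,\tau} \ni j) \le 4 M^2 n^{-2} m \tau$. I do not foresee a real obstacle: the argument is essentially a one-node analogue of the multi-overlap estimate in Lemma~\ref{the:NoMultioverlap}, and the only care needed is to verify the pair-covering probability above via \eqref{eq:UniformPerStep} and to handle the degenerate case $j \in A$ separately, which is harmless since then $j$ was already discovered at an earlier step.
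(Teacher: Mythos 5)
Your proof is correct and takes essentially the same approach as the paper's: decompose according to the step at which $j$ is first discovered, condition on the exploration history $\cE_{t-1}^+$, use the uniform-coverage property \eqref{eq:UniformPerStep} to bound the pair-covering probability $\pr(\{v_t,j\}\subset V(G_k)\mid \cE_{t-1}^+) \le M^2/(n-t)^2 \le 4M^2 n^{-2}$, then union bound over the at most $m$ unexplored layers and over the $\tau$ steps. The only cosmetic difference is that the paper phrases the decomposition via the event $\cE_{t-1}^{++} = \{\cN^d_{i,t-1}\not\ni j\}\cap\{T_i\ge t\}$ rather than your explicit first-discovery partition, but these are identical in substance.
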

\begin{proof}
Fix $1 \le t \le \tau$ and consider an event $\cE_{t-1}^+ = \cE_{t-1}^+(A,B,C,v)$ as in the proof of Lemma~\ref{the:NumberLayersExplored}, for some node sets $A \subset C$ with $i \in C \setminus A$ and $C \not\ni j$, and some layer set $B$. By recalling \eqref{eq:UniformPerStep}, we know that for any $k \in B^c$, the $\cE_{t-1}^+$-conditional probability that $G_k$ covers $v$ and $j$ is bounded by $\frac{X_k}{n-(t-1)} \frac{X_k-1}{n-t} \le \frac{M^2}{(n-\tau)^2}$. Because $\abs{B^c} \le n$, the union bound implies
\[
 \pr( \cN^d_{i,t} \ni j \cond \cE_{t-1}^+)
 \wle m \frac{M^2}{(n-t)^2}
 \wle 4 M^2 n^{-2} m.
\]
Because the above inequality is valid whenever $C \not\ni j$, the above inequality also holds with $\cE_{t-1}^+$ replaced by the event  $\cE_{t-1}^{++} = \{\cN^{d}_{i,t-1} \not \ni j\} \cap \{T_i \ge t\}$. Hence the claim follows by noting that $\pr( \cN^{d}_{i,\tau} \ni j) = \sum_{t=1}^\tau \pr( \cN^{d}_{i,t} \ni j, \, \cE_{t-1}^{++}) \le \sum_{t=1}^\tau \pr( \cN^{d}_{i,t} \ni j \cond \cE_{t-1}^{++})$.
\end{proof}

\begin{lemma}
\label{the:DisjointExplorations}
For any $i\ne j$ and $1 \le \tau \le n/4$, the probability that explorations started at $i$ and $j$ overlap is bounded by 
$
 \pr( \cN^{d}_{i,\tau} \cap \cN^{d}_{j,\tau} \ne \emptyset)
 \le c \tau^2 n^{-1} \log n,
$
where $c = e^{7M(1+m/n)}$.
\end{lemma}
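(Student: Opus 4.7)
The plan is to run the two explorations sequentially: first the $i$-exploration for $\tau$ steps, then condition on its full history and bound the probability that the $j$-exploration enters the set $\cN^{d}_{i,\tau}$ of already-discovered nodes. I would begin with the decomposition
\begin{equation*}
 \{\cN^{d}_{i,\tau} \cap \cN^{d}_{j,\tau} \ne \emptyset\}
 \ \subseteq\  \{j \in \cN^{d}_{i,\tau}\} \cup \cO_{ji},
\end{equation*}
where $\cO_{ji}$ is the event that at some step $t \le \tau$ of the $j$-exploration, a layer $G_k$ newly explored at that step satisfies $V(G_k) \cap (\cN^{d}_{i,\tau} \setminus \{v_{j,t}\}) \ne \emptyset$. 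Indeed, if $j \notin \cN^{d}_{i,\tau}$ yet the two discovered sets share a node, that node must enter the $j$-queue via such a layer. Lemma~\ref{the:AvoidOne} directly yields $\pr(j \in \cN^{d}_{i,\tau}) \le 4 M^2 n^{-2} m \tau$.

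To bound $\pr(\cO_{ji})$, I would condition on the complete $i$-history and mimic the step-by-step analysis of Lemmas~\ref{the:NumberLayersExplored}--\ref{the:NoMultioverlap}. Set $L_i = \cM^e_{i,\tau}$. Then $\abs{\cN^{d}_{i,\tau}} \le \tau + M \abs{L_i}$, and for every $k \in L_i$ we have $V(G_k) \subseteq \cN^{d}_{i,\tau}$, so no such layer is explored by the $j$-process at any step with $v_{j,t} \notin \cN^{d}_{i,\tau}$. Fix $t \le \tau$ and consider an extended conditioning event $\cE^+$ specifying the full $i$-history, the $j$-history up to step $t-1$, and $v_{j,t} = v \notin \cN^{d}_{i,\tau}$. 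The key observation, analogous to \eqref{eq:UniformPerStep}, is that conditional on $\cE^+$, for each $k \notin L_i \cup \cM^e_{j,t-1}$ the set $V(G_k)$ is uniform among $X_k$-subsets of $[n] \setminus (\cN^e_{i,\tau} \cup \cN^e_{j,t-1})$ -- a set of size at least $n - 2\tau \ge n/2$ since $\tau \le n/4$ -- independently across $k$. This yields
\begin{equation*}
 \pr\bigl(V(G_k) \ni v,\ V(G_k) \cap (\cN^{d}_{i,\tau} \setminus \{v\}) \ne \emptyset \bigm| \cE^+\bigr)
 \ \le\ \frac{4 X_k (X_k - 1) \, \abs{\cN^{d}_{i,\tau}}}{n^2},
\end{equation*}
summable over $k$ to $4 M^2 m \abs{\cN^{d}_{i,\tau}} / n^2$ and over $t = 1, \ldots, \tau$ to $4 M^2 m \tau (\tau + M \abs{L_i}) / n^2$.

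I would then split on the size of $L_i$ as in Lemma~\ref{the:NoMultioverlap}: Lemma~\ref{the:NumberLayersExplored} with $a = \log n$ gives $\pr(\abs{L_i} > \tau \log n) \le e^{O(M m/n)} \tau / n$, and on the complementary event the bound above is at most $4 M^3 m \tau^2 (1+\log n) / n^2$. Adding the contribution from Lemma~\ref{the:AvoidOne}, each of the three resulting terms is dominated (for $\tau \ge 1$, $n \ge 3$) by $\tau^2 n^{-1} \log n$ times an expression of the form $M^{O(1)} (m/n)^{O(1)}$, which $e^{7M(1+m/n)}$ comfortably absorbs; the extra factor $e^{2M(1+m/n)}$ over Lemma~\ref{the:NoMultioverlap} reflects the larger excluded set $\cN^e_{i,\tau} \cup \cN^e_{j,t-1}$.

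The main obstacle will be verifying that the uniform-per-step property \eqref{eq:UniformPerStep}, central to Lemmas~\ref{the:NumberLayersExplored}--\ref{the:AvoidOne}, persists under sequential conditioning on two explorations run on the same graph instance, and tracking the enlarged excluded set correctly. Once this bookkeeping is settled, the rest reduces to routine step-wise and union bounds following the templates of the preceding lemmas.
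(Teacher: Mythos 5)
Your proposal follows essentially the same route as the paper's proof: condition on the full $i$-exploration history and the $j$-exploration history up to step $t-1$, observe that on this extended conditioning the unexplored layers remain uniform over the complement of the combined explored node set $\cN^e_{i,\tau}\cup\cN^e_{j,t-1}$ (which has size $\le 2\tau\le n/2$), bound the per-step overlap probability, and combine with Lemma~\ref{the:NumberLayersExplored} (to control $\abs{\cM^e_{i,\tau}}$) and Lemma~\ref{the:AvoidOne} (to remove the event $j\in\cN^d_{i,\tau}$). The decomposition into $\{j\in\cN^d_{i,\tau}\}$ and a first-overlap step, the uniformity argument for $k\notin\cM^e_{i,\tau}\cup\cM^e_{j,t-1}$, and the split on $\abs{\cM^e_{i,\tau}}$ are exactly the paper's structure; the only cosmetic difference is your choice $a=\log n$ versus the paper's $a=\log(n/\tau)$, and the paper makes the constant-accounting explicit ($4M^3\frac mn+4M^2\frac mn+e^{4Mm/n}\le e^{7M(1+m/n)}$) where you sketch that $e^{7M(1+m/n)}$ ``comfortably absorbs'' the factors — but that computation is indeed routine and goes through.
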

\begin{proof}
Consider an event $\cE_{i,\tau} = \cE_{i,\tau}(A_i, B_i, C_i)$ that after $\tau$ steps of exploration started from $i$, the set of explored nodes equals $\cN^e_{i,\tau} = A_i$, the set of explored layers equals $\cM^e_{i,\tau} = B_i$, and the set of discovered nodes equals $\cN^{d}_{i,\tau} = C_i$ for some node sets $A_i \subset C_i$ such that $C_i \not\ni j$ and $C_i \le M a \tau$, and some layer set $B_i$. Fix $1 \le t \le \tau$ and consider an event $\cE_{j,t-1}^+ = \cE_{j,t-1}^+(A_j, B_j, C_j, v)$ as in the proof of Lemma~\ref{the:NumberLayersExplored}, for some node sets $A_j \subset C_j$ with $v \in C_j \setminus A_j$ and $C_i \cap C_j = \emptyset$, and some layer set $B_j$. Then $\cN^{d}_{j,t-1}$ does not overlap $\cN^{d}_{i,\tau}$ on the event $\cF^+_{t-1} = \cE_{i,\tau} \cap \cE^+_{j,t-1}$.  We will next analyse the conditional probability that the same is true for $\cN^{d}_{j, t}$. We note that on the event $\cF^+_{t-1}$, the $j$-exploration at step $t$ only explores layers $k \in (B_i \cup B_j)^c$ because the layers in $B_i$ do not cover $v_j$, and the layers in $B_j$ have already been explored. We also observe that the event $\cF^+_{t-1}$ is determined by the random graphs $G_k$, $k \in B_i \cup B_j$, and the indicators $1(V(G_k) \ni a)$ for $k=1,\dots,n$ and $a \in A_i \cup A_j$. Hence given $\cF^+_{t-1}$, the layers $G_k$, $k \in (B_i \cup B_j)^c$ are mutually independent, and such that $V(G_k)$ is a uniformly random $X_k$-set in $(A_i \cup A_j)^c$. The probability that a layer $k \in (B_i \cup B_j)^c$ covers $v$ and overlaps with $C_i$, is at most
\[
 \frac{X_k}{n-\abs{A_i}-\abs{A_j}} \frac{(X_k-1)C_i}{n-\abs{A_i}-\abs{A_j}-1}
 \wle \frac{M^2 C_i}{(n - 2\tau)^{2}}.
\]
Hence, due to $\abs{(B_i \cup B_j)^c} \le n$, and $C_i \le M a\tau$, it follows that
\[
 \pr( \cN^{d}_{i,\tau} \cap \cN^{d}_{j,t} \ne \emptyset \cond \cF^+_{t-1} )
 \wle 4 M^3 a \tau n^{-2} m.
\]
Because the right side above does not depend on $A_i, B_i, C_i, A_j, B_j, C_j, v$, the above inequality remains valid also for 
$\cF^+_{t-1}$ replaced by $\cG^+_{t-1} = \{\cN^{d}_{i,\tau} \cap \cN^{d}_{j,t-1} = \emptyset\} \cap \{\abs{\cN^{d}_{i,\tau}} \le M a\tau \}$. Thus,
\begin{align*}
 \pr( \cN^{d}_{i,\tau} \cap \cN^{d}_{j,\tau} \ne \emptyset
    \, \cond \, \cN^{d}_{i,\tau} \cap \cN^{d}_{j, 0} = \emptyset, \abs{\cN^{d}_{i,  \tau}} \le M a \tau )
 &\weq \sum_{t=1}^\tau \pr( \cN^{d}_{i,\tau} \cap \cN^{d}_{j, t} \ne \emptyset
     \cond \cG^+_{t-1}) \\
 &\wle 4 M^3 a \tau^2 n^{-2} m.
\end{align*}

Hence noting that $\cN^{d}_{i,\tau} \cap \cN^{d}_{j,0} \ne \emptyset$ if and only if $\cN^{d}_{i,\tau} \not\ni j$, together with
$\abs{\cN^{d}_{i,\tau}} \le M \abs{\cM^e_{i,\tau}}$, applying Lemma~\ref{the:NumberLayersExplored} and Lemma~\ref{the:AvoidOne}, for $a, \tau \ge 1$,
\begin{align*}
 \pr( \cN^{d}_{i,\tau} \cap \cN^{d}_{j,\tau} \ne \emptyset )
 &\wle 4 M^3 a \tau^2 n^{-2} m + \pr( \cN^{d}_{i,\tau} \ni j ) + \pr( \abs{\cN^{d}_{i,\tau}} > M a \tau ) \\
 &\wle 4 M^3 a \tau^2 n^{-2} m + 4 M^2 n^{-2} m \tau + \tau e^{4M m/n - a}.
\end{align*}
Plugging in $a = \log \frac{n}{\tau}$ and noting that $a \le \log n$ and $\tau e^{-a} = \tau^2 n^{-1} \le \tau^2 n^{-1} \log n$,
 implies
\begin{align*}
 \pr( \cN^{d}_{i,\tau} \cap \cN^{d}_{j,\tau} \ne \emptyset )
 &\wle \left( 4 M^3 \frac{m}{n} + 4 M^2 \frac{m}{n} + e^{4M m/n} \right) \tau^2 n^{-1} \log n.
\end{align*}
The claim follows after noting that $4 M^3 \frac{m}{n} + 4 M^2 \frac{m}{n} \le 8 M^3 \frac{m}{n} = (2M)^3 \frac{m}{n} \le e^{6M(1+m/n)}$ implies the term on the right in parentheses is at most $e^{7M(1+m/n)}$.
\end{proof}

\begin{proof}[Proof of Proposition~\ref{the:DoubleUpperBound}]
Let $Q_{it}$ and $Q_{jt}$ be the exploration queue lengths of Algorithm~\ref{algo:UpperExploration} started at distinct nodes $i$ and $j$. We use the notations of Section~\ref{sec:PropertiesUpperExploration}.

Consider an event $\cE^+_{i\tau} = \{\cN^e_{i\tau} = A, \cM^e_{i\tau} = B, Q_{i\tau} > 0\}$ for some node set $A \not\ni j$ of size $\tau$, and some layer set $B$. 
Let $Q'_{jt}$ be the exploration queue of a modified exploration obtained by running Algorithm~\ref{algo:UpperExploration}
started from $j$ with a reduced set of input layers $\{G_k: k \in B^c\}$.
Then $(Q'_{j0},\dots,Q'_{j\tau}) = (Q_{0\tau},\dots,Q_{j\tau})$ on the event $\cM^e_{j\tau} \cap B = \emptyset$.
Hence
\begin{equation}
\label{eq:Decoupling1}
\begin{aligned}
 \pr( Q_{j,\tau} > 0, \, \cM^e_{i\tau} \cap \cM^e_{j, \tau} = \emptyset \cond \cE^+_{i\tau})
 &\weq \pr( Q'_{j\tau} > 0, \, \cM^e_{j\tau} \cap B = \emptyset \cond \cE^+_{i\tau}) \\
 &\wle \pr( Q'_{j\tau} > 0 \cond \cE^+_{i\tau}).
\end{aligned}
\end{equation}

The event $\cE^+_{i\tau}$ is determined by the random graphs $\{G_k: k \in B\}$ and the indicators $\{1(V(G_k) \ni a): k \in [m], a \in A\}$. Hence the $\cE^+_{i\tau}$-conditional distribution of $G_k$, $k \in B^c$, is such that these layers are mutually independent and $V(G_k)$ is a uniformly random $X_k$-set in $A^c$. Hence the $\cE^+_{i\tau}$-conditional law of the $Q'_{jt}$-exploration process is the same as the law of the exploration process $Q''_{jt}$ obtained by running Algorithm~\ref{algo:UpperExploration} started at $j$ for a model instance with node set $A^c$ and layer set $\{G_k: k \in B^c\}$. Hence
\begin{equation}
 \label{eq:Decoupling2}
 \pr( Q'_{j\tau} > 0 \cond \cE^+_{i\tau})
 \weq \pr( Q''_{j\tau} > 0).
\end{equation}

Let $Q'''_{jt}$ be an exploration queue of Algorithm~\ref{algo:UpperExploration} started at $j$ for a model instance with a full layer set $\{G_k: k \in [m]\}$ and node set $A^c \ni j$ of size $n - \tau$. Then $Q''_{jt} > 0$ implies $Q'''_{jt} > 0$ under a natural coupling, and we conclude with the help of \eqref{eq:Decoupling1} and \eqref{eq:Decoupling2} that
\[
 \pr( Q_{j,\tau} > 0, \, \cM^e_{i,\tau} \cap \cM^e_{j\tau} = \emptyset, \, \cE^+_{i\tau}) 
 \wle \pr( \cE^+_{i\tau}) \, \pr( Q'''_{j\tau} > 0).
\]
Because the probability on the right does not depend on the choice of $A,B$,
\[
 \pr( Q_{i\tau} > 0, \, Q_{j\tau} > 0, \, \cM^e_{i\tau} \cap \cM^e_{j\tau} = \emptyset ) 
 \wle \pr( Q_{i\tau} > 0) \, \pr(Q'''_{j\tau} > 0).
\]
By Lemma~\ref{the:RestrictedQueueUpper}, we see that $\pr( Q_{i\tau} > 0) \le \rho_\tau(f_{\tau,n})$. By applying the same lemma again for a model instance with the full layer set $\{G_k: k \in [m]\}$ and a node set of size $n-\tau$, we find that $\pr(Q'''_{j\tau} > 0) \le \rho_\tau(f'''_{\tau,n})$ where $f'''_{\tau,n}$ is defined as in \eqref{eq:UpperOffspring} but with $n$ replaced by $n-\tau$. Now we note that $f'''_{\tau,n} = f_{2\tau,n}$, and that $f_{\tau,n} \lest f_{2\tau,n}$ implies $\rho_\tau(f_{\tau,n}) \le \rho_\tau(f_{2\tau,n})$.
Hence
\begin{align*}
 \pr( Q_{i\tau} > 0, Q_{j\tau} > 0)
 &\wle \rho_\tau(f_{2\tau,n})^2 + \pr( \cM^e_{i\tau} \cap \cM^e_{j\tau} \ne \emptyset ).
\end{align*}
Because the indicators of $\{ \abs{C_G(i)} > \tau \}$ and $\{ Q_{i\tau} > 0 \}$ coincide on the event $\cO_{i, \le \tau}^c$, and the same is true for $j$, we find that
\begin{align*}
 \pr( \abs{C_G(i)} > \tau, \, \abs{C_G(j)} > \tau)
 &\wle \pr( Q_{i\tau} > 0, \, Q_{j\tau} > 0) +  \pr(\cO_{i,\le \tau}) + \pr(\cO_{j,\le \tau}) \\
 &\wle \rho_\tau(f_{2\tau,n})^2 + \pr( \cM^e_{i\tau} \cap \cM^e_{j\tau} \ne \emptyset ) + 2 \pr(\cO_{i,\le \tau}).
\end{align*}
The claim follows due to $\pr( \cM^e_{i\tau} \cap \cM^e_{j\tau} \ne \emptyset ) \le \pr( \cN^{d}_{i\tau} \cap \cN^{d}_{j\tau} \ne \emptyset)$ and Lemmas~\ref{the:NoMultioverlap} and~\ref{the:DisjointExplorations}.
\end{proof}

\subsection{Quantitative lower bound for deterministic layer types}

Proving a lower bound is more complicated than an upper bound, because we need to verify that the types of unexplored layers remain balanced during the exploration. We start by analysing the case with nonrandom layer types in a finite set in Proposition~\ref{the:QuantitativeLowerBound}. The proof is based on analysing a balanced exploration process in Algorithm~\ref{algo:LowerExploration} which uses a randomised selection of disjoint layers in Algorithm~\ref{algo:SetSelection} as a subroutine.

\begin{proposition}
\label{the:QuantitativeLowerBound}
Fix a finite set $A \subset \Z_+ \times [0,1]$, integers $1 \le M, \tau, \nu \le n$, and a number $\delta \in (0,1)$. Assume that $2 M^2 \abs{A} \frac{\nu \tau} {n} \le \delta$, $\tau \le \frac12 n$, and
$x \le M$ for all $(x,y) \in A$. Then
\begin{equation}
 \label{eq:QuantitativeLowerBound}
 \pr( \abs{C_G(i)} > \tau )
 \wge \rho_\tau(f_{\delta, \tau, \nu}) - \abs{A} \tau e^{4M m/n - \nu},
\end{equation}
where
\begin{equation}
 \label{eq:LowerOffspring}
 f_{\delta, \tau, \nu}
 \weq \law \Big( \sum_{(x,y) \in A} \sum_{k=1}^{m_{xy,\tau-1}} B_{xy}(k) T_{xy}(k) \Big),
\end{equation}
and the random variables on the right are mutually independent and such that $\law(B_{xy}(k)) = \Ber( (1-\delta) \frac{x}{n})$, $\law(T_{xy}(k)) = \Bin^+(x-1,y)$, and $m_{xy,\tau} = (m_{xy} - \tau\nu)_+$ where $m_{xy}$ is the number of layers of type $(x,y)$.
\end{proposition}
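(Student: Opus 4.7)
The plan is to analyse a \emph{balanced restricted exploration} (Algorithm~\ref{algo:LowerExploration}) that, at each step $t$, uses the randomised subset selection of Algorithm~\ref{algo:SetSelection} to process the layers covering $v_t$ type by type, retaining at most $\nu$ layers of each type $(x,y) \in A$. As in the upper bound, the queue $Q_t$ of this exploration consists of nodes reachable from $i$ in $G$, so $\{Q_\tau > 0\}$ implies $\{\abs{C_G(i)} > \tau\}$. It therefore suffices to lower bound $\pr(Q_\tau > 0)$.

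Let $\cR$ denote the truncation event that at some step $t \le \tau$ there is a type $(x,y) \in A$ with more than $\nu$ layers of type $(x,y)$ covering $v_t$; on $\cR^c$ the cap at $\nu$ is never active and the balanced exploration uses every available covering layer of each type in $A$. Arguing as in Lemma~\ref{the:NumberLayersExplored}, the uniformity \eqref{eq:UniformPerStep} implies that conditionally on any history, the number of unused type-$(x,y)$ layers covering $v_t$ is stochastically dominated by $\Bin(m_{xy}, x/(n-(t-1)))$, and the Chernoff inequality exceeds $\nu$ with probability at most $e^{2Mm/(n-\tau)-\nu} \le e^{4Mm/n-\nu}$ (using $\tau \le n/2$). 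A union bound over $|A|$ types and $\tau$ steps gives $\pr(\cR) \le |A|\tau \, e^{4Mm/n-\nu}$.

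On $\cR^c$, the next step is a step-by-step stochastic domination of the queue increment $Z_t$. Fix a history $\cE^+_{t-1} = \cE^+_{t-1}(A_*,B_*,C_*,v)$ with $\abs{A_*} = t-1$; since at most $\nu$ layers per type are consumed per step, $\abs{B_*} \le |A|\nu(t-1)$ and the discovered set satisfies $\abs{C_*} \le M\abs{B_*} \le M|A|\nu\tau$. For each unused layer of type $(x,y)$, the conditional probability of covering $v_t$ is $x/(n-(t-1)) \ge x/n$. Given a covering, the probability that the remaining $x-1$ nodes of the layer avoid $C_*$ is at least $1 - (x-1)\abs{C_*}/(n-t) \ge 1 - 2M^2|A|\nu\tau/n \ge 1-\delta$ (using $\tau \le n/2$ and the hypothesis $2M^2|A|\nu\tau/n \le \delta$). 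Hence each unused type-$(x,y)$ layer, independently across $k$, yields a ``clean cover'' with probability at least $(1-\delta)x/n$, and on this event its $x-1$ non-$v_t$ nodes are fresh, so it contributes $\deg_{\bar G_k}(v_t) \sim \Bin^+(x-1,y)$ new nodes to the queue. Since at most $\nu(t-1) \le \nu(\tau-1)$ layers of each type have been consumed, at least $m_{xy,\tau-1}$ unused layers remain, so conditionally on $\cE^+_{t-1}$ we have $Z_t \gest f_{\delta,\tau,\nu}$.

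Iterating this domination across $t=1,\dots,\tau$ shows that on $\cR^c$ the queue of the balanced exploration stochastically dominates the queue $(Q^*_0, \dots, Q^*_\tau)$ of a Galton--Watson exploration with independent $f_{\delta,\tau,\nu}$-distributed increments. Therefore $\pr(Q_\tau > 0) \ge \pr(Q^*_\tau > 0) - \pr(\cR) = \rho_\tau(f_{\delta,\tau,\nu}) - \pr(\cR)$, and combined with $\pr(\abs{C_G(i)} > \tau) \ge \pr(Q_\tau > 0)$ and the bound on $\pr(\cR)$ this yields \eqref{eq:QuantitativeLowerBound}. The main technical obstacle is to ensure that the ``clean cover'' Bernoulli indicators of the unused layers remain conditionally independent during the balanced exploration; this is precisely why Algorithm~\ref{algo:SetSelection} must perform its random selection using only information about the already-covering layers, leaving the yet-unused layers in their original product conditional distribution of uniform node subsets.
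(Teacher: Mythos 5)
Your proposal follows the same overall structure as the paper's proof: the balanced exploration (Algorithm~\ref{algo:LowerExploration}), the randomised disjoint-layer extraction (Algorithm~\ref{algo:SetSelection}), the per-type cap at $\nu$, the Chernoff bound giving $\abs{A}\tau e^{4Mm/n-\nu}$, and the comparison with a Galton--Watson process with offspring $f_{\delta,\tau,\nu}$. The correct arithmetic $\tfrac{x}{n-(t-1)}\cdot(1-\delta)(1-\tfrac{t-1}{n})=(1-\delta)\tfrac{x}{n}$ and the bound $\abs{C_*}\le M\abs{A}\nu\tau$ are both present, so the numerical constants match.

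Where your write-up is thinner than the paper's is in converting the step-by-step comparison into a bound on $\pr(Q_\tau>0)$. You assert a pathwise domination ``on $\cR^c$'' and then subtract $\pr(\cR)$, but stochastic domination is a statement about laws, not about sample paths restricted to an event, and the cap event is itself a function of the exploration realisation. The paper avoids this by proving an \emph{exact} distributional identity $\pr(Q_t=r,\cA_{\le t})=\pr(Q'_t=r,\cA'_{\le t})$ by conditioning on the full history $\cE_{t-1}$ and inducting; the reference process carries its own truncation events $\cA'_t$, which have the same probability as $\cA_t$, so the subtraction $\pr(Q'_\tau>0)-\pr((\cA'_{\le\tau})^c)$ is justified. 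Relatedly, your explanation of why the ``clean cover'' indicators are conditionally independent (that Algorithm~\ref{algo:SetSelection} ``uses only information about the already-covering layers'') understates what Lemma~\ref{the:SetSelection} accomplishes: the geometric disjointness constraints between covering layers create genuine dependence, and it is the sequential coin-flip thinning with the adaptive threshold $\alpha/\binom{\abs{V}-\abs{H}}{\abs{V_k}}^{-1}\binom{\abs{V}}{\abs{V_k}}$ that restores exact mutual independence of $\Ber(\alpha_t)$ indicators. Finally, the hypothesis $\abs{\cW^+_{xyt}}\le\nu$ is needed \emph{inside} the application of Lemma~\ref{the:SetSelection} (so that $\norm{x}_1$ is controlled and the $\alpha_t$ lower bound for the polynomial holds), which is why the paper's domination is stated conditionally on $\cA_t$, not unconditionally on the history. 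These are gaps in rigor rather than in strategy.
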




\begin{algorithm}[h]
\small
\DontPrintSemicolon
\KwInput{Layers $G_1,\dots,G_m$, root node $i$, parameters $\nu \in \Z_+$, $\delta \in (0,1)$
}
\KwOutput{Subset of $G$-component of $i$.}
~\\

State variables: \\
$\cQ_t$ = Set of nodes in the exploration queue after step $t$ \\
$\cN_{t}$ = Set of discovered nodes after step $t$ \\
$\cM_{t}$ = Set of available layers after step $t$ \\[1ex]

Initialise: Put node $i$ into the queue and declare $i$ discovered; declare all layers available; initialise state variables as: $\cQ_0 \leftarrow \{i\}$, $\cN_0 \leftarrow \{i\}$, $\cM_{0} \leftarrow \{1,\dots,m\}$; and set $t \leftarrow 0$. \\[1ex]

\While{$\cQ_t \ne \emptyset$}
{
Set $t \leftarrow t+1$ and select node $v_t \leftarrow \min \cQ_{t-1}$ for exploration \\[.5ex]

Declare the layers in $\cW^+_t \leftarrow \{k \in \cM_{t-1} : V(G_k) \ni v_t\}$ and the nodes in $\cZ_t^+ \leftarrow \cup_{k \in \cW^+_t} (V(G_k) \setminus \{v_t\})$ as discovered\\[.5ex]

Extract a disjoint subcollection of discovered layers $\cW^+_t$ by computing $\cW_t \leftarrow $ Output of Algorithm~\ref{algo:SetSelection} with ground set $\{v_1,\dots, v_{t-1}\}^c$, input sets $\{ V(G_k) \setminus \{v_t\} : k \in \cW^+_t \}$, taboo set $\cN_{t-1}$, parameter $\alpha_t = (1-\delta)(1-\frac{t-1}{n})$ \\[.5ex]

Explore the selected layers and determine the node set $\cZ_t \leftarrow \cup_{k \in \cW_t} N_{v_t}(\bar G_k)$,
where $\bar G_k$ is the transitive closure of $G_k$ \\[.5ex]

Update the exploration queue by $\cQ_t \leftarrow (\cQ_{t-1} \setminus \{v_t\}) \cup \cZ_t$ and the set of discovered nodes by $\cN_t \leftarrow \cN_{t-1} \cup \cZ_t^+$ \\[.5ex]

Layer balancing:
$\cM_t \leftarrow \cup_{(x,y) \in A} \cM_{xy,t}$ where $\cM_{xy,t}$ is a uniformly random subset of $\cW^u_{xy,t} = \{k \in \cM_{t-1} \setminus \cW^+_t: X_k=x, Y_k=y\}$ of size $\abs{\cW^u_{xy,t}} \wedge (m_{xy} - \nu t)_+$

}
Output: $\{v_1,\dots, v_t\}$

\caption{Balanced exploration.}
\label{algo:LowerExploration}
\end{algorithm}

%

\begin{algorithm}[H]
\small
\DontPrintSemicolon
\KwInput{List of subsets $(V_1,\dots,V_m)$ of a ground set $V$, taboo set $H_0 \subset V$,
parameter $\alpha \in (0,1)$}
\KwOutput{Random index set $K \subset \{1,\dots,m\}$}
~\\
Initialise $K \leftarrow \emptyset$ and $H \leftarrow H_0$ \\
\For{$k=1,\dots,m$}
{
$U_k \leftarrow $ uniform random number in $(0,1)$ \\ 
\If{$V_k \cap H = \emptyset$ and $U_k \le \alpha \binom{\abs{V}-\abs{H}}{\abs{V_k}}^{-1} \binom{\abs{V}}{\abs{V_k}}$}
{
Add the index $k$ to $K$ \\
Add the elements of $V_k$ to $H$
}
}
Output $K$
\caption{Extracting disjoint sets.}
\label{algo:SetSelection}
\end{algorithm}

\begin{lemma}
\label{the:SetSelection}
Let $H_0 \subset V$ be nonrandom sets. Let $V_1,\dots,V_m$ be independent uniformly random subsets of $V$ with nonrandom sizes $x_1,\dots, x_m$, and assume that $\abs{H_0} + \norm{x}_1 \le \abs{V}$ and $0 \le \alpha \le \left(1 - \frac{\abs{H_0}+\norm{x}_1}{\abs{V}} \right)^{\supnorm{x}}$ where $\norm{x}_1 = \sum_{k=1}^m x_k$ and $\norm{x}_\infty = \max_{1 \le k \le m} x_k$. Then the indicator variables $B_k = 1(k \in K)$ characterising the output of Algorithm~\ref{algo:SetSelection} are mutually independent and $\Ber(\alpha)$-distributed, and the sets $\{V_k: k \in K\}$ are mutually disjoint and disjoint from $H_0$ almost surely.
\end{lemma}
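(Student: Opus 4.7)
The plan is to proceed by induction on $k=1,\dots,m$, maintaining the joint invariant that after step $k$: (a) the indicators $B_1,\dots,B_k$ are i.i.d.\ $\Ber(\alpha)$; (b) the accepted sets $\{V_j:j\le k,\,B_j=1\}$ are pairwise disjoint and disjoint from $H_0$; and (c) conditionally on the history $\cF_k=\sigma(V_1,U_1,\dots,V_k,U_k)$, the tail sets $V_{k+1},\dots,V_m$ are still mutually independent with $V_j$ uniform on the $x_j$-subsets of $V$. Item (c) just says that the algorithm does not touch the distribution of the unseen $V_j$'s and is immediate from the fact that Algorithm~\ref{algo:SetSelection} inspects only $V_1,\dots,V_k$; the real work is to advance (a) and (b).

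The core of the inductive step is a short conditional computation. Write $H=H_{k-1}=H_0\cup\bigcup_{j<k,\,B_j=1}V_j$, which is $\cF_{k-1}$-measurable by construction. Because $V_k$ is independent of $\cF_{k-1}$ and uniform on $x_k$-subsets of $V$, one has $\pr(V_k\cap H=\emptyset\mid\cF_{k-1})=\binom{\abs{V}-\abs{H}}{x_k}/\binom{\abs{V}}{x_k}$. Combining this with the independence of $U_k\sim\Unif(0,1)$ from $(\cF_{k-1},V_k)$, and assuming the threshold $\alpha\binom{\abs{V}-\abs{H}}{x_k}^{-1}\binom{\abs{V}}{x_k}$ lies in $[0,1]$, one obtains $\pr(B_k=1\mid\cF_{k-1})=\alpha$. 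Moreover the threshold depends on $V_k$ only through the event $\{V_k\cap H=\emptyset\}$, so conditionally on $\{B_k=1\}$ and $\cF_{k-1}$ the law of $V_k$ is uniform on the $x_k$-subsets of $V\setminus H$; this upgrades (b) and preserves (c), completing the induction.

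The hard part will be verifying that the threshold really lies in $[0,1]$ for every realisation of $H$ encountered along the run, that is, $\alpha\le\binom{\abs{V}-\abs{H}}{x_k}/\binom{\abs{V}}{x_k}$ always. Since $\abs{H}\le\abs{H_0}+\norm{x}_1$ by construction, this reduces to a deterministic lower bound on $\binom{\abs{V}-\abs{H_0}-\norm{x}_1}{x_k}/\binom{\abs{V}}{x_k}$ for each $k$, which I would attack by writing it as a product of $x_k\le\supnorm{x}$ factors $(\abs{V}-\abs{H_0}-\norm{x}_1-i)/(\abs{V}-i)$, and then matching each factor against $1-(\abs{H_0}+\norm{x}_1)/\abs{V}$ using $x_k\le\supnorm{x}$ and the feasibility hypothesis $\abs{H_0}+\norm{x}_1\le\abs{V}$, to land precisely on the quantitative assumption on $\alpha$. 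This elementary but somewhat delicate factorwise estimate is the only place where the specific upper bound on $\alpha$ is consumed; everything else in the argument is bookkeeping about conditional distributions and independence across $k$ built from the independence of the auxiliary variables $(U_k)$ and $(V_k)$.
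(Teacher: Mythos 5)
Your high-level architecture (condition on $\cF_{k-1}$, independence of $U_k$ from $(\cF_{k-1},V_k)$, deduce $\pr(B_k=1\mid\cF_{k-1})=\alpha$ so long as the threshold is $\le 1$, then iterate) is exactly the paper's, and is fine. The gap is in the deterministic verification that the threshold is never truncated, and it is not a matter of delicacy: as you have set it up, the inequality you would need is simply false.

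You reduce to showing $\binom{|V|-T}{x_k}/\binom{|V|}{x_k}\ge\alpha$ with $T=|H_0|+\norm{x}_1$, and propose to write the left side as $\prod_{i=0}^{x_k-1}(|V|-T-i)/(|V|-i)$ and ``match each factor against $1-T/|V|$.'' But each factor satisfies $(|V|-T-i)/(|V|-i)\le 1-T/|V|$ for $i\ge 1$ (cross-multiply), so factorwise matching gives an \emph{upper} bound, not the lower bound you need. Worse, the target inequality itself fails at this level of coarseness: take $|V|=10$, $H_0=\emptyset$, $x_1=x_2=3$, so $T=6$, $\supnorm{x}=3$. Then $\binom{4}{3}/\binom{10}{3}=1/30\approx 0.033$, while $(1-T/|V|)^{\supnorm{x}}=0.4^3=0.064$, so $\alpha$ can be chosen in $(1/30,\,0.064]$ and the threshold would exceed $1$.

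The missing ingredient is the tighter bound $|H_{k-1}|\le|H_0|+\norm{x}_1-x_k$: the set $V_k$ itself is one of the $m$ input sets and has not yet been added to $H$, so the accumulated mass that $V_k$ must avoid is at most $\norm{x}_1-x_k$, not $\norm{x}_1$. With $h=|H_{k-1}|\le h_0+\norm{x}_1-x_k$ (where $h_0=|H_0|$), one uses $p(h,x)=\prod_{r=0}^{x-1}\bigl(1-h/(|V|-r)\bigr)\ge\bigl(1-h/(|V|-x)\bigr)^x$ and the elementary identity $\frac{a-c}{b-c}\le\frac{a}{b}$ for $0\le c<b$ and $a\le b$ (here $a=h_0+\norm{x}_1$, $b=|V|$, $c=x_k$) to get
\[
p(|H_{k-1}|,x_k)
\ \ge\ \Bigl(1-\tfrac{h_0+\norm{x}_1-x_k}{|V|-x_k}\Bigr)^{x_k}
\ \ge\ \Bigl(1-\tfrac{h_0+\norm{x}_1}{|V|}\Bigr)^{x_k}
\ \ge\ \Bigl(1-\tfrac{h_0+\norm{x}_1}{|V|}\Bigr)^{\supnorm{x}}
\ \ge\ \alpha,
\]
where the last two steps use $x_k\le\supnorm{x}$ with a base in $[0,1]$ and the feasibility hypothesis $h_0+\norm{x}_1\le|V|$. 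This is the step your sketch skips, and without the $-x_k$ correction no factorwise estimate can close the argument.
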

\begin{proof}
Denote by $H_k$ the state of $H$ after finishing round $k$ of Algorithm~\ref{algo:SetSelection}.
Then $H_k$ equals the union of $H_0$ and the sets $V_j$ admitted during rounds $j \le k$, and 
the if-statement guarantees that a set $V_k$ is admitted to $K$ only if it is disjoint from $H_{k-1}$. Hence the family $\{V_k: k \in K\} = \{V_k: B_k = 1\}$ is surely disjoint and disjoint from $H_0$. To investigate the joint distribution of $B_k = 1_K(k)$, $k=1,\dots,m$, denote by 
denote by $p(h,x) = \binom{\abs{V}-h}{x}\binom{\abs{V}}{x}^{-1}$ the probability that a random $x$-set in $V$ does not overlap a particular $h$-set of $V$.
%
%
Let $\cF_k$ be the sigma-algebra generated by $\{(U_j, V_j): j \le k\}$. Then $B_k,H_k$ are $\cF_k$-measurable, $V_k$ is independent of $\cF_{k-1}$, and $U_k$ is independent of $(\cF_{k-1}, V_k)$.
Hence,
\begin{equation}
 \label{eq:SetSelection}
\begin{aligned}
 \pr(V_k \cap H_{k-1} = \emptyset \cond \cF_{k-1})
 &\weq p(\abs{H_{k-1}}, x_k), \\
 \pr(U_k \le \tfrac{\alpha}{p(\abs{H_{k-1}}, \abs{V_k})} \cond \cF_{k-1})
 &\weq \tfrac{\alpha}{p(\abs{H_{k-1}}, \abs{V_k})} \wedge 1.
\end{aligned}
\end{equation}
A basic computation shows that $p(h, x) = \prod_{r=0}^{x-1} \left(1 - \frac{h}{\abs{V}-r} \right) \ge \left(1 - \frac{h}{\abs{V}-x} \right)^x$, so that $p(\abs{H_{k-1}}, x_k) \ge p(h_0 + \norm{x}_1 - x_k, x_k)$ with $h_0 = \abs{H_0}$ implies
\[
 p(\abs{H_{k-1}}, x_k)
 \wge \left(1 - \frac{h_0 + \norm{x}_1 - x_k}{\abs{V} - x_k} \right)^{x_k}
 \wge \left(1 - \frac{h_0+\norm{x}_1}{\abs{V}} \right)^{\supnorm{x}}
 \wge \alpha.
\]
Hence we may ignore the truncation by one in \eqref{eq:SetSelection}, and it follows that $\pr(B_k=1 \cond \cF_{k-1}) = \alpha$. This implies that $\pr(B_k=1) = \alpha$, and that $B_k$ is independent of $\cF_{k-1}$. Especially, $B_k$ is independent of $(B_1,\dots,B_{k-1})$, so we conclude that $B_1,\dots, B_m$ are mutually independent.
\end{proof}


\subsubsection{Proof of Proposition~\ref{the:QuantitativeLowerBound}}

It suffices to find a lower bound for the exploration queue length $Q_t = \abs{\cQ_t}$ in Algorithm~\ref{algo:LowerExploration}.  This is because Algorithm~\ref{algo:LowerExploration} started at node $i$ discovers a subset of $\abs{C_G(i)}$, and hence $\pr( \abs{C_G(i)} > \tau ) \ge \pr( Q_\tau > 0 )$. Denote by $\cM_{xy,t}$ the of available $xy$-layers, and recall that $m_{xy,t} = (m_{xy} - \nu t)_+$. Denote by $T_i$ the number of steps completed by the algorithm. The queue length obeys the recursion $Q_t = (Q_{t-1} - 1 + Z_t) 1(Q_{t-1}>0)$ where $Z_t = \abs{\cZ_t}$.  Algorithm~\ref{algo:SetSelection} guarantees that the node sets $V(G_k) \setminus \{v_t\}$ of the explored layers $k \in \cW_t$ are mutually disjoint and do not overlap any previously explored layers. Hence
\begin{equation}
 \label{eq:LowerBoundDiscovered}
 Z_t \weq \sum_{(x,y) \in A} \sum_{k \in \cM_{xy,t-1}} B_{1xyt}(k) B_{2xyt}(k) \, T_{xyt}(k),
\end{equation}
where $B_{1xyt}(k) = 1(k \in \cW^+_{xyt})$, $B_{2xyt}(k) = 1(k \in \cW_{xyt})$, and $T_{xyt}(k) = \abs{N_{v_t}(\bar G_k)}$ equals the number of neighbours of node $v_t$ in the transitive closure of $G_k$.

We will compare the queue length process to a random walk defined recursively by $Q'_0=1$ and $Q'_t = (Q'_{t-1}-1+ Z'_t) 1(Q'_{t-1}>0)$, where
\[
 Z'_t
 \weq \sum_{(x,y) \in A} \sum_{k=1}^{m_{xy,t-1}} B'_{1xyt}(k) B'_{2xyt}(k) T'_{xyt}(k),
\]
and where the random variables appearing on the right are mutually independent and such that
$\law(B'_{1xyt}(k)) = \Ber( \frac{x}{n-(t-1)} )$, $\law(B'_{2xyt}(k)) = \Ber( \alpha_t )$ with $\alpha_t = (1-\delta)(1-\frac{t-1}{n})$, and $\law(T'_{xyt}(k)) = \Bin^+(x-1,y)$. A key part of the proof is to show that
\begin{equation}
 \label{eq:LowerBoundKey}
 \pr( Q_t = r, \cA_{\le t} )
 \weq \pr( Q'_t = r, \cA'_{\le t} )
\end{equation}
for all $r > 0$ and $t \le \tau$, where $\cA_{\le t} = \cA_1 \cap \cdots \cap \cA_t$ and $\cA'_{\le t} = \cA'_1 \cap \cdots \cap \cA'_t$ are defined by
\[
 \cA_t
 = \left\{ T_i \ge t, \ \max_{(x,y) \in A} \abs{\cW^+_{xyt}} \le \nu \right\}
 \quad \text{and} \quad
 \cA'_t
 = \left\{ \max_{(x,y) \in A} \sum_{k=1}^{m_{xy,t}} B'_{1xyt}(k) \le \nu \right\}.
\]

To verify \eqref{eq:LowerBoundKey}, consider an event $\cE_{t-1}$ that $Q_{t-1}=q$, $\cA_{\le t-1}$ is valid, the set of previously explored nodes equals $\hat\cN^e_{\le t-1}$, the set of previously explored layers equals $\hat\cM^e_{\le t-1}$, the set of available $xy$-layers after $t-1$ steps is $\hat\cM_{xy,t-1}$, and node $v_t$ is explored on step $t$. This event is determined by the graphs $G_k$, $k \in \hat\cM^e_{\le t-1}$, the indicator variables $1(V(G_k) \ni v)$ for $k = [m]$ and $v \in \hat\cN^e_{\le t-1}$, and the random variables used in the randomised algorithm during steps $s \le t-1$. On the event $\cE_{t-1}$, the number of available $xy$-layers in the beginning of step $t$ equals $m_{xy,t-1}$, and the only thing known about the available layers is that they do not contain any of the explored nodes $\hat\cN^e_{\le t-1}$. Conditionally on $\cE_{t-1}$, the graphs $G_k$, $k \in \hat\cM_{t-1}$, are hence mutually independent and such that $V(G_k)$ is a uniform $X_k$-set in $[n] \setminus \hat\cN^e_{\le t-1}$.

Conditionally on the event $\cE_{t-1}$, each available layer $k \in \hat\cM_{xy,t-1}$ is discovered with probability $\frac{x}{n-(t-1)}$, independently of other available layers. Hence the indicators $B_{1xyt}(k)$ in \eqref{eq:LowerBoundDiscovered} are  independent and $\Ber(\frac{x}{n-(t-1)})$-distributed given $\cE_{t-1}$. Let $\cE_{t-1}^+ = \cE_{t-1} \cap \{\cW^+_{xyt} = \hat \cW^+_{xyt}, (x,y) \in A\}$ for some layer sets $\abs{\hat \cW^+_{xyt}} \le \nu$ such that $\cE_{t-1}^+$ has nonzero probability.  On the event $\cE_{t-1}^+$, the number of nodes discovered before step $t$ is bounded by $\abs{\cN_{t-1}} \le M \abs{A} \nu (t-1)$, and $\sum_{k \in \cW^+_t} \abs{V(G_k) \setminus \{v_t\} } \le M \abs{\cW^+_t} \le M \abs{A} \nu$, and it follows that
\begin{align*}
 \left( 1 - \frac{\abs{\cN_{t-1}} + \sum_{k \in \cW^+_t} \abs{V(G_k) \setminus \{v_t\} }} {n-(t-1)} \right)^M
 &\wge \left(1 - \frac{M \abs{A} \nu t} {n-(t-1)}\right)^M,
\end{align*}
where the right side is at least $\left(1 - 2 M \abs{A} \frac{\nu t}{n}\right)^M \ge 1 - 2 M^2 \abs{A} \frac{\nu t} {n} \ge \alpha_t$ due to $2 M^2 \abs{A} \frac{\nu t} {n} \le \delta$ and $\alpha_t \le 1-\delta$.
By Lemma~\ref{the:SetSelection}, we find that the indicators $B_{2xyt}(k)$ in \eqref{eq:LowerBoundDiscovered} are mutually independent and $\Ber(\alpha_t)$-distributed given $\cE_{t-1}^+$. Furthermore, also the random integers $T_{xyt}(k)$ in \eqref{eq:LowerBoundDiscovered} are mutually independent, independent of the indicators $B_{2xyt}(k)$, and such that $\law(T_{xyt}(k) \cond \cE_{t-1}^+) = \Bin^+(x-1,y)$. These observations allow us to conclude that
$
 \law( Z_t \cond \cE_{t-1}, \cA_t )
 = \law( Z'_t \cond \cA'_t )
$
and
$
 \pr( \cA_t \cond \cE_{t-1} )
 = \pr( \cA'_t ).
$
Hence for any $r > 0$, 
\begin{align*}
 \pr( Q_t = r, \cA_t \cond \cE_{t-1})
 &\weq \pr( \cA_t \cond \cE_{t-1}) \, \pr( q-1+Z_t = r \cond \cE_{t-1}, \cA_t ) \\
 &\weq \pr( \cA'_t ) \, \pr( q-1+Z'_t = r \cond \cA'_t) \\
 &\weq \pr( q-1+Z'_t = r, \cA'_t ) \\
 &\weq \pr( Q'_t = r, \cA'_t \cond Q'_{t-1} = q, \cA'_{\le t-1}).
\end{align*}
By multiplying both sides above by $\pr(\cE_{t-1})$ and summing over all $\cE_{t-1}$ which are subsets of the event $\{Q_{t-1}=q\} \cap \cA_{\le t-1}$, it follows that
\begin{align*}
 \pr( Q_t = r, \cA_t \cond Q_{t-1}=q, \cA_{\le t-1})
 &\weq \pr( Q'_t = r, \cA'_t \cond Q'_{t-1} = q, \cA'_{\le t-1}).
\end{align*}
Because the above equality holds for all $q, r > 0$, a simple induction argument, based on
\[
 \pr(Q_t = r, \cA_{\le t})
 \weq \sum_{q > 0} \pr(Q_{t-1} = q, \cA_{\le t-1}) \pr( Q_t = r, \cA_t \cond Q_{t-1}=q, \cA_{\le t-1}),
\]
confirms \eqref{eq:LowerBoundKey}. With the help of \eqref{eq:LowerBoundKey}, we now find that
\[
 \pr( Q_\tau > 0 )
 \wge \pr( Q_\tau > 0, \cA_{\le \tau} )
 \weq \pr( Q'_\tau > 0, \cA'_{\le \tau} ) 
 \wge \pr( Q'_\tau > 0 ) - \pr( (\cA'_{\le \tau})^c ).
\]
Denote $M'_{1xyt} = \sum_{k=1}^{m_{xy,t-1}} B'_{1xyt}(k)$ and observe that $m_{xy,t-1} \le m$ and $\law(B'_{1xyt}(k)) \lest \Ber(2 \frac{M}{n})$ imply that $\law(M'_{1xyt}) \lest \Bin(m, \frac{2 M}{n})$ for $t \le n/2$. The moment generating function of the latter distribution, evaluated at one, is bounded by $(1+\frac{2 M}{n}(e-1))^n \le e^{2M(e-1) m/n} \le e^{4M m/n}$. Therefore, Markov's inequality for $e^{M'_{1xyt}}$ implies $\pr( M'_{1xyt} > \nu ) \le e^{4M m/n - \nu}$, and
\[
 \pr( (\cA'_{\le \tau})^c )
 \wle \sum_{t=1}^\tau \pr( (\cA'_t)^c )
 \wle \abs{A} \tau e^{4M m/n - \nu}.
\]

Finally, observe that the distribution of $Z'_t$ coincides with $f_{\delta, t, \nu}$ defined by \eqref{eq:LowerOffspring}. Moreover, $f_{\delta, t, \nu} \gest f_{\delta, \tau, \nu}$ for all $t=1,\dots, \tau$.  Therefore, $\pr( Q'_\tau > 0 ) \ge \pr( Q''_\tau > 0 )$ where $(Q''_0, \dots, Q''_\tau)$ is defined as before, but with $Z'_1,\dots, Z'_\tau$ replaced by mutually independent $f_{\delta, \tau, \nu}$-distributed random integers $Z''_1,\dots,Z''_\tau$. The claim follows by noting that $\pr( Q''_\tau > 0 ) = \rho_\tau( f_{\delta, \tau, \nu} )$.

\qed

\subsection{Component analysis for a finite layer type space}


\begin{lemma}
\label{the:MBGiant1}
Under the assumptions and notations of Theorem~\ref{the:Giant}, together with the extra assumption that the supports of $P$ and $(P_n)_{n \ge 1}$ are all contained in a finite set $A \subset \{0,\dots,M\} \cap [0,1]$, the component size of any particular node $i$ satisfies
\begin{alignat}{2}
 \label{eq:MBGiant1Constant}
 \pr \big( \abs{C_{G^{(n)}}(i)} > \tau \big) &\wto \rho_\tau(f^+)
 \qquad &\text{for any constant $\tau \ge 1$}, \\
 \label{eq:MBGiant1}
 \pr \big( \abs{C_{G^{(n)}}(i)} > \omega \big) &\wto \rho(f^+)
 \qquad &\text{for $1 \ll \omega \ll n \log^{-1} n$},
\end{alignat}
the relative frequencies of nodes with large components satisfy
\begin{alignat}{2}
 \label{eq:EmpLargeComponentFrequencyConstant}
 n^{-1} \abs{B_\tau(G^{(n)})} &\wprto \rho_\tau(f^+)
 \qquad &\text{for any constant $\tau \ge 1$}, \\
 \label{eq:EmpLargeComponentFrequency}
 n^{-1} \abs{B_\omega(G^{(n)})} &\wprto \rho(f^+)
 \qquad &\text{for $1 \ll \omega \ll n \log^{-1} n$},
\end{alignat}
and the largest component size in $G^{(n)}$ satisfies
\begin{equation}
 \label{eq:GiantFiniteLayerTypes}
 n^{-1} N_1(G^{(n)}) \wprto \rho(f^+).
\end{equation}
\end{lemma}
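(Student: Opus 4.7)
The plan is to combine the quantitative bounds of Propositions~\ref{the:SingleUpperBound}, \ref{the:DoubleUpperBound}, and~\ref{the:QuantitativeLowerBound} with exchangeability and a second-moment argument. The central technical ingredient is showing that, when $\delta_n \to 0$, $\nu_n \to \infty$, and $\tau_n \nu_n = o(n)$, both the upper-bound offspring distribution $f_{\tau_n,n}$ from \eqref{eq:UpperOffspring} and the lower-bound offspring distribution $f_{\delta_n,\tau_n,\nu_n}$ from \eqref{eq:LowerOffspring} converge weakly, together with first-moment convergence, to the target $f^+ = \CPoi(\mu(P)_{10}, \Bin^+_{10}(P))$. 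Under the finite-support assumption, the law of large numbers gives $m_{xy}/n \to \mu P(\{(x,y)\})$ almost surely for each $(x,y) \in A$, and a Le Cam--style Poisson approximation applied type by type yields the claimed compound Poisson limit with mixing distribution $\Bin^+_{10}(P)$.

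For \eqref{eq:MBGiant1Constant} I apply Propositions~\ref{the:SingleUpperBound} and~\ref{the:QuantitativeLowerBound} at the fixed cutoff $\tau$, with parameters $\delta_n \downarrow 0$ and $\nu_n \uparrow \infty$ chosen so that the error terms vanish; continuity of $\rho_\tau$ in the offspring law (it depends on finitely many tree probabilities) closes the squeeze. For \eqref{eq:MBGiant1}, the same propositions applied at cutoff $\omega_n = o(n \log^{-1} n)$ with $\nu_n \uparrow \infty$ still yield vanishing error terms. For the upper bound I use $\rho_{\omega_n}(f_{\omega_n,n}) \le \rho_K(f_{\omega_n,n}) \to \rho_K(f^+)$ for any fixed $K$, followed by $\rho_K(f^+) \downarrow \rho(f^+)$ as $K \to \infty$. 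For the matching lower bound I use $\rho_{\omega_n}(f_{\delta_n,\omega_n,\nu_n}) \ge \rho(f_{\delta_n,\omega_n,\nu_n})$ combined with continuity of the survival probability $\rho$ at $f^+$ via the fixed-point equation for its probability generating function.

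Next, \eqref{eq:EmpLargeComponentFrequencyConstant}--\eqref{eq:EmpLargeComponentFrequency} follow by exchangeability and a second-moment estimate. By exchangeability, $\E[n^{-1}\abs{B_\tau(G^{(n)})}] = \pr(\abs{C_{G^{(n)}}(1)} > \tau)$ converges to $\rho_\tau(f^+)$ by the previous step. For the second moment, Proposition~\ref{the:DoubleUpperBound} together with the same Poisson approximation gives
\[
 n^{-2} \E\abs{B_\tau(G^{(n)})}^2
 \wle n^{-1} + \rho_\tau(f_{2\tau, n})^2 + c \tau^2 n^{-1} \log n
 \wto \rho_\tau(f^+)^2,
\]
so Chebyshev's inequality yields the convergence in probability. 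The identical argument with $\tau$ replaced by $\omega_n$ delivers \eqref{eq:EmpLargeComponentFrequency}.

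The main obstacle is \eqref{eq:GiantFiniteLayerTypes}. The easy direction $n^{-1} N_1(G^{(n)}) \le n^{-1}\abs{B_{\omega_n}(G^{(n)})} + \omega_n n^{-1}$ combined with \eqref{eq:EmpLargeComponentFrequency} gives $\limsup n^{-1} N_1(G^{(n)}) \le \rho(f^+)$. For the matching lower bound, I must show that the $\Theta(n)$ nodes lying in components of size exceeding $\omega_n$ aggregate into a single one rather than being spread across many medium-sized ones. I plan a sprinkling argument: for small $\epsilon > 0$, decompose each layer strength as $Y_k = 1 - (1 - Y_k^{(1)})(1 - Y_k^{(2)})$ with $Y_k^{(1)} = (1-\epsilon)Y_k$, which realises $G^{(n)}$ as the union of two conditionally independent overlay graphs $G^{(n),1} \cup G^{(n),2}$. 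Applying \eqref{eq:EmpLargeComponentFrequency} to $G^{(n),1}$ (whose limiting offspring law is arbitrarily close to $f^+$) identifies $\Theta(n)$ nodes in large $G^{(n),1}$-components, and a double-exploration estimate in the spirit of Proposition~\ref{the:DoubleUpperBound}, exploiting the positive density of sprinkled layers in $G^{(n),2}$, shows that all but $o(n)$ of these nodes merge into a single component. Letting $\epsilon \downarrow 0$ recovers $\liminf n^{-1} N_1(G^{(n)}) \ge \rho(f^+)$.
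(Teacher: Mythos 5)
Your high-level plan reproduces the paper's structure (Propositions~\ref{the:SingleUpperBound}, \ref{the:QuantitativeLowerBound}, \ref{the:DoubleUpperBound} plus a Le~Cam $\Ber\to\Poi$ comparison, then exchangeability with a Chebyshev bound, then sprinkling for the giant), and the treatment of \eqref{eq:MBGiant1Constant}--\eqref{eq:EmpLargeComponentFrequency} is essentially the paper's argument, up to cosmetic differences in whether $\delta$ is held fixed and sent to zero at the end or coupled to $n$.

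The genuine gap is in the sprinkling step for \eqref{eq:GiantFiniteLayerTypes}. You split each layer's strength as $Y_k = 1-(1-Y_k^{(1)})(1-Y_k^{(2)})$, so $G^{(n),1}$ and $G^{(n),2}$ share the same node sets $V(G_k)$. They are only conditionally independent given these node sets. But the sprinkling argument conditions on the realization of $G^{(n),1}$ and on the identity of two distinct large $G^{(n),1}$-components $C',C''$ of size $\gesim\omega$, and this conditioning \emph{does} reveal information about $\{V(G_k)\}$: the absence of any $G_k^{(1)}$-edge between $C'$ and $C''$ tilts the posterior on $V(G_k)$ away from node sets with large $\abs{V(G_k)\cap C'}\cdot\abs{V(G_k)\cap C''}$, precisely the ones on which $G_k^{(2)}$ would create a bridging edge. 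So the conditional distribution of the sprinkled edges given $G^{(n),1}$ is not the fresh Bernoulli structure you invoke, and the claim ``exploiting the positive density of sprinkled layers'' does not follow without additional work. The paper avoids this by partitioning the \emph{layer indices} into red and blue: the blue layers carry fresh node sets independent of $G^{(n,r)}$, so after conditioning on the red graph the blue node sets are still uniform $x$-sets and Lemma~\ref{the:IntersectBoth} applies directly. If you want to keep a strength-split, you would have to condition first on all node sets $\{V(G_k)\}$ and then show that, conditionally on $C',C''$ and the no-$G_k^{(1)}$-edge constraint, enough expected $G_k^{(2)}$-bridges survive; that is not automatic and is not addressed in the proposal.

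Two smaller omissions: (a) the lower bound for the giant is only needed when $\rho(f^+)>0$, which forces $(P)_{21}>0$; you should note this to ensure the sprinkled graph actually supports bridging edges; and (b) Propositions~\ref{the:SingleUpperBound}, \ref{the:DoubleUpperBound}, \ref{the:QuantitativeLowerBound} are stated for deterministic layer types, while Lemma~\ref{the:MBGiant1} inherits the generality of Theorem~\ref{the:Giant}. The paper closes this with a conditioning argument via Lemmas~\ref{the:ConditionalConvergenceInProbability} and \ref{the:EmpiricalDistributionConvergence}; your proposal is silent on it.
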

\begin{proof}
We start by making an additional assumption that all layer types are nonrandom.  The extension to random layer types is treated in the end.

(i) Upper bound for \eqref{eq:MBGiant1Constant}. Fix $1 \le \tau \le n/2$. Then by Proposition~\ref{the:SingleUpperBound},
\begin{equation}
 \label{eq:FiniteUpper1Constant}
 \pr( \abs{C_{G^{(n)}}(i)} > \tau)
 \wle \rho_\tau(f_{\tau,n}) + c \tau^2 n^{-1} \log n,
\end{equation}
where $c = e^{5M(1+m/n)}$, and $f_{\tau,n}$ is the distribution defined by \eqref{eq:UpperOffspring}. A natural coupling implies that $\abs{\rho_{\tau}(f_{\tau,n}) - \rho_{\tau}(f^+)} \le \tau \dtv(f_{\tau,n}, f^+)$. Hence by \eqref{eq:FiniteUpper1Constant} it follows that
\begin{equation}
 \label{eq:FiniteUpper2Constant}
 \pr( \abs{C_{G^{(n)}}(i)} > \tau)
 \wle \rho_\tau(f^+) + c \tau^2 n^{-1} \log n + \tau \dtv(f_{\tau,n}, f^+).
\end{equation}

Define $\tilde f_{\tau,n}$ using the same formula~\eqref{eq:UpperOffspring}, but with the $\Ber(\frac{X_k}{n-\tau})$-distributed random variables $B_k$ replaced by $\Poi(\frac{X_k}{n-\tau})$-distributed random variables $\tilde B_k$.  Because $\dtv(\Ber(p), \Poi(p)) = p(1-e^{-p}) \le p^2$ for all $0 \le p \le 1$, a natural coupling implies that
\[
 \dtv(f_{\tau,n}, \tilde f_{\tau,n})
 \wle \sum_{k=1}^m \left( \frac{X_k}{n-\tau} \right)^2
 \wle 4 \frac{M^2}{n^2} m.
\]
Then we see by Lemma~\ref{the:CPoiSum} that $\tilde f_{\tau,n} = \CPoi(\frac{m}{n-\tau} (P_n)_{10}, g_n)$ where $g_n = \Bin^+_{10}(P_n)$.  Lemma~\ref{the:CPoiPerturbation} implies that
$
\dtv(\tilde f_{\tau,n}, f^+)
 \le \left| \frac{m}{n-\tau} (P_n)_{10} - \mu (P)_{10} \right| + \mu (P)_{10} \dtv(g_n, g),
$
and we conclude that
\[
 \dtv(f_{\tau,n}, f^+)
 \wle \left| \frac{m}{n-\tau} (P_n)_{10} - \mu (P)_{10} \right| + \mu (P)_{10} \dtv(g_n, g) + 4 \frac{M^2}{n^2} m.
\]
Because $g_n \weakto g$ by Lemma~\ref{the:MixedBiasedERConvergence}, it follows that $\dtv(f_{\tau,n}, f^+) \to 0$ as $n \to \infty$. Hence by \eqref{eq:FiniteUpper2Constant} it follows that $\limsup_{n \to \infty} \pr( \abs{C_{G^{(n)}}(i)} > \tau) \le \rho_\tau(f^+)$.

(ii) Upper bound for \eqref{eq:MBGiant1}. Fix $\epsilon > 0$ and select a large enough $t$ such that $\rho_t(f^+) \le \rho(f^+) + \epsilon$. Define $\tau_n = \floor{\omega_n \wedge n^{1/3}}$. Then $\tau_n \ge t$ for large values of $n$, and by \eqref{eq:FiniteUpper1Constant},
\begin{align*}
 \pr( \abs{C_{G^{(n)}}(i)} > \tau_n)
 \wle \rho_{\tau_n}(f_{\tau_n,n}) + c \tau_n^2 n^{-1} \log n
 &\wle \rho_t(f_{\tau_n,n}) + c n^{-1/3} \log n.
\end{align*}
A natural coupling implies that $\abs{\rho_t(f_{\tau_n,n}) - \rho_{t}(f^+)} \le t \dtv(f_{\tau_n,n},f^+)$. Hence it follows that
\[
 \pr( \abs{C_{G^{(n)}}(i)} > \omega_n)
 \wle \pr( \abs{C_{G^{(n)}}(i)} > \tau_n)
 \wle \rho(f^+) + c n^{-1/3} \log n + t \dtv(f_{\tau_n,n},f^+) + \epsilon.
\]
The upper bound analysis of \eqref{eq:MBGiant1Constant} shows that $\dtv(f_{\tau,n}, f^+) \to 0$ also for $\tau = \tau_n \gg 1$.  Hence we conclude that $\limsup_{n \to \infty} \pr( \abs{C_{G^{(n)}}(i)} > \omega_n) \le \rho(f^+)$.

(iii) Lower bound for \eqref{eq:MBGiant1Constant}. Fix $\epsilon > 0$. To avoid trivialities we assume that $(P)_{10} > 0$, in which case $(P_n)_{10} > 0$ for all large values of $n$. Define $f_\delta = \CPoi((1-\delta)\lambda, g)$ with $\lambda = \mu (P)_{10}$.  Lemma~\ref{the:CPoiPerturbation} then implies that $f_\delta \weakto f^+$ as $\delta \to 0$. Hence by Lemma~\ref{the:GWLarge} we may choose a small $\delta \in (0,1)$ such that $\rho_\tau(f_\delta) \ge \rho_\tau(f^+) - \epsilon$. Define $\nu_n = \ceil{2 \log n}$. Then $2 M^2 \abs{A} \frac{\tau \nu_n} {n} \le \delta$ for large values of $n$, and Lemma~\ref{the:QuantitativeLowerBound} implies, recalling that $\abs{\rho_\tau(f^{(n)}_{\delta, \tau, \nu_n}) - \rho_\tau(f_\delta)} \le \tau \dtv(f^{(n)}_{\delta, \tau, \nu_n}, f_\delta)$, 
\begin{equation}
 \label{eq:CPoiBinBoundKey}
 \begin{aligned}
 \pr( \abs{C_{G^{(n)}}(i)} > \tau )
 &\wge \rho_\tau(f^{(n)}_{\delta, \tau, \nu_n}) - \abs{A} e^{4M m/n} \tau n^{-2} \\
 &\wge \rho_\tau(f_\delta) - \tau \dtv(f^{(n)}_{\delta, \tau, \nu_n}, f_\delta) - \abs{A} e^{4M m/n} \tau n^{-2} \\
 &\wge \rho_\tau(f^+) - \epsilon - \tau \dtv(f^{(n)}_{\delta, \tau, \nu_n}, f_\delta) - \abs{A} e^{4M m/n} \tau n^{-2} \\
 \end{aligned}
\end{equation}
where $f^{(n)}_{\delta, \tau, \nu_n}$ is the distribution defined in \eqref{eq:LowerOffspring}. Hence it suffices to verify that $\dtv(f^{(n)}_{\delta, \tau, \nu_n}, f_\delta) \to 0$. To do this, define modifications of $f^{(n)}_{\delta, \tau, \nu_n}$ by
\[
 f^{(n)}_{\delta} = \law\Big( \sum_{(x,y) \in A} \sum_{k=1}^{m_{xy}} B_{xy}(k) T_{xy}(k) \Big),
 \quad
 \tilde f^{(n)}_{\delta} = \law\Big( \sum_{(x,y) \in A} \sum_{k=1}^{m_{xy}} \tilde B_{xy}(k) T_{xy}(k) \Big),
\]
where the random variables are mutually independent and such that $\law(B_{xy}(k)) = \Ber( (1-\delta) \frac{x}{n})$, $\law(\tilde B_{xy}(k)) = \Poi( (1-\delta) \frac{x}{n})$, and $\law(T_{xy}(k)) = \Bin^+(x-1,y)$. A natural coupling implies
\begin{equation}
 \label{eq:CPoiBinBound1}
 \dtv(f^{(n)}_{\delta, \tau, \nu_n}, f^{(n)}_{\delta})
 \wle \sum_{(x,y) \in A} (m_{xy} - m_{xy, \tau-1}) (1-\delta) \frac{x}{n}
 \wle M \abs{A} \frac{\tau \nu_n}{n}.
\end{equation}
Because $\dtv(\Ber(p), \Poi(p)) = p(1-e^{-p}) \le p^2$ for all $0 \le p \le 1$, and $\sum_{(x,y) \in A} \sum_{k=1}^{m_{xy}} \le m$, a natural coupling implies that
\begin{equation}
 \label{eq:CPoiBinBound2}
 \dtv(f^{(n)}_{\delta}, \tilde f^{(n)}_{\delta})
 \wle \sum_{(x,y) \in A} \sum_{k=1}^{m_{xy}} \left( (1-\delta) \frac{x}{n} \right)^2
 \wle \frac{M^2}{n^2} m.
\end{equation}
Now let us observe that $\law(\sum_{k=1}^{m_{xy}} \tilde B_{xy}(k) T_{xy}(k)) = \CPoi((1-\delta) m_{xy} \frac{x}{n}, \Bin^+(x-1,y))$, so by Lemma~\ref{the:CPoiSum} we see that $\tilde f^{(n)}_{\delta} = \CPoi((1-\delta) \lambda_n, g_n)$, where
$\lambda_n = \frac{m}{n} (P_n)_{10}$, and $g_n = \Bin^+_{10}(P_n)$. Then Lemma~\ref{the:CPoiPerturbation} implies
$
 \dtv(\tilde f^{(n)}_{\delta}, f_\delta)
 \le \abs{\lambda_n - \lambda} + \lambda \dtv(g_n, g),
$
and combining this with \eqref{eq:CPoiBinBound1}--\eqref{eq:CPoiBinBound2} shows that
\[
 \dtv(f^{(n)}_{\delta, \tau, \nu}, f_{\delta})
 \wle M \abs{A} \frac{\tau \nu_n}{n} + \frac{M^2}{n^2} m + \abs{\lambda_n - \lambda} + \lambda \dtv(g_n, g).
\]
Because $\lambda_n \to \lambda$ and $g_n \weakto g$ (Lemma~\ref{the:MixedBiasedERConvergence}), we see that $\dtv(f^{(n)}_{\delta, \tau, \nu}, f_{\delta}) \to 0$, and in light of \eqref{eq:CPoiBinBoundKey}, it follows that $\liminf_{n \to \infty} \pr( \abs{C_{G^{(n)}}(i)} > \tau ) \ge \rho(f^+)$.

(iv) Lower bound for \eqref{eq:MBGiant1}. Fix $\epsilon > 0$, define $\nu = \ceil{2 \log n}$, and let $\tau_n = \omega_n$. 
Again let us choose a small $\delta \in (0,1)$ such that $\rho(f_\delta) \ge \rho(f) - \epsilon$.
Recall that Lemma~\ref{the:QuantitativeLowerBound} implies
\[
 \pr( \abs{C_{G^{(n)}}(i)} > \tau )
 \wge \rho_\tau(f^{(n)}_{\delta, \tau, \nu_n}) - \abs{A} e^{4M m/n} \tau n^{-2}
 \wge \rho(f^{(n)}_{\delta, \tau, \nu_n}) - \abs{A} e^{4M m/n} \tau n^{-2}.
\]
Inspection of the previous part of the proof shows that $\dtv(f^{(n)}_{\delta, \tau, \nu}, f_{\delta}) \to 0$ also for $\tau = \tau_n$ with $1 \ll \tau_n \ll n \log^{-1}n$.
Hence also $\rho(f^{(n)}_{\delta, \tau, \nu}) \to \rho(f_{\delta})$ and
$
 \liminf_{n \to \infty} \pr( \abs{C_{G^{(n)}}(i)} > \omega )
 \ge \rho(f^+).
$

(iv) Proof of \eqref{eq:EmpLargeComponentFrequencyConstant}. Denote $p_i = \pr( \abs{C_{G^{(n)}}(i)} > \tau)$ and $p_{ij} = \pr( \abs{C_{G^{(n)}}(i)} > \tau, C_j(G^{(n)}) > \tau)$.  Symmetry implies that $\E \abs{B_\tau(G^{(n)})} = n p_1$ and $\Var \abs{B_\tau(G^{(n)})} = n p_1(1-p_1) + (n)_2 (p_{12} - p_1^2)$. Then \eqref{eq:MBGiant1Constant} implies
that $n^{-1} \E \abs{B_\tau(G^{(n)})} \to \rho_\tau$.  If $\rho_\tau = 0$, the claim follows by Markov's inequality. Assume next that $\rho_\tau > 0$.  Proposition~\ref{the:DoubleUpperBound} shows that
$
 p_{12}
 \le \rho_\tau(f_{2\tau,n})^2 + c \tau^2 n^{-1} \log n
$
where $c = e^{9M(1+m/n)}$ and $f_{2\tau,n}$ is defined by \eqref{eq:UpperOffspring}. The analysis of the upper bound for \eqref{eq:MBGiant1Constant} shows that $\rho_\tau(f_{2\tau,n}) \to \rho_\tau(f^+)$. Hence for any $\epsilon > 0$, we see that $p_{12} \le \rho_\tau(f)^2 + \epsilon$ for all sufficiently large $n$. Because $p_1 \to \rho_\tau(f^+)$ by \eqref{eq:MBGiant1Constant}, we conclude that $p_{12} - p_1^2 \le 2\epsilon$ for large $n$. Hence $\Var \abs{B_\omega(G^{(n)})} \le n p_1 + 2 n^2 \epsilon$ for large $n$, and
\[
 \frac{\Var \abs{B_\tau(G^{(n)})}}{(\E \abs{B_\tau(G^{(n)})})^2}
 \wle \frac{n p_1}{(n p_1)^2} + \frac{2 n^2 \epsilon}{(n p_1)^2}.
\]
Because $p_1 \asymp 1$, the ratio on the left vanishes and \eqref{eq:EmpLargeComponentFrequencyConstant} follows by Chebyshev's inequality.

(v) Proof of \eqref{eq:EmpLargeComponentFrequency} for $\omega \asymp \log n$. Now \eqref{eq:MBGiant1} implies that $p_1 = n^{-1} \E \abs{B_\omega(G^{(n)})} \to \rho$.  If $\rho = 0$, the claim follows by Markov's inequality. For $\rho > 0$, Proposition~\ref{the:DoubleUpperBound} shows that
$
 p_{12}
 \le \rho_\omega(f_{2\omega,n})^2 + c \omega^2 n^{-1} \log n.
$
By a similar argument as in (ii), we conclude $\frac{\Var \abs{B_\omega(G^{(n)})}}{(\E {B_\omega(G^{(n)})})^2} \ll 1$, so that Chebyshev's inequality now yields \eqref{eq:EmpLargeComponentFrequency} for $\omega \asymp \log n$.

(iv) Proof of \eqref{eq:EmpLargeComponentFrequency} for $1 \ll \omega \ll n \log^{-1} n$. Let $\omega' \asymp \log n$. Then $\abs{1( C_i > \omega) - 1( C_i > \omega')} = 1( C_i > \omega \wedge \omega') - 1(C_i > \omega \vee \omega')$
together with the triangle inequality shows that $\abs{ \, \abs{B_\omega} - \abs{B_{\omega'}} \, } \le \abs{ B_{\omega \wedge \omega'} } - \abs{B_{\omega \vee \omega'}}$. Taking expectations and Markov's inequality imply that
$\pr( \abs{ \, \abs{B_\omega} - \abs{B_{\omega'}} \, } > \epsilon n) \le \epsilon^{-1} n^{-1} \E \abs{ B_{\omega \wedge \omega'} } = \epsilon^{-1} \pr( C_i > \omega \wedge \omega' ) \ll 1$ by \eqref{eq:MBGiant1}.
Hence $\abs{B_\omega} - \abs{B_{\omega'}} = o_\pr(n)$. In the previous step we saw that $\abs{B_{\omega'}} = \rho n + o_\pr(m)$.  Hence $\abs{B_{\omega}} = \rho n + o_\pr(n)$, and \eqref{eq:EmpLargeComponentFrequency} holds also for general $1 \ll \omega \ll n \log^{-1} n$.

(v) Proof of an upper bound for \eqref{eq:GiantFiniteLayerTypes}. Fix $\epsilon > 0$, and let $\omega \asymp \log m$. Then $(\rho+\epsilon) n \ge \omega$ for large values of $n$. If $N_1(G^{(n)}) > (\rho+\epsilon) n$, then every node in a largest component has its component bigger than $\omega$, and hence $\abs{B_\omega} \ge N_1(G^{(n)}) \ge (\rho+\epsilon) n$.
Hence by \eqref{eq:MBGiant1},
\[
 \pr( n^{-1} N_1(G^{(n)}) > \rho+\epsilon)
 \wle \pr( n^{-1} \abs{B_\omega} > \rho+\epsilon)
 \wto 0.
\]

(vi) Proof of a lower bound for \eqref{eq:GiantFiniteLayerTypes}.
We assume that $(P)_{21} > 0$ because otherwise $g$ and $f^+ = \CPoi(\lambda, g)$ both degenerate to the Dirac measure at zero, and the lower bound is trivial. Fix $\epsilon > 0$. Fix $\delta \in (0,1)$ so small that $\rho_\delta(\CPoi((1-\delta)\lambda, g))$ satisfies $\rho_\delta \ge \rho - \epsilon/2$. Denote by $m_{xy} = m P_n(x,y)$ the number of $xy$-layers.
Let us partition the set of layers into two categories called \emph{red} and \emph{blue}, so that the number of red $xy$-layers equals $m^{(r)}_{xy} = \floor{ (1-\delta) m_{xy}}$ for each layer type $(x,y) \in A$, and denote by
$G^{(n,r)}$ the overlay graph on $[n]$ generated by the red layers.
%
Then $\frac{m^{(r)}_{xy}}{m} \to (1-\delta) P(x,y)$ implies that the total number of red layers equals $m^{(r)} \sim (1-\delta)m$ and the empirical layer type distribution of the red layers satisfies $P_n^{(r)} \weakto P$  with $(P_n^{(r)})_{10} \to (P)_{10}$.
By applying \eqref{eq:EmpLargeComponentFrequency} to the overlay graph $G^{(n,r)}$, it follows that the relative proportion of nodes with a large red component is approximated by
\begin{equation}
 \label{eq:RedLargeComponents}
 n^{-1} \abs{B_\omega(G^{(n,r)})} \wprto \rho_\delta
\end{equation}
for any $1 \ll \omega \ll n \log^{-1} n$. Furthermore, denoting $\cE_n = \left\{ \text{$B_\omega(G^{(n,r)})$ is $G^{(n)}$-connected}\right\}$,
\begin{align*}
 \pr( n^{-1} N_1(G^{(n)}) < \rho - \epsilon )
 &\wle \pr( n^{-1} N_1(G^{(n)})  < \rho - \epsilon, \, \cE_n) + \pr(\cE_n^c ) \\
 &\wle \pr( n^{-1} \abs{B_\omega(G^{(n,r)})} < \rho - \epsilon) + \pr(\cE_n^c ) \\
 &\wle \pr( n^{-1} \abs{B_\omega(G^{(n,r)})} < \rho_\delta - \epsilon/2 ) + \pr(\cE_n^c).
\end{align*}
In light of \eqref{eq:RedLargeComponents}, it suffices to show that $\cE_n$
occurs with high probability. 

On the complement of $\cE_n$, there exists a pair of distinct $G^{(n,r)}$-components $C',C'' \subset B_\omega(G^{(n,r)})$ such that there are no $G^{(n)}$-links between $C',C''$. Especially, there are no links between $C',C''$ generated by the blue layers. Denote by $p_{xy}$ the conditional probability that a particular blue layer of type $(x,y)$ connects $C'$ and $C''$ by a link, given the red layers and the event that $C',C''$ are distinct $G^{(n,r)}$-components both larger than $\omega$. Then by applying Lemma~\ref{the:IntersectBoth} and noting that $(x)_2 \le M^2 1(x \ge 2)$, it follows that
\begin{equation}
 \label{eq:BlueLink}
 p_{xy}
 \wge \frac{2 \abs{C'} \abs{C''}}{(n)_2} 1(x \ge 2) y
 \wge M^{-2} \left( \frac{\omega}{n} \right)^2 (x)_2 y.
\end{equation}
Denote by $M_b$ the number of blue layers generating at least one link between $C'$ and $C''$. Then using $1-x \le e^{-x}$,
\begin{align*}
 \E e^{-s M_b}
 \weq \prod_{(x,y)\in A} \left( (1-p_{xy} + p_{xy}e^{-s} \right)^{m^{(b)}_{xy}}
 &\wle e^{-(1-e^{-s}) \sum_{(x,y)\in A} m^{(b)}_{xy} p_{xy}}.
\end{align*}
By noting that $m^{(b)}_{xy} \ge \delta m P_n(x,y)$ and applying \eqref{eq:BlueLink}, we see that for $\omega = n^{2/3}$,
$
 \sum_{(x,y)\in A} m^{(b)}_{xy} p_{xy}
 \ge \delta M^{-2} m n^{-2/3} (P_n)_{21}
 \ge c_1 n^{1/3}
$
for large value of $n$, where $c_1 = \frac12 \delta \mu M^{-2} (P)_{21}$. Markov's inequality implies that for any $a,s > 0$,
\begin{equation}
 \label{eq:BlueLink2}
 \pr( M_b < a)
 \wle e^{sa} \E e^{-sM_b}
 \wle e^{sa - (1-e^{-s}) c_1 n^{1/3}}.
\end{equation}


By noting that $1 - p_{xy} \le e^{- p_{xy}}$
it follows that the conditional probability that there are no blue links between $C'$ and $C''$ is bounded by
\begin{align*}
 \prod_{(x,y) \in A} \left( 1 - p_{xy} \right)^{m^{(b)}_{xy}}
 \wle e^{- \sum_{(x,y) \in A} M^{-2} (\frac{\omega}{n})^2 (x)_2 y m^{(b)}_{xy}}
 \weq e^{- M^{-2} (\frac{\omega}{n})^2 m^{(b)} (P^{(b)}_n)_{21}}.
\end{align*}
Note that there are at most $\frac{n}{\omega} = n^{1/3}$ distinct $G^{(n,r)}$-components inside $B_\omega(G^{(n,r)})$. The number of such component pairs is hence at most $\frac12 n^{2/3}$, and the union bound together with \eqref{eq:BlueLink2} with $a=s=1$ then confirms that
\begin{equation}
 \label{eq:RedLargeConnected}
 \pr(\cE_n^c)
 \wle \frac12 n^{2/3} e^{1 - (1-e^{-1}) c_1 n^{1/3}}
 \wto 0.
\end{equation}
This fact together with \eqref{eq:RedLargeComponents} implies that $n^{-1} N_1(G^{(n)}) \ge \rho - \epsilon$ with high probability.


(vii) Finally, let us extend the proofs to random layer types.  Denote by $\pr_{\theta_n}$ the regular conditional distribution of the $n$-th model given layer types $\theta_n = ( (X_{n,1}, Y_{n,1}), \dots, (X_{n,m}, Y_{n,m}) )$, see Section~\ref{sec:FormalModel} for formal details. In this case the earlier analysis of \eqref{eq:GiantFiniteLayerTypes}
confirms that $\pr_{\theta_n}( \abs{ n^{-1} N_1(G^{(n)}) - \rho(f^+)} > \epsilon ) \to 0$ for any realisation of $(\theta_1, \theta_2,\dots)$ for which the empirical layer type distributions converge according to $\dtv(P_{\theta_n}, P) \to 0$. Because $P_n \weakto P$  it follows by Lemma~\ref{the:EmpiricalDistributionConvergence} that $\dtv(P_{\theta_n}, P) \prto 0$. Now by applying Lemma~\ref{the:ConditionalConvergenceInProbability}
with $\Phi_n(\theta_n, \xi_n) = n^{-1} N_1(G_n)$ and $G_n = G_n(\xi_n)$, we find that $\Phi_n \prto \rho(f^+)$, and hence
\eqref{eq:GiantFiniteLayerTypes} also holds for random layer types.  The same argument also confirms \eqref{eq:MBGiant1Constant}--\eqref{eq:EmpLargeComponentFrequency} for random layer types.
\end{proof}

\begin{lemma}
\label{the:IntersectBoth}
Let $C_1,C_2$ be disjoint subsets of $[n]$ of sizes $c_1,c_2$. Let $V$ be a uniformly random $x$-set in $[n]$ with $x \ge 2$. Then the probability that $V$ intersects both $C_1$ and $C_2$ is at least $ \frac{2 c_1 c_2}{n(n-1)}$.
\end{lemma}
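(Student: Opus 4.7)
The plan is to obtain this bound by a simple coupling argument that reduces the problem to computing the probability of a specific event about just two random elements.

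First, I would realise the uniformly random $x$-set $V$ as the first $x$ entries of a uniformly random permutation $(\sigma_1, \dots, \sigma_n)$ of $[n]$. Since $x \ge 2$, this coupling guarantees $\{\sigma_1, \sigma_2\} \subset V$, so any event about $(\sigma_1, \sigma_2)$ that forces $V$ to intersect both $C_1$ and $C_2$ gives an immediate lower bound on the target probability.

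Next I would lower-bound the event $\{V \cap C_1 \ne \emptyset, \, V \cap C_2 \ne \emptyset\}$ by the disjoint union
\[
 \{\sigma_1 \in C_1, \, \sigma_2 \in C_2\} \,\cup\, \{\sigma_1 \in C_2, \, \sigma_2 \in C_1\},
\]
each of whose two pieces has probability $\frac{c_1 c_2}{n(n-1)}$ because $(\sigma_1,\sigma_2)$ is a uniformly random ordered pair of distinct elements of $[n]$. Summing yields the claimed bound $\frac{2 c_1 c_2}{n(n-1)}$.

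There is no real obstacle here; the only thing to be careful about is that $C_1$ and $C_2$ are disjoint (hence the two events above are indeed disjoint and individually have the stated probability) and that $x \ge 2$ (so that both $\sigma_1$ and $\sigma_2$ belong to $V$). An alternative approach would be the second-moment bound $\pr(N_1 N_2 \ge 1) \ge (\E N_1 N_2)^2 / \E(N_1 N_2)^2$ with $N_k = \abs{V \cap C_k}$, but this gives a weaker constant and requires more calculation, so the coupling route is preferable.
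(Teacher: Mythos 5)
Your proof is correct and uses essentially the same idea as the paper: both arguments couple $V$ with a uniformly random $2$-subset of $[n]$ that is guaranteed to lie inside $V$ (the paper takes a uniform $2$-subset of $V$; you take $\{\sigma_1,\sigma_2\}$ from a permutation realisation) and then compute the probability exactly for the $2$-subset, getting $\frac{c_1 c_2}{\binom{n}{2}} = \frac{2c_1 c_2}{n(n-1)}$. The differences are purely presentational.
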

\begin{proof}
Denote $p_x = \pr( V \cap C_1 \ne \emptyset, V \cap C_2 \ne \emptyset )$ for $V$ being a uniformly random $x$-set in $[n]$. Define a random set $V'$ so that the conditional distribution of $V'$ given $V$ is uniformly random among the 2-subsets of $V$. Then $V' \subset V$ with probability one, and the unconditional distribution of $V'$ is uniform among the 2-subsets of $[n]$. Hence it follows that
\[
 p_x
 \wge \pr( V' \cap C_1 \ne \emptyset, V' \cap C_2 \ne \emptyset )
 \weq p_2
 \weq \frac{ c_1 c_2 }{\binom{n}{2}}.
\]
\end{proof}

\subsection{Discretising layer types}
\label{sec:DiscreteLayerTypes}

Layer sizes are compactified using the function $\sigma_M: (x,y) \mapsto (x1(x \le M), y)$ which simply sets the layer size to zero. In the proofs we also need to discretise layer strengths.  Some care is needed to avoid possible atoms of the limiting layer type distribution. Given a probability measure $P$ on $\Z_+ \times [0,1]$, for every integer $L \ge 1$ we define functions $\sigma_{L-}, \sigma_{L+}: \Z_+ \times [0,1] \to \Z_+ \times [0,1]$ as follows.  First, let $B_P$ be the set of points $y \in (0,1)$ for which $P(\Z_+ \times \{y\}) > 0$. Because $B_P$ is countable, for every integer $L \ge 1$ we may select a set of points $0 = s_{0} < s_1 < \cdots < s_L = 1$ such that $\{s_1,\dots, s_{L-1}\} \cap B_P = \emptyset$ and $\abs{s_i - s_{i-1}} \le 2 L^{-1}$ for all $i=1,\dots,L$.  Then we define
\begin{equation}
\label{eq:DiscreteLayerStrengths}
 \begin{aligned}
  \floor{y}_L &\weq \sum_{i=1}^L s_{i-1} 1(s_{i-1} \le y < s_i) + s_{L-1} 1(y=L), \\
  \ceil{y}_L &\weq s_1 1(y=0) + \sum_{i=1}^L s_i 1(s_{i-1} < y \le s_i),
\end{aligned}
\end{equation}
and set $\sigma_{L-}(x,y) = (x, \floor{y}_L)$ and $\sigma_{L+}(x,y) = (x, \ceil{y}_L)$.

\iflongversion\else\color{\iftextcolor}
The following result (proof omitted) can be readily verified by standard dominated convergence and Skorohod's coupling arguments \cite[Proposition 4.30]{Kallenberg_2002}.
\color{black}\fi

\begin{lemma}
\label{the:DiscreteLayerTypes}
Consider probability measures on $\Z_+ \times [0,1]$ such that $P_n \weakto P$ and $(P_n)_{10} \to (P)_{10} \in [0,\infty)$. Then
(i) $P \circ \sigma_M^{-1} \weakto P$  and $(P \circ \sigma_M^{-1})_{10} \to (P)_{10}$ as $M \to \infty$;
(ii) $P_n \circ \sigma_M^{-1} \weakto P \circ \sigma_M^{-1}$ and $(P_n \circ \sigma_M^{-1})_{10} \to (P \circ \sigma_M^{-1})_{10}$ as $n \to \infty$; 
(iii) $P \circ \sigma_{L\pm}^{-1} \weakto P$  and $(P \circ \sigma_{L\pm}^{-1})_{10} \to (P)_{10}$ as $L \to \infty$;
(iv) $P_n \circ \sigma_{L\pm}^{-1} \weakto P \circ \sigma_{L\pm}^{-1}$ and $(P_n \circ \sigma_{L\pm}^{-1})_{10} \to (P \circ \sigma_{L\pm}^{-1} )_{10}$ as $n \to \infty$;
and (v) $h(M) = \sup_{n \ge 1} \int x 1(x > M) P_n(dx,dy) \to 0$.
\end{lemma}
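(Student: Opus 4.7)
The plan is to handle all five parts using dominated convergence and standard continuity properties of pushforwards, plus an $\epsilon$-splitting argument for the uniform tail in (v).

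For parts (i) and (iii), I would couple via a $P$-distributed pair $(X,Y)$. In (i), $\sigma_M(X,Y) = (X\,1(X\le M), Y) \to (X,Y)$ almost surely as $M \to \infty$, since $X < \infty$ a.s., which gives $P \circ \sigma_M^{-1} \weakto P$; monotone convergence then yields $(P \circ \sigma_M^{-1})_{10} = \int x\,1(x \le M) \, P(dx,dy) \to (P)_{10}$. In (iii), the construction of split points with mesh at most $2/L$ guarantees $\floor{y}_L, \ceil{y}_L \to y$ for every $y \in [0,1]$, so $\sigma_{L\pm}(X,Y) \to (X,Y)$ surely, while $(P \circ \sigma_{L\pm}^{-1})_{10} = (P)_{10}$ trivially since $\sigma_{L\pm}$ preserves the $x$-coordinate.

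For parts (ii) and (iv), I would invoke the continuous mapping theorem. In (ii), $\sigma_M$ is continuous on $\Z_+ \times [0,1]$ because $\{x \le M\}$ is clopen in the discrete topology on $\Z_+$, so $P_n \circ \sigma_M^{-1} \weakto P \circ \sigma_M^{-1}$; the first-moment convergence follows because $x\,1(x\le M)$ is bounded continuous. In (iv), $\sigma_{L\pm}$ is discontinuous only where $y$ equals one of the split points $s_1,\dots, s_{L-1}$, and by the construction of those points none of them lies in the atom set $B_P$ of the $y$-marginal of $P$. Hence the discontinuity set carries zero $P$-mass and the continuous mapping theorem applies. The first-moment statement is immediate from $(P_n)_{10} \to (P)_{10}$, since $\sigma_{L\pm}$ preserves $x$.

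For part (v), set $h_n(M) := \int x\,1(x > M)\, P_n(dx,dy) = (P_n)_{10} - \int x\,1(x \le M)\, P_n(dx,dy)$. For each fixed $M$, weak convergence applied to the bounded continuous function $x\,1(x\le M)$ together with $(P_n)_{10} \to (P)_{10}$ gives $h_n(M) \to h_\infty(M) := \int x\,1(x > M)\, P(dx,dy)$, and $h_\infty(M) \to 0$ as $M \to \infty$ by dominated convergence under $P$. Given $\epsilon > 0$, first pick $M_0$ with $h_\infty(M_0) < \epsilon/2$, then $n_0$ with $h_n(M_0) < \epsilon$ for all $n \ge n_0$; for each of the finitely many $n < n_0$, $(P_n)_{10} < \infty$ yields $h_n(M) \to 0$, so enlarging $M_0$ to some $M^*$ delivers $\sup_n h_n(M) < \epsilon$ for $M \ge M^*$. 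The only nonroutine ingredient is the preparatory choice of split points avoiding $B_P$ in (iv); the remaining steps are bookkeeping.
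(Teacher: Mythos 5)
Your proof is correct and follows essentially the same approach as the paper: dominated convergence for (i) and (iii), continuous mapping for (ii), avoiding $B_P$ for (iv), and the finite-mean/tightness interplay for (v). The two places where you deviate are cosmetic: in (iv) the paper uses a Skorohod coupling while you invoke the continuous-mapping theorem via the observation that the discontinuity set $\Z_+\times\{s_1,\dots,s_{L-1}\}$ has zero $P$-mass, and in (v) the paper cites uniform integrability of $(X_n)$ (from $X_n\Rightarrow X$ with $\E X_n\to\E X<\infty$) whereas you reprove that fact with an explicit $\epsilon$-splitting; both substitutions are equivalent and standard. Your extra remark that $\sigma_{L\pm}$ preserves the $x$-coordinate, so that $(P_n\circ\sigma_{L\pm}^{-1})_{10}=(P_n)_{10}$ exactly, is a small simplification over the paper's dominated-convergence step for the moment convergence in (iii)--(iv).
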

\iflongversion\color{\iftextcolor}
\begin{proof}
(i) Let $f$ be bounded and continuous. Then $f \circ \sigma_M \to f$ pointwise as $M \to \infty$ and $\abs{f \circ \sigma_M} \le \abs{f}$ pointwise, so that by Lebesgue's dominated convergence, $P \circ \sigma_M^{-1} (f) = P( f \circ \sigma_M) \to P(f)$. The same argument applied to $f(x,y) = x$ shows that $(P \circ \sigma_M^{-1})_{10} = P( f \circ \sigma_M) \to P(f) = (P)_{10}$.

(ii) Because $f \circ \sigma_M$ is bounded and continuous whenever $f$ is, it follows that
$P_n \circ \sigma_M^{-1} (f) = P_n( f \circ \sigma_M) \to P( f \circ \sigma_M) = P \circ \sigma_M^{-1} (f)$. For $f(x,y) = x$, we find that $(P_n \circ \sigma_M^{-1})_{10} = P_n( f \circ \sigma_M) \to P( f \circ \sigma_M) = (P \circ \sigma_M^{-1})_{10}$.

(iii) The construction in \eqref{eq:DiscreteLayerStrengths} guarantees that $\floor{y}_L \to y$ and $\ceil{y}_L \to y$ as $L \to \infty$ for every $y \in [0,1]$. Therefore the functions $\sigma_{L-}, \sigma_{L+}$ converge pointwise to the identity map on $\Z_+ \times [0,1]$ as $L \to \infty$. Hence by Lebesgue's dominated convergence, $P \circ \sigma_{L\pm}^{-1} (f) = P( f \circ \sigma_{L\pm}) \to P(f)$ for any bounded continuous $f$. The same argument applied to $f(x,y) = x$ implies that $(P \circ \sigma_{L\pm}^{-1})_{10} = P( f \circ \sigma_{L\pm}) \to P(f) = (P)_{10}$.

(iv) By Skorohod's coupling \cite[Proposition 4.30]{Kallenberg_2002}, there exist random variables $(X_n,Y_n)$ and $(X,Y)$ such that $\law(X_n,Y_n) = P_n$, $\law(X,Y) = P$, and $(X_n,Y_n) \to (X,Y)$ almost surely.
Hence $\sigma_{L-}(X_n,Y_n) = (X_n, \floor{Y_n}_L) \to (X, \floor{Y}_L)$ whenever $Y \in [0,1] \setminus \{s_1,\dots, s_{L-1}\}$. Now $\pr(Y = s_i) = P( \Z_+ \times \{s_i\}) = 0$ by construction, so we conclude that $\sigma_{L-}(X_n,Y_n) \to \sigma_{L-}(X,Y)$ almost surely. Hence $P_n \circ \sigma_{L-}^{-1} \weakto P \circ \sigma_{L-}^{-1}$.  The same argument also works for $P_n \circ \sigma_{L+}^{-1}$, and $(P_n \circ \sigma_{L\pm}^{-1})_{10} \to (P \circ \sigma_{L\pm}^{-1} )_{10}$ then follows by dominated convergence.

(v) Let $(X_n,Y_n)$ and $(X,Y)$ be random variables distributed according to $P_n$ and $P$, respectively.  Then $X_n \to X$ in distribution and $\E X_n = (P_n)_{10} \to (P)_{10} = \E X < \infty$. Hence $(X_n)_{n \ge 1}$ is uniformly integrable, and $h(M) = \sup_{n \ge 1} \E X_n 1(X_n > M) \to 0$.
\end{proof}
\color{black}\fi

\subsection{Proof of Theorem~\ref{the:Giant}}

Denote $P^{M} = P \circ \sigma_M^{-1}$ where $\sigma_M: (x,y) \mapsto (x1(x \le M), y)$. Let $f^+ = \CPoi( \mu (P)_{10}, g)$ and $f^{M} = \CPoi( \mu (P^{M})_{10}, g^{M})$, where $g = \Bin^+_{10}(P)$ and $g^{M} = \Bin^+_{10}(P^{M})$ are defined by~\eqref{eq:MixedBinPlus}.  Then by Lemma~\ref{the:DiscreteLayerTypes}, $P^{M} \weakto P$ together with $(P^{M})_{10} \to (P)_{10}$.  Lemma~\ref{the:MixedBiasedERConvergence} implies that $g^{M} \weakto g$. Hence $f^{M} \weakto f^+$ (Lemma~\ref{the:CPoiPerturbation}), implying that $\rho_t(f^{M}) \to \rho_t(f^+)$ for all $t$ and $\rho(f^{M}) \to \rho(f^+)$ (Lemma~\ref{the:GWLarge}).

(i) Lower bound.  Fix $\epsilon > 0$. Fix a large enough $M$ such that $\rho(f^{M}) \ge \rho(f^+) - \epsilon$. Then apply the layer strength discretisation procedure \eqref{eq:DiscreteLayerStrengths} to $P^{M}$, and define $\sigma_{L-}$ accordingly. Define $P^{ML-} = P^{M} \circ \sigma_{L-}^{-1}$. Lemma~\ref{the:DiscreteLayerTypes} then implies that $P^{ML-} \weakto P^{M}$ and $(P^{ML-})_{10} \to (P^{M})_{10}$ as $L \to \infty$. The same argument as above then implies that $f^{ML-} \weakto f^{M}$ and $\rho(f^{ML-}) \to \rho(f^{M})$ as $L \to \infty$, where $f^{ML-} = \CPoi( \mu (P^{ML-})_{10}, g^{ML-})$ and $g^{ML-} = \Bin^+_{10}(P^{ML-})$.  Hence we may fix a large $L$ so that $\rho(f^{ML-}) \ge \rho(f^{M}) - \epsilon$.

Now for each $n$, consider a modification $G^{(nML-)}$ of $G^{(n)}$ where each layer of type $(x,y)$ is replaced by a layer of type $(x1(x \le M), \floor{y}_L)$.   Under a natural coupling, $N_1(G^{nML-}) \le N_1(G^{n})$ almost surely for every $n$, and
\begin{align*}
 \frac{N_1(G^{(n)})}{n}
 \wge \frac{N_1(G^{nML-})}{n}
 \wge \rho(f^+) - 2\epsilon + \left( \frac{N_1(G^{nML-})}{n} - \rho(f^{ML-}) \right).
\end{align*}
The averaged layer type distribution of $G^{(nML-)}$ equals $P_n^{ML-} = P_n \circ \sigma_{M}^{-1} \circ \sigma_{L-}^{-1} $. In light of Lemma~\ref{the:DiscreteLayerTypes}, we see that $P_n^{ML-} \weakto P^{ML-}$ and $(P_n^{ML-})_{10} \to (P^{ML-})_{10}$ as $n \to \infty$. A suitable lower bound follows from the above inequality, because $\frac{N_1(G^{nML-})}{m} \prto \rho(f^{ML-})$ due to 
Lemma~\ref{the:MBGiant1}.

(ii) Upper bound. Given $\delta, \epsilon > 0$, choose a large enough $t$ so that $\rho_{t}(f^+) \le \rho(f^+) + \epsilon/5$. Then choose a large enough $M$ so that $\rho_{t}( f^{M} ) \le \rho_{t}(f^+) + \epsilon/5$ and $h(M) \le \frac{\delta \epsilon}{40\mu t}$ where $h(M) = \sup_n \int x 1(x>M) d P_n$ (see Lemma~\ref{the:DiscreteLayerTypes}).  By similar arguments as in the proof of the lower bound, we find that
$P^{ML+} \weakto P^{M}$, $g^{ML+} \weakto g^{M}$, and $f^{ML+} \weakto f^{M}$ as $L \to \infty$, where 
$f^{ML+} = \CPoi(\mu (P^{ML+})_{10}, g^{ML+})$ with 
$P^{ML+} = P^{M} \circ \sigma_{L+}^{-1}$ and $g^{ML+} = \Bin^+_{10}(P^{ML+})$.
Hence we may choose (Lemma~\ref{the:GWLarge}) a large enough $L$ so that $\rho_{t}( f^{ML+}) \le \rho_{t}( f^{M}) + \epsilon/5$. Hence $\rho_{t}( f^{ML+}) \le \rho(f^+) + \frac{3}{5}\epsilon$.

Let $G^{(n,M)}$ and $G^{(n,ML+)}$ be modified overlay graphs in which each layer of type $(x,y)$ is replaced by a layer of type $(x 1(x \le M), y)$ and $(x 1(x \le M), \ceil{y}_L)$, respectively. We fix a natural coupling under which $G^{(n,M)} \subset G^{(n,ML+)}$ almost surely.  Then by Lemma~\ref{the:ComponentOverlayTruncation},
\begin{align*}
 \frac{N_1(G^{(n)})}{n}
 \wle \frac{\abs{B_{t}(G^{(n,ML+)})}}{n} + \frac{t}{n} (Z_{n,M} + 1),
\end{align*}
where $Z_{n,M}$ is the number of nodes covered by layers larger than $M$ in the nontruncated model $G^{(n)}$.
By Lemma~\ref{the:MBGiant1}, we may choose an integer $n_0$ such that $\frac{1}{m} \le h(M)$, $\frac{m}{n} \le 2\mu$, and
\begin{equation}
 \label{eq:UpperBoundN1}
 \pr\left(  \frac{\abs{ B_t(G^{(n,ML+)})}}{n} > \rho_t(f^{(ML+)}) + \frac{\epsilon}{5} \right)
 \wle \frac{\delta}{2}
\end{equation}
for all $n \ge n_0$. Then we note that
\[
 \E Z_{n,M}
 \wle \E \sum_{k=1}^m X^{(n)}_k 1(X^{(n)}_k > M)
 \wle m h(M),
\]
so that $\E \frac{t}{n} (Z_{n,M} + 1) \le \frac{t}{n} (m h(M) + 1) \le 4 \mu t h(M) \le \frac{\delta \epsilon}{10}$, and by Markov's inequality, $\pr( \frac{t}{n} (Z_{n,M} + 1) > \frac{\epsilon}{5} ) \le \frac{\delta}{2}$. Hence for all $n \ge n_0$,
\begin{align*}
 \frac{N_1(G^{(n)})}{n}
 \wle \rho_t(f^{(ML+)}) + \frac{2}{5} \epsilon
 \wle \rho(f^+) + \epsilon
\end{align*}
with probability at least $1-\delta$.

(iii) Upper bound on the second largest component.
Fix $\delta, \epsilon, t, M, L$ as in part (ii) and define $G^{(n,M)}$ and $G^{(n,ML+)}$ in the same way.  By Lemma~\ref{the:BigComponents} and Lemma~\ref{the:ComponentOverlayTruncation},
\[
 N_1(G^{(n)}) + N_2(G^{(n)})
 \wle \abs{B_{t}(G^{(n)})} + 2t
 \wle \abs{B_{t}(G^{(n,M)})} + 2t + t Z_{n,M}.
\]
Under a natural coupling, $\abs{B_t(G^{(n,M)})} \le \abs{B_t(G^{(n,ML+)})}$, so that
\[
 \frac{N_2(G^{(n)})}{n}
 \wle \frac{\abs{B_{t}(G^{(n,ML+)})}}{n} - \frac{N_1(G^{(n)})}{n} + \frac{t}{n} (Z_{n,M} +2).
\]
By part (i), $\frac{N_1(G^{(n)})}{n} \ge \rho(f^+) - \epsilon/5$ with probability at least $1-\delta/2$, whereas part (ii) implies that
$\pr( \frac{t}{n} (Z_{n,M} + 1) > \frac{\epsilon}{5} ) \le \frac{\delta}{2}$. Together with \eqref{eq:UpperBoundN1} it follows that 
$\frac{N_2(G^{(n)})}{n} \le \frac{6}{5}\epsilon$ with probability at least $1-\frac{3}{2} \delta$, whenever $n$ is large enough.
Hence $\frac{N_2(G^{(n)})}{n} \prto 0$.

\qed

\subsection{Proofs for percolation models}

\subsubsection{Site percolation}

Proof of Theorem~\ref{the:SitePercolation}. 
The site-percolated graph $\check G^{(n)}$ is an instance of the overlay graph model \eqref{eq:OverlayGraph} with $\check n = \abs{S_n}$ nodes and $m$ layers $\check G_{1},\dots, \check G_{m}$ where $\check G_k$ is the subgraph of $G_k$ induced by $S_n$, and $G_1,\dots,G_m$ are the original layers generating the graph $G$.  The layer types $(\check X_k, \check Y_k)$ in the site-percolated model are mutually independent, and 
$\law(\check X_k \cond X_k=x_k)$ is hypergeometric with probability mass function
\[
 \Hyp(n,\check n, x_k)(t) \weq \frac{\binom{\check n}{t} \binom{n-\check n}{x_k-t} }{ \binom{n}{x_k} }.
\]
The site-percolated graph is hence an instance of the overlay model with $\check n$ nodes, $m$ layers, and averaged layer type distribution 
\[
 \check P_n( A )
 \weq \int (\Hyp(n,\check n, x) \times \, \delta_{y})(A) \, P_n(dx,dy).
\]

Define probability kernels $K_n,K$ on $\Z_+ \times [0,1]$ by formulas $K_n((x,y),A) = (\Hyp(n,\check n, x) \times \delta_y)(A)$ and $K((x,y),A) = (\Bin(x,\theta) \times \delta_y)(A)$. By \cite[Theorem 4]{Diaconis_Freedman_1980}, $\dtv( \Hyp(n,\check n, x), \Bin(x,\frac{\check n}{n}) ) \le 2 \frac{x}{n}$. A basic coupling of coin flips implies that $\dtv(\Bin(x,\frac{\check n}{n}), \Bin(x,\theta)) \le \abs{ \frac{\check n}{n} - \theta } x$. Then for any bounded continuous function $\phi$ on $\Z_+ \times [0,1]$,
\begin{align*}
 \abs{K_n \phi(x,y) - K \phi(x,y)}
 \wle 2 \supnorm{\phi} \dtv( \Hyp(n,\check n, x), \Bin(x,\theta) )
 &\wle 2 \supnorm{\phi} \left( \frac{2}{n} + \abs{\frac{\check n}{n} - \theta} \right) x
\end{align*}
for all $x,y$. Hence $K_n \phi \to K\phi$ uniformly on the compact subsets of $\Z_+ \times [0,1]$. Because $(P_n)_{n \ge 1}$ is tight, it follows that $\check P_n(\phi) = P_n (K_n \phi) \to P (K \phi) = \check P(\phi)$ for any bounded continuous $\phi$.
Hence $\check P_n \weakto \check P$. Direct computations show that $(\check P_n)_{10} = \frac{\check n}{n}(P_n)_{10} \to \theta (P)_{10} = (\check P)_{10}$, and $\frac{m}{\check n} \to \check \mu = \theta^{-1} \mu$. Theorem~\ref{the:SitePercolation}:\eqref{ite:SitePercolation1}--\eqref{ite:SitePercolation2} now follow by applying Theorems~\ref{the:DegreeDistribution} and~\ref{the:Giant} to $\check G^{(n)}$ and noting that $\check \mu (\check P)_{10} = \mu ( P)_{10}$.

Assume next that $(P_n)_{rs} \to (P)_{rs} \in (0,\infty)$ for $rs=21,32,33$. A direct computation using the binomial distribution shows that $(\check P)_{rs} = \theta^r (P)_{rs}$. Theorem~\ref{the:SitePercolation}:\eqref{ite:SitePercolation3} now follows by 
applying Theorem~\ref{the:TransitivityGlobal} to conclude that the  clustering coefficient of $\check G^{(n)}$ converges to $\check \tau = \frac{ (\check P)_{33} }{(\check P)_{32} + \check \mu (\check P)_{21}^2} = \frac{ (P)_{33} }{(P)_{32} + \mu (P)_{21}^2} = \tau$.  Theorem~\ref{the:SitePercolation}:\eqref{ite:SitePercolation4} follows similarly from Theorem~\ref{the:TransitivityLocal}.


\subsubsection{Layerwise bond percolation}

Proof of Theorem~\ref{the:BondPercolation} for the layerwise bond-percolated graph $\tilde G^{(n)}$. The graph $\tilde  G_n$ is an instance of the overlay model with $n$ nodes and $m$ layers $\tilde G_{1},\dots, \tilde G_{m}$ where $\tilde G_k$ has size $X_k$ and strength $\theta Y_k$. The layers $(\tilde G_k, X_k, \theta Y_k)$ are mutually independent, with averaged layer type distribution 
\[
 \tilde P_n( A )
 \weq \int (\delta_x \times \, \delta_{\theta y})(A) \, P_n(dx,dy)
\]
converging according to $\tilde P_n \weakto \hat P$ and $(\tilde P)_{10} \to (\hat P)_{10}$. Furthermore, a direct computation shows that $(\hat P)_{rs} = \theta^s (P)_{rs}$.  Statements \eqref{ite:BondPercolation1}--\eqref{ite:BondPercolation2} of Theorem~\ref{the:BondPercolation} now follow by Theorems~\ref{the:DegreeDistribution} and~\ref{the:Giant}, and noting that $(\hat P)_{10} = (P)_{10}$. Statements \eqref{ite:BondPercolation3}--\eqref{ite:BondPercolation4} follow analogously by Theorems~\ref{the:TransitivityGlobal} and~\ref{the:TransitivityLocal}, and noting that $\hat \tau = \frac{ (\hat P)_{33} }{(\hat P)_{32} + \hat (\hat P)_{21}^2} = \theta \frac{ (P)_{33} }{(P)_{32} + \mu (P)_{21}^2} = \theta\tau$.

\subsubsection{Bond percolation coupling}
\label{sec:BondLayerCoupling}
We will utilise the fact that the overlay bond-percolated graph does not differ much from the layerwise bond-percolated graph $\tilde G^{(n)}$, for which the theorem has already been proved. The conditional distribution of $\hat G^{(n)}$ given the layers $(G_k, X_k, Y_k)$ is an inhomogeneous Bernoulli graph on $\{1,\dots,n\}$ where each node pair $ij$ is linked with probability $\hat p_{ij} = \theta (M_{ij} \wedge 1)$ where $M_{ij} = \sum_k 1( E(G_k) \ni ij )$ is the number of layers linking a node pair $ij$.  The corresponding conditional distribution of $\tilde G^{(n)}$ is a similar inhomogeneous Bernoulli graph with link probabilities $\tilde p_{ij} = 1- (1-\theta)^{M_{ij}}$.  Because $\hat p_{ij} \le \tilde p_{ij}$, this suggest the following coupling construction:
\begin{enumerate}[(i)]
\item Sample the layers $(G_k, X_k, Y_k)$, $k=1,\dots,m$.
\item Sample independent inhomogeneous Bernoulli graphs $\tilde H$ and $H^*$ with link probabilities $\tilde p_{ij}$ and $p_{ij}^* = \frac{\hat p_{ij}}{\tilde p_{ij}}$ with the convention $\frac{0}{0} = 0$.
\item Define $\hat G = G \cap \hat H$ and $\tilde G = G \cap \tilde H$ with $G$ defined by \eqref{eq:OverlayGraph} and $\hat H = \tilde H \cap H^*$.
\end{enumerate}
Then $(\hat G, \tilde G, G)$ constitutes a coupling of the overlay bond-percolated, layerwise bond-percolated, and nonpercolated graphs such that $\hat G \subset \tilde G \subset G$ almost surely.


%

\subsubsection{Proof of Theorem~\ref{the:BondPercolation}:\eqref{ite:BondPercolation1} for overlay bond percolation}

Let us denote by $\hat D_n = \deg_{\hat G^{(n)}}(i)$ and $\tilde D_n = \deg_{\tilde G^{(n)}}(i)$ the degrees of node~$i$ in the overlay bond-percolated and layerwise bond-percolated graph, respectively. Using the coupling of Section~\ref{sec:BondLayerCoupling}, we observe that $\hat D_n = \tilde D_n$ on the event $M_i \le 1$, where $M_i = \max_{j \ne i} M_{ij}$. Hence $\dtv( \law(\hat D_n), \law(\tilde D_n)) \le \pr( M_i > 1)$. The union bound implies that
\[
 \pr( M_{ij} > 1 )
 \wle \sumd_{k,\ell} \pr( E(G_k) \ni ij ) \, \pr( E(G_\ell) \ni ij)
 \wle \left( \sum_k \pr( E(G_k) \ni ij ) \right)^2.
\]
By noting that $\pr( E(G_k) \ni ij ) = \E \frac{(X_k)_2}{(n)_2} Y_k$, we conclude that
\begin{equation}
 \label{eq:CommonLayers}
 \pr( M_{ij} > 1 ) \wle \left( m (n)_2^{-1} (P_n)_{21} \right)^2,
\end{equation}
Another union bound shows that $\pr( M_i > 1 ) \le \sum_{j \ne i} \pr( M_{ij} > 1 )$ and hence 
\begin{equation}
 \label{eq:BondLayerDegrees}
 \dtv( \law(\hat D_n), \law(\tilde D_n))
 \wle (m/n)^2 (P_n)_{21}^2 (n-1)^{-1}.
\end{equation}

Because $\law(\tilde D_n) \weakto \CPoi( \mu (P)_{10}, \Bin_{10}(\hat P) )$, the same result for the bond-percolation graph follows from \eqref{eq:BondLayerDegrees} in case of bounded layer sizes. In the general case, 
we truncate layers as in \eqref{eq:LayerTruncationNew}, and 
denote by $\hat D_n^{M}$ (resp.\ $\tilde D_n^{M}$) the degree of node~$i$ in $\hat G_n^{M}$ (resp.\ $\tilde G_n^{M}$).  
Then \eqref{eq:BondLayerDegrees} implies that $\dtv ( \law(\hat D^M_n), \law(\tilde D_n^{M}) ) \le c M^4 n^{-1}$ for all large values of $n$, with $c = 2\mu^2$. The reasoning in \eqref{eq:DegreeTruncation1} works also for bond-percolated models, and hence $\dtv ( \law(\hat D_n), \law(\hat D_n^{M}) ) \le h(M)$ and 
$\dtv ( \law(\tilde D_n), \law(\tilde D_n^{M}) ) \le h(M)$ 
where $h(M) = \sup_{n \ge 1} \int x 1(x > M) P_n(dx,dy)$.
We conclude that
\[
 \dtv ( \law(\hat D_n), \law(\tilde D_n) )
 \wle c M^4 n^{-1} + 2 h(M)
\]
for all $M$. By choosing $M \asymp n^{1/5}$, we see that $\dtv ( \law(\hat D_n), \law(\tilde D_n) ) \to 0$, and 
Theorem~\ref{the:BondPercolation}:\eqref{ite:BondPercolation1} follows for $\hat G^{(n)}$.

\subsubsection{Proof of Theorem~\ref{the:BondPercolation}:\eqref{ite:BondPercolation3} for overlay bond percolation}

For any distinct nodes $i,j,k$, we see that $\pr(\hat G^{(n)}_{ij}, \hat G^{(n)}_{ik}, \hat G^{(n)}_{jk}) = \theta^3 \pr(G^{(n)}_{ij}, G^{(n)}_{ik}, G^{(n)}_{jk})$ and $\pr(\hat G^{(n)}_{ij}, \hat G^{(n)}_{ik}) = \theta^2 \pr(G^{(n)}_{ij}, G^{(n)}_{ik})$. Hence $\hat \tau^{(n)} = \theta \tau^{(n)}$ for every $n$, and the claim follows by applying Theorem~\ref{the:TransitivityGlobal} to the nonpercolated model.

\subsubsection{Proof of Theorem~\ref{the:BondPercolation}:\eqref{ite:BondPercolation4} for overlay bond percolation}
Fix any distinct nodes $i,j,k$, and note that the clustering spectrum of $\hat G^{(n)}$ can be written as
$\hat \sigma^{(n)}(t) = \pr(\hat \cA_{n,t})/\pr( \hat \cB_{n,t} )$ where
\begin{align*}
 \hat \cA_{n,t} &\weq \{ \deg_{\hat G^{(n)}}(i) = t, \, \hat G^{(n)}_{ij}, \hat G^{(n)}_{ik}, \hat G^{(n)}_{jk}\}, \\
 \hat \cB_{n,t} &\weq \{ \deg_{\hat G^{(n)}}(i) = t, \, \hat G^{(n)}_{ij}, \hat G^{(n)}_{ik}\}.
\end{align*}
A similar formula also holds for the clustering spectrum $\tilde \sigma^{(n)}(t)$ of the layerwise bond-percolated graph, with $\tilde \cA_{n,t}, \tilde \cB_{n,t}$ defined analogously.  Observe that $1( \hat \cA_{n,t} ) = 1 ( \tilde \cA_{n,t} )$ and $1( \hat \cB_{n,t} ) = 1 ( \tilde \cB_{n,t} )$ on the event that $M_i = \max_{j \ne i} M_{ij} \le 1$ and $M_{jk} \le 1$, where $M_{ij}$ refers to the number of layers linking node pair $ij$ in the coupling construction of Section~\ref{sec:BondLayerCoupling}.
By exchangeability, the union bound, estimate \eqref{eq:CommonLayers}, and $(P_n)_{21} \lesim 1$, it follows that
\[
 \pr(M_i > 1 \ \text{or} \ M_{jk} > 1) 
 \wle n \pr( M_{ij} > 1 )
 \wle n \left( m (n)_2^{-1} (P_n)_{21} \right)^2
 \wlesim n^{-1}.
\]
Hence $\pr(\hat \cA_{n,t}) = \pr(\tilde \cA_{n,t}) + O(n^{-1})$ and $\pr(\hat \cB_{n,t}) = \pr(\tilde \cB_{n,t}) + O(n^{-1})$.
Hence $\hat \tau^{(n)}(t) = (1+o(1)) \tilde \tau^{(n)}(t)$, and the claim follows from the corresponding result for the layerwise bond-percolated model.

\subsubsection{Proof of Theorem~\ref{the:BondPercolation}:\eqref{ite:BondPercolation2} for overlay bond percolation}
The coupling construction in Section~\ref{sec:BondLayerCoupling} shows that all components in $\hat G^{(n)}$ are stochastically smaller than their counterparts in $\tilde G^{(n)}$. Hence the upper bounds concerning component sizes in $\hat G^{(n)}$ follow directly from the result of Theorem~\ref{the:BondPercolation}:\eqref{ite:BondPercolation2} for $\tilde G^{(n)}$. Therefore, we only need to prove that with high probability $\hat G^{(n)}$ contains a component of size $(1+o_\pr(1)) \rho(\hat f^+) n$.

Let us investigate how Lemma~\ref{the:MBGiant1} behaves when $G^{(n)}$ is replaced by $\hat G^{(n)} = G^{(n)} \cap H$ where $H$ is a homogeneous Bernoulli graph on $\{1,\dots,n\}$ with link probability $\theta$.
Define a modification of Algorithm~\ref{algo:LowerExploration} where the layer exploration step is replaced by
$\cZ_t \leftarrow \cup_{k \in \cW_t} N_{v_t}(\hat G'_k)$
where $\hat G_k'$ is the transitive closure of $\hat G_k = G_k \cap H$. By construction, the modified version of Algorithm~\ref{algo:LowerExploration} discovers a subset of the $\hat G^{(n)}$-component of the root. Furthermore, the algorithm avoids multi-overlaps, and therefore the output of Algorithm~\ref{algo:LowerExploration} is the same as if it were run for the layerwise bond-percolated model $\tilde G^{(n)}$ with mutually independent layers $\tilde G_k = G_k \cap H_k$ as in \eqref{eq:LayerPercolation}. Hence Lemma~\ref{the:QuantitativeLowerBound} is valid for the overlay bond-percolated model, with the same lower bound as for the layerwise bond-percolated model. Hence the statements in \eqref{eq:MBGiant1Constant}--\eqref{eq:EmpLargeComponentFrequency} of Lemma~\ref{the:MBGiant1} are valid just the same as for the layer-percolated model.

To finish extending Lemma~\ref{the:MBGiant1} to the overlay bond-percolated graph, we still need to verify the sprinkling argument in the proof of the lower bound for \eqref{eq:GiantFiniteLayerTypes}. To do this, we modify the earlier argument slightly using a modified coupling.  As in the earlier proof for the nonpercolated model, fix a small $\delta \in (0,1)$, partition the set of layers into red layers and blue layers, and denote by $G^{(r)}$ and $G^{(b)}$ the overlay graphs generated by the red and blue layers. Let $\theta^{(b)} = \delta$ and define $\theta^{(r)} = 1 - \frac{1-\theta}{1-\delta}$. Let $H^{(r)}, H^{(b)}$ be mutually independent homogeneous Bernoulli graphs on $[n]$ with link probabilities $\theta^{(b)}$ and $\theta^{(r)}$, respectively, sampled independently of the layers. 
Then $\tilde G = G \cap H$ with $G = G^{(r)} \cup G^{(b)}$ and $H = H^{(r)} \cup H^{(b)}$ is an instance of the bond-percolated overlay graph. For a lower bound, we note that $\tilde G \supset \tilde G^{(r)} \cup \tilde G^{(b)}$
where $\tilde G^{(r)} = G^{(r)} \cap H^{(r)}$ and $\tilde G^{(b)} = G^{(b)} \cap H^{(b)}$. Note that $\theta - \delta/2 \le \theta^{(r)} \le \theta$ for $0 < \delta \le \frac12$.

Let $B = B_\omega( \tilde G^{(r)} )$ be the set of nodes having $\tilde G^{(r)}$-component larger than $\omega = n^{2/3}$. 
Then by \eqref{eq:MBGiant1Constant}--\eqref{eq:EmpLargeComponentFrequency} of Lemma~\ref{the:MBGiant1}, it follows that $B \ge (\rho(\hat f^+)-\epsilon) n$ with high probability, where $\epsilon>0$ becomes arbitrarily small after choosing a small enough $\delta > 0$. We claim that $B$ is $\tilde G$-connected with high probability for $\omega = n^{2/3}$.  If $B$ is not $\tilde G$-connected, then there exist disjoint $\tilde G^{(r)}$-components $C',C''$ both of size at least $\omega$, between which there are no $\tilde G$-links and hence no $\tilde G^{(b)}$-links. Let us condition on the red layers and $H^{(r)}$.  Given these, the blue layers and $H^{(b)}$ behave independently. Denote by $M_b$ the number of blue layers containing at least one link between $C'$ and $C''$. Denote by $L_b = \abs{E(G^{(b)}, C',C'')}$ (resp.\ $\tilde L_b = \abs{E(\tilde G^{(b)}, C',C'')}$)  the number of $G^{(b)}$-links (resp.\ $\tilde G^{(b)}$-links) between $C'$ and $C''$. Let $s = \delta^{-1} \log n$ and $t = 3 \delta^{-1} \log n$, and observe that
\begin{align*}
 \pr( \tilde L_b = 0 \cond L_b \ge s)
 \wle (1-\delta)^{s}
 \wle e^{-\delta s}
 \weq n^{-1}.
\end{align*}
Given $M_b \ge t$, we know that $L_b \gest N_{t}$ where $N_{t}$ is the number of distinct coupon types obtained after collecting $t$ random coupons from a collection of $n_0 = \abs{C' \times C''}$ coupon types. By Lemma~\ref{the:CouponCollection}, for large enough $n$ such that $1 + s \le \frac12 t$ and $t \le n_0^{1/4}$,
\[
 \pr( L_b < s \cond M_b \ge t )
 \wle \pr( N_t < s )
 \wle n_0^{-1}
 \wle \omega^{-2}.
\]
By applying \eqref{eq:BlueLink2} and noting that $t \le (\frac12 - e^{-1}) c_1 n^{1/3}$ for large $n$, we see that $\pr( M_b < t) \le e^{t - (1-e^{-1}) c_1 n^{1/3}} \le e^{- c_2 n^{1/3}}$ with $c_1 = \frac12 \delta \mu M^{-2} (P)_{21}$ and $c_2 = \frac14 \delta \mu M^{-2} (P)_{21}$.
Now
\begin{align*}
 \pr( \tilde L_b = 0 )
 &\wle \pr( \tilde L_b = 0 \cond L_b \ge s) + \pr(L_b < s) \\
 &\wle \pr( \tilde L_b = 0 \cond L_b \ge s) + \pr(L_b < s \cond M_b \ge t) + \pr( M_b < t) \\
 &\wle n^{-1} +  \omega^{-2} + e^{- c_2 n^{1/3}} \\
 &\wle 3n^{-1}.
\end{align*}
Now there are at most $\frac{n}{\omega} = n^{1/3}$ such components $C',C''$, and hence at most $\frac12 n^{2/3}$ such component pairs. Hence the probability that there exists a component pair $C',C''$ with no $\tilde G^{(b)}$-links in between, is at most $\frac{3}{2} n^{-1/3}$. We conclude that $B$ is $\tilde G$-connected with high probability.  This confirms that the lower bound for \eqref{eq:GiantFiniteLayerTypes} in Lemma~\ref{the:MBGiant1} extends to the overlay bond-percolated setting.

All the rest in the proof of Theorem~\ref{the:Giant} extends to the overlay bond-percolated setting in a straightforward manner. This concludes the proof of Theorem~\ref{the:BondPercolation}:\eqref{ite:BondPercolation2}.

\qed

\section{Analysis of power-law models}
\label{sec:PowerLawAnalysis}

\subsection{Mixed binomial power laws}

When the limiting layer type distribution factorises according to \eqref{eq:LayerTypeFactorized}--\eqref{eq:PowerLaws} and $\alpha + s \beta > r+1$, we find that the mixed binomial distribution in \eqref{eq:MixedBin} can be written as
\[
 \Bin_{rs}(P)(t)
 \weq \sum_{x=1}^\infty \Bin(x-r, q(x))(t) \, \tilde p_{rs}(x),
\]
where $\tilde p_{rs}(x) = \frac{(x)_r q(x)^s p(x)}{(P)_{rs}}$ is a biased layer size distribution. Assumptions \eqref{eq:PowerLaws} imply that the biased layer size distribution follows a power law
$\tilde p_{rs}(x) \sim \frac{a b^s}{(P)_{rs}} x^{-(\alpha+s\beta-r)}.$ 
If $\beta > 0$ or $b < 1$, then Lemma~\ref{L1} shows that also the mixed binomial distribution follows a power law
\begin{equation}
 \label{eq:MixedBinPowerLaw}
 \Bin_{rs}(P)(t) \wsim d_{rs} t^{-\delta_{rs}}
\end{equation}
with parameters
\begin{equation}
 \label{eq:MixedBinPowerLawParameters}
 \delta_{rs} = 1 + \frac{\alpha+s\beta-r-1}{1-\beta}
 \qquad \text{and} \qquad
 d_{rs} = \frac{ab^s}{(P)_{rs}} \frac{b^{\delta_{rs}-1}}{1-\beta}.
\end{equation}

\subsection{Proof of Theorem~\ref{the:PowerLawDegree}}
The limiting degree distribution given by Theorem~\ref{the:DegreeDistribution} equals $f = \CPoi(\mu (P)_{10}, g_{10})$ with  $g_{10} = \Bin_{10}(P)$. 

(i) Assume first that $0 \le \beta < 1$ and that either $\beta > 0$ or $b < 1$. By \eqref{eq:MixedBinPowerLaw}, we find that $g_{10}(t) \sim d_{10} t^{-\delta_{10}}$. The above formula implies that $g_{10}$ is subexponential \cite[Theorem 4.14]{Foss_Korshunov_Zachary_2013} and it follows that \cite[Theorem 4.30]{Foss_Korshunov_Zachary_2013} $f(t) \sim \mu (P)_{10} g_{10}(t) \sim \mu (P)_{10} d_{10} t^{-\delta_{10}}$.

(ii) Consider the case with $\beta = 0$ and $b=1$, and assume that $q(x)=1$ for all but finitely many $x$.
Then $\Bin(x-1,q(x)) = \delta_{x-1}$ for large values of $x$, and it follows that $g_{10}(t) = \tilde p_{10}(t+1)$ for all large $t$. Hence $g_{10}(t) \sim \tilde p_{10}(t)$, and the claim follows as in (i).

(iii) If $\beta \ge 1$, then $M = \sup_{x \ge 1} (x-1)q(x) < \infty$. The generating function of the limiting degree distribution equals $\sum_{t \ge 0} z^t f(t) = e^{\lambda(\hat g_{10}(z)-1)}$, where
\[
 \hat g_{10}(z)
 \weq \sum_{x \ge 1} (1-q(x) + q(x) z)^{x-1} \tilde p_{10}(x).
\]
Because $1-y + y z \le e^{y(z-1)}$ for all real numbers $z$, it follows that $\hat g_{10}(z) \le e^{M(z-1)}$
and hence $\sum_{t \ge 0} z^t f(t)$ is finite for all $z>0$.
\qed

\subsection{Proof of Theorem~\ref{the:PowerLawTransitivityLocal}}

The limiting clustering spectrum $\sigma(t)$ in Theorem~\ref{the:TransitivityLocal} is represented using convolutions of the limiting degree distribution $f = \CPoi(\mu (P)_{10}, g_{10})$ and distributions $g_{rs} = \Bin_{rs}(P)$ defined by \eqref{eq:MixedBin}.  Theory of discrete subexponential densities \cite[Lemmas 4.9 and 4.14]{Foss_Korshunov_Zachary_2013} implies that
$
 (f_1 \conv f_2)(t) \sim f_1(t) + f_2(t)
$ 
for all probability densities on the positive integers such that $f_i(t) \sim a_i t^{-\alpha_i}$ with $a_i > 0$ and $\alpha_i > 1$. By Theorem~\ref{the:PowerLawDegree}, we know that $f(t) \sim \mu(P)_{10} d_{10} t^{-\delta_{10}}$, and 
by \eqref{eq:MixedBinPowerLaw}, we find that $g_{rs}(t) \sim d_{rs} t^{-\delta_{rs}}$ with parameters given by \eqref{eq:MixedBinPowerLawParameters}. Because $\delta_{32} < \delta_{21} < \delta_{10}$, it follows that
\[
 (f \conv g_{32})(t)
 \wsim f(t) + g_{32}(t)
 \wsim g_{32}(t)
\]
and
\[
 (f \conv g_{21} \conv g_{21})(t)
 \wsim f(t) + g_{21}(t) + g_{21}(t)
 \wll g_{32}(t).
\]
Hence by formula~\eqref{eq:TransitivityLocal},
\[
 \sigma(t)
 \wsim \frac{(P)_{33}}{(P)_{32}} \frac{f(t) + g_{33}(t)}{g_{32}(t)}
 \wsim \frac{(P)_{33}}{(P)_{32}} \frac{\mu (P)_{10} d_{10} t^{-\delta_{10}}  + d_{33} t^{-\delta_{33}}}{ d_{32} t^{-\delta_{32}}}.
\]
Because $\delta_{33} - \delta_{10} = \frac{3\beta-2}{1-\beta}$, we see that $\sigma(t)$ follows a power law with density exponent $\delta_{33}-\delta_{32} = \frac{\beta}{1-\beta}$ for $\beta \le \frac{2}{3}$, and density exponent $\delta_{10}-\delta_{32} = 2$ for $\beta \ge \frac{2}{3}$. The constant term of the power law is determined by \eqref{eq:MixedBinPowerLawParameters}. \qed

\appendix

\section{Supplementary results}
\label{sec:Supplement}


\subsection{Formal model definition}
\label{sec:FormalModel}
Fix integers $n, m \ge 1$.  Let $p_{n,1}, \dots, p_{n,m}$ be probability measures on $\Z_+ \times [0,1]$, and let $q_n$ be a probability kernel from $\Z_+ \times [0,1]$ into $\cG_n$ defined by
$
 q_n((x,y), g)
 = \binom{n}{x}^{-1} (1-y)^{\binom{n}{2} - \abs{E(g)}} \, y^{\abs{E(g)}}.
$
The space of possible layer type configurations $\theta_n = ( (x_1,y_1), \dots, (x_m, y_m))$ is denoted by $\Omega_{1,n} = (\Z_+ \times [0,1])^m$, and the space of possible layer configurations $\xi_n = (g_1,\dots, g_m)$ by $\Omega_{2,n} = \cG_n^m$. Define a probability measure $\bar p_n$ on $\Omega_{1,n}$ and a probability kernel $\bar q_n$ from $\Omega_{1,n}$ to $\Omega_{2,n}$ by
\begin{align*}
 \bar p_n( d\theta_n )
 \weq \prod_{k=1}^m p_{n,k}(dx_k, dy_k),
 \quad
 \bar q_n( \theta_n, \xi_n )
 \weq \prod_{k=1}^m q_n((x_k,y_k), g_k).
\end{align*}
The joint probability distribution of layers and their types is a probability measure $\pr_n = \bar p_n \otimes \bar q_n$ on $\Omega_{n} = \Omega_{1,n} \times \Omega_{2,n}$. We denote by $\pr_{\theta_n}(A) = \bar q_n( \theta_n, A)$ the regular conditional distribution of the layers given layer types $\theta_n$.  The empirical layer type distribution is defined by $P_{\theta_n} = \frac{1}{m} \sum_{k=1}^m \delta_{x_k,y_k}$. The averaged layer type distribution is denoted by $P_n = \frac{1}{m} \sum_{k=1}^m p_{n,k}$.

By defining $\pr$ as the product measure on $\Omega = \Omega_1 \times \Omega_2 \times \cdots$ we may consider all models on all scales simultaneously on a common probability space.  Then $\theta_n$, $\xi_n$, and the graphs $G_n = G_n(\theta_n,\xi_n)$ can be viewed as random variables on $\Omega$ defined using canonical coordinate projections $(\theta, \xi) \to \theta_n$, $(\theta, \xi) \to \xi_n$, and the deterministic map $\xi_n \mapsto G_n(\xi_n) = (\{1,\dots,n\}, \cup_{k=1}^m E(g_{k}))$.

\begin{lemma}
\label{the:ConditionalConvergenceInProbability}
Let $\Phi_n: \Omega_n \to \R$ measurable functions such that
$\pr_{\theta_n}( \{ \xi_n: \abs{ \Phi_n( \theta_n, \xi_n) - c } > \epsilon \}) \to 0$ for all $\epsilon > 0$ and 
for all $(\theta_1,\theta_2,\dots)$ such that $\dtv(P_{\theta_n} ,P ) \to 0$. Assume that $\dtv( P_{\theta_n}, P ) \prto 0$. Then $\Phi_n \prto c$.
\end{lemma}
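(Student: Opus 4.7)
The plan is to fix $\epsilon>0$ and define $\psi_n\colon \Omega_{1,n} \to [0,1]$ by $\psi_n(\theta) = \pr_\theta(\abs{\Phi_n(\theta,\cdot) - c} > \epsilon)$. Fubini's theorem applied to the product structure $\pr_n = \bar p_n \otimes \bar q_n$ yields
\[
 \pr(\abs{\Phi_n(\theta_n,\xi_n) - c} > \epsilon)
 \weq \E\psi_n(\theta_n),
\]
and since $\psi_n$ is bounded by $1$, bounded convergence reduces the claim to establishing $\psi_n(\theta_n) \prto 0$ as random variables on $\Omega$.

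I would verify this via the subsequence characterisation of convergence in probability: it suffices to show that every subsequence contains a further subsequence along which $\psi_n(\theta_n) \to 0$ almost surely. Given any subsequence $(n_k)$, the hypothesis $\dtv(P_{\theta_n}, P) \prto 0$ lets me extract a further subsequence $(n_{k_j})$ along which $\dtv(P_{\theta_{n_{k_j}}}, P) \to 0$ almost surely. Fix an outcome $\omega$ in this full-measure event; it then suffices to prove $\psi_{n_{k_j}}(\theta_{n_{k_j}}(\omega)) \to 0$.

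The assumption of the lemma only addresses sequences indexed by all of $n$, so the crucial step is to extend $(\theta_{n_{k_j}}(\omega))_j$ to a full sequence whose empirical layer-type distributions still converge to $P$ in total variation. I would observe that $\dtv(P_{\theta_n}, P) \prto 0$ implies $\pr(\dtv(P_{\theta_n}, P) \le \delta) > 0$ for every $\delta > 0$ and all sufficiently large $n$, so the deterministic set $\{\theta \in \Omega_{1,n} : \dtv(P_\theta, P) \le \delta\}$ is nonempty; a diagonal selection then yields $\theta'_n \in \Omega_{1,n}$ with $\dtv(P_{\theta'_n}, P) \to 0$. Defining $\tilde\theta_n = \theta_n(\omega)$ when $n \in \{n_{k_j}\}_j$ and $\tilde\theta_n = \theta'_n$ otherwise gives a full sequence with $\dtv(P_{\tilde\theta_n}, P) \to 0$, and invoking the hypothesis produces $\psi_n(\tilde\theta_n) \to 0$, whose restriction to the subsequence is exactly the desired conclusion.

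The main obstacle will be precisely this extension step, which reconciles the full-sequence formulation of the hypothesis with the sub-subsequence almost-sure convergence supplied by the subsequence principle. Its resolution rests on the elementary observation that in-probability convergence of the empirical parameters forces the existence of deterministic approximants on every scale, which is enough to splice the almost-sure sub-subsequence into a valid full sequence to which the hypothesis may be applied.
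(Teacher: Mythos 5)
Your proposal is correct and follows the same route as the paper's own proof: reduce to $\E\psi_n(\theta_n)\to 0$ via the conditioning/Fubini identity $\pr(\abs{\Phi_n-c}>\epsilon)=\E\,\pr_{\theta_n}(\abs{\Phi_n-c}>\epsilon)$, invoke the subsequence characterisation of convergence in probability (the paper cites \cite[Lemma 4.2]{Kallenberg_2002}), extract a sub-subsequence along which $\dtv(P_{\theta_n},P)\to 0$ almost surely, conclude $\psi_n(\theta_n)\to 0$ on that full-measure event, and finish by dominated convergence.

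The genuine addition in your write-up is the splicing step. The paper passes directly from \emph{a.s.\ convergence of $\dtv(P_{\theta_n},P)$ along a sub-subsequence} to \emph{$Z_{\epsilon,n}\to 0$ a.s.\ along that sub-subsequence}, implicitly treating the lemma's hypothesis as if it applied to subsequences, whereas it is stated for full sequences $(\theta_1,\theta_2,\dots)$. Because the spaces $\Omega_{1,n}$ change with $n$, one cannot simply restrict a full sequence to a subsequence and conversely: the subsequence must be filled back in with valid elements of the intermediate $\Omega_{1,n}$'s. Your observation that $\dtv(P_{\theta_n},P)\prto 0$ forces the deterministic sets $\{\theta\in\Omega_{1,n}:\dtv(P_\theta,P)\le\delta\}$ to be eventually nonempty, and hence yields deterministic approximants $\theta'_n$ with $\dtv(P_{\theta'_n},P)\to 0$, is precisely the right fix and makes the argument airtight. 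It buys you nothing new in generality but supplies the missing justification for the step the paper glosses over; the trade-off is a slightly longer proof. Both proofs otherwise have the same skeleton, so there is nothing to flag beyond this technical refinement.
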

\begin{proof}
We will apply the result \cite[Lemma 4.2]{Kallenberg_2002} that $X_n \prto X$ if and only if for any subsequence of $\N$ there exists a further subsequence along which the convergence takes place $\pr$-almost surely. Fix a subsequence $\N' \subset \N$. Because $\dtv( P_{\theta_n}, P ) \prto 0$ as $n \to \infty$ along $\N'$, there exists a further subsequence $\N'' \subset \N$ such that $\dtv( P_{\theta_n}, P ) \to 0$ $\pr$-almost surely along $\N''$. Then for any $\epsilon > 0$, the random variables $Z_{\epsilon, n} = \pr_{\theta_n}( \{ \xi_n: \abs{ \Phi_n( \theta_n, \xi_n) - c } > \epsilon \})$ satisfy $Z_{\epsilon, n} \to 0$ $\pr$-almost surely along $\N''$. Dominated convergence then implies that $\pr( \abs{\Phi_n-c} > \epsilon) = \E Z_{\epsilon, n} \to 0$ along $\N''$. Then there exists a further subsequence $\N'''$ such that $\Phi_n \to c$ $\pr$-almost surely along $\N'''$.
\end{proof}

\begin{lemma}
\label{the:EmpiricalDistributionConvergence}
Assume that $P_n$, $n \ge 1$, and $P$ are supported on a finite set $A \subset \Z_+ \times [0,1]$, and that $P_n \weakto P$. Then $\dtv(P_{\theta_n}, P) \prto 0$.
\end{lemma}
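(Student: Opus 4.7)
The plan is to exploit finiteness of $A$ to reduce the total variation distance to a finite sum of atomic masses, and then apply a weak law of large numbers to each atom separately. Since every probability measure involved is supported in $A$, for any configuration $\theta_n$ we have
\[
 \dtv(P_{\theta_n}, P)
 \weq \tfrac{1}{2} \sum_{a \in A} \abs{P_{\theta_n}(\{a\}) - P(\{a\})},
\]
so it suffices to show $P_{\theta_n}(\{a\}) \prto P(\{a\})$ for each fixed $a \in A$.

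Fix $a \in A$ and write $\xi_{n,k} = 1((X_{n,k}, Y_{n,k}) = a)$. By construction, $\xi_{n,1}, \dots, \xi_{n,m}$ are independent Bernoulli variables with $\E \xi_{n,k} = p_{n,k}(\{a\})$. First, I would use weak convergence on the finite set $A$, which implies convergence of every atomic mass, giving $P_n(\{a\}) = \frac{1}{m} \sum_{k=1}^m p_{n,k}(\{a\}) \to P(\{a\})$. Next, by independence,
\[
 \Var\bigl( P_{\theta_n}(\{a\}) \bigr)
 \weq \frac{1}{m^2} \sum_{k=1}^m \Var(\xi_{n,k})
 \wle \frac{1}{m},
\]
so that $P_{\theta_n}(\{a\}) - P_n(\{a\}) \prto 0$ as $m \to \infty$ by Chebyshev's inequality. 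Combining the deterministic convergence of $P_n(\{a\})$ with the variance bound via the triangle inequality yields $P_{\theta_n}(\{a\}) \prto P(\{a\})$.

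Summing the atomic convergences over the finite set $A$ gives $\dtv(P_{\theta_n}, P) \prto 0$, completing the proof. I expect no real obstacle here: the argument is a routine weak law of large numbers adapted to a triangular array of independent but nonidentically distributed Bernoulli variables, and the only hypothesis genuinely used (beyond $m \to \infty$, which is part of the large-network setup) is finiteness of $A$ together with weak convergence $P_n \weakto P$.
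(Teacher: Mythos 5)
Your proof is correct and takes essentially the same route as the paper's: convergence of $\E P_{\theta_n}(\{a\}) = P_n(\{a\}) \to P(\{a\})$ for each atom, the variance bound $\Var(P_{\theta_n}(\{a\})) \le m^{-1}$ from independence, and Chebyshev, with finiteness of $A$ used to sum the atomic convergences into total variation convergence. You are merely a bit more explicit than the paper about why weak convergence on a finite support set yields atomic convergence and about reducing $\dtv$ to a finite sum.
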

\begin{proof}
Now $\E P_{\theta_n}(x,y) = P_n(x,y) \to P(x,y)$ for all $(x,y) \in A$.  Because the layer types are independent, $\Var P_{\theta_n}(x,y) = \frac{1}{m^2} \sum_{k=1}^m \Var 1(X_{n,k} =x, Y_{n,k} =y) \le m^{-1}$. Hence by Chebyshev's inequality, $P_{\theta_n}(x,y) \prto P(x,y)$ for all $(x,y) \in A$, and the claim follows.
\end{proof}

\subsection{Elementary analysis}

\iflongversion\else\color{\iftextcolor}
The following elementary result (proof omitted) follows by standard Riemann integral approximations.
\color{black}\fi

\begin{lemma}
\label{the:UnimodularSum}
Fix integers $a < b$ and let $f: [a,b] \to [0,\infty)$ be unimodular in the sense that there exists $s^* \in [a,b]$ such that $f$ is nondecreasing on $[a,s^*]$ and nonincreasing on $[s^*,b]$.
Then $\left| \sum_{k=a}^b f(k) - \int_a^b f(s) \, ds \, \right| \le \supnorm{f}$.
\end{lemma}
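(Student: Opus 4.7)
The plan is to split the interval at an integer near the mode and invoke the classical Riemann--sum estimate for monotone functions twice. The elementary building block to use is this: for a nondecreasing function $g$ on an integer interval $[\alpha,\beta]$,
\[
 \sum_{k=\alpha}^{\beta-1} g(k) \wle \int_\alpha^\beta g(s)\,ds \wle \sum_{k=\alpha+1}^\beta g(k),
\]
which gives $g(\alpha)\le \sum_{k=\alpha}^\beta g(k) - \int_\alpha^\beta g(s)\,ds \le g(\beta)$; an analogous estimate with the roles of $\alpha,\beta$ swapped holds for nonincreasing $g$.

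The boundary cases $s^*\in\{a,b\}$ reduce $f$ to the monotone setting on all of $[a,b]$, and the claim follows at once from the building block. Otherwise $s^*\in(a,b)$, and I would take $c=\lceil s^*\rceil\in\{a+1,\dots,b\}$, so that $f$ is nondecreasing on $[a,c-1]$ and nonincreasing on $[c,b]$. Writing
\[
 \sum_{k=a}^b f(k) \weq S_1+S_2,\qquad \int_a^b f(s)\,ds \weq I_1+I_2+I_3,
\]
with $S_1=\sum_{k=a}^{c-1}f(k)$, $S_2=\sum_{k=c}^{b}f(k)$, $I_1=\int_a^{c-1}f$, $I_2=\int_{c-1}^{c}f$, and $I_3=\int_c^b f$, the monotone estimate gives $0\le S_1-I_1\le f(c-1)$ and $0\le S_2-I_3\le f(c)$.

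The main technical obstacle is that combining these two bounds naively only yields $S-I\le f(c-1)+f(c)\le 2\,\supnorm{f}$, which is too weak. To sharpen the estimate I would use the unimodularity of $f$ to bound the middle integral from below: since $f\ge f(c-1)$ on $[c-1,s^*]$ and $f\ge f(c)$ on $[s^*,c]$, one has $I_2\ge \lambda f(c-1)+(1-\lambda)f(c)$ with $\lambda=s^*-(c-1)\in[0,1]$. Substituting into $S-I=(S_1-I_1)+(S_2-I_3)-I_2$ yields
\[
 S-I \wle (1-\lambda)f(c-1)+\lambda f(c) \wle \supnorm{f},
\]
as a convex combination of two values of $f$, while the lower bound $S-I\ge -I_2\ge -\supnorm{f}$ is immediate from $I_2\le \supnorm{f}$ together with the nonnegativity of the two monotone discrepancies. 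This gives $|S-I|\le \supnorm{f}$, as required.
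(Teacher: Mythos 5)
Your proof is correct, and it follows the same core strategy as the paper's (split at the mode, apply the monotone Riemann estimate to each side, and treat the small interval near the mode separately), but with a somewhat tidier execution. The paper's proof works with both $r_1=\lfloor s^*\rfloor$ and $r_2=\lceil s^*\rceil$ and splits into the two cases $r_1=r_2$ (integer mode) and $r_1=r_2-1$, in the second case using the crude lower bound $\int_{r_1}^{r_2}f\ge f(r_1)\wedge f(r_2)$ together with $f(r_1)\vee f(r_2)\le\supnorm{f}$. You instead make a single cut at $c=\lceil s^*\rceil$, and by locating $s^*$ precisely within $[c-1,c]$ you get the weighted lower bound $I_2\ge\lambda f(c-1)+(1-\lambda)f(c)$ with $\lambda=s^*-(c-1)$, which collapses the two cases into one and makes the convex-combination upper bound $S-I\le(1-\lambda)f(c-1)+\lambda f(c)\le\supnorm{f}$ drop out cleanly. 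Your bound on $I_2$ is also slightly sharper than the paper's min-max estimate, though both deliver the same final constant. The lower bound $S-I\ge-I_2\ge-\supnorm{f}$ is likewise a streamlined version of what the paper does with its $1(r_1<r_2)$ bookkeeping. All steps check out, including the degenerate endpoint configurations $c=a+1$ and $c=b$ where one of the side sums collapses to a single term.
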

\iflongversion\color{\iftextcolor}
\begin{proof}
Let us abbreviate $\int_a^b f = \int_a^b f(s) \, ds$ and $\sum_a^b f = \sum_{a \le k \le b} f(k)$. Denote $r_1 = \floor{s^*}$ and $r_2 = \ceil{s^*}$. Then by writing
\[
 \sum_{k=a}^{r_1-1} f(k)
 \weq \int_{a}^{r_1} f(\floor{s}) ds
 \qquad \text{and} \qquad
 \sum_{k=r_2+1}^{b} f(k)
 \weq \int_{r_2}^{b} f(\ceil{s}) ds
\]
we find that $\sum_{a}^{r_1-1} f \le \int_{a}^{r_1} f$ and $\sum_{r_2+1}^{b} f \le \int_{r_2}^{b} f$. If $r_1 = r_2 = s^*$, then $f(r_1)=f(r_2)=\supnorm{f}$, and we see that $\sum_{a}^{b} f \le \int_a^b f + \supnorm{f}$. If $r_1 = r_2-1$, then $f(r_1) \wedge f(r_2) \le f(s)$ for $s \in [r_1,r_2]$ implies that
\[
 f(r_1)+f(r_2)
 \weq f(r_1) \wedge f(r_2) + f(r_1) \vee f(r_2)
 \wle \int_{r_1}^{r_2} f + \supnorm{f},
\]
and hence $\sum_{a}^{b} f \le \int_a^b f + \supnorm{f}$ also in this case.

To obtain a lower bound, a similar reasoning shows that $\int_{a}^{r_1} f \le \sum_{a+1}^{r_1} f $ and $\int_{r_2}^{b} f \le \sum_{r_2}^{b-1} f$. Together with the fact that $\int_{r_1}^{r_2} f \le \supnorm{f} 1(r_1<r_2)$, it follows that
$\int_a^b f \le \sum_{a+1}^{r_1} f + \sum_{r_2}^{b-1} f + \supnorm{f} 1(r_1<r_2)$.  Now, because $\sum_{a+1}^{r_1} f + \sum_{r_2}^{b-1} f = \sum_{a+1}^{b-1} f + \supnorm{f} 1(r_1=r_2) \le \sum_{a}^{b} f + \supnorm{f} 1(r_1=r_2)$, it follows that
$\int_a^b f \le \sum_{a}^{b} f + \supnorm{f}$.
\end{proof}
\color{black}\fi


\subsection{Power laws}

The following result characterises conditions under which a mixed binomial distribution follows a power law.

\begin{lemma}
\label{L1}
Consider a mixed binomial distribution $g(r) = \sum_{k \ge 1} p_k f_k(r)$ where $f_k = \Bin(x_k, y_k)$
and $(p_k)$ is a probability distribution on $\{1,2,\dots\}$.  Assume that
\[
 x_k = \big(a + O\big(k^{-\alpha/2}\big)\big) k^{\alpha},
 \quad
 y_k = \big(b + O\big(k^{-\alpha/2}\big)\big) k^{-\beta},
 \quad
 p_k = (c+o(1)) k^{-\gamma},
\]
for some $0 \le \beta < \alpha < \beta + 2$ and $\gamma>1$, and some $a,b,c>0$ such that $\beta>0$ or $b < 1$.
Then
\[
 g(r) = (d+o(1)) r^{-\delta}
\]
where $\delta = 1 + \frac{\gamma-1}{\alpha-\beta}$ and $d = (ab)^{\delta-1}c/(\alpha-\beta)$.
\end{lemma}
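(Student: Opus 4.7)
The plan is to identify the critical scale $k_\ast = k_\ast(r) := (r/(ab))^{1/(\alpha-\beta)}$ at which the mean $m_k := x_k y_k \sim ab\,k^{\alpha-\beta}$ of $\Bin(x_k, y_k)$ crosses the target value $r$, and to show that the dominant contribution to $g(r) = \sum_k p_k \Bin(x_k, y_k)(r)$ comes from a narrow window of $k$-values around $k_\ast$. Writing $\theta := \alpha - \beta \in (0,2)$, the upper constraint $\theta < 2$ ensures that the $k$-window of non-negligible binomial density, which has width $\sqrt{m_{k_\ast}}/(dm_k/dk|_{k_\ast}) \sim k_\ast^{1-\theta/2}$, diverges and so contains many integers, making a sum-to-integral approximation legitimate. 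The alternative hypothesis ``$\beta > 0$ or $b < 1$'' ensures $\sigma_k^2 := m_k(1-y_k) \to \infty$, which is needed for Gaussian approximation of the binomial.

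The main computation would then invoke a local central limit theorem,
\[
 \Bin(x_k, y_k)(r) = (1+o(1))\,\frac{1}{\sqrt{2\pi \sigma_k^2}}\,\exp\!\left(-\frac{(r-m_k)^2}{2\sigma_k^2}\right),
\]
valid uniformly in $k$ with $|r - m_k| \le \sigma_k \log \sigma_k$. On this central window the slowly varying factor $ck^{-\gamma}$ may be replaced by $c k_\ast^{-\gamma}$ with negligible error; approximating the resulting Riemann sum by a Lebesgue integral and changing variables $u = m_k$ via $du = \theta\,ab\,k^{\theta-1}(1+o(1))\,dk$, one finds that the Gaussian density integrates to one, producing the main term
\[
 g(r) \sim c\, k_\ast^{-\gamma}\cdot \frac{1}{\theta\, ab\, k_\ast^{\theta-1}} = \frac{c}{\theta}\,(ab)^{(\gamma-1)/\theta}\, r^{-1-(\gamma-1)/\theta},
\]
which is exactly $d\,r^{-\delta}$ with the asserted $\delta = 1 + (\gamma-1)/\theta$ and $d = (ab)^{\delta-1} c/(\alpha-\beta)$.

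The principal obstacle is controlling the tail contribution from $k$ outside the central window. For $k \ll k_\ast$ (so $m_k \ll r$), I would use the crude bound $\binom{x_k}{r} y_k^r \le m_k^r/r!$ combined with Stirling to get decay faster than any polynomial in $r$, which easily sums to $o(r^{-\delta})$; for $k \gg k_\ast$ (so $m_k \gg r$), a Chernoff bound on the lower binomial tail gives $\Bin(x_k, y_k)(r) \le \exp(-c_0\, m_k)$ with some $c_0 > 0$, and summing against $p_k \lesim k^{-\gamma}$ with $\gamma > 1$ yields a geometrically convergent total that is negligible compared to $r^{-\delta}$. A secondary technical point is making the LCLT error uniform in $k$ over the central window, so that the $(1+o(1))$ factor genuinely contributes only an $o(1)$ multiplicative correction after summation; this follows from standard moderate-deviation estimates, where the additional logarithmic slack in the window size $\sigma_k \log \sigma_k$ provides the needed margin.
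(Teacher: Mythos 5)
Your proposal is correct and follows essentially the same route as the paper's proof: both center the analysis on the critical index $k_\ast$ where $x_ky_k \approx r$, use a local central limit theorem to replace $\Bin(x_k,y_k)(r)$ by a Gaussian density on a window of width $\asymp r^{1/2}\log r$, approximate the resulting Riemann sum by an integral whose change of variables produces the Jacobian factor $(\alpha-\beta)^{-1}$, and kill the tails with Chernoff bounds. The one place where the paper does genuine extra work that your sketch passes off to "standard moderate-deviation estimates" is step (iv), where one must check that replacing $\mu_k,\sigma_k$ by $abk^{\alpha-\beta}$ and $\sigma_0 r^{1/2}$ \emph{inside the Gaussian exponent} changes it by $o(1)$ uniformly over the window; this is delicate because the relative parameter errors get multiplied by $(\log r)^2$, and it is precisely what fixes $\Delta_r = r^{1/2}\log r$ rather than $r^{1/2}\sqrt{\log r}$.
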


\begin{proof}
Denote the mean and variance of $f_k$ by $\mu_k = x_k y_k$ and $\sigma_k^2 = x_k y_k(1-y_k)$. 
Denote $x_k = (1+\epsilon_{1,k}) a k^{\alpha}$, $y_k = (1+\epsilon_{2,k}) b k^{-\beta}$, and define $\epsilon_k$ by the formula $1+\epsilon_k = (1+\epsilon_{1,k})(1+\epsilon_{2,k})$. Then $\epsilon_k = O(k^{-\alpha/2})$, and we may fix constants $k_0,M > 0$ such that $\abs{\epsilon_k} \le M k^{-\alpha/2} \le \frac14$ for all $k \ge k_0$.
Then
\[
 \mu_k = (1+\epsilon_k) abk^\rho
\]
where $\rho = \alpha-\beta$. Define
\[
 A_r = \{k \in \N: \abs{ abk^\rho - r } \le \Delta_r\}
\]
where $\Delta_r = r^{1/2} \log r$. Let us choose $r_0$ large enough so that $\max_{k < k_0} x_k < r_0$ and $4 M (\frac{5a}{4})^{1/2} r^{1/2} \le \Delta_r \le \frac12 r$ for all $r \ge r_0$.

(i) We will first verify that for all $r \ge r_0$,
\begin{equation}
 \label{eq:MixingLocal}
 \sum_{k \notin A_r} f_k(r) p_k
 \weq \sum_{k: k\ge k_0: x_k \ge r, k \notin A_r} f_k(r) p_k
 \wle e^{-\frac{\Delta_r^2}{10 r}}.
\end{equation}
Because $f_k(r) = 0$ for $x_k < r$, we observe that only indices $k$ with $k \ge k_0$  and $x_k \ge r$ appear in the sum 
$g(r) = \sum_{k: x_k \ge r} p_k f_k(r)$ when $r \ge r_0$. This confirms the equality in \eqref{eq:MixingLocal}. For such $k$, $r \le x_k$ and $x_k \le (1+\frac14) a k^\alpha$ imply $k \ge (\frac{4}{5a})^{1/\alpha} r^{1/\alpha}$, and this further shows that $\abs{\epsilon_k} \le M k^{-\alpha/2} \le M (\frac{5a}{4})^{1/2} r^{-1/2}$, so that $\abs{\epsilon_k} r \le \frac14 \Delta_r$. Then by writing
\[
 \mu_k-r
 \weq (1+\epsilon_k)(abk^\rho - r) + \epsilon_k r,
\]
we find that when $r \ge r_0$,
$
 \abs{\mu_k-r}
 \wge (1 - \abs{\epsilon_k}) \Delta_r - \abs{\epsilon_k} r
 \wge \frac12 \Delta_r
$
for all $k$ such that $x_k \ge r$ and $k \notin A_r$. For such values of $k$, Chernoff inequalities for the binomial distribution (Lemma~\ref{the:BinomialConcentration}) imply (using $\Delta_r \le \frac12 r$) that
\[
 f_k(r)
 \wle e^{-\frac{\Delta_r^2}{8(r+\frac12 \Delta_r)}} 
 \wle e^{-\frac{\Delta_r^2}{10 r}}.
\]

(ii) For $r \ge r_0$ and for values $k \in A_r$, we have $\frac12 r \le abk^\rho \le 2r$ due to $\Delta_r \le \frac12 r$, and hence $c_0 r^{1/\rho} \le k \le c_0' r^{1/\rho}$, where $c_0 = (2ab)^{-1/\rho}$ and $c_0' = (ab/2)^{-1/\rho}$. Then let
\[
 \epsilon'_r
 \weq \max_{k \ge c_0 r^{1/\rho}} \abs{\epsilon_k}.
\]
Then $\epsilon'_r$ is decreasing and nonnegative. Now $\abs{\epsilon_k} \le M k^{-\alpha/2} \le
c_0^{-\alpha/2} M r^{-\alpha/(2\rho)}$ for $k \ge k_0$ and $k \ge c_0 r^{1/\rho}$. Hence $\epsilon'_r = O(r^{-\alpha/(2\rho)})$. Now it follows that the mean of $f_k$ is approximated by
\[
 \mu_k \weq (1 + O(r^{-1}\Delta_r) + O(\epsilon'_r)) r
\]
uniformly for $k \in A_r$.
Next, we note that
$y_k = \Theta( r^{-\beta/\rho})$ for $\beta > 0$, 
and $y_k = b + O(k^{-\alpha/2}) = b + O(r^{-\alpha/(2\rho)})$ for $\beta = 0$, uniformly for $k \in A_r$.
It follows that, denoting $\beta' = \beta$ for $\beta > 0$ and $\beta' = \alpha/2$ for $\beta = 0$,
\[
 \sigma_k^2
 \weq \Big(1 + O(r^{-1}\Delta_r) + O( r^{-\beta'/\rho}) + O(\epsilon'_r)\Big) \sigma_0^2 r
\]
where $\sigma_0^2 = 1-b$ for $\beta = 0$ and $\sigma_0^2 = 1$ for $\beta >0$.
Also,
\[
 k^{-\gamma}
 \weq (ab)^{\gamma/\rho} (abk^\rho)^{-\gamma/\rho}
 \weq (1+O(r^{-1}\Delta_r)) (ab)^{\gamma/\rho} r^{-\gamma/\rho}.
\]
Hence,
\begin{equation}
 \label{eq:MixingDensityLocal}
 p_k
 \weq (1+o(1)) c_1 r^{-\gamma/\rho}.
\end{equation}
for $c_1 = (ab)^{\gamma/\rho}c$, uniformly for $k \in A_r$.

(iii) We will next approximate the binomial density $f_k$ by a normal density with the same mean and variance.
By a local limit theorem \cite[Lemma 5]{Zolotukhin_Nagaev_Chebotarev_2018} (see also  \cite{Bloznelis_2019,Leskela_Stenlund_2011}),
\[
 \left| f_k(r) - \frac{1}{\sigma_k} \phi \bigg( \frac{r-\mu_k}{\sigma_k} \bigg) \right|
 \wle 0.516 \, \sigma_k^{-2},
\]
for all $0 \le r \le k-1$ and all $k \ge 2$, where $\phi(s) = (2\pi)^{-1/2} e^{-s^2/2}$ is the standard normal density. Hence
\begin{equation}
 \label{eq:LLT}
 f_k(r)
 \weq \frac{1}{\sigma_k} \phi \bigg( \frac{r-\mu_k}{\sigma_k} \bigg) + O(r^{-1})
\end{equation}
uniformly for $k \in A_r$.

(iv) We will approximate the parameters of the normal density in \eqref{eq:LLT} by $\mu_k \approx abk^\rho$ and $\sigma_k \approx \sigma_0 r^{1/2}$. To see that these approximations hold uniformly, denote $s_{k,r} = \frac{\mu_k-r}{\sigma_k}$ and $t_{k,r} = \frac{abk^\rho-r}{\sigma_0 r^{1/2}}$. Note that
\[
 s_{k,r}
 \weq \sigma_k^{-1} (1+O(\epsilon'_r)) ( abk^\rho - r)
\]
and
\begin{equation}
 \label{eq:MixedVarAppr}
 \sigma_k^{-1}
 \weq \big(1 + O(r^{-1}\Delta_r) + O( r^{-\beta'/\rho}) + O(\epsilon'_r) \big) \sigma_0^{-1} r^{-1/2}.
\end{equation}
Hence
\[
 s_{k,r}
 \weq \big(1 + O(r^{-1}\Delta_r) + O( r^{-\beta'/\rho}) + O(\epsilon'_r) \big) t_{k,r}.
\]
Note that $s^2 - t^2 = (2+u)ut^2$ for $s=(1+u)t$. By applying this formula with $u$ being the above approximation error, using $\abs{t_{k,r}} = O(r^{-1/2} \Delta_r)$, we find that
\begin{align*}
 s_{k,r}^2 - t_{k,r}^2
 &\weq \big( O(r^{-1}\Delta_r) + O( r^{-\beta'/\rho}) + O(\epsilon'_r) \big) O(t_{k,r}^2) \\
 &\weq O(r^{-2}\Delta_r^3) + O( r^{-1-\beta'/\rho} \Delta_r^2) + O(\epsilon'_r r^{-1} \Delta_r^2),
\end{align*}
uniformly for $k \in A_r$. Our choice of $\Delta_r = r^{1/2} \log r$ implies that $s_{k,r}^2 - t_{k,r}^2 = o(1)$ uniformly with respect to $k \in A_r$.
%
%
Then $\abs{e^t-1} \le e{\abs{t}}$ for $\abs{t} \le 1$ implies
\[
 \frac{\phi(s_{k,r})}{\phi(t_{k,r})}
 \weq e^{\frac12( t_{k,r}^2 - s_{k,r}^2 )}
 \weq 1 + O( \abs{t_{k,r}^2 - s_{k,r}^2} )
 \weq 1 + o(1),
\]
and
\begin{align*}
 \phi \bigg( \frac{\mu_k-r}{\sigma_k} \bigg)
 &\weq (1+o(1)) \phi \bigg( \frac{abk^\rho-r}{\sigma_0 r^{1/2}} \bigg)
\end{align*}
uniformly for $k \in A_r$. Together with \eqref{eq:LLT} and \eqref{eq:MixedVarAppr}, it follows that
\begin{equation}
 \label{eq:LLTNice}
 f_k(r)
 \weq (1+o(1)) \frac{1}{\sigma_0 r^{1/2}} \phi \bigg( \frac{abk^\rho-r}{\sigma_0 r^{1/2}} \bigg) + O(r^{-1})
\end{equation}
uniformly for $k \in A_r$.

(v) By Lemma~\ref{the:UnimodularSum}, it follows that
\begin{align*}
 \sum_{k \in A_r} \frac{1}{\sigma_0 r^{1/2}} \phi \bigg( \frac{ab k^\rho-r}{\sigma_0 r^{1/2}} \bigg)
 &\weq \int_{A_r} \frac{1}{\sigma_0 r^{1/2}} \phi \bigg( \frac{abs^\rho-r}{\sigma_0 r^{1/2}} \bigg) ds + O(r^{-1/2}).
\end{align*}
By a change of variables $s = \nu(t)$ with $\nu(t) = (t/ab)^{1/\rho}$, we find that
\begin{align*}
 \int_{A_r} \frac{1}{\sigma_0 r^{1/2}} \phi \bigg( \frac{abs^\rho-r}{\sigma_0 r^{1/2}} \bigg) ds
 &\weq \int_{r-\Delta_r}^{r+\Delta_r} \frac{1}{\sigma_0 r^{1/2}} \phi \bigg( \frac{t-r}{\sigma_0 r^{1/2}} \bigg) \nu'(t) \, dt \\
 &\weq \E \left( \nu'(r+ \sigma_0 r^{1/2}Z) \, 1(\sigma_0 r^{1/2} \abs{Z} \le  \Delta_r) \right),
\end{align*}
where $\law(Z)$ is standard normal.
Because $\nu'(r) = c_2 r^{1/\rho-1}$ with $c_2 = \rho^{-1} (ab)^{-1/\rho}$, we see that
$\nu'(r+\sigma_0 r^{1/2}z) = (1+o(1)) \nu'(r)$ uniformly for $\abs{z} \le \sigma_0^{-1} r^{-1/2} \Delta_r$. Hence it follows by Lebesgue's dominated convergence that
\begin{align*}
 \int_{A_r} \frac{1}{\sigma_0 r^{1/2}} \phi \bigg( \frac{ab s^\rho-r}{\sigma_0 r^{1/2}} \bigg) ds
 \weq (1+o(1)) \nu'(r)
 \weq (c_2+o(1)) r^{1/\rho-1}.
\end{align*}
Because $r^{-1/2} \ll r^{1/\rho-1}$ due to $\rho < 2$, it follows that
\begin{equation}
 \label{eq:NormalSum}
 \sum_{k \in A_r} \frac{1}{\sigma_0 r^{1/2}} \phi \bigg( \frac{ab k^\rho-r}{\sigma_0 r^{1/2}} \bigg)
 \wsim c_2 r^{1/\rho-1}.
\end{equation}
A similar computation also shows that
\begin{equation}
 \label{eq:ConcentratedVolume}
 \abs{A_r}
 \weq \int_{r-\Delta_r}^{r+\Delta_r} \nu'(t) \, dt 
 \wsim 2 \Delta_r r^{1/\rho-1}.
\end{equation}

(vi) By combining \eqref{eq:MixingDensityLocal}, \eqref{eq:LLTNice}, \eqref{eq:NormalSum}, and \eqref{eq:ConcentratedVolume} we now conclude that
\begin{align*}
 \sum_{k \in A_r} f_k(r) p_k
 &\wsim c_1 r^{-\gamma/\rho} \sum_{k \in A_r} f_k(r) \\
 &\wsim c_1 r^{-\gamma/\rho}
 \sum_{k \in A_r} \left( \frac{1}{\sigma_0 r^{1/2}} \phi \bigg( \frac{a bk^\gamma-r}{\sigma_0 r^{1/2}} \bigg) + O(r^{-1}) \right) \\
 &\wsim c_1 r^{-\gamma/\rho} c_2 r^{1/\rho-1}.
\end{align*}
Together with \eqref{eq:MixingLocal}, this now implies the claim, because $e^{-\frac{\Delta_r^2}{10 r}} \ll r^{-\delta}$ for
$\delta = 1 + \frac{\gamma-1}{\alpha-\beta}$.
\end{proof}

\subsection{Compound Poisson and binomial distributions}

Recall that $\CPoi(\lambda, f)$ denotes the compound Poisson distribution with rate parameter $\lambda$ and increment distribution $f$.
\iflongversion
\color{\iftextcolor}
The following three elementary results are included for ease of reference, although they are rather immediately available in the literature (e.g.\ \cite{Barbour_Holst_Janson_1992,Janson_Luczak_Rucinski_2000}).
\color{black}
\else\color{\iftextcolor}
The following three elementary results (proofs omitted) are included for ease of reference. They are either well known or easily verified (e.g.\ \cite{Barbour_Holst_Janson_1992,Janson_Luczak_Rucinski_2000}).
\color{black}\fi

\begin{lemma}
\label{the:CPoiSum}
Let $X = \sum_i X_i$ be a sum of independent random variables such that $\law(X_i) = \CPoi(\lambda_i, g_i)$ with $0 < \sum_i \lambda_i < \infty$. Then $\law(X) = \CPoi(\lambda, g)$ with $\lambda = \sum_i \lambda_i$ and $g = \sum_i \frac{\lambda_i}{\lambda} g_i$.
\end{lemma}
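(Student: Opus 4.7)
The plan is to prove the identity by matching characteristic functions. Recall that the characteristic function of $\CPoi(\mu, h)$ is $\phi(t) = \exp(\mu(\hat h(t) - 1))$, where $\hat h(t) = \int e^{itx} h(dx)$; this follows by conditioning on the Poisson-distributed number $N$ of increments and evaluating $\E e^{itY} = \sum_{n \ge 0} e^{-\mu} \mu^n/n! \, \hat h(t)^n = e^{\mu(\hat h(t)-1)}$.

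Independence of the $X_i$ then yields
\[
 \phi_X(t) \weq \prod_i \exp\bigl(\lambda_i(\hat g_i(t) - 1)\bigr) \weq \exp\bigl(\lambda(\hat g(t) - 1)\bigr),
\]
where $\lambda = \sum_i \lambda_i$ and $\hat g(t) = \sum_i (\lambda_i/\lambda) \hat g_i(t)$ is precisely the characteristic function of the mixture $g = \sum_i (\lambda_i/\lambda) g_i$. Uniqueness of characteristic functions then identifies $\law(X) = \CPoi(\lambda, g)$.

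The only technical point is almost-sure convergence of $X = \sum_i X_i$ when the index set is countably infinite. I would dispatch this via the superposition property of Poisson point processes: realise each $X_i$ as $\int y \, N_i(dy)$ where $N_i$ is an independent Poisson point process on the underlying space with intensity measure $\lambda_i g_i$; then the superposition $N = \sum_i N_i$ is Poisson with intensity $\sum_i \lambda_i g_i$, which is a finite measure of total mass $\lambda$. Hence $N$ has finitely many atoms almost surely, and $X = \int y \, N(dy) = \sum_i X_i$ converges a.s.\ to a compound Poisson random variable with rate $\lambda$ and increment distribution $g$, as required. This superposition argument has the added bonus of proving the distributional claim directly, making the characteristic function computation redundant in principle (though the latter is more compact to write down).

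There is no substantial obstacle here; the entire content is the algebraic factorisation $\sum_i \lambda_i(\hat g_i - 1) = \lambda(\hat g - 1)$ together with the observation that a convex combination of characteristic functions is the characteristic function of the corresponding mixture of probability measures.
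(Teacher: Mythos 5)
Your proof is correct and takes essentially the same approach as the paper: compute the transform of $X$, use independence to factor it as a product, and observe that the exponents sum to $\lambda(\hat g - 1)$. The only cosmetic difference is that you use characteristic functions where the paper uses probability generating functions; your choice is arguably the more appropriate one since the lemma statement (and the paper's own definition of $\CPoi$) allows increment distributions on all of $\R$, whereas PGFs are naturally restricted to $\Z_+$-valued laws. Your remark about superposition of Poisson point processes is a nice alternative structural proof and also settles almost-sure convergence of the infinite sum, but this is strictly bonus content: the lemma as used in the paper is only applied to finite index sets, so the convergence issue never actually arises there.
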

\iflongversion\color{\iftextcolor}
\begin{proof}
The probability generating function of a compound Poisson distribution $\CPoi(\lambda_i, g_i)$ equals $\exp( \lambda_i (G_{g_i}(z)-1)$. Hence the probability generating function of $\sum_i X_i$ equals
\[
 G_{X}(z)
 \weq \prod_i G_{X_i}(z)
 \weq \exp \Big( \sum_i \lambda_i ( G_{g_i}(z)-1) \Big)
 \weq \exp \Big( \lambda ( G_{g}(z)-1) \Big),
\]
where $G_g(z)$ is the probability generating function of $g = \sum_i \frac{\lambda_i}{\lambda} g_i$.
\end{proof}
\color{black}\fi

\begin{lemma}
\label{the:CPoiPerturbation}
For any $\lambda, \lambda' \ge 0$ and any probability measures $f,f'$ on $\R$,
\[
 \dtv\bigg( \CPoi(\lambda, f), \, \CPoi(\lambda', f') \bigg)
 \wle \min\{\lambda, \lambda'\} \, \dtv(f, f') + \abs{\lambda-\lambda'}.
\]
\end{lemma}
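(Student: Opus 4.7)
The plan is to prove this by a coupling argument, bounding the total variation distance via the standard coupling inequality $\dtv(\law(X), \law(X')) \le \pr(X \ne X')$.

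Without loss of generality assume $\lambda \le \lambda'$, so $\min\{\lambda,\lambda'\} = \lambda$ and $|\lambda-\lambda'| = \lambda'-\lambda$. I will exhibit a joint construction of random variables $X \sim \CPoi(\lambda, f)$ and $X' \sim \CPoi(\lambda', f')$ for which $\pr(X \ne X') \le \lambda \dtv(f,f') + (\lambda'-\lambda)$.

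First, fix a maximal coupling of $f$ and $f'$: a pair $(Y,Y')$ on a common space with $\law(Y) = f$, $\law(Y') = f'$, and $\pr(Y \ne Y') = \dtv(f,f')$. Take iid copies $(Y_k, Y'_k)_{k \ge 1}$ of this pair. Independently, let $N \sim \Poi(\lambda)$ and $M \sim \Poi(\lambda'-\lambda)$, and let $Z_1, Z_2, \dots$ be iid with law $f'$, all independent of each other and of the pairs $(Y_k,Y'_k)$. Now define
\[
 X \weq \sum_{k=1}^N Y_k,
 \qquad
 X' \weq \sum_{k=1}^N Y'_k + \sum_{j=1}^M Z_j.
\]
Then $\law(X) = \CPoi(\lambda,f)$ by definition, and $\law(X') = \CPoi(\lambda',f')$ follows from Lemma~\ref{the:CPoiSum} (or directly from the fact that $N+M \sim \Poi(\lambda')$ and the summands are iid with law $f'$).

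The coupling has $X = X'$ on the event $\{M = 0\} \cap \{Y_k = Y'_k \text{ for } 1 \le k \le N\}$. By the union bound and Wald's identity applied to the indicator sum,
\[
 \pr(X \ne X')
 \wle \pr(M \ge 1) + \pr\!\Big(\textstyle\sum_{k=1}^N 1(Y_k \ne Y'_k) \ge 1\Big)
 \wle \E M + \E N \cdot \pr(Y_1 \ne Y'_1),
\]
which equals $(\lambda'-\lambda) + \lambda \dtv(f,f')$. Applying $\dtv(\law(X),\law(X')) \le \pr(X \ne X')$ concludes the proof. No step is a serious obstacle here; the only care needed is the independence structure and the appeal to the maximal coupling characterisation of $\dtv$.
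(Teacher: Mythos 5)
Your proof is correct and uses essentially the same coupling approach as the paper; the only organizational difference is that you build one coupling handling both perturbations simultaneously, whereas the paper passes through the intermediate measure $\CPoi(\lambda, f')$ and applies the triangle inequality in two separate coupling steps, arriving at the identical bound.
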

\iflongversion\color{\iftextcolor}
\begin{proof}
By symmetry, we may assume that $\lambda \le \lambda'$. Denote $g = \CPoi(\lambda, f)$, $g' = \CPoi(\lambda, f')$, and $g'' = \CPoi(\lambda', f')$. By triangle inequality, it suffices to verify that $\dtv(g,g') \le \lambda \, \dtv(f, f')$ and $\dtv(g',g'') \le \lambda'-\lambda$.

(i) Let $(X,X')$ a coupling of $f$ and $f'$ which is optimal in the sense that $\pr(X \ne X') = \dtv(f, f')$. Define a coupling of $g$ and $g'$ by
\[
 Y \weq \sum_{j=1}^{\Lambda} X_j
 \qquad \text{and} \qquad
 Y' \weq \sum_{j=1}^{\Lambda} X_j',
\]
where $\Lambda, (X_1,X_1'), (X_2,X_2'), \dots$ are mutually independent random variables such that $\law(\Lambda) = \Poi(\lambda)$ and $\law(X_j,X_j') = \law(X,X')$ for all $j$. Then by the union bound, we see that
\[
 \pr( Y \ne Y' \cond \Lambda = \ell)
 \weq \pr\left( \sum_{j=1}^{\ell} X_j \ne \sum_{j=1}^{\ell} X_j' \right)
 \wle \ell \, \pr(X \ne X').
\]
By summing both sides weighted by $\pr(\Lambda = \ell)$, it follows that
$\pr(Y \ne Y') \le \E( \Lambda) \pr( X \ne X' )$ and hence $\dtv(g,g') \le \lambda \dtv(f,f')$.

(ii) Let $Y'$ and $\Delta$ be independent random numbers such that $\law(Y') = \CPoi(\lambda, f')$ and $\law(\Delta) = \CPoi(\delta, f')$ with $\delta = \lambda'-\lambda$. Define $Y'' = Y' + \Delta$ and note by Lemma~\ref{the:CPoiSum} that $\law(Y'') = \CPoi(\lambda', f')$. Hence
\[
 \dtv(g', g'')
 \wle \pr(Y' \ne Y'')
 \weq \pr(\Delta \ne 0) 
 \wle 1-e^{-\delta}
 \wle \delta
 \weq \lambda'-\lambda.
\]
\end{proof}
\color{black}\fi

\begin{lemma}
\label{the:BinomialConcentration}
If $X$ is $\Bin(n,p)$-distributed with mean $\mu = np$, then (i) $\pr( X > a ) \le e^{2\mu-a}$ for all $a \ge 0$, 
(ii) $\pr( X \le a ) \le e^{-\mu/8}$ for any $a \le \frac12 \mu$, and (iii) $\pr(X=r) \le e^{-\frac{s^2}{2(r+s)}}$ for any $s > 0$ and for all integers $r$ such that $\abs{r-\mu} \ge s$.
\end{lemma}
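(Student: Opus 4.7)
For part (i), the plan is a one-line Markov-exponential argument: since $\E e^{X} = (1-p+p e)^n \le e^{np(e-1)} = e^{(e-1)\mu}$ (via $1+x \le e^x$), Markov's inequality for $e^X$ yields $\pr(X > a) \le e^{-a} \E e^X \le e^{(e-1)\mu - a} \le e^{2\mu - a}$, using $e-1 < 2$.

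For part (ii), I would apply the classical Cram\'er--Chernoff lower-tail argument. For any $t \ge 0$, Markov's inequality for $e^{-t X}$ combined with the MGF bound $\E e^{-t X} \le e^{\mu(e^{-t}-1)}$ gives $\pr(X \le a) \le e^{t a + \mu(e^{-t}-1)}$. Writing $a = (1-\delta)\mu$ with $\delta = 1 - a/\mu \ge 1/2$ and optimising in $t$ (equivalently, invoking the standard inequality $\pr(X \le (1-\delta)\mu) \le e^{-\delta^2 \mu/2}$ for $\delta \in [0,1]$), one obtains $\pr(X \le \mu/2) \le e^{-\mu/8}$.

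Part (iii) is the step that needs the most care and I expect it to be the main obstacle. I would split according to the sign of $r - \mu$. For the upper tail $r \ge \mu + s$, I would start from the Chernoff bound $\pr(X \ge r) \le e^{-t r + \mu(e^t - 1)}$ and set $t = \log(1 + s'/\mu)$ with $s' = r - \mu \ge s$, giving $\mu(e^t-1) = s'$ and thus $\pr(X \ge r) \le \exp\bigl(s' - (\mu + s') \log(1 + s'/\mu)\bigr)$. Then using the elementary inequality $\log(1+x) \ge 2x/(2+x)$, one finds
\[
 s' - (\mu+s') \log(1 + s'/\mu) \wle - \frac{s'^2}{2\mu + s'} \weq -\frac{(r-\mu)^2}{r+\mu}.
\]
Finally, it remains to verify the elementary comparison $\frac{(r-\mu)^2}{r+\mu} \ge \frac{s^2}{2(r+s)}$ under the hypothesis $r - \mu \ge s$, which reduces (clearing denominators) to $2(r-\mu)^2(r+s) \ge s^2(r+\mu)$ and factors cleanly with $u := r-\mu \ge s$ as $2r(u^2 - s^2) + 2 u^2 s + s^2 u \ge 0$.

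For the lower tail $r \le \mu - s$, I would invoke the standard sharper lower-tail Chernoff bound $\pr(X \le r) \le \exp\bigl(-(\mu - r)^2/(2\mu)\bigr)$ (same derivation as in part (ii), without specialising $\delta$). Since $\mu - r \ge s$ and $r + s \le \mu$, we have $(\mu-r)^2/(2\mu) \ge s^2/(2\mu) \ge s^2/(2(r+s))$, and the claim follows. The bookkeeping of these two cases produces the unified bound stated in (iii).
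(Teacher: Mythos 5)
Your proof is correct and follows essentially the same route as the paper's: Markov applied to $e^{X}$ for (i), the standard lower-tail Chernoff bound for (ii), and for (iii) the split $\pr(X=r)\le\min\{\pr(X\le r),\pr(X\ge r)\}$ with a Chernoff bound on each tail. The only minor divergence is in the upper-tail half of (iii): the paper invokes the Bernstein-form bound $\pr(X\ge\mu+t)\le e^{-t^2/(2(\mu+t/3))}$ with $t=s$ and compares denominators, whereas you re-derive from scratch the bound $\pr(X\ge r)\le e^{-(r-\mu)^2/(r+\mu)}$ at the optimal tilt and finish by cross-multiplying; both computations are sound and lead to the stated bound.
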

\iflongversion\color{\iftextcolor}
\begin{proof}
(i) Because $\E e^{X} = (1 + p(e-1))^n \le e^{(e-1) \mu} \le e^{2\mu}$, Markov's inequality implies that $\pr( X > a ) = \pr( e^X > e^a) \le e^{-a} \E e^{X} \le e^{2\mu - a}$.

(ii) Because $(\mu-a)^2 \ge \frac{1}{4} \mu^2$, it follows by \cite[Theorem 2.1]{Janson_Luczak_Rucinski_2000} that
$\pr( X \le a ) \le e^{-(\mu-a)^2/(2\mu)} \le e^{-\mu/8}$.

(iii) The approximation $\pr( X = r ) \le \min\{ \pr( X \le r), \pr( X \ge r ) \}$ combined with suitable Chernoff bounds \cite[Theorem 2.1]{Janson_Luczak_Rucinski_2000} will do the job, as shown below. Fix an integer $r \ge 0$ and consider the following two cases:
\begin{enumerate}[(a)]
\item If $r \le \mu - s$. Then the bound $\pr( X \le \mu - t ) \le e^{- \frac{t^2}{2\mu} }$ for $t = \mu - r$, together with the fact that $t \mapsto \frac{(t-r)^2}{2t}$ is increasing on $(r,\infty)$, implies that
\[
 \pr(X \le r)
 \weq \pr( X \le \mu - (\mu-r) )
 \wle \exp\left( - \frac{(\mu-r)^2}{2\mu} \right)
 \wle \exp\left( - \frac{s^2}{2(r+s)} \right).
\] 
\item If $r \ge \mu + s$. Then the bound $\pr( X \ge \mu + t ) \le e^{- \frac{t^2}{2(\mu + t/3)} }$ for $t = s$, and the fact that $\mu + s/3 \le r \le r+s$ imply that
\[
 \pr(X \ge r)
 \weq \pr( X \ge \mu+s )
 \wle \exp\left( - \frac{s^2}{2(\mu+s/3)} \right)
 \wle \exp\left( - \frac{s^2}{2(r+s)} \right).
\]
\end{enumerate}
\end{proof}
\color{black}\fi

\subsection{Biased and truncated probability measures}

Below $P(\psi) = \int \psi(x) P(dx)$ is used as a shorthand for integrals.  When $P(\psi) \in (0,\infty)$, we denote by $P^\psi = \frac{\psi(x) P(dx)}{P(\psi)} = \frac{\psi dP}{P(\psi)}$ the $\psi$-biased probability measure $P^\psi(A) = \frac{\int_A \psi(x) P(dx)}{P(\psi)}$. For a probability measure $P$ and a probability kernel $K$ we denote by $PK$  the probability measure $PK(A) = \int K(x,A) P(dx)$.  For a function $\phi$, we define a function $K\phi$ by $K\phi(x) = \int \phi(y) K(x,dy)$.
\iflongversion
\color{\iftextcolor}
The following three results are proved for ease of reference, although they are rather immediate consequences of standard Wasserstein-type estimates of probability kernels (e.g.\ \cite{Leskela_2010,Leskela_Vihola_2017}).
\color{black}
\else\color{\iftextcolor}
The following three results (proofs omitted) follow by standard dominated convergence and Skorohod's coupling arguments \cite{Kallenberg_2002} or Wasserstein-type estimates of probability kernels (e.g.\ \cite{Leskela_2010,Leskela_Vihola_2017}), by noting that the kernels $K( (x,y), t ) = \Bin(x-r, y)(t)$ and $K^+( (x,y), t ) = \Bin^+(x-r, y)(t)$ are continuous in $y$ (being polynomials of finite order). 
\color{black}\fi

\begin{lemma}
\label{the:BiasedWeakConvergence}
Let $P_n, P$ be probability measures on a separable metric space such that $P_n \weakto P$ and $P_n(\psi) \to P(\psi) \in (0,\infty)$ for some continuous function $\psi \ge 0$. Then $P_n^\psi \weakto P^\psi$.
\end{lemma}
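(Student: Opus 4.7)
The plan is to verify weak convergence of the biased measures by checking that $P_n^\psi(\phi) \to P^\psi(\phi)$ for every bounded continuous function $\phi$. Writing out the biased measures, this reduces to
\[
 \frac{P_n(\phi\psi)}{P_n(\psi)} \wto \frac{P(\phi\psi)}{P(\psi)}.
\]
The denominator behaves correctly by hypothesis, and since $P(\psi)>0$ it suffices to prove the numerator convergence $P_n(\phi\psi) \to P(\phi\psi)$.

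The obstacle is that $\psi$ is only assumed continuous and nonnegative, not bounded, so $\phi\psi$ need not be bounded and the portmanteau characterization of $P_n \weakto P$ does not apply directly. I would overcome this by extracting uniform integrability of $\psi$ under the sequence $(P_n)$. Concretely, by Skorokhod's representation theorem (available because the underlying space is separable metric) choose random variables $X_n,X$ on a common probability space with $\law(X_n)=P_n$, $\law(X)=P$, and $X_n\to X$ almost surely. Continuity of $\psi$ and $\phi$ gives $\psi(X_n)\to\psi(X)$ and $\phi(X_n)\psi(X_n)\to\phi(X)\psi(X)$ almost surely. The assumption $P_n(\psi)\to P(\psi)<\infty$ translates to $\E\psi(X_n)\to\E\psi(X)<\infty$, and combined with the a.s.\ convergence and nonnegativity of $\psi(X_n)$, Scheffé's lemma (or equivalently Pratt's lemma) yields $L^1$-convergence $\E|\psi(X_n)-\psi(X)|\to 0$, hence uniform integrability of $\{\psi(X_n)\}$.

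With uniform integrability in hand, the bound $|\phi(X_n)\psi(X_n)|\le\supnorm{\phi}\,\psi(X_n)$ shows that $\{\phi(X_n)\psi(X_n)\}$ is also uniformly integrable. Combined with the a.s.\ convergence $\phi(X_n)\psi(X_n)\to\phi(X)\psi(X)$, Vitali's convergence theorem gives $\E\phi(X_n)\psi(X_n)\to\E\phi(X)\psi(X)$, that is $P_n(\phi\psi)\to P(\phi\psi)$. Dividing by $P_n(\psi)\to P(\psi)\in(0,\infty)$ completes the argument. Since $\phi$ was an arbitrary bounded continuous function, $P_n^\psi \weakto P^\psi$ as required.
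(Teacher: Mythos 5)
Your proof is correct and takes essentially the same route as the paper: both start from Skorokhod's representation theorem to convert weak convergence into almost-sure convergence, and both exploit $\E\psi(X_n)\to\E\psi(X)<\infty$ to upgrade the pointwise convergence of $\phi(X_n)\psi(X_n)$ to convergence of expectations. The only cosmetic difference is that the paper invokes a generalized dominated convergence theorem (Kallenberg's Theorem 1.21, i.e.\ Pratt's lemma) in one step, whereas you unpack that step via Scheff\'e's lemma to get $L^1$-convergence of $\psi(X_n)$, deduce uniform integrability of the dominated family, and then apply Vitali -- the two are equivalent bookkeeping for the same argument.
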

\iflongversion\color{\iftextcolor}
\begin{proof}
By Skorohod coupling \cite[Proposition 4.30]{Kallenberg_2002} there exist random variables $X_n, X$ such that $\law(X_n) = P_n$, $\law(X) = P$, and $X_n \to X$ almost surely. Let $\phi$ be a bounded and continuous. Then $Y_n = \phi(X_n) \psi(X_n)$ converges almost surely to $Y = \phi(X) \psi(X)$, and $\abs{Y_n} \le \norm{\phi}_\infty \psi(X_n)$ almost surely for all $n$. Because $\E \phi(X_n) \to \E \phi(X) < \infty$, Lebesgue's dominated convergence theorem (as stated in \cite[Theorem 1.21]{Kallenberg_2002}) implies that $P_n( \phi \psi) = \E Y_n \to \E Y = P(\phi \psi)$. Hence $P_n^\psi(\phi) = \frac{P_n( \phi \psi)}{P_n(\psi)} \to \frac{P( \phi \psi)}{P(\psi)} = P^\psi(\phi)$.
\end{proof}
\color{black}\fi



\begin{lemma}
\label{the:KernelConvergenceSpecial}
Let $P_n,P$ be probability measures on $\Z_+ \times [0,1]$, and let $K$ be a probability kernel from $\Z_+ \times [0,1]$ into $\Z_+$ such that $y \mapsto K((x,y),t)$ is continuous for every $x,t \in \Z_+$.
If $P_n \weakto P$, then $P_n K \weakto PK$.
\end{lemma}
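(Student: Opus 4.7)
The plan is to verify weak convergence $P_n K \weakto PK$ by testing against arbitrary bounded continuous functions $\phi: \Z_+ \to \R$. Since $(P_n K)(\phi) = P_n(K\phi)$ where $K\phi(x,y) = \sum_{t \in \Z_+} \phi(t) K((x,y),t)$, it suffices to show that $K\phi$ is a bounded continuous function on $\Z_+ \times [0,1]$: then weak convergence $P_n \weakto P$ would directly give $P_n(K\phi) \to P(K\phi) = (PK)(\phi)$.

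Boundedness is immediate: $\abs{K\phi(x,y)} \le \supnorm{\phi}$ since $K((x,y),\cdot)$ is a probability measure on $\Z_+$. For continuity, I would equip $\Z_+ \times [0,1]$ with the product of the discrete topology on $\Z_+$ and the usual topology on $[0,1]$; then continuity at $(x_0, y_0)$ reduces to checking that $K\phi(x_0, y_n) \to K\phi(x_0, y_0)$ whenever $y_n \to y_0$.

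The main step is the latter. By the hypothesis, $K((x_0,y_n),t) \to K((x_0,y_0),t)$ for every fixed $t \in \Z_+$. Because each $K((x_0, y_n), \cdot)$ and the limit $K((x_0, y_0), \cdot)$ are probability measures on the countable set $\Z_+$, pointwise convergence of their densities together with equality of total masses lets me invoke Scheff\'e's lemma, yielding convergence in total variation: $\dtv(K((x_0,y_n),\cdot), K((x_0,y_0),\cdot)) \to 0$. Since $\phi$ is bounded,
\[
 \abs{K\phi(x_0,y_n) - K\phi(x_0,y_0)}
 \wle 2 \supnorm{\phi} \, \dtv\bigl( K((x_0,y_n),\cdot), K((x_0,y_0),\cdot) \bigr)
 \wto 0,
\]
which establishes continuity. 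The rest of the argument is then routine: apply $P_n \weakto P$ to the bounded continuous function $K\phi$, and conclude that $(P_n K)(\phi) \to (PK)(\phi)$ for every bounded continuous $\phi$, i.e.\ $P_n K \weakto PK$. The only mildly delicate point is invoking Scheff\'e, but since everything lives on the countable set $\Z_+$, there is nothing further to worry about.
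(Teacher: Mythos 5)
Your proof is correct and follows essentially the same route as the paper's: show that $K\phi$ is bounded and continuous by reducing to continuity in $y$ for fixed $x$, pass from pointwise convergence of the probability mass functions $K((x_0,y_n),\cdot)$ to convergence of the integrals $K\phi(x_0,y_n)$, and then apply $P_n \weakto P$ to the bounded continuous test function $K\phi$. The only cosmetic difference is that you invoke Scheff\'e's lemma to get total variation convergence, whereas the paper simply notes that pointwise convergence of densities on the countable space $\Z_+$ already gives weak convergence of $K((x_n,y_n),\cdot)$; both observations serve the same purpose and are equally valid.
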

\iflongversion\color{\iftextcolor}
\begin{proof}
Let $\phi: \Z_+ \to \R$ be bounded. Assume that $(x_n, y_n) \to (x,y)$.  Then the probability measures on $\Z_+$ defined by $Q_n(A) = K((x_n,y_n), A)$ and $Q(A) = K((x,y), A)$ converge according to $Q_n(\{t\}) \to Q(\{t\})$ for all $t \in \Z_+$, and hence also weakly. Hence the function $K\phi$ defined by $K\phi(x,y) = \sum_t K((x,y),t) \phi(t)$ is bounded and continuous. Now $P_n \weakto P$ implies that $P_nK(\phi) = P_n(K\phi) \to P(K \phi) = PK(\phi)$. Hence $P_nK \weakto PK$.
\end{proof}
\color{black}\fi

\begin{lemma}
\label{the:MixedBiasedERConvergence}
If $P_n \weakto P$ and $(P_n)_{rs} \to (P)_{rs} \in (0,\infty)$, then the laws in \eqref{eq:MixedBin}--\eqref{eq:MixedBinPlus} satisfy
$\Bin_{rs}(P_n) \weakto \Bin_{rs}(P)$ and $\Bin^+_{rs}(P_n) \weakto \Bin^+_{rs}(P)$. 
\end{lemma}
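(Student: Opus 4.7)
The plan is to recognise both $\Bin_{rs}(P)$ and $\Bin^+_{rs}(P)$ as push-forwards of a biased version of $P$ under a suitable probability kernel, and then to chain together Lemma~\ref{the:BiasedWeakConvergence} and Lemma~\ref{the:KernelConvergenceSpecial}.

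First I would set $\psi(x,y) = (x)_r y^s$, which is continuous and nonnegative on $\Z_+ \times [0,1]$, and note that $(P_n)_{rs} = P_n(\psi)$ and $(P)_{rs} = P(\psi)$. By hypothesis, $P_n \weakto P$ and $P_n(\psi) \to P(\psi) \in (0,\infty)$, so Lemma~\ref{the:BiasedWeakConvergence} gives the weak convergence of the biased measures $P_n^\psi \weakto P^\psi$ on $\Z_+ \times [0,1]$.

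Next, define the probability kernels $K((x,y),t) = \Bin(x-r,y)(t)$ and $K^+((x,y),t) = \Bin^+(x-r,y)(t)$ from $\Z_+ \times [0,1]$ to $\Z_+$. Observe that $\Bin_{rs}(P) = P^\psi K$ and $\Bin^+_{rs}(P) = P^\psi K^+$, and the same for $P_n$ in place of $P$. For each fixed $x,t \in \Z_+$, the map $y \mapsto K((x,y),t) = \binom{x-r}{t}(1-y)^{x-r-t}y^t$ is a polynomial in $y$, hence continuous; similarly $y \mapsto K^+((x,y),t)$ is a polynomial in $y$ because it is obtained by summing, over the finitely many graphs on $\{1,\dots,x-r+1\}$ with a marked node of transitive-closure degree $t$, terms of the form $y^{e}(1-y)^{\binom{x-r+1}{2}-e}$. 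Applying Lemma~\ref{the:KernelConvergenceSpecial} with the kernels $K$ and $K^+$ to the convergent sequence $P_n^\psi \weakto P^\psi$ then yields $\Bin_{rs}(P_n) = P_n^\psi K \weakto P^\psi K = \Bin_{rs}(P)$ and $\Bin^+_{rs}(P_n) = P_n^\psi K^+ \weakto P^\psi K^+ = \Bin^+_{rs}(P)$.

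There is no real obstacle: the argument is essentially a check that the standing hypotheses fit the assumptions of the two previously established lemmas. The one point to verify carefully is the continuity of $y \mapsto \Bin^+(x-r,y)(t)$, for which the polynomial representation above suffices; no closed-form is needed.
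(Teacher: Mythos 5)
Your proof is correct and follows exactly the paper's argument: bias $P_n$ by $\psi(x,y)=(x)_r y^s$ via Lemma~\ref{the:BiasedWeakConvergence}, then push forward through the kernels $K$ and $K^+$ via Lemma~\ref{the:KernelConvergenceSpecial}, using that both kernels are polynomial (hence continuous) in $y$. Your additional remark justifying the polynomial form of $y\mapsto\Bin^+(x-r,y)(t)$ is a welcome elaboration of a point the paper states without proof.
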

\iflongversion\color{\iftextcolor}
\begin{proof}
Define $\psi$-biased probability measures $P_n^\psi, P^\psi$ using $\psi(x,y) = (x)_r y^s$.  Then $P_n^* \weakto P^*$ by Lemma~\ref{the:BiasedWeakConvergence}. Observe next that
the kernels $K( (x,y), t ) = \Bin(x-r, y)(t)$ and $K^+( (x,y), t ) = \Bin^+(x-r, y)(t)$ are continuous in $y$ (being polynomials of finite order). The claims now follow by Lemma~\ref{the:KernelConvergenceSpecial}
because $\Bin_{rs}(P_n) = P^\psi_n K$ and $\Bin^+_{rs}(P_n) = P^\psi_n K^+$.
\end{proof}
\color{black}\fi

\subsection{Graph components}

Denote by $N_1(G) \ge N_2(G)$ the largest two component sizes in $G$ (with $N_2(G) = 0$ if $G$ is connected.) Let $B_t(G) = \{i \in V(G): \abs{C_i(G)} > t\}$ be the set of nodes with component larger than $t$.

\begin{lemma}
\label{the:BigComponents}
For all $t \ge 0$:
(i) $N_1(G) \le \max\{\abs{B_t(G)}, \, t\}$ and (ii) $N_1(G) + N_2(G) \le \abs{B_t(G)} + 2t$.
\end{lemma}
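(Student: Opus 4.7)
The plan is to argue by case analysis based on whether the largest components exceed the threshold~$t$. For part~(i), if $N_1(G) \le t$ then the bound $N_1(G) \le \max\{|B_t(G)|, t\}$ is immediate; otherwise the largest component~$C^*$ has size $|C^*| > t$, so every node of $C^*$ belongs to $B_t(G)$, giving $N_1(G) = |C^*| \le |B_t(G)|$.

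For part~(ii), I would split into three cases according to how $N_1(G)$ and $N_2(G)$ compare to~$t$. If $N_1(G) \le t$, then also $N_2(G) \le t$, so $N_1(G) + N_2(G) \le 2t \le |B_t(G)| + 2t$. If $N_1(G) > t$ but $N_2(G) \le t$, then the largest component $C^*$ is contained in $B_t(G)$, so $N_1(G) \le |B_t(G)|$ and $N_1(G) + N_2(G) \le |B_t(G)| + t$. Finally, if both $N_1(G) > t$ and $N_2(G) > t$, then the two largest components are disjoint subsets of $B_t(G)$, so $N_1(G) + N_2(G) \le |B_t(G)|$. In all three cases we obtain the desired bound.

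There is no real obstacle here; the lemma is a tautology once one writes down the definition of $B_t(G)$ as a union of components of size exceeding~$t$. The only thing to be mildly careful about is ensuring that in case analysis the inequalities are tight in the right direction, which is handled by the slack term $+2t$.
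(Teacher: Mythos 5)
Your proof is correct and follows essentially the same three-way case analysis as the paper's; the only cosmetic difference is that in the middle case ($N_2 \le t < N_1$) you argue directly that $C^* \subset B_t(G)$ rather than citing part (i), but the underlying reasoning is identical.
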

\begin{proof}
(i) Let $C_1$ be a component of $G$ of size $\abs{C_1} = N_1(G)$. If $\abs{C_1} \le t$, there is nothing to prove. If $\abs{C_1} > t$, then every node in $C_1$ has component larger than $t$, and hence $C_1 \subset B_t(G)$ implies $\abs{N_1(G)} \le \abs{B_t(G)}$.

(ii) If $N_2(G) \le N_1(G) \le t$, the claim is clear. If $N_2(G) \le t < N_1(G)$, the claim follows from (i). Assume now that $t < N_2(G) \le N_1(G)$, and let $C_1,C_2$ be components of $G$ with sizes $\abs{C_1} = N_1(G)$ and $\abs{C_2} = N_2(G)$. Then every node in $C_1 \cup C_2$ has component larger than $t$, and the claim follows from $N_1(G) + N_2(G) = \abs{C_1 \cup C_2} \le \abs{B_t(G)}$.
\end{proof}

\subsection{Graph superpositions}

Let $G_1,\dots,G_m$ be graphs such that $V(G_k) \subset V$ for all $k$. For $A \subset [m]$ we denote by $G_A$ the overlay graph with $V(G_A)=V$ and $E(G_A) = \cup_{a \in A} E(G_a)$.

\begin{lemma}
\label{the:ComponentOverlayTruncation}
For any $A,B \subset [m]$ and $t \ge 0$,
\begin{align*}
 \abs{B_t(G_{A \cup B})} &\wle \abs{B_t(G_A)} + t \abs{U_B}, \\
 N_1(G_{A \cup B}) &\wle \max\{ \abs{B_t(G_A)} + t \abs{U_B}, \, t \},
\end{align*}
where $U_B = \cup_{k \in B} V(G_k)$.
\end{lemma}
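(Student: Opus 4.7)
The strategy is to compare $G_{A \cup B}$-components to $G_A$-components by examining how the addition of the layers in $B$ can merge previously disjoint $G_A$-components. For the first inequality, I would partition $B_t(G_{A\cup B})$ into two classes according to whether the node's $G_A$-component is itself large or small, and bound each class separately.

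For the main step, consider a node $v \in B_t(G_{A\cup B})$ and let $D = C_{G_A}(v)$ be its $G_A$-component. Either $|D| > t$, in which case $v \in B_t(G_A)$ and contributes to the first term, or $|D| \le t$. In the latter case, $v$ lies in a $G_{A \cup B}$-component $C$ of size $> t$, and since $D \subsetneq C$, there must be a $G_B$-edge with one endpoint in $D$ and the other in $C \setminus D$. The endpoint of this edge lying in $D$ belongs to $V(G_k)$ for some $k \in B$, hence to $U_B$. Thus every ``small'' $G_A$-component $D$ that produces such nodes must intersect $U_B$.

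Since the $G_A$-components partition $V$, each node of $U_B$ lies in exactly one $G_A$-component, so at most $|U_B|$ distinct $G_A$-components $D$ can intersect $U_B$. Each contributes at most $|D| \le t$ nodes to the ``small'' class, giving a bound of $t\abs{U_B}$. Adding the two classes yields $\abs{B_t(G_{A\cup B})} \le \abs{B_t(G_A)} + t\abs{U_B}$.

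The second inequality follows immediately by applying Lemma~\ref{the:BigComponents}(i) to $G_{A \cup B}$: $N_1(G_{A \cup B}) \le \max\{\abs{B_t(G_{A\cup B})}, t\}$, and then substituting the first inequality. No step here looks genuinely hard; the only care required is to argue correctly that small $G_A$-components contributing nodes to $B_t(G_{A\cup B}) \setminus B_t(G_A)$ must touch $U_B$, which is precisely the merging observation above.
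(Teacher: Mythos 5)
Your proof is correct and follows essentially the same route as the paper: the key observation that any node in $B_t(G_{A\cup B})\setminus B_t(G_A)$ has a small $G_A$-component which must contain a point of $U_B$ (via a $B$-edge out of that component), so the difference set is covered by at most $\abs{U_B}$ components of size at most $t$; and then the second inequality via Lemma~\ref{the:BigComponents}. The paper phrases this as a single containment $B_t(G_{A\cup B})\setminus B_t(G_A)\subset\bigcup_{j\in U_B:\,\abs{C_j(G_A)}\le t}C_j(G_A)$ rather than your two-class partition, but the content is identical.
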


\begin{proof}
Assume that $i \in B_t(G_{A \cup B}) \setminus B_t(G_A)$. Then $\abs{C_i(G_{A \cup B})} > t$ but $\abs{C_i(G_A)} \le t$, and we see that $C_i(G_A)$ must contain some node $j \in U_B$. Then $i \in C_j(G_A)$ and $\abs{C_j(G_A)} \le t$. We conclude that
\[
 B_t(G_{A \cup B}) \setminus B_t(G_A)
 \ \subset \nhquad \bigcup_{j \in U_B: \abs{C_j(G_A)} \le t} C_j(G_A).
\]
Hence
\[
 \abs{B_t(G_{A \cup B})}
 \wle \abs{B_t(G_A)} + \abs{B_t(G_{A \cup B}) \setminus B_t(G_A)}
 \wle \abs{B_t(G_A)} + t \abs{U_B}.
\]
The second inequality follows because $N_1(G_{A \cup B}) \le \max\{\abs{B_t(G_{A \cup B})}, \, t\}$ by Lemma~\ref{the:BigComponents}.
\end{proof}


In the following two results, we denote by $N_A$ the set of neighbours of node $i$ in $G_A$, and we set $D_A = \abs{N_A}$ to denote the degree of $i$ in $G_A$.

\begin{lemma}
\label{the:DegreeLessLayers}
Let $g$ be an arbitrary probability density on $\Z_+$. Let
$
 \epsilon(t)
 = \sum_{r+s=t} \Big( \pr( D_{A \cup B} = r ) - \pr( D_A=r ) \Big) g(s).
$
Then $\sum_{t \ge 0} \abs{\epsilon(t)} \le 2 \pr(D_B > 0)$.
\end{lemma}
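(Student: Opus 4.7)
The plan is to recognise $\epsilon$ as the difference of two convolutions and then contract with the convolution inequality for total variation. Writing $h_1(r) = \pr(D_{A\cup B}=r)$ and $h_2(r) = \pr(D_A=r)$, we have $\epsilon = (h_1 - h_2) \conv g$, so by the triangle inequality and Fubini,
\[
 \sum_{t \ge 0} \abs{\epsilon(t)}
 \wle \sum_{s \ge 0} g(s) \sum_{t \ge s} \abs{h_1(t-s) - h_2(t-s)}
 \weq \sum_{r \ge 0} \abs{h_1(r) - h_2(r)}
 \weq 2 \dtv( \law(D_{A\cup B}), \law(D_A) ).
\]

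The next step is to bound the total variation distance between the two degree laws using the natural coupling coming from the same realisation of the underlying layers $G_1,\dots,G_m$. Since $N_{A\cup B} = N_A \cup N_B$ where $N_C$ denotes the neighbourhood of $i$ in $G_C$, on the event $\{D_B = 0\}$ (equivalently $N_B = \emptyset$) we have $N_{A\cup B} = N_A$ and therefore $D_{A\cup B} = D_A$. Hence $\pr(D_{A\cup B} \ne D_A) \le \pr(D_B > 0)$, and the coupling inequality gives $\dtv( \law(D_{A\cup B}), \law(D_A) ) \le \pr(D_B > 0)$. Combining this with the display above yields the claim. I expect no real obstacle here; the proof is a two-line contraction argument once one recognises the convolution structure.
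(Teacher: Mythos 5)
Your proof is correct and takes essentially the same route as the paper: both recognise $\epsilon = (f_{A\cup B} - f_A) \conv g$, contract via the total-variation/convolution inequality, and then bound $\dtv(\law(D_{A\cup B}), \law(D_A))$ by $\pr(D_{A\cup B} \neq D_A) \le \pr(D_B > 0)$ using the natural coupling on the common probability space. The only cosmetic difference is that you spell out the contraction step via Fubini rather than citing it directly.
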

\begin{proof}
Denote the densities of the degrees by $f_{A \cup B} = \law(D_{A \cup B})$ and $f_{A} = \law(D_{A})$. Then $\sum_{t \ge 0} \abs{\epsilon(t)} = \norm{f_{A \cup B} \conv g - f_{A} \conv g}_1 = 2 \dtv( f_{A \cup B} \conv g, f_{A} \conv g) \le 2 \dtv( f_{A \cup B}, f_{A})$. Further, $\dtv( f_{A \cup B}, f_{A}) \le \pr(D_{A \cup B} \ne D_{A}) \le \pr(D_B > 0)$.
\end{proof}

\begin{lemma}
\label{the:DegreeSplitLayers}
Assume that $G_1,\dots,G_m$ are mutually independent, let $A,B \subset [m]$ be disjoint, and let $\cE_A,\cE_B$ be events determined by $(G_a)_{a \in A}$ and $(G_b)_{b \in B}$, respectively. Then
\[
 \pr( D_{A \cup B} = t, \cE_A, \cE_B ) 
 \weq \pr( D_A + D_B =t, \cE_A, \cE_B ) + \epsilon(t),
\]
where the error term is bounded by $\abs{\epsilon(t)} \le c_B t \pr( D_A \le t, \cE_A )$, and where $c_B = \max_{j \ne i} \pr( ij \in E(G_B), \cE_B )$.

\end{lemma}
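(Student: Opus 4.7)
The plan is to exploit the elementary identity $D_{A\cup B} = D_A + D_B - R$, where $R = |N_A \cap N_B|$ counts the number of nodes that are neighbors of $i$ in both $G_A$ and $G_B$. This makes the two random variables compared in the statement differ by $R$, so the error $\epsilon(t)$ should be controllable by the first moment of $R$ on a suitable event. The independence hypothesis is what lets us factor that moment into the product structure that produces $c_B$.

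First I would write
\[
 \epsilon(t)
 \weq \pr(D_A + D_B - R = t, R \ge 1, \cE_A, \cE_B) - \pr(D_A + D_B = t, R \ge 1, \cE_A, \cE_B),
\]
using that on $\{R=0\}$ we have $D_{A\cup B} = D_A + D_B$, so the two probabilities in the statement coincide when restricted to that subevent. Since $\epsilon(t)$ is the difference of two nonnegative numbers, its absolute value is bounded by the larger of them. Each probability on the right is supported on a subset of $\{D_A \le t\}$, because both $\{D_{A\cup B}=t\}$ and $\{D_A+D_B=t\}$ force $D_A \le t$. Hence
\[
 |\epsilon(t)| \wle \pr(R \ge 1, D_A \le t, \cE_A, \cE_B) \wle \E\bigl( R \, 1(D_A \le t, \cE_A, \cE_B) \bigr).
\]

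Next I would expand $R = \sum_{j \ne i} 1(ij \in E(G_A)) \, 1(ij \in E(G_B))$ and use the hypothesis that $(G_a)_{a \in A}$ and $(G_b)_{b \in B}$ are independent (which follows from the mutual independence of $G_1,\dots,G_m$ together with $A \cap B = \emptyset$). Because $\cE_A$ and $1(D_A \le t)$ and $1(ij \in E(G_A))$ are all measurable with respect to $(G_a)_{a \in A}$, and $\cE_B$ and $1(ij \in E(G_B))$ are measurable with respect to $(G_b)_{b \in B}$, the expectation factors term by term:
\[
 \E\bigl( R \, 1(D_A \le t, \cE_A, \cE_B) \bigr)
 \weq \sum_{j \ne i} \pr(ij \in E(G_A), D_A \le t, \cE_A) \, \pr(ij \in E(G_B), \cE_B).
\]

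The second factor is bounded by $c_B$ uniformly in $j$. Pulling this out and recognising the remaining sum as $\E(D_A \, 1(D_A \le t, \cE_A))$, which is at most $t \, \pr(D_A \le t, \cE_A)$, yields the claimed bound. I do not expect any serious obstacle: the single delicate step is the simultaneous dominance $|\epsilon(t)| \le \max(a,b)$ for nonnegative $a,b$, which spares a factor of two and matches the constant in the statement exactly.
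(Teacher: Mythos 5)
Your proof is correct and follows essentially the same route as the paper: both start from the observation that $D_{A\cup B} = D_A + D_B$ off the multi-overlap event $\{N_A\cap N_B\neq\emptyset\}$, bound $|\epsilon(t)|$ by the probability of multi-overlap intersected with $\{D_A\le t\}\cap\cE_A\cap\cE_B$, and then invoke independence of $(G_a)_{a\in A}$ and $(G_b)_{b\in B}$ together with a union bound to extract the factor $c_B t$. The only cosmetic difference is in the packaging of that last step: the paper conditions on $N_A=U$ and applies the union bound to $\pr(|U\cap N_B|>0,\cE_B)$, whereas you pass through the Markov-type inequality $1(R\ge1)\le R$ and linearise the resulting expectation -- the same calculation reorganised.
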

\begin{proof}
Because $D_{A \cup B} = D_A + D_B$ outside the event $\cF = \{\abs{N_A \cap N_B} > 0\}$, we see that
\[
 \epsilon(t)
 \weq \pr( D_{A \cup B} = t, \cE_A, \cE_B, \cF ) - \pr( D_A + D_B = t, \cE_A, \cE_B, \cF ).
\]
Hence it follows that $\abs{\epsilon(t)} \le \pr( D_A \le t, \cE_A, \cE_B, \cF )$, where the upper bound can be expressed as
\begin{align*}
 \pr( D_A \le t, \cE_A, \cE_B, \cF )
 &\weq \sum_{U: \abs{U} \le t, i \notin U} \pr( N_A = U, \cE_A ) \, \pr( \abs{U \cap N_B} > 0, \cE_B ).
\end{align*}
Because $\pr( \abs{U \cap N_B} > 0, \, \cE_B ) \le \sum_{j \in U} \pr(ij \in E(G_B), \, \cE_B) \le c_B t$ whenever $\abs{U} \le t$, the claim follows.
\end{proof}

\subsection{Galton--Watson processes}
\label{sec:BranchingTrees}

Let $f$ be a probability measure on $\Z_+$ and consider a Galton--Watson branching process with offspring distribution $f$. The exploration queue length of the corresponding tree \cite[Section 3.3]{VanDerHofstad_2017} satisfies the recursion $Q_0=1$ and $Q_t = 1(Q_{t-1}>0) (Q_{t-1}-1+Z_t)$ where $Z_1,Z_2,\dots$ are independent $f$-distributed random integers. The total progeny equals $T = \inf\{t \ge 1: Q_t = 0\} \in [0,\infty]$. We denote $\rho_t(f) = \pr(T > t)$ and $\rho(f) = \pr(T=\infty)$. We also note that $\pr(T > t) = \pr(Q_t > 0)$.
\iflongversion\else\color{\iftextcolor}
The following result (proof omitted) can be verified by straightforward extensions of the arguments in \cite{Bollobas_Janson_Riordan_2007} and \cite[Lemma 2.6]{Leskela_Ngo_2017}.
\color{black}\fi

\begin{lemma}
\label{the:GWLarge}
If $f_n \weakto f$, then (i) $\rho_t(f_n) \to \rho_t(f)$ for all $t \ge 0$.  If $f_n \weakto f$ and $f(0) > 0$, then (ii)
$\rho(f_n) \to \rho(f)$, and (iii) $\rho_{\omega_n}(f_n) \to \rho(f)$ for all sequences $\omega_n \to \infty$.
\end{lemma}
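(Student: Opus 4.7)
The plan is to recast everything in terms of the probability generating function $\phi_g(s) = \sum_k s^k g(k)$ and the extinction probability. On $\Z_+$, weak convergence $f_n \weakto f$ is equivalent to $f_n(k) \to f(k)$ for every $k$, which by Scheff\'e's lemma upgrades to $\dtv(f_n, f) \to 0$, and hence $\sup_{s \in [0,1]} \abs{\phi_{f_n}(s) - \phi_f(s)} \le 2 \dtv(f_n, f) \to 0$. For (i), I would couple innovations so that $\pr(Z_s^{(n)} \ne Z_s) = \dtv(f_n, f)$ independently across $s$; running the common recursion $Q_t = (Q_{t-1} - 1 + Z_t)1(Q_{t-1}>0)$ from $Q_0 = 1$ then yields $\pr(Q_t^{(n)} \ne Q_t) \le t \, \dtv(f_n, f)$, so $\rho_t(f_n) = \pr(Q_t^{(n)} > 0) \to \pr(Q_t > 0) = \rho_t(f)$.

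For (ii), the upper bound $\limsup_n \rho(f_n) \le \rho(f)$ follows from $\rho(f_n) \le \rho_t(f_n)$ together with (i) and the fact that $\rho_t(f) \downarrow \rho(f)$ as $t \to \infty$. The lower bound is the main technical point, and the only place where $f(0) > 0$ is actually needed. Writing $\eta(g) = 1 - \rho(g)$ for the smallest fixed point of $\phi_g$ in $[0,1]$, I would handle separately the trivial case $\eta(f) = 1$; otherwise $\eta(f) < 1$, and then $f$ must charge some integer $k \ge 2$, because $f(0) > 0$ excludes $f = \delta_1$ and any distribution supported on $\{0,1\}$ with $f(0) > 0$ has mean at most one and therefore $\eta = 1$. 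Consequently $\phi_f$ is strictly convex on $[0,1]$, and its two fixed points $\eta(f)$ and $1$ force $\phi_f(s) < s$ on $(\eta(f), 1)$. For $\epsilon \in (0, \rho(f))$ and $s_0 = \eta(f) + \epsilon$, uniform convergence gives $\phi_{f_n}(s_0) < s_0$ for all large $n$; since $\phi_{f_n}(0) = f_n(0) \ge 0$, the intermediate value theorem applied to $\phi_{f_n}(s) - s$ on $[0, s_0]$ produces a fixed point of $\phi_{f_n}$ in $[0, s_0]$, so $\eta(f_n) \le s_0$ and $\rho(f_n) \ge \rho(f) - \epsilon$. Sending $\epsilon \to 0$ gives $\liminf_n \rho(f_n) \ge \rho(f)$.

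Part (iii) is then a soft sandwich: for any $\epsilon > 0$ fix $t$ with $\rho_t(f) \le \rho(f) + \epsilon$; once $\omega_n \ge t$, the monotonicity $\rho(f_n) \le \rho_{\omega_n}(f_n) \le \rho_t(f_n)$ combined with (ii) and (i) squeezes $\rho_{\omega_n}(f_n)$ to within $\epsilon$ of $\rho(f)$. The hard part throughout is the lower bound in (ii); the assumption $f(0) > 0$ is essential, as for instance $f = \delta_1$ with $f_n = \tfrac{1}{n}\delta_0 + (1 - \tfrac{2}{n})\delta_1 + \tfrac{1}{n}\delta_2$ satisfies $\rho(f_n) = 0$ for every $n$ while $\rho(f) = 1$, so the hypothesis has to be exploited precisely to secure the strict convexity of $\phi_f$ that drives the IVT step.
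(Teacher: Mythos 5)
Your proof is correct, and for parts (i) and (iii) it tracks the paper's own argument: (i) via a natural coupling together with the equivalence of weak and total-variation convergence on $\Z_+$, and (iii) via the sandwich $\rho(f_n) \le \rho_{\omega_n}(f_n) \le \rho_t(f_n)$ once $\omega_n \ge t$. The genuine divergence is in part (ii): the paper simply cites an external lemma (Leskel\"a and Ngo 2017, Lemma 2.6), whereas you supply a self-contained argument through the generating function $\phi_f$. Your upper bound ($\limsup_n \rho(f_n) \le \rho_t(f) \downarrow \rho(f)$) is clean, and the lower bound isolates precisely where $f(0) > 0$ matters: it rules out $f = \delta_1$ and all measures supported on $\{0,1\}$, so that when $\rho(f) > 0$ the PGF charges some $k \ge 2$, giving strict convexity and $\phi_f < \mathrm{id}$ on $(\eta(f), 1)$; the uniform bound $\sup_{[0,1]} \abs{\phi_{f_n} - \phi_f} \le 2\dtv(f_n,f)$ then makes the intermediate value theorem trap $\eta(f_n)$ in $[0, \eta(f)+\epsilon]$. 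Your counterexample with $f = \delta_1$ and $f_n = \tfrac{1}{n}\delta_0 + (1-\tfrac{2}{n})\delta_1 + \tfrac{1}{n}\delta_2$ (mean exactly $1$, so $\rho(f_n) = 0$ while $\rho(f) = 1$) is a nice demonstration that the hypothesis cannot be dropped. One small imprecision: strict convexity of $\phi_f$ at $s = 0$ can fail when $f(2) = 0$ yet $f$ charges some larger $k$, but strict convexity on $(0,1)$ is what the IVT step actually needs and that does hold. In sum, the two routes prove the same lemma — the paper's is more compact by outsourcing (ii), while yours is more transparent and keeps the argument self-contained.
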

\iflongversion\color{\iftextcolor}
\begin{proof}
A natural coupling of exploration processes implies that $\abs{\rho_t(f_n) - \rho_t(f)} \le t \dtv(f_n, f)$ for all $t$. Hence (i) follows by noting that weak convergence and total variation convergence are equivalent for probability measures $f_n, f$ on the countable space $\Z_+$.  Claim (ii) follows by \cite[Lemma 2.6]{Leskela_Ngo_2017}.   For (iii), we first note that $\rho_t(f) \to \rho(f)$ as $t \to \infty$. Hence given any $\epsilon > 0$, we may choose $t$ so that $\rho(f) \le \rho_t(f) \le \rho(f) + \epsilon$. Then, we see that
\[
 \rho(f_n) \wle \rho_{\omega_n}(f_n) \wle \rho_{t}(f_n) + \epsilon
\]
for all sufficient large values of $n$ such that $\omega_n \ge t$. Now (iii) follows by noting that $\rho(f_n) \to \rho(f)$ by (i), and 
$\rho_{t}(f_n) \to \rho_{t}(f)$ by (ii).
\end{proof}
\color{black}\fi

\subsection{Coupon collection}
The classical coupon collector's problem involves a collector who at each round receives a coupon with type selected uniformly at random among a set of $n$ types, independently of previous rounds. We denote by $N_t$ the number of distinct coupon types obtained after collecting $t$ coupons.

\begin{lemma}
\label{the:CouponCollection}
Fix integers $k, t, n \ge 1$ such that $\frac{1}{k} \ge \frac{1}{t} + \frac{1}{n}$. Then the probability that the number of distinct coupon types obtained after collecting $t$ coupons is less than $k$ is at most
\begin{equation}
 \label{eq:Coupon1}
 \pr( N_t < k)
 \wle \left(\frac{t}{k}\right)^k \left(\frac{n}{k} - \frac{n}{t} \right)^{-(t-k)}.
\end{equation}
Especially, $\pr( N_t < k) \le n^{-\alpha}$ whenever $\alpha + k \le (1-\beta)t$ and $t \le n^{\beta/2}$ for some $\alpha > 0$ and $\beta \in (0,1)$
\end{lemma}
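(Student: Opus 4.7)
The plan is to reduce $\pr(N_t < k)$ to a binomial upper-tail probability by a coupling, and then apply a standard Chernoff estimate.

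First I would realize the coupon sequence on an enriched probability space using i.i.d.\ uniform random variables $U_1,\dots,U_t \in (0,1)$, constructed so that the ``repeat'' event $A_i = \{X_i \in \{X_1,\dots,X_{i-1}\}\}$ equals $\{U_i < N_{i-1}/n\}$. On the event $\{N_t < k\}$ one has $N_{i-1} < k$ for every $i \le t$, so $A_i \subset \{U_i < k/n\}$, and setting $B_i = 1(U_i < k/n)$ produces i.i.d.\ $\Ber(k/n)$ indicators satisfying $1_{A_i} \le B_i$ throughout $\{N_t<k\}$. Since the total number of repeats equals $t-N_t \ge t-k+1$ on this event, summing yields $\{N_t < k\} \subset \{\sum_i B_i \ge t-k+1\}$ and hence
\[
 \pr(N_t < k) \wle \pr\bigl( \Bin(t,k/n) \ge t-k \bigr).
\]

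The second step is to invoke the standard binomial Chernoff bound $\pr(B \ge r) \le (mp/r)^r (m(1-p)/(m-r))^{m-r}$, which holds for $B \sim \Bin(m,p)$ whenever $r \ge mp$ (obtained by optimising $e^{-\lambda r}\E e^{\lambda B}$ over $\lambda > 0$), applied with $m=t$, $p=k/n$, $r=t-k$. The hypothesis $1/k \ge 1/t + 1/n$ rearranges precisely to $t-k \ge tk/n$, i.e.\ $r \ge mp$, so the bound applies; dropping the factor $(1-k/n)^k \le 1$ from the resulting estimate gives \eqref{eq:Coupon1}.

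The $n^{-\alpha}$ consequence follows by taking logarithms in \eqref{eq:Coupon1} and bounding $\log(t/k) \le \log t$ and $\log(kt/(t-k)) \le 2\log t$; under $t \le n^{\beta/2}$ each $\log t$ is at most $(\beta/2)\log n$. Collecting terms gives $\log\pr(N_t < k) \le -[(1-\beta)(t-k) - \beta k/2]\log n$, and a brief computation using $(1-\beta)(t-k) - \beta k/2 = (1-\beta)t - k + \beta k/2 \ge (1-\beta)t - k$ shows that this is at most $-\alpha \log n$ whenever $\alpha + k \le (1-\beta)t$.

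I do not anticipate any substantive obstacle. The coupling is elementary and the Chernoff estimate is classical; the one point that requires care is recognising that the hypothesis $1/k \ge 1/t + 1/n$ is exactly the sharp condition $r \ge mp$ under which the Chernoff bound is applicable with the threshold $r = t-k$, which is what pins down the precise form of the right-hand side of \eqref{eq:Coupon1}.
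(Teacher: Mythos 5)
Your proof is correct, and it takes a genuinely different route from the paper's. The paper works with $T_k$, the number of coupons needed to collect $k$ distinct types, observes that $\{N_t < k\} = \{T_k > t\}$, writes $T_k$ as a sum of independent geometric waiting times, computes the product moment generating function, and applies Markov's inequality with a hand-picked exponent $s = \log\!\left(\frac{n}{k} - \frac{n}{t}\right)$. You instead couple the repeat indicators pathwise: on $\{N_t < k\}$ each repeat event is dominated by an independent $\Ber(k/n)$ variable, which reduces $\{N_t < k\}$ to the binomial upper-tail event $\{\Bin(t,k/n) \ge t-k\}$, and the classical optimized Chernoff bound (whose applicability condition $r \ge mp$ is exactly the hypothesis $\frac{1}{k} \ge \frac{1}{t} + \frac{1}{n}$) then yields \eqref{eq:Coupon1} after discarding the harmless $(1-k/n)^k \le 1$ factor. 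Both are Chernoff-type estimates and land on the identical bound; the paper's route is fully self-contained but requires recognizing the correct exponent, while yours outsources the optimization to a textbook binomial inequality at the cost of a short (but clean) domination argument. For the $n^{-\alpha}$ consequence, your logarithmic bookkeeping with $\log(kt/(t-k)) \le 2\log t$ is valid because $1/k \ge 1/t + 1/n$ forces $k \le t-1$, and it arrives at the same threshold as the paper's computation via $\frac{1}{k} - \frac{1}{t} \ge t^{-2}$.
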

\begin{proof}
Fix $s = \log(\frac{n}{k} - \frac{n}{t})$. Then $s \ge 0$ and $\frac{k}{n} e^s = 1 - \frac{k}{t} < 1$. Denote by $T_k$ the number of coupons needed to obtain $k$ distinct coupon types. Then $T_{j+1}-T_j$ is geometrically distributed with moment generating function $\E e^{s(T_{j+1}-T_j)} = \frac{(1-j/n)e^s}{1-(j/n)e^s}$. Hence
\[
 \E e^{s T_{k}}
 \weq \prod_{j=0}^{k-1} \frac{(1-j/n)e^s}{1-(j/n)e^s}
 \wle \left( \frac{e^s}{1 - \frac{k}{n} e^s} \right)^k.
\]
Markov's inequality applied to $e^{s T_{k+1}}$ hence shows that
\[
 \pr( N_t < k)
 \weq \pr( T_{k} > t )
 \wle e^{-st} \E e^{s T_{k+1}}
 \weq \frac{e^{s(k-t)}}{\left(  1 - \frac{k}{n} e^s \right)^k}
 \weq \frac{(\frac{n}{k} - \frac{n}{t})^{k-t}}{\left( \frac{k}{t} \right)^k}.
\]

Observe next that $\frac{1}{k} - \frac{1}{t} \ge t^{-2}$ implies that the right side of \eqref{eq:Coupon1} is at most
$\left(\frac{t}{k}\right)^k \left( \frac{t^2}{n} \right)^{t-k} \le t^{2t} n^{-(t-k)} \le n^{\beta t - (t-k)}$ for $t \le n^{\beta/2}$. Hence
$\pr( N_t < k) \le n^{-\alpha}$ when we also assume that $\alpha + k \le (1-\beta)t$.
\end{proof}

\ifllstyle
\iflongversion
\bibliographystyle{siamplain}
\else
\bibliographystyle{abbrv}
\fi
\ifmbstyle
\bibliographystyle{siamplain}
\fi
\ifaap
\bibliographystyle{imsart-number}
\fi
\bibliography{lslReferences}
\end{document}

\clearpage

\section{LEFTOVERS START HERE}

\section{Analysis of connected components}

\subsection{Double branching process upper bound --- Newer}

\begin{lemma}
\label{the:UniformRandomSet}
Let $U$ be uniformly distributed among the $x$-sized subsets of a set $V$ of size $m$. Then for any $A \subset V$ and $v \in V \setminus A$,
\[
 \pr( U \ni v, \, U \cap A \ne \emptyset)
 \wle \abs{A} \left( \frac{x}{m} \right)^2.
\]
\end{lemma}
\begin{proof}
By the union bound,
\[
 \pr( U \ni v, \, U \cap A \ne \emptyset)
 \wle \sum_{a \in A} \pr( U \supset \{a,v\} )
 \weq \abs{A} \frac{(x)_2}{(m)_2},
\]
so the claim follows because $(x-1)/(m-1) \le x/m$ for $x \le m$.
\end{proof}

Note:  \rnote{We could do this with the true component explorations, or the restricted explorations, perhaps the latter is simpler to work with.}

(i) Denote by $V_{i,t}$ the set of explored nodes and by $W_{i,t}$ the set of explored layers during the first $t$ steps of the exploration from node $i$. Let $\bar V_{i,t} = \cup_{k \in W_{i,t}} V(G_k)$ the set of nodes covered by the discovered layers. Let $i$ and $j$ be distinct.
Let
\[
 \cA_{ij}
 \weq \set{ \bar V_{i,t} \cap V_{j,t} = \emptyset, \, W_{i,t} \cap W_{j,t} = \emptyset }.
\]
Fix some node sets $A_i \ni i$ and $C_i \not\ni j$ such that $A_i \subset C_i$ and $\abs{A_i}=t$, and consider the event
\begin{equation}
 \label{eq:EventI}
 \cE_i
 \weq \{ V_{i,t} = A_i, \, W_{i,t} = B_i, \, \bar V_{i,t} = C_i \}.
\end{equation}
Note that $\abs{L_j} \ge t$ if and only if $\abs{V_{j,t}} = t$, and therefore
\begin{align*}
 \pr( \abs{L_j} \ge t, \, \cA_{ij} \cond \cE_i)
 &\weq \sum_{A_j} \sum_{B_j} \pr( V_{j,t} = A_j, \, W_{j,t} = B_j \cond \cE_i)
\end{align*}
where the first sum is over all node sets $A_j \ni j$ of size $t$ which do not overlap with $C_i$, and the second sum over all layer sets $B_j$ not overlapping with $B_i$. Note that on the event $\cE_i$, no layer in $B_i$ contains a node in $A_j$. Therefore, on the event $\cE_i$, the indicator of whether or not the event $\{V_{j,t} = A_j, \, W_{j,t} = B_j\}$ occurs can be expressed as a deterministic function of the random integers $1(V(G_k) \ni a)$, $a \in A_j$, $k \in B_i^c$, and the random graphs $G_k$, $k \in B_j \subset B_i^c$. Conditionally on $\cE_i$, the layers $G_k$, $k \in B_i^c$, are mutually independent and such that each $V(G_k)$ is uniformly distributed among the $X_k$-sized subsets of $A_i^c$. Hence it follows that
\[
 \pr( V_{j,t} = A_j, \, W_{j,t} = B_j \cond \cE_i)
 \weq \pr(  V'_{j,t} = A_j, \,  W'_{j,t} = B_j ),
\]
where $V'_{j,t}, W'_{j,t}$ are the sets of explored nodes and layers after $t$ steps in an exploration starting from node $j$ of a modified overlay graph model $G'$ defined with node set $A_i^c$ and layer types $(X_k,Y_k)$, $k \in B_i^c$.  By summing over $A_j$ and $B_j$ as described above, 
it follows that
\[
 \pr( \abs{L_j} \ge t, \cA_{ij} \cond \cE_i)
 \weq \pr( \abs{L'_j} \ge t, \, V'_{j,t} \cap C_i = \emptyset )
 \wle \pr( \abs{L'_j} \ge t ).
\]
Furthermore, 
$
 \pr( \abs{L'_j} \ge t )
 \le \pr( \abs{L''_j} \ge t ),
$
where $L''_j$ refers to an exploration starting from an arbitrary node $j$ of a modified overlay graph model $G''$ defined with $m-t$ nodes and $n$ layers with types $(X_k,Y_k)$, $k \in [n]$. Hence
\begin{align*}
 \pr( \cE_i, \, \abs{L_j} \ge t, \, \cA_{ij} ) \wle \pr( \cE_i ) \, \pr( \abs{L''_j} \ge t ),
\end{align*}
and by summing both sides over all events $\cE_i$ of the form in \eqref{eq:EventI}, it follows that
\[
 \pr( \abs{L_i} \ge t, \, \abs{L_j} \ge t, \, \bar V_{i,t} \not\ni j, \, \cA_{ij} )
 \wle \pr( \abs{L_i} \ge t ) \, \pr( \abs{L_j''} \ge t ).
\]
Now \mnote{Could be a lemma}
\begin{align*}
 \pr( \abs{L_i} \ge t, \, \abs{L_j} \ge t )
 \wle \pr( \abs{L_i} \ge t ) \, \pr( \abs{L_j''} \ge t )
 + \pr( \cA_{ij}^c )
 + \pr(\bar V_{i,t} \ni j).
\end{align*}

\begin{rcomm}
For $\omega = \Theta(m^{1/3})$ and $n = \Theta(m)$, the upper bound is small for $t \ll m^{1/3}$.
\end{rcomm}

(ii) Let us now verify that $\pr( \bar V_{i,t} \cap \bar V_{j,t} \ne \emptyset)$ is small. Note first that
\[
 \pr( \bar V_{i,t} \cap \bar V_{j,t} \ne \emptyset)
 \wle \sum_{s=1}^t \pr(\cC_s),
\]
where $\cC_s = \{ \bar V_{i,t} \cap \bar V_{j, s-1} = \emptyset, \, \bar V_{i,t} \cap \bar V_{j,s} \ne \emptyset\}$.
On the event $\cC_s$ there exists a node $v \in \bar V_{j, s-1}$ which is explored during step $s$, and a layer $k$ which covers $v$ and overlaps with $\bar V_{i,t}$. Let us now consider the event 
\[
 \cE_i 
 \weq \set{ V_{i,t} = A_i, \, W_{i,t} = B_i, \, \bar V_{i,t} = C_{i} }
\]
for some $i \in A_i \subset C_i$ such that $C_i \not\ni j$ and some layer set $B_i$. Consider also the event
\[
 \cE_j
 \weq \set{ V_{j, s-1} = A_j, \, W_{j, s-1} = B_j, \, \bar V_{j, s-1} = C_{j} }
\]
for some $j \in A_j \subset C_j$ such that $C_i \cap C_j = \emptyset$ and some layer set $B_j$ such that $B_i \cap B_j = \emptyset$. Conditionally on $\cE_i$ and $\cE_j$, the layers $k \in (B_i \cup B_j)^c$ are mutually independent and such that $V(G_k)$ is uniformly distributed among the $X_k$-sized subsets of $(A_i \cup A_j)^c$. On the event $\cC_s \cap \cE_i \cap \cE_j$ there exists a node $v \in C_j \setminus A_j$ which is explored during the $s$-th step of the $j$-exploration, and a layer $k \in (B_i \cup B_j)^c$ such that $V(G_k) \ni v$ and $V(G_k) \cap C_i \ne \emptyset$. Therefore,
\[
 \pr( \cC_s \cap \cE_i \cap \cE_j )
 \wle \sum_{v \in C_j \setminus A_j} \sum_{k \in (B_i \cup B_j)^c}
 \pr( V(G_k) \ni v, \, V(G_k) \cap C_i \ne \emptyset, \, \cE_i, \, \cE_j )
\]
Now 
\begin{align*}
 \pr( V(G_k) \ni v, \, V(G_k) \cap C_i \ne \emptyset \cond \cE_i, \, \cE_j )
 &\weq \pr( V(G_k) \ni v, \, V(G_k) \cap (C_i \setminus A_i) \ne \emptyset \cond \cE_i, \, \cE_j ) \\
 &\wle \abs{C_i \setminus A_i} \frac{X_k^2}{\abs{(A_i \cup A_j)^c}^2} \\
 &\wle \omega M^2 (m-2t)^{-2} \\
 &\wle 4 \omega M^2 m^{-2}
\end{align*}
for $C_i \le \omega$, $X_k \le M$, and $\abs{A_i}, \abs{A_j} \le t \le \frac14 m$. Therefore,
\[
 \pr( \cC_s \cap \cE_i \cap \cE_j )
 \wle 4 \omega^2 M^2 m^{-2} n \pr( \cE_i, \, \cE_j )
\]
for $C_j \le \omega$. By summing over $\cE_i, \cE_j$ of the proper form, we find that
\[
 \pr( \cC_s \cond \bar V_{i,t} \not\ni j, \, \abs{\bar V_{i,t}} \le \omega, \, \abs{\bar V_{j,t}} \le \omega )
 \wle 4 \omega^2 M^2 m^{-2} n.
\]
Hence, by the union bound,
\[
 \pr( \bar V_{i,t} \cap \bar V_{j,t} \ne \emptyset
   \cond \bar V_{i,t} \not\ni j, \, \abs{\bar V_{i,t}} \le \omega, \, \abs{\bar V_{j,t}} \le \omega )
 \wle 4 t \omega^2 M^2 m^{-2} n,
\]
for $t \le \frac14 m$, and it follows that (see below) for all $t \le \frac14 m$,
\begin{align*}
 \pr( \bar V_{i,t} \cap \bar V_{j,t} \ne \emptyset)
 &\wle 4 t \omega^2 M^2 m^{-2} n + \pr( \bar V_{i,t} \ni j ) + 2 \pr(  \abs{\bar V_{i,t}} > \omega ) \\
 &\wle 4 t \omega^2 M^2 m^{-2} n + \omega m^{-1} + 3 \pr(  \abs{\bar V_{i,t}} > \omega ).
\end{align*}
By \eqref{eq:NumberCovered} and Markov's inequality,
\[
 \pr(  \abs{\bar V_{i,t}} > \omega )
 \wle 2 M^2 t \omega^{-1} m^{-1} n
\]
for $t \le \frac12 m$. Hence
\begin{align*}
 \pr( \bar V_{i,t} \cap \bar V_{j,t} \ne \emptyset)
 \wle 4 t \omega^2 M^2 m^{-2} n + \omega m^{-1} + 6 M^2 t \omega^{-1} m^{-1} n.
\end{align*}

\begin{rcomm}
For $\omega = \Theta(m^{1/3})$ and $n = \Theta(m)$, the upper bound is small for $t \ll m^{1/3}$.
\end{rcomm}

\begin{rcomm}
By a more detailed analysis, we can improve the upper bound by replacing $\omega^2$ with $\omega$, and $M^2$ by $(P_n)_{2,0}$.
\end{rcomm}

(iii) Note that (think of $j$ being independent of the $i$ exploration) $\pr(\bar V_{i,t} \ni j \, \cond \, \abs{\bar V_{i,t}} = s ) = \frac{s}{m}$. Hence,
\[
 \pr(\bar V_{i,t} \ni j)
 \wle \pr( \abs{\bar V_{i,t}} > \omega ) + \frac{\omega}{m}.
\]

(iv) We will verify that for all $t$,
\begin{equation}
 \label{eq:NumberCovered}
 \E \abs{\bar V_{i,t}}
 \wle \frac{t}{m-(t-1)} \sum_{k=1}^n X_k^2.
\end{equation}
To do this, let us first analyse the probability of the event $\cD_{i,k,s}$ that layer $k$ is discovered during step $s$ of the exploration process started at $i$. Let $\cA_{i, s, v}$ be the event that there is a node to explore during step $s$ and this node equals $v$, for an exploration process starting at $i$. Fix a node set $A_i$ of size $s-1$ and some layer set $B_i$ such that the event $\cA_{i, s, v} \cap \cE_i$ has nonzero probability, where $\cE_i = \{ V_{i,s-1}=A_i, \, W_{i,s-1}=B_i \}$.
Then
\[
 \pr( \cD_{i,k,s} \cond \cA_{i, s, v}, \cE_i )
 \weq \pr( V(G_k) \ni v \cond \cA_{i, s, v}, \cE_i )
 \weq 1(k \in B_i^c) \frac{X_k}{\abs{A_i^c}}
 \wle \frac{X_k}{m-(t-1)}.
\]
Because the above upper bound holds for all $\cE_i$ of the aforementioned type and for all nodes $v$, 
it follows that the probability of discovering layer $k$ during the $s$-th step is bounded by
\[
 \pr( \cD_{i,k,s} )
 \weq \sum_{v=1}^m \sum_{\cE_i} \pr( \cD_{i,k,s} \cond \cA_{i, s, v}, \cE_i ) \, \pr ( \cA_{i, s, v}, \cE_i )
 \wle \frac{X_k}{m-(t-1)}.
\]
Now, due to $\abs{\bar V_{i,t}} \le \sum_{k \in W_{i,t}} X_k$, we conclude that
\[
 \E \abs{\bar V_{i,t}}
 \wle \sum_{k=1}^n X_k \pr( W_{i,t} \ni k )
 \weq \sum_{k=1}^n X_k \sum_{s=1}^t \pr( \cD_{i,k,s} ).
\]
Hence \eqref{eq:NumberCovered} follows by combining the last two inequalities.

\subsection{Compound Poisson approximation of the upper bound}

\begin{lemma}
\label{the:CompoundPoissonUpperBound}
For any $1 \le \omega \le \frac12 m$, the distribution of the random integer $Z^+_\omega$ defined in \eqref{eq:OffspringUpper} is approximated by
\[
 \dtv( \law(Z^+_\omega), \CPoi(\lambda_n, \bar g_n )
 \wle 4 m^{-2} \sum_{k=1}^n X_k^2 + 2 \omega m^{-2} \sum_{k=1}^n X_k,
\]
where $\lambda_n = \frac{n}{m} (P_n)_{10}$ and $\bar g_n(t) = \int \Bin^+(x-1,y)(t) \frac{x P_n(dx,dy)}{(P_n)_{10}}$.
\end{lemma}
\begin{proof}
Denote $n_{xy}$ the number of layers of type $(x,y)$, and let $K$ be the set of layer types for which $n_{xy} > 0$. Define 
\[
 Z^+
 = \sum_{(x,y) \in K} \sum_{j=1}^{N^+_{xy}} D_{xy}(j),
 \quad
 Y^+
 = \sum_{(x,y) \in K} \sum_{j=1}^{M^+_{xy}} D_{xy}(j),
 \quad
 Y
 = \sum_{(x,y) \in K} \sum_{j=1}^{M_{xy}} D_{xy}(j),
\]
where the random variables appearing in the sums are mutually independent and distributed according to
\[
 \law(N^+_{xy}) = \Bin(n_{xy}, \frac{x}{m-\omega}),
 \quad
 \law(M^+_{xy}) = \Poi( n_{xy} \frac{x}{m-\omega} ),
 \quad
 \law(M_{xy}) = \Poi( n_{xy} \frac{x}{m} ),
\]
together with $\law(D_{xy}(j)) = \Bin^+(x-1,y)$.

Then we observe that $\law(Z^+) = \law(Z^+_\omega)$. Moreover, we observe that $Y = \sum_{(x,y) \in K} Y_{xy}$ where the summands are mutually independent with $\law(Y_{xy}) = \CPoi(\lambda_{xy}, \Bin^+(x-1,y) )$ with $\lambda_{xy} = n_{xy} \frac{x}{m}$. Hence by Lemma~\ref{the:CPoiSum}, the distribution of $Y$ is compound Poisson $\CPoi(\lambda, \bar g)$ with rate parameter
\[
 \lambda
 \weq \sum_{(x,y) \in K} \lambda_{xy} \sum_{(x,y) \in K} n_{xy} \frac{x}{m}
 \weq \frac{n}{m} (P_n)_{10}
\]
and increment distribution
\[
 \bar g
 \weq \sum_{(x,y) \in K} \frac{\lambda_{xy}}{\lambda} \Bin^+(x-1,y)
 \weq \sum_{(x,y) \in K} \frac{x n_{xy}}{n (P_n)_{10}} \Bin^+(x-1,y)
 \weq \int \Bin^+(x-1,y) \frac{x P_n(dx,dy)}{(P_n)_{10}}.
\]

By Lemma~\ref{the:dtvBinPoi}, $\dtv( \law( N^+_{xy} ), \law( M^+_{xy} ) ) \le n_{xy} (\frac{x}{m-\omega})^2$. Hence a basic coupling implies that
\[
 \dtv( \law( Z^+ ), \law( Y^+ ) )
 \wle \sum_{(x,y) \in K}  n_{xy} (\frac{x}{m-\omega})^2
 \weq \frac{1}{(m-\omega)^2} \sum_{k=1}^n X_k^2.
\]
Furthermore, Lemma~\ref{the:dtvPoiPoi} implies that
\[
 \dtv( \law( M^+_{xy} ), \law( M_{xy} ) )
 \wle n_{xy} (\frac{x}{m-\omega} - \frac{x}{m})
 \weq x n_{xy} \frac{\omega}{m(m-\omega)}.
\] 
Hence another basic coupling implies that
\[
 \dtv( \law( Y^+ ), \law( Y ) )
 \wle \sum_{(x,y) \in K} x n_{xy} \frac{\omega}{m(m-\omega)}
 \weq \frac{\omega}{m(m-\omega)} \sum_{k=1}^n X_k.
\]
Hence for $\omega \le \frac12 m$ it follows that
\begin{align*}
 \dtv( \law( Z^+_\omega ), \CPoi(\lambda, \bar g ) )
 &\weq \dtv( \law( Z^+ ), \law( Y ) ) \\
 &\weq \dtv( \law( Z^+ ), \law( Y^+ ) ) + \dtv( \law( Y^+ ), \law( Y ) ) \\
 &\wle \frac{1}{(m-\omega)^2} \sum_{k=1}^n X_k^2 + \frac{\omega}{m(m-\omega)} \sum_{k=1}^n X_k \\
 &\wle 4 m^{-2} \sum_{k=1}^n X_k^2 + 2 \omega m^{-2} \sum_{k=1}^n X_k.
\end{align*}
\end{proof}

\subsection{Large-scale analysis of the upper bound}

Consider a sequence of models $(G^{(n)}: n \ge 1)$ such that the $n$-th model has deterministic layer sizes $(X_1^{(n)}, \dots, X^{(n)}_n)$ and deterministic layer strengths $(Y_1^{(n)}, \dots, Y^{(n)}_n)$, where the empirical distribution of the layer types $(X^{(n)}_k, Y^{(n)}_k)$ in the $n$-th model is $P_n$.  For a technical compactification argument, we assume that
\begin{equation}
 \label{eq:NoRedundantLayers}
 \inf_{n \ge 1} \min_{1 \le k \le n} X^{(n)}_k \wge 2,
\end{equation}
and
\begin{equation}
 \label{eq:NoWeakSmallLayers}
 \inf_{n \ge 1} \min_{k: X_k^{(n)} \le M} Y^{(n)}_k  > 0
 \quad \text{for all $M$}.
\end{equation}
Condition \eqref{eq:NoRedundantLayers} is natural because layers of size less than two do not contribute and links to the overlay graph and may therefore be ignored as redundant. Condition \eqref{eq:NoWeakSmallLayers} means that there are \emph{no weak small layers}, and this is natural for real-world models in which layer strength decreases with respect to layer size.

\begin{lemma}
\label{the:ProbabilityLargeComponent}
Assume that $\frac{m}{n} \to \beta \in (0,\infty)$ and $P_n \weakto P$  with $(P_n)_{10} \to (P)_{10} \in (0,\infty)$. Assume also that there exist some constants $M < \infty$ and $\epsilon > 0$ such that
\[
 2 \le X^{(n)}_k \le M
 \quad \text{and} \quad
 \epsilon \le Y^{(n)}_k \le 1
\]
for all $n \ge 1$ and all $1 \le k \le n$. Then
\[
 \pr( \abs{C_i^{(n)}} \ge \omega_n)
 \wle \zeta(\CPoi(\lambda, \bar g)) + o(1)
\]
for any $\omega_n \gg 1$, where $\zeta$ is the smallest nonnegative fixed point of the generating function of $\CPoi(\lambda, \bar g)$ with $\lambda = \mu (P)_{10}$ and $\bar g(t) = \int \Bin^+(x-1,y)(t) \frac{x P(dx,dy)}{(P)_{10}}$.
\end{lemma}
\begin{proof}
Denote $M_n := \max_k X^{(n)}_k$ and $\hat{q}_n := \min_{k} \left( 1 - \Bin^+(X^{(n)}_k-1,Y^{(n)}_k})(0) \right)$. Then $M_n \le M$ and $\hat q_n \ge \epsilon$ due to the fact that $1-\Bin^+(x-1,y)(0) \ge y$ for all $x \ge 2$. Choose an arbitrary sequence $1 \ll \omega_n' \ll m^{1/2}$ such that $\omega'_n \le \omega_n$. Then $\pr( \abs{C_i^{(n)}} \ge \omega_n) \le \pr( \abs{C_i^{(n)}} \ge \omega_n')$ implies that it is sufficient to prove the claim for $\omega_n'$ instead of $\omega_n$.  Therefore, without loss of generality, we shall from now on assume that $1 \ll \omega_n \ll m^{1/2}$.

By Lemma~\ref{the:GiantUpperFinite}, we find that
\[
 \pr( \abs{C_i^{(n)}} \ge \omega)
 \wle \pr \big( \gw(Z^+_{n,\omega}) \ge \omega \big) + 4 \frac{M_n^2}{\hat q_n^2} m^{-1}\omega^2 + 2 \omega^{-1}.
\]
By Lemma~\ref{the:CompoundPoissonUpperBound}, the offspring distribution above is approximated by
\[
 \dtv( Z^+_{n,\omega},  Y_n )
 \wle 4 m^{-2} n \int x^2 P_n(dx,dy) + 2 \omega m^{-2} n \int x P_n(dx,dy)
 \wle 4 (M_n + \omega) m^{-2} n  (P_n)_{10},
\]
where $Y_n \eqst \CPoi(\lambda_n, \bar g_n)$ with $\lambda_n = \frac{n}{m} (P_n)_{10}$ and $\bar g_n(t) = \int \Bin^+(x-1,y)(t) \frac{x P_n(dx,dy)}{(P_n)_{10}}$. Hence by Lemma~\ref{the:GWHittingTimes},
\[
 \left| \pr( \gw(Z^+_{n,\omega}) \ge \omega ) - \pr( \gw(Y_n) \ge \omega ) \right|
 \wle \omega \, \dtv( Z^+_{n,\omega},  Y_n ).
\]
Hence
\[
 \pr( \abs{C_i^{(n)}} \ge \omega)
 \wle \pr \big( \gw(Y_n) \ge \omega \big) + 4 \omega (M_n + \omega) m^{-2} n (P_n)_{10}
 + 4 \frac{M_n^2}{\hat q_n^2} m^{-1}\omega^2 + 2 \omega^{-1}.
\]
When $M_n, \hat q_n, (P_n)_{10} \asymp 1$, $1 \ll \omega \ll m^{1/2}$ with $m \asymp n$, we obtain
\[
 \pr( \abs{C_i^{(n)}} \ge \omega)
 \wle \pr \big( \gw(Y_n) \ge \omega \big) + o(1).
\]
When $\frac{m}{n} \to \beta \in (0,\infty)$ and $P_n \weakto P$  with $(P_n)_{10} \to (P)_{10} \in (0,\infty)$, then \rnote{verify} 
$\lambda_n \to \lambda = \mu (P)_{10}$ and $\bar g_n \to \bar g(t) = \int \Bin^+(x-1,y)(t) \frac{x P(dx,dy)}{(P)_{10}}$. Hence $\law(Y_n) \to \law(Y) = \CPoi(\lambda, \bar g)$ weakly \rnote{verify}.
With Lemma~\ref{the:GWLarge} we may now conclude that $\pr \big( \gw(Y_n) \ge \omega \big) \to \pr \big( \gw(Y) = \infty \big) = \zeta$, and the claim follows.
\end{proof}

\subsection{Upper bound for the number of nodes in big components (asymptotic, compact setting)}

\begin{lemma}
Under the same assumptions as in Lemma~\ref{the:ProbabilityLargeComponent}, the largest component size in the model is bounded by
\[
 \cmax(G^{(n)}) \wle \zeta m + o_\pr(m)
\] 
for any $1 \ll \omega_n \le \zeta m$, where $\zeta$ is the smallest nonnegative fixed point of the generating function of $\CPoi(\lambda, \bar g)$ with $\lambda = \mu (P)_{10}$ and $\bar g(t) = \int \Bin^+(x-1,y)(t) \frac{x P(dx,dy)}{(P)_{10}}$.
\end{lemma} 
\begin{proof}[Proof sketch]
Fix $\epsilon > 0$.  Denote by $\cmax(G)$ the size of the largest component in graph $G$, and by $B_t(G)$ the set nodes with $G$-component of size at least $t$. Note that $\cmax(G) \le \max\{ \abs{B_t(G)}, t \}$ for all $t \ge 0$. Therefore,
\begin{equation}
 \label{eq:LargeComponentUpper}
 \pr( \cmax(G^{(n)}) \ge t )
 \wle \pr( \abs{B_t(G^{(n)})} \ge t )
 \wle \pr( \abs{B_\omega(G^{(n)})} \ge t )
\end{equation}
for all $\omega \le t$. Especially,
\[
 \pr \left( \cmax(G^{(n)}) \ge (\zeta+\epsilon) m \right)
 \wle \pr \left( \abs{B_\omega(G^{(n)})} \ge (\zeta+\epsilon) m \right)
\]
for all $\omega \le (\zeta+\epsilon) m$. Fix now an arbitrary $\omega_n \gg 1$ such that $\omega_n \le (\zeta+\epsilon) m_n$, and denote $b_n = \abs{B_{\omega_n}(G^{(n)})}$.
Observe that for all $i$,
\[
 \E b_n
 \weq m \pr( \abs{\abs{C_{G^{(n)}}(i)}} \ge \omega),
\]
and so that by Lemma~\ref{the:ProbabilityLargeComponent} (recalling that all layer types are in a compact set), 
\[
 \E \left( \frac{b_n}{m_n} \right)
 \wle \zeta + o(1).
\]
Hence $\E \left( \frac{b_n}{m_n} \right) \le \zeta + \frac12 \epsilon$ for all large enough $n$, and it follows by Chebyshev's inequality that
\begin{align*}
 \pr \left( \cmax(G^{(n)}) \ge (\zeta+\epsilon) m \right)
 &\wle \pr \left( b_n \ge (\zeta+\epsilon) m \right) \\
 &\wle \pr \left( b_n - \E(b_n) \ge \frac12 \epsilon m \right) \\
 &\wle 4 \epsilon^{-2} m^{-2} \Var( b_n).
\end{align*}
The claim follows after verifying that $\Var(b_n) \ll m^2$. \rnote{To do this, we need double exploration process analysis.}
\end{proof}

\subsection{Compactification argument}

Fix an integer $M \ge 2$, and 
define a \emph{small-layer graph} $G^{(n,M)}$ by
\[
 V(G^{(n,M)}) \weq \{1,\dots,m\}
 \qquad\text{and}\qquad
 E(G^{(n,M)}) \ = \bigcup_{k: X_k^{(n)} \le M} E(G_k^{(n)}).
\] 
Then $G^{(n,M)}$ is a subgraph of $G^{(n)}$ generated by aggregating all layers of size at most $M$. Structurally, the graph 
$G^{(n,M)}$ is also an instance of the overlay graph model with $m$ nodes, $n_M = \sum_{k=1}^n 1(X_k^{(n)} \le M)$ layers, and layer type distribution
\[
 P_{n,M}(A) \weq P_{n}(A \cond K_M), 
\]
where $K_M = \{2,\dots,M\} \times [0,1]$. If we define a similar truncated version of the limiting layer type distribution $P$ by
\[
 P_{\infty,M}(A) \weq P(A \cond K_M),
\]
then under mild regularity, it follows that $P_{n,M} \to P_{\infty, M}$ and $(P_{n,M})_{10} \to (P_{\infty, M})_{10}$ as $n \to \infty$, together with $P_{n,M} \to P_n$ and $P_{\infty,M} \to P$ as $M \to \infty$.  We aim to approximate the size of the largest component $C_1(G^{(n)})$ in $G^{(n)}$ according to
\[
 \frac{\abs{C_1(G^{(n)})}}{m}
 \wapprox \frac{\abs{C_1(G^{(n,M)})}}{m}
 \wapprox \zeta( \CPoi( \lambda_{\infty,M}, \bar g_{\infty,M} ) )
 \wapprox \zeta( \CPoi( \lambda, \bar g ) ),
\]
where $\zeta(f)$ indicates the Galton--Watson survival probability for offspring distribution $f$, 
$\lambda_{\infty,M} = \mu (P_{\infty,M})_{10}$, and $\bar g_{\infty,M}(t) = \int \Bin^+(x-1,y)(t) \frac{x P_{\infty,M}(dx,dy)}{(P_{\infty,M})_{10}} $.

Assume now that the layer types of the original model satisfy conditions \eqref{eq:NoRedundantLayers} and \eqref{eq:NoWeakSmallLayers}.
Because $1 - \Bin^+(x-1,y)(0) = 1 - (1-y)^{x-1} \ge y$ for $x \ge 2$, we see that the layer types of $G^{(n,M)}$ satisfy
\[
 \hat{q}_n
 \weq \min_{k: X^{(n)}_k \le M} \left( 1 - \Bin^+(X^{(n)}_k-1,Y^{(n)}_k)(0) \right)
 \wasymp 1.
\]

By Lemma~\ref{the:ComponentOverlayTruncation}, we see that
\[
 \abs{C_1(G^{(n)})}
 \wle \max\{ \abs{B'_t} + t \abs{V''_{n,M}}, \, t \},
\]
where $B'_t$ is the of nodes with $G^{(n,M)}$-component of size at least $t$, and $V''_{n,M} = \cup_{k: X^{(n)}_k > M} V(G_k^{(n)})$ is the set of nodes covered by one or more big layers.


Observe now that
\[
 n^{-1} \abs{V''_{n,M}}
 \wle  n^{-1} \sum_{k: X_k^{(n)} > M} X_k^{(n)}
 \weq \int x \, 1_{K_M^c}(x,y) \, P_n(dx,dy)
 \wle h(M),
\]
where $h(M) = \sup_{n \ge 1} \int x \, 1_{K_M^c}(x,y) \, P_n(dx,dy)$. Now $P_n \weakto P$  and $(P_n)_{10} \to (P)_{10} < \infty$ imply uniform integrability in the sense that $h(M) \to 0$ as $M \to \infty$. Now choose $1 \ll t_M \ll h(M)^{-1}$. Then
\begin{align*}
 m^{-1} \max_{i} \abs{\abs{C_{G^{(n)}}(i)}}
 &\wle \max\{ m^{-1} \abs{B'_{t_M}} + m^{-1} t_M \abs{V''_{n,M}}, \, m^{-1} t_M \} \\
 &\wle \max\{ m^{-1} \abs{B'_{t_M}} + m^{-1} n t_M h(M), \, m^{-1} t_M \} \\
\end{align*}

\subsection{Component sizes in truncated overlay graphs}

The following result shows how the component sizes of an overlay graph $G$ can be approximated by the component sizes of a truncated overlay graph $G'$ generated by a constrained set of layers.

\begin{lemma}
\label{the:ComponentOverlayTruncationOld}
Let $G = G' \cup G''$ be a union of graphs $G',G''$  with a common node set $V$ and link sets $E(G') = \cup_{k \in W'} E(G_k)$ and $E(G'') = \cup_{k \in W''} E(G_k)$, where $W'$ and $W''$ are disjoint.
Then
\[
 \max_{i \in V} \abs{C_i(G')}
 \wle \max_{i \in V} \abs{C_i(G)}
 \wle \max\{ \abs{B'_t} + t \abs{V''}, \, t \},
\]
for all integers $t \ge 0$, where $V'' = \cup_{k \in W''} V(G_k)$ and $B'_t = \{i \in V: \abs{C_i(G')} \ge t\}$.
\end{lemma}
\begin{proof}
The first inequality is trivial because $G'$ is a subgraph of $G$. To verify the second inequality, let us abbreviate $C_i = C_i(G)$ and $C'_i = C_i(G')$, and denote by $B_t = \{i \in V: \abs{C_i(G)} \ge t\}$ the set of nodes with $G$-component of size at least $t$.  If $i \in B_t \setminus B'_t$, then $C_i \ge t$ but $\abs{C_i'} < t$, and we see that $C'_i$ must intersect  $V''$. Hence $C'_i = C'_j$ for some $j \in V''$ such that $\abs{C'_j} < t$. We conclude that
\[
 B_t \setminus B'_t
 \ \subset \nhquad \bigcup_{j \in V'': \abs{C'_j} < t} C'_j.
\]
Hence we conclude that
\[
 \abs{B_t}
 \wle \abs{B'_t} + \abs{B_t \setminus B'_t}
 \wle \abs{B'_t} + t \abs{V''}.
\]
The second inequality now follows by observing that $\max_i C_i \le \max\{ \abs{B_t}, t \}$. 
\end{proof}

\subsection{Old generic text}
The giant component analysis requires care, because the random 2-section induces dependencies.  The Galton--Watson branching process with the limiting degree distribution being the offspring distribution might not produce the correct analysis for the existence of giants (compare with the distribution in \cite[Eq. (3.2)]{Britton_Deijfen_Lageras_Lindholm_2008}). The key difference to standard RIGs is that here the set of nodes in the connected component of $i$ which share a layer with $i$ is a larger set than the 1-neighbourhood of $i$. This is why a Galton--Watson process with the limiting degree distribution as the offspring distribution should give a lower bound for the size of component of $i$.

A correct answer should come using a similar branching process as in \cite{Britton_Deijfen_Lageras_Lindholm_2008}. To heuristically derive the correct offspring distribution, let us proceed as follows. Denote by $\bar G_k$ the transitive closure of $G_k$ (the graph where $ij$ is linked when there is a path in $G_k$ from $i$ to $j$). The transitive neighbourhood of node $i$ in layer $G_k$ equals $N_{\bar G_k}(i) = C_{G_k}(i) \setminus \{i\}$ and has cardinality $\deg_{\bar G_k}(i)$, where $C_{G_k}(i)$ denotes the connected component of node $i$. In a sparse setting a large proportion of the sets $\{N_{\bar G_k}(i): 1 \le k \le n\} $ are w.h.p.\ disjoint, and on this event the number of nodes connected to node $i$ by a single-layer path equals
\[
 R_i
 \ := \ \sum_{k} 1_{V(G_k)}(i) \deg_{\bar G_k}(i).
\]
This leads to the following conjecture, where we denote by $f_{x_k,q_k}$ the distribution of $\deg_{\bar G^k}(i)$.

\subsection{Lower bound exploration (not super old)}

An exploration process for a lower bound of the component $C_w$ of node $w$ is defined as follows. Here $\cK$ is a finite set  of layer types $(x,q)$ with $2 \le x \le M$ and $0 < q \le 1$, with cardinality $K = \abs{\cK}$. We assume that $\pimin = \min_{(x,q) \in \cK} \frac{n_{x,q}}{n} > 0$, where $n_{x,q}$ is the number of layers of type $(x,q)$. Fix a small $0 < \epsilon < 1$, a maximum step count $1 \le t_1 \le m$, and a maximum mark count $1 \le n_1 \le n$.

\paragraph{Initialisation.} Use your favourite orderings to create an ordered list of nodes $(v_1, \dots, v_m)$ and an ordered list of layers $(F_1,\dots,F_n)$. In what follows, \emph{first}, \emph{last}, etc.\ refer to these orderings. Create one more list called the \emph{exploration queue} which initially only contains node $w$.

\paragraph{Exploration step $t \ge 1$.} 

If the exploration queue is empty or $t > t_1$, stop. Otherwise proceed as follows.

\begin{enumerate}[(a)]
\item  \emph{Thinning}. Traverse the list of layer types and for each layer type, declare the first $n_{x,q}(t) = n_{x,q} - (t-1) n_1$ previously unmarked layers of type $(x,q)$ as \emph{available} for step $t$.

\item \emph{Marking}. Select the oldest node $u_t$ from the exploration queue. Traverse the list of layers declared available in step (a), and declare an available layer $F$ \emph{marked} if $V(F)$ contains node $u_t$.
Verify that the number of layers of type $(x,q)$ that were marked is at most $n_1$ for each layer type. If not, stop the exploration.

\item \emph{Admission}.
Traverse the list of layers marked in (b) and for each marked layer $F$, denote by $H(F)$ the union of $\{w\}$ and the set of nodes covered by the layers that were marked before $F$ during the current or previous steps.  Declare a marked layer $F$ \emph{regular} if $V(F) \cap H(F) = \{u_t\}$. In this case flip a coin and declare $F$ \emph{admitted} with probability
\[
 p_{\rm coin}(x, h, t)
 \weq \frac{(1-\epsilon)\frac{m-(t-1)}{m}}{\phi(x-1, h-t, m-t)},
 \qquad
 \phi(x, h, m)
 \weq \frac{\binom{m-h}{x}}{\binom{m}{x}},
 \]
where $x = \abs{V(F)}$ and $h = \abs{H(F)}$. \rnote{Comment $\phi(x-1, h-t, m-t) \ge 1 - \frac{xh}{m}$}
\begin{rcomm}
Each yet unmarked layer is marked in step $t$ with probability $\frac{x}{m-(t-1)}$.\\
Each layer that was marked in step $t$ gets admitted with probability $(1-\epsilon)\frac{m-(t-1)}{m}$.\\
Hence the probability that a layer gets marked and admitted equals $(1-\epsilon) \frac{x}{m}$.
\end{rcomm}

\item \emph{layer exploration}. Traverse the list of layers, and whenever encountering a layer $F$ which was admitted in (b), append all nodes in the $F$-component of $u_t$ to the exploration queue in the order of node labels. Then remove $u_t$ from the exploration queue.
\end{enumerate}

\subsubsection{Number of marked layers}

Given the past information and assuming that the exploration reaches step $t$, the number of layers of type $(x,q)$ marked in step $t$ is $\Bin( n_{x,q} - (t-1) n_1, \frac{x}{m-(t-1)})$. Note that $\E e^{\Bin( n, p )} = \left( 1-p + p e \right)^n \le e^{(e-1) np} \le e^{2 np}$, and hence for $n_{x,q} \le n \le m$, and $t \le \pimin \epsilon \omega^{-1} n \le \pimin \epsilon n \le \frac12 n$ , we have
\[
 \frac{n x}{n-t}
 \wle \frac{n x}{n-\pimin \epsilon n}
 \weq \frac{x}{1-\pimin \epsilon}
 \wle 2 x.
\]
Hence the conditional probability of marking more than $\omega$ layers of type $(x,q)$ during step $t$ is bounded by
\begin{align*}
 &\pr \Bigg( \Bin\Big( n_{x,q} - (t-1) \omega, \frac{x}{m-(t-1)}\Big) > \omega \Bigg) \\
 &\wle \pr \Bigg( \Bin\Big( n_{x,q}, \frac{x}{m-t}\Big) > \omega \Bigg) \\
 &\wle e^{-\omega} e^{\frac{2 n x}{m-t}}
 \wle e^{-\omega} e^{4 x},
\end{align*}
and
\[
 \pr\left( \bigcup_{x,q} \bigcup_{1 \le t \le \pimin \epsilon \omega^{-1} n} \Bin\left( n_{x,q}, \frac{x}{m-(t-1)}\right) > \omega \right)
 \wle \abs{K}  \pimin \epsilon \omega^{-1} n e^{-\omega} e^{4 \xmax}.
\]
The right side tends to zero when $K$ and $\xmax$ do not depend on scale, and when
$\omega e^\omega \gg n$, for example $\omega \sim 1.1 \log n$. Hence with high probability, the exploration does not stop due to marking too many layers.


\subsection{Lower bound exploration (original)}

An exploration process for a lower bound of the component $C_w$ of node $w$ is defined as follows. Here $\cK$ is a finite set  of layer types $(x,q)$ with $2 \le x \le M$ and $0 < q \le 1$, with cardinality $K = \abs{\cK}$. We assume that $\pimin = \min_{(x,q) \in \cK} \frac{n_{x,q}}{n} > 0$, where $n_{x,q}$ is the number of layers of type $(x,q)$. Fix a small $0 < \epsilon < 1$, a maximum step count $1 \le t_1 \le m$, and a maximum mark count $1 \le n_1 \le n$.

\paragraph{Initialisation.} Use your favourite orderings to create an ordered list of nodes $(v_1, \dots, v_m)$ and an ordered list of layers $(F_1,\dots,F_n)$. In what follows, \emph{first}, \emph{last}, etc.\ refer to these orderings. Create one more list called the \emph{exploration queue} which initially only contains node $w$.

\paragraph{Exploration step $t \ge 1$.} 

If the exploration queue is empty or $t > t_1$, stop. Otherwise proceed as follows.

\begin{enumerate}[(a)]
\item  \emph{Thinning}. Traverse the list of layer types and for each layer type, declare the first $n_{x,q}(t) = n_{x,q} - (t-1) n_1$ previously unmarked layers of type $(x,q)$ as \emph{available} for step $t$.

\item \emph{Marking and admission}. Select the oldest node $u_t$ from the exploration queue. Traverse the list of layers declared available in step (a), and declare an available layer $F$ \emph{marked} if $V(F)$ contains node $u_t$.
For each such layer, denote by $H(F)$ the union of $\{w\}$ and the set of nodes covered by the layers that were marked before $F$ during the current or previous steps.  Declare a marked layer $F$ \emph{regular} if $V(F) \cap H(F) = \{u_t\}$. In this case flip a coin and declare $F$ \emph{admitted} with probability
\[
 p_{\rm coin}(x, h, t)
 \weq \frac{(1-\epsilon)\frac{x}{m}}{p_{\rm reg}(x, h, t)},
 \qquad
 p_{\rm reg}(x, h, t)
 \weq \frac{\binom{m-h}{x-1}}{\binom{m-(t-1)}{x}},
 \]
where $x = \abs{V(F)}$ and $h = \abs{H(F)}$. \rnote{Comment $\le 1$}

\item \emph{Mark counting}.  Verify that the number of layers of type $(x,q)$ marked in (b) is at most $n_1$ for each layer type. If not, stop the exploration.

\item \emph{layer exploration}. Traverse the list of layers, and whenever encountering a layer $F$ which was admitted in (b), append all nodes in the $F$-component of $u_t$ to the exploration queue in the order of node labels. Then remove $u_t$ from the exploration queue.

\end{enumerate}

\subsection{Analysis of lower bound exploration}

Fix some integers $t$ and $h$, some distinct nodes $w=u_1, u_2,\dots, u_t$, and some sets of layer indices $A_{x,q}(t) \subset [n]$ of sizes $n_{x,q}(t) = n_{x,q} - (t-1) n_1$, etc.

Denote by $\cE$ the event that the exploration is not stopped before (b) of step $t$, and that for each layer type, the set of layer indices of type $(x,q)$ declared available for step $t$ equals $A_{x,q}(t)$, etc. Any layer that is declared available in (a) of step $t$ has not been marked during the steps $s \le t-1$. Hence conditionally on the event $\cE$, the available layers are mutually independent, and the distribution of $V(F_k)$ for $k \in A_{x,q}(t)$ is uniform on the $x$-sets of the set of $m-(t-1)$ yet unexplored nodes. Hence, the probability that $F_k$ becomes marked in (b) of step $t$ equals
\[
 \pr( V(F_k) \ni u_t \cond \cE )
 \weq \frac{x}{m-(t-1)},
\]
and the $\cE$-conditional distribution of the number of marked $(x,q)$-layers is $\Bin( n_{x,q}(t), \frac{x}{m-(t-1)})$, and so on.

Now fix a layer index $k \in A_{x,q}(t)$, and denote by $\cE'$ the event that $\cE$ occurs, and $\abs{H(F_k)} = h$. Conditionally on $\cE'$ our only information about $F_k$ is that $V(F_k)$ does not contain $u_1,\dots, u_{t-1}$. Conditional on $\cE'$, layer $F_k$ is hence declared marked with probability
\[
 \pr( V(F_k) \ni u_t \cond \cE' )
 \weq \frac{x}{m-(t-1)},
\]
regular with probability
\[
 \pr \Big( V(F_k) \ni u_t, \, V(F_k) \cap H(F_k) = \{u_t\} \, \cond \, \cE' \Big)
 \weq p_{\rm reg}(x, h, t),
\]
and admitted with probability
\[
 p_{\rm reg}(x, h, t) \, p_{\rm coin}(x, h, t) 
 \weq (1-\epsilon)\frac{x}{m}
\]
Let $B_{t ,k}$ be the indicator of the event that layer $F_k$ is admitted during step $t$.
The above formula shows that, conditionally on $\cE$ (knowing that the exploration step $t$ is started and the available layers are known), the admission indicators are mutually independent, and $B_{t,k} = 1$ with probability $(1-\epsilon)\frac{x_k}{m}$. Now the $\cE'$-conditional probability of $F_k$ being admitted, given it was marked, equals
\[
 \frac{(1-\epsilon)\frac{x}{m}}{\frac{x}{m-(t-1)}}
 \weq (1-\epsilon) \frac{m-(t-1)}{m}
 \weq (1-\epsilon) \left( 1 - \frac{t-1}{m} \right).
\]

\rnote{Conditionally on not marking too much, we might get bias.}


\subsubsection{Adaptive thinning lemma}
Text.

\begin{bcomm}
\begin{algorithm}[H]
\DontPrintSemicolon
\KwInput{Finite set $S$. List of subsets $(X_1,\dots,X_n)$ of $S$. List of numbers $(U_1,\dots,U_n)$ in $(0,1)$. Threshold parameter $\epsilon \in [0,1]$.
}
\KwOutput{Index set $J \subset [n]$.}
~\\
Initialise: Set $m \leftarrow \abs{S}$. Set $H \leftarrow \emptyset$.
Define $\phi(x,h) = \binom{m-h}{x} \binom{m}{x}^{-1}$\\

\For{$t=1,\dots,n$}
{
 If $X_t \cap H = \emptyset$ and $U_t \le \frac{1-\epsilon}{\phi(\abs{X_t}, \abs{H})}$, set $B_t \leftarrow 1$ and $H \leftarrow H \cup X_t$.
}
$J \leftarrow \{t \in [n]: B_t=1\}$
\caption{Randomized set thinning.}
\label{algo:SetThinning}
\end{algorithm}
\end{bcomm}

\begin{lemma}
\label{the:AdaptiveThinningNew}
Let $X_1,\dots,X_n$ be (random or nonrandom) subsets of a finite set $S$ with sizes $x_1, \dots, x_n$ such that $\norm{x}_1 \norm{x}_\infty \le \epsilon \abs{S}$ for some $0 \le \epsilon \le 1$, where $\norm{x}_1 = \sum_i x_i$ and $\norm{x}_\infty = \max_i x_i$. Let $U_1,\dots,U_n$ be independent uniform random numbers in $(0,1)$. Then the output $J \subset [n]$ of Algorithm~\ref{algo:SetThinning} 
satisfies:
\begin{enumerate}[(i)]
\item \label{ite:AdaptiveThinningNew1}
The sets $\{X_j: j \in J\}$ are disjoint almost surely,
\item \label{ite:AdaptiveThinningNew2}
The indicator variables $B_i = 1(J \ni i)$, $1 \le i \le n$, are mutually independent and $\Ber(1-\epsilon)$-distributed.
\end{enumerate}
\end{lemma}

\begin{lemma}
\label{the:AdaptiveThinning}
Let $X_1,\dots,X_n$ be independent uniformly random sets in $[m]$ with sizes $x_1, \dots, x_n$ such that $\norm{x}_1 \norm{x}_\infty \le \epsilon m$ for some $0 \le \epsilon \le 1$, where $\norm{x}_1 = \sum_i x_i$ and $\norm{x}_\infty = \max_i x_i$. Then there exists a random set $J \subset [n]$ with the properties:
\begin{enumerate}[(i)]
\item \label{ite:AdaptiveThinning1}
 The random sets $\{X_j: j \in J\}$ are disjoint almost surely,
\item \label{ite:AdaptiveThinning2}
 The indicator variables $B_i = 1(J \ni i)$, $1 \le i \le n$, are mutually independent and $\Ber(1-\epsilon)$-distributed.
\end{enumerate}
\end{lemma}
\begin{proof}
Fix uniformly distributed random numbers $U_1,\dots,U_n$ in the continuous interval $(0,1)$ which are mutually independent, and independent of the random sets $X_1,\dots,X_n$. We define $B_1 = 1(U_1 \le 1-\epsilon)$,  and define recursively for
$1 \le t \le n-1$,
\begin{equation}
 \label{eq:AdaptiveThinningRecursion}
 B_{t+1}
 \weq 1 \Big( X_{t+1} \cap H_t = \emptyset \Big) \, 1\Big( U_{t+1} \le \frac{1-\epsilon}{\phi(x_{t+1}, \abs{H_t})} \Big),
\end{equation}
where $H_t = \cup_{s \le t: B_s=1} X_s$ and $\phi(x,h) = \frac{\binom{m-h}{x}}{\binom{m}{x}}$. Then we let
$J = \{ i: B_i = 1 \}$.

Equation \eqref{eq:AdaptiveThinningRecursion} guarantees that $t+1$ is included in $J$ only if $X_{t+1}$ is disjoint from all the previously included sets. Hence \eqref{ite:AdaptiveThinning1} is valid.

To verify \eqref{ite:AdaptiveThinning2}, observe first that $\phi(x,h)$ equals the probability that a uniformly random $x$-set in $[m]$ does not overlap a given $h$-set in $[m]$.
By the union bound, we find that
\begin{equation}
 \label{eq:AdaptiveThinningBound}
 \phi(x,h) \wge 1 - x \frac{h}{m}.
\end{equation}
Let $\cF_t$ be the sigma-algebra generated by $X_1,\dots,X_t$ and $U_1,\dots, U_t$. Because $H_t$ is measurable with respect to $\cF_t$, and because $X_{t+1}$ and $U_{t+1}$ are independent of each other and independent of $\cF_t$, it follows that
\begin{align*}
 \pr_{\cF_t}( B_{t+1}=1 )
 &\weq \pr_{\cF_t} \Big( X_{t+1} \cap H_t = \emptyset, \ U_{t+1} \le \frac{1-\epsilon}{\phi(x_{t+1}, \abs{H_t})} \Big) \\
 &\weq \pr_{\cF_t} \Big( X_{t+1} \cap H_t = \emptyset \Big) \,
    \pr_{\cF_t} \Big( U_{t+1} \le \frac{1-\epsilon}{\phi(x_{t+1}, \abs{H_t})} \Big) \\
 &\weq \phi(x_{t+1}, \abs{H_t} ) \, \min \Big( \frac{1-\epsilon}{\phi(x_{t+1}, \abs{H_t})}, \, 1 \Big).
\end{align*}
Observe next that by \eqref{eq:AdaptiveThinningBound},
\[
 \phi(x_{t+1}, \abs{H_t})
 \wge 1 - \frac{\abs{H_t} x_{t+1} }{m}
 \wge 1 - \frac{\norm{x}_1 \norm{x}_\infty}{m}
 \wge 1 - \epsilon
\]
almost surely. Therefore,
\begin{align*}
 \pr_{\cF_t}( B_{t+1}=1 )
 \weq 1-\epsilon
\end{align*}
almost surely. The above formula implies that $B_{t+1}$ is $\Ber(1-\epsilon)$-distributed and independent of $\cF_t$. Because $B_1,\dots,B_t$ are $\cF_t$-measurable, we conclude that $B_{t+1}$ is independent of $(B_1,\dots,B_t)$. Because this is true for all $1 \le t \le n-1$, we conclude that \eqref{ite:AdaptiveThinning2} is valid.
\end{proof}

\begin{rcomm}
If a layer $F$ of size $x$ is declared available in step $t$, then it has not been marked during the steps $s \le t-1$. Conditionally on this information, $V(F)$ is a uniformly distributed $x$-set in the set of $m-(t-1)$ yet unexplored nodes. Hence, conditionally on this information, the probability that $F$ is marked equals $p_{\rm mark}(x, t) = \frac{x}{m-(t-1)}$ and the probability that $F$ is regular equals $p_{\rm reg}(x, h, t)$. Note that it is harder for layers to be regular that appear later in the list. This is balanced by the randomized admission rule. Namely, now each layer $F$ of type $(x,q)$ is admitted with probability
\[
 p_{\rm adm}(x, h, t)
 \weq p_{\rm reg}(x, h, t) \, p_{\rm coin}(x, h, t) 
 \weq (1-\epsilon)\frac{x}{m}
\]
as long as $h = \abs{\cH_F}$ satisfies $p_{\rm reg}(x, h, t) \ge (1-\epsilon)\frac{x}{m}$.  In this case each marked layer of type $(x,q)$ is admitted with probability
\[
 \frac{p_{\rm adm}(x, h, t)}{p_{\rm mark}(x, t) }
 \weq \frac{(1-\epsilon)\frac{x}{m}}{\frac{x}{m-(t-1)}}
 \weq (1-\epsilon) \frac{m-(t-1)}{m}.
\]

Given the past information, the number $N_{x,q}(t)$ of layers of type $(x,q)$ marked in step $t$ is $\Bin( n'_{x,q} - (t-1) \omega, \frac{x}{m-(t-1)})$.

The number of new nodes discovered in step $t$, see (72) in MB notes, can be represented as
\[
 \sum_{(x,q) \in K} \sum_{\ell=1}^{X^{(t)}_{x,q}} B^{(t)}_{x,q}(\ell) D^*_{x,q}(\ell),
\]
where $B^{(t)}_{x,q}(t,\ell)$ are Bernoulli-distributed with success probability $(1-\epsilon) \frac{m-(t-1)}{m}$, $X^{(t)}_{x,q}$ represents a $\Bin( n'_{x,q} - (t-1) \omega, \frac{x}{m-(t-1)})$-distribution conditioned to be at most $\omega$, and $D^*_{x,q}(\ell)$ represents the transitive degree distribution of an $(x,q)$-layer.
\end{rcomm}

\subsection{Alternative lower bound exploration}

An exploration process for a lower bound of the component $C_w$ of node $w$ is defined as follows. Here $\cK$ is a finite set  of layer types $(x,q)$ with $2 \le x \le M$ and $0 < q \le 1$, with cardinality $K = \abs{\cK}$. We assume that $\pimin = \min_{(x,q) \in \cK} \frac{n_{x,q}}{n} > 0$, where $n_{x,q}$ is the number of layers of type $(x,q)$. Fix a small $0 < \epsilon < 1$, a maximum step count $1 \le t_1 \le m$, and a maximum mark count $1 \le n_1 \le n$.

\paragraph{Initialisation.} Create three ordered lists by ordering in an arbitrary fashion the sets of nodes, layers, and layer types, respectively. Create one more list called the \emph{exploration queue} which initially only contains node $w$.

\paragraph{Exploration step $t \ge 1$.} 

If the exploration queue is empty or
$t > t_1$, stop. Otherwise proceed as follows.

\begin{enumerate}[(a)]
\item  \emph{Thinning}. Traverse the list of layer types and for each layer type, declare the first $n_{x,q}(t) = n_{x,q} - (t-1) n_1$ previously unmarked layers of type $(x,q)$ as \emph{available} for step $t$. 

\item \emph{Marking}. Select the oldest node $u_t$ from the exploration queue. Traverse the list of layers declared available in step (a), and declare an available layer $F$ \emph{marked} if $V(F)$ contains node $u_t$.

\item \emph{Mark counting}.  Verify that the number of layers of type $(x,q)$ marked in (b) is at most $n_1$ for each layer type. If not, stop the exploration.

\item \emph{Admission}. Traverse the list of layers marked in (b). Denote by $\cH_t$ the union of $\{w\}$ and the set of nodes covered by the layers that were marked in (b) during the current or previous steps.  Declare a marked layer $F$ \emph{regular} if $V(F) \cap \cH_t = \{u_t\}$. In this case flip a coin and declare $F$ \emph{admitted} with probability
\[
 p_{\rm coin}(x, h, t)
 \weq \frac{(1-\epsilon)\frac{x}{m}}{p_{\rm reg}(x, h, t)} \wedge 1,
 \qquad
 p_{\rm reg}(x, h, t)
 \weq \frac{\binom{m-h}{x-1}}{\binom{m-(t-1)}{x}},
 \]
where $x = \abs{V(F)}$ and $h = \abs{\cH_t}$.

\item \emph{layer exploration}. Traverse the list of layers admitted in (d). For any admitted layer $F$, append all nodes in the $F$-component of $u_t$ to the exploration queue. Then remove $u_t$ from the exploration queue.
\end{enumerate}

\subsection{Analysis of alternative lower bound exploration}

Fix some integers $t$ and $h$, some distinct nodes $w=u_1, u_2,\dots, u_t$, and some sets of layers $A_{x,q}(t)$ of sizes $n_{x,q}(t) = n_{x,q} - (t-1) n_1$, etc.

Denote by $\cE$ the event that the exploration is not stopped before phase (a) of step $t$, and that for each layer type, the set of layers of type $(x,q)$ declared available for step $t$ equals $A_{x,q}(t)$, etc. Any layer that is declared available in (a) of step $t$ has not been marked during the steps $s \le t-1$. Hence conditionally on the event $\cE$, the available layers are mutually independent random sets, and the distribution of each $V(F)$ for $F \in A_{x,q}(t)$ is uniform on the $x$-sets of the set of $m-(t-1)$ yet unexplored nodes. Hence, the probability that $F \in A_{x,q}(t)$ becomes marked in (a) of step $t$ equals
\[
 \pr( V(F) \ni u_t \cond \cE )
 \weq \frac{x}{m-(t-1)},
\]
and the $\cE$-conditional distribution of the number of marked $(x,q)$-layers is $\Bin( n_{x,q}(t), \frac{x}{m-(t-1)})$, and so on.

Denote by $\cE'$ the event that $\cE$ occurs and the exploration is not stopped in (c) of step $t$. Conditional on $\cE'$, the number of marked $(x,q)$-layers is $\CondBin( n_{x,q}(t), \frac{x}{m-(t-1)} ,n_1)$-distributed, and these counts are mutually independent.

Denote by $\cE''$ the event that $\cE'$ occurs and that $\abs{\cH_t} = h$. Conditional on $\cE''$, \rnote{given this, are the regularity events independent and good?}

Moreover, the probability that $F$ is regular equals $p_{\rm reg}(x, h, t)$ where $h = \abs{\cH_t}$. We want to restrict to regular layers in the exploration, but we also want the number of explored layers not to be random. This is balanced by the randomized admission rule. Namely, now each layer $F$ of type $(x,q)$ is admitted with probability
\[
 p_{\rm adm}(x, h, t)
 \weq p_{\rm reg}(x, h, t) \, p_{\rm coin}(x, h, t) 
 \weq (1-\epsilon)\frac{x}{m}
\]
as long as $h = \abs{\cH_t}$ satisfies $p_{\rm reg}(x, h, t) \ge (1-\epsilon)\frac{x}{m}$.  In this case each marked layer of type $(x,q)$ is admitted with probability
\[
 \frac{p_{\rm adm}(x, h, t)}{p_{\rm mark}(x, t) }
 \weq \frac{(1-\epsilon)\frac{x}{m}}{\frac{x}{m-(t-1)}}
 \weq (1-\epsilon) \frac{m-(t-1)}{m}.
\]

By Lemma~\ref{the:IntersectionCombinatorics}, we see that $p_{\rm reg}(x, h, t) \ge (1 - \epsilon) \frac{x}{m}$ whenever $\frac{x h}{m} \le \epsilon$. We need an upper bound for $h = \abs{\cH_t}$. Assume that we are performing step $t \ge 1$. If the previous $t-1$ steps have been successful and also (b) and (c) of step $t$, then in each of (b) steps so far, we have marked at most $\abs{K} \omega$ layers. Hence so far we have marked at most $t \abs{K} \omega$ layers, and the number of nodes covered by these layers is at most $h := \abs{\cH_t} \le t \abs{K} \omega \xmax$. Now
\[
 xh
 \wle t \abs{K} \omega \xmax^2,
\]
so that $xh \le \epsilon m$ for 

Hence before starting step $t$, the number of nodes covered by the layers marked so far is at most
\[
 (t-1) \abs{K} \, \omega \xmax
 \wle t \abs{K} \, \omega \xmax
 \wle \pimin \epsilon \omega^{-1} n \abs{K} \, \omega \xmax
 \wle \pimin \epsilon n \abs{K} \, \xmax
\]

\subsection{Intersection probabilities}
\begin{lemma}
\label{the:IntersectionCombinatorics}
Let $A \subset S$ be finite nonempty sets with cardinalities $m_1 \le m_2$, and let $X$ be a uniformly random $x$-element subset of $S$ with $1 \le x \le m_2$. Then
\begin{equation}
 \label{eq:IntersectionCombinatorics1}
 \pr( X \cap A = \emptyset)
 \weq \frac{\binom{m_2-m_1}{x}}{\binom{m_2}{x}}
 \wge 1 - x \frac{m_1}{m_2},
\end{equation}
and for any $a \in A$,
\begin{equation}
 \label{eq:IntersectionCombinatorics2}
 \pr( X \cap A = \{a\})
 \weq \frac{\binom{m_2-m_1}{x-1}}{\binom{m_2}{x}}
 \wge \frac{x}{m_2} \left( 1 - (x-1) \frac{m_1-1}{m_2-1} \right).
\end{equation}
Moreover, for any integers $0 \le t-1 \le h \le m$, and any $1 \le x \le m-h$,
\begin{equation}
 \label{eq:IntersectionCombinatorics3}
 p_{\rm reg}(x, h, t)
 \ := \
 \frac{\binom{m-h}{x-1}}{\binom{m-(t-1)}{x}}
 \wge \frac{x}{m} \left( 1 - \frac{x h}{m} \right).
\end{equation}
\end{lemma}
\begin{proof}
The equality in \eqref{eq:IntersectionCombinatorics1} follows by noting that the number of $x$-sets of $S$ which do not intersect $A$ equals $\binom{m_2-m_1}{x}$. The inequality in \eqref{eq:IntersectionCombinatorics1} follows by noting that
\[
 \pr( X \cap A \ne \emptyset)
 \weq \pr( \cup_{a \in A} \{ X \ni a \})
 \wle \sum_{a \in A} \pr(  \{ X \ni a \})
 \weq m_1 \frac{x}{m_2}.
\]
The equality in \eqref{eq:IntersectionCombinatorics2} follows by noting that the number of $x$-sets of $S$ which intersect $A$ precisely at $a$ equals $\binom{m_2-m_1}{x-1}$. To verify the inequality in \eqref{eq:IntersectionCombinatorics2}, note that
\[
 \pr( X \cap A = \{a\} \cond X \ni a)
 \weq \pr( \tilde X \cap \tilde A = \emptyset)
\]
where $\tilde X$ is a uniformly random $(x-1)$-element subset of $\tilde S = S \setminus \{a\}$, and $\tilde A = A \setminus \{a\}$.
Because $\pr( X \ni a ) = \frac{x}{m_2}$, the second inequality follows by applying the first inequality to $\tilde A$, $\tilde S$, and $\tilde X$.

To verify \eqref{eq:IntersectionCombinatorics3}, note that by applying \eqref{eq:IntersectionCombinatorics2} with $m_1 = h-(t-1)$ and $m_2 = m-(t-1)$ we see that
\[
 \frac{\binom{m-h}{x-1}}{\binom{m-(t-1)}{x}}
 \wge \frac{x}{m-(t-1)} \left( 1 - (x-1) \frac{h-t}{m-t} \right).
\]
This implies \eqref{eq:IntersectionCombinatorics3} because $\frac{h-t}{m-t} \le \frac{h}{m}$ for $h \le m$.
\end{proof}

\subsection{Comments on proving the upper bound}
MB 2019-08-13 notes contain proofs of the conjecture where layer strengths are deterministic functions of layer sizes. To generalise the MB approach, assume first that the empirical distributions $\pi^{(\nu)}$ and the limiting distribution $\pi$ all have a common finite support $K \subset \Z_+ \times (0,1)$.
We denote
\[
 \hat{q}
 \ := \ \inf_{\nu \ge 1} \inf_{k \le m^{(\nu)}} ( 1 - f_{x^{(\nu)}_k, q^{(\nu)}_k}(0) )
\]
and note that (because the transitive degree distribution is stochastically larger than the ordinary degree distribution)
\[
 \hat{q}
 \weq  \min_{(x,q) \in K} ( 1 - f_{x,q}(0) )
 \wge \min_{(x,q) \in K} ( 1 - (1-q)^{x-1} )
 \ > \ 0.
\]
due to the finite support assumption. We also denote $M = \max_{(x,q) \in K} x < \infty$.

As in (48), let us represent the key compound Poisson distribution as
\[
 Y
 \weq \sum_{(x,q) \in K} \sum_{s=1}^{\Lambda_{x,q}} T^*_{x,q}(s),
\]
where $\Lambda_{x,q}$ is $\Poi(\lambda_{x,q})$-distributed with rate $\lambda_{x,q} = \mu \pi(x,q)$, and where $T^*_{x,q}(s)$ are distributed according to the transitive degree distribution $f_{x,q}$. We define $Y^+_\epsilon$ and $Y^-_\epsilon$ similarly, replacing $\lambda_{x,q}$ by $\lambda^+_{x,q} = \lambda_{x,q} + \epsilon$ and $\lambda^-_{x,q} = (1-\epsilon) \lambda_{x,q}$, respectively. For the upper bound, we fix $1 < \omega' < n$ and define a compound binomial random integer
\[
 Z^+
 \weq \sum_{(x,q) \in K} \sum_{s=1}^{N_{x,q}^+} T^*_{x,q}(s),
\]
where $N_{x,q}^+$ is $\Bin( n_{x,q}, \frac{x}{m-\omega'})$-distributed and $n_{x,q} = n \pi_{x,q}$ denotes the number of layers of type $(x,q)$.

\subsection{Computing transitive degree distributions}

Let $K_{n,p}$ be the random graph on node set $\{1,\dots,n\}$ where each node pair is linked with probability $p$, independently of other node pairs.
Denote by $f_{n,p}$ the probability distribution of the degree of node $i$ in the transitive closure of $K_{n,p}$. Obviously, $f_{0,p} = f_{1,p} = \delta_0$ and $f_{2,p} = \Ber(p)$. For $n \ge 3$ there is no simple expression available, but the following recursive formulas can be used in numerical computations to compute $f_{n,p}$ and the generating function $\hat f_{n,p}(s) = \sum_k s^k f_{n,p}(k)$. A related recursive system of equations in a continuous-time epidemic model has been derived in \cite[Theorem 2.2]{Andersson_Britton_2000}. Alternatively, for a $f_{n, p}$-distributed random integer $T$, the generating function of $n-1-T$ can be expressed as
\[
 \sum_{r=0}^{n-1} (n-1)_r (1-p)^{r(n-r)} \gon_{p, r}(z)
\]
where $\gon_{p, r}(z) = \gon_r( z \, | \, (1-p)^0, (1-p)^1, \dots)$ is the Gontcharoff polynomial of degree $r$ generated by the sequence $( (1-p)^0, (1-p)^1, \dots )$, see \cite[Section 3.1]{Lefevre_Picard_1990} for details. 

\begin{lemma}
For any $n \ge 1$, $p \in [0,1]$, and $k \ge 0$,
\begin{equation}
 \label{eq:TransitiveRecursion}
 f_{n,p}(k)
 \weq (1 - q^k) f_{n-1,p}(k-1) + q^{k+1} f_{n-1,p}(k).
\end{equation}
where $q=1-p$. Moreover, the generating functions satisfy\rnote{wrong, see proof}
\begin{equation}
 \label{eq:TransitiveRecursionGen}
 \hat f_{n,p}(s)
 \weq s \hat f_{n-1,p}(s) + q(1-s) \hat f_{n-1,p}(qs).
\end{equation}
\end{lemma}
\begin{proof}
Denote by $C_G(i)$ the connected component of a node $i$ in $G = K_{n,p}$, and let $c_G(i) = \abs{C_G(i)}$. Choose another node $j \ne i$ from $G$, and let $F$ be the subgraph of $G$ induced by $V(G) \setminus \{j\}$. Consider the event that $C_F(i) = A$ for some $A \subset V(F)$ of size $\abs{A}=k$. On this event, $C_G(i) = A$ if there are no links from $A$ to $j$, and $C_G(i) = A \cup \{j\}$ otherwise \rnote{wrong, it is possible that $C_G(i) = A \cup B$ where $B$ is the component of $j$ in $G[A^c]$}. Because the link indicators of node pairs between $A$ and $j$ are independent of the event $C_F(i) = A$, it follows that
\[
 \pr( c_G(i) = k \cond C_F(i) = A )
 \weq q^{k}
\]
and
\[
 \pr( c_G(i) = k+1 \cond C_F(i) = A )
 \weq 1 - q^{k}.
\]
Because the above equations are valid for all $\abs{A}=k$, it follows that
\[
 \pr( c_G(i) = k \cond c_F(i) = k )
 \weq q^{k}
\]
and
\[
 \pr( c_G(i) = k+1 \cond c_F(i) = k )
 \weq 1 - q^{k},
\]
from which we conclude that
\[
 \pr( c_G(i) = k )
 \weq (1 - q^{k-1}) \pr( c_F(i) = k-1 ) + q^k \pr( c_F(i) = k ).
\]
The above equality implies \eqref{eq:TransitiveRecursion} after observing that $\pr( c_G(i) = k ) = f_{n,p}(k+1)$, and that $\pr( c_F(i) = k ) = f_{n-1,p}(k+1)$ because $F$ has the same distribution as $K_{n-1,p}$.

Equation \eqref{eq:TransitiveRecursionGen} follows from  by multiplying both sides of \eqref{eq:TransitiveRecursion} by $s^k$ and summing both the resulting equation, keeping in mind that $f_{n,p}(k)$ is nonzero only for $0 \le k \le n-1$.
\end{proof}

The generating function of $\CPoi(\lambda, g)$ equals $e^{\lambda (\hat g(z)-1)}$, where $\hat g(z)$ is the generating function of the increment distribution. Now
\[
 \hat g(z)
 \weq \sum_{s =0}^\infty z^s g(s)
 \weq \int \hat f_{x,q}(z) \ \frac{x \, \pi(dx, dq)}{(\pi)_{10}}.
\]
For a model where the layer sizes $q_k = q(x_k)$ are deterministic functions of the layer strength,
\[
 \hat g(z)
 \weq \sum_{s =0}^\infty z^s g(s)
 \weq \sum_{x=0}^\infty \hat f_{x,q(x)}(z) \ \frac{x \, \pi(x)}{(\pi)_{1}}
\]
with $\pi(x)$ being the (limiting) layer size distribution and $(\pi)_1 = \sum_x x \pi(x)$.

\rnote{When is the mean of $f$ above greater than one?.  We need the mean of $g$, but this seems complicated. These are probably related to Gontcharoff polynomials, see \cite{Ball_Sirl_Trapman_2014}.}

\rnote{What about thinning the RIG using bond percolation with rate $p$? We might get a similar model but with $q_k$ replaced by $pq_k$.}

\section{Summary}

\paragraph{Notations.}
\begin{center}
\scriptsize
\begin{itemize}
\item $\ang{f} = \sum_{k=1}^n f(k)$ for real functions on $[n]$
\item $\pi = \frac{1}{n} \sum_{k=1}^n \delta_{(x_k, q_k)}$ denotes the empirical joint distribution of layer sizes and layer strengths
\item $p_{ab}(k) = \frac{(x_k)_a}{(m)_a} q_k^b$, so that $\ang{p_{ab}}$ equals the expected number of layers covering any particular subgraph with $a$ nodes and $b$ links
\item $(\pi)_{ab} = \frac{1}{n} \sum_{k=1}^n (x_k)_a q_k^b$, so that $\ang{p_{ab}} = \frac{n}{(m)_a} (\pi)_{ab}$
\end{itemize}
\end{center}

\paragraph{Assumption A (Regular layers)} $(\pi)_{10}, (\pi)_{21}, (\pi)_{32}, (\pi)_{33} \asymp 1$, and $1 \le \norm{x}_\infty \ll m$. We assume that $m \ge 2$ always to avoid trivialities, and that $n \ll m^2$ which guarantees that the model is sparse.

Then the link density is $\mu_{21} \asymp m^{-2}n$ and the mean degree is $\asymp m^{-1} n$. Moreover, $\mu_{32}, \mu_{33} \asymp m^{-3}n$, and we have the upper bound
\[
 \ang{p_{21} p_{32}}
 \wle \left( \frac{\norm{x}_\infty}{m} \right)^2 \mu_{33}
 \wasymp \left( \frac{\norm{x}_\infty}{m} \right)^2 m^{-3} n,
\]
and by \eqref{eq:p21squared},
\[
 \ang{p_{21}^2}
 \wle 16 m^{-4} n + 2 \frac{\norm{x}_\infty}{m} \ang{ p_{32}}
 \wasymp \supnorm{x} m^{-4}n 
\]
\begin{rcomm}
A simpler but weaker bound is $\ang{p_{21}^2} \le \left( \frac{\norm{x}_\infty}{m} \right)^2 \ang{ p_{21}} \asymp \supnorm{x}^2 m^{-4} n$.
\end{rcomm}

\rnote{If we also assume that $n \asymp m$, 
then $\mu_{21} \asymp m^{-1} $ and $\mu_{32}, \mu_{33} \asymp m^{-2}$,
and then $\mu_{21}^2 \asymp m^{-2}$ implies that $\ang{p_{21}^2}, \ang{p_{21} p_{32}} \ll \mu_{21}^2$.}

\begin{remark}[Doubly stochastic model]
Fix sequences $(m_\nu)_{\nu \ge 1}$ and $(n_\nu)_{\nu \ge 1}$ of positive integers indexed by a scale parameter $\nu=1,2,\dots$ Fix a sequence $(P_\nu)_{\nu \ge 1}$ of probability measures on $\Z_+ \times [0,1]$ such that $P_\nu( [m_\nu] \times [0,1] ) = 1$, and such that $P_\nu \to P_\infty$ weakly together with $( P_\nu )_{ab} \to ( P_\infty )_{ab}$ for $(a,b) \in \{(1,0),(2,0),(2,1), (3,2), (3,3)\}$, where $( P )_{ab} = \int (x)_a q^b P(dx,dq)$. For each $\nu$, let $(x^{(\nu)}_1,q^{(\nu)}_1), \dots, (x^{(\nu)}_{n_\nu}, q^{(\nu)}_{n_\nu})$ be independent samples from $P_\nu$. Assume that $m_\nu, n_\nu \to \infty$ and that $n_\nu \ll m_\nu^{4/3}$. Then by Lemma~\ref{the:QWLLN_New}, it follows that the sampled sequence satisfies the regular layers assumption with high probability (with respect to the $P_\nu$-sequence).
\end{remark}

\paragraph{Results.}

Under assumption A, by Theorem~\ref{the:DegreeApproximationQuantitative}, for $\lambda = \frac{n}{m} (\pi)_{10}$,
\[
 \dtv \Big( \law(\deg_G(i)), \CPoi(\lambda, g_{10}) \Big)
 \wle \frac{n^2}{2 m (m)_2} (\pi)_{21}^2 + \frac{2n \norm{x}_\infty}{m^2} (\pi)_{21}
 \wasymp m^{-3} n^2 + \norm{x}_\infty m^{-2}n,
\]
so the error vanishes when $n \ll m^{3/2}$ and $\supnorm{x} \ll m^2 n^{-1}$. For the error to vanish, it suffices to assume that $n \lesim m$ and $\supnorm{x} \ll m$. Alternatively, it suffices that $n \ll m^{3/2}$ and $\supnorm{x} \lesim m^{1/2}$. So far have not discussed whether or not $g_{10}$ converges to a limiting distribution.

The two-star density is approximated (Theorem \ref{the:TwoStarDensityDeg}) by
\[
 \pr(\cK_{12})
 \wsim \mu_{32} + \mu_{21}^2
\]
when $n \ll m^2$ and $\supnorm{x} \ll m$. The triangle density is approximated (Theorem~\ref{the:TriangleDensity}) by
\[
 \pr(\cK_3)
 \wsim \mu_{33}
\]
when $n \ll m^{3/2}$ and $\supnorm{x} \ll m$.  The hub-degree dependent two-star density is approximated by (Theorem~\ref{the:TwoStarDensityDeg})
\begin{equation*}
 \begin{aligned}
 &\pr(D = t, \cK_{12})
 \weq \mu_{32} \, f \conv g_{32}(t-2)
 \ + \ \mu_{21}^2 f \conv g_{21} \conv g_{21}(t-2)
 \ + \ \epsilon(t),
 \end{aligned}
\end{equation*}
and the degree-dependent triangle density is approximated by (Theorem~\ref{the:TriangleProbability})
\[
 \pr(D = t, \cK_3)
 \weq \mu_{33} \, f \conv g_{33}(t-2) + \epsilon(t),
\]
where in both cases $\sum_{t \ge 0} \abs{\epsilon(t)} \ll \pr(\cK_{12})$ when $n \ll m^{4/3}$ and $\supnorm{x} \ll m n^{-1/2} \wedge m^{1/2}$. A sufficient condition for this is that $n \lesim m$ and $\supnorm{x} \ll m^{1/2}$. Hence we get the following approximation for the model clustering profile
\[
 \frac{\pr(D = t, \cK_3)}{\pr(D = t, \cK_{12})} 
 \wsim \frac{\mu_{33} \, f \conv g_{33}(t-2)}
 {\mu_{32} \, f \conv g_{32}(t-2) + \mu_{21}^2 f \conv g_{21} \conv g_{21}(t-2)} 
\]
which is accurate when $n \ll m^{4/3}$ and $\supnorm{x} \ll m n^{-1/2} \wedge m^{1/2}$.
\rnote{Question: How does this behave when $m \ll n \ll m^{4/3}$? Then things diverge in the nominator and denominator. Perhaps a Gaussian local limit theorem could help.}

\rnote{Allowing $n \sim m \log m$ might give us connected graphs and consistent statistical estimators.}


\section{Doubly stochastic model --- Randomly generated layer types}

Fix integers $m,n \ge 1$ and a probability measure $P$ on $\Z_+ \times [0,1]$ with mass supported on $[m] \times [0,1]$, consider the random graph obtained by first generating an iid sample $(X_1,Q_1), \dots, (X_n,Q_n)$ from $P$, and conditionally on this sample, letting $G$ be the random graph distributed as earlier, with layer sizes $X_k$ and layer strengths $Q_k$. We call this the \new{doubly stochastic model} with parameter triple $(m,n,P)$.

For each $\nu \ge 1$, consider a triplet $(m_\nu, n_\nu, P_\nu)$ where $P_\nu$ is a probability measure on $\Z_+ \times [0,1]$ supported on $[m_\nu] \times [0,1]$. Consider a model $G^{(\nu)}$ with parameters $(m^{(\nu)}, n^{(\nu)}, x^{(\nu)}, q^{(\nu)})$.

\begin{rcomm}
Let us apply Lemma~\ref{the:PowerBound} with $d=2$ and $c=1$ and $u \ge 1$. Then for any $x \in \Z_+$ and $q \in [0,1]$ such that $0 < x^b q^a \le u$,
\[
 \sup_{(x,q) \in S} \frac{x^2 q^1}{x^b q^a}
 \weq
 \begin{cases}
  1, &\quad 2 \le b, \ 1 \ge a, \\
  u^{2/b-1}, &\quad 2 > b, \ 1/2 \ge a/b, \\
 \infty, &\quad \text{else}.
 \end{cases}
\]
For $u= \delta n$ and $b = 4/3$ and $a = 2/3$, this gives 
\[
 \sup_{(x,q) \in S} \frac{x^2 q^1}{x^b q^a}
 \weq
  (\delta n)^{1/2}.
\]
Noting that $(x)_2 q^1 \le x^2 q^1$, with the other lemma, with $t(u) = u^{1/2}$ and $s = \epsilon n^{1/2}$, and $\phi(x,q) = x^b q^a$,
\[
 \pr \left( (\pi)_{21} > \epsilon n^{1/2} \right) 
 \wle \frac{(\delta n)^{1/2}}{ \epsilon n^{1/2}} \E X^b Q^a + \frac{n}{\delta n} h_\phi(\delta n)
 \wle \frac{\delta^{1/2}}{\epsilon} \E X^{4/3} Q^{2/3} + \frac{n}{\delta n} h_\phi(\delta n).
\]
Hence it seems that for $(\pi)_{21} = o_\pr(n^{1/2})$ it suffices to assume that $X_\nu^{4/3} Q_\nu^{2/3} = ( X_\nu^2 Q_\nu^1 )^{2/3}$ is uniformly integrable.
\end{rcomm}

\begin{rcomm}
Let $c, d > 0$. Let $\alpha \ge 0$. Then we choose $b = d/(\alpha+1)$ and $a = c/(\alpha+1)$. These choices imply that $a,b > 0$, $b \le d$ and $c/a = d/b$. Lemma~\ref{the:PowerBound} then implies that
\[
 \sup_{(x,q) \in S_u} \frac{x^d q^c}{x^b q^a}
 \weq \sup_{(x,q) \in S_u} \frac{x^d q^c}{(x^d q^c)^{1/(\alpha+1)}}
 \weq u^\alpha,
\]
where $S_u = \{(x,q) \in \R^2: x \ge 1, \, 0 < q \le 1, \, x^a q^b \le u\}$, that is, 
$S_u = \{(x,q) \in \R^2: x \ge 1, \, 0 < q \le 1, \, x^d q^c \le u^{\alpha+1}\}$. Then by Lemma~\ref{the:UIBoundGen} we find that, noting that $(x)_d q^c \le x^d q^c$,
\begin{align*}
 \pr \left( (\pi)_{dc} > \epsilon n^{\alpha} \right) 
 &\wle \frac{(\delta n)^{\alpha}}{ \epsilon n^{\alpha}} \E (X^d Q^c)^{1/(\alpha+1)} + \frac{n}{\delta n} h_\phi(\delta n) \\
 &\wle \frac{\delta^{\alpha}}{\epsilon} \E (X^d Q^c)^{1/(\alpha+1)} + \frac{1}{\delta} h_\phi(\delta n).
\end{align*}
Hence it seems that for $(\pi)_{dc} = o_\pr(n^{\alpha})$ it suffices to assume that $(X^d Q^c)^{1/(\alpha+1)}$ is uniformly integrable. For example, for $(d, c)=(2,1)$ and $\alpha = 1/2$, we require $X^{4/3} Q^{2/3}$ to be UI.\mnote{$c=0$ should follow, too} For example, for $(d, c)=(3,1)$ and $\alpha = 1$, we require $X^{3/2} Q^{1/2}$ to be UI.
\end{rcomm}

\begin{lemma}
\label{the:PowerBound}
Let $a, b > 0$ and $u \ge 1$. Then for any real numbers $c$ and $d$,
\[
 \sup_{(x,q) \in S_u} \frac{x^d q^c}{x^b q^a}
 \weq
 \begin{cases}
  1, &\quad d < b, \ c \ge a, \\
  u^{d/b-1}, &\quad d \ge b, \ c/a \ge d/b, \\
 \infty, &\quad \text{else},
 \end{cases}
\]
where  $S_u = \{(x,q) \in \R^2: x \ge 1, \, 0 < q \le 1, \, x^a q^b \le u\}$.
\end{lemma}
\begin{proof}
(i) Assume that $d \ge b$. Then for any $0 < q \le 1$, the function $x \mapsto \frac{x^d q^c}{x^b q^a}$ subject to $x \ge 1$ and $x^b q^a \le u$ is maximised at $x = u^{1/b} q^{-a/b}$, for which
\[
 \frac{x^d q^c}{x^b q^a}
 \weq \big( u^{1/b} q^{-a/b} \big)^{d-b} q^{c-a}
 \weq u^{d/b-1} q^{c-ad/b}.
\]
Therefore,
\[
 \sup_{(x,q) \in S_u} \frac{x^d q^c}{x^b q^a}
 \weq \sup_{0 < q \le 1} u^{d/b-1} q^{c-ad/b}
 \weq \begin{cases}
  u^{d/b-1}, &\quad c \ge ad/b, \\
 \infty, &\quad c < ad/b.
 \end{cases} 
\]

(ii) Assume next that $d < b$. Then for any $0 < q \le 1$, the function $x \mapsto \frac{x^d q^c}{x^b q^a}$ subject to $x \ge 1$ and $x^b q^a \le u$ is maximised at $x = 1$, and therefore
\[
 \sup_{(x,q) \in S_u} \frac{x^d q^c}{x^b q^a}
 \weq
 \sup_{0 < q \le 1} q^{c-a}
 \weq
 \begin{cases}
  1, &\quad c \ge a, \\
 \infty, &\quad c < a.
 \end{cases} 
\]

\end{proof}

\begin{lemma}[Simple version of Lemma~\ref{the:UIBoundGen}]
\label{the:UIBoundSimple}
Let $X,X_1,\dots,X_n \ge 0$ be identically distributed (not necessarily independent) random numbers.\mnote{independence not used} Then for any $\alpha, \delta, \epsilon > 0$,
\[
 \pr \left( \frac{1}{n}\sum_{k=1}^{n} X_k^{1+\alpha} > \epsilon s \right) 
 \wle \frac{\delta^\alpha n^\alpha}{\epsilon s} \E X + \frac{1}{\delta} \E X 1(X > \delta n).
\]
\end{lemma}
\begin{proof}
Let $M = \max_k X_k$. Note that $\frac{1}{n}\sum_{k=1}^{n} X_k^{1+\alpha} \le u^\alpha \frac{1}{n}\sum_{k=1}^{n} X_k$ on the event $M \le u$. Hence for any $u, s > 0$,
\begin{align*}
 \pr \left( \frac{1}{n}\sum_{k=1}^{n} X_k^{1+\alpha} > s \right) 
 &\wle \pr \left( \frac{1}{n}\sum_{k=1}^{n} X_k^{1+\alpha} > s, \ M \le u \right) + \pr(M > u) \\
 &\wle \pr \left( \frac{1}{n}\sum_{k=1}^{n} X_k > s u^{-\alpha}\right) + n \pr(X > u) \\
 &\wle \frac{u^\alpha}{s} \E X + \frac{n}{u} \E X 1(X > u).
\end{align*}
The claim follows by substituting $u = \delta n$ and $s = \epsilon n^\alpha$ above.\mnote{fixme}
\end{proof}

\begin{lemma}
\label{the:UIBoundGen}
Let $X,X_1,\dots,X_n$ be identically distributed (not necessarily independent\mnote{independence not used})  random variables in some space $S$. Let $\phi,\psi$ be nonnegative functions on $S$ such that $\phi(x) = 0 \implies \psi(x) = 0$. Then for any $\epsilon, s, u > 0$,
\[
 \pr \left( \frac{1}{n}\sum_{k=1}^{n} \psi(X_k) > s \right) 
 \wle \frac{t(u)}{s} \E \phi(X) + \frac{n}{u} h_\phi(u).
\]
where $h_\phi(u) = \E \phi(X) 1(\phi(X) > u)$ and $t(u) = \sup_{x: 0 < \phi(x) \le u} \frac{\psi(x)}{\phi(x)}$.
\end{lemma}
\begin{proof}
We denote $M = \max_{1 \le k \le n} \phi(X_k)$. Note that for any $u>0$,
\[
 \pr(M > u)
 \wle n \pr( \phi(X) > u )
 \wle \frac{n}{u} h_\phi(u).
\]
Note also that on the event $M \le u$,
\[
 \sum_k \psi(X_k)
 \weq \sum_{k: \phi(X_k) > 0} \psi(X_k)
 \wle t(u) \sum_k \phi(X_k).
\]
Hence by Markov's inequality,
\begin{align*}
 \pr \left(  \frac{1}{n} \sum_k \psi(X_k) > s, \, M \le u \right)
 \wle \pr \left( \frac{t(u)}{n} \sum_k \phi(X_k) > s \right)
 &\wle \frac{t(u)}{s} \E \phi(X).
\end{align*}
It hence follows that
\begin{align*}
 \pr \left( \frac{1}{n} \sum_k \psi(X_k) > s \right) 
 &\wle \pr \left( \frac{1}{n} \sum_k \psi(X_k) > s, \, M \le t \right)  + \pr(M >t) \\
 &\wle \frac{t(u)}{s} \E \phi(X) + \frac{n}{u} h_\phi(u).
\end{align*}
\end{proof}

The following result shows for example that if $(X_\nu^{4/3})_{\nu \ge 1}$ is uniformly integrable, then $\frac{1}{n_\nu} \sum_{k=1}^{n_\nu} X_{\nu, k}^2 = o_\pr(n_\nu^{1/2})$, or in other words $(\pi)_2 \ll n^{1/2}$ stochastically.

\begin{lemma}[{Extends \cite[Eq.~(4.5)]{Bloznelis_2013}}]
For each integer $\nu \ge 1$, let $X_\nu, X_{\nu, 1}, X_{\nu, 2}, \dots$ be identically distributed (possibly dependent) random variables in $\R_+$. Assume that $(X_\nu^\alpha)_{\nu \ge 1}$ is uniformly integrable for some $\alpha > 0$, and that $n_\nu \to \infty$ as $\nu \to \infty$. Then for any $\beta > \alpha$,
\[
 \sum_{k=1}^{n_\nu} X_{\nu, k}^\beta
 \weq o_\pr( n_\nu^{\beta/\alpha} )
 \qquad \text{as $\nu \to \infty$.}
\]
\end{lemma}
\begin{proof}
Denote $h_\nu(u) = \E X_\nu^\alpha 1(X_\nu^\alpha > u)$, and let $h(u) = \sup_\nu h_\nu(u)$. Note first that for all $u \ge 0$,
\[
 \E X_\nu^\alpha
 \weq \E X_\nu^\alpha 1(X_\nu^\alpha \le u) + \E X_\nu^\alpha 1(X_\nu^\alpha > u)
 \wle u + h(u).
\]
Uniform integrability implies that $\lim_{u \to \infty} h(u) = 0$. Hence we may fix a number $c$ such that $h(c) \le 1$. Then $\E X_\nu^\alpha \le c +1$ for all $\nu$.

Let us apply Lemma \ref{the:UIBoundGen} with $\psi(x) = x^\beta$ and $\phi(x) = x^\alpha$. Then we get $t(u) = \sup_{x: 0 < \phi(x) \le u} \frac{\psi(x)}{\phi(x)} = u^{\beta/\alpha-1}$, and
\begin{align*}
 \pr \left( \frac{1}{n_\nu} \sum_{k=1}^{n_\nu} X_{\nu, k}^\beta > s \right) 
 &\wle \frac{u^{\beta/\alpha-1}}{s} \E X_\nu^\alpha + \frac{n}{u}\E X_\nu^\alpha 1(X_\nu^\alpha > u) \\
 &\wle \frac{u^{\beta/\alpha-1}}{s} (c+1) + \frac{n}{u} h(u).
\end{align*}
For $s = \epsilon n_\nu^{\beta/\alpha-1}$ and $u = \delta n_\nu$, this implies
\[
 \pr \left( \sum_{k=1}^{n_\nu} X_{\nu, k}^\beta > \epsilon n_\nu^{\beta/\alpha} \right) 
 \wle \frac{\delta^{\beta/\alpha-1}}{\epsilon}(c+1) + \frac{1}{\delta}h(\delta n_\nu).
\]
Hence
\[
 \limsup_{\nu \to \infty} \pr \left( \sum_{k=1}^{n_\nu} X_{\nu, k}^\beta > \epsilon n_\nu^{\beta/\alpha} \right) 
 \wle \frac{\delta^{\beta/\alpha-1}}{\epsilon}(c+1).
\]
Because the above inequality is valid for all $\delta > 0$, the claim follows.
\end{proof}

\appendix

\section{Vector norms et cetera}

\begin{lemma}
\label{the:VectorNorms}
For any $x_1,\dots,x_n \ge 0$ and any $p \ge 1$,
\[
 n^{1-p} \left( \sum_{k=1}^n x_k \right)^p
 \wle \sum_{k=1}^n x_k^p
 \wle \left( \sum_{k=1}^n x_k \right)^p.
\]
Moreover, the $p$-th power norm of $x$ satisfies $\norm{x}_p \le \norm{x}_1 \le n^{1-1/p} \norm{x}_p$ for any $x \in \R^n$.
\end{lemma}
\begin{proof}
Without loss of generality we may and will assume that $\sum_{k=1}^n x_k > 0$.
Because $\frac{{x_k}}{\norm{x}_1} \le 1$ for all $k$, it follows that
\[
 \sum_{k=1}^n \left( \frac{{x_k}}{\norm{x}_1} \right)^p
 \wle \sum_{k=1}^n \left( \frac{{x_k}}{\norm{x}_1} \right)
 \weq 1,
\]
from which we obtain the inequality on the right. To verify the other inequality, note that by applying Jensen's inequality to the convex function $x \mapsto x^p$,
\[
 \left( \frac{1}{n} \sum_{k=1}^n x_k \right)^p
 \wle \frac{1}{n} \sum_{k=1}^n x_k^p.
\]
The statement related to $x \in \R^n$ follows by taking absolute values.
\end{proof}

\begin{lemma}
\label{the:EmpiricalUIPower}
For any $x_1,\dots,x_n \ge 0$, any $t \ge 0$, and any integer $r \ge 1$,
\[
 \sum_{k=1}^n x_k^r
 \wle t^r n + \phi(t)^r n^r,
\]
where $\phi(t) = \frac{1}{n} \sum_{k=1}^n x_k 1(x_k > t)$.
\end{lemma}
\begin{proof}
The claim follows by noting that
\[
 \sum_{k=1}^n x_k^r 1( x_k \le t)
 \wle t^r n
\]
and
\[
 \sum_{k=1}^n x_k^r 1( x_k > t)
 \weq \sum_{k=1}^n \Big( x_k 1( x_k > t) \Big)^r
 \wle \left( \sum_{k=1}^n x_k 1( x_k > t) \right)^r
 \weq \Big( \phi(t) n \Big)^r.
\]
\end{proof}

\begin{lemma}
\label{the:MinSum}
For any $c_1,c_2 >0$ and $\alpha \ge 1$ and $\beta > 0$,
\[
 \min_{t > 0} \left( c_1 t^\alpha + c_2 t^{-\beta} \right)
 \weq \left\{ \left(\frac{\beta}{\alpha}\right)^{\frac{\alpha}{\alpha+\beta}}
 + \left(\frac{\alpha}{\beta}\right)^{\frac{\beta}{\alpha+\beta}} \right\}
 c_1^{\frac{\beta}{\alpha+\beta}} c_2^{\frac{\alpha}{\alpha+\beta}},
\]
and $t = \left(\frac{c_2 \beta}{c_1 \alpha}\right)^{1/(\alpha+\beta)}$ equals the unique point at which the minimum is attained. Especially,
\[
 \min_{t > 0} \left( c_1 t + c_2 t^{-1} \right)
 \weq (c_1 c_2)^{1/2},
\]
and
\[
 \min_{t > 0} \left( c_1 t^2 + c_2 t^{-1} \right)
 \weq \frac{3}{2^{2/3}} \, c_1^{\frac{1}{3}} c_2^{\frac{2}{3}}.
\]
\end{lemma}
\begin{proof}
Denote $f(t) = c_1 t^\alpha + c_2 t^{-\beta}$.
Differentiation shows that
\[
 f'(t)
 \weq c_1 \alpha t^{\alpha-1} - c_2 \beta t^{-\beta-1}
\]
and
\[
 f''(t)
 \weq c_1 \alpha (\alpha-1) t^{\alpha-2} + c_2 \beta (\beta+1) t^{-\beta-2}
 \ > \ 0.
\]
Hence the global minimum of $f$ is attained at $t =  \left( \frac{c_2 \beta}{c_1 \alpha} \right)^{\frac{1}{\alpha + \beta}}$, which is the unique point at which $f'(t) = 0$. The claim follows by substituting this value.
\end{proof}

\section{Component size distributions in ER graphs (DEPRECATED)}

In many computations we need to known the distribution (or generating function) of the size of a connected component of a node in an \Erdos--\Renyi graph with $x$ nodes and linking probability $q$. This is related to Gontcharoff polynomials.   Ball, Sirl, and Trapman in \cite{Ball_Sirl_Trapman_2014} apply a general result of Ball and O'Neill \cite{Ball_ONeill_1999} which generalises Lefevre and Picard \cite{Lefevre_Picard_1990}, and they also cite \cite{Ball_Mollison_Scalia-Tomba_1997} and \cite{Ball_Sirl_Trapman_2010}. (See~\cite{Daniels_1967} but this is for a bit different model.) What we need is given for the generating function by \cite[Corollary 3.3]{Lefevre_Picard_1990}.

Ludwig \cite{Ludwig_1975} wrote down a recursive formula for the direct probabilities.

Let $G$ be a graph. We denote by $N_G(U)$ the set of nodes in $V(G) \setminus U$ which are adjacent to one or more nodes in a set $U \subset V(G)$. We also denote by $\bar G$ the transitive closure of $G$. Then $N_{\bar G}(U)$ equals the set of nodes in $V(G) \setminus U$ which are reachable from one or more nodes in $U$ by a path in $G$, and the connected component of a node $i \in V(G)$ equals $\{i\} \cup N_{\bar G}(\{i\})$.

Observe also that for $U_1 = N_G(U_0)$,
\[
 N_{\bar G}(U_0)
 \weq U_1 \cup N_{\bar G}(N_G(U))
\]
You can think $U_0$ as the initial infectives, and $U_1$ as the nodes which become infective at the first stage of an epidemic.

Let $G$ be a graph. For $U \subset V(G)$ we recursively define $B_G(U,0) = U$, and $B_G(U,r+1) = B_G(U,r) \cup N_G( B_G(U,r) )$, so that $B_G(U,r)$ equals the set of nodes which can be reached from $U$ by a path of length at most $r$. We denote by $B_G^+(U) = \cup_{r \ge 1} B_G(U,r)$ the set of nodes in $V(G) \setminus U$ which are reachable from $U$ by path in $G$.

Then it follows that we can write $D_G(U)$ as a disjoint union
\[
 B_G^+(U)
 \weq U' \cup B^+_{G'}( U' ),
\]
where $U' = B_G(U,1)$ and $G' = G[V(G) \setminus U]$. Assume now that $G$ is an ER graph with $k+\ell$ nodes, so that $k$ are initially infective ($\abs{U}=k$) and the rest are susceptible. Then any susceptible node not in $U$ remains susceptible with probability $(1-p)^k$, independently. It follows that $\abs{U'}$ is $\Bin(\ell, 1-(1-p)^k)$-distributed.

\section{Sampling with and without replacement}
\label{sec:Sampling}

Let $V$ be a subset of $[m]$ of size $x \le m$ selected uniformly at random. Fix distinct $i,j \in [m]$. Let $f = \law( 1_{V_k}(i), 1_{V_k}(j) )$ and let $g = \law( 1_{V_k}(i)) \times \law(1_{V_k}(j) )$. Note that
\begin{align*}
 f(0,0)
 &\weq \Big( 1 - \frac{x}{m} \Big) \Big( 1 - \frac{x}{m-1} \Big), \\
 f(1,0)
 &\weq \frac{x}{m} \Big( 1 - \frac{x-1}{m-1} \Big), \\
 f(0,1)
 &\weq \Big( 1 - \frac{x}{m} \Big) \frac{x}{m-1}, \\
 f(1,1)
 &\weq \frac{x}{m} \frac{x-1}{m-1},
\end{align*}
which also shows that $f(0,1) = f(1,0)$. A direct computation shows that
\[
 f(x,y) - g(x,y)
 \weq
 \begin{cases}
  \frac{x}{m} \frac{m-x}{m(m-1)}, &\quad (x,y) = (0,1), (1,0), \\
  - \frac{x}{m} \frac{m-x}{m(m-1)}, &\quad (x,y) = (0,0), (1,1),
 \end{cases}
\]
%
and we conclude the following result.

\begin{lemma}
\label{the:Sampling}
\[
 \dtv(f,g)
 \weq 2 \frac{x}{m} \frac{m-x}{m(m-1)}
 \weq 2 \left( \frac{1}{m-1} \right) \frac{x}{m} \left( 1 - \frac{x}{m} \right).
\]
\end{lemma}

Now let $V_k$ be subsets of $[m]$ of sizes $x_k \le m$. Let $f = \prod_{k=1}^n \law( 1_{V_k}(i), 1_{V_k}(j) )$, and let $g = \prod_{k=1}^n \law( 1_{V_k}(i) ) \times \law( 1_{V_k}(j) )$. Then the above equality implies that
\[
 \dtv(f,g)
 \wle 2 \sum_{k=1}^m \left( \frac{1}{m-1} \right) \frac{x_k}{m} \left( 1 - \frac{x_k}{m} \right)
 \wle \frac{2}{(m)_2} \sum_{k=1}^m x_k
 \weq \frac{2n}{(m)_2} (\pi)_{10}.
\]


\section{Total variation and Wasserstein metrics}
\label{sec:TotalVariation}

Recall the total variation distance of probability measures $f,g$ on a countable space $S$. See for example \cite[Section 4.1]{Levin_Peres_Wilmer_2008}. We know that
\[
 \dtv(f,g)
 \weq \frac12 \sum_{x \in S} \abs{f(x) - g(x)}
 \weq \sum_{x \in S_+} \big(f(x) - g(x) \big)
 \weq f(S_+) - g(S_+),
\]
where $S_+ = \{x: f(x) > g(x)\}$. Note also that
\[
 \sum_{x \in S_+^c} \big(f(x) - g(x) \big)
 \weq f(S_+^c) - g(S_+^c)
 \weq g(S_+) - f(S_+).
\]

\begin{lemma}
\label{the:dtvMarginals}
Let $f$ and $g$ be probability distributions on a product space $S_1 \times \cdots S_n$, such that $f$ has marginal distributions $f_i$ and $g$ has marginal distributions $g_i$, $i=1,\dots,n$. Then
\[
 \dtv(f, g)
 \wle \dtv(f_1, g_1) + \cdots + \dtv(f_n, g_n).
\]
\end{lemma}
\begin{proof}
We know that the exists (require Polish?) an optimal coupling, a pair of random variables $X = (X_1,\dots,X_n)$ and $Y=(Y_1,\dots,Y_n)$ such that
\[
 \dtv(f,g)
 \weq \pr( X \ne Y).
\]
By the union bound, and the well-known fact (require Polish?), it follows that
\[
 \pr(X \ne Y)
 \wle \sum_{i=1}^n \pr(X_i \ne Y_i)
 \wle \sum_{i=1}^n \dtv( \law(X_i), \law(Y_i))
 \wle \sum_{i=1}^n \dtv( f_i, g_i).
\]
\end{proof}

\begin{lemma}
\label{the:dtvBerPoi}
The total variation distance between a Bernoulli and Poisson distribution with the same mean satisfies $\dtv(\Ber(p), \Poi(p)) = p (1-e^{-p}) \le p^2$ for any $0 \le p \le 1$.
\end{lemma}
\begin{proof}
Note that $\abs{f_0-g_0} = e^{-p} - (1-p)$, $\abs{f_1-g_1} = p(1-e^{-p})$, and that
\begin{align*}
 \sum_{x \ge 2} \abs{f_x-g_x}
 \weq \sum_{x \ge 2} g_x
 \weq 1 - e^{-p} - pe^{-p}.
\end{align*}
The first claim follows by summing up the above equations and dividing the outcome by two. The second follows by noting that $1-p \le e^{-p}$ for all $p$.
\end{proof}

\begin{lemma}
\label{the:dtvBinPoi}
The total variation distance between a binomial and a Poisson distribution with the same mean satisfies $\dtv(\Bin(n,p), \Poi(np)) \le n p^2$ for any $n \ge 1$ and $0 \le p \le 1$.
\end{lemma}
\begin{proof}
Let $M = \sum_{i=1}^n A_i$ and $N = \sum_{i=1}^n B_i$, where the summands are mutually independent and such that $\law(A_i) = \Ber(p)$ and $\law(B_i) = \Poi(p)$. Then $(M,N)$ is a coupling of $\Bin(n,p)$ and $\Poi(np)$, and by the union bound, $\pr( M \ne N ) \le \sum_{i=1}^n \pr(A_i \ne B_i) \le n \dtv(\Ber(p), \Poi(p))$. By Lemma~\ref{the:dtvBerPoi}, the claim follows.
\end{proof}

\begin{lemma}
\label{the:dtvPoiPoi}
The total variation distance between two Poisson distributions is bounded by $\dtv(\Poi(\lambda), \Poi(\mu)) \le 1-e^{-\abs{\lambda-\mu}} \le \abs{\lambda-\mu}$.
\end{lemma}
\begin{proof}
Assume that $0 \le \lambda \le \mu$ without loss of generality. Let $X,D$ be independent Poisson distributions with means $\lambda, \mu-\lambda$, respectively. Define $Y = X+D$. Then $(X,Y)$ is a coupling of $\Poi(\lambda)$ and $\Poi(\mu)$, and $\pr( X \ne Y ) = \pr(D \ne 0) = 1 - e^{-(\mu-\lambda)}$. Hence the first claim follows. The second follows by noting that $1-t \le e^{-t}$ for all $t$.

\end{proof}

\begin{lemma}
\label{the:dtvSum}
For any collection of independent nonnegative random numbers $A_i, B_i, B_i'$, $i \in I$, \mnote{no more needed?}
\[
 \dtv\left( \sum_i A_i B_i, \, \sum_i A_i B_i' \right)
 \wle \sum_i \pr(A_i \ne 0) \, \dtv(B_i, \, B_i').
\]
\end{lemma}
\begin{proof}
For any $i \in I$ there exists a coupling $(\tilde B_i, \tilde B_i')$ of $B_i$ and $B_i'$ such that $\pr(\tilde B_i \ne \tilde B_i') = \dtv(B_i, \, B_i')$. Hence there exist a collection of independent random variables $\{\tilde A_i, (\tilde B_i, \tilde B_i'), i \in I\}$ such that $\left( \sum_i \tilde A_i \tilde B_i, \, \sum_i \tilde A_i \tilde B_i' \right)$ is a coupling of $\sum_i A_i B_i$ and $\sum_i A_i B_i'$. This coupling satisfies
\begin{align*}
 \pr \left( \sum_i \tilde A_i \tilde B_i \ne \sum_i \tilde A_i \tilde B_i' \right)
 &\wle \pr \left( \bigcup_i \{ \tilde A_i \tilde B_i \ne \tilde A_i \tilde B_i' \} \right) \\
 &\wle \sum_i \pr \left( \tilde A_i \tilde B_i \ne \tilde A_i \tilde B_i' \right) \\
 &\weq \sum_i \pr \left( \tilde A_i \ne 0, \tilde B_i \ne \tilde B_i' \right) \\
 &\weq \sum_i \pr(\tilde A_i \ne 0) \pr(\tilde B_i \ne \tilde B_i').
\end{align*}
Because $\tilde A_i \eqst A_i$ and $\pr(\tilde B_i \ne \tilde B_i') = \dtv(B_i, \, B_i')$, the claim follows.
\end{proof}

\section{Empirical distributions}

\begin{lemma}[Empirical distribution of dependent variables]
\label{the:EmpiricalDistributionMean}
Let $X_1,\dots,X_m$ be random variables on a countable space $S$. Let $\pi = \frac{1}{m} \sum_{i=1}^m \delta_{X_i}$ be the empirical distribution, and define $\bar\pi(A) = \E \pi(A)$. Then for any $\epsilon > 0$,
\[
 \pr\left( \sup_{s \in S} \abs{ \pi(s) - \bar\pi(s) } > \epsilon \right)
 \wle \frac{2}{\epsilon^2} \left( \frac{1}{m} + \frac{1}{m^2} \sumd_{i,j} d_{ij} \right),
\]
where $d_{ij} = \dtv( \law(X_i, X_j), \law(X_i) \times \law(X_j))$.
\end{lemma}
\begin{proof}
Because $\E \pi(s) = \bar\pi(s)$ for all $s \in S$, we see by applying the union bound and Chebyshev's inequality that
\begin{align*}
 \pr\left( \sup_{s} \abs{ \pi(s) - \bar\pi(s) } > \epsilon \right)
 &\wle \sum_s \pr\left( \abs{ \pi(s) - \bar\pi(s) } > \epsilon \right)
 &\le \ \epsilon^{-2} \sum_s \Var(\pi(s)),
\end{align*}
where
\[
 \Var(\pi(s))
 \weq \Cov( \pi(s), \pi(s) )
 \weq \frac{1}{m^2} \sum_{i,j} \Cov\Big( 1(X_i=s), \, 1(X_j=s) \Big).
\]
Next, we note that
\begin{align*}
 &\sum_s \Cov\Big( 1(X_i=s), \, 1(X_j=s) \Big) \\
 &\weq \sum_s \Big( \pr ( X_i = s, X_j = s ) - \pr ( X_i = s) \pr( X_j = s ) \Big) \\
 &\wle \sum_s \Big| \pr ( X_i = s, X_j = s ) - \pr ( X_i = s) \pr( X_j = s ) \Big| \\
 &\wle \sum_s \sum_t \Big| \pr ( X_i = s, X_j = t ) - \pr ( X_i = s) \pr( X_j = t ) \Big| \\
 &\weq 2 \dtv \Big(  \law(X_i, X_j), \ \law(X_i) \times \law(X_j) \Big).
\end{align*}
Now, it follows that
\begin{align*}
 \sum_s \Var( \pi(s) )
 &\weq \frac{1}{m^2} \sum_{i,j} \sum_s \Cov\Big( 1(X_i=s), \, 1(X_j=s) \Big)
 \wle \frac{2}{m^2} \sum_{i,j} d_{ij}.
\end{align*}
Now the claim follows by applying the generic bound $d_{ii} \le 1$ to the diagonal elements of $(d_{ij})$.
\end{proof}

\section{Weak convergence and uniform integrability}
\label{sec:WeakConvergence}

\begin{lemma}[Maxima of random numbers]
\label{the:IIDMaxima}
Let $M_n = \max(X_1^{(n)}, \dots, X_n^{(n)})$ be a maximum of independent nonnegative random numbers with a common distribution $P_n$. If $(P_n)_{n \ge 1}$ is $\alpha$-uniformly integrable for some $\alpha > 0$, then $M_n \ll_{\pr} n^{1/\alpha}$.
\end{lemma}
\begin{proof}
Denote $\phi(t) = \sup_n \int x^\alpha 1(x>t) P_n(dx)$, and note that for any $t > 0$ and any $P_n$-distributed random variable $X$,
\begin{align*}
 \pr( X > t )
 \wle t^{-\alpha} \E X^\alpha 1( X > t )
 \wle t^{-\alpha} \phi(t).
\end{align*}
Hence, the union bound implies that
\begin{align*}
 \pr( M_n > t )
 \wle n \pr( X_1^{(n)} > t )
 \wle n t^{-\alpha} \phi(t).
\end{align*}
By substituting $t = \epsilon n^{1/\alpha}$, we find that $\pr( M_n > \epsilon n^{1/\alpha} ) \to 0$ for any $\epsilon > 0$.
\end{proof}

\begin{rcomm}
Application of Lemma~\ref{the:QWLLN_New}. Assume that $P^\nu \to P$ weakly and $(P^\nu)_{sr} \to (P)_{sr}$. Let $(X^\nu_1,Q^\nu_1), \dots, (X^\nu_n,Q^\nu_n)$ be independent and $P^\nu$-distributed. Let $\hat P^\nu_n$ be the empirical distribution. Then for every bounded continuous function $\phi$, $\pr_\nu( \abs{P^\nu(\phi) - P(\phi)} > \epsilon ) \to 0$, and this is also true for $\phi(x,q) = (x)_s q^r$. Hence $P^\nu \weakto P$  in probability.
\end{rcomm}
\begin{lemma}[Quantitative weak law of large numbers]\mnote{In degree proof $h(M)=0$}
\label{the:QWLLN_New}
Let $X_1,X_2,\dots,X_n$ be independent random numbers with a finite mean, and denote $\mu = \frac{1}{n} \sum_{k=1}^n \E X_k$. Then
\[
 \pr \left( \Big\lvert \frac{1}{n} \sum_{k=1}^n X_k - \mu \Big\rvert > \epsilon \right)
 \wle 9 \frac{M^2}{\epsilon^2 n} + 6 \frac{h(M)}{\epsilon}.
\]
for all $\epsilon, M > 0$, where $h(M) = \max_{1 \le k \le n} \E \abs{X_k}1(\abs{X_k} > M)$.
\end{lemma}
\begin{proof}
Fix $\epsilon, M > 0$. We only need to do the proof when $h(M) \le \epsilon/6$, because otherwise the upper bound is bigger than one and claim is trivial. Define a truncation map of the real line into $[-M,M]$ by $t_M(x) = (-M) \vee (x \wedge M)$. Define truncated random numbers $X'_k = t_M(X_k)$ and $X' = t_M(X)$. Then
\[
 \underbrace{\frac{1}{n} \sum_{k=1}^n X_k - \mu}_{\Delta}
 \weq \underbrace{\frac{1}{n} \sum_{k=1}^n (X_k' - \E X'_k)}_{\Delta_1}
 + \underbrace{\frac{1}{n} \sum_{k=1}^n (X_k-X_k')}_{\Delta_2}
 + \underbrace{\frac{1}{n} \sum_{k=1}^n \E (X'_k - X_k)}_{\Delta_3}.
\]
Note that by independence and Chebyshev's inequality,
\[
 \pr( \abs{\Delta_1} > \epsilon/3 )
 \wle (\epsilon/3)^{-2} \Var(\Delta_1)
 \weq (\epsilon/3)^{-2} \frac{1}{n^2} \sum_k \Var(X'_k)
 \wle (\epsilon/3)^{-2} \frac{1}{n} M^2.
\]
Furthermore,
\begin{align*}
 \E \abs{X'_k - X_k}
 &\weq \E \abs{X_k'-X_k} 1(\abs{X_k} > M) \\
 &\wle \E (\abs{X_k'}+\abs{X_k})1(\abs{X_k} > M) \\
 &\wle 2 h(M).
\end{align*}
Hence by Markov's inequality,
\begin{align*}
 \pr( \abs{\Delta_2} > \epsilon/3 )
 \wle (\epsilon/3)^{-1} \frac{1}{n} \sum_k \E \abs{X_k-X'_k}
 \wle 2 (\epsilon/3)^{-1} h(M).
\end{align*}
The above upper bound on $\E\abs{ X'_k - X_k}$ also shows that $\pr( \abs{\Delta_3} > \epsilon/3 ) = 0$ 
when $h(M) \le \epsilon/6$. The claim now follows by the union bound.
\end{proof}

\begin{lemma}[Limits of integrals unbounded functions]
\label{the:UnboundedIntegral}
Assume that $\pi_n \to \pi$ in distribution, \mnote{no more needed?} and that a continuous function $\phi$ is bounded by $\abs{\phi(x)} \le \psi(x)$ for some continuous function $\psi$ such that $\pi_n(\psi) \to \pi(\psi) < \infty$. Then $\pi_n(\phi) \to \pi(\phi)$.
\end{lemma}
\begin{proof}
By Skorohod coupling there exists random variables $X_n, X$ defined on a common probability space such
that $\law(X_n) = \pi_n$, $\law(X) = \pi$, and $X_n \to X$ almost surely. Let $Y_n = \phi(X_n), Y = \phi(Y)$ and $Z_n = \psi(X_n), Z = \psi(X)$. Then $\abs{Y_n} \le \abs{Z_n}$, $Y_n \to Y$, and $Z_n \to Z$ almost surely. Also $\E Z_n \to \E Z < \infty$. Hence by Lebesgue's dominated convergence theorem (see the formulation in \cite[Theorem 1.21]{Kallenberg_2002}) it follows that $\pi_n(\phi) = \E Y_n \to \E Y = \pi(\phi)$.
\end{proof}

\section{Cross-factorial moments}
\label{sec:CrossFactorialMoments}

\begin{lemma}
\label{the:CrossFactorialMoments}
For any $k \ge 1$,
\[
 (\pi)_{k,k-1}
 \wle (2k)^k + 2 (\pi)_{1,0}^{1/k} (\pi)_{k+1,k}^{1-1/k}.
\]
\end{lemma}
\begin{proof}
Let $(X,Q)$ be a $\pi$-distributed random pair. Observe first that $x - j \le x \le 2(x-k)$ for all $x \ge 2k$ and all $j \ge 1$, and therefore
\[
 \frac{(x)_k}{(x-k)^{k-1}}
 \weq x \prod_{j=1}^{k-1} \frac{x-j}{x-k}
 \wle 2^{k-1} x
 \wle 2^k x
\]
for all $x \ge 2k$. Hence by applying $(x)_k = \frac{(x)_{k+1}}{x-k}$, we find that
\[
 (x)_k
 \weq (x)_{k+1}^{1-1/k} \frac{(x)_k^{1/k}}{(x-k)^{1-1/k}}
 \weq (x)_{k+1}^{1-1/k} \left( \frac{(x)_k}{(x-k)^{k-1}} \right)^{1/k}
 \wle 2 (x)_{k+1}^{1-1/k} x^{1/k}
\]
for $x \ge 2k$. Together with the fact that $(x)_k \le (2k)^k$ for $x \le 2k$, it follows that
\begin{align*}
 (\pi)_{k,k-1}
 &\weq \E (X)_k Q^{k-1} 1(X \le 2k) + \E (X)_k Q^{k-1} 1(X > 2k) \\
 &\wle (2k)^k + 2 \E (X)_{k+1}^{1-1/k} Q^{k-1} X^{1/k}.
\end{align*}
Now Hölder's inequality with conjugate exponents $p=k/(k-1)$ and $p'=k$ implies that
\[
  \E (X)_{k+1}^{1-1/k} Q^{k-1} X^{1/k}
  \wle \Big( \E (X)_{k+1} Q^{k} \Big)^{1-1/k} \Big( \E X \Big)^{1/k},
\]
so we conclude that
\[
 (\pi)_{k,k-1}
 \wle (2k)^k + 2 (\pi)_{k+1,k}^{1-1/k} (\pi)_{1,0}^{1/k}.
\]
\end{proof}

\section{Binomial kernel}
\label{sec:BinomialKernel}

\begin{lemma}
\label{the:ChernoffThirdMoment}
Let $X$ be $\Bin(n,p)$-distributed. Then
\[
 (\E X)^3 \pr(X \le s)
 \wle
 \begin{cases}
  689 &\quad \text{for} \ s \le \frac12 np, \\
  8 s^3 &\quad \text{for} \ s \ge \frac12 np.
 \end{cases}
\]
\end{lemma}
\begin{proof}
If $s \le \frac12 np$, then by a Chernoff bound \cite[Theorem 2.1]{Janson_Luczak_Rucinski_2000} we find that
\[
 \pr( X \le s )
 \wle \pr( X \le \frac12 \E X)
 \wle e^{-np/8}.
\]
Note also that $t^3 e^{-t/8} \le 24^3 e^{-3} \le 689$ for all $t$, so that in this case
\[
 (\E X)^3 \pr( X \le s )
 \wle (np)^3 e^{-np/8}
 \wle 689.
\]

On the other hand, if $s \ge \frac12 np$, then the trivial upper bound $\pr(X \le s) \le 1$ implies that $(\E X)^3 \pr(X \le s) \le (np)^3 \le 8 s^3$.
\end{proof}

The binomial kernel is denoted by
\[
 \Bin(n, p, A)
 \weq \sum_{s \in A} \binom{n}{s} (1-p)^{n-s} p^s, \quad n \in \Z_+, \ p \in [0,1], \ A \subset \Z_+.
\]
For fixed $n,p$, the map $A \mapsto \Bin(n,p,A)$ is the binomial distribution with $n$ trials and success probability $p$.

\begin{lemma}[Upper tail of the binomial kernel]
\label{the:BinKernelChernoff}
Fix integers $a \ge 1$ and $0 \le b \le a$. Then for any $M \ge 1$ and $M' \ge 7 a! M$,
\begin{equation}
 \label{eq:BinKernelChernoff}
 \Bin(x,q,[M',\infty)) \wle e^{-M'}
\end{equation}
for 
all $(x,q)$ such that $(x)_a q^b \le M$.
\end{lemma}
\begin{proof}
Assume first that $x \ge a$. Then by applying the inequality $\frac{(x)_a}{x^a} \ge \frac{1}{a!}$ it follows that
\[
 M
 \wge 
 (x)_a q^b
 \weq \frac{(x)_a}{x^a} (xq)^b x^{a-b}
 \wge \frac{1}{a!} (xq)^b x^{a-b}.
\]
If $b=0$, this implies $x \le (a! M)^{1/a} \le a! M < 7 a! M \le M'$, and therefore the left side of \eqref{eq:BinKernelChernoff} is zero. If $1 \le b \le a$, the above inequality implies 
\[
 (xq)^b \le \frac{a!M}{x^{a-b}} \le a! M,
\]
so that $7xq \le 7(a!M)^{1/b} \le 7 a! M \le M'$, and \eqref{eq:BinKernelChernoff} follows by applying a Chernoff bound \cite[Corollary 2.4]{Janson_Luczak_Rucinski_2000}. 

Assume next that $x < a$. Then the fact that $a \le 7 a! M \le M'$ immediately confirms that the left side of \eqref{eq:BinKernelChernoff} is zero.
\end{proof}


Let $\pi$ be a probability measure on $\Z_+ \times [0,1]$. Then a $\pi$-mixed binomial distribution is the probability distribution on $\Z_+$ with density function (note that we define $\binom{0}{0} = 0$ and $\binom{n}{k} = 0$ for $k > n$)
\[
 k \mapsto \int_{\Z_+ \times [0,1]} \binom{n}{k} (1-q)^{n-k} q^k \, \pi(dn, dq).
\]
We can represent this distribution as the law of
\[
 M
 \weq \sum_{n=1}^N B_n
 \quad\text{with}\quad B_n = 1(U_n \le Q),
\]
where $(N,Q), U_1, U_2, \dots$ are independent, with $(N,Q)$ being $\pi$-distributed and $U_1,U_2,\dots$ uniform on the unit interval.

Observe that $\E_Q B_{n_1} \cdots B_{n_r} = Q^j$ where $j$ is the number of distinct entries among $n=(n_1,\dots, n_r)$. Therefore,
\begin{align*}
 \E_{N,Q} M^r
 \weq \sum_{n \in [N]^r} \E_Q B_{n_1} \cdots B_{n_r}
 \weq \sum_{j=1}^r c_{j, r} (N)_j q^j,
\end{align*}
where $c_{j, r}$ is the number of labeled partitions $(A_1,\dots,A_j)$ of $[r]$ into $j$ nonempty disjoint sets. By taking expectations above we find that the moments of the mixed binomial distribution are given by
\begin{align*}
 \E M^r
 \weq \sum_{j=1}^r c_{j, r} \E (N)_j q^j.
\end{align*}
Because $c_{1,1} = 1$, we obtain
\[
 \E M \weq \E N Q.
\]
Because $c_{1,2} = 1$ and $c_{2,2} = 1$, we obtain
\[
 \E M^2 \weq \E N Q + \E (N)_2 Q^2
\]
Because $c_{1,3} = 1$, $c_{2,3} = 3$, and $c_{3,3} = 1$, we obtain
\[
 \E M^3 \weq \E N Q + 3 \E (N)_2 Q^2 + \E (N)_3 Q^3.
\]

\section{Zipf distribution}

The Zipf distribution with density exponent $\alpha \in \R$, min value $a > 0$ and max value $b > a$ is a probability measure on $\{1,2,3,\dots\} \cap [a,b]$ with density
\[
 f(x)
 \weq c x^{-\alpha}
\]
and normalising constant $c = \sum_{a \le x \le b} x^{-\alpha}$. The unbounded Zipf distribution with $b = \infty$ is defined for $\alpha > 1$. A unit-scale Zipf distribution with density exponent $\alpha > 1$ is the one with $a=1$ and $b=\infty$, and has density
\[
 f(x) \weq \zeta(\alpha)^{-1} x^{-\alpha},
\]
which is normalised by Riemann's zeta function $\zeta(\alpha)^{-1} = \sum_{x \ge 1} x^{-\alpha}$.

An $x^k$-biasing of a unit-scale Zipf distribution with density exponent $\alpha > k+1$ is a unit-scale Zipf distribution with density exponent $\alpha-k$.

Python's numpy computes samples from the unit-scale Zipf distribution as follows. Let $U \in (0,1)$ be a uniform random number. Then $U^{-1/(\alpha-1)}$ is Pareto distributed with unit scale, tail exponent $\alpha-1$, and density exponent $\alpha$.
Let $Y = \floor{U^{-1/(\alpha-1)}}$. Then $Y$ is a down-truncated Pareto random integer with density
\[
 g(x)
 \weq {x}^{-(\alpha-1)} - ({x}+1)^{-(\alpha-1)}, \quad x=1,2,\dots
\]
We use this as a proposal distribution for rejection sampling. Observe that 
\begin{align*}
 \frac{f(x)}{g(x)}
 &\weq \zeta(\alpha)^{-1} \frac{x^{-\alpha}}{{x}^{-(\alpha-1)} - ({x}+1)^{-(\alpha-1)}} \\
 &\weq \zeta(\alpha)^{-1} \frac{1}{ {x} - (x+1) ( x / (x+1) )^\alpha  } \\
 &\weq \zeta(\alpha)^{-1} \frac{1}{ {x} - (x+1) ( 1 + 1/x )^{-\alpha}  } \\
\end{align*}

 and $T = (1+1/X)^\alpha$ for $U \in (0,1)$ being a uniform random number. Define $b = 2^\alpha$ and let $V \in (0,1)$ be another uniform random number. If $V X (T-1)/(b-1) \le T/b$, return $X$.

\section{Compound Poisson distributions}
\label{sec:CompoundPoisson}

\subsection{Definition}
The compound Poisson distribution with rate parameter $\lambda \ge 0$ and increment distribution $f$ is the probability measure on $\R$ defined by 
\[
 \CPoi(\lambda, f)
 \weq \sum_{k=0}^\infty e^{-\lambda} \frac{\lambda^k}{k!} f^{\ast k},
\]
where $f^{\ast k}$ denotes the $k$-fold convolution of a probability distribution $f$ on $\R$. This is the law of the random number
\begin{equation}
 \label{eq:CPoiRepresentation}
 Y \weq \sum_{k=1}^\Lambda X_k,
\end{equation}
where $\Lambda$ is Poisson distributed with mean $\lambda$, the summands $X_1,X_2,\dots$ are $f$-distributed, and all random variables appearing on the right are independent.

\subsection{Generating function, moments, and cumulants}

Let $Y$ be a compound Poisson distributed random integer with rate parameter $\lambda$ and increment distribution $f$. Denote the generating function and the cumulant generating function of the compound Poisson distribution by
\begin{align*}
 G_Y(s) &\weq \E s^Y, \\
 K_Y(t)
 &\weq \log G_Y(e^t).
\end{align*}

By noting that for independent $f$-distributed random variables $X, X_1, X_2, \dots$,
\[
 \E s^{\sum_{i=1}^n X_i}
 \weq \prod_{i=1}^n \E s^{X_i}
 \weq G_X(s)^n
\]
we see that
\begin{equation}
 \label{eq:GfCPoi}
 G_Y(s)
 \weq e^{\lambda( G_X(s) - 1)},
\end{equation}
with $G_X(s) = \E s^X$. By taking logarithms and substituting $s = e^t$ above, 
we see that the cumulant generating function equals
\[
 K_Y(t)
 \weq \lambda (M_X(t) - 1).
\]
where $M_X(t) = \E e^{t X}$ is the moment generating function of $X$. Recalling that the moments of $X$ are given by $\E X^r = M^{(r)}_X(0)$, we find that the cumulants of the compound Poisson distribution are given by
\[
 K^{(r)}_Y(0)
 \weq \lambda M^{(r)}_X(0)
 \weq \lambda \E X^r, \quad r \ge 1.
\]
In the special case where $f = \delta_1$ we obtain the standard Poisson distribution, and the above formula shows that all cumulants of the standard Poisson distribution are equals to $\lambda$. Recall also that
\[
 K^{(1)}_Y(0)
 \weq \E(Y)
\]
and
\[
 K^{(2)}_Y(0)
 \weq \Var(Y)
 \weq \E(Y^2) - (\E Y)^2.
\]
Hence the mean and the variance of the compound Poisson distribution equal $\E(Y) = \lambda \E(X)$ and $\Var(Y) = \lambda \E(X^2)$. By applying the formulas
\begin{align*}
 \E(Y) &\weq  K^{(1)}_Y(0), \\
 \E(Y^2) &\weq K^{(2)}_Y(0) + ( K^{(1)}_Y(0) )^2, \\
 \E(Y^3) &\weq K^{(3)}_Y(0) + 3 K^{(2)}_Y(0)  K^{(1)}_Y(0) + ( K^{(1)}_Y(0) )^3,
\end{align*}
we find that the first three moments of the compound Poisson distribution are
\begin{equation}
\label{eq:CompoundPoissonMoments}
\begin{aligned}
 \E(Y) &\weq \lambda \E(X), \\
 \E(Y^2) &\weq \lambda \E(X^2) + \lambda^2 (\E X )^2, \\
 \E(Y^3) &\weq \lambda \E(X^3) + 3 \lambda^2 \E X^2 \E X + \lambda^3 (\E X)^3.
\end{aligned}
\end{equation}

The following lemma implies, by conditioning on $\Lambda$, that
\[
 \E(\Lambda) \E(X^r)
 \wle \E(Y^r)
 \wle \E(\Lambda^r) \E(X^r)
\]
for all integers $r \ge 1$. Especially, the $r$-th moment of the compound Poisson distribution is finite if and only if the $r$-th moment of the increment distribution is finite.

\begin{lemma}
\label{the:CompoundPoissonMoments}
If $Y = \sum_{j=1}^n X_j$ is a sum of independent nonnegative random numbers, then
for any integer $r \ge 1$,
\[
 n \E(X^r)
 \wle \E(Y^r)
 \wle n^r \E(X^r).
\]
\end{lemma}
\begin{proof}
The lower bound follows by noting that $Y^r \ge \sum_{j=1}^n X_j^r$
and taking expectations. For the upper bound, a generalized Hölder's inequality implies
\[
 \E X_{j_1} \cdots X_{j_r}
 \wle (\E X_{j_1}^r)^{1/r} \cdots (\E X_{j_r}^r)^{1/r}
 \weq \E(X^r)
\]
for all $j_1, \dots, j_r$, so that
\[
 \E(Y^r)
 \weq \E \sum_{j \in [n]^r} X_{j_1} \cdots X_{j_r}
 \wle n^r \E(X^r).
\]
\end{proof}

For the standard Poisson distribution, let $N_t$ be a unit rate Poisson process. Then
\[
 \E N_t^r
 \weq t^r \E \left( t^{-1} \sum_{s=1}^t (N_s-N_{s-1}) \right)^r
 \wle t^r \E \left( t^{-1} \sum_{s=1}^t (N_s-N_{s-1})^r \right)
 \weq t^r \E N_1^r.
\]
Hence for a rate-$\lambda$ Poisson distributed random integer $\Lambda$, $\E \Lambda \le \lambda^r \E N_1^r$.
We may conclude that
\[
 \lambda \E(X^r)
 \wle \E(Y^r)
 \wle c_r \lambda^r \E(X^r)
\]
where $c_r = e^{-1} \sum_{k=0}^\infty \frac{k^r}{k!}$ is the $r$-th moment of a unit-rate Poisson distribution.

%

\section{Older stuff}

\subsection{Analysis of triangle covering probabilities}
To prove the above claims rigorously, we need to study the probabilities
\begin{align*}
 p_1^{(k)} &\weq \pr\bigg( V_k \supset \{1,2,3\}, \ \cA^{(k)}, \ \deg_G(1) = d\bigg), \\
 \tilde p_1^{(k)} &\weq \pr\bigg( V_k \supset \{1,2,3\}, \ \cA^{(k)}, \ d_1 + d_{*}' = d\bigg), \\
 \tilde p_{1,s}^{(k)} &\weq \pr\bigg( V_k \supset \{1,2,3\}, \ \cA^{(k)}, \ d_1^{(k)} = s+2 \bigg).
\end{align*}
Observe that for a layer $k$ of size $x_k$,
\begin{align*}
 \tilde p_{1,r}^{(k)}
 &\weq \pr\bigg( V_k \supset \{1,2,3\}, \ \cA^{(k)}, \ d_1^{(k)} = r+2 \bigg) \\
 &\weq \frac{(x_k)_3}{(m)_3} q_{k}^3 \pr( \Bin(x_k-3, q_{k}) = r) \\
\end{align*}

\subsection{Degree distribution with forbidden layers}

Here a lemma that is needed in the sequel. Let $K \subset [n]$ be a set of forbidden layers. Denote by $G^{(-K)} = \cup_{k \in [n] \setminus K} G^{(k)}$ the graph with layers $K$ excluded. Note that
\[
 0
 \wle \deg_G(i) - \deg_{G^{(-K)}}(i)
 \wle \sum_{k \in K} \deg_{G^{(k)}}(i),
\]
and
\[
 \E \deg_{G^{(k)}}(i)
 \weq \sum_{j \ne i} \frac{(x_k)_2}{(m)_2} q_k
 \weq m^{-1} (x_k)_2 q_k.
\]
Hence
\[
 \pr( \deg_G(i) \ne \deg_{G^{(-K)}}(i) )
 \wle m^{-1} \sum_{k \in K} (x_k)_2 q_k.
\]

Specializing to a singleton $K=\{k\}$, we find that
\[
 \pr( \deg_G(i) \ne \deg_{G^{(-k)}}(i) )
 \wle m^{-1} (x_k)_2 q_k,
\]
and
\[
 \pr( \deg_G(i) \ne \deg_{G^{(-k)}}(i) \ \text{for some $k$})
 \wle \sum_k m^{-1} (x_k)_2 q_k
 \weq (m/n) (\pi_n)_{2,1}.
\]

\section{Connectivity and isolated nodes}

What is the probability that a node is isolated? Let $G_k$ be the graph on node set $[n]$ with links $\{i,j\}$ such that $B_{ik}B_{jk} C_{ijk} = 1$. Node $i$ is isolated if and only if it is isolated in every layer. Given the layer size $X_k$, node $i$ belongs to $V_k$ with probability $\frac{X_k}{n}$, and on this event the number of other nodes in $V_k$ equals $X_k-1$, so that the probability that node $i$ does not connect to any other node in $G_k$ has probability
$
 (1-Q_k)^{X_k-1}.
$
Hence the probability that node $i$ is isolated in $G_k$, given $X_k$ and $Q_k$, equals
\[
 \pr_{X_k, Q_k}( \deg_{G_k}(i) = 0)
 \weq 1-\frac{X_k}{n} + \frac{X_k}{n} (1-Q_k)^{X_k-1}
\]
Hence the probability that node $i$ is isolated, given the layer sizes and strengths equals
\[
 \pr_{X, Q}( \deg_{G}(i) = 0)
 \weq \prod_{k=1}^m \left( 1-\frac{X_k}{n} + \frac{X_k}{n} (1-Q_k)^{X_k-1} \right).
\]
Because different layers have independent and identically distributed characteristics, it follows that
\[
 \pr( \deg_{G}(i) = 0)
 \weq \left( 1- \E  \frac{X_1}{n} + \E  \frac{X_1}{n} (1-Q_1)^{X_1-1} \right)^m.
\]
Hence the expected number of isolated nodes equals
\[
 \E Y
 \weq n\left( 1 - \left( 1- \E  \frac{X_1}{n} + \E  \frac{X_1}{n} (1-Q_1)^{X_1-1} \right)^m \right).
\]
When we write $a_n = \E  {X_1}(1- (1-Q_1)^{X_1-1})$, this equals
\[
 \E Y
 \weq n\left( 1 - \left( 1- \frac{a_n}{n} \right)^m \right)
 \wapprox n \left( 1 - \left( e^{-a_n} \right)^{m/n} \right)
 \wapprox m a_n,
\]
when $a_n \ll n$ and so on\dots

\subsubsection{Lower bound on the number of isolated nodes}

For node $i$ to have neighbors in $G_k$ it is necessary that $i \in V_k$ which happens with probability $\frac{X_k}{n}$ given the size $X_k$ of layer $k$. Given this event, there are $X_k-1$ other nodes in $V_k$, and any of them is linked in $G_k$ to $i$ with probability $Q_k$, independently. Hence the expected degree of node $i$ in graph $G_k$ given $(X_k,Q_k)$ equals
\[
 \frac{X_k}{n} (X_k-1)Q_k,
\]
and by Markov's inequality,
\[
 \pr( \deg_{G_k}(i) > 0 \cond X_k, Q_k)
 \wle \frac{X_k}{n} (X_k-1)Q_k.
\]
Now by the union bound,
\[
 \pr_{X,Q}( \deg_{G}(i) > 0)
 \wle \frac{1}{n} \sum_{k=1}^m X_k(X_k-1)Q_k,
\]
where $\pr_{X,Q}$ refers to the conditional distribution given the layer sizes and strengths. Let $Y$ be the number of isolated nodes. Then there are $n-Y$ nonisolated nodes, and by the above bound,
\[
 \E_{X,Q}(n-Y)
 \wle n\frac{1}{n} \sum_{k=1}^m X_k(X_k-1)Q_k
\]
Hence
\[
 \E_{X,Q}(Y)
 \wge n -  \sum_{k=1}^m X_k(X_k-1)Q_k.
\]

\section{Discussion about other related models}
We will discuss the mixed-membership stochastic block model in \cite{Airoldi_Blei_Fienberg_Xing_2008}. We will discuss the directed version of the model as in the paper. A model with $m$ nodes and $n$ channel types (layers) is parametrized by $\alpha \in (0,\infty)^n$ and an $n$-by-$n$ matrix $K \in [0,1]^{n \times n}$. Here $\alpha_k$ represents the overall attractiveness of layer $k$ and $K_{k,\ell}$ represents the transmission strength for a channel with transmission type $i$ and receival type $j$. The model is defined by first sampling $m$ independent random variables $U_1, \dots, U_m$ in the probability simplex $\cS_1 = \{x \in \R_+^n: \norm{x}_1 = 1\}$ using a Dirichlet distribution with parameter $\alpha$. Then for $(i,j) \in [m]^2_{\ne}$ we sample $Z^+_{ij}, Z^-_{ij} \in [n]$ independently using distributions $U_i$ and $U_j$. Here $Z^+_{ij}$ (resp.\ $Z^-_{ij}$) describes the type of transmission (resp.\ receival) from node $i$ to $j$. Then the probability that $i$ is linked to $j$ equals $K(Z^+_{ij}, Z^-_{ij})$. A full sample from the model is $(U, Z^+, Z^-, X) \in \cS_1^m \times [n]^{m(m-1)} \times [n]^{m(m-1)} \times \{0,1\}^{n(n-1)}$ where $X$ is the adjacency matrix of the graph on $[m]$. Note that conditionally on $U$, node $i$ is linked to $j$ with probability
\[
 \sum_{k=1}^n \sum_{\ell=1}^n K(u,v) U_i(k) U_j(\ell),
\]
and unconditionally,
\[
 \iint \sum_{k=1}^n \sum_{\ell=1}^n K(u,v) U_i(k) U_j(\ell) dU_i dU_j.
\]

\section{New upper bounds}

For a collection of sets $A$, we denote by $A^\flat = \cup_{a \in A} a$ the set of elements covered by one or more sets of the collection, so that for example, $\{\{1,2\}\}^\flat = \{1,2\}$ and $\{ \{1,2\}, \{1, 3\} \}^\flat = \{1,2,3\}$.

Given a partition $\cA$ of the link set $E(R)$ of a graph $R$ into nonempty sets, we denote by $\abs{\cA}$ the number of parts in the partition, and we set $\norm{\cA} = \sum_{A \in \cA} \abs{A^\flat}$ where $A^\flat = \cup_{e \in A} e$ equals the set of nodes covered by the node pairs of $A$. See Table~\ref{tab:TrianglePartition}.
\begin{table}[h]
\centering
\small
\begin{tabular}{lcc}
$\cA$ & $\abs{\cA}$ & $\norm{\cA}$ \\
\midrule
$\{ \{ e_{12}, e_{13} \}\}$ & 1 & 3 \\
$\{ \{ e_{12} \}, \{ e_{13} \} \}$ & 2 & 4
\end{tabular}
\qquad
\begin{tabular}{lcc}
$\cA$ & $\abs{\cA}$ & $\norm{\cA}$ \\
\midrule
$\{ \{ e_{12}, e_{13}, e_{23} \}\}$ & 1 & 3 \\
$\{ \{ e_{12} \}, \{ e_{13}, e_{23} \} \}$ & 2 & 5 \\
$\{ \{ e_{13} \}, \{ e_{12}, e_{23} \} \}$ & 2 & 5 \\
$\{ \{e_{23}\}, \{ e_{12}, e_{13} \} \}$ & 2 & 5 \\
$\{ \{ e_{12} \}, \{ e_{13} \},  \{ e_{23} \}\}$ & 3 & 6
\end{tabular}
\caption{\label{tab:TrianglePartition} Link partitions of a two-star (left) and a triangle (right) on node set $\{1,2,3\}$. Here $e_{ij} = \{i,j\}$.}
\end{table}

Lemma~\ref{the:OverlappingUnionBound} is a strengthened version of the standard union bound for sufficiently overlapping set collections.

\begin{lemma}[Overlapping union bound]
\label{the:OverlappingUnionBound}
Let $k \ge 2$ and let $C_1, \dots, C_k$ be finite sets such that for each $i$ there exists $j \ne i$ such that $C_i \cap C_j$ is nonempty. Then
\[
 \left\lvert{\bigcup_{i=1}^k C_i}\right\rvert
 \wle \sum_{i=1}^k C_i - k + 1.
\]
\end{lemma}
\begin{proof}
When $C_1 \cap C_2$ is nonempty,
\[
 \abs{C_1 \cup C_2}
 \weq \abs{C_1} + \abs{C_2} - \abs{ C_1 \cap C_2}
 \wle \abs{C_1} + \abs{C_2} - 1
\]
show that the claim is true for $k=2$. To proceed by induction, fix sets $C_1, \dots, C_{k+1}$ satisfying the property of the lemma. If necessary, relabel the sets so that $C_k \cup C_{k+1}$ is nonempty. Then
\[
 \abs{C_k \cup C_{k+1}}
 \weq \abs{C_k} + \abs{C_{k+1}} - \abs{C_k \cap C_{k+1}}
 \wle \abs{C_k} + \abs{C_{k+1}} - 1,
\]
so that
\[
 \sum_{i=1}^{k+1} C_i
 \wge \sum_{i=1}^{k-1} C_i + \abs{C_k \cup C_{k+1}} + 1
 \weq \sum_{i=1}^{k} \abs{D_i} + 1,
\]
where $D_i = C_i$ for $i \le k-1$ and $D_k = C_k \cup C_{k+1}$. Then by the induction assumption,
\[
 \sum_{i=1}^{k} \abs{D_i}
 \wge \left\lvert{\bigcup_{i=1}^k D_i}\right\rvert + k-1
 \weq \left\lvert{\bigcup_{i=1}^{k+1} C_i}\right\rvert + k-1,
\]
and hence
\[
 \sum_{i=1}^{k+1} C_i
 \weq \sum_{i=1}^{k} \abs{D_i} + 1
 \wge \left\lvert{\bigcup_{i=1}^{k+1} C_i}\right\rvert + k.
\]
Now the claim follows by induction.
\end{proof}

\begin{lemma}
\label{the:MinimumPartition}
For a any connected graph $R$,
\[
 \min_\cA \{ \norm{\cA} - \abs{\cA}\}
 \weq \abs{V(R)} - 1,
\]
where the minimum is taken over all partitions of $E(R)$ into nonempty sets.
\end{lemma}
\begin{rcomm}
For a disconnected graph $R$ consisting of two nonoverlapping triangles, the trivial link partition produces $\norm{\cA} - \abs{\cA} = 6 - 1 = 5$. The partition with two parts consisting of the link sets of the connected components gives $\norm{\cA} - \abs{\cA} = 6 - 2 = 4$. We might conjecture that in general,
\[
 \min_\cA \{ \norm{\cA} - \abs{\cA}\}
 \weq \abs{V(R)} - \#\text{connected components of $R$}.
\]
\end{rcomm}
\begin{proof}
Fix some partition $\cA$ of $E(R)$ into nonempty sets, and label its parts as $A_1,\dots, A_k$. Because $R$ is connected, there are no isolated nodes, and hence it follows that $\cup_i A_i^\flat = V(R)$. The connectedness of $R$ also implies that for every $i$ there exists some $j \ne i$ such that $A_i^\flat \cap A_j^\flat$ is nonempty.
Lemma~\ref{the:OverlappingUnionBound} can be hence applied to conclude that
\[
 \abs{V(R)}
 \weq \left\lvert{\bigcup_{i} A_i^\flat}\right\rvert
 \wle \sum_i \abs{A_i^\flat} - k + 1
 \wle \norm{\cA} - \abs{\cA} + 1.
\]
The claim follows because the above upper bound holds as equality for the trivial partition $\cA = \{ E(R) \}$ in which $\abs{\cA} = 1$ and $\norm{\cA} = \abs{V(R)}$. 
\end{proof}

\begin{lemma}
\label{the:GenericUpperBound}
For any graph $R$ with $V(R) \subset V(G)$, the probability that $G$ contains $R$ as a subgraph is bounded by
\[
 \pr(G \supset R)
 \wle \sum_\cA m^\abs{\cA} \prod_{A \in \cA} \frac{(\pi)_{\abs{A^\flat}, \abs{A}}}{(n)_\abs{A^\flat}}.
\]
where the sum is over all partitions $\cA$ of $E(R)$ into nonempty sets.
\end{lemma}

\begin{rcomm}
For a lower bound, consider the trivial partition $\cA_0 = \{A\}$ with $A = E(R)$. Then all maps from $\sigma: \cA \to [m]$ are constants, and
\[
 \sum_\sigma \prod_{A \in \cA_0} q_{\sigma(A)}^\abs{A} \frac{(x_{\sigma(A)})_\abs{A^\flat}}{(n)_\abs{A^\flat}}
 \weq \sum_{k=1}^m q_k^\abs{E(R)} \frac{(x_k)_\abs{V(R)}}{(n)_\abs{V(R)}}
 \weq \frac{m}{(n)_\abs{V(R)}} (\pi)_{\abs{V(R)}, \abs{E(R)}}.
\]
Now
\[
 \pr \left( G \supset R \right)
 \weq \pr \left( \bigcup_\cA \bigcup_\sigma \bigcap_{A \in \cA} \{ E(G_{\sigma(A)}) \supset A \} \right) \\
 \wge \pr \left( \bigcup_\sigma \bigcap_{A \in \cA_0} \{ E(G_{\sigma(A)}) \supset A \} \right).
\]
Maybe we can show with a second moment method to the right side that 
\[
 \pr \left( G \supset R \right)
 \wgesim n^{-\abs{V(R)}+1} (\pi)_{\abs{V(R)}, \abs{E(R)}}.
\]
\end{rcomm}

\begin{proof}
Observe that $G = \cup_{k=1}^m G_k$, where $G_k$ is the graph with node set $[n]$ and link set $\{e \in \binom{V_k}{2}: C_{e,k}=1\}$. We see that $G \supset R$ if and only if there exists a partition $\cA$ of $E(R)$ into nonempty sets, and an injective map $\sigma: \cA \to [m]$ such that $E(G_{\sigma(A)}) \supset A$ for all $A \in \cA$. Here $\sigma(A)$ refers to a layer which is responsible for realizing the links in $A$. Observe that
\begin{align*}
 \pr( E(G_k) \supset A )
 \weq \pr( V_k \supset A^\flat, \ C_{e,k} = 1 \ \text{for all} \ e \in A )
 \weq q_k^\abs{A} \frac{(x_k)_\abs{A^\flat}}{(n)_\abs{A^\flat}}
\end{align*}
Hence by the independence of $G_1,G_2, \dots, G_m$, it follows that
\begin{align*}
 \pr \left( G \supset R \right)
 &\weq \pr \left( \bigcup_\cA \bigcup_\sigma \bigcap_{A \in \cA} \{ E(G_{\sigma(A)}) \supset A \} \right) \\
 &\wle \sum_\cA \sum_\sigma \pr \left( \bigcap_{A \in \cA} \{ G_{\sigma(A)} \supset A \} \right) \\
 &\weq \sum_\cA \sum_\sigma \prod_{A \in \cA} q_{\sigma(A)}^\abs{A} \frac{(x_{\sigma(A)})_\abs{A^\flat}}{(n)_\abs{A^\flat}}.
\end{align*}
In the above sums and unions, the symbol $\sigma$ ranges over all injective maps from $\cA$ into $[m]$.  To simplify the last sum, we switch to labeled partitions. A labeled partition of $E(R)$ of size $b$ is a list $(A_1,\dots, A_b)$ of disjoint sets such that $\cup_{i=1}^b A_i = E(R)$. Because each (unlabeled) partition of size $b$ corresponds to $b!$ labeled partitions, and each injection from $\cA$ of size $b$ into $[m]$ corresponds to a list $(k_1,\dots, k_b)$ of distinct layers, we can condition on the size of the partition and write
\begin{align*}
 &\sum_\cA \sum_\sigma  \prod_{A \in \cA} q_{\sigma(A)}^\abs{A} \frac{(x_{\sigma(A)})_\abs{A^\flat}}{(n)_\abs{A^\flat}} \\
 &\weq \sum_{b = 1}^{\abs{E(R)}} \frac{1}{b!} \sum_{(A_1,\dots,A_b)} \sum_{(k_1, \dots, k_b) \in [m]^b_{\ne}}
\prod_{i=1}^b q_{k_i}^\abs{A_i} \frac{(x_{k_i})_\abs{A_i^\flat}}{(n)_\abs{A_i^\flat}} \\
 &\wle \sum_{b = 1}^{\abs{E(R)}} \frac{1}{b!} \sum_{(A_1,\dots,A_b)} \sum_{(k_1, \dots, k_b) \in [m]^b}
\prod_{i=1}^b q_{k_i}^\abs{A_i} \frac{(x_{k_i})_\abs{A_i^\flat}}{(n)_\abs{A_i^\flat}} \\
 &\weq \sum_{b = 1}^{\abs{E(R)}} \frac{1}{b!} \sum_{(A_1,\dots,A_b)} 
 \prod_{i=1}^b \left( \sum_{k=1}^m q_{k}^\abs{A_i} \frac{(x_{k})_\abs{A_i^\flat}}{(n)_\abs{A_i^\flat}} \right) \\
 &\weq \sum_{b = 1}^{\abs{E(R)}} \frac{m^b}{b!} \sum_{(A_1,\dots,A_b)} 
 \prod_{i=1}^b \frac{(\pi)_{\abs{A_i^\flat},\abs{A_i}}}{(n)_\abs{A_i^\flat}}.
\end{align*}
The last sum also equals
\[
 \sum_\cA m^\abs{\cA} \prod_{A \in \cA} \frac{(\pi)_{\abs{A^\flat}, \abs{A}}}{(n)_\abs{A^\flat}}.
\]
\end{proof}

\begin{lemma}\mnote{We get a lower bound by just taking one partition in the union into account.}
\label{the:KappaBound}
Assume that $m = O(n)$ and $(\pi)_{r,s} = O(1)$ for all $s \le \abs{E(R)}$ and $r \le \abs{V(R)}$. Then for any connected graph $R$ with $V(R) \subset V(G)$, the probability that $G$ contains $R$ as a subgraph is bounded by
\[
 \pr(G \supset R)
 \weq O(n^{-\abs{V(R)}+1}).
\]
\end{lemma}
\begin{proof}
Because $(n)_r \ge (1-r/n) n^r$, it follows that
\[
 \prod_{A \in \cA} (n)_\abs{A^\flat}
 \wge \prod_{A \in \cA} (1-\abs{A^\flat}/n) n^\abs{A^\flat}
 \wge (1-\abs{V(R)}/n) n^\norm{\cA}
 \wge 0.99 n^\norm{\cA}
\]
for all $n \ge 100 \abs{V(R)}$.
Moreover, by Lemma~\ref{the:MinimumPartition} we find that
\[
 m^\abs{\cA} n^{-\norm{\cA}}
 \weq \left(\frac{m}{n}\right)^{\abs{\cA}} n^{\abs{\cA} - \norm{\cA}}
 \wle \left(\frac{m}{n}\right)^{\abs{E(R)}} n^{-\abs{V(R)}+1}.
\]
Hence by Lemma~\ref{the:GenericUpperBound} we find that
\begin{align*}
 \pr(G \supset R)
 &\wle \sum_\cA m^\abs{\cA} \prod_{A \in \cA} \frac{(\pi)_{\abs{A^\flat}, \abs{A}}}{(n)_\abs{A^\flat}} \\
 &\wle 0.99 \sum_\cA m^\abs{\cA}  n^{-\norm{\cA}} \prod_{A \in \cA} (\pi)_{\abs{A^\flat}, \abs{A}} \\
 &\wle 0.99 \left(\frac{m}{n}\right)^{\abs{E(R)}} n^{-\abs{V(R)}+1} \sum_\cA \prod_{A \in \cA} (\pi)_{\abs{A^\flat}, \abs{A}}.
\end{align*}

Hence
\[
 \prod_{A \in \cA} \frac{(\pi)_{r, s}}{(n)_r}
 \wle c \norm{q}_\infty^\abs{E(R)} n^{-\norm{\cA}}
\]
where $c = \max_r (\pi)_{r, 0}$. Hence
\[
 \pr(G \supset R)
 \wle \sum_\cA m^\abs{\cA} \prod_{A \in \cA} \frac{(\pi)_{\abs{A^\flat}, \abs{A}}}{(n)_\abs{A^\flat}}
 \wle c \norm{q}_\infty^\abs{E(R)} \sum_\cA m^\abs{\cA} n^{-\norm{\cA}}
\]
\end{proof}




\section{Old proof stuff}

\begin{bcomm}

\subsubsection{Compound Poisson approximations for the upper bound (deprecated?)}

Next, by Lemma~\ref{the:CompoundBinPoisson}, we note that $\dtv( f, \tilde f_\tau ) \le \sum_{xy} n_{xy} p_{xy}^2  = \sum_k (\frac{X_k}{n-\tau})^2 \le \frac{M^2 n}{(m-\tau)^2} \le 4 M^2 m^{-2}n$ where $\tilde f _\tau = \CPoi( \lambda_{n,\tau}, \bar g_n )$ is a compound Poisson distribution with rate parameter $\lambda_{n,\tau} = \frac{n}{m-\tau} (P_n)_{10}$ and increment distribution $g_n$. Then by Lemma~\ref{the:CPoiPerturbation}, $\dtv(\tilde f_\tau, \tilde f_0) \le \abs{\lambda_{n,\tau} - \lambda_n} \le 2 M m^{-2} n \tau$ due to $(P_n)_{10} \le M$. Hence $\dtv(f, \tilde f_0) \le 4 M^2 m^{-2}n + 2 M m^{-2} n \tau$.

A natural coupling implies that $\dtv( \law(Q'_0, \dots, Q'_\tau), \law(Q''_0, \dots, Q''_\tau)) \le (4 M^2 m^{-2}n + 2 M m^{-2} n \tau) \tau \le 6M^2 m^{-2} n \tau^2$, where $(Q''_0, \dots, Q''_\tau)$ is defined similarly as in \eqref{eq:ExplorationQueueNew} but with $Z_1,Z_2,\dots$ replaced by independent $\tilde f_0$-distributed random integers $Z''_1, Z''_2, \dots$ Hence $\abs{\rho_\tau( f ) - \rho_\tau( \tilde f_0 )} \le (4 M^2 m^{-2}n + 2 M m^{-2} n \tau) \tau \le 6M^2 m^{-2} n \tau^2$.

The claim follows by noting that $\tau^2 \le \tau^2 \log t$ and $\tau \le \tau^2 \log \tau$ for $\tau \ge 3$.
\end{bcomm}

\subsubsection{Compound Poisson approximations (deprecated?)}
\begin{bcomm}
(i) Define modifications of the distribution $f_{\delta, \tau, \nu}$ in \eqref{eq:LowerOffspring} by
\[
 f_{\delta} = \law\Big( \sum_{(x,y) \in A} \sum_{k=1}^{n_{xy}} B_{xy}(k) T_{xy}(k) \Big),
 \quad
 \tilde f_{\delta} = \law\Big( \sum_{(x,y) \in A} \sum_{k=1}^{n_{xy}} \tilde B_{xy}(k) T_{xy}(k) \Big),
\]
where the random variables are mutually independent and such that $\law(B_{xy}(k)) = \Ber( (1-\delta) \frac{x}{m})$, $\law(\tilde B_{xy}(k)) = \Poi( (1-\delta) \frac{x}{m})$, and $\law(T_{xy}(k)) = \Bin^+(x-1,y)$. A natural coupling implies that
\[
 \dtv(f_{\delta}, f_{\delta, \tau, \nu})
 \wle \sum_{(x,y) \in A} (n_{xy} - n^a_{xy,\tau-1}) (1-\delta) \frac{x}{m}
 \wle M \abs{A} \frac{\tau\nu}{m}.
\]
Because $\dtv(\Ber(p), \Poi(p)) = p(1-e^{-p}) \le p^2$ for all $0 \le p \le 1$, a natural coupling implies that
\[
 \dtv(f_{\delta}, \tilde f_{\delta})
 \wle \sum_{(x,y) \in A} \sum_{k=1}^{n_{xy}} ((1-\delta) \frac{x}{m})^2
 \wle (\frac{M}{m})^2 \sum_{(x,y) \in A} n_{xy}
 \wle M^2 (n/m) m^{-1}.
\]
Hence
\[
 \dtv(f_{\delta, \tau, \nu}, \tilde f_{\delta})
 \wle M \abs{A} \frac{\tau \nu}{m} + M^2 (n/m) m^{-1}
 \wle M^2 \left(1 + \frac{n}{m} \right) \abs{A} \frac{\tau \nu}{m}.
\]
Now let us observe that $\law(\sum_{k=1}^{n_{xy}} \tilde B_{xy}(k) T_{xy}(k)) = \CPoi((1-\delta) n_{xy} \frac{x}{m}, \Bin^+(x-1,y))$, so by Lemma~\ref{the:CPoiSum} we see that $\law(\tilde f_{\delta}) = \CPoi((1-\delta)\lambda, g)$ with $\lambda = \sum_{(x,y) \in A} \frac{x}{m} n_{xy} = \frac{n}{m} \int_A x P_n(dx,dy)$ and $g = \int_A \Bin^+(x-1,y) \frac{x P_n(dx,dy)}{\int_A x P_n(dx,dy)}$. 

(ii) Define modifications of the distribution $f_{\delta, \tau, \nu}$ in \eqref{eq:LowerOffspring} by
\[
 \tilde f_{\delta, \tau, \nu}
 = \law \Big( \sum_{(x,y) \in A} \sum_{k=1}^{n^a_{xy,t-1}} \tilde B_{xy}(k) T_{xy}(k) \Big),
 \quad
 \tilde f_{\delta} = \law\Big( \sum_{(x,y) \in A} \sum_{k=1}^{n_{xy}} \tilde B_{xy}(k) T_{xy}(k) \Big),
\]
where $\law(\tilde B_{xy}(k)) = \Poi( (1-\delta) \frac{x}{m})$. Because $\dtv(\Ber(p), \Poi(p)) = p(1-e^{-p}) \le p^2$ for all $0 \le p \le 1$, and $\sum_{(x,y) \in A} \sum_{k=1}^{n^a_{xyt}} \le n$, a natural coupling implies that
\[
 \dtv(f_{\delta, \tau, \nu}, \tilde f_{\delta, \tau, \nu})
 \wle \sum_{(x,y) \in A} \sum_{k=1}^{n^a_{xyt}} \left( (1-\delta) \frac{x}{m} \right)^2
 \wle \frac{M^2}{m^2} n.
\]
We also find that, noting that $\pr( \tilde B_{xy}(k)) > 0) \le (1-\delta) \frac{x}{m}$, 
\[
 \dtv(\tilde f_{\delta, \tau, \nu}, \tilde f_{\delta})
 \wle \sum_{(x,y) \in A} (n_{xy} - n^a_{xy,\tau-1}) (1-\delta) \frac{x}{m}
 \wle M \abs{A} \frac{\tau\nu}{m}.
\]

\begin{rcomm}
A different approximation can be derived by noting that 
\[
 f_{\delta, \tau, \nu}
 \weq \law \Big( \sum_{(x,y) \in A} \sum_{k=1}^{N^a_{xy,t-1}} T_{xy}(k) \Big),
 \quad
 \tilde f_{\delta, \tau, \nu}
 \weq \law \Big( \sum_{(x,y) \in A} \sum_{k=1}^{\tilde N^a_{xy,t-1}} T_{xy}(k) \Big),
\]
where $\law(N^a_{xy,t-1}) = \Bin(n^a_{xy,t-1}, (1-\delta) \frac{x}{m})$ and $\law(\tilde N^a_{xy,t-1}) = \Poi(n^a_{xy,t-1} (1-\delta) \frac{x}{m})$, and applying the inequality \cite{LeCam_1960} $\dtv(\Bin (n,p), \Poi(np)) \le 9p$ (see Lemma~\ref{the:CompoundBinPoisson}). Then a natural coupling implies that
\[
 \dtv(f_{\delta, \tau, \nu}, \tilde f_{\delta, \tau, \nu})
 \wle \sum_{(x,y) \in A} 9 (1-\delta) \frac{x}{m}
 \wle 9 M \abs{A} m^{-1}.
\]
\end{rcomm}

Now a natural coupling implies that $\abs{ \rho_\tau(f_1) - \rho_\tau(f_2)} \le \tau \dtv(f_1,f_2)$. Hence, 
\[
 \rho_\tau(f_{\delta, \tau, \nu})
 \wge \rho_\tau( \tilde f_{\delta, \tau, \nu}) - M^2 (n/m) \tau m^{-1}.
\]
Now we cannot approximate $\rho_\tau( \tilde f_{\delta, \tau, \nu}) \approx \rho_\tau( \tilde f_{\delta})$ well enough. However, we note that
\[
 \rho_\tau(f_{\delta, \tau, \nu})
 \wge \rho_\tau( \tilde f_{\delta, \tau, \nu}) - M^2 (n/m) \tau m^{-1}
 \wge \rho( \tilde f_{\delta, \tau, \nu}) - M^2 (n/m) \tau m^{-1},
\]
and we can approximate $\rho(\tilde f_{\delta, \tau, \nu}) \approx \rho( \tilde f_{\delta})$ well. Here we can cite either BJR07 AND/OR Leskelä and Ngo.

\end{bcomm}

\rnote{Old proof stuff} where $\rho_k(Z)$ denotes the probability that a Galton--Watson process with offspring distributed according to $Z$ has total progeny larger than $k$, and where $Z_{n,\delta}^\pm$ are compound binomial distributions defined using stochastic representations
\begin{equation}
 \label{eq:CompoundBinomial}
 Z_{n,\delta}^{\pm} \ = \sum_{(x,y) \in A^0} \sum_{s=1}^{N^\pm_{xy}} T_{xy}(s),
\end{equation}
where the random variables appearing in the sum are mutually independent, $T_{xy}(s)$ is $\Bin^+(x-1,y)$-distributed for all $s$, and
\[
 N^-_{xy}  \eqst \Bin \Big((1-2\delta) nP(x,y), (1-\delta) \frac{x}{m} \Big),
 \quad
 N^+_{xy} \eqst \Bin \Big(nP_n(x,y), \frac{x}{m-\omega'} \Big).
\]
Let us also define compound Poisson distributions $\tilde Z_{n,\delta}^\pm$ using the same stochastic representation \eqref{eq:CompoundBinomial} but with $N^\pm_{xy}$ replaced by a Poisson-distributed random variable $\tilde N^\pm_{xy}$ having the same mean as $N^\pm_{xy}$. Using the total variation distance bound $\dtv(\Bin (n,p), \Poi(np)) \le 9p$ by Le Cam \cite{LeCam_1960}, we find that
\begin{rcomm}
For $\rho_t(Z) = \pr(T(Z) > t)$ where $T(Z)$ represents the total progeny of GW with offspring $Z$, we may prove that $\abs{\rho_t(Z) - \rho_t(Z')} \le t \dtv(Z,Z')$.
\end{rcomm}
\begin{equation}
 \label{2019-06-14-1++}
 \begin{aligned}
 \bigl| \rho_k ( Z_{n,\delta}^-) - \rho_k ( \tilde Z_{n,\delta}^- ) \bigr|
 &\wle 9 k \frac{M}{m}, \\
 \bigl| \rho_k ( Z_{n,\delta}^+ ) - \rho_k ( \tilde Z_{n,\delta}^+ ) \bigr|
 &\wle 9 k \frac{M}{m-\omega'}.
 \end{aligned}
\end{equation}
Because $\Bin^+(x-1,y)$ equals the Dirac measure at zero whenever $x \le 1$ or $y=0$, we find that we can replace $A_0$ by $A$ in \eqref{eq:CompoundBinomial}. With the help of Lemma~\ref{the:CPoiSum} we find that
\begin{align*}
 \law(\tilde Z_{n,\delta}^-) &\weq \CPoi( (1-\delta)(1-2\delta) \frac{n}{m} (P)_{10}, \bar g)
 \wto \CPoi( (1-\delta)(1-2\delta) \lambda, \bar g) \\
 \law(\tilde Z_{n,\delta}^+) &\weq \CPoi( \frac{n}{m-\omega'} (P_n)_{10}, \bar g_n)
 \wto \CPoi( \lambda, \bar g)
\end{align*}
where $\bar g$ is the distribution defined by \eqref{eq:MixedBinPlus} and $\bar g_n$ is defined using the same formula with $P$ replaced by $P_n$.

Let $\cY^+_{\delta}$, $\cY^{-}_{\delta}$, $\cY$ be compound Poisson distributions such that 
\begin{equation}
 \label{2019-06-24}
 \cY^\pm_\delta = \CPoi((1 \pm \delta) \lambda, \bar g),
 \quad
 \cY = \CPoi(\lambda, \bar g),
\end{equation}
where $\lambda = \mu (P)_{10}$ and $\bar g$ is the distribution defined in \eqref{eq:MixedBinPlus}.

From $P_n \weakto P$ , and (\ref{2019-06-14-1++}), recalling that $\frac{n}{m} \to \mu$, we obtain for  $k=k(n)$ such that $1 \le k \ll m$,
\begin{eqnarray}
\label{2019-06-25+1}
 &&
 \rho_k \bigl(\cY^{-}_{3\delta}\bigr)
 \wle \rho_k \bigl( \tilde Z_{n,\delta}^- \bigr)
 \wle \rho_k \bigl( Z_{n,\delta}^- \bigr) + o(1), \\
 &&
 \label{2019-06-25+2}
 \rho_k \bigl(\cY^+_{2\delta}\bigr)
 \wge \rho_k \bigl(\tilde Z_{n,\delta}^+ \bigr)
 \wge \rho_k \bigl(Z^+_{n,\delta} \bigr) - o(1).
\end{eqnarray}
Then (\ref{2019-06-25+1}), (\ref{2019-06-25+2}),
(\ref{2019-07-26+1}) imply
\[
 \rho_\omega \bigl( \cY^{-}_{3\delta} \bigr) - o(1)
 \wle \pr(C_i > \omega)
 \wle \rho_{\omega'} \bigl( \cY^+_{2\delta} \bigr)+o(1).
\]

Now the lower bound of \eqref{eq:MBGiant1} follows from $\rho_\omega(\cY^{-}_{3\delta}) \ge \rho(\cY^{-}_{3\delta})\to \rho(\cY)$ as $\delta\downarrow 0$.

For the upper bound, first fix any $\epsilon > 0$. Then find a large enough $k_0$ such that $\rho_{k_0}(\cY) \le \rho(\cY)+\epsilon$. Then fix a small $\delta_0 > 0$ so that $\rho_{k_0}(\cY^+_{2\delta}) \le \rho_{k_0}(\cY)+\epsilon$ for all $0 < \delta < \delta_0$. Then for all $k \ge k_0$ and all $0 < \delta < \delta_0$,
\[
 \rho_k(\cY^+_{2\delta})
 \wle \rho_{k_0}(\cY^+_{2\delta})
 \wle \rho(\cY) + 2 \epsilon.
\]
Hence
\[
 \rho_k \bigl( \tilde Z_{n,\delta}^+ \bigr)
 \wle \rho_k \bigl( \cY^+_{2\delta} \bigr)
 \wle \rho(\cY) + 2 \epsilon
\]

%

\begin{rcomm}
Any layer $G_k$ with type in $A \setminus A_0$ has size less than two or strength zero, and hence $E(G_k) = \emptyset$ with probability one. These layer are called trivial. Hence $E(G) = \cup_{k: (X_k,Y_k) \in A_0} E(G_k)$, and we may ignore the trivial layers from the component exploration. The set of nontrivial layers is denoted by $\DD_{0}$. The number of nontrivial layers in the $n$-th model equals $n P_n(A_0) = (P(A_0) +o(1))n \asymp n$.
\end{rcomm}

\emph{Proof of the upper bound in \eqref{2019-07-26+1}}. \bnote{Upper coupling bound for large component probability.}
Let $n'=nP_n(A_0)$ be the number of nontrivial layers in the $n$-th model. We assume without loss of generality that 
nontrivial layers are ordered by their size and labeled by 1,2,\dots,$n'$.

Given a node $w$, define the list $L_w$ of nodes using a BFS type exploration procedure. In the beginning all nodes are uncolored, all nontrivial layers are unmarked, and $L_w = \emptyset$. After a node is added to $L_w$ the node is colored white. We add $w$  to the list. Next we proceed recursively. We choose  the oldest (with respect to inclusion to $L_w$) white 
node, say  $u$, from $L_w$. For layers $i=1,2,\dots, n'$ such that $u \in V(G_i)$ and $G_i$ is unmarked, we mark $G_i$ (we say that $G_i$ is marked by $u$) and add to $L_w$ (in increasing order) all uncolored nodes of $G_i$ that are connected to $u$ by paths in $G_i$. We say that $G_i$ brings these nodes to the list and attach label $G_i$ to each of them.  Afterwards we color $u$ black. Nodes added to $L_w$ in this step are called children of $u$. We then chose the oldest white node from $L_w$, add to $L_w$ its children and color this node black etc. We stop when there are no more white nodes in $L_w$ or there are no more unmarked layers $G_i$ left. We denote $L_w= \{u_1,u_2,\dots\}$, where 
elements are listed in the order of their inclusion to the list ($u_i$ is older than $u_j$ for $i<j$ and $u_1=w$). We denote $L_{w,k}=\{u_1,\dots, u_k\}$ the set of $k$ oldest nodes of $L_w$. Note that $L_w$ is a subset of $C_{w}$. 
For any $u_i\in L_w$ with $i\ge 2$ there is unique $i^*\in[1,i)$ such that $u_i$ is a child of $u_{i^*}$ (equivalently, $u_{i^*}$ is the parent of  $u_i$). While constructing the list $L_w$ we keep track of the layers $G_{i_1}, G_{i_2},\dots$ that have been marked one after another ($G_{i_s}$ was marked before $G_{i_t}$ for $s<t$). For $u_j \in L_w$ the number $r=r(j)$ tells us that $u_j$ was brought to the list by $G_{i_r}$, the $r$-th member of the sequence ${\mathbb G}_w = \{G_{i_1}, G_{i_2},\dots\}$. A layer $G_{i_s}$ marked by $u\in L_w$ is called \emph{void} if $u$ has no neighbors in $G_{i_s}$ linked to $u$ by links labeled $G_{i_s}$ (in this case $G_{i_s}$ brings no children to $u$). Note that any $G_{i_j}$ is void with probability at most $1-{\hat q}$. A nontrivial layer $G_{i_s}$ is called \emph{regular} if $\cup_{j=1}^{s-1} G_{i_j}$ and $G_{i_{s}}$ intersect 
in a single point. Node $w$ is called \emph{$k$-regular} if $|L_w| \ge k$  and $G_{i_j}$ is regular for $j=2,3,\dots, r(k)$. The set of $k$-regular nodes of $G$ is denoted $W_k$. Note that the events $\{|C_{w}|\ge k, w\in W_k\}$ and $\{|L_w|\ge k, w\in W_k\}$ are equal.

The number of nodes brought to the list $L_w$ by a regular layer $G_{i_s}$ of type $(x,y)$ is $\Bin^+(x-1,y)$-distributed. For an irregular layer this number may be smaller, because $G_{i_s}$ cannot bring to $L_w$ those nodes of $G_{i_s}$ that have been colored in previous steps of the exploration. Therefore, as long as $k \le \omega'$, a coupling of the exploration process with the branching process $\gw(Z_n^+)$ shows that
\begin{equation}
\label{2019-07-09}
 \pr\{|L_w| \ge k\}
 \wle 
 \pr\{\gw(Z_n^+) \ge k\}
 \weq \rho_k(Z_n^+).
\end{equation}

Next we show that 
\begin{equation}
\label{2019-06-30}
 \pr\{|L_w|\ge k, w\notin W_k\}
 \wle 
 {\hat k}^2 M^2 m^{-1} + 2k^{-1},
\end{equation}
where ${\hat k}:=2k/{\hat q}$. For node $w$ with $|L_w| \ge k$ the event $\{w\notin W_k\}$ implies that either $G_{i_s}$ is irregular for some $s\le {\hat k}$ (this event we denote ${\cal A}_k$), or there are at least ${\hat k}-k+2$ void layers $G_{i_l}$ with $l \le  {\hat k}$ (this event we denote ${\cal B}_k$). Indeed, on the event 
${\overline {\cal A}}_k\cap \{w\notin W_k\} \cap \{|L_w|\ge k\}$, the index $j$ of the first observed irregular layer $G_{i_j}$ satisfies ${\hat k}<j\le r(k)$. But the inequality ${\hat k} < r(k)$ implies that among the first ${\hat k}$ layers from $\DD_w$ there are less than $k-1$ nonvoid ones as each nonvoid set contributes at least one new node to the list. Now (\ref{2019-06-30}) follows from the inequalities
\begin{eqnarray}
 \label{2019-06-30+1}
 \pr\{{\cal B}_k\} &\wle &\pr\{Y<k-1\} \wle 2k^{-1}, \\
 \label{2019-06-30+2}
 \pr\{{\cal A}_k\} &\wle & \sum_{2\le s\le {\hat k}} \pr\{D_{i_s} \ \text{is irregular} \}
 \wle ({\hat k}-1)\frac{({\hat k}-1)M^2}{m}.
\end{eqnarray}
Here $Y \eqst \Bin({\hat k}, {\hat q})$ and (\ref{2019-06-30+1}) follows by Chebyshev's inequality.
In (\ref{2019-06-30+2}) we estimated $\pr\{G_{i_s} \ \text{is irregular}\} \le ({\hat k}-1)M^2/(m-({\hat k}-1))$. Indeed, given $H_{s-1}=\cup_{1\le j\le s-1}G_{i_j}$, the size 
$|G_{i_s}|=t$ and the event that $G_{i_s}$ is marked by $u_j$, the probability that $G_{i_s}$ is irregular is the conditional probability 
$p^* = \pr\bigl\{ |H_{s-1}\cap D^*|\ge 2\,\bigr| u_j\in D^* \bigr\}$, where $D^*$ is a random subset of size $t$ of the set $[m] \setminus \{u_1,\dots, u_{j-1}\}$. For $|H_{s-1}|=h$ we have 
\begin{equation}
 \label{2019-07-01}
 p^*
 = 
 \frac{
 \pr\{|H_{s-1}\cap D^*|\ge 2, u_j\in D^*\}
 }
 {
 \pr\{u_j\in D^*\}
 }
 \le
 \frac{(h-j)(t-1)}{m-j}.
\end{equation}
The last fraction upper bounds the probability that $D^* \setminus \{u_j\}$ of size $t-1$ intersects with $
 H_{s-1} \setminus \{u_1,\dots, u_j\}$ of size $h-j$. Note that 
 $(h-j)/(m-j)\le h/m$ and  $h\le ({\hat k}-1)M$ and $t\le M$. Therefore, the right side of \eqref{2019-07-01} is at most $({\hat k}-1)M^2/m$. This shows (\ref{2019-06-30+2}) and we arrive to 
 (\ref{2019-06-30}). It follows from 
(\ref{2019-06-30}) that
\begin{equation}
\label{2019-07-24+1}
 k=o(\sqrt{m})
 \ \Rightarrow \
 \pr\{|L_w|\ge k, w\notin W_k\} = o(1).
\end{equation}
The same argument yields  
\begin{equation}
\label{2019-07-25}
 k=o(\sqrt{m})
 \ \Rightarrow \
 \pr\{|C_{w}|\ge k, w\notin W_k\}=o(1).
\end{equation}  
Indeed, for $w$ with $|C_{w}| \ge k$, the event $w\notin W_k$ 
implies that either $|L_w|<k$ or $|L_w|\ge k$ and 
$G_{i_j}$ is irregular for some $2 \le j\le r(k)$. The 
probability of the latter event is bounded by \eqref{2019-07-24+1}.
Furthermore, the event $\{|C_{w}|\ge k$, $|L_w|<k\}$ implies that an irregular layer $G_{i_s}$ has been marked by some $u_i \in L_w$ (note that $i<k$). 

\begin{rcomm}
-----x-x-x--x-- for preprint only---x-x-x-x-x-x-

Indeed we always have $L_w\subset C_w$. The situation where $L_w\not=C_w$ happens when some irregular $D_{i_s}$ brings a node  $u'$ to the list $L_w$ such that $u_j\in D_{i_t}$, for $t<s$, and $u_j$ is connected by an link labeled $D_{i_t}$ to another node $u''\in D_{i_t}$, which do not belong to $L_w$. Then $u''\in C_w$ and $u''\notin L_w$.

-------x-x-x-x-x-- end of prteprint only -------x-x-x-x-x- 
\end{rcomm}

The probability that the index $s$ of the first irregular layer $G_{i_s}$ satisfies $s \le {\hat k}$ is bounded by 
\eqref{2019-06-30+2}. On the other hand, the event $s>{\hat k}$  implies that at most $k-1$ elements of 
the list $L_w$  have marked at least ${\hat k}-k+2$ void sets before an irregular set was marked. The 
probability of such event is bounded by \eqref{2019-06-30+1}.

Finally, we observe that the events 
$\{|C_{w}| \ge k, w\in W_k\}$ and $\{|L_w|\ge k, w\in W_k\}$ are equal. 
Now \eqref{2019-07-24+1}, \eqref{2019-07-25} combined with \eqref{2019-07-09} imply
\[
 \pr\{|C_{w}|\ge k\}
 \weq \pr(|L_w|\ge k\} + o(1)
 \wle \rho_k(Z_n^+) + o(1).
\]
and hence the upper bound of \eqref{2019-07-26+1} follows.

\subsection{Old proof of the lower coupling bound in \eqref{2019-07-26+1}}
\rnote{Old stuff}

Denote $\cA_t = \{ \max_{(x,y) \in A_0} \abs{\cN^m_{xyt}} \le N\}$ and $\cB_t = \{\abs{\cQ_t} > 0\}$, and denote
$\cC_t = \cap_{s \le t} (\cA_s \cap \cB_s)$.

Because every explored layer overlaps the earlier marked layers at a single node, on the event that $v_t$ is the $t$-th explored node, $Z_t = \sum_{(x,y) \in A_0} Z_{xyt}$ where $Z_{xyt} = \sum_{k \in \cN^e_{xyt}} \deg_{v_t}(\bar G_k)$. Given $\cN^m_t$ and past history $\cF_{t-1}$, on the event $\cC_{t-1} \cap \cA_t$, the sizes $\abs{\cN^e_{xyt}}$, $(x,y) \in A_0$, are mutually independent and $\Bin( \abs{\cN^m_{xyt}}, (1-\delta)(1-\frac{t-1}{m}) )$-distributed. Furthermore, $\abs{\cN^m_{xyt}}$ are mutually independent and $\Bin( n^a_{xyt}, \frac{x}{m-(t-1)} )$-distributed given $\cF_{t-1}$. Hence the number of discovered nodes during step $t$ can be represented as
\[
 \law( Z_t \cond \cC_{t-1} \cap \cA_t )
 \weq \law(Z_t' \, \cond \cA_t' )
\]
where 
\[
 Z_t'
 \weq \sum_{(x,y) \in A_0} \sum_{k=1}^{N'_{xyt}} B_{xy}(k) T_{xy}(k),
 \qquad
 \cA_t' = \Big\{ \max_{(x,y)\in A_0} N'_{xyt} \le N \Big\}, 
\]
and where the random variables in the above sums are mutually independent, $\law(N'_{xyt}) = \Bin(n^a_{xyt}, \frac{x}{m-(t-1)})$, $\law(B_{xy}(k)) = \Ber( (1-\delta)(1-\frac{t-1}{m}))$, and $\law(T_{xy}(k)) = \Bin^+(x-1,y)$.

Note that $\abs{\cN^m_{xyt}} \eqst \Bin(n^a_{xyt}, \frac{x}{m-(t-1)})$, 
Assume that $t \le m/2$.
Because $n^{a}_{xyt} \le n$ and $\frac{x}{m-t+1} \le \frac{x}{m/2} \le 2 M m^{-1}$, we see that $\law(\abs{\cN^m_{xyt}}) \lest \Bin(n, 2 M m^{-1})$, and the moment generating function at one of the latter distribution is bounded by $e^{2M(e-1)m^{-1} n}$.
Therefore, Markov's inequality applied to $e^{\abs{\cN^m_{xyt}}}$ together with the union bound implies that $\pr( \cA_t^c ) \le m^{-2}$ when $N \ge \log(\abs{A_0} m^2) + 2M(e-1)m^{-1} n$ and $t \le m/2$.

Now
\[
 \pr( \cA_1, Q_1 > r)
 \weq \pr(\cA_1) \pr(Q_1 > r \cond \cA_1)
 \wge (1-m^{-2}) \pr(Q_1 > r \cond \cA_1)
\]
Now, using $\pr(A \cond B) \ge \pr(A,B) = \pr(A) - \pr(A, B^c) \ge \pr(A) - \pr(B^c)$ (or the total variation distance bound),
\begin{align*}
 \pr(Q_1 > r \cond \cA_1)
 \weq \pr(Z_1 > r \cond \cA_1)
 &\weq \pr(Z'_1 > r \, \cond \cA'_1 ) \\
 &\wge \pr(Z'_1 > r) - \pr( \cA'_1 ) \\
 &\weq \pr(Q'_1 > r) - \pr( \cA'_1 ) \\
\end{align*}

Next, by conditioning on $\cF_1$ which contains information about which layers were marked and explored in step 1, and the contents of the explored layers in step 1, we know $Q_1$,
\begin{align*}
 \pr( Q_2 > r, \cA_1, \cA_2)
 &\weq \pr( Q_1 + (Z_2 - 1) > r, Q_1 > 0, \cA_1, \cA_2)  \\
 &\weq \E \pr_{\cF_1}( Q_1 + (Z_2 - 1) > r, Q_1 > 0, \cA_1, \cA_2)  \\
 &\weq \E 1(\cA_1, Q_1>0) \pr_{\cF_1}( Q_1 + (Z_2 - 1) > r, \cA_2)  \\
 &\weq \E 1(\cA_1, Q_1>0) \pr_{\cF_1}( Q_1 + (Z_2 - 1) > r \cond \cA_2) \pr_{\cF_1}(\cA_2) \\
 &\weq \E 1(\cA_1, Q_1>0) \pr_{\cF_1}( Q_1 + (Z'_2 - 1) > r \cond \cA'_2) \pr_{\cF_1}(\cA_2) \\
 &\wge \E 1(\cA_1, Q_1>0) \Big( \pr_{\cF_1}( Q_1 + Z'_2 - 1 > r ) - \pr_{\cF_1}((\cA'_2)^c) \Big) \pr_{\cF_1}(\cA_2) \\
 &\weq \pr( Q_1 + Z'_2 - 1 > r, Q_1 > 0, \cA_1) - \pr( \cA_1, Q_1>0, (\cA'_2)^c ) \\
 &\wge \pr( Q_1 + Z'_2 - 1 > r, Q_1 > 0, \cA_1) - \pr( (\cA'_2)^c ) \\
 &\wge \pr( Q_1 + Z'_2 - 1 > r, Q_1 > 0 \cond \cA_1) \pr(\cA_1) - \pr( (\cA'_2)^c ) \\
 &\wge \pr( Q'_1 + Z'_2 - 1 > r, Q'_1 > 0 \cond \cA'_1) \pr(\cA_1) - \pr( (\cA'_2)^c ) \\
 &\wge \pr( Q'_2 > r \cond \cA'_1) \pr(\cA_1) - \pr( (\cA'_2)^c ) \\
 &\wge \Big( \pr( Q'_2 > r ) - \pr( (\cA'_1)^c ) \Big) \pr(\cA_1) - \pr( (\cA'_2)^c ) \\
 &\wge \pr( Q'_2 > r ) \pr(\cA_1) - \pr( (\cA'_1)^c ) - \pr( (\cA'_2)^c ) \\
\end{align*}

Note also that $\pr(\cA_1) = \pr(\cA'_1)$. Therefore,
\begin{align*}
 \pr( Q_2 > r, \cA_1, \cA_2)
 &\wge \pr( Q'_1 + Z'_2 - 1 > r, Q'_1 > 0 \cond \cA'_1) \pr(\cA'_1) - \pr( (\cA'_2)^c ) \\
 &\weq \pr( Q'_1 + Z'_2 - 1 > r, Q'_1 > 0, \cA'_1) - \pr( (\cA'_2)^c ) \\
 &\weq \pr( Q'_2 > r, \cA'_1) - \pr( (\cA'_2)^c ) \\
 &\wge \pr( Q'_2 > r ) - \pr( (\cA'_1)^c ) - \pr( (\cA'_2)^c ) \\
\end{align*}

We claim that for any $r \ge 0$ and any $0 \le t \le \omega$,
\[
 \pr( Q_t > r, \cA_1, \dots, \cA_t)
 \wge \pr( \hat Q_t > r ) - \sum_{s=1}^t \pr( \hat \cA_s^c ).
\]
Assume that the claim holds for up to $t-1$. Fix some sigma-algebra, for which $Q_{t-1}$ and the events $\cA_1,\dots, \cA_{t-1}$ are measurable. Then
\begin{align*}
 \pr( Q_t > r, \cA_1, \dots, \cA_t)
 &\weq \pr( Q_{t-1} + Z_t - 1 > r, Q_{t-1} > 0, \cA_1, \dots, \cA_t)  \\
 &\weq \E 1(\cA_1, \dots, \cA_{t-1}, Q_{t-1}>0) \pr_{\cF_{t-1}}( Q_{t-1} + Z_t - 1 > r, \cA_t),
\end{align*}
Now we let the hat variables be independent of everything else here,
\begin{align*}
 \pr_{\cF_{t-1}}( Q_{t-1} + Z_t - 1 > r, \cA_t)
 \weq \pr_{\cF_{t-1}}( Q_{t-1} + \hat Z_t - 1 > r, \hat \cA_t),
\end{align*}
so that
\begin{align*}
 \pr( Q_t > r, \cA_1, \dots, \cA_t)
 &\weq \pr( Q_{t-1} + Z_t - 1 > r, Q_{t-1} > 0, \cA_1, \dots, \cA_t)  \\
 &\weq \pr( Q_{t-1} + \hat Z_t - 1 > r, Q_{t-1} > 0, \cA_1, \dots, \cA_{t-1}, \hat\cA_t)  \\
\end{align*}

Fix $t \ge 1$. Fix some sets $\hat \cN_{s}^e \subset \hat \cN_{s}^d \subset \hat \cN_{s}^a \subset [n]$ and $\hat \cM_{s}^e \subset \hat \cM_{s}^d \subset [m]$, $1 \le s \le t-1$, a node $v \in [m]$ such that $v = \min \hat \cQ_{t-1}$ with $\hat \cQ_{t-1} = \hat \cM^d_{t-1} \setminus \hat \cM^e_{t-1}$, and some graphs $\hat G_k$, $k \in \cup_{s<t} \hat \cN_{s}^e$, such that $\abs{\hat \cN_{s}^d \cap \cN_{xy}} \le N$ for all $s < t$, and such that the event $\cE_{t-1}$ \dots has a nonzero probability. Also fix some sets $\hat \cN_{xyt}^a \subset [m] \setminus (\cup_{s<t}\hat \cN_{xys}^d)$ of sizes $n^{(a)}_{xyt}$, and let $\cF_{t1}$ be the event that $\cN^a_{xyt} = \hat \cN_{xyt}^a$ for all $(x,y) \in A_0$.

Given $\cE_{t-1} \cap \cF_{t1}$, the random graphs $G_k$, $k \in \cup_{xy} \hat\cN^a_{xyt}$, are independent, and distributed so that $V(G_k)$ is uniform among the $X_k$-sets of $[m] \setminus \hat \cM_{t-1}^e$, where $\hat \cM_{t-1}^e$ has size $t-1$. Hence, given $\cE_{t-1} \cap \cF_{t1}$, the random variables $\abs{\cN^d_{xyt}}$ are mutually independent and $\Bin(n^{(a)}_{xyt}, \frac{x}{m-(t-1)} )$-distributed.

Also fix some sets $\hat \cN_{xyt}^d \subset \hat \cN_{xyt}^a$, and let $\cF_{t2}$ be the event that $\cN^d_{xyt} = \hat \cN_{xyt}^d$ for all $(x,y) \in A_0$. Given $\cE_{t-1} \cap \cF_{t1} \cap \cF_{t2}$, the random graphs $G_k$, $k \in \cup_{xy} \hat\cN^d_{xyt}$, are mutually independent, and such that $V(G_k)$ is uniformly distributed among the $X_k$-sets of $[m] \setminus \hat \cM_{t-1}^e$ containing $v$.

conditional distribution of



\begin{rcomm}
The indicator variables $B^r_{t,k}$ guarantee that the node sets $V(G_k)\setminus \{v_t\}$ and hence also the neighborhoods $N_{v_t}(\bar G_k)$ for $k \in \cN^e_t$ are mutually disjoint. Hence the number of discovered nodes during step $t$ can be written as $Z_t = \sum_{(x,y) \in A_0} Z_{xyt}$ where
\[
 Z_{xyt}
 \weq \sum_{k \in \cN^e_{xyt}} T_{kt}
 \weq \sum_{k \in \cN^m_{xyt}} B^r_{kt} B^e_{kt} T_{kt}
 \weq \sum_{k \in \cN^a_{xyt}} B_{kt} T_{kt}
\]
where $T_{kt} = \abs{N_{v_t}(\bar G_k)}$ and $B_{kt} = B^m_{kt} B^r_{kt} B^e_{kt}$ with $B^m_{kt} = 1(V(G_k) \ni v_t)$. 

Let us fix $t \ge 1$, and consider the event that $\cQ_{t-1} = \hat \cQ_{t-1}$ for some set with $v_t = \min\hat \cQ_{t-1}$, and

Let us define a compound binomial distribution $\CBin(n,p,f)$ as the law of 
$\sum_{k=1}^N X_k$, where $\law(N) = \Bin(n,p)$ and $\law(X_k) = f$, and the random variables in the sum are mutually independent. Then
\[
 \law( Z_{xyt} \cond N^m_{xyt}, \dots )
 \weq \CBin\Big( N^m_{xyt}, (1-\delta)(1-\tfrac{t-1}{m}), \Bin^+(x-1,y)\Big)
\]
and
\[
 \law( Z_{xyt} \cond \dots )
 \weq \CBin\Big( n^a_{xyt}, (1-\delta)\tfrac{x}{m}, \Bin^+(x-1,y)\Big)
\]

where $B_{t,k} = B^r_{t,k} B^e_{t,k}$. The indicator variables $B^e_{t,k}$ are designed so that the random variables $B_{t,k}$, $k \in \cN^{m}_t$, are (conditionally) mutually independent with mean $(1-\delta)(1-\frac{t-1}{m})$, given suitable information. Hence the conditional law of $Z_t$ given suitable information involving the event that $\abs{\cN^m_{xyt}} = n^m_{xyt}$, is the law of
\[
 \sum_{(x,y) \in A_0} \sum_{k=1}^{n^{m}_{xyt}} B_{xyt}(k) T_{xyt}(k)
\]
where all random variables on the right are independent, $B_{xyt}(k)$ is $\Ber((1-\delta)(1-\frac{t-1}{m}))$, and $T_{xyt}(k)$ is $\Bin^+(x-1,y)$. Furthermore, observe that the random variables $\abs{\cN^m_{xyt}}$ are independent and $\Bin(n^a_{xyt}, \frac{x}{m-(t-1)})$-distributed given \dots Hence
\[
 \sum_{(x,y) \in A_0} \sum_{k=1}^{N^{e}_{xyt}} T_{xyt}(k)
\]
where $\law(N^{e}_{xyt}) = \Bin(n_{xy} - N(t-1), (1-\delta)\frac{x}{m})$.

\end{rcomm}

\begin{rcomm}
Denote $\cA^m_t = \{ \norm{\cN^m_t} \le N\}$ and $\cA_t = \cA^m_t \cap \{Q_t > 0\}$ be the event that exploration step $t$ is successful. Denote $\cA_{\le t} = \cap_{s \le t} \cA_t$. Given the event $\cA_{\le t-1}$, the conditional probability of $\cA^m_t$ is at least $1-m^{-2}$. We will show that
\[
 \law(Q_t \cond \cA_{\le t-1})
\]
Step 1 is successful with probability $\pr( \cA^m_1 ) \pr( Q_1 > 0 \cond \cA^m_1)$. Here $Q_1 = Q_0-1+Z_1$, with
\[
 Z_1
 \weq \sum_{(x,y) \in A_0} \sum_{k \in \cN^m_{xy1}} B_{1k} T_{xy1}(k),
\]
where $B_{1,k} = B^r_{1k} B^e_{1k}$. Observe next that
\[
 \dtv( \law(Z_1), \law(Z_1 \cond \norm{\cN^m_1} \le N))
 \wle \pr( \norm{\cN^m_1} > N )
 \le m^{-2}.
\]
Now the unconditional law of $Z_1$ is the same as the law of
\[
 \weq \sum_{(x,y) \in A_0} \sum_{k =1}^{\bar N_{xy1}} T_{xy1}(k),
\]
where $\bar N_{xy1} = \Bin( n^a_{xy1}, (1-\delta) \frac{x}{m}) = \Bin( n_{xy}, (1-\delta) \frac{x}{m})$.

The layer selection is constructed so that the node sets $V(G_k) \setminus \{v_t\}$, $k \in \cN^e_t$, are disjoint, and given suitable information including $\cN^m_t$, the sizes $\abs{\cN^e_{xyt}}$ are mutually independent and $\Bin(n^m_{xyt}, (1-\delta) \frac{x}{m-(t-1)})$-distributed.
\end{rcomm}

\begin{rcomm}
Denote by $\cF^-_{t,k}$ the sigma-algebra describing the information gained from the layers explored in steps up to $t-1$, the information about $\cN^m_t$, and the information about $V(G_j)$ for $j <k$ such that $j \in \cN^m_t$.

given by the information available \dots Then
$
 \pr( B^r_{t,k} = 1 \cond \cF^-_{t,k} )
 \weq p_r(\abs{\cH_{t,k}}, X_k, t)
$
and
$
 \pr( B_{t,k} = 1 \cond \cF^-_{t,k} )
 \weq (1-\delta)p_m(X_k,t)
$
As a consequence, the random variables $\{B_{t,k}: k \in \cN^m_t\}$ are conditionally independent given $\cN^m_t$, and $\Ber( (1-\delta)\frac{X_k}{m-(t-1)} )$-distributed, given $\cF^-_t$ and $\cN^m_t$.

.  Hence, the collections $\{B_{t,k}: k \in \cN^m_{xyt}\}$ are mutually independent, and consists of mutually independent random variables.
\end{rcomm}


\begin{rcomm} \scriptsize
The conditional distribution of $\abs{\cN^e_{xyt}}$ given $\norm{\cN^m_t} \le N$, the previous $t-1$ exploration step are successful, and  

The conditional distribution of $\abs{\cN^e_{xyt}}$ given suitable information is $\Bin( n^{a}_{xyt}, (1-\delta) n P(x,y) )$.

The probability is good when $\cN^m_{xy,s} \le N$ for all $s \le t$ and $(x,y) \in A_0$:

A simple computation shows that $p_1(h,x,t) \ge p_1(h,x,1) \ge (1-2\frac{h}{m})^M \frac{x}{m}$ for all $1 \le t \le m$ and all $x \le M$, when $m \ge 2M$. Hence $p_1(h,x,t) \ge (1-\delta) \frac{x}{m}$ for all $1 \le t \le m$ and all $h \le \delta_1 m$ with $\delta_1 = \frac12 (1-(1-\delta))^{1/M}$.

$\abs{\cH_{t,k}} \le 1 + M \sum_{s \le t} \abs{\cN^m_{s}} \le c_1 N t$ with $c_1 = (M+1) \abs{A_0}$ on the event that $\cN^m_{xy,s} \le N$ for all $s \le t$ and $(x,y) \in A_0$.

Hence $p_1(h,x,t) \ge (1-\delta) \frac{x}{m}$ for $t \le c_1^{-1} \delta_1 N^{-1} m$

\bigskip

Define $p_0(h,x,t) = \binom{m-h}{x-1} \binom{m-(t-1)}{x-1}^{-1}$. This is the probability that a random $x$-set, given that it intersects $\{v_1,\dots, v_t\}$ precisely at $v_t$ (the layer was marked during step $t$), does not overlap with a given $h$-set containing $\{v_1,\dots, v_t\}$. A simple computation shows that $p_0(h,x,t) \ge p_0(h,x,1) \ge (1-2h/m)^M$ for $m \ge 2M$. Hence $p_0(h,x,t) \ge (1-\delta)\frac{x}{m-(t-1)}$ for $h \le c_0 m$ with $c_0 = $.
\end{rcomm}

\paragraph{Some remarks}

Given that a layer $k$ of type $(x,y)$ is admissible during step $t$, the conditional distribution of $V(G_k)$ is uniform among the $x$-sets of $\{v_1,\dots, v_{t-1}\}^c$. Hence any layer $k \in \cN^a_{xyt}$ is marked with conditional probability $\frac{x}{m-(t-1)}$. The layers are conditionally independent. Hence the conditional distribution of $\abs{\cN^m_{xyt}}$ is $\Bin(n^{a}_{xyt}, \frac{x}{m-(t-1)})$.

Given that $k \in \cN^a_{xyt}$ and $\abs{\cH_{t,k}} = h$, the event $V(G_k) \cap \cH_{t,k} = \{v_t\}$ occurs with probability
$
 p_1(h,x,t)
 = \binom{m-h}{x-1} \binom{m-(t-1)}{x}^{-1}.
$
Hence, given that $k \in \cN^a_{xyt}$ and $\abs{\cH_{t,k}} = h$, the event $k \in \cN^e_{xyt}$ occurs with probability $p_1(h,x,t) p(h,x,t) = (1-\delta) n P(x,y)$. Hence any admissible layer of type $(x,y)$ becomes marked and selected for exploration with probability $(1-\delta) n P(x,y)$. Are these events conditionally independent with respect to suitable background information? We would like to conclude that the conditional distribution of $\abs{\cN^e_{xyt}}$ given suitable background information is $\Bin( n^{a}_{xyt}, (1-\delta) n P(x,y) )$.

The selection procedure guarantees that $\cZ_t = \cup_{k \in \cN^{e}_t} N_{v_t}(\bar G_k)$ is a disjoint union of (conditionally) mutually independent random sets with sizes $\BIn^+(X_k-1, Y_k)$-distributed. Hence the conditional distribution $\abs{\cZ_t}$ can be represented as the law of 
\[
 Z_t
 \weq \sum_{(x,y) \in A_0} \sum_{k = 1}^{\bar N_{xyt}} T_{xyt}(k)
\]
where $\law(\bar N_{xyt}) = \Bin( n^a_{xyt}, (1-\delta) n P(x,y) ) \gest \Bin( (1-2\delta)n_{xy}, (1-\delta) n P(x,y) )$ \rnote{requires integers} when $t \le c N^{-1} n$ with $c = \delta \min_{(x,y) \in A_0} P(x,y)$.

Fix a small $\delta \in (0,1)$. Fix $n_0$ large enough so that $\abs{\frac{P_n(x,y)}{P(x,y)}-1} \le \delta$ for all $n \ge n_0$. We denote $N = \floor{3 \log m}$. We study a modified exploration defined as follows. The exploration is controlled so that the number of discovered nodes is required to be bounded by $h \le \delta_1 m$.

\mnote{only essential layers are used}\\

\subsection{MB proof of lower bound}

\subsubsection{MB lower bound for $X_k=M$ and $Y_k=1$}

\begin{bcomm}
Denote by $\cN^a_s \supset \cN^m_s \supset \cN^e_s$ the layers available, marked, and explored (regular and admitted) at step $s$.

Fix a small $\delta \in (0,1)$, $t \lesim m \log^{-2} m$ and $\omega_1 \sim 3 \log m$, such that $\omega_1 t \le \delta n$.
Define $n^a_s = (1-\delta)n - (s-1) \omega_1$ (suitably rounded) for all $s=1,\dots, t$. Initialize: Set $\cQ \leftarrow \{i\}$, $\cN^a_0 \leftarrow [n]$. Step $s \ge 1$. Check that $\cQ_s \ne \emptyset$ that $\max_{1 \le r \le s-1}\abs{\cN^m_r} \le \omega_1$. If yes, proceed. Let $v_s$ be the node in the queue with the smallest index. Let $\cN^a_s$ = uniformly random $n^a_s$-set of $\cN^a_{s-1}$ (uniformly random conditionally on the previous events). Let $\cN^m_s = \{k \in \cN^a_s: V(G_k) \ni v_s\}$ .
Let $\cN^e_s$ be those marked layers $k \in \cN^m_s$ which are regular and admitted for exploration. Then remove $v_s$ and add the nodes in $N_{G_k}(v_s)$, $k \in \cN^e_s$, to the exploration queue.

Because all explored layers are disjoint from previously explored layers, on the event that the exploration proceeds to step $s$, the number of discovered nodes during step $s$ equals
\begin{align*}
 Z_s
 \weq (M-1) \abs{\cN^e_s}
 &\weq (M-1) \sum_{k \in \cN^m_s} 1(k \in \cN^e_s) \\
 &\weq (M-1) \sum_{k \in \cN^a_s} 1(k \in \cN^m_s) 1(k \in \cN^e_s).
\end{align*}
Given an event $\cE_{s-1}^+$ that $Q_{s-1} > 0$ and $\max_{r \le s-1} \abs{\cN^m_r} \le \omega_1$ and $\cN^a_{s-1} = A$ for some $A$ of size $n^a_{s}$, 
the number of marked layers $\abs{\cN^m_s}$ in step $s$ is distributed according to $\Bin(n^a_s, \frac{M}{m-(s-1)})$. Given $\cE_{s-1}^+$ and $\abs{\cN^m_s} = n_1$ for some $n_1 \le \omega_1$, the number of explored layers $\abs{\cN^e_s}$ is distributed according to $\Bin\left(n_1, (1-\delta) (1 - \frac{s-1}{m}) \right)$. \rnote{Problem for nonconstant layer types: Given earlier events, the law of $\cN^a_s$ is not uniform because conditionally on $Q_{s-1}>0$, larger and stronger layers are more likely to have been used in the previous steps.}
%
Define a mixed binomial distribution
\[
 f_s(r)
 \weq \sum_{n_1=0}^{n^a_s} \Bin\Big(n^a_s, \tfrac{M}{m-(s-1)}\Big)(n_1) \, \Bin\Big(n_1, (1-\delta) (1 - \tfrac{s-1}{m})\Big)(r).
\]
and let
\[
 \tilde f_s(r)
 \weq \frac{1(r \le \omega_1) f_s(r)}{\sum_{r' \le \omega_1} f_s(r')}.
\]
Define a modified queue length process by $Q'_s = 1$ and $Q'_s = 1(Q'_{s-1} > 0) (Q'_{s-1} - 1 + Z'_s)$ for $1 \le s \le t$ where $Z'_1,\dots,Z'_t$ are mutually independent and such that $\law(Z'_s) = \tilde f_s$ \rnote{times $M-1$}.
Then (for $M=2$)
\[
 \law( Z_s \cond \cE_{s-1}^+, \abs{\cN^m_s} \le \omega_1 )
 \weq \law(Z'_s). 
\]
Now
\begin{align*}
 \pr( Q_s > 0 )
 &\wge \pr( Q_s > 0, \cE_{s-1}^+, \abs{\cN^m_s} \le \omega_1 ) \\
 &\wge \pr( Q_s > 0 \cond \cE_{s-1}^+, \abs{\cN^m_s} \le \omega_1 )
 - \pr( \max_{r \le s} \abs{\cN^m_r} > \omega_1 ).
\end{align*}

Denote $\cA_s = \{ \abs{\cN^m_s} \le \omega_1 \}$ and $\cA_{\le s} = \cA_1 \cap \cdots \cap \cA_s$. Then for any $a>0$ and $b \ge 0$,
\begin{align*}
 \pr(Q_s=b \cond Q_{s-1}=a, \cA_{\le s})
 &\weq \pr( a+ Z_s = b \cond Q_{s-1}=a, \cA_{\le s}) \\
 &\weq \pr(a + Z'_s = b) \\
 &\weq \pr(Q'_s = b \cond Q'_{s-1}=a, \dots, Q'_0=1).
\end{align*}
Then
\begin{align*}
 \pr( Q_0=1, \dots, Q_s=b \cond \cA_{\le s} )
 &\weq \pr( Q_0=1, \dots, Q_{s-1}=a \cond \cA_{\le s} ) \pr(Q_s=b \cond Q_{s-1}=a, \cA_{\le s}) \\
 &\weq \pr( Q_0=1, \dots, Q_{s-1}=a \cond \cA_{\le s} ) \pr(Q'_s = b \cond Q'_{s-1}=a).
\end{align*}
Now by writing $\pr(B,  A_1,A_2) = \pr( B \cond A_1) - \pr(B \cond A_1) \pr(A_1^c) - \pr(B_1, A_1, A_2^c)$, it follows that $\pr( B \cond A_1,A_2) \ge \pr(B,  A_1,A_2) \ge \pr( B \cond A_1) - \pr(A_1^c) - \pr(A_2^c)$. By applying this generic inequality, we find that
\begin{align*}
 \pr( Q_0=1, \dots, Q_s=b \cond \cA_{\le s} )
 &\weq \pr( Q_0=1, \dots, Q_{s-1}=a \cond \cA_{\le s} ) \pr(Q'_s = b \cond Q'_{s-1}=a) \\
 &\wge \pr( Q_0=1, \dots, Q_{s-1}=a \cond \cA_{\le s-1} ) \pr(Q'_s = b \cond Q'_{s-1}=a) \\
 &\qquad - \pr(\cA_{\le s-1}^c) - \pr(\cA_{s}^c) \\
 &\wge \pr( Q_0=1, \dots, Q_{s-1}=a \cond \cA_{\le s-1} ) \pr(Q'_s = b \cond Q'_{s-1}=a) \\
 &\qquad - \sum_{r = 1}^s \pr(\cA_r^c).
\end{align*}
By applying induction and noting that $Q'_s$ is Markov, we find that
\begin{align*}
 \pr( Q_0=1, \dots, Q_s=b \cond \cA_{\le s} )
 \wge \pr( Q'_0=1, \dots, Q'_s=b ) - s \sum_{r = 1}^s \pr(\cA_r^c) 
\end{align*}
Especially,
\begin{align*}
 \pr( Q_s > 0 \cond \cA_{\le s} )
 \wge \pr( Q'_s > 0 ) - s \sum_{r = 1}^s \pr(\cA_r^c) 
\end{align*}
so that
\begin{align*}
 \pr( Q_s > 0 )
 \wge \pr( Q_s > 0, \cA_{\le s} )
 &\weq \pr( Q_s > 0 \cond \cA_{\le s} ) - \pr( Q_s > 0 \cond \cA_{\le s} ) \pr(\cA_{\le s}^c) \\
 &\wge \pr( Q'_s > 0 ) - s \sum_{r = 1}^s \pr(\cA_r^c) - \pr(\cA_{\le s}^c) \\
 &\wge \pr( Q'_s > 0 ) - (s+1) \sum_{r = 1}^s \pr(\cA_r^c).
\end{align*}
By choosing $\omega_1$ well, we can show that $\pr(\cA_s^c) \lesim m^{-3}$ for all $s \le t$, or $m^{-4}$ if needed. Then
\begin{align*}
 \pr( Q_t > 0 )
 &\wge \pr( Q'_t > 0 ) - O(t^2 m^{-3}).
\end{align*}

But now we see that $f_s$ is actually a binomial distribution,
\[
 f_s
 \weq \Bin\Big( n^a_s, \tfrac{M}{m-(s-1)} (1-\delta) (1 - \tfrac{s-1}{m}) \Big)
 \weq \Bin\Big( n^a_s, (1-\delta) \tfrac{M}{m} \Big),
\]

Then $f_s \gest f_t \gest \Bin\left( (1-2\delta), (1-\delta) (1 - \frac{M}{m}) \right)$ for $\omega_1 t \le \delta n$.

Note that $\dtv(f_s, \tilde f_s) = 2 \sum_{r > \omega_1}f_s(r)$. Note also that $\E e^N = (1 + (e-1)p)^n \le (1+2p)^n \le e^{2np}$ implies $\pr( N > a ) \le e^{-a} e^{2np}$ for $N$ being $\Bin(n,p)$-distributed. Hence $\dtv(f_s, \tilde f_s) \le 2 \pr( N > \omega_1 ) \le 2 e^{-\omega_1} e^{2n^a_s (1-\delta) M/m} \le  e^{-\omega_1} 2 e^{2M n/m} \le m^{-3} 2 e^{2M n/m}$ for $N$ being $f_s$-distributed and for $\omega_1 \ge 3 \log m$.

Denote by $\cE_t$ the event that $N^m_s \le \omega_1$ for all $1 \le s \le t \wedge T_i$. Then we can show that $\pr( \cE_t^c ) \lesim m^{-1}$ for $t \le m/2$.

\end{bcomm}

Given a root node, the list $L^* = \{u_1,u_2,\dots\}$ is constructed similarly as $L$ but now each explored node only accepts children brought by \emph{regular} layers. Moreover, not every regular layer is allowed to contribute to the list $L^*$. Permission to contribute is granted at random as follows. Let $G^*_{1}, G^*_{2},\dots$ denote the regular marked layers that were allowed to contribute to the list one after another during the exploration. Denote $H^*_{s} = \cup_{r=0}^s H_r$ where $H_0 = \{u_1\}$ and $H_r = V(G^*_r)$. Then a regular layer $G^*_{s+1}$ of size $x \ge 2$ marked by $u_t$ is allowed to contribute to the list $L^*$ with probability\mnote{$\le 1$ under restrictions} 
$p^*(|H^*_s|, x, t)$, where\mnote{$p^-_{x,\delta} = (1-\delta)\frac{x}{m}$}
\begin{equation}
 \label{2019-07-10}
 p^*(h,x,t) = \frac{(1-\delta)\frac{x}{m}}{p_1^*(h,x,t)},
 \qquad
 p_1^*(h,x,t) = \binom{m-h}{x-1}{\binom{m-t+1}{x}}^{-1}.
\end{equation}
Note that $p_1^*(h,x,t)$ is the probability that a uniformly random $x$-set $D$ in $[m] \setminus \{u_1,\dots, u_{t-1}\}$ contains node $u_t$ but no other node of a $h$-set $H$ of nodes (covered by previously marked layers). This is the probability that an $xy$-layer is discovered (contains $u_t$) and regular (no multi-overlap with the $h$ previously discovered nodes.) Then $p^*(h,x,t)$ equals the probability that an $xy$-layer is discovered, regular, and admitted.


During the exploration we make sure that $\abs{H^*_s} \le \delta m$ which implies $p^*_1( \abs{H^*_s}, x, t) \ge (1-\delta)\frac{x}{m}$.
%
To this aim we control the growth of the number of marked layers. Let $N^{(t)}_{xy}$ denote the number of $xy$-layers marked by $u_t$, and let $N^{(t)} = \sum_{xy} N^{(t)}_{xy}$.
We introduce an observer who monitors these numbers
and alerts at the first instance when $N^{(t)}_{xy} > \omega_1$ occurs.
Furthermore, each $u_t$ is only allowed to mark layers from certain collections $\DD^{(t)}_{x,y}\subset \DD_{x,y}$ defined as follows. For each $(x,y)\in A_0$  we select (at random) a collection $\DD^{(1)}_{x,y}$ of $(x,y)$-layers of size $n^{(1)}_{x,y} = (1-\delta) nP(x,y)$. \rnote{This is possible due to $\abs{\frac{P_n(x,y)}{P(x,y)}-1} \le \delta$.}
The root node $u_1$ is only allowed to mark layers from the collections $\DD^{(1)}_{x,y}$, $(x,y)\in A_0$. After the first exploration step (after $u_1$ has collected its children) we check whether $N^{(1)}_{x,y} \le \omega_1$ for each $(x,y)\in A_0$. If no alert was declared, we proceed to the next step of exploration. During exploration step $t \ge 2$ (if no alert was declared so far), node $u_t$ is only allowed to mark layers from $\cup_{xy} \DD^{(t)}_{xy}$, where each $\DD^{(t)}_{xy}$ is a (random) collection of unmarked layers from $\DD^{(t-1)}_{xy}$ of size $n^{(t)}_{xy} =  n^{(t-1)}_{xy} - \omega_1$. Note that $n^{(t)}_{xy} = (1-\delta) nP(x,y) - (t-1) \omega_1 \ge (1-2\delta) nP(x,y)$, \rnote{provided that $t \le \omega_1^{-1} n \delta P(x,y) \asymp n \log^{-1} n$}. The number $N^{(t)}_{xy}$ has binomial distribution $\Bin(n^{(t)}_{xy}, \frac{x}{m-t+1})$.  Let $\eta^{(t)}_{xy}$ denote the random variable $N^{(t)}_{xy}$ conditioned on the event $N^{(t)}_{xy} \le \omega_1$.
 
Denote by $\cE_{k}$ the event that $N^{(t)}_{xy} \le \omega_1$ for all $1 \le t < k$ and $xy$.
\rnote{$\cE_{i*(t)}$ is the event that all layer mark counts were $\le \omega_1$ until the layer mark count leading to the discovery of node $u_t$ }
Let us verify that
\begin{equation}
 \label{2019-07-06}
 \pr(N^{(t)}_{x,y} > \omega_1)
 \le cm^{-2}
 \quad \text{and} \quad
 \pr(\cE_{\floor{m/2}}^c)
 \le c |A_0| m^{-1}.
\end{equation}
for all $t \le m/2$ and for all $(x,y) \in A_0$.  Because the number of unmarked layers is at most $n$, and an unmarked layer of size $x$ is marked by $u_t$ with probability $\frac{x}{m-t+1} \le \frac{x}{m/2} \le 2 M m^{-1}$ (for $|L^*_w| < t$ we have $N^{(t)}_{x,y} = 0$), we see that $N^{(t)}_{x,y} \lest N^*$ for a generic $\Bin(n, 2 M m^{-1})$-distributed random integer $N^*$. Then $\pr(N^* > \omega_1) \le e^{-\omega_1} \E e^{N^*} \sim e^{-\omega_1} \E e^{\tilde N^*}$ where $\law(\tilde N^*) = \Poi(2 M \mu)$. Hence $\pr(N^{(t)}_{x,y} > \omega_1) \le e^{4M\mu} e^{-\omega_1}$ for all large enough values of $n$ and all $t \le m/2$. The union bound implies the second inequality. Now we ask the observer to stop the exploration $L^*_w$ at the first instance where $N^{(t)}_{x,y} > \omega_1$. It follows from \eqref{2019-07-06} that with probability $1 - O(m^{-1})$, the exploration will not be stopped by observer within the first $t = 1,2,\dots, \floor{m/2}$ steps (it may still terminate  for other reasons). 


Let $t \le m\log^{-2}m$. Recall that $i^*(t)$ indicates the exploration step during which $u_t$ was discovered, 
\rnote{$i^*(t)$ = exploration step when $u_t$ was discovered}
and $r(t)$ indicates the running label of the layer that was explored while $u_t$ was discovered. \rnote{$r(t)$ = number of layers explored before the one related to $u_t$} Hence $u_t$ was brought to the list $L^*_w$ while exploring node $u_{i^*(t)}$ and layer $G^*_{r(t)}$.
Observe that $r(t) \le N_1 + \dots + N_{i^*(t)}$ \rnote{$ \le \abs{A_0} \omega_1 \le \frac12 m$?}. From the second inequality of \eqref{2019-07-06} we obtain that
\begin{equation}
 \label{2019-07-09+3}
 \pr( \cE_{i^*(t)} )
 = 1 - O(m^{-1}),
 \quad
 \pr\big( |L^*_w|\ge t \, \bigr|\, \cE_{i^*(t)} \big)
 = \pr(|L^*_w| \ge t) + O(m^{-1}).
\end{equation}
Here the first inequality implies the second. The conditioning on $\cE_{i^*(t)}$ means that the observer has not stopped the exploration until $u_t$ was discovered and added to the list. On the event $\{|L^*_w| \ge t\} \cap \cE_{i^*(t)}$,
each of the sets $H^*_1 \subset H^*_2 \subset \cdots H^*_{r(t)}$ contains at most $M r(t)$ nodes, and \rnote{for $\omega_1 \sim 3 \log m$}
\[
 r(t)M
 \wle (N_1+\cdots +N_{i^*(t)})M
 \wle i^*(t) |A_0| \omega_1 M 
 \wle 3 M |A_0| m \log^{-1} m 
 =: h^*. 
\]
Note that $h^* = o(m)$. Hence for any $1\le s \le i^*(t)$ and $s-1\le h \le h^*$ we have $p^*_1(h,x,s) > (1-\delta)\frac{x}{m}$. Therefore, for each $1 \le s \le i^*(t)$, the probability that an $xy$-layer $G_k$ is marked by $u_s$, is regular, and allowed to contribute to $L^*_w$ is $(1-\delta)\frac{x}{m}$. The total number of children of $u_s$ is then
\begin{equation}
 \label{2019-07-10+1}
 \sum_{xy} \sum_{k=1}^{\eta^{(s)}_{xy}} \II^{(s)}_k(x,y) T^{(s)}_{k}(x,y)
\end{equation}
where $\II^{(s)}_k(x,y)$ is a Bernoulli random variable (independent of all the other random variables) with success probability 
\[
 p'_{s,\delta}
 := \frac{(1-\delta) \frac{x}{m}}{\pr(G_k \ \text{is marked by $u_s$} )}
 = \frac{(1-\delta) \frac{x}{m}}{x/(m-s+1)}
 \weq (1-\delta) \left(1 - \frac{s-1}{m} \right).
\]
Here $p'_{s,\delta}$ is the conditional probability that $G_k \in \DD^{(s)}_{x,y}$ is allowed to contribute to $L^*_w$ given that $G$ is marked by $u_s$. \rnote{Does not depend on layer type}

Let us compare the exploration process $L^*_w$ with the branching process $\cL$ which produces an ordered list of nodes $\{u_1,u_2,\dots\}$ and where the offspring number of $u_s$ is defined by \eqref{2019-07-10+1}, but with $\eta^{(s)}_{xy}$ replaced by $N^{(s)}_{xy}$. Because
\[
 \dtv\bigl( \eta^{(s)}_{xy}, N^{(s)}_{xy} \bigr)
 \wle \pr (N^{(s)}_{xy} > \omega_1)
 \wle c m^{-2},
\]
we find that
\begin{equation}
 \label{2019-07-25+2}
 \pr( |L^*_w| \ge t \ | \ \cE_{i^*(t)} )
 \weq \pr( |\cL| \ge t) + O(t/m^2).
\end{equation}
Furthermore, the total progenies of the branching processes are ordered by $\pr(|\cL|\ge k) \ge \pr(|Z^-|\ge k\}$. Indeed, we can represent the offspring  number of $\cL$ as
\[
 \sum_{xy} \sum_{s=1}^{N^{(j)}_{xy}} \II^{(j)}_s(x,y)T^{(j)}_{s}(x,y)
 = \sum_{xy} \sum_{s=1}^{{\bar N}^{(j)}_{xy}} T^{(j)}_{s}(x,y),
\]
where ${\bar N}^{(j)}_{xy}\eqst \Bin(n^{(j)}_{xy}, (1-\delta)\frac{x}{m})$. Observe that $j-1 \le \frac{\delta n P(x,y)}{\omega_1}$ implies $n^{(j)}_{x,y} \ge (1-2\delta)nP(x,y)$, and hence ${\bar N}^{(j)}_{x,y} \gest N^{-}_{x,y}$. Now, if we assume that
\[
 j-1 \wle \frac{\delta n \min_{(x,y) \in A_0} P(x,y)}{\omega_1},
\]
it follows that the $j$-dependent offspring numbers are bounded from below by $Z^-_{n,\delta}$ defined in \eqref{eq:CompoundBinomial}. Hence \rnote{up to $j$ at least} we find that
\[
 \pr( \abs{\cL} \ge k )
 \wge \rho_k( Z^-_{n,\delta} ).
\]

Now \eqref{2019-07-09+3}, \eqref{2019-07-25+2} imply $\pr( \abs{L^*} \ge k) \ge \pr( |Z^-| \ge k) + o(1)$. Finally, the simple inequality $\pr( C_i \ge k) \ge \pr( \abs{L^*} \ge k)$ shows that
\[
 \pr( C_i \ge k )
 \wge \rho_k( Z_{n,\delta}^- ) + o(1).
\]
This gives the lower bound of \eqref{2019-07-26+1}.
\qed

\subsection{Number of nodes in large components}

\bnote{The number of nodes contained in large components is approximately $\rho(\cY)$. Requires the variance analysis of component sizes, done using double branching processes.}
\begin{lemma}
\label{the:MBGiant2}
Under the same assumptions\mnote{Assume finite support $A$, but no need to assume $P(A_0)>0$?} as in Lemma~\ref{the:MBGiant1}, for any $1 \ll \omega \le n \log^{-2} n$,
\begin{eqnarray}
 \label{2019-08-05+1}
 &&
 m^{-1} |B^{\omega}(G^{(n)})| \prto \rho(\cY)
\end{eqnarray}
\end{lemma}
\begin{proof}
Using the shorthand notation $\II_w = 1_{\{|C_{w}|\ge \omega\}}$ we write
\begin{equation}
\label{2019-08-06+1}
 |B^{\omega}|
 =
 \sum_w {\mathbb I}_w,
 \qquad
 \binom{|B^{\omega}|}{2}=\sum_{\{u,w\}\subset W}
 \II_u \II_w.
\end{equation}
The first identity combined with (\ref{eq:MBGiant1}) yields
\begin{equation}
 \label{2019-08-10+4}
 \E|B^{\omega}|
 = m\rho(\cY)+o(m).
\end{equation} 
For $\rho(\cY)=0$ this implies \eqref{2019-08-05+1}. For $\rho(\cY)>0$ we establish (\ref{2019-08-05+1}) by showing that
$|B^{\omega}|$ concentrates around its mean.

(i) We will first prove the claim in the case where $\omega = \log m$. Let $\{x,y\}\subset W$ denote a pair of nodes selected uniformly at random. We show below that, uniformly with respect to $x,y$,
\begin{equation}
 \label{2019-08-06+2}
 \E (\II_x \II_y )
 \wle \rho(\cY) \times \rho(\cY) + o(1).
\end{equation}
Then \eqref{2019-08-06+2} combined with \eqref{2019-08-06+1}, \eqref{2019-08-10+4} implies $\E|B^{\omega}|^2 \le (\E |B^{\omega}|)^2+o(m^2)$. From the latter inequality we conclude that $\var |B^{\omega}|=o(m^2)$. Now
Chebyshev's inequality implies that for all $\gamma > 0$,
\begin{equation}
 \label{2019-08-17+3}
 \pr \Bigl\{ \bigl| |B^{\omega}|-\E |B^{\omega}|\bigr| > \gamma m \Bigr\}
 \wle (\gamma m)^{-2}\var(|B^{\omega}|)
 \weq o(1).
\end{equation}
Letting $\gamma \downarrow 0$ we obtain \eqref{2019-08-05+1}.

\subsection{MB proof of double upper bound}
\label{sec:MBDoubleUpperBound}
See Section~\ref{sec:DoubleUpperBound} and Algorithm~\ref{algo:UpperExploration}.

Proof of \eqref{2019-08-06+2}, that is,
\[
 \pr( C_i \ge \omega, C_j \ge \omega)
 \wle \rho(Y)^2 + o(1)
 \qquad \text{for $\omega \asymp \log m$}.
\] 
Fix $t \asymp \log m$.  Denote $\cL_{it} = \{ \abs{L_i} \ge t\}$ and $\cL_{it}^+ = \cL_{it} \cap \cR_{it}$ where $\cR_{it}$ is the event that the exploration from $i$ contains no multi-overlaps before discovering $t$ nodes ($i$ is $t$-regular).  Approximation \eqref{2019-07-25} and the fact that $\{C_i \ge t\} \cap \cR_{it} = \{\abs{L_i} \ge t\} \cap \cR_{it}$ imply
\begin{equation}
\label{2019-08-10}
 \begin{aligned}
 \pr( C_i \ge t, C_j \ge t)
 &\weq \pr( C_i \ge t, C_j \ge t, \cR_{it}, \cR_{it}) + o(1) \\
 &\weq \pr( \cL^+_{it}, \cL^+_{jt}) + o(1).
 \end{aligned}
\end{equation}

(i) We show that a moderate number $s = 2 \frac{t}{\hat q}$ of layer explorations suffices to discover $t$ nodes.  On the event $\abs{L_i} \ge t$, we denote by $\DD_{it}$ the set of layers explored in the $i$-exploration until $t$ nodes are discovered (we assume that initially node $i$ is discovered). Let $H_{it} = \cup_{k \in \DD_{it}} V(G_k)$. The event $\cL_{it}^+ \cap \{ \abs{\DD_{it}} > s\}$ implies that more than $s$ layers need to be explored to discover $t$ nodes in the $i$-exploration, and that no multi-overlaps occur during the first $s$ layer explorations. Hence less than $t-1$ of the first $s$ layer explorations are successful. The probability of such event is $o(1)$, see \eqref{2019-06-30+1}.  Now we have $\pr( \cL_{it}^+, \abs{\DD_{it}} > s)  = o(1)$. Because $\abs{\DD_{it}} \le s$ implies $\abs{H_{it}} \le Ms $, we now see that $\pr( j \in H_{it}  \cond \abs{\DD_{it}} \le s) \le M s/m \ll 1$. Hence
\begin{equation}
 \label{2019-08-10+1}
 \begin{aligned}
 \pr( \cL_{it}^+, \cL_{jt}^+)
 &\weq \pr( \cL_{it}^+, \cL_{jt}^+, \abs{\DD_{it}} \le s ) + o(1) \\
 &\weq \pr( \cL_{it}^+, \cL_{jt}^+, \abs{\DD_{it}} \le s, j \notin H_{it} ) + o(1).
 \end{aligned}
\end{equation}

(ii) We show that $i$-exploration and $j$-exploration are whp disjoint up to $t$-th node discovery. More precisely, we show that
\begin{equation}
 \label{2019-08-09}
 \pr( \cL_i^+, \cL_j^+, \abs{\DD_{it}} \le s, j \notin H_{it} )
 \weq \pr( \cL_i^+, \cL_j^+, \abs{\DD_{it}} \le s, j \notin H_{it}, \cS_{ijt}) + o(1).
\end{equation}
where 
$\cS_{ijt} = H_{it} \cap H_{jt} = \emptyset$. Let $\cS^*_{ijst}$ be the event that first $j$-explored layers up to $s$ are disjoint from $H_{it}$. For a uniformly random $x$-set Let $D \subset [m]$, independent of $H_{it}$, on the event $j \notin H_{it}$, we see that
\[
 \pr\big( D \cap H_{it} \ne \emptyset \cond  H_{it}, j \in D \big)
 \wle \abs{H_{it}} \frac{x-1}{m-1}
 \wle \abs{H_{it}} \frac{x}{m}.
\]
Hence, given $\{\abs{\DD_{it}} \le s\}  \cap \{j \notin H_{it}\}$, the conditional probability that the first layer $G_k$ explored by $j$ overlaps with $H_{it}$ at most $M^2 s m$. Here we used the fact that $j \notin H_{it}$ implies $G_k$ does not belong to $\DD_{it}$. Furthermore, given the event that layer $G_k$ is explored by the $r$-th node of the $j$-exploration and that all previously explored layers are disjoint from $H_{it}$ (conditional on this information, $V(G_k) \setminus \{v_r\}$ is a uniformly random $(X_k-1)$-set in $[m] \setminus \{v_1,\dots, v_r\}$ and $H_{it}$ is a subset of $[m] \setminus \{v_1,\dots, v_r\}$), the probability that $G_k$ overlaps $H_{it}$ is at most $(M-1)M{s}(m-r)^{-1} \le M^2 s(m-t)^{-1}$. Here we used the fact that  $v_r \notin H_{it}$ implies that $G_k$ is not in $\DD_{it}$. By the union bound applied to $({ {\cS^*_{ijst}}})^c$, we have
\[
 \pr \big( ( {\cS^*_{ijst}})^c \cond \abs{\DD_{it}} \le s, j \notin H_{it} \big)
 \wle M^2 s^2 (m-t)^{-1}
 \wll 1.
\]
Furthermore, on the event $\cS^*_{ijst}$, i.e., when the $j$-exploration does not encounter $H_{it}$ and thus $L_j$ is 
determined solely by the layers $\DD_j = \{D_{i_1}, D_{i_2},\dots\}$ (which are subsets of $H_{it}^c$), we have by \eqref{2019-06-30+2} that the event $\cL_{jt}^+ \cap \{ \abs{\DD_{jt}} > s\}$ has probability $o(1)$. But the event
$\{\abs{\DD_{jt}} \le s \} \cap \cS^*_{ijst}$ implies $\cS_{ijt}$. 
Hence
\begin{align*}
 \pr( \cL_i^+, \cL_j^+, \cS_{ijt}^c)
 &\weq \pr( \cL_i^+, \cL_j^+, \abs{\DD_{it}} \le s, \abs{\DD_{jt}} \le s, j \notin H_{it}, \cS_{ijt}^c) + o(1) \\
 &\wle  \pr( \cL_i^+, \cL_j^+, \abs{\DD_{it}} \le s, \abs{\DD_{jt}} \le s,  j \notin H_{it}, (\cS^*_{ijst})^c) + o(1) \\
 &\wle  \pr( \cL_i^+, \abs{\DD_{it}} \le s, j \notin H_{it}, (\cS^*_{ijst})^c) + o(1) \\
 &\weq o(1).
\end{align*}
We arrive at \eqref{2019-08-09}.

(iii) Finally, we recall that $\pr( \cL_{it}^+ ) \le \rho(\cY)+o(1)$, and we derive the estimate
\begin{equation}
 \label{2019-08-10+3}
 \pr\big( \cL_{jt}^+, \cS_{ijt} \ \big| \ \cL_{it}^+, \abs{\DD_{it}} \le s, j \notin H_{it} \big)
 \wle \rho(\cY) + o(1).
\end{equation}
This is the key estimate.

Conditioning MB style. Fix $s_1 \le s, t_1 \le t$, and let $\cE = \cE(A,B,C)$ be an event that in the $i$-exploration, the number of discovered nodes exceeds $t$ while exploring the $t_1$-th node and the $s_1$-th layer, $A$ equals the set of nodes explored before the $t_1$-th node, $B$ equals the set of first $s_1$ explored layers, and $C = \cup_{k \in B} V(G_k)$. Conditional on the event $\cE$, we know that \dots

Example. The number of discovered nodes exceeds $10$ while exploring the $2$-nd node and the $3$-rd layer, $A=\{1\}$ 
equals the set of nodes explored before the $2$-nd node, $B=\{6,7,9\}$ equals the set of first 3 explored layers, and $C = \cup_{k \in \{6,7,9\}} V(G_k)$.  On this event we know that the layers $k \in B^c$ do not contain the first explored node. But maybe we know more about these layers? If layers 6,7 were explored during the 1st node exploration step and layer 9 during the 2nd node exploration step, then we know that layers 1,2,3,4,5,8 do not contain the 2nd explored node.
\begin{rcomm}

\end{rcomm}

This is obtained by a similar argument as earlier, but now we put $N^+_x \eqst \Bin(n_{xy}, p^*_{x,t,Ms})$
and ${\tilde N}^+_{xy}$ to be Poisson with the same mean. Here given disjoint node sets $A,B \subset [m]$ of sizes
$|A|=a$, $|B|=b$ and a node $z \notin A \cup B$, we denote
\[
 p^*_{x,a,b}
 \weq \pr( z \in D, \, D \cap B = \emptyset )
 \weq \frac{x}{m-a} \frac{\binom{m-a-b}{x-1}}{\binom{m-a}{x-1}}.
\]
for a uniformly random $x$-set $D \subset [m] \setminus A$. Note that $p^*_x = x m^{-1}(1+o(1))$ for $2 \le x \le M$.
\begin{bcomm}
Note that for $a=t$ and $b = Ms$ and $x \le M$, with $t +1 \le m/2$,
\[
 \pr( D \cap B \ne \emptyset \cond D \ni z)
 \wle b \frac{x-1}{m-a-1}
 \wle M^2 s / (m-t-1)
 \wle 2 M^2 s/m.
\]
Hence in this case
\[
 (1-2 M^2 s/m) \frac{x}{m}
 \wle p^*_{x,a,b}
 \wle \frac{x}{m-t}
\]
\end{bcomm}

Finally, we use the inequality
\begin{equation}
 \label{2019-08-10+2}
 \pr \big( \cL_{it}^+, \cL_{jt}^+ , \abs{\DD_{it}} \le s, j \notin H_{it}, \cS_{ijt} \big)
 \wle \pr \big( \cL_{jt}^+,  \cS_{ijt} \cond \cL_{it}^+, \abs{\DD_{it}} \le s, j \notin H_{it} \big) \, \pr(\cL_{it}^+)
\end{equation}

Relation $\pr( C_i \ge t, C_j \ge t) \le \rho(Y)^2 + o(1)$ for $t = \omega \asymp \log m$ 
follows from \eqref{2019-08-10}, \eqref{2019-08-10+1}, \eqref{2019-08-09}, \eqref{2019-08-10+2}, and \eqref{2019-08-10+3}.

Next we prove (\ref{2019-08-05+1}) for general $1 \ll t \le n \log^{-2} n$, denoting $\bar\omega = \log m$. 
Let $N = \omega\vee{\bar\omega}$ and $\omega_2=\omega\wedge{\bar\omega}$. Then
$|B^{\omega}|-|B^{\bar{\omega}}|
=
 |B^{\omega_2}|
 -
 |B^{N}|\ge 0$.
Now \eqref{2019-08-10+4} implies
\[
 \E\bigl| |B^{\omega}|-|B^{\bar{\omega}}| \bigr|
 \weq \E \big( |B^{\omega_1}| - |B^{\omega_2}| \big)
 \weq o(m).
\]
Therefore, $|B^{\omega}| = |B^{\bar\omega}| + o_\pr(m)$, and the claim follows.
\end{proof}

\rnote{Giant component for finite layer type space.}
\begin{lemma}
\label{the:MBGiant3} 
Under the same assumptions and notations as in Theorem~\ref{the:Giant}, together with the extra assumption that the supports of $P$ and $(P_n)_{n \ge 1}$ are all contained in a finite set $A \subset \Z_+ \cap [0,1]$, the largest component size in $G^{(n)}$ is approximated by $m^{-1} N_1(G^{(n)}) \prto \rho$.
\end{lemma}
\begin{proof}
(i) Fix $\epsilon > 0$ and select $1 \ll \omega \ll n \log^{-1} n$. Lemma~\ref{the:BigComponents} implies $N_1(G^{(n)}) \le \max\{\abs{B_\omega(G^{(n)})}, \omega\}$. Hence for all sufficiently large values of $m$ so that $m^{-1} \omega \le \rho + \epsilon$,
\[
 \pr( m^{-1} N_1(G^{(n)}) > \rho + \epsilon )
 \wle \pr( m^{-1} \abs{B^\omega(G^{(n)})} > \rho + \epsilon ).
\]
The upper bound follows because the right side above tends to zero by Lemma~\ref{the:MBGiant2}.\rnote{The upper bound requires $P(A_0) > 0$, nontrivial layers must exist.}

(ii) Next we analyze the lower bound, assuming $\rho>0$ (the case $\rho=0$ follows immediately by Markov's inequality).
\begin{rcomm}
If $\rho(\cY)>0$, then $P(A_0) > 0$ for $A_0 = (\Z_+ \cap [2,\infty)) \times (0,1]$, because otherwise $\bar g$ and hence also $\cY = \CPoi(\lambda, \bar g)$ would both reduce to Dirac masses at zero.
\end{rcomm}

Recall that $\cY = \CPoi(\lambda, \bar g)$ and $\cY^-_\delta = \CPoi((1-\delta)\lambda, \bar g)$ as defined in \eqref{2019-06-24}, where $\lambda = \mu (P)_{10}$ and $\bar g$ is the distribution defined in \eqref{eq:MixedBinPlus}. By Lemma~\ref{the:CPoiPerturbation} it follows that $\cY^-_\delta \to \cY$ weakly as $\delta \to 0$. Because $\rho(\cY) < 1$, it follows \cite[Lemma 2.6]{Leskela_Ngo_2017} that $\rho(\cY^-_\delta) \to \rho(\cY)$ as $\delta \to 0$. Fix an arbitrary $\epsilon \in (0,1)$. Then we may choose a small enough $\delta > 0$ so that $\rho(\cY^-_\delta) \ge (1-\epsilon)\rho(\cY)$.

Now for each $n \ge 1$, we split the layers so that, for each layer type $(x,y) \in A$, out of the total of $n P_n(x,y)$ such layers, $\floor{\delta n P_n(x,y)}$ are colored blue, and the remaining layers red. We denote by $G^{(n)}_b$ the overlay graph generated by the blue layers, and $G^{(n)}_r$ the one with red layers. Then $G^{(n)} = G^{(n)}_b \cup G^{(n)}_r$. Now the $n$-th model has $\sum_{(x,y)\in A} \floor{\delta n P_n(x,y)} \sim \delta n$ blue layers and $\sim (1-\delta) n$ red layers. Moreover, the empirical layer type distributions of the blue and red models both converge weakly to $P$ as $n \to \infty$. Let $\omega = n^{2/3}$. Then by 
applying \eqref{2019-08-05+1} to the model with red layers we see that $B^\omega(G^{(n)}_r) \wge m\rho(\cY^-_\delta)+o_\pr(m)$ where $\cY^-_\delta = \CPoi((1-\delta)\lambda, \bar g)$. Hence
\begin{equation}
 \label{eq:RedBigComponents}
 \abs{B^\omega(G^{(n)}_r)} \wge (1-\epsilon)\rho(\cY) m + o_\pr(m).
\end{equation}

Then we will show that all nodes of $B^\omega(G^{(n)}_{r})$ belong to the same connected component of $G^{(n)}$ whp. Clearly, there are at most $\omega^{-1} n = n^{1/3}$ such components. Given a pair of distinct such components $C', C''$, for any blue layer $G_k$, the  probability \rnote{conditional probability given $G_{A_\delta^c}$} that $C', C''$ are connected by a link in $G_k$ is at least 
 \[
 p^* := \pr \big( V(G_k) \cap C' \ne \emptyset, V(G_k) \cap C'' \ne \emptyset | G_{A_\delta^c} \big)
 \cdot y 
 \ge (x)_2 n^{4/3} m^{-2} y(1+o(1)).
\]
A layer of size $x$ intersects $C'$ with probability $\abs{C'} x m^{-1}$, and given this event, the layer intersects $C''$ with probability $\abs{C''} (x-1) (m-1)^{-1}$.  A uniformly random node set of size $x$, selected independently of $C'$ and $C''$, given that $C'$ and $C''$ are disjoint, intersects both $C'$ and $C''$, with probability $\abs{C'} \abs{C''} \frac{(x)_2}{(m)_2}$. Hence the probability that there exists a $G_k$-link between $C'$ and $C''$ is at least $\abs{C'} \abs{C''} \frac{(X_k)_2}{(m)_2} Y_k \ge \frac{\omega^2}{m^2} (X_k)_2 Y_k = m^{-2} n^{4/3} (X_k)_2 Y_k$.

Now because $P(A_0) > 0$ and $P_n(x,y) \to P(x,y)$ for all $(x,y) \in A$, we may select a type $(x,y)$ with $x \ge 2$ and $y>0$ such that the set of blue layers of type $(x,y)$ has size $n_\delta \sim \delta P(x,y)n$. By the union bound, the probability\rnote{conditional probability given $G_{A_r}$?} that there exists a pair of distinct red components larger than $\omega$, not connected by link in blue layer of type $(x,y)$, is at most
\[
 \binom{\lceil n^{1/3}\rceil}{2}(1-p^*)^{n_\delta}
 \wle n^{2/3} e^{- n_\delta p^*}
\]
When we choose $\delta = n^{-1/6}$, it follows that 
\[
 n_\delta p^*
 \weq (1+o(1)) n^{-1/6} P(x,y)n p^*
 \wge (1+o(1)) n^{-1/6} P(x,y)n m^{-2} n^{4/3} (x)_2 y
 \wasymp n^{1/6},
\]
and hence $n^{2/3} e^{- n_\delta p^*} \to 0$. Hence whp, all red components larger than $\omega$ are connected to each other by a link in a blue layer of type $(x,y)$, and hence $N_1(G^{(n)}) \ge \abs{B^\omega(G^{(n)}_{r})}$ whp. By \eqref{eq:RedBigComponents}, it follows that $N_1(G^{(n)}) \ge (1-\epsilon)\rho(\cY) m + o_\pr(m)$. The claim follows because this holds for all $\epsilon > 0$.
\end{proof}

\subsection{Old (?) proof of Theorem~\ref{the:Giant}}

We start by selecting a large integer $M$ and defining a truncation map $\tau_M(x,y) = (x \wedge M, y)$ acting on the space of layer types $\Z_+ \cap [0,1]$. Let $G^{n,M}$ be the overlay graph defined with layer types truncated using $\tau_M$. Then the empirical layer type distribution of the truncated model equals $P^{n,M} = P_n \circ \tau_M^{-1}$ and converges according to $P^{n,M} \to P^M = P \circ \tau_M^{-1}$ weakly. As an intermediary step we will prove that
\begin{equation}
 \label{eq:GiantTruncatedSize}
 m^{-1} N_1(G^{n,M}) \prto \rho(\cY^M).
\end{equation}
where $\cY^M = \CPoi(\lambda^M, \bar g^M)$ with $\lambda^M = \mu (P^M)_{10}$ and $\bar g^M$ is defined as in \eqref{eq:MixedBinPlus} but with $P$ replaced by $P^M$.

Let $G^{M,L\pm,n}$ be the overlay graph generated by truncated layer sizes and discretized layer strengths according to $\sigma_{L\pm} \circ \tau_M$. Then this model has empirical layer type distribution $P^{M, L\pm, n} = P_n \circ \tau_M^{-1} \circ \sigma_{L\pm}^{-1}$ with support contained in the finite set $A = (\Z_+ \cap [0,M]) \times S_L$ for all $n$. Then $P^{M,L\pm,n} \to P^{M,L\pm}$ weakly as $n \to \infty$, where the limiting distribution $P^{M,L\pm} = P \circ \tau_M^{-1} \circ \sigma_{L\pm}^{-1}$ also has support contained in $A$.


By applying Lemma~\ref{Lemma2019-12-23} to $G^{M,L\pm,n}$, we conclude that\mnote{verify}
\begin{equation}
 \label{eq:GiantTruncatedSizeStrength}
 m^{-1} N_1( G^{M,L\pm,n} )
 \prto \rho \big(\cY^{M,L\pm} \big),
\end{equation}
where $\cY^{M,L\pm} = \CPoi(\lambda^M, \bar g^{M,L\pm})$, where $\bar g^{M,L\pm}$ is defined as in \eqref{eq:MixedBinPlus} but with $P$ replaced by $P^{M,L\pm}$. Now given any $\delta \in (0,1)$, we claim\mnote{verify} that there exist suitable $M$ and $L$ such that $\abs{\rho \big(\cY^{M,L\pm} \big) - \rho(\cY)} \le \delta \rho(\cY)$.

There is a natural coupling under which $G^{(n,M)}_{L-} \subset G^{(n,M)} \subset G^{(n)}$ and $N_1( G^{(n,M)}_{L-} ) \le N_1( G^{(n)} )$ with probability one. Therefore, by \eqref{eq:GiantTruncatedSizeStrength} we see that
\[
 m^{-1} N_1( G^{(n)} )
 \wge \rho(\cY^M_{L-}) + o_\pr(1)
 \wge (1-\delta) \rho(\cY) + o_\pr(1).
\]

\begin{lemma} 
\label{Lemma2019-12-23}
Assume that the limiting layer type distribution $P$ has a finite support $A$, $(P)_{21} > 0$, and that $X^{(n)}_k \le M$ for all $n \ge 1$ and all $k=1,\dots,n$. Assume also that $P_n(x,y) \to P(x,y)$ for all $(x,y) \in A$. Then the statements of Lemmas \ref{the:MBGiant2} and~\ref{the:MBGiant3} are true.
 \rnote{$ m^{-1} |B^{\omega}| \prto \rho(\cY)$ and $m^{-1} N_1(G^{(n)}) \prto \rho(\cY)$}
\end{lemma}

\begin{proof}
We first show that $m^{-1} N_1(G^{(n)}) \prto \rho(\cY)$.

Fix $n$ and let $A' = \{k \in [n]: (X_k^{(n)}, Y_k^{(n)}) \in A\}$ and $B' = [n] \setminus A'$. Let $N_1(G^{(n)}_{A'})$ be the largest component size in $G^{(n)}_{A'}$ with node set $[m]$ and link set $\cup_{k \in A'} E(G^{(n)}_k)$.
Because $\abs{\cup_{k \in B'} V(G^{(n)}_k)} \le \abs{B'} M$, it follows by Lemma~\ref{the:ComponentOverlayTruncation} that
\begin{equation}
 \label{2019-12-26}
 N_1(G^{(n)}_{A'})
 \wle N_1(G^{(n)})
 \wle |B_t(G^{(n)}_{A'})| + \abs{B'} M t +t
 \quad \text{for all $t \ge 0$}.
\end{equation}
\begin{rcomm}
We assume that $A$ is a finite set such that $P(A)=1$.  We also assume that $P_n \weakto P$ . Does it follow that $P_n(A) \to P(A) = 1$?

Because $A$ is finite, it is closed set in the natural topology of $\Z_+ \times [0,1]$. Here portmanteau goes the wrong way. MB explicitly assume that $P_n(x,y) \to P(x,y)$ for all $(x,y) \in A$, from which the claim follows.
 
\end{rcomm}

The assumptions imply that $P_n(A) \to P(A) = 1$, and hence the number of blue layers satisfies $n_B = (1-P_n(A))n = o(n)$. We observe that results (\ref{eq:MBGiant1}), (\ref{2019-08-05+1}), Lemma~\ref{the:MBGiant3}apply to $G_R$ because $n_B=o(n)$. In particular, the statement of Lemma~\ref{the:MBGiant3} remains true with $C$ replaced by $C_R$.  This shows the lower bound $|C| \ge m \rho(\cY) + o_\pr(m)$.
%
For the upper bound, note that $n_B \ll n$ implies that there exists $1 \ll \omega \ll n$ such that $n_B \omega \ll n$. By substituting $t = \omega$ in \eqref{2019-12-26}, we obtain $|C| \le |B_R^\omega| + o(m)$. Finally we apply (\ref{2019-08-05+1}) to $|B_R^\omega|$ and obtain $|C|\le m\rho(\cY)+o_\pr(m)$. 

Next we show that (\ref{2019-08-05+1}) holds.
The upper bound $|B^\omega|\le m\rho(\cY)+o_\pr(m)$ follows from the second inequality of (\ref{2019-12-26}) by the same argument as above. The lower bound $|B^{\omega}|\ge m\rho(\cY)+o_\pr(m)$ makes sense when $\rho (\cY)>0$. For $\rho (\cY)>0$ the lower bound follows from  $|C|\ge m\rho(\cY)+o_\pr(m)$ and the fact that $|B^{\omega}|\ge |C|$ \rnote{why, due to Lemma~\ref{the:BigComponents}?} provided that $\omega(n)\le n \log^{-2} n$ and $|C| \ge \frac12 m \rho(\cY)$.
\end{proof}

\subsection{Proof of Theorem~\ref{the:Percolation}}

(i) Assume that the layers are generated by model B (random $P_n$-distributed independent layer types). Then in layer percolation we obtain another instance of model B where the layer type distribution $P_n$ is replaced $\tilde P_n = P_n \circ t_\theta^{-1}$ where $t_\theta(x,y) = (x, \theta y)$. Now $\tilde P_n \to \tilde P = P \circ t_\theta^{-1}$ weakly. Moreover,
$(\tilde P_n)_{10} \to (\tilde P)_{10}$ because $(\tilde P_n)_{10} = (P_n)_{10}$ and $(P)_{10} = (\tilde P)_{10}$. Hence \eqref{eq:Percolation} for layer percolation follows by applying Theorem~\ref{the:Giant} to this modified instance of model B. 

(ii) The fact that $\rho_\theta > 0$ if and only if $R_0(\theta) > 1$ follows by basic branching process theory, by noting that $1-\rho_\theta$ equals the extinction probability of a Galton--Watson branching process with offspring distribution $\CPoi(\lambda, \bar g_{\theta})$ which has mean $R_0(\theta)$.

(iii) Assume next that $\sup_n (P_n)_{21} < \infty$, and let us compare the two percolation models. Each link $ij$ of $G^{(n)}$ is retained with probability $\theta$ in overlay percolation, and with probability $1- (1-\theta)^{s_{ij}} \ge \theta$ in layer percolation, where $s_{ij}$ is the number of layers covering node pair $ij$.  Hence we may couple the two percolation models so that $G^{(n)}(\theta) \subset \hat G^{(n)}(\theta)$ with probability one. Fix this coupling, and denote by $\Delta_n$ the number of node pairs linked in $\hat G^{(n)}$ but not in $G^{(n)}(\theta)$. By the union bound, the probability (in model~B) that node pair $ij$ is linked in $\hat G^{(n)}$ but not in $G^{(n)}(\theta)$ is bounded by
\[
 \pr( s_{ij} \ge 2 )
 \wle \frac12 \sumd_{k,\ell} \pr( ij \in E(G^{(n)}_k) \, \pr( ij \in E(G^{(n)}_\ell)
 \weq \binom{n}{2} \left( \frac{(P_n)_{21}}{(m)_2} \right)^2.
\]
Hence $\E \Delta_n \le \frac14 \frac{(n)_2}{(m)_2} (P_n)_{21}^2 = O(1)$, and it follows by Markov's inequality that $\pr( \Delta_n > \omega ) \ll 1$ for any $1 \ll \omega \ll \log n$.

In case $\rho_\theta = 0$, the almost sure upper bounds $\abs{C_1^{(n)}(\theta)} \le \abs{\hat C_1^{(n)}(\theta)}$ and $\abs{C_2^{(n)}(\theta)} \le \abs{\hat C_2^{(n)}(\theta)}$ imply that \eqref{eq:Percolation} holds also for overlay percolation. In case $\rho_\theta > 0$ we need to analyze a lower bound for $\abs{C_1^{(n)}(\theta)}$. Observe that $\abs{\hat C_1^{(n)}(\theta)} \le \omega_1 + \omega_2 \omega_3$ on the event $\cA = \cA_1 \cap \cA_2 \cap \cA_3$ where $\cA_1 = \{\abs{C_1^{(n)}(\theta)} \le \omega_1\}$, $\cA_2 = \{\Delta_n \le \omega_2\}$, and $\cA_3 = \{\abs{C_2^{(n)}(\theta)} \le \omega_3\}$.
Therefore,
\[
 \pr \big( \abs{C_1^{(n)}(\theta)} \le \omega_1 \big)
 \wle \pr( \abs{\hat C_1^{(n)}(\theta)} \le \omega_1 + \omega_2 \omega_3 )
 + \pr( \cA_2^c ) + \pr( \cA_3^c ).
\]
The right hand side above tends to zero when we choose $\omega_1 = (\rho_\theta - \epsilon) m$ for some $0 < \epsilon < \rho_\theta$ together with $1 \ll \omega_2 \ll m^{1/2}$ and $\log m \ll \omega_3 \ll m^{1/2}$. Hence it follows that $\pr \big( m^{-1} \abs{C_1^{(n)}(\theta)} > \rho_\theta-\epsilon \big) \to 1$.
\qed

\section{Leftovers}

\subsection{Old details about limiting degree distribution (deprecated)}
\begin{remark}
Recalling that the moment generating function of $\Bin(n,p)$ equals $( 1-p + p e^\theta)^n$, 
we find that the moment generating function of the limiting degree distribution is
\begin{align*}
 M_f(\theta)
 \weq \sum_{k=0}^\infty e^{-\lambda} \frac{\lambda^k}{k!} M_{g_{10}}(\theta)^k
 \weq \exp \left( \lambda \left( M_{g_{10}}(\theta) - 1 \right) \right),
\end{align*}
where
\begin{align*}
 M_{g_{10}}(\theta)
 \weq \int_{\Z_+ \times [0,1]} \left( 1-q+q e^\theta \right)^{y-1} \frac{y \, \pi(dy, dq)}{(\pi)_{10}}.
\end{align*}
\end{remark}

\begin{remark}
Let $(\tilde X, \tilde Q)$ be distributed according to $\frac{x \, \pi(dx, dq)}{(\pi)_{10}}$. Now
\[
 \E (\tilde X-1)_j \tilde Q^j
 \weq \int_{\Z_+ \times [0,1]} (x-1)_j q^j \ \frac{x \, \pi(dx, dq)}{(\pi)_{10}}
 \weq \int_{\Z_+ \times [0,1]} (x)_{j+1} q^j \ \frac{\pi(dx, dq)}{(\pi)_{10}}
 \weq \frac{(\pi)_{j+1,j}}{(\pi)_{10}},
\]
so that the moments of $g_{10}$ are given by
\[
 m_r( g_{10} )  \weq \sum_{j=1}^r c_{j, r} \E (\tilde X-1)_j \tilde Q^j
 \weq \sum_{j=1}^r c_{j, r} \frac{(\pi)_{j+1,j}}{(\pi)_{10}}.
\]
Hence $m_r(g_{10})$ is finite if and only if $(\pi)_{r+1,r}$ is finite. Because
\[
 \lambda m_r(g_{10})
 \wle m_r(f)
 \wle c_r \lambda^r m_r(g_{10}),
\]
it follows that the $f$ has a finite $r$-th moment if and only if $(\pi)_{r+1,r}$ is finite.

The mean of the limiting degree distribution equals
\[
 m_1(f)
 \weq \lambda m_1(g_{10})
 \weq \lambda \frac{(\pi)_{21}}{(\pi)_{10}}
 \weq \mu (\pi)_{21},
\]
and the variance (when exists) equals
\[
 m_2(f) - m_1(f)^2
 \weq \lambda m_2(g_{10})
 \weq \lambda \left( \frac{(\pi)_{21}}{(\pi)_{10}} + \frac{(\pi)_{32}}{(\pi)_{10}} \right)
 \weq  \mu\left( (\pi)_{21} + (\pi)_{32} \right).
\]

The compound Poisson distribution $f$ has a finite $r$-th moment if and only if the $r$-th moment of $g_{1,0}$ is finite, see Appendix~\ref{sec:CompoundPoisson}. See also Lemma~\ref{the:CrossFactorialMoments}. For a binomial distribution with parameters $x-1$ and $q$, we find that the $r$-th moment is bounded from above by
\[
 \text{const} \times \sum_{j=1}^r (x)_j q^j,
\]
so that the $r$-th moment of $g_{1,0}$ is bounded from above by
\[
 \text{const} \times \sum_{j=1}^r   \int_{\Z_+ \times [0,1]}  (x)_j q^j \ \frac{x \, \pi(dx, dq)}{(\pi)_{1,0}}
 \approx  \sum_{j=1}^r \frac{(\pi)_{j+1,j}}{(\pi)_{1,0}}
 \wle \approx (\pi)_{r+1,r}.
\]
This makes us conjecture that the degree distribution has a finite $r$-th moment iff $(\pi)_{r+1,r}$ is finite. See also Appendix~\ref{sec:BinomialKernel}.
\end{remark}

\subsection{Older details about the limiting degree distribution (deprecated)}

For a general compound Poisson distribution with rate $\lambda$ and summand distribution $f$, the mean equals $\lambda \mom_1(f)$ and the variance equals $\lambda \mom_2(f)$. To compute moments of a $q$-thinned distribution $T_q f$, note that $T_q f$ can be identified as the probability law of the random variable
\[
 \sum_{i=1}^{N-1} B_i
\]
where $N$ is $f$-distributed and $B_i$ are $\Ber$-distributed with a correct $q$-value.

The first moment of the $q$-thinned probability distribution $f$ on $\Z_+$ equals
\[
 \mom_1( T_q f )
 \weq \sum_{s \ge 1} s q_{s+1} f_s.
\]
Hence
\[
 \mom_1( T_q \hat \pi )
 \weq \sum_{s \ge 1} s q_{s+1} \hat \pi_s
 \weq \sum_{s \ge 1} s q_{s+1} \frac{(s+1)\pi_{s+1}}{\mom_1(\pi)}
 \weq \frac{\sum_{s \ge 1} (s-1)s q_{s} \pi_s}{\mom_1(\pi)},
\]
and therefore the first moment of the limiting degree distribution equals
\[
 \lambda \mom_1( T_q \hat \pi )
 \weq \mu \mom_1(\pi) \frac{\sum_{s \ge 1} (s-1)s q_{s} \pi_s}{\mom_1(\pi)}
 \weq \mu \sum_{s \ge 1} (s-1)s q_{s} \pi_s.
\]

\begin{remark}
Assume that $q_s=1$ identically. Then we obtain a so-called passive random intersection graph with $m$ nodes and $n$ layers. Then the upper bound equals
\[
 L_i
 \weq \sum_{k=1}^n 1_{V_k}(i) (x_k-1)_+
 \weq \sum_{s \ge 1} (s-1) \sum_{k: x_k = s} B_{ik}.
\]
where $B_{ik} = 1_{V_k}(i)$ are independent and $\Ber(\frac{x_k}{m})$-distributed. The mean equals
\[
 \E L_i
 \weq \sum_{s \ge 1} \pi^{(n)}_s n (s-1) \frac{s}{m}
 \weq m^{-1} n \sum_{s \ge 1} \pi^{(n)}_s (s-1) s.
\]
The variance equals
\begin{align*}
 \Var(L_i)
 &\weq \sum_{k=1}^n (x_k-1)_+^2 \Var(B_{ik}) \\
 &\weq \sum_{k=1}^n (x_k-1)_+^2 \frac{x_k}{m}(1-\frac{x_k}{m}) \\
 &\weq \sum_{s \ge 0} \sum_{k: x_k = s} (s-1)^2 \frac{s}{m}(1-\frac{s}{m}) \\
 &\weq m^{-1} n \sum_{s \ge 0} (s-1)^2 s(1-\frac{s}{m}) \pi^{(n)}_s.
\end{align*}
Assume that $m, n \to \infty$ and $m/n \to \beta \in (0,\infty)$. Assume also that $\pi^{(n)}_s \to \pi_s$ for every $s$, for some limiting probability distribution $\pi$. Under sufficient moment bounds, it follows that
\[
 \E L_i
 \wto \mu \sum_{s \ge 0} (s-1) s \pi_s
\]
and
\[
 \Var(L_i)
 \wto \mu \sum_{s \ge 0} (s-1)^2 s \pi_s.
\]
The mean and variance are not equal, so the limit is \emph{not} Poisson.
\end{remark}

\subsection{Two different biasings}

\begin{lemma}[Two different biasings]
\label{the:TwoBiasedMeasures}
Let $\pi$ be a probability measure, and let $0 \le \phi \le \psi$ be nonnegative functions such that $0 < \pi(\phi) \le \pi(\psi) < \infty$. Then the probability measures
\[
 \pi_\phi(dz)
 \weq \frac{\phi(z) \pi(dz)}{\pi(\phi)}
 \quad \text{and} \quad
 \pi_\psi(dz)
 \weq \frac{\psi(z) \pi(dz)}{\pi(\psi)}
\]
satisfy $\dtv(\pi_\phi,\pi_\psi) \le 1 - \frac{\pi(\phi)}{\pi(\psi)}$.
\end{lemma}
\begin{proof}
Note that
\begin{align*}
 \frac{\phi(z) }{\pi( \phi )} - \frac{\psi(z)}{\pi(\psi)}
 &\weq \left( \frac{1}{\pi( \phi )} - \frac{1}{\pi(\psi)} \right) \phi(z) - \frac{\psi(z) - \phi(z)}{\pi(\psi)} \\
\end{align*}
Because $0 \le \phi \le \psi$, it follows that
\begin{align*}
 \left| \frac{\phi(z) }{\pi( \phi )} - \frac{\psi(z)}{\pi(\psi)} \right|
 &\wle \left( \frac{1}{\pi( \phi )} - \frac{1}{\pi(\psi)} \right) \phi(z) + \frac{\psi(z)-\phi(z)}{\pi(\psi)} \\
\end{align*}
Integrating shows both sides of the above inequality agains $\pi$ now implies that
\begin{align*}
 \dtv( \pi_\phi, \pi_\psi )
 &\weq \frac12 \int \left| \frac{\phi(z) }{\pi( \phi )} - \frac{\psi(z)}{\pi(\psi)} \right| \pi(dz) \\
 &\wle \frac12 \left( \frac{1}{\pi(\phi )} - \frac{1}{\pi(\psi)} \right) \pi(\phi) + \frac12 \frac{\pi(\psi) - \pi(\phi)}{\pi(\psi)} \\
 &\weq 1 - \frac{\pi(\phi)}{\pi(\psi)}.
\end{align*}
\end{proof}

\subsection{Scale-dependent empirical distributions, unbounded case}

\begin{lemma}[Scale-dependent empirical distributions, unbounded case]
\label{the:ScaleEmpirical}
For each integer $n \ge 1$, let $\hat P_n = \frac{1}{n(n)} \sum_{k=1}^{m(n)} \delta_{X^{(n)}_k}$ be the empirical distribution of independent $P_n$-distributed random variables $X^{(n)}_1,\dots, X^{(n)}_{m(n)}$ in a measurable space $S$, with joint distribution $\pr_n = P_n^{\otimes m(n)}$.
\begin{enumerate}[(i)]
\item Assume that  $m(n) \to \infty$ and that $(P_n \circ \psi^{-1})_{n \ge 1}$ is uniformly integrable for some $\psi: S \to \R_+$. Then for any $\epsilon > 0$,
\begin{equation}
 \label{eq:ScaleEmpirical1}
 \pr_n\Big(  \Big| \int \psi d \hat P_n -  \int \psi d P_n \Big| > \epsilon  \Big)
 \wto 0.
\end{equation}

\item Assume in addition that $\inf_n \int \psi d P_n > 0$, and define $\psi$-biased distributions $P_n^*(dx) = \frac{\psi(x) P_n(dx)}{\int \psi d P_n}$, and
\[
 \hat P_n^*(dx)
 \weq
 \begin{cases}
   \frac{\psi(x) \hat P_n(dx)}{\int \psi d\hat P_n}, &\quad \text{if} \ \int \psi d\hat P_n > 0, \\
   \delta_0(dx), &\quad \text{else}.
 \end{cases}
\]
Then for all bounded measurable functions $\phi$ on $S$, 
\begin{equation}
 \label{eq:ScaleEmpirical2}
 \E_n \Big| \int \phi d \hat P^*_n -  \int \phi d P^*_n \Big| 
 \wto 0.
\end{equation}
\end{enumerate}
\end{lemma}
\begin{proof}
Fix $\epsilon > 0$. Let us use the shorthand $P \psi = \int \psi dP$ for integrals. Note that
\[
 \hat P_n \psi
 \weq \frac{1}{m(n)} \sum_{k=1}^{m(n)} \psi( X^{(n)}_k )
\]
is an average of independent $P_n \circ \psi^{-1}$-distributed random numbers with a common mean $P_n \psi$. Hence by Lemma~\ref{the:QWLLN_New}, it follows that for any $M > 0$,
\[
 \pr_n\Big(  \Big| \hat P_n \psi - P_n \psi \Big| > \epsilon  \Big)
 \wle 9 \frac{M^2}{\epsilon^2 m(n)} + 6 \frac{h_n(M)}{\epsilon},
\]
where $h_n(M) = \int \psi 1(\psi > M) d P_n$. Now \eqref{eq:ScaleEmpirical1} follows by uniform integrability.

(ii) Denote $b = \inf_n \int \psi d P_n > 0$, select a bounded $\phi: S \to \R$, and fix $\epsilon > 0$. Now denote $\Delta_{\phi\psi} = { \hat P_n(\phi\psi) - P_n(\phi\psi)}$ and $\Delta_{\psi} = { \hat P_n(\psi) - P_n(\psi)}$.
Fix some $0 < \delta \le b/2$, and consider the event that $\abs{\Delta_{\psi}} \le \delta$ and $\abs{\Delta_{\phi\psi}} \le \delta$. On this event $\hat P_n(\psi) > 0$, and by writing
\[
 \hat P^*_n \phi - P^*_n \phi
 \weq \frac{\hat P_n(\phi \psi)}{\hat P_n(\psi)} - \frac{P_n(\phi \psi)}{P_n(\psi)} 
 \weq \frac{\Delta_{\phi\psi} - \frac{P_n(\phi \psi)}{P_n(\psi)} \Delta_\psi}{P_n(\psi) + \Delta_\psi},
\]
we see that
\[
 \abs{\hat P^*_n \phi - P^*_n \phi}
 \wle \frac{\abs{\Delta_{\phi\psi}} + \supnorm{\phi} \abs{\Delta_\psi}}{ b - \abs{\Delta_\psi}}
 \wle \frac{2}{b} \left( 1 + \supnorm{\phi} \right) \delta.
\]
Therefore, for any $\delta \in (0,b/2)$ which is so small that $\frac{2}{b} \left( 1 + \supnorm{\phi} \right) \delta \le \epsilon$, it follows that
\[
 \pr_n \Big( \abs{\hat P^*_n \phi - P^*_n \phi} > \epsilon \Big)
 \wle \pr_n ( \abs{\Delta_{\psi}} > \delta ) + \pr_n ( \abs{\Delta_{\phi\psi}} > \delta ).
\]
Now $\pr_n( \abs{\Delta_{\psi}} > \delta) \to 0$ by \eqref{eq:ScaleEmpirical1}. Moreover, because also $(P_n \circ (\phi\psi)^{-1})_{n\ge 1}$ is uniformly integrable, we may apply \eqref{eq:ScaleEmpirical1} again to conclude that $\pr_n( \abs{\Delta_{\phi\psi}} > \delta) \to 0$. Hence \eqref{eq:ScaleEmpirical2} follows by noting that
\[
 \E_n \abs{\hat P^*_n \phi - P^*_n \phi}
 \wle \epsilon + 2 \supnorm{\phi} \pr_n \Big( \abs{\hat P^*_n \phi - P^*_n \phi} > \epsilon \Big)
\]
\end{proof}

\subsection{Empirical degree distribution}

The empirical degree distribution
\[
 \edeg(s)
 \weq \frac{1}{m} \sum_{i=1}^m 1( \deg_G(i) = s )
\]
is a random function which for any integer $s$ returns the fraction of nodes of degree $s$.

\begin{theorem}
\label{the:EmpiricalDegreeDistribution}
Consider a sequence of models parametrized by $(m_\nu, n_\nu, \pi_\nu)$ such that $m_\nu, n_\nu \to \infty$ with $\frac{n_\nu}{m_\nu} \to \eta \in (0,\infty)$. Assume also that $\pi_\nu \to \pi$ weakly with $(\pi_\nu)_{10} \to (\pi)_{10} \in (0,\infty)$, and that $(\pi_\nu)_{21} \ll n_\nu^{1/2}$ and $\int \big( x^3q \wedge x^2 \big) \pi_\nu(dx,dq) \ll n_\nu$. Then the empirical degree distribution of the graph converges according to
\[
 \dtv( \edegnu, f ) \wprnuto 0,
\]
where $f = \CPoi(\lambda, g)$ is a compound Poisson distribution with rate parameter $\lambda = \eta (\pi)_{10}$ and increment distribution $g = \int \Bin(x-1,q) \frac{x \pi(dx, dq)}{(\pi)_{10}}$.
\end{theorem}

\begin{proof}[Proof of Theorem~\ref{the:EmpiricalDegreeDistribution}]
Fix an $\epsilon > 0$ and choose a large enough integer $M$ so that $\sum_{s \ge M} f(s) \le \epsilon$.
Let us denote by $\aedegnu(s) = \E_\nu \edegnu(s)$ an averaged version of the empirical degree distribution, and observe that by exchangeability,
\[
 \aedegnu(s)
 \weq \pr_\nu( \deg_G(i) = s )
 \qquad \text{for all $i$}.
\]
Then by applying Lemma~\ref{the:TotalVariation},
\begin{align*}
 \dtv( \edeg_\nu, f )
 &\wle \sum_{s < M} \abs{ \edeg_\nu(s) - f(s) } + \sum_{s \ge M} f(s) \\
 &\wle \sum_{s < M} \abs{ \edeg_\nu(s) - \aedeg_\nu(s) } + \sum_{s < M} \abs{ \aedeg_\nu(s) - f(s) }
   + \sum_{s \ge M} f(s) \\
 &\wle M \norm{ \edeg_\nu - \aedeg_\nu }_\infty + 2 \dtv(\aedeg_\nu, f) + \sum_{s \ge M} f(s).
\end{align*}
Hence by our choice of $M$ it follows that
\[
 \dtv( \edeg_\nu, f )
 \wle M \norm{ \edeg_\nu - \aedeg_\nu }_\infty + 2 \dtv(\aedeg_\nu, f) + \epsilon,
\]
and for proving the claim it suffices to show that $\norm{ \edeg_\nu - \aedeg_\nu }_\infty \prnuto 0$ and $\dtv(\aedeg_\nu, f) \to 0$. This is what we will do next.

(i) We will verify that $\norm{ \edeg_\nu - \aedeg_\nu }_\infty \prnuto 0$. By Theorem~\ref{the:DegreeDecoupling} we know that for all $i \ne j$, the joint distribution of the degrees $D_i = \deg_G(i)$ and $D_j = \deg_G(j)$ satisfies
\[
 \dtv \Big( \law( D_i, D_j ), \ \law(D_i) \times \law(D_j) \Big)
 \wle \frac{2 n_\nu^2}{m_\nu(m_\nu)_2} (\pi_\nu)_{21}^2 + \frac{2 n_\nu}{(m_\nu)_2} (\pi_\nu)_{10}.
\]
Hence by Lemma~\ref{the:EmpiricalDistributionMean} it follows that for any $\delta > 0$,
\[
 \pr_\nu\left( \norm{ \edeg - \aedeg }_\infty > \delta \right)
 \wle \frac{2}{\delta^2} \left( \frac{1}{m_\nu}
  + \frac{2 n_\nu^2}{m_\nu(m_\nu)_2} (\pi_\nu)_{21}^2 + \frac{2 n_\nu}{(m_\nu)_2} (\pi_\nu)_{10} \right)
 \wto 0.
\]

(ii) To show that $\dtv(\aedegnu, f) \to 0$, note first that
\[
 \dtv(\aedegnu, f)
 \wle \dtv(\aedegnu, f_\nu) + \dtv(f_\nu, f),
\]
where $f_\nu = \CPoi(\lambda_\nu, g_\nu)$ is a compound Poisson distribution with rate parameter $\lambda_\nu = \frac{n_\nu}{m_\nu} (\pi_\nu)_{10}$ and increment distribution $g_\nu = \int \Bin(x-1,q) \frac{x \pi_\nu(dx, dq)}{(\pi_\nu)_{10}}$. By Theorem~\ref{the:DegreeApproximationQuantitative}, it follows that
\[
 \dtv( \aedeg_\nu, f_\nu ) \wto 0.
\]
Moreover, $\lambda_\nu \to \lambda$, and by Lemma~\ref{the:BiasedWeakConvergence} we see that $\frac{x \pi_\nu(dx, dq)}{(\pi_\nu)_{10}} \to \frac{x \pi(dx, dq)}{(\pi)_{10}}$ weakly. Because the map $(x,q) \mapsto \Bin(x-1,q)(s)$ from $\Z_+ \times [0,1]$ into $[0,1]$ is continuous and bounded for any $s \in \Z_+$, it follows that $g_\nu(s) \to g(s)$ for all $s$, which further implies that $\dtv(g_\nu, g) \to 0$. Now by Lemma~\ref{the:CPoiPerturbation} we conclude that $\dtv(f_\nu, f) \to 0$. Hence it follows that $\dtv(\aedeg_\nu, f) \to 0$.
\end{proof}

\subsubsection{Approximate independence}
The following result shows that the degrees of any two distinct nodes are asymptotically independent, under sufficient regularity. 
\begin{lemma}
\label{the:DegreeDecoupling}
For any graph model with parameters $(m,n,\pi)$, the joint distribution of the degrees $D_i = \deg_G(i)$ and $D_j = \deg_G(j)$ of any distinct nodes $i$ and $j$ satisfies
\[
 \dtv \Big( \law( D_i, D_j ), \ \law(D_i) \times \law(D_j) \Big)
 \wle \frac{2 n^2}{m(m)_2} (\pi)_{21}^2 + \frac{2 n}{(m)_2} (\pi)_{10}.
\]
\end{lemma}
\begin{proof}
Denote $D_i = \deg_G(i)$ and $D_{ik} = \deg_{G^k}(i)$. Define $L_i = \sum_k D_{ik}$. We also define random integers
\[
 D'_{ik} \weq B_{ik} A_{ik},
\]
where $\{B_{ik}, A_{ik}: i=1,\dots,n, k=1,\dots, m\}$ is a collection of independent random variables, which is assumed to be independent of $(V_1,\dots,V_n)$, such that $\law(B_{ik}) = \Ber(\frac{x_k}{m})$ and $\law(A_{ik}) = \Bin(x_k-1,q_k)$. This construction implies that $D_{ik} \eqst D'_{ik}$ for all $k$, and hence
\[
 L_i \weqst \sum_{k=1}^n D'_{ik}.
\]

Note that
\[
 ( D_{ik}, D_{jk} )
 \weq \Big( 1_{V_k}(i) D_{ik}, \, 1_{V_k}(j) D_{jk} \Big)
 \weqst \Big( 1_{V_k}(i) A_{ik}, \, 1_{V_k}(j) A_{jk} \Big).
\] 
This implies, by applying Lemma~\ref{the:Sampling}, that
\begin{align*}
 \dtv\big( \law( D_{ik}, D_{jk} ),  \law( D'_{ik}, D'_{jk} ) \big)
 &\wle \dtv\big( \law( 1_{V_k}(i), 1_{V_k}(j) ),  \law( B_{ik}, B_{jk} ) \big) \\
 &\wle 2\frac{x_k}{(m)_2}.
\end{align*}
This further implies, by Lemma~\ref{the:dtvMarginals}, that
\[
 \dtv\Big( \law( D_{ik}, D_{jk}: k \le n ), \ \law( D'_{ik}, D'_{jk}: k \le n ) \Big)
 \wle 2\sum_{k=1}^n \frac{x_k}{(m)_2}.
\]
This also implies that
\[
 \dtv\Big( \law( \sum_k D_{ik}, \sum_k D_{jk} ),  \law( \sum_k D'_{ik}, \sum_k D'_{jk} ) \Big)
 \wle 2\sum_{k=1}^n \frac{x_k}{(m)_2}.
\]
But now
\[
 \law( \sum_k D'_{ik}, \sum_k D'_{jk} )
 \weq \law( \sum_k D'_{ik} ) \times \law( \sum_k D'_{jk}) )
 \weq \law( L_i ) \times \law( L_j ).
\]
Hence
\[
 \dtv( \law(L_i, L_j), \ \law(L_i) \times \law(L_j) )
 \wle 2\sum_{k=1}^n \frac{x_k}{(m)_2}.
\]

Hence by Lemma~\ref{the:dtvMarginals}, we see that
\begin{align*}
 &\dtv( \law(D_i, D_j), \ \law(D_i) \times \law(D_j) ) \\
 &\quad \wle \dtv( \law(D_i, D_j), \ \law(L_i, L_j) ) \\
 &\quad\quad + \dtv( \law(L_i, L_j), \ \law(L_i) \times \law(L_j) ) \\
 &\quad\quad + \dtv( \law(L_i) \times \law(L_j), \ \law(D_i) \times \law(D_j) ) \\
 &\quad \wle 4 \dtv( \law(L_i), \law(D_i) ) + \dtv( \law(L_i, L_j), \ \law(L_i) \times \law(L_j) ).
\end{align*}
The claim follows because in the proof of \rnote{earlier} we saw that
\[
 \dtv( \law(D_i), \law(L_i) )
 \wle \frac{n^2}{2m(m)_2} (\pi)_{21}^2.
\]
\end{proof}

\subsection{Sparsity --- Deterministic layer types}


\begin{proposition}
\label{the:Sparsity}
$\pr(G_{ij}=1) \ll 1$ if and only if $\frac{n}{(m)_2} (\pi)_{21} \ll 1$ \rnote{$\mu_{21} \ll 1$} , in which case
\[
 \pr(G_{ij}=1)
 \weq (1+o(1)) \frac{n}{(m)_2} (\pi)_{21}, \rnote{\sim \mu_{21}}
\]
and the mean degree is approximately
\[
 \E \deg_G(i)
 \weq (1+o(1)) \frac{n}{m} (\pi)_{21}. \rnote{\sim m \mu_{21}}
\]
\end{proposition}
\begin{proof}
An exact formula for the link probability is
\[
 \pr(G_{ij}=1)
 \weq 1 - \prod_{k=1}^n ( 1 - \pr(G_{ij}^k = 1) )
 \weq 1 - \prod_{k=1}^n \left( 1 - p_{21}(k) \right).
\]
Hence the inequality $1-x \le e^{-x}$ yields a lower bound
\[
 \pr(G_{ij}=1)
 \wge 1 - \exp\left( -\sum_{k=1}^n p_{21}(k) \right)
 \weq 1 - e^{- \mu_{21}}.
\]
Hence $\pr(G_{ij}=1) \ll 1$ only if $\mu_{21} \ll 1$.

Assume next that $\mu_{21} \ll 1$. Then by applying the union bound to $\pr(G_{ij}=1) = \pr(\cup_k G_{ij}^k = 1)$ yields
\[
 \pr(G_{ij}=1)
 \wle \sum_k \pr(G_{ij}^k = 1)
 \weq \sum_{k} p_{21}(k)
 \weq \mu_{21}.
\]
Hence in this case $\pr(G_{ij}=1) \ll 1$. Inclusion--exclusion gives another lower bound
\begin{align*}
 \pr(G_{ij}=1)
 &\wge \sum_{k} \pr(G_{ij}^k = 1) - \frac12 \sumd_{k,\ell} \pr(G_{ij}^k = 1, G_{ij}^\ell=1)  \\
 &\weq \sum_{k} \pr(G_{ij}^k = 1) - \frac12 \sumd_{k,\ell} p_{21}(k) p_{21}(\ell) \\
 &\wge \mu_{21} - \frac12 \mu_{21}^2.
\end{align*}
Therefore, $\pr(G_{ij}=1) = \mu_{21} + O(\mu_{21}^2)$, and the expected degree satisfies $\E \deg_G(i) = (m-1) \pr(G_{ij}=1) = (1+o(1)) (m-1) \mu_{21}$.
\end{proof}

\subsection{Sparsity --- random layer types}

\begin{proposition}
Assume that $m^{(\nu)} \gg 1$, $n^{(\nu)} \lesim m^{(\nu)}$,  and that $P^{(\nu)} \to P$ weakly together with $(P^{(\nu)})_{10} \to (P)_{10} < \infty$. Then $\pr( G^{(\nu)}(i,j) = 1 ) \to 0$.
\end{proposition}
\begin{proof}

The proof of Proposition \ref{the:Sparsity} contains the bound
\[
 \pr_{X,Q}(G_{ij}=1)
 \wle n \frac{ (\pi^{(\nu)})_{21}}{(m)_2}.
\]
Taking expectations shows that
\[
 \pr(G_{ij}=1)
 \wle \frac{n}{(m)_2} (P^{(\nu)})_{21}
\]
Moreover, for any $M > 0$,
\begin{align*}
 \frac{n}{(m)_2} (P^{(\nu)})_{21}
 &\weq \frac{n}{(m)_2} \int (x)_2 q 1(x \le M) \, d P^{(\nu)} + \frac{n}{(m)_2} \int (x)_2 q 1(x > M) \, d P^{(\nu)} \\
 &\wle \frac{n}{(m)_2} M^2 + \frac{n}{m} \int x 1(x > M) \, d P^{(\nu)} \\
 &\wle \frac{n}{(m)_2} M^2 + \frac{n}{m} h(M),
\end{align*}
where $h(M) = \sup_{\nu \ge 1} \int x 1(x > M) \, d P^{(\nu)}$. Here $h(M) \to 0$ by uniform integrability. Hence the claim follows.
\end{proof}

\subsection{Old stuff about degree distribution, random layer types (deprecated)}

By Lemma~\ref{the:UIBoundSimple}, whp, $\int x^2 q \, d \pi \ll n^{1/\beta - 1}$ whenever $(X^2 Q)^{\beta}$ is UI for some $\beta > 0$, and $\int (x^3 q \wedge x^2) \, d\pi \ll n^{1/\beta' - 1}$ whenever $(X^3 Q \wedge X^2)^{\beta'}$ is UI for some $\beta' > 0$. Hence, it follows that
\[
 \dtv \Big( \law(\deg_G(i)), \CPoi(\lambda, g_{10}) \Big)
 \wll 1
 \qquad \text{whp}
\]
when  $(X^2 Q)^{\beta}$ is UI for some $\beta > 0$ such that $n \lesim m^{(3/2)\beta}$, and
$(X^3 Q \wedge X^2)^{\beta'}$ is UI for some $\beta' > 0$ such that $n \lesim m^{2\beta'}$.

Case: Assume that $n \wle c m^\alpha$ for some $\alpha \ge 0$. Then we choose $\beta = (2/3)\alpha$ and $\beta' = (1/2)\alpha$. Then we require that $(X^{4/3} Q^{2/3})^{\alpha}$ and $(X^{3/2} Q^{1/2} \wedge X)^{\alpha}$ are uniformly integrable. Hence, we require the $\alpha$-uniform integrability of
\[
 \max\{ X^{4/3} Q^{2/3}, \min\{ X^{3/2} Q^{1/2}, X \} \}
 \weq
 \begin{cases}
  X^{3/2} Q^{1/2}, &\quad Q \le X^{-1}, \\
  X, &\quad X^{-1} \le Q \le X^{-1/2}, \\
  X^{4/3} Q^{2/3}, &\quad Q \ge X^{-1/2}.
 \end{cases}
\]

Special case: $Q = X^{-\gamma}$. Then we require $X^{\gamma'}$ to be $\alpha$-UI where 
\[
 \gamma'
 \weq
 \begin{cases}
   \frac{4}{3} - \frac{2}{3} \gamma, &\quad 0 \le \gamma \le \frac12, \\
   1, &\quad \frac12 \le \gamma \le 1, \\
   \frac{3}{2} - \frac{\gamma}{2}, &\quad \gamma \ge 1.
 \end{cases}
\]

\begin{theorem}
Consider a sequence of models indexed by integers $\nu=1,2,\dots$ where the $\nu$-th model has $m_\nu$ nodes and $n_\nu$ layers of types $(X_{\nu, k}, Q_{\nu, k})$, $k=1,\dots,n_\nu$ which are independent random variables each distributed as a random variable $(X_\nu, Q_\nu)$. Assume that $n_\nu \le c m_\nu^\alpha$ for some scale-independent constants $c > 0$ and $\alpha \ge 0$, $(X_\nu, Q_\nu) \to (X,Q)$ in distribution for some random variable $(X,Q)$ in $\Z_+ \times [0,1]$, and that the collection of random variables $\{\phi(X_\nu, Q_\nu)^\alpha: \nu \ge 1\}$ is uniformly integrable, where
\[
 \phi(x,q)
 \weq
 \begin{cases}
  x^{3/2} q^{1/2}, &\quad 0 \le q \le x^{-1}, \\
  x, &\quad x^{-1} \le q \le x^{-1/2}, \\
  x^{4/3} q^{2/3}, &\quad x^{-1/2} \le q \le 1.
 \end{cases}
\]
Then with high probability, the sequence $(X_\nu, Q_\nu)$ is such that
\[
 \law( \deg_{G_\nu}(1) \cond (X_{\nu,k}, Q_{\nu,k}) )
 \wto \CPoi(\lambda, g_{10})
 \quad \text{weakly as $\nu \to \infty$}.
\]
\rnote{$\lambda \approx 0$ here}
\end{theorem}

\subsection{Old auxiliary results for degree distribution (deprecated?)}

\begin{rcomm}
Apply Lemma~\ref{the:EmpMixedBiased} this with $S_1 = \Z_+ \times [0,1]$, $S_2 = \Z_+$, $\phi(x,q) = x$, $P = P^{\nu}$ being the model layer type distribution, and $Q((x,q),A) = \Bin(x-1,q,A)$. Then $P^\phi Q$ is a probability measure on $\Z_+$ with density
\[
 g_{10}^{(\nu)}(t)
 \ := \ P^\phi Q(t)
 \weq \int \Bin(x-1,q)(t) \frac{x P^{\nu}(dx,dq)}{(P^{\nu})_{10}}
\]
and $\hat P^\phi Q$ is a random probability measure on $\Z_+$ with density
\[
 \hat g_{10}^{(\nu)}(t)
 \ := \ \hat P^\phi Q(t)
 \weq \frac{\frac{1}{n} \sum_{k=1}^n \Bin(X_k-1,Q_k)(t) X_k}{\frac{1}{n} \sum_{k=1}^n X_k}
 \weq \int \Bin(x,q-1)(t) \frac{\pi^\nu(dx,dq)}{(\pi^\nu)_{10}}.
\]
Then Lemma~\ref{the:EmpMixedBiased} shows that
\[
 \pr \left( \, \dtv(\hat g_{10}^{(\nu)}, g_{10}^{(\nu)}) > \epsilon \right)
 \wto 0
\]
when $(P^\nu)_{\nu \ge 1}$ is uniformly $\phi$-integrable for $\phi(x,q) = x$. See below for details.

Consider the probability kernel from $\Z_+ \times [0,1]$ to $\Z_+$ defined by $Q(x, q, A) = \Bin(x-1,q,A)$, where
$\Bin(x, q, A)$ denotes the binomial kernel (Appendix~\ref{sec:BinomialKernel}). Because the binomial kernel is stochastically monotone in its parameters, it follows that $Q(x, q, A) \le \Bin(x, q, A)$ for all upper sets $A$, and by Lemma~\ref{the:BinKernelChernoff},
\[
 Q(x,q,[M',\infty))
 \wle e^{-M'}
\]
for all $M' \ge 7a! M$ and all $(x,q)$ such that $(x)_a q^b \le M$. As a consequence, denoting $\phi(x,q) = (x)_a q^b$ together with $A = [M',\infty)$, the mixed binomial distribution $P^\phi Q$ satisfies
\begin{align*}
 &P^\phi Q([M',\infty)) \\
 &\weq \int_{\phi(x,q) \le M} Q(x,q, A) P^\phi(dx,dq) + \int_{\phi(x,q) > M} \nhquad Q(x,q, A) \, P^\phi(dx,dq) \\
 &\wle \sup_{(x,q): \phi(x,q) \le M} Q(x,q, A) + P^\phi\{(x,q): \phi(x,q) > M\} \\
 &\wle e^{-M'} + h_{P,\phi}(M),
\end{align*}
where
\begin{align*}
 h_{P,\phi}(M)
 &\weq \frac{1}{P\phi} \int \phi(x,q) 1(\phi(x,q) > M) P(dx,dq).
\end{align*}
Hence
\[
 P^\phi Q([M',\infty))
 \wle e^{-M'} + h_{P,\phi}(M)
\]
for all $M \ge 1$ and all $M' \ge 7 a!M$.

This shows that when $(P_\nu)_{\nu \ge 1}$ is uniformly $\phi$-integrable, then $(P_\nu^\phi Q)_{\nu \ge 1}$ is tight. Then by Lemma~\ref{the:EmpMixedBiased} we can show that $\dtv( \hat P^\phi_\nu Q, P^\phi_\nu Q) = o_\pr(1)$.
\end{rcomm}

Let $P$ be a probability measure on a measurable space $S_1$. Let $\phi: S_1 \to [0,\infty)$ be such that $0 < P\phi < \infty$, and define $P^\phi(dz) = \frac{\phi(z) P(dz)}{P\phi}$. Let $Q$ be a probability kernel from $S_1$ into a measurable space $S_2$. Define a probability distribution $P^\phi Q$ on $S_2$ by
\[
 P^\phi Q(A)
 \weq \int Q(z, A) P^\phi(dz)
 \weq \frac{\int Q(z, A) \phi(z) P(dz)}{ \int \phi(z) P(dz) }.
\]
If $Z_1,\dots,Z_n$ are independent $P$-distributed random variables in $S_1$, then define the empirical distribution by $\hat P = \frac{1}{n} \sum_k \delta_{Z_k}$. Then an empirical version of $P^\phi Q$ equals
\[
 \hat P^\phi Q(A)
 \weq \frac{\frac{1}{n} \sum_k Q(Z_k, A) \phi(Z_k)}{\frac{1}{n} \sum_k \phi(Z_k)}.
\]
We will assume that $P\{z: \phi(z) > 0\} = 1$ so that the above formula produces a well-defined probability measure on $S_2$. Below we denote $h_{P,\phi}(M) = (P\phi)^{-1} \int \phi(z) 1(\phi(z) > M) P(dz)$.

\begin{lemma}[Biased empirical mixture]
\label{the:EmpMixedBiased}
Assume that $S_2$ is countable, and that $\phi: S_1 \to [0,\infty)$ satisfies $P\{z: \phi(z) > 0\} = 1$ and $P\phi < \infty$. Then
\[
 \pr\left( \, \dtv(\hat P^\phi Q, P^\phi Q) > 2\epsilon \right)
 \wle \frac{72 \abs{K}^3 M^2}{\epsilon^2 (P \phi)^2 n} + \frac{24 \abs{K}^2 h_{P,\phi}(M)}{\epsilon}.
\]
for all $\epsilon, M > 0$ and $K \subset S_2$ such that $P^\phi Q(K^c) \le \epsilon$,

\end{lemma}
\begin{proof}
Denote $g = P^\phi Q$ and $\hat g = \hat P^\phi Q$. By Lemma~\ref{the:TotalVariation}, 
\begin{align*}
 \dtv(\hat g, g)
 &\wle \sum_{s \in K} \abs{\hat g(s) - g(s)} + g(K^c)
 \wle \abs{K} \max_{s \in K} \abs{\hat g(s) - g(s)} + g(K^c)
\end{align*}
Hence when $g(K^c) \le \epsilon$, it follows that
\begin{equation}
\label{eq:MixedBiased1}
\begin{aligned}
 \pr( \dtv(\hat g, g) > 2 \epsilon)
 &\wle \pr\left( \max_{s \in K} \abs{\hat g(s) - g(s)} > \frac{\epsilon}{\abs{K}} \right) \\
 &\wle \sum_{s \in K} \pr\left(  \abs{\hat g(s) - g(s)} > \frac{\epsilon}{\abs{K}} \right).
\end{aligned}
\end{equation}
Observe next that
\[
 \hat g(s) - g(s)
 \weq \underbrace{\hat g(s) - \frac{\hat P\phi}{P\phi} \cdot \hat g(s)}_{\Delta_1}
 + \underbrace{\frac{\hat P\phi}{P\phi} \cdot \hat g(s) - g(s)}_{\Delta_2}.
\]
Due to $\abs{\hat g(s)} \le 1$,
\[
 \abs{\Delta_1}
 \wle \Big\lvert \frac{\hat P\phi}{P\phi} - 1 \Big\rvert
 \weq \Big\lvert \frac{1}{n} \sum_k \frac{\phi(Z_k)}{P\phi} - 1 \Big\rvert,
\]
where the random variables $Y_k = \frac{\phi(Z_k)}{P\phi}$ are iid with mean $1$, so by applying Lemma~\rnote{the:QWLLN},
\begin{equation}
\label{eq:Delta1Bound}
\begin{aligned}
 \pr \left( \abs{\Delta_1} > \epsilon \right)
 &\wle \frac{9(M/P\phi)^2}{\epsilon^2 n} + \frac{6 \E Y_k 1(Y_k > M/P\phi)}{\epsilon} \\
 &\weq \frac{9M^2}{\epsilon^2 (P\phi)^2 n} + \frac{6 h_{P,\phi}(M) }{\epsilon}.
\end{aligned}
\end{equation}
Similarly,
\[
 \abs{\Delta_2}
 \weq \Big\lvert \frac{1}{n} \sum_k \frac{Q(Z_k,s) \phi(Z_k)}{P\phi}  - g(s) \Big\rvert,
\]
where the summands $Y_k' =  \frac{Q(Z_k,s) \phi(Z_k)}{P\phi}$ are iid with with mean $g(s)$, and satisfy
$\E Y_k' 1(Y_k' > M) \le \E Y_k 1(Y_k > M)$. Hence by applying Lemma~\rnote{the:QWLLN} again, we see that $\Delta_2$ obeys the same upper bound \eqref{eq:Delta1Bound} bound as $\Delta_1$.
%
Hence it follows that
\begin{align*}
 \pr \left( \big\lvert \hat g(s) - g(s) \big\rvert > \frac{\epsilon}{\abs{K}} \right)
 &\wle \pr\left( \abs{\Delta_1} > \frac{\epsilon}{2\abs{K}} \right) + \pr\left( \abs{\Delta_2} > \frac{\epsilon}{2\abs{K}} \right) \\
 &\wle 2 \left( \frac{9M^2}{(\epsilon/(2\abs{K}))^2 (P\phi)^2 n} + \frac{6 h_{P,\phi}(M) }{\epsilon/(2\abs{K})} \right).
\end{align*}
The claim follows by combining this bound with \eqref{eq:MixedBiased1}.

\end{proof}


\subsection{Local clustering --- Rooted graph approach}
A \new{rooted graph} is a pair $(G,v)$ where $G$ is a graph and $v \in V(G)$. We say that $(F,u)$ is a \new{rooted subgraph} of $(G,u)$ if $V(F) \subset V(G)$, $E(F) \subset E(G)$, and $u=v$. A rooted graph homomorphism from $(F,u)$ to $(G,v)$ is a function $\phi: V(F) \to V(G)$ such that $\phi(e) \in E(G)$ for all $e \in E(F)$, and $\phi(u) = v$. An injective homomorphism is called an embedding, and an bijective homomorphism is called an isomorphism. Two rooted graphs are called isomorphic if there exists an isomorphism between them. The set of $(F,u)$-isomorphic rooted subgraphs of a rooted graph $(G,v)$ is denoted by $\Sub_{(F,u)}(G,v)$. The set of embeddings from $(F,u)$ to $(G,v)$ is denoted by $\Emb_{(F,u)}(G,v)$. The set of automorphisms (isomorphism from a rooted graph to itself) of $(G,v)$  is denoted by $\Aut(G,v)$. Lowercase letters $\sub_{(F,u)}(G,v), \emb_{(F,u)}(G,v), \aut(G,v)$ denote the cardinalities of the aforementioned sets. A basic property is that $\emb_{(F,u)}(G,v) = \aut(F,u) \sub_{(F,u)}(G,v)$.

We denote by $K_3^\bullet$ a triangle rooted at any of its nodes, and by $K_{1n}^\bullet$ a star graph with $n$ leaves rooted at its hub node.  The {local clustering} of a node $v$ measures the relative proportion of linked nodes among the neighbors of $v$. More precisely, the \new{local clustering} of a graph $G$ at a node $v \in V(G)$ with degree at least 2 is defined by
\[
 \frac{\sub_{K_3^\bullet}(G,v)}{\sub_{K_{12}^\bullet}(G,v)}.
\]
This quantity also equals the probability that an unordered pair of neighbors of $v$ in $G$, selected uniformly at random, is linked in $G$.

Note that $\sub_{K_3^\bullet}(G,v)$ equals the number of linked (unordered) node pairs at distance one from $v$.
Also, $\sub_{K_{12}^\bullet}(G,v) = \binom{k}{2}$ for any node of degree $k$. Hence for a node of degree $k$,
\[
 \frac{\sub_{K_3^\bullet}(G,v)}{\sub_{K_{12}^\bullet}(G,v)}
 \weq \frac{\abs{E(G[N_G(v)])}}{\binom{k}{2}},
\]
where $N_G(v)$ denotes the set of neighbors of $v$. In terms of the adjacency matrix, 
\[
 \frac{\sub_{K_3^\bullet}(G,v)}{\sub_{K_{12}^\bullet}(G,v)}
 \weq \frac{\sum_{(w,w'): w < w'} G(v,w) G(v,w') G(w,w')}{\sum_{(w,w'): w < w'} G(v,w) G(v,w')}.
\]
Also, because $\aut(K_{12}^\bullet) = \aut(K_3^\bullet) = 2$, we see that
\[
 \frac{\sub_{K_3^\bullet}(G,v)}{\sub_{K_{12}^\bullet}(G,v)}
 \weq \frac{\emb_{K_3^\bullet}(G,v)}{\emb_{K_{12}^\bullet}(G,v)}.
\]

\subsection{General triangle density}

\begin{theorem}[Complicated, deprecated]
\label{the:TriangleDensity}
The probability that $G$ covers any particular triangle is approximated by $\pr\{ G \supset K_3 \} = U' - \epsilon$ where
\begin{equation}
 \label{eq:TriangleDensity}
 U'
 \weq \mu_{33} + 3 \mu_{21} \mu_{32} + \mu_{21}^3,
\end{equation}
and the approximation error is bounded by\mnote{$\ang{p_{ab}} = \frac{n}{(m)_a} (\pi)_{ab}$}
\[
 0 \wle \epsilon
 \wle 54 ( e^{\mu_{21}}-1 ) U' + 5 \mu_{21} \mu_{32}^2 + 15 \mu_{32}^2
 + 3 \ang{p_{21}p_{32}} + 6 \mu_{21} \ang{p_{21}^2}.
\]
\end{theorem}
\begin{proof}[Proof of Theorem~\ref{the:TriangleDensity}]
Denote the links of $K_{3}$ by $e_1, e_2, e_3$.  Let us first introduce the shorthand notations, $k,\ell \in [n]^3$,
\begin{align*}
 p(k) &\weq \pr( G^{k_1} \ni e_1, G^{k_2} \ni e_2, G^{k_3} \ni e_3 ), \\
 p(k\ell) &\weq \pr( G^{k_1} \ni e_1, G^{k_2} \ni e_2, G^{k_3} \ni e_3, G^{\ell_1} \ni e_1, G^{\ell_2} \ni e_2, G^{\ell_3} \ni e_3 ).
\end{align*}
Then by inclusion--exclusion, we find that $U - \Delta \le \pr \{ G \supset K_{3} \} \le U$ where
\[
 U
 \weq \sum_{k \in [n]^3} p(k)
\]
and
\begin{equation}
 \label{eq:TriangleInclusion}
 \Delta
 \weq \nhquad \sum_{k \in [n]^3} \sum_{\ell \in [n]^3: \, \ell \ne k} p(k \ell)
 \weq \Delta_1 + \Delta_{02} + \Delta_{03},
\end{equation}
where $\Delta_1$ is the sum of $p(k\ell)$ such that $\set{k} \ne \set{\ell}$, and $\Delta_{0s}$, $s=2,3$, is the sum of $p(k\ell)$ such that $k \ne \ell$ with $\set{k} = \set{\ell}$ being of cardinality $s$. By Lemma~\ref{the:GeneralUpperBound}, it follows that 
\[
 \Delta_1 \wle 54 ( e^{\mu_{21}}-1 ) U.
\]

Let us next study $\Delta_{03}$. Let us represent a pair of layer triples $(k,\ell)$ as a matrix
\[
 \begin{bmatrix}
  k_1 & k_2 & k_3 \\
  \ell_1 & \ell_2 & \ell_3 \\
 \end{bmatrix}.
\]
For any layer triple $k = (a,b,c)$ such that $\set{k} = 3$, there are 5 layer triples $\ell \ne k$ such that $\set{\ell} = \set{k}$. Out of these, two triples result in matrices
\[
 \begin{bmatrix}
  a & b & c \\
  b & c & a \\
 \end{bmatrix},
 \quad
 \begin{bmatrix}
  a & b & c \\
  c & a & b \\
 \end{bmatrix},
\]
where every layer appears in precisely two columns, and therefore $p(k\ell) = p_{32}(a) p_{32}(b) p_{32}(c)$. The remaining three triples result in matrices
\[
 \begin{bmatrix}
  a & b & c \\
  a & c & b \\
 \end{bmatrix},
 \quad
 \begin{bmatrix}
  a & b & c \\
  c & b & a \\
 \end{bmatrix},
 \quad
 \begin{bmatrix}
  a & b & c \\
  b & a & c \\
 \end{bmatrix},
\]
for which $p(k\ell)$ equals $p_{21}(a) p_{32}(b) p_{32}(c)$, $p_{32}(a) p_{21}(b) p_{32}(c)$, $p_{32}(a) p_{32}(b) p_{21}(c)$, respectively. Therefore, by symmetry, and due to $p_{32}(k) \le p_{21}(k)$, we conclude that
\begin{align*}
 \Delta_{03}
 &\weq \sumd_{a,b,c} \Big( 2 p_{32}(a) p_{32}(b) p_{32}(c) + 3 p_{21}(a) p_{32}(b) p_{32}(c) \Big) \\
 &\wle 5 \sumd_{a,b,c} p_{21}(a) p_{32}(b) p_{32}(c)
 \wle 5 \mu_{21} \mu_{32}^2.
\end{align*}

Let us next analyze $\Delta_{02}$. Fix any $a \ne b$, and let $W_{ab}$ be the set of pairs of layer triples $(k,\ell)$ such that $\set{k} = \set{\ell} = \{a,b\}$ and $k \ne \ell$. Then
\[
 \Delta_{02}
 \weq \sum_{ \{a,b\} \in \binom{[n]}{2}} \sum_{(k,\ell) \in W_{ab}} p(k\ell)
 \weq \frac12 \sumd_{a,b} \sum_{(k,\ell) \in W_{ab}} p(k\ell).
\]
If $k = (a,a,b)$, then $\ell$ must contain $b$ in position 1 or 2, corresponding to a matrix of type
\[
 \begin{bmatrix}
  a & a & b \\
  b & * & * \\
 \end{bmatrix}
 \quad\text{or}\quad
 \begin{bmatrix}
  a & a & b \\
  * & b & * \\
 \end{bmatrix},
\]
where $*$ indicates either $a$ or $b$. Hence both $a$ and $b$ appear in at least two columns of the matrix, and it follows that $p(k\ell) \le p_{32}(a) p_{32}(b)$ (recall that $p_{33}(a) \le p_{32}(a)$ for all $a$). This observation can be generalized to conclude that $p(k\ell) \le p_{32}(a) p_{32}(b)$ for all $(k,\ell) \in W_{ab}$. Moreover, because the set of $k$ such that $\set{k} = \{a,b\}$ has cardinality 6, we see that $\abs{W_{ab}} = 6 \cdot 5 = 30$, and therefore
\[
 \Delta_{02}
 \wle 15 \sumd_{a,b} p_{32}(a) p_{32}(b)
 \wle 15 \mu_{32}^2.
\]
Hence we conclude that
\begin{equation}
 \label{eq:TriangleDeltaOld}
 \Delta
 \wle 54 ( e^{\mu_{21}}-1 ) U + 5 \mu_{21} \mu_{32}^2 + 15 \mu_{32}^2.
\end{equation}

We will next approximate
\[
 U
 \weq \sum_a p_{33}(a) + 3 \sumd_{a,b} p_{21}(a) p_{32}(b) + \sumd_{a,b,c} p_{21}(a) p_{21}(b) p_{21}(c)
\]
using a simpler expression
\[
 U'
 \weq \mu_{33} + 3 \mu_{21}  \mu_{32} +  \mu_{21}^3.
\]
Because
\begin{align*}
 \sumd_{a,b} p_{21}(a) p_{32}(b)
 &\weq \sum_{a,b} p_{21}(a) p_{32}(b) - \sum_a p_{21}(a) p_{32}(a)
\end{align*}
and
\begin{align*}
 \sumd_{a,b,c} p_{21}(a) p_{21}(b) p_{21}(c)
 &\weq \sum_{a,b,c} p_{21}(a) p_{21}(b) p_{21}(c) - 6 \sumd_{a,b} p_{21}(a) p_{21}(b)^2
 - \sum_a p_{21}(a)^3,
\end{align*}
we find that
\[
 U'
 \weq U + 3 \sum_a p_{21}(a) p_{32}(a) + 6 \sumd_{a,b} p_{21}(a) p_{21}(b)^2 + \sum_{a} p_{21}(a)^3.
\]
Hence we see that $U \le U'$. The above equality also shows that
\begin{align*}
 U'-U
 &\weq 3 \ang{p_{21} p_{32}}
 + 6 \sum_{a,b} p_{21}(a) p_{21}(b)^2 - 5 \sum_{a} p_{21}(a)^3 \\
 &\wle 3 \ang{p_{21} p_{32}} + 6 \sum_{a,b} p_{21}(a) p_{21}(b)^2.
\end{align*}
%
Hence $U$ is bounded by $U' - \Delta' \le U \le U'$ with
\[
 \Delta'
 \weq 3 \ang{p_{21}p_{32}} + 6 \mu_{21} \ang{p_{21}^2}.
\]
By combining this with \eqref{eq:TriangleDelta}, we find that the triangle density is bounded by
\[
 U' - \epsilon
 \wle \pr\{G \supset K_3\}
 \wle U',
\]
where the error term $\epsilon = \Delta + \Delta' \ge 0$ satisfies the bound as claimed.
\end{proof}

\begin{rcomm}
For regular layers,
\[
 \frac{3 \mu_{21} \mu_{32} + \mu_{21}^3}{\mu_{33}}
 \wasymp m^{-2} n + m^{-3} n^2.
\]
Hence we need to assume that $n \ll m^{3/2}$ to get $U' \sim \mu_{33}$. Then we get
\[
 \epsilon
 \wlesim 54 \mu_{21} \mu_{33} + 5 \mu_{21} \mu_{32}^2 + 15 \mu_{32}^2
 + 3 \ang{p_{21}p_{32}} + 6 \mu_{21} \ang{p_{21}^2}.
\]
Then also
\[
 \frac{54 \mu_{21} \mu_{33} + 5 \mu_{21} \mu_{32}^2 + 15 \mu_{32}^2}{\mu_{33}}
 \wasymp m^{-2} n
\]
and
\[
 \frac{3 \ang{p_{21}p_{32}}}{\mu_{33}} + \frac{6 \mu_{21} \ang{p_{21}^2}}{\mu_{33}}
 \wlesim \supnorm{x} m^{-1} + \mu_{21} \supnorm{x} m^{-1}
 \wasymp \supnorm{x} m^{-1}.
\]
Hence
\[
 \pr(\cK_3)
 \wsim \mu_{33}
\]
for regular layers with $n \ll m^{3/2}$.
\end{rcomm}

\subsubsection{Simplified triangle density}
It is natural to assume that in a sparse model, the most likely way to form a triangle is by a single layer connecting all nodes of the triangle. For this we need to impose mild extra condition which guarantees there exist sufficiently many layers capable of creating a triangle. The expected number of layers containing any particular triangle as a subgraph equals
\[
 \mu_{33}
 \wasymp (\pi)_{33} m^{-3} n.
\]
In a sparse regime, the expected number of ordered triples of two-node layers jointly covering any particular triangle is approximately $\mu_{21}[2]^3$, where
\[
 \mu_{21}[2]
 \weq \sum_{k: x_k=2} p_{21}(k)
 \wasymp m^{-2} n_2 \qtwo,
\]
and where $n_2$ and $\qtwo$ are the number and average strength of two-node layers. It turns out that a sufficient extra condition is to require that $\mu_{33} \gg \mu_{21}[2]^3$, which is equivalent to
\begin{equation}
 \label{eq:ThreeLayers}
 (\pi)_{33} \wgg m^{-3} n^{-1} (n_2 \qtwo)^3.
\end{equation}
Observe that $(\pi)_{33} \wgg m^{-3} n^2$ is a simple sufficient condition for \eqref{eq:ThreeLayers}, due to $n_2 \le n$ and $\qtwo \le 1$. For this, it suffices that $(\pi)_{33} \gesim 1$ and $n \ll m^{3/2}$.

\begin{rcomm}
Theorem. Assume that $n \ll m^{3/2}$ and that $(\pi)_{21}, (\pi)_{32}, (\pi)_{33} \asymp 1$. Then $\pr(\cK_3) \sim \mu_{33} \sim (\pi)_{33} m^{-3} n$.

Proof. Note that
\begin{align*}
 \frac{U'}{\mu_{33}} - 1
 &\weq 3 \frac{\mu_{21} \mu_{32}}{\mu_{33}} + \frac{\mu_{21}^3}{\mu_{33}} \\
 &\wasymp m^{-2} n + m^{-3} n^2 \\
 &\wasymp m^{-1/2} m^{-3/2} n + \big( m^{-3/2} n \big)^2
 \wll 1,
\end{align*}
so that $U' = (1+o(1)) \mu_{33}$.

Bounds:
\[
 \ang{p_{21} p_{32}}
 \wle \left( \frac{\supnorm{x}}{m} \right)^2 \mu_{33}
\]
and for $(\pi)_{32} \asymp (\pi)_{33} \asymp 1$,
\[
 \ang{p_{21}^2}
 \wle 16 m^{-4} n + 2 \frac{\supnorm{x}}{m} \mu_{32}
 \wasymp \frac{\supnorm{x}}{m} \mu_{33}
\]
and
\[
 \mu_{32}^2
 \wle \mu_{21} \mu_{32}
 \wasymp \mu_{21} \mu_{33}.
\]
Hence by the theorem it follows that $\pr(\cK_3) \sim \mu_{33}$.

\end{rcomm}

\begin{theorem}
\label{the:TriangleDensitySimple}
If the maximum layer size is bounded by $\supnorm{x} \ll m n^{-2/3}$, the link density is at least $\mu_{21} \gesim n^{-4/3}$, and \eqref{eq:ThreeLayers} holds, then
\[
 \pr\{ G \supset K_3 \}
 \wsim \mu_{33}.
\]
\end{theorem}


The proof of the theorem is based on the following lemma.
\begin{lemma}
\label{the:ExpectedCoveringCounts}
For any integers $0 < a \le c$ and $0 < b \le d$ such that $ad \le bc$,
\begin{align*}
 \ang{p_{ba}}
 \wle (d!)^{a/c} n_{\ge d}^{1-a/c} \left(\frac{\supnorm{x}}{m}\right)^{b-ad/c} \ang{p_{dc}}^{a/c} + \ang{p_{ba}}_{[b,d)}.
\end{align*}
where $\ang{p_{ba}}_{[b,d)} = \sum_{b \le x_k < d} p_{ba}(k)$.
\end{lemma}
\begin{proof}
Observe first that
\[
 \ang{p_{ba}}_{[d,\infty)}
 \weq \sum_{x_k \ge d} \frac{(x_k)_b}{(m)_b} q_k^a
 \wle \sum_{x_k \ge d} \left(\frac{x_k}{m}\right)^b q_k^a
 \wle \left(\frac{\supnorm{x}}{m}\right)^{b-ad/c} \sum_{x_k \ge d} \left(\frac{x_k}{m}\right)^{ad/c} q_k^a.
\]
By Lemma~\ref{the:VectorNorms}, and the fact that $x^d \le d!(x)_d$ for $x \ge d$, it follows that
\begin{align*}
 \left( \sum_{x_k \ge d} \left(\frac{x_k}{m}\right)^{ad/c} q_k^a \right)^{c/a}
 \wle n_{\ge d}^{c/a-1} \sum_{x_k \ge d} \left(\frac{x_k}{m}\right)^{d} q_k^c
 \wle d! \, n_{\ge d}^{c/a-1} \sum_{x_k \ge d} \frac{(x_k)_d}{(m)_d} q_k^c.
\end{align*}
By combining the above two inequalities,
\begin{align*}
 \ang{p_{ba}}_{[d,\infty)}
 \wle (d!)^{a/c} n_{\ge d}^{1-a/c} \left(\frac{\supnorm{x}}{m}\right)^{b-ad/c} \ang{p_{dc}}^{a/c}.
\end{align*}
This yields the claim because $\ang{p_{ba}} = \ang{p_{ba}}_{[d,\infty)} + \ang{p_{ba}}_{[b,d)}$.
\end{proof}

\begin{proof}[Proof of Theorem~\ref{the:TriangleDensitySimple}]
Observe first that $\supnorm{x} \ll m n^{-2/3}$ implies $\supnorm{x} \ll m$, and
\[
 \mu_{21}
 \weq \sum_k \frac{(x_k)_2}{(m)_2} q_k
 \wle n \frac{\supnorm{x}^2}{m^2}
 \wll 1.
\]
Moreover,
\[
 \supnorm{p_{21}}
 \weq \max_k \frac{(x_k)_2}{(m)_2} q_k
 \wle \max_k \frac{x_k^2}{m^2} q_k
 \wle m^{-2} \supnorm{x}^2,
\]
so that by applying $\ang{p_{21}^2} \le \supnorm{p_{21}} \mu_{21}$, we find that
\[
 \frac{\ang{p_{21}^2}}{\mu_{21}^2}
 \wle \frac{\supnorm{p_{21}}}{\mu_{21}}
 \wle \frac{m^{-2} \supnorm{x}^2}{\mu_{21}}
 \wlesim \frac{m^{-2} \supnorm{x}^2}{n^{-4/3}}
 \wll 1.
\]
Now by Theorem~\ref{the:TriangleDensity} it follows that
\[
 \pr\{ G \supset K_3 \}
 \wsim \mu_{33} + 3 \mu_{21} \mu_{32} + \mu_{21}^3.
\]

By applying Lemma~\ref{the:ExpectedCoveringCounts} with $(a,b,c,d) = (1,2,3,3)$, we find that
\begin{equation}
 \label{eq:Bound2133}
 \mu_{21}
 \wle 6^{1/3} n^{2/3} \rho \mu_{33}^{1/3} + \mu_{21}[{[2,3)}].
\end{equation}
By applying Lemma~\ref{the:ExpectedCoveringCounts} with $(a,b,c,d) = (2,3,3,3)$, we find that
\begin{equation}
 \label{eq:Bound3233}
 \mu_{32}
 \wle 6^{2/3} n^{1/3} \rho \mu_{33}^{2/3},
\end{equation}
with $\rho = \frac{\supnorm{x}}{m}$. Now $(x+ y)^3 \le 2^{3-1}(x^3 + y^3)$ implies
\[
 \frac{\mu_{21}^3}{\mu_{33}}
 \wle 24 n^{2} \rho^3 + 4 \frac{\mu_{21}[{[2,3)}]^3}{\mu_{33}}
 \weq 24 n^{2} \rho^3 + 4 R,
\]
where $R = \frac{\mu_{21}[{[2,3)}]^3}{ \mu_{33}}$.
Furthermore,
\[
 \frac{\mu_{21}\mu_{32}}{\mu_{33}}
 \wle 6 n \rho^2 + 6^{2/3} n^{1/3} \rho \frac{ \mu_{21}[{[2,3)}] }{ \mu_{33}^{1/3} }
 \weq 6 n \rho^2 +  \Big( 6 n \rho^3 R \Big)^{1/3}.
\]
Because $\rho \ll n^{-2/3}$ and $R \ll 1$, it follows that $\frac{\mu_{21}^3}{\mu_{33}} + \frac{\mu_{21}\mu_{32}}{\mu_{33}} \ll 1$, and the claim follows.

\end{proof}

\subsection{Old stuff about the degree-dependent triangle densities}

Compare with Theorem~\ref{the:TriangleDensity}, and its simplified version Theorem~\ref{the:TriangleDensitySimple}. Compare with similar results for two-stars in Theorem~\ref{the:TwoStarDensityDeg} and Theorem~\ref{the:TwoStarDensityDeg}.

\begin{theorem}
\label{the:TriangleProbability}
For $D$ being the degree of any particular node of $K_3$,
\[
 \pr( D = t, G \supset K_3 )
 \weq \mu_{33} \, f \conv g_{33}(t-2) + \epsilon(t),
\]
where the approximation error is bounded by
\[
 \sum_{t \ge 0} \abs{\epsilon(t)}
 \wle \mu_{21} \mu_{32} + \mu_{21}^3
 + \mu_{33}^2
 + 4 \mu_{21}^{1/3} \mu_{33} b^{2/3}
 + 2 m \ang{p_{21} p_{33}}
\]
with
\[
 b
 \weq 2 + m \mu_{21} + m \frac{\mu_{44}}{\mu_{33}}
 \wle m \mu_{21} + 3 \supnorm{x}.
\]
\end{theorem}

\begin{rcomm}
Random layer types, $\mu_{sr} = \frac{n}{(m)_s} (P_n)_{sr}$:
\begin{align*}
 \sum_{t \ge 0} \abs{\epsilon_1(t)}
 \wle 3 \mu_{21} \mu_{32} + \mu_{21}^3
 \weq O(n^{-3})
\end{align*}
\begin{align*}
 \sum_{t \ge 0} \abs{\epsilon_2(t)}
 \wle \mu_{33}^2
 \weq O(n^{-4})
\end{align*}
\[
 \abs{\epsilon_{3}(t)}
 \wle t \mu_{21} \sum_k \pr( \cA_k )
 \weq t \mu_{21} \mu_{33}
 \weq O(t n^{-3})
\]
\begin{align*}
 \sum_{t > t_0} \abs{\epsilon_{3}(t)}
 &\wle 3 t_0^{-1} \E D 1(\cA_k)
 \wle \Big( (P_n)_{21} (P_n)_{33} + (P_n)_{33} + (P_n)_{44} \Big) O(n^{-2}) t_0^{-1}
\end{align*}
\begin{align*}
 \sum_{t \ge 0} \abs{\epsilon_4(t)}
 \wle 2 (m-1)n^{-1} \mu_{33} \mu_{21} 
 \weq O(n^{-3}).
\end{align*}

\begin{align*}
 \abs{\epsilon(t)}
 \weq O(n^{-3})
 \quad \text{for any fixed $t$}
\end{align*}
\begin{align*}
 \sum_{t \ge 0} \abs{\epsilon(t)}
 \weq O(n^{-3}) + O(t_0^2 n^{-3} + t_0^{-1} n^{-2} +t_0^{-1} (P_n)_{44} n^{-2})
\end{align*}
This is $o(n^{-2})$ when $(P_n)_{44} \ll n^{1/2}$, because then we may choose 
$1 + (P_n)_{44} \ll t_0 \ll n^{1/2}$.
\end{rcomm}

\begin{rcomm}
For regular layers,
\begin{align*}
 \mu_{21} \frac{\mu_{32}}{\mu_{33}} + \frac{\mu_{21}^3}{\mu_{33}} + \mu_{33}
 &\wasymp m^{-2}n + m^{-3} n^2 + m^{-3} n \\
 &\wasymp m^{-2}n + (m^{-3/2} n)^2,
\end{align*}
and $m \frac{\ang{p_{21} p_{33}}}{\mu_{33}} \le m^{-1} \supnorm{x}^2 $, and $b \le m \mu_{21} + 3 \supnorm{x}$ implies
\begin{align*}
 \mu_{21}^{1/2} b
 &\wle \mu_{21}^{1/2} (m \mu_{21} + 3 \supnorm{x}) \\
 &\wasymp (m^{-2} n)^{1/2} (m^{-1} n + \supnorm{x}) \\
 &\wasymp m^{-2} n^{3/2} + (m^{-2} n)^{1/2} \supnorm{x} \\
 &\wasymp (m^{-4/3} n )^{3/2} + (m^{-2} n)^{1/2} \supnorm{x}.
\end{align*}
Hence
\[
 \frac{\sum_{t \ge 0} \abs{\epsilon(t)}}{\mu_{33}}
 \wlesim m^{-2}n + (m^{-3/2} n)^2 + \left( (m^{-4/3} n )^{3/2} + (m^{-2} n)^{1/2} \supnorm{x} \right)^{2/3} + m^{-1} \supnorm{x}^2,
\]
which is small when $n \ll m^{4/3}$ and $\supnorm{x} \ll m n^{-1/2} \wedge m^{1/2}$. A sufficient condition for this is that $n \lesim m$ and $\supnorm{x} \ll m^{1/2}$.
\end{rcomm}

\begin{proof}
Let us denote by $\cA_k = \{G^k \supset K_3\}$  the event that all node pairs of the triangle are linked by layer $k$. We also denote $D = \deg_G(i)$ and $D_k = \deg_{G^k}(i)$ and $D_{-k} = \deg_{G^{-k}}(i)$ where $G^{-k} = \cup_{k' \ne k} G^k$. The proof is based on the following approximations (see also Lemma~\ref{the:TriangleDegree})
\begin{align}
 \pr( D = t, G \supset K_3 )
 \label{eq:TriangleDegree1} &\wapprox \pr( D = t, \cup_k \cA_k ) \\
 \label{eq:TriangleDegree2} &\wapprox \sum_k \pr( D = t, \cA_k) \\
 \label{eq:TriangleDegree3} &\wapprox \sum_k \pr( D_{-k} + D_k = t, \cA_k) \\
 \nonumber &\weq \sum_k \sum_{r+s=t} \pr( D_{-k} = r) \, \pr( D_k = s, \cA_k) \\
 \label{eq:TriangleDegree4} &\wapprox \sum_k \sum_{r+s=t} \pr(D=r) \, \pr( D_k = s, \cA_k ) \\
 \nonumber &\weq \mu_{33} \sum_{r+s=t} \pr(D=r) \, g_{33}(s-2) \\
 \nonumber &\weq \mu_{33} f \conv g_{33}(t-2).
\end{align}
Let $\epsilon_1,\dots,\epsilon_4$ be the approximation errors made in \eqref{eq:TriangleDegree1}--\eqref{eq:TriangleDegree4}.

(i) The approximation error in \eqref{eq:TriangleDegree1} equals
\[
 \epsilon_1(t)
 \weq \pr( D = t, G \supset K_3 ) - \pr( D = t, \cup_k \cA_k ).
\]
Because $\epsilon_1(t) \ge 0$, it follows that
\[
 \sum_{t \ge 0} \abs{\epsilon_1(t)}
 \weq \pr(G \supset K_3 ) - \pr(\cup_k \cA_k )
 \wle \pr( \cE_{12} ) + \pr( \cE_{111} ),
\]
where $\cE_{12}$ is the event that there exists one layer covering one link and a different layer covering two links, and $\cE_{111}$ is the event that three distinct layers cover the links of $K_3$. We write $p(abc) = \pr(\cG^a_{12}, \cG^b_{13}, \cG^c_{23})$ and note that
\begin{align*}
 \pr(\cE_{12})
 \wle \sumd_{a,b} \Big(p(aab) + p(aba) + p(baa)\Big) 
 &\weq 3 \sumd_{a,b} p_{21}(a) p_{32}(b)
\end{align*}
and
\begin{align*}
 \pr(\cE_{111})
 \wle \sumd_{a,b,c} p(abc)
 \weq \sumd_{a,b,c} p_{21}(a) p_{21}(b) p_{21}(c).
\end{align*}
Hence it follows that
\begin{align*}
 \sum_{t \ge 0} \abs{\epsilon_1(t)}
 &\wle 3 \sumd_{a,b} p_{21}(a) p_{32}(b) + \sumd_{a,b,c} p_{21}(a) p_{21}(b) p_{21}(c) \\
 &\wle 3 \mu_{21} \mu_{32} + \mu_{21}^3.
\end{align*}
\begin{rcomm}
Random layer types, $\mu_{sr} = \frac{n}{(m)_s} (P_n)_{sr}$,
\begin{align*}
 \E \sum_{t \ge 0} \abs{\epsilon_1(t)}
 \wle 3 \mu_{21} \mu_{32} + \mu_{21}^3
 \weq O(n^{-3}).
\end{align*}
\end{rcomm}

(ii) The approximation error in \eqref{eq:TriangleDegree2} equals
\[
 \epsilon_2(t)
 \weq \pr( \cup_k \cA_k, D = t) - \sum_k \pr( \cA_k,D = t)
\]
and is bounded (inclusion--exclusion) by
\[
 0
 \wle -\epsilon_2(t)
 \wle \sumd_{k,k'} \pr( \cA_k, \cA_{k'}, D = t).
\]
Hence
\[
 \sum_{t \ge 0} \abs{\epsilon_2(t)}
 \wle \sumd_{k,k'} \pr( \cA_k, \cA_{k'})
 \weq  \sumd_{k,k'} p_{33}(k) p_{33}(k') 
 \wle \mu_{33}^2.
\]
\begin{rcomm}
Random layer types, $\mu_{sr} = \frac{n}{(m)_s} (P_n)_{sr}$,
\begin{align*}
 \E \sum_{t \ge 0} \abs{\epsilon_2(t)}
 \wle \mu_{33}^2
 \weq O(n^{-4}).
\end{align*}
\end{rcomm}

(iii) The approximation error in \eqref{eq:TriangleDegree3} equals $\epsilon_3(t) = \sum_k \epsilon_{3k}(t)$, where
\begin{align*}
 \epsilon_{3k}(t)
 \weq \pr( D = t, \cA_k) - \pr( D_{-k} + D_k = t, \cA_k).
\end{align*}
\rnote{This part not needed for fixed $t$.}
Fix a number $t_0 > 0$. Note that by Markov's inequality,
\[
 \pr( D > t_0, \cA_k )
 \weq \pr( D 1(\cA_k) > t_0 )
 \wle t_0^{-1} \E D 1(\cA_k).
\]
Similarly, by noting that $D_{-k} \le D$ and $D_k \le D$,
\[
 \pr(D_{-k} + D_k > t_0, \cA_k)
 \wle t_0^{-1} \E (D_{-k} + D_k)1(\cA_k)
 \wle 2 t_0^{-1} \E D 1(\cA_k).
\]
As a consequence, we find that
\begin{align*}
 \sum_{t > t_0} \abs{\epsilon_{3k}(t)}
 &\wle \pr( D > t_0, \cA_k) + \pr( D_{-k} + D_k > t_0, \cA_k) \\
 &\wle 3 t_0^{-1} \E D 1(\cA_k).
\end{align*}
Furthermore, by noting that $D \le \sum_\ell D_\ell$, we see that
\begin{align*}
 \E D 1(\cA_k)
 &\wle \sum_{\ell \ne k} \E D_\ell 1(\cA_k) + \E D_k 1(\cA_k),
\end{align*}
\begin{rcomm}
Random layer types:
\begin{align*}
 \E D 1(\cA_k)
 \wle \sum_{\ell \ne k} \E D_\ell 1(\cA_k) + \E D_k 1(\cA_k),
\end{align*}
with
\begin{align*}
 \E D_k 1(\cA_k)
 &\weq 2 \pr(\cA_k) + (m-3) \pr( G_1 \ni \text{coathanger} ) \\
 &\weq 2 \frac{(P_n)_{33}}{(m)_3} + (m-3) \frac{(P_n)_{44}}{(m)_4} \\
 &\weq \Big(  (P_n)_{33} + (P_n)_{44} \Big) O(n^{-3}).
\end{align*}
and 
\[
 \sum_{\ell \ne k} \E D_\ell 1(\cA_k)
 \weq (n-1) (m-1) n^{-2} \mu_{21} \mu_{33}
 \weq (P_n)_{21} (P_n)_{33} O(n^{-3}).
\]
\begin{align*}
 \sum_{t > t_0} \abs{\epsilon_{3k}(t)}
 &\wle 3 t_0^{-1} \E D 1(\cA_k)
 \wle \Big( (P_n)_{21} (P_n)_{33} + (P_n)_{33} + (P_n)_{44} \Big) O(n^{-3}) t_0^{-1}.
\end{align*}
\begin{align*}
 \sum_{t > t_0} \abs{\epsilon_{3}(t)}
 &\wle 3 t_0^{-1} \E D 1(\cA_k)
 \wle \Big( (P_n)_{21} (P_n)_{33} + (P_n)_{33} + (P_n)_{44} \Big) O(n^{-2}) t_0^{-1}.
\end{align*}
\end{rcomm}
where
\[
 \sum_{\ell \ne k} \E D_\ell 1(\cA_k)
 \weq \sum_{\ell \ne k} (m-1) p_{21}(\ell) p_{33}(k)
 \wle m \mu_{21} p_{33}(k).
\]
Also, by noting that $D_k = 2 + \sum_{j \in V(K_3)^c} G^k_{ij}$ on the event $\cA_k$, we see that
\begin{align*}
 \E D_k 1(\cA_k)
 \weq 2 \pr(\cA_k) + \sum_{j \in V(K_3)^c} \E G^k_{ij} 1(\cA_k)
 \weq 2 p_{33}(k) + (m-3) p_{44}(k).
\end{align*}
We may now conclude that
\[
 \sum_{t > t_0} \abs{\epsilon_{3k}(t)}
 \wle 3 t_0^{-1} \Big( m \mu_{21} p_{33}(k) + 2 p_{33}(k) + mp_{44}(k) \Big),
\]
and by summing over $k$ it follows that
\[
 \sum_{t > t_0} \abs{\epsilon_{3}(t)}
 \wle 3 t_0^{-1} \left( 2 \mu_{33} + m \mu_{21} \mu_{33} + m \mu_{44} \right).
\]
\rnote{$3 t_0^{-1} ( O(n^{-2}) + O(n^{-2}) + O(m \mu_{44})$}


Denote $R_k = \deg_{G^k \cap G^{-k}}(i)$. Now, by noting that $D = D_{-k} + D_k$ on the event $R_k=0$, it follows that
\begin{align*}
 \epsilon_{3k}(t)
 \weq \pr( D = t, \cA_k, R_k>0) - \pr( D_{-k} + D_k = t, \cA_k, R_k>0).
\end{align*}
Because both probabilities on the right are at most $\pr( D_k \le t, \cA_k, R_k>0)$, it follows that
\[
 \abs{\epsilon_{3k}(t)}
 \wle \pr( D_k \le t, \cA_k, R_k>0).
\]
Next, $R_k>0$ implies that $ij \in E(G^\ell)$ for some node $j \in N_{G^k(i)} = N_k$ and some layer $\ell \ne k$.
Hence for any node set $U$ of size at most $t$,
\begin{align*}
 \pr( \cA_k, R_k>0, N_k = U)
 &\wle \sum_{j \in U} \sum_{\ell \ne k} \pr( \cA_k, ij \in E(G^\ell), N_k = U) \\
 &\weq \pr( \cA_k, N_k = U) \sum_{j \in U} \sum_{\ell \ne k}  \, \pr(ij \in E(G^\ell) ) \\
 &\weq \pr( \cA_k, N_k = U) \, \abs{U} \sum_{\ell \ne k} p_{21}(\ell)\\
 &\wle t \mu_{21} \, \pr( \cA_k, N_k = U).
\end{align*}
By summing over all node sets $U$ of size at most $t$, it follows that
\[
 \abs{\epsilon_{3k}(t)}
 \wle \pr( \cA_k, D_k \le t, R_k>0)
 \wle t \mu_{21} \, \pr( \cA_k, D_k \le t).
\]
\begin{rcomm}
\begin{align*}
 \abs{\epsilon_{3k}(t)}
 \wle t \mu_{21} \, \pr( \cA_k, D_k \le t).
\end{align*}
\[
 \abs{\epsilon_{3}(t)}
 \wle t \mu_{21} \sum_k \pr( \cA_k )
 \weq t \mu_{21} \mu_{33}
\]
\end{rcomm}
As a consequence,
\[
 \abs{\epsilon_{3}(t)}
 \wle t \mu_{21} \sum_k \pr( \cA_k )
 \weq t \mu_{21} \mu_{33},
\]
so that
\[
 \sum_{t \le t_0} \abs{\epsilon_{3}(t)}
 \wle t_0^2 \mu_{21} \mu_{33},
\]
\rnote{$\weq t_0^2 O(n^{-3})$}
Now we conclude that
\[
 \sum_{t \ge 0} \abs{\epsilon_{3}(t)}
 \wle \Big( t_0^2 \mu_{21} + 3 t_0^{-1} b \Big) \mu_{33},
\]
\rnote{$3 t_0^{-1} ( O(n^{-2}) + O(n^{-2}) + O(m \mu_{44}) + t_0^2 O(n^{-3})$}\\
\rnote{for $t_0 = n^{1/3}$, this is $O(n^{-7/6}) + O(n^{-1/3} m \mu_{44}) $}
with
\[
 b \weq 2 + m \mu_{21} + m \frac{\mu_{44}}{\mu_{33}}.
\]
By substituting $t_0 = \mu_{21}^{-1/3} b^{1/3}$ above, it follows that
\[
 \sum_{t \ge 0} \abs{\epsilon_{3}(t)}
 \wle 4 \mu_{21}^{1/3} b^{2/3} \mu_{33}.
\]

(iv) The approximation error in \eqref{eq:TriangleDegree4} equals $\epsilon_4(t) = \sum_k \epsilon_{4k}(t)$ where
\[
 \epsilon_{4k}(t)
 \weq \sum_{r+s=t} \Big( \pr(D=r) - \pr( D_{-k} = r) \Big) \pr(D_k = s, \cA_k).
\]
Because $D = D_{-k}$ on the event $D_k=0$, it follows that
\begin{align*}
 \abs{ \pr(D=r) - \pr( D_{-k} = r) }
 \wle \pr(D_k > 0),
\end{align*}
and hence
\[
 \abs{\epsilon_{4k}(t)}
 \wle \pr(D_k > 0) \sum_{r+s=t} \pr(D_k = s, \cA_k)
 \wle \pr(D_k > 0) \pr(\cA_k).
\]
\rnote{This is good for fixed $t$}

\begin{align*}
 \epsilon_{4k}(t)
 &\weq \sum_{r+s = t} \Big( \pr( D = r, D_{k} > 0) -\pr( D_{-k} = r, D_{k}>0 ) \Big) \pr(D_k = s, \cA_k) \\
 &\weq \pr(D_{k} > 0) \pr(\cA_{k}) \sum_{r+s = t} \Big( \pr( D^* = r) -\pr( D^*_{-k} = r ) \Big) \pr( \tilde D_{k} = s),
\end{align*}
where $D^*, D_{-k}^*$, and $\tilde D_{k}$ are\mnote{Clean out $D^*, D_{-k}^*$, and $\tilde D_{k}$} arbitrary mutually independent random integers distributed according to
$\law(D \cond D_{k} > 0)$, $\law(D_{-k} \cond D_{k} > 0)$, and $\law(D_{k} \cond \cA_{k})$, respectively.
Hence
\begin{align*}
 \sum_{t \ge 0} \abs{\epsilon_{4k}(t)}
 \wle \pr(D_{k} > 0) \pr(\cA_{k}) \sum_{t \ge 0} \Big( \pr( D^* + \tilde D_{k} = t) + \pr( D^*_{-k} + \tilde D_{k} = t) \Big),
\end{align*}
from which it follows that
\begin{align*}
 \sum_{t \ge 0} \abs{\epsilon_4(t)}
 \wle 2 \sum_{k} \pr(\cA_{k}) \pr(D_{k} > 0) 
 \wle 2 \sum_{k} \pr(\cA_{k}) \E D_{k}.
\end{align*}
Hence, by noting that $\E D_k = (m-1) p_{21}(k) \le m p_{21}(k)$, we see that
\begin{align*}
 \sum_{t \ge 0} \abs{\epsilon_4(t)}
 \wle 2 m \sum_k p_{21}(k) \, \pr( \cA_k)
 \weq 2 m \ang{p_{21} p_{33}}.
\end{align*}
\begin{rcomm}
Random layer types, $\mu_{sr} = \frac{n}{(m)_s} (P_n)_{sr}$, do the above analysis using unconditional probabilities, to obtain
\begin{align*}
 \E \sum_{t \ge 0} \abs{\epsilon_4(t)}
 \wle 2 \sum_k \pr(\cA_k) \E D_k
 \weq 2n \frac{(P_n)_{33}}{(m)_3}  (m-1) \frac{(P_n)_{21}}{(m)_2} 
 \weq 2n^{-1} \mu_{33} \mu_{21} (m-1)
 \weq O(n^{-3}).
\end{align*}
\end{rcomm}

\begin{rcomm}
NO MORE NEEDED.
For large $t$, we approximate
\begin{align*}
 \epsilon_{4k}(t)
 \wle \sum_{r+s=t} \pr(D=r) \pr(D_k = s, \cA_k) 
 &\weq \pr(\cA_k) \sum_{r+s=t} \pr(D=r) \pr(D_k = s \cond \cA_k) \\
 &\weq \pr(\cA_k) \pr(D + D^*_k = t),
\end{align*}
where $D^*_k$ is a random variable independent of $D$, such that $\law(D^*_k) = \law( D_k \cond \cA_k )$.
Similarly, the negative part of the error term is bounded by
\begin{align*}
 -\epsilon_{4k}(t)
 \wle \sum_{r+s=t} \pr(D_{-k}=r) \pr(D_k = s, \cA_k) 
 &\weq \pr(\cA_k) \pr(D_k + D^*_k = t),
\end{align*}
where we assume that $D^*_k$ is also independent of $D_k$. Hence, noting that $D_{-k} \le D$,
\begin{align*}
 \sum_{t > s} \abs{\epsilon_{4k}(t)}
 &\wle \sum_{t > s} \Big( \pr(\cA_k) \pr(D + D^*_k = t) + \pr(\cA_k) \pr(D_k + D^*_k = t) \Big) \\
 &\weq \pr(\cA_k) \Big(  \pr(D + D^*_k > s ) + \pr(\cA_k) \pr(D_k + D^*_k > s) \Big) \\
 &\wle 2 \pr(\cA_k) \pr(D + D^*_k > s) \\
 &\wle 2 \pr(\cA_k) s^{-1} \Big( \E D + \E D^*_k \Big) \\
 &\weq 2 s^{-1} \Big( \pr(\cA_k) \E D + \E D_k 1(\cA_k) \Big).
\end{align*}
By summing both sides of the above inequality over $k$, and recalling that $\pr(\cA_k) = 2\mu_{33}$, $\E D \le (m-1) \mu_{21}$, and
 $\E D_k 1(\cA_k)
 = 2p_{33}(k) + (m-3) p_{44}(k)$,
we find that
\begin{align*}
 \sum_{t > s} \abs{\epsilon_{4}(t)}
 \wle 2 s^{-1} \Big( m \mu_{21} \mu_{33} + 2 \mu_{33} + m \ang{p_{44}} \Big).
\end{align*}
As a conclusion, by noting that $\ang{p_{21} p_{33}} \le \left(\frac{\supnorm{x}}{m}\right)^2 \mu_{33}$
and $\ang{p_{44}} \le \frac{\supnorm{x}}{m} \mu_{33}$,
\begin{align*}
 \sum_{t > 0} \abs{\epsilon_{4}(t)}
 &\wle t_0 m \ang{p_{21} p_{33}} + 2 t_0^{-1} \Big( m \mu_{21} \mu_{33} + 2 \mu_{33} + m \ang{p_{44}} \Big) \\
 &\wle t_0 \supnorm{x}^2 m^{-1} \mu_{33} + 2 t_0^{-1} \Big( m \mu_{21} + 2 + \supnorm{x} \Big) \mu_{33} .
\end{align*}
\end{rcomm}
\end{proof}

\begin{lemma}
\label{the:TriangleDegree}
Let $K_3$ be a triangle with node set contained in $[m]$, and let $i$ be a node of $K_3$. Then for any layer $k$,
\begin{enumerate}[(i)]
\item $\pr( G^k \supset K_3) = p_{33}(k) = \frac{(x_k)_3}{(m)_3} q_k^3$,
\item
$
 \pr\left( \deg_{G^k}(i) = s, \, G^k \supset K_3 \right)
 \weq p_{33}(k) \dbin(x_k-3, q_k, s-2),
$
\item 
$
 \pr\left( \deg_{G^k}(i) \le s, \, G^k \supset K_3 \right)
 \wle 689 \frac{(s+1)^3}{(m)_3}.
$
\end{enumerate}
\end{lemma}
\begin{proof}
(i) The claim follows after noting that
\[
 \pr\Big( V(G^k) \supset V(K_3) \Big)
 \weq \frac{(x_k)_3}{(m)_3}
\]
and
\[
 \pr\Big( G^k \supset K_3 \cond V(G^k) \supset V(K_3) \Big)
 \weq q_k^3.
\]

(ii) Let us consider a node set $B$ of size $x_k$ which contains $A = V(K_3)$. On the event $\{G^k \supset K_3, V_k = B\}$, the number of $G^k$-neighbors of node $i$ equals
\[
 \deg_{G^k}(i)
 \weq \sum_{j \in B \setminus A} G^k_{ij} + 2.
\]
Because the random variables on the right side above are independent of $V_k$ and the link indicators $\{G^k_{ij}: i,j \in A\}$, it follows that
\begin{align*}
 &\pr( \deg_{G^k}(i) = s, \, G^k \supset K_3, \, V_k = B) \\
 &\weq \pr \Big( \sum_{j \in B \setminus A} G^k_{ij} + 2 = s \Big) \, \pr( G^k \supset K_3, \, V_k = B) \\
 &\weq \pr( Z + 2= s) \, \pr( G^k \supset K_3, \, V_k = B),
\end{align*}
where $Z$ is a generic $\Bin(x_k-3, q_k)$-distributed random integer. Hence the second claim follows by summing the above equality over $B$, and dividing the outcome by $\pr( G^k \supset K_3)$.

(iii) Let $Z'$ be a generic $\Bin(x_k, q_k)$-distributed random integer, and note that
$\law(Z') \le \law(Z+3)$ in the strong stochastic order, which can be verified
by a simple coupling of sums of Bernoulli random variables.
Then by applying (i) and (ii) we see by Lemma~\ref{the:ChernoffThirdMoment} that
\begin{align*}
 \pr( \deg_{G^k}(i) \le s, G^k \supset K_3)
 &\weq \pr( G^k \supset K_3) \, \pr( Z \le s-2) \\
 &\weq \pr( G^k \supset K_3) \, \pr( Z+3 \le s+1) \\
 &\wle \pr( G^k \supset K_3) \, \pr( Z' \le s+1) \\
 &\weq \frac{(x_k)_3}{(m)_3} q_k^3 \, \pr( Z' \le s+1) \\
 &\wle (m)_3^{-1} (x_k q_k)^3 \, \pr( Z' \le s+1) \\
 &\wle (m)_3^{-1} 689 (s+1)^3.
\end{align*}
\end{proof}
The following result shows that the conditional degree distribution of a node being part of a triangle is approximately the law of the random integer $2 + D + D'$ where the summands are independent, $D$ follows the degree distribution $f$, and $D'$ follows the distribution $g_{33}$ which corresponds to the excess degree arising from the knowledge that a node already is covered by a triangle.

\begin{rcomm}
Special case: $q_k=1$. Then $g_{33}(x) = g_3(x)$ where
\[
 g_3(x)
 \weq \frac{(x+3)_3 \pi(x+3)}{(\pi)_3},
 \quad x=0,1,2,\dots
\]
is the law of $\tilde X - 3$ where $\tilde X$ is $\tilde \pi_3$-distributed.
\end{rcomm}

\subsection{Stuff about two-star densities}

\begin{theorem}
\label{the:TwoStarDensityDegOld}
For $D$ being the degree of the hub of $K_{12}$,
\begin{equation}
 \label{eq:TwoStarProbabilityOld}
 \begin{aligned}
 &\pr( D = t, \, G \supset K_{12}) \\
 &\quad \weq
 \mu_{32} \, f \conv g_{32}(t-2)
 \ + \ \mu_{21}^2 f \conv g_{21} \conv g_{21}(t-2)
 \ + \ \epsilon(t),
 \end{aligned}
\end{equation}
where $f$ denotes the (nonasymptotic) degree distribution and $g_{21}, g_{32}$ denote the  (nonasymptotic) mixed binomial distributions defined by \eqref{eq:MixedBin}, and the approximation error is bounded by
\begin{align*}
 \sum_{t \ge 0} \abs{ \epsilon(t) }
 &\wle \Big( 15 b^{2/3} \mu_{21}^{1/3}  + \mu_{21} + 6 \mu_{21}^2 \Big) \Big( \mu_{32} + \mu_{21}^2 \Big) \\
   &\qquad + 4  m \ang{ p_{21} p_{32} } + (4 m \mu_{21} + 1) \ang{p_{21}^2},
\end{align*}
where
\[
 b \weq 1 + m \mu_{21} + m \frac{\mu_{32}}{\mu_{21}} + m \frac{\mu_{43}}{\mu_{32}}
 \wle m \mu_{21} + 3 \supnorm{x}.
\]
\end{theorem}

\begin{rcomm}
Because $b \le m \mu_{21} + 3 \supnorm{x}$, this bound shows that
\begin{align*}
 \frac{\sum_{t \ge 0} \abs{\epsilon(t)}}{\pr(\cK_{12})}
 &\wlesim b^{2/3} \mu_{21}^{1/3}  + \mu_{21} + \frac{m \ang{ p_{21} p_{32} } }{m^{-3}n}
 + \frac{(4 m \mu_{21} + 1) \ang{p_{21}^2}}{m^{-3}n} \\
 &\wlesim (m^{-1} n + \supnorm{x})^{2/3} (m^{-2}n)^{1/3}  + m^{-2} n + \supnorm{x}^2 m^{-1} + (4 m^{-1}n + 1) \supnorm{x} m^{-1}  \\
 &\wasymp (m^{-1} n + \supnorm{x})^{2/3} (m^{-2}n)^{1/3}  + \supnorm{x} m^{-2} n + \supnorm{x}^2 m^{-1} \\
 &\weq \left( (m^{-1} n + \supnorm{x}) m^{-1} n^{1/2} \right)^{2/3}  + \supnorm{x} m^{-2} n + \supnorm{x}^2 m^{-1} \\
 &\weq \left( \big(m^{-4/3} n \big)^{3/2} + \supnorm{x} m^{-1} n^{1/2} \right)^{2/3}
   + \supnorm{x} m^{-2} n + \supnorm{x}^2 m^{-1}.
\end{align*}
The relative error vanishes when $n \ll m^{4/3}$ and $\supnorm{x} \ll m^{1/2} \wedge m n^{-1/2}$. (Note that $m n^{-1/2} \ll m^2 n^{-1}$ here.) For $n \lesim m$ this is small iff $\norm{x}_\infty \ll m^{1/2}$.
\end{rcomm}

\begin{remark}
Assume that $n = O(m)$, $\norm{p_{21}}_\infty \ll m^{-1}$, and that the mean degree satisfies $m \mu_{21} \gesim 1$. Then 
\[
 \mu_{21}
 \wle n \norm{p_{21}}_\infty
 \wlesim m \norm{p_{21}}_\infty
 \wll 1,
\]
and
\[
 2 m \ang{p_{21} p_{32} }
 \wle 2 m \norm{p_{21}}_\infty \mu_{32}
 \wll \mu_{32},
\]
and
\[
 \Big(2 m \mu_{21} + 1\Big) \ang{p_{21}^2}
 \wlesim m \mu_{21} \ang{p_{21}^2} 
 \wle m \norm{p_{21}}_\infty \mu_{21}^2
 \wll \mu_{21}^2.
\]
Hence $\abs{\epsilon(t)} \ll \mu_{32} +  \mu_{21}^2$ as desired.

Assume that $n = O(m)$ and $(x_1,q_1), \dots, (x_n, q_n)$ are independent samples from $\law(X_\nu, Q_\nu)$ such that $(X_\nu)_2 Q_\nu)$ is UI with respect to the scale parameter.  Then by Lemma~\ref{the:IIDMaxima}, for any $\epsilon > 0$, we have $\max_k (x_k)_2 q_k \le \epsilon n \le \epsilon c m$ with high probability. Hence $\norm{p_{21}}_\infty \le  \epsilon c m^{-1}$ whp.
\end{remark}

\begin{lemma}
\label{the:DegreeTailTwoStar}
The degree $D$ of the hub of node of $K_{12}$ is bounded by, for any $s \ge 0$,
\[
 \pr(D > s, G \supset K_{12})
 \wle 3 s^{-1}  \left( 1 + m \mu_{21} + m \frac{\mu_{43}}{\mu_{32}} \right) \Big( \mu_{32} + \mu_{21}^2 \Big). 
\]
\end{lemma}
\begin{proof}
We assume that $K_{12}$ is the 2-star with node set $\{1,2,3\}$ and link set $\{12,13\}$.  By writing $D = \sum_{j \ne 1} 1( E(G) \ni 1j)$ and noting that $1(G \supset K_{12}) = 1( E(G) \ni 12, 13)$, we find that
\begin{align*}
 \E D 1(G \supset K_{12})
 &\weq \sum_{j \ne 1} \pr( E(G) \ni 12, 13, 1j) \\
 &\weq 2 \pr( G \supset K_{12}) + (m-3) \pr( G \supset K_{13} ) \\
 &\weq 2 \pr( G \supset K_{12}) + m \pr( G \supset K_{13} ),
\end{align*}
where $K_{13}$ is the 3-star with node set $\{1,2,3,4\}$ and link set $\{12,13,14\}$. Recall that
\[
 \pr (G \supset K_{12})
 \wle \mu_{32} + \mu_{21}^2.
\]
Next, denote $p(abc) = \pr( G_a \ni 12, G_b \ni 13, G_c \ni 14)$, and note that
\begin{align*}
 \pr (G \supset K_{13})
 &\wle \sum_{a,b,c} p(abc) \\
 &\weq \sum_{a} p(aaa) + 3 \sumd_{a,b} p(abb) + \sumd_{a,b,c} p(abc) \\
 &\weq \sum_{a} p_{43}(a) + 3 \sumd_{a,b} p_{21}(a) p_{32}(b) + \sumd_{a,b,c} p_{21}(a) p_{21}(b) p_{21}(c) \\
 &\wle \mu_{43} + 3 \mu_{21} \mu_{32} + \mu_{21}^3 \\
 &\weq \left( \frac{\mu_{43}}{\mu_{32}} + 3 \mu_{21} \right) \mu_{32} + \mu_{21} \mu_{21}^2.
\end{align*}
As a consequence,
\begin{align*}
 \E D 1(G \supset K_{12})
 &\wle \Big( 2 + 3 m \mu_{21} + m \frac{\mu_{43}}{\mu_{32}} \Big) \mu_{32} + \Big(2 + m \mu_{21} \Big) \mu_{21}^2 \\
 &\wle \Big( 2 + 3 m \mu_{21} + m \frac{\mu_{43}}{\mu_{32}} \Big) \Big( \mu_{32} + \mu_{21}^2  \Big).
\end{align*}
Now the claim follows by Markov's inequality
\[
 \pr( D > s,  G \supset K_{12} )
 \weq \pr( D 1(G \supset K_{12}) > s )
 \wle s^{-1} \E D 1(G \supset K_{12}).
\]
\end{proof}

\begin{rcomm}
\rnote{This part needed for bounding the $\sum{t \ge 0} \abs{\epsilon(t)}$ for the degree-dependent two-star density.}
(vii) By Lemma~\ref{the:DegreeTailTwoStar}, we see that
\[
 \sum_{t > t_0} \pr(D=t, \cK_{12})
 \wle 3 t_0^{-1}  \left( 1 + m \mu_{21} + m \frac{\mu_{43}}{\mu_{32}} \right) \Big( \mu_{32} + \mu_{21}^2 \Big). 
\]
Observe next that by Markov's inequality,
\[
 \sum_{t > t_0} f \conv g_{32}(t-2)
 \weq \pr( 2 + D + D_{32} > t_0 )
 \wle t_0^{-1} \Big( 2 + \E D + \E D_{32} \Big),
\]
where $D_{32}$ is an arbitrary $g_{32}$-distributed random integer which is independent of $D$. Note that $\E D \le m \mu_{21}$, and 
\[
 \E D_{32}
 \weq \sum_k (x_k-3) q_k \frac{p_{32}(k)}{\mu_{32}}
 \weq (m-3) \frac{\mu_{43}}{\mu_{32}}
 \wle m \frac{\mu_{43}}{\mu_{32}}.
\]
Hence
\[
 \sum_{t > t_0} f \conv g_{32}(t-2)
 \wle t_0^{-1} \Big( 2 + m \mu_{21} + m \frac{\mu_{43}}{\mu_{32}} \Big).
\]
Similarly,
\[
 \sum_{t > t_0} f \conv g_{21} \conv g_{21} (t-2)
 \weq \pr( 2 + D + D_{21} + D'_{21} > t_0 )
 \wle t_0^{-1} \Big( 2 + \E D + 2 \E D_{21} \Big),
\]
where $D_{21}, D'_{21}$ is are arbitrary $g_{21}$-distributed random integers which are independent of each other and $D$. Note that $\E D \le m \mu_{21}$, and 
\[
 \E D_{21}
 \weq \sum_k (x_k-2) q_k \frac{p_{21}(k)}{\mu_{21}}
 \weq (m-2) \frac{\mu_{32}}{\mu_{21}}
 \wle m \frac{\mu_{32}}{\mu_{21}}.
\]
Hence
\[
 \sum_{t > t_0} f \conv g_{21} \conv g_{21} (t-2)
 \wle  t_0^{-1} \Big( 2 + m \mu_{21} + 2 m \frac{\mu_{32}}{\mu_{21}} \Big).
\]
By combining these upper bounds we find that
\begin{align*}
 &\mu_{32} \sum_{t > t_0} f \conv g_{32} (t-2) + \mu_{21}^2 \sum_{t > t_0} f \conv g_{21} \conv g_{21} (t-2) \\
 &\wle t_0^{-1} \Big( 2 + m \mu_{21} + m \frac{\mu_{43}}{\mu_{32}} \Big) \mu_{32} + t_0^{-1} \Big( 2 + m \mu_{21} + 2 m \frac{\mu_{32}}{\mu_{21}} \Big) \mu_{21}^2 \\
 &\wle t_0^{-1} \Big( 2 + m \mu_{21} + 2 m \frac{\mu_{32}}{\mu_{21}} + m \frac{\mu_{43}}{\mu_{32}} \Big) \Big( \mu_{32} + \mu_{21}^2 \Big).
\end{align*}
As a conclusion, the approximation error $\epsilon(t)$ in \eqref{eq:TwoStarDegCombined} is bounded by
\begin{align*}
 \sum_{t > t_0} \abs{ \epsilon(t) }
 &\wle 3 t_0^{-1}  \left( 1 + m \mu_{21} + m \frac{\mu_{43}}{\mu_{32}} \right) \Big( \mu_{32} + \mu_{21}^2 \Big) \\
 &\qquad + t_0^{-1} \Big( 2 + m \mu_{21} + 2 m \frac{\mu_{32}}{\mu_{21}} + m \frac{\mu_{43}}{\mu_{32}} \Big) \Big( \mu_{32} + \mu_{21}^2 \Big) \\
 &\wle 5 t_0^{-1}  \left( 1 + m \mu_{21} + m \frac{\mu_{32}}{\mu_{21}} + m \frac{\mu_{43}}{\mu_{32}} \right) \Big( \mu_{32} + \mu_{21}^2 \Big).
\end{align*}

(viii) By combining the bounds of (vi) and (vii), it follows that
\begin{align*}
 \sum_{t \ge 0} \abs{ \epsilon(t) }
 &\wle (6 + \mu_{21} + t_0 + 9 t_0^2 ) \mu_{21}
   \Big( \mu_{32} + \mu_{21}^2 \Big) \\
   &\quad + 4  m \ang{ p_{21} p_{32} } + (4 m \mu_{21} + 1) \ang{p_{21}^2} \\
 &\quad + 5 t_0^{-1}  \left( 1 + m \mu_{21} + m \frac{\mu_{32}}{\mu_{21}} + m \frac{\mu_{43}}{\mu_{32}} \right) \Big( \mu_{32} + \mu_{21}^2 \Big).
\end{align*}
For $t_0 \ge 1$, it follows that
\begin{align*}
 \sum_{t \ge 0} \abs{ \epsilon(t) }
 &\wle ( 10 \mu_{21} t_0^2 + 5 b t_0^{-1} ) \Big( \mu_{32} + \mu_{21}^2 \Big)
 + (6 + \mu_{21} ) \mu_{21} \Big( \mu_{32} + \mu_{21}^2 \Big) \\
   &\quad + 4  m \ang{ p_{21} p_{32} } + (4 m \mu_{21} + 1) \ang{p_{21}^2},
\end{align*}
where $b = 1 + m \mu_{21} + m \frac{\mu_{32}}{\mu_{21}} + m \frac{\mu_{43}}{\mu_{32}}$. The above bound is valid for all $t_0 \ge 1$.  Because $b \ge m \mu_{21}$, we see that $\left( \frac{b}{\mu_{21}} \right)^{1/3} \ge m^{1/3} \ge 1$. By substituting $t_0 = \left( \frac{b}{\mu_{21}} \right)^{1/3}$, we see that
\begin{align*}
 10 \mu_{21} t_0^2 + 5 b t_0^{-1}
 &\weq 10 \mu_{21} \left( \frac{b}{\mu_{21}} \right)^{2/3} + 5 b \left( \frac{b}{\mu_{21}} \right)^{-1/3} \\
 &\weq 15 b^{2/3} \mu_{21}^{1/3}.
\end{align*}
Then we conclude that
 \begin{align*}
 \sum_{t \ge 0} \abs{ \epsilon(t) }
 &\wle \Big( 15 b^{2/3} \mu_{21}^{1/3}  + \mu_{21} + 6 \mu_{21}^2 \Big) \Big( \mu_{32} + \mu_{21}^2 \Big) \\
   &\qquad + 4  m \ang{ p_{21} p_{32} } + (4 m \mu_{21} + 1) \ang{p_{21}^2}.
\end{align*}
\end{rcomm}

\begin{lemma}
\label{the:MeanDegreeCond}
For any graph $R$ such that $V(R) \subset [m]$ and any $i \in V(R)$,
\[
 \E \deg_{G_k}(i) 1(G_k \supset R)
 \weq d p_{r,s}(k) + (r - 1 - d) p_{r,s+1}(k) + (m-r) p_{r+1,s+1}(k),
\]
where $r = \abs{V(R)}$, $s = \abs{E(R)}$, and $d=\deg_R(i)$. \rnote{This lemma still needed?}
\end{lemma}
\begin{proof}
Denote by $K_{\{i,j\}}$ the complete graph on node set $\{i,j\}$, and note that the adjacency matrix of $G_k$ can be represented as $G_k(i,j) = 1( G_k \supset K_{\{i,j\}})$. As a consequence,
\[
 G_k(i,j) 1(G_k \supset R)
 \weq 1(G_k \supset K_{\{i,j\}}) 1(G_k \supset R)
 \weq 1(G_k \supset R \cup K_{\{i,j\}}),
\]
and it follows that
\begin{align*}
 \E \deg_{G_k}(i) 1(G_k \supset R)
 &\weq \E \sum_{j \ne i} G_k(i,j) 1(G_k \supset R)
 \weq \sum_{j \ne i} \pr( G_k \supset R \cup K_{\{i,j\}} ).
\end{align*}
Observe next that
\[
 ( \abs{V(R \cup K_{\{i,j\}})}, \abs{E(R \cup K_{\{i,j\}})})
 \weq
 \begin{cases}
  (r,s), &\quad j \in N_R(i), \\
  (r,s+1), &\quad j \in V(R) \setminus N_R(i), \\
  (r+1,s+1), &\quad j \in V(R)^c,
 \end{cases}
\]
where $N_R(i)$ the set of neighbors of node $i$ in graph $R$. Hence the claim follows by splitting the sum above into three parts, and recalling that $\pr(G_k \supset R) = p_{r,s}(k)$ for any graph $R$ with node set contained in $[m]$.
\end{proof}

\begin{remark}[Simplified two-star density]
\label{rem:SimplifiedTwostar}
We get a simplified approximation for the two-star density when
\begin{equation}
 \label{eq:TwostarDensitySimple}
 \ang{p_{21}^2}
 \wll \mu_{32} + \mu_{21}^2.
\end{equation}
Because $(x)_2^2 \le 2 x (x)_3$ for $x \ge 3$, and $(m)_2^2 \ge m (m)_3$ for all integers $m \ge 0$, and $(m_2) \ge \frac12 m^2$ for $m \ge 2$, we find that
\begin{equation}
 \label{eq:p21squared}
 \begin{aligned}
 \ang{p_{21}^2}
 &\weq \frac{4}{(m)_2^2} \sum_{k: x_k=2} q_k^2 + \sum_{k: x_k \ge 3} \frac{(x_k)_2^2}{(m)_2^2} q_k^2 \\
 &\wle \frac{4n}{(m)_2^2} + 2 \sum_{k: x_k \ge 3} \frac{x_k}{m} \frac{(x_k)_3}{(m)_3} q_k^2 \\
 &\wle 16 m^{-4} n + 2 \frac{\norm{x}_\infty}{m} \ang{ p_{32}}.
 \end{aligned}
\end{equation}
Hence \eqref{eq:TwostarDensitySimple} holds when  $\norm{x}_\infty \ll m$ and $\mu_{21}^2 + \mu_{32} \gg m^{-4} n$. For the latter condition it suffices to assume that $(\pi)_{21} \gg n^{-1/2}$ or $(\pi)_{32} \gg m^{-1}$.

The fact that $(k!)^{-1} x^k \le (x)_k \le x^k$ for all $x \ge k$ shows that (the sums below are unrestricted when $\min_k x_k \ge 3$ uniformly)
\begin{align*}
 \mu_{21} &\wasymp m^{-2} \sum_{k: x_k \ge 2} x_k^2 q_k, \\
 \mu_{32} &\wasymp m^{-3} \sum_{k: x_k \ge 3} x_k^3 q_k^2, \\
 \ang{p_{21}^2} &\wasymp m^{-4} \sum_{k: x_k \ge 2} x_k^4 q_k^2.
\end{align*}
Now Lemma~\ref{the:VectorNorms} tells that $\ang{p_{21}^2} \lesim \mu_{21}^2$ but no more.
On the other hand, denoting by $n_2$ the number of layers of size 2,
\begin{align*}
 \ang{p_{21}^2}
 &\wasymp m^{-4} \sum_{k: x_k \ge 2} x_k^4 q_k^2 \\
 &\weq m^{-4} \sum_{k: x_k \ge 3} x_k^4 q_k^2 + m^{-4} \sum_{k: x_k = 2} x_k^4 q_k^2 \\
 &\wlesim m^{-1} \norm{x}_\infty \mu_{32}  + m^{-4} n_2.
\end{align*}
Hence for \eqref{eq:TwostarDensitySimple} it suffices that $\supnorm{x} \ll m$ and $m^{-2} (n_2)^{1/2} \ll \mu_{21}$.

An alternative sufficient condition for \eqref{eq:TwostarDensitySimple} is to require that
the numbers $z_k = (x_k)_2 q_k$ satisfy $\sum_k z_k^2 \ll ( \sum_k z_k )^2$. Note that $\sum_k z_k^2 \ll (\sum_k z_k)^2$ is equivalent to $\frac{1}{n} \sum_k \tilde z_k^2 \ll n$, where $\tilde z_k = \frac{(x_k)_2 q_k}{(\pi)_{21}}$. A sufficient condition for this is that $(\pi)_{21} \gesim 1$ and that the empirical distribution of $\tilde z_k$ is uniformly integrable (with respect to the scale parameter). This can be verified by applying Lemma~\ref{the:EmpiricalUIPower}.
\end{remark}

\begin{remark}[Simplifying conditions]
If $n = O(m)$ and the layer sizes are independent $P_\nu$-distributed random numbers and $(P_\nu)_{\nu \ge 1}$ is uniformly integrable, then by Lemma~\ref{the:IIDMaxima}, it follows that $\max_{1 \le k \le n} x_k \ll_{\pr} m$. We also see that the condition $m n p_{21} \gg m^{-1} n^{1/2}$ is valid whenever $n \lesim m$ and the mean degree is bounded away from zero.
\end{remark}

\begin{example}[Constant layer size]
\label{exa:ConstantLayerSize}
Let $m=n \gg 1$ and assume that all layers have size $x$. Denote $\bar q_r = \frac{1}{n} \sum_k q_k^r$. Then
\[
 np_{21}
 \weq \frac{(x)_2}{(m)_2} n \bar q_1
 \wsim (x)_2 \bar q_1 n^{-1},
\]
and
\[
 np_{32}
 \weq \frac{(x)_3}{(m)_3} n \bar q_2
 \wsim (x)_3 \bar q_2 n^{-2},
\]
and
\[
 \delta
 \weq \frac{(x)_2^2}{(m)_2^2} n \bar q_2
 \wsim (x)_2^2 \bar q_2 n^{-3}.
\]

For $x=2$, we get $n p_{32}=0$,  $(np_{21})^2 \sim 4 \bar q_1^2 n^{-2}$, and 
$\delta \sim 4 \bar q_2 n^{-3}$. In this case sparsity is guaranteed, and Theorem~\ref{the:TwoStarDensityDeg} tells us that
\begin{align*}
 \pr\{ G \supset K_{12} \}
 \wsim (np_{21})^2 - \delta
 \wsim 4 \bar q_1^2 n^{-2} - 4 \bar q_2 n^{-3}.
\end{align*}
Can we say that $\delta \ll (np_{21})^2$ in this case? This is equivalent to $\frac{\bar q_2}{\bar q_1^2} \ll n$. Note that $q_k \le 1$ implies $\bar q_2 \le \bar q_1$. Hence we see the following sufficient conditions:
\[
 \min_k q _k \gg n^{-1}
 \quad \implies \quad
 \bar q_1 \gg n^{-1}
 \quad \implies \quad
 \delta \ll (np_{21})^2.
\]
Recall also that Jensen's inequality implies $\bar q_1 \le (\bar q_2)^{1/2}$, but this tells nothing interesting here.
\end{example}

\begin{example}[Ambient layer containing all nodes]
\label{exa:AmbientLayer}
Let $m = n$, and assume that $x_1 = m$ and $x_k = 7$ for $k \ge 2$. Assume that $q_k = x_k^{-\beta}$ for some $0 < \beta < 1$. Then the expected number of layers covering any particular link is
\[
 n p_{21}
 \weq \sum_{k} \frac{(x_k)_2}{(m)_2} q_k
 \weq m^{-\beta} + \sum_{k \ne 1} \frac{(7)_2}{(m)_2} 7^{-\beta}
 \wsim m^{-\beta},
\]
and the expected number of layers covering any particular two-star is
\[
 n p_{32}
 \weq \sum_{k} \frac{(x_k)_3}{(m)_3} q_k^2
 \weq m^{-2\beta} + \sum_{k \ne 1} \frac{(7)_3}{(m)_3} 7^{-2\beta}
 \wsim m^{-2\beta}.
\]
Moreover,
\[
 \delta
 \weq \sum_{k} \left( \frac{(x_k)_2}{(m)_2} q_k \right)^2
 \weq m^{-2\beta} + \sum_{k \ne 1} \left( \frac{(7)_2}{(m)_2} 7^{-\beta} \right)^2
 \wsim m^{-2\beta}.
\]
Hence in this case $(np_{21})^2 \sim np_{32} \sim \delta \ll 1$. Theorem~\ref{the:TwoStarDensityDeg} tells us that
\begin{align*}
 \pr\{ G \supset K_{12} \}
 \wsim n p_{32} + (np_{21})^2 - \delta
 \wsim m^{-2\beta}.
\end{align*}
A more detailed computation shows that
\[
 n p_{32} + (np_{21})^2 - \delta
 \weq m^{-2\beta} + 2 (7)_2 7^{-\beta} m^{-\beta-1} + O(m^{-2}),
\]
but this more accurate approximation is useless unless the statement of Theorem~\ref{the:TwoStarDensityDeg} is refined to say something about the size of the approximation error.
\end{example}

\begin{example}
Let $m \asymp n$ and assume that one layer has size $n^{1/2} \ll r \ll n$ and the other layers have size 2. Assume that all layers have unit strength $q_k=1$. Then
\[
 n p_{21}
 \weq \sum_{k} \frac{(x_k)_2}{(m)_2} q_k
 \weq (n-1) \cdot \frac{(2)_2}{(m)_2} + 1 \cdot \frac{(r)_2}{(m)_2}
 \wsim \frac{r^2}{n^2}
\]
and (note that $(2)_3=0$ by definition)
\[
 n p_{32}
 \weq \sum_{k} \frac{(x_k)_3}{(m)_3} q_k^2
 \weq (n-1) \cdot \frac{(2)_3}{(m)_3} + 1 \cdot \frac{(r)_3}{(m)_3}
 \wsim \frac{r^3}{n^3}.
\]
We also have
\[
 \delta
 \weq \sum_{k} \left( \frac{(x_k)_2}{(m)_2} q_k \right)^2
 \weq (n-1) \cdot \frac{(2)_2^2}{(m)_2^2} + 1 \cdot \frac{(r)_2^2}{(m)_2^2}
 \wsim \frac{r^4}{n^4}.
\]
Hence $\delta \sim (n p_{21})^2 \ll n p_{32} \ll n p_{21} \ll 1$. This example shows that $\max_k x_k \ll m$ with $n p_{21} \ll 1$ is not in general sufficient for concluding that $\delta \ll  (n p_{21})^2$. In this case Theorem~\ref{the:TwoStarDensityDeg} tells us that
\begin{align*}
 \pr\{ G \supset K_{12} \}
 \wsim n p_{32} + (np_{21})^2 - \delta
 \wsim n p_{32}.
\end{align*}
But this means that here indeed
\[
 \pr\{ G \supset K_{12} \} \wsim n p_{32} + (np_{21})^2.
\]
The fact that $\delta \sim (np_{21})^2$ is not valid does not matter because $\delta \ll np_{32}$. Is it possible to have scenario where $np_{32}, (np_{21})^2, \delta$ are all of the same order? Yes, see Example~\ref{exa:AmbientLayer}.
\end{example}

\begin{example}[Small and large layers]
Assume that $q_k=1$ identically. Assume that $n_1$ of the layers have size $r_1$ and the remaining $n_2 = n - n_1$ layers have size $r_2$, for some $2 \le r_1 \le r_2 \le m$. In this case the sparsity condition is equivalent to requiring that $m \gg 1$ and
\[
 n_1 (r_1)_2 + n_2 (r_2)_2 \wll m^2.
\]

We want to study when
\[
 \sum_{k=1}^n \left[ \frac{(x_k)_2}{(m)_2} q_k \right]^2
 \wll \left[ \sum_{k=1}^n \frac{(x_k)_2}{(m)_2} q_k \right]^2.
\]
In this case this is equivalent to
\begin{equation}
 \label{eq:Wanted}
 n_1 (r_1)_2^2 + n_2 (r_2)_2^2
 \wll \Big( n_1 (r_1)_2 + n_2 (r_2)_2 \Big)^2.
\end{equation}

(i) If the numbers of both small and large layers are $n_1, n_2 \gg 1$, then $n_1 \ll n_1^2$ and $n_2 \ll n_2^2$ imply \eqref{eq:Wanted}.

(ii) Assume that the number of small layers is $n_1 \asymp 1$, and the number of large layers $n_2 \gg 1$. Then the left side of~\eqref{eq:Wanted} is $\sim n_2 (r_2)_2^2$, and the right side of~\eqref{eq:Wanted} is $\sim n_2^2 (r_2)_2^2$. Hence \eqref{eq:Wanted} holds.

(iii) Assume that there are $n_1 \gg 1$ small layers and $n_2 \asymp 1$ large layers. Then $n_1 \sim n$ and $n_1 (r_1)_2^2 \ll n_1^2 (r_1)_2^2$. Then \eqref{eq:Wanted} becomes equivalent to
\[
 (r_2)_2^2
 \wll \Big( n_1 (r_1)_2 + n_2 (r_2)_2 \Big)^2,
\]
or also
\[
 (r_2)_2
 \wll n_1 (r_1)_2 + n_2 (r_2)_2,
\]
or also
\[
 (r_2)_2
 \wll n (r_1)_2.
\]
Hence in this case \eqref{eq:Wanted} is equivalent to the condition that the ratio of large and small layer sizes is bounded by $\frac{r_2}{r_1} \ll n^{1/2}$.

Hence \eqref{eq:Wanted} fails when there are $n_1 \gg 1$ small layers and $n_2 \asymp 1$ large layers, and size of the large layers is $r_2 \gesim n^{1/2} r_1$. In this case the mean number of layers covering any particular link is
\begin{align*}
 \sum_{k=1}^n \frac{(x_k)_2}{(m)_2} q_k
 &\weq (m)_2^{-1} \Big( n_1 (r_1)_2 + n_2 (r_2)_2 \Big) \\
 &\wasymp m^{-2} \Big( n (r_1)_2 + (r_2)_2 \Big) \\
 &\wasymp m^{-2} r_2^2,
\end{align*}
and sparsity is equivalent to $r_2 \ll m$. How does the empirical layer size distribution look in this case?
\end{example}

Denote
\[
 \phi(K)
 \weq \sup \left( \frac{1}{n} \sum_{k=1}^n (x_k)_2 1( (x_k)_2 > K ) \right),
\]
where the supremum is taken with respect to all scale parameters.

Observe now that
\begin{align*}
 \sum_{k=1}^n (x_k)_2^2 1( (x_k)_2 > K )
 \wle \Big( \sum_{k=1}^n (x_k)_2 1( (x_k)_2 > K ) \Big)^2
 \weq n^2 \phi(K)^2.
\end{align*}
and
\begin{align*}
 \sum_{k=1}^n (x_k)_2^2 \, 1( (x_k)_2 \le K )
 \wle n K^2,
\end{align*}
so that
\[
 \sum_{k=1}^n (x_k)_2^2
 \wle n K^2 + n^2 \phi(K)^2.
\]
On the other hand, for $x_k \ge 2$,
\[
 \sum_{k=1}^n (x_k)_2
 \wge n,
\]
so that
\[
 \frac{\sum_{k=1}^n (x_k)_2^2}{\left( \sum_{k=1}^n (x_k)_2 \right)^2} 
 \wle \frac{n K^2 + n^2 \phi(K)^2}{n^2} 
 \weq n^{-1} K^2 + \phi(K).
\]
Now for $K = n^{1/3}$, the right side vanishes.

\subsection{Transitivity spectrum --- Rooted graph notations}

The \new{transitivity spectrum} of $G$ is the function $t_G: S \to [0,1]$ defined by
\[
 t_G(k)
 \weq \frac{1}{v_k(G)} \sum_{v \in V_k(G)} \frac{\sub_{K_3^\bullet}(G,v)}{\sub_{K_{12}^\bullet}(G,v)},
\]
where $S = \{k \ge 2: v_k(G) > 0\}$.

Observe that
\begin{align*}
 &\sum_{v \in V_{k}(G)} \emb( (K_3,1), (G,v) ) \\
 &\weq \sum_{v \in V_{k}(G)} \abs{ \{ \phi \in \Emb(K_3, G): \phi(1)=v\} } \\
 &\weq \sum_{v \in V_{k}(G)} \sum_{w_1 \in V(G)} \sum_{w_2 \in V(G)} \abs{ \{ \phi \in \Emb(K_3, G): \phi(1)=v, \phi(2) = w_2, \phi(3) = w_3\} } \\
 &\weq \sum_{v \in V_{k}(G)} \sum_{\ell_1 \ge 0} \sum_{\ell_2 \ge 0} \sum_{w_1 \in V_{\ell_1} (G)} \sum_{w_2 \in V_{\ell_2}(G)} \abs{ \{ \phi \in \Emb(K_3, G): \phi(1)=v, \phi(2) = w_2, \phi(3) = w_3\} } \\
 &\weq \sum_{v \in V_{k}(G)} \sum_{\ell_1 \ge 0} \sum_{\ell_2 \ge 0} \sum_{w_1 \in V_{\ell_1} (G)} \sum_{w_2 \in V_{\ell_2}(G)} 1( G[\{v,w_1,w_2\}] \isom K_3 ).
\end{align*}
Now define (for graphs such that the denominators are nonzero)
\[
 f_{k,\ell_1,\ell_2}(G)
 \weq \frac{1}{v_k(G)} \frac{1}{v_{\ell_1}(G)} \frac{1}{v_{\ell_2}(G)}
 \sum_{v \in V_{k}(G)} \sum_{w_1 \in V_{\ell_1} (G)} \sum_{w_2 \in V_{\ell_2}(G)} 1( G[\{v,w_1,w_2\}] \isom K_3 )
\]
as the probability (given a graph sample $G$) that an ordered node triple $(v^*, w_1^*, w_2^*)$ with degrees $k, \ell_1, \ell_2$, selected uniformly at random, induces a triangle in $G$. Then we may write
\[
 \sum_{v \in V_{k}(G)} \emb( (K_3,1), (G,v) )
 \weq v_k(G) \sum_{\ell_1 \ge 0} \sum_{\ell_2 \ge 0} v_{\ell_1}(G) v_{\ell_2}(G) f_{k,\ell_1,\ell_2}(G),
\]
and as a consequence, the transitivity spectrum can be expressed as
\[
 t_G(k)
 \weq \frac{1}{k(k-1)} \sum_{\ell_1 \ge 0} \sum_{\ell_2 \ge 0} v_{\ell_1}(G) v_{\ell_2}(G) f_{k,\ell_1,\ell_2}(G).
\]

\subsection{Discussion on the variational approach of Stegehuis et al.}

The heuristic variational approach in \cite[Chapter 6]{Stegehuis_2019_thesis} finds an approximate value for $t_G(k)$ by replacing the sum on the right with a term which maximizes the sum. The key heuristic observation is that for many random graph models, there exists unique $(\ell_1^*,\ell_2^*$ which maximize the term $v_{\ell_1}(G) v_{\ell_2}(G) f_{k,\ell_1,\ell_2}(G)$, in which case
\[
 t_G(k)
 \wapprox \frac{1}{k^2} \, v_{\ell_1^*}(G) v_{\ell_2^*}(G) f_{k,\ell_1^*,\ell_2^*}(G).
\]

For the active RIG (under usual limiting assumptions, with limiting degree distribution having a power law with density exponent $\tau > 2$), whp,
\[
 f_{k,\ell_1,\ell_2}(G)
 \wapprox \text{const} \times k \frac{\ell_1}{m} \frac{\ell_2}{m} 
\]
and
\[
 v_{\ell}(G) 
 \weq n \frac{v_\ell(G)}{n}
 \wapprox \text{const} \times n \ell^{-\tau},
\]
so that for $\ell_1 \approx n^{\alpha_1}$ and $\ell_2 \approx n^{\alpha_2}$,
\begin{align*}
 v_{\ell_1}(G) v_{\ell_2}(G) f_{k,\ell_1,\ell_2}(G)
 &\wapprox \text{const} \times n \ell_1^{-\tau} n \ell_2^{-\tau} k \frac{\ell_1}{m} \frac{\ell_2}{m} \\
 &\wapprox \text{const} \times k n^2 m^{-2} \ell_1^{1-\tau} \ell_2^{1-\tau} \\
 &\wapprox \text{const} \times k n^2 m^{-2} n^{(\alpha_1+\alpha_2)(1-\tau)}
\end{align*}
The right side is maximized by taking $\alpha_1=\alpha_2=0$, which leads to
\[
 v_{\ell_1}(G) v_{\ell_2}(G) f_{k,\ell_1,\ell_2}(G)
 \wapprox \text{const} \times k
\]
and
\[
 t_G(k)
 \wapprox \frac{\text{const} \times k}{k(k-1)} 
 \wapprox \text{const} \times k^{-1}.
\]


\subsection{Local limit conjecture}

First define a locally finite partially labeled rooted random tree $T^\bullet$ as follows. Choose some node as the root. Individuals in even generations (including the root, representing nodes) have offspring distribution $\Poi(\lambda)$ with $\lambda = \lim \frac{n}{m} \E X$. Every individual in an odd generation (representing a layer) is attached a random label $(X^*,Q^*)$ sampled independently from the $X$-biased distribution of $(X,Q)$. Every individual in an odd generation with label $(X^*,Q^*)$ produces a deterministic number $X^*-1$ of children. See Figure~\ref{fig:LabeledTree}. Having generated the partially labeled rooted tree, we define a locally finite rooted random graph $G^\bullet$ as follows. First we attach each layer (odd generation node) $k$ with label $(x_k, q_k)$ in $T^\bullet$ an independent collection of $\{0,1\}$-valued random variables $\{C_{ij}^k\}$ with mean $q_k$, independently of other layers. Then we define the node set of $G^\bullet$ as the set of even-generation nodes in $T^\bullet$, and the root of $G^\bullet$ to be the root of $T^\bullet$. We declare a pair of distinct nodes $i$ and $j$ in $V(G^\bullet)$ to be linked if there exists a layer $k$ and a 2-hop path $i \to k \to j$ in $T^\bullet$ such that $C_{ij}^k = 1$. Finally, we redefine $G^\bullet$ to be the connected component of the root in $G^\bullet$. Then $G^\bullet$ is a random instance of a locally finite connected rooted graph. 

\begin{conjecture}
Under sufficient regularity, $G^\bullet$ is a local weak limit of the random intersection graph model.
\end{conjecture}

Note by replacing $q_k$ by $p q_k$ above, we might obtain the local limit of the $p$-thinned random intersection graph corresponding to bond percolation.

\begin{figure}[h]
\centering
\begin{tikzpicture} [
 font = \small
]
\node[circle, draw](n0) at (0,0) {};
\node[rectangle, draw](c11) at (-1,1) {(4,0.6)};
\node[rectangle, draw](c12) at (1,1) {(2,0.2)};
\node[circle, draw](n11) at (-2,2) {};
\node[circle, draw](n12) at (-1,2) {};
\node[circle, draw](n13) at (-0,2) {};
\node[circle, draw](n14) at (+2,2) {};
\node[rectangle, draw](c21) at (-2,3) {(3,0.1)};
\node[rectangle, draw](c22) at (+2,3) {(6,0.2)};
\node[circle, draw](n21) at (-2.5,4) {};
\node[circle, draw](n22) at (-1.5,4) {};
\node[circle, draw](n23) at (+1.0,4) {};
\node[circle, draw](n24) at (+1.5,4) {};
\node[circle, draw](n25) at (+2.0,4) {};
\node[circle, draw](n26) at (+2.5,4) {};
\node[circle, draw](n27) at (+3.0,4) {};
\draw (n0) -- (c11);
\draw (n0) -- (c12);
\draw (c11) -- (n11);
\draw (c11) -- (n12);
\draw (c11) -- (n13);
\draw (c12) -- (n14);
\draw (n11) -- (c21);
\draw (n14) -- (c22);
\draw (c21) -- (n21);
\draw (c21) -- (n22);
\draw (c22) -- (n23);
\draw (c22) -- (n24);
\draw (c22) -- (n25);
\draw (c22) -- (n26);
\draw (c22) -- (n27);
\end{tikzpicture}
\caption{\label{fig:LabeledTree} A sample of a partially labeled random tree. Circles represent nodes (in the original model description) and rectangles layers.}
\end{figure}

%
%
\begin{figure}[h]
\centering
\begin{tikzpicture}
  \tikzstyle{every node}=[ultra thick, circle, minimum width=6pt,
    minimum height=3pt, inner sep=0pt, outer sep=0pt, align=center]
  \tikzstyle{node_0} = [fill=red]        
  \tikzstyle{node_1} = [fill=blue]       
    \node [node_0] (v10) at (5,5) {};
    \node [node_1] (v11) at (5,7) {};
\end{tikzpicture}
\end{figure}


%
%
\begin{figure}[h]
\centering
\begin{tikzpicture}
  \tikzstyle{every node}=[ultra thick, ellipse, minimum width=6pt,
    minimum height=3pt, inner sep=0pt, outer sep=0pt, align=center]
  \tikzstyle{vline} = [dashed, thin, black]
  \tikzstyle{node_0} = [fill=red]        
  \tikzstyle{node_1} = [fill=blue]       
  \tikzstyle{node_2} = [fill=blue]       
  \tikzstyle{node_3} = [fill=blue]       
  \tikzstyle{node_i} = [fill=black!50]  
  \tikzstyle{link} = [thick, black!50]
  \tikzstyle{link_comp} = [thick, blue]
  \tikzstyle{layer} = [draw, black, fill=blue!10, opacity=1, ultra thin]

  \begin{scope}
    \foreach \h [count=\i] in {0,40,80,120,160} {
      \pgftransformcm{0.7}{0}{0.25}{0.1}{\pgfpoint{0}{\h}}
      \draw [black!50, fill=black!4, step=1cm] (0,0) rectangle (10,10);
      \node at (1, 4.5) {$G^\i$};
    }
  \pgftransformcm{0.7}{0}{0.25}{0.1}{\pgfpoint{0}{-80}}
  \draw [black!50, fill=blue!5, step=1cm] (0,0) rectangle (10,10);
  \node at (1, 4.5) {$G$};
  \end{scope}
  \draw [layer] (5.7,0.5) ellipse [x radius=0.95, y radius=0.4]; 
  \draw [layer] (4.5,1.9) ellipse [x radius=0.8, y radius=0.3]; 
  \draw [layer] (6.5,3.2) ellipse [x radius=0.8, y radius=0.3]; 
  \draw [layer] (6.5,4.7) ellipse [x radius=1.2, y radius=0.4]; 
  \draw [layer] (3.9,6.0) ellipse [x radius=1.1, y radius=0.3]; 

  \begin{scope} 
    \pgftransformcm{0.7}{0}{0.3}{0.1}{\pgfpoint{0}{0}}
    \node [node_0] (v10) at (5,5) {};
    \node [node_1] (v11) at (5,7) {};
    \node [node_1] (v12) at (7,4) {};
    \node [node_2] (v13) at (7,3) {};
    \draw [link_comp] (v10) -- (v11);
    \draw [link_comp] (v10) -- (v12);
    \draw [link_comp] (v11) -- (v12);
    \draw [link_comp] (v12) -- (v13);
  \end{scope}
  \begin{scope} 
    \pgftransformcm{0.7}{0}{0.3}{0.1}{\pgfpoint{0}{40}}
    \node [node_0] (v20) at (5,5) {};
    \node [node_1] (v21) at (4.7,4) {};
    \node [node_i] (v22) at (3.4,5.5) {};
    \node [node_i] (v23) at (3.4,6.5) {};
    \draw [link_comp] (v20) -- (v21);
    \draw [link] (v22) -- (v23);
  \end{scope}
  \begin{scope} 
     \pgftransformcm{0.7}{0}{0.3}{0.1}{\pgfpoint{0}{80}}
     \node [node_1] (v32) at (7,4) {};
     \node [node_i] (v33) at (8.5,2) {};
     \node [node_i] (v34) at (8.9,2.5) {};
     \draw [link] (v33) -- (v34);
  \end{scope}
  \begin{scope} 
    \pgftransformcm{0.7}{0}{0.3}{0.1}{\pgfpoint{0}{120}}
     \node [node_1] (v42) at (7,4) {};
     \node [node_2] (v46) at (6,7) {};
     \node [node_2] (v43) at (7,7) {};
     \node [node_2] (v44) at (7.7,4) {};
     \node [node_3] (v45) at (8.5,4) {};
     \draw [link_comp] (v42) -- (v43);
     \draw [link_comp] (v42) -- (v44);
     \draw [link_comp] (v42) -- (v46);
     \draw [link_comp] (v43) -- (v44);
     \draw [link_comp] (v43) -- (v45);
     \draw [link_comp] (v44) -- (v45);
    \end{scope}
  \begin{scope} 
    \pgftransformcm{0.7}{0}{0.3}{0.1}{\pgfpoint{0}{160}}
     \node [node_1] (v51) at (4.7,4) {};
     \node [node_2] (v52) at (4,4) {};
     \node [node_2] (v53) at (4.5,2) {};
     \node [node_3] (v54) at (3,5) {};
     \node [node_i] (v55) at (3,3) {};
     \node [node_i] (v56) at (3,4) {};
     \node [node_i] (v57) at (2.3,4.9) {};
     \draw [link_comp] (v51) -- (v52);
     \draw [link_comp] (v51) -- (v53);
     \draw [link_comp] (v52) -- (v54);
     \draw [link_comp] (v53) -- (v54);
     \draw [link] (v55) -- (v56);
     \draw [link] (v56) -- (v57);

  \end{scope}
  \draw [vline] (v10) -- (v20);
  \draw [vline] (v12) -- (v32);
  \draw [vline] (v32) -- (v42);
  \draw [vline] (v21) -- (v51);
  
  \begin{scope} 
  \pgftransformcm{0.7}{0}{0.3}{0.1}{\pgfpoint{0}{-80}}
    \node [node_0, label=below:{\scriptsize $i$}] (a10) at (5,5) {};
    \node [node_1, label=below:{\scriptsize $j'$}] (a12) at (7,4) {};
    \node [node_1, label=below:{\scriptsize $j$}] (a51) at (4.7,4) {};
    \node [node_1] (a11) at (5,7) {};
    \node [node_2] (a13) at (7,3) {};
    \draw [link_comp] (a10) -- (a11);
    \draw [link_comp] (a10) -- (a12);
    \draw [link_comp] (a11) -- (a12);
    \draw [link_comp] (a12) -- (a13);

     \node [node_2] (a46) at (6,7) {};
     \node [node_2] (a43) at (7,7) {};
     \node [node_2] (a44) at (7.7,4) {};
     \node [node_3] (a45) at (8.5,4) {};

     \draw [link_comp] (a12) -- (a43);
     \draw [link_comp] (a12) -- (a44);
     \draw [link_comp] (a12) -- (a46);
     \draw [link_comp] (a43) -- (a44);
     \draw [link_comp] (a43) -- (a45);
     \draw [link_comp] (a44) -- (a45);

     \node [node_2] (a52) at (4,4) {};
     \node [node_2] (a53) at (4.5,2) {};
     \node [node_3] (a54) at (3,5) {};
     \draw [link_comp] (a10) -- (a51);
     \draw [link_comp] (a51) -- (a52);
     \draw [link_comp] (a51) -- (a53);
     \draw [link_comp] (a52) -- (a54);
     \draw [link_comp] (a53) -- (a54);
     
    \node [node_i] (a22) at (3.4,5.5) {};
    \node [node_i] (a23) at (3.4,6.5) {};
    \draw [link] (a22) -- (a23);

     \node [node_i] (a33) at (8.5,2) {};
     \node [node_i] (a34) at (8.9,2.5) {};
     \draw [link] (a33) -- (a34);

     \node [node_i] (a55) at (3,3) {};
     \node [node_i] (a56) at (3,4) {};
     \node [node_i] (a57) at (2.3,4.9) {};
     \draw [link] (a55) -- (a56);
     \draw [link] (a56) -- (a57);

  \end{scope}
\end{tikzpicture}
\caption{\label{fig:Union} Graph $G = G^1 \cup \cdots \cup G^5$ obtained as a union of five graphs corresponding to five layers. 
Nodes $i,j,j'$ belong to multiple layers. Node~$i$ belongs to layers 1 and 2, node~$j$ to layers 2 and 5, and node~$j'$ to layers $1,3,4$. \rnote{Todo: Create a plot series describing the layer exploration, where in step 1 the red node fully explores all layers that it belongs to, beyond its nearest neighbors.}}
\end{figure}
\clearpage


\clearpage

\newcommand{\myGlobalTransformation}[2] {
  \pgftransformcm{0.7}{0}{0.7}{1}{\pgfpoint{#1cm}{#2cm}}
}

\newcommand{\gridThreeD}[3]
{
    \begin{scope}
        \myGlobalTransformation{#1}{#2};
        \draw [#3, step=2cm] grid (8,8);
    \end{scope}
}

\tikzstyle myBG=[line width=3pt, opacity=1.0]

\newcommand{\drawLinewithBG}[2]
{
    \draw[white, myBG]  (#1) -- (#2);
    \draw[black, very thick] (#1) -- (#2);
}

\newcommand{\graphLinesHorizontal}
{
    \drawLinewithBG{1,1}{7,1};
    \drawLinewithBG{1,3}{7,3};
    \drawLinewithBG{1,5}{7,5};
    \drawLinewithBG{1,7}{7,7};
}

\clearpage


\end{document}